\documentclass{book}
\usepackage{amssymb, amsmath, amsthm}
\usepackage[all]{xy}
\usepackage{cite}
\usepackage{makeidx}\makeindex
\newcommand\xemph[1]{\emph{#1}\index{#1}}
\usepackage{hyperref}
\usepackage[margin = 2cm, paperwidth = 17cm, paperheight = 24cm]{geometry}

\title{Realizability Categories}
\author{W. P. Stekelenburg}
\date{}

\theoremstyle{plain}
\newtheorem{theorem}{Theorem}[section]
\newtheorem*{theorem*}{Theorem}
\newtheorem{lemma}[theorem]{Lemma}
\newtheorem*{lemma*}{Lemma}
\newtheorem{corollary}[theorem]{Corollary}
\newtheorem{proposition}[theorem]{Proposition}

\theoremstyle{definition}
\newtheorem{definition}[theorem]{Definition}
\newtheorem*{definition*}{Definition}
\newtheorem{example}[theorem]{Example}
\newtheorem{remark}[theorem]{Remark}

\newcommand\comment[1]{}

\begin{document} 

\newcommand\cat{\mathcal}
\newcommand\Cat{\mathsf}
\newcommand\id{\mathrm{id}}
\newcommand\dom{\mathrm{dom}}
\newcommand\cod{\mathrm{cod}}
\newcommand\Set{\Cat{Set}}

\comment{
- Moerdijks vermoeden: Johnstone schijnt dit al opgelost te hebben met tripostheorie. Hoeveel verder kom je daarmee?

- controleer: nodeloos "-ed by" ", that", "form"/"from", "atgeor", "this way"/ "in this way"

- in Longley: probeer een bewijs te vinden voor je relative completion.
Ik zie nog steeds niet of we met surjectieve functies moeten werken of niet. Dit idee van een categorie van realizers spreekt me aan, en het is een voor de hand liggende generalizatie van gepartitoneerde assemblies. Alleen die stomme partiele functies overal verpesten het.

- filters op \Kone: zie Baten en Boerboom 1979. Er bestaan hierom mogelijk oneindig veel niet isomorfe subPCAs van \Kone. We weten niet of alle recursieve functies partieel combinatorisch zijn, omdat dat laatste begrip afhangt van de operator. Ik denk dat het idee van een zwakke PCA hierbij een rol speelt: daarmee kun je bepaalde applicatie gedefinieerd laten zijn die niet 

Wellicht iets met de beslisbaarheid van gelijkheid die combinatorische algebras niet hebben.

- filters op graafmodellen: goed op nog eens op terug te komen?
Ik ben niet zo onder de indruk van mijn resultaten hier.
Rosolini had bezwaar tegen mijn presentatie, maar ik begrijp nog niet waarom.

- soundness classical realizability
Krivine behoort adequacy, maar Streicher bedacht de tripos.
}

\frontmatter
\maketitle
\newpage
\thispagestyle{empty}
\noindent Assessment committee:\\
Prof. dr. Martin Hyland\\
Prof. dr. Bart Jacobs\\
Prof. dr. Giuseppe Rosolini\\
Prof. dr. Thomas Streicher\\
Dr. Benno van den Berg

\vspace{\stretch{2}}

\noindent Cover design by Wouter Stekelenburg. The following remark inspired it:
\begin{quote}Computable and constructive mathematics is like the Matrix. Do you remember the film \emph{Matrix}? Computable
mathematics is the Matrix, a world built by intelligent computers to keep people from seeing the
world as it really is. The realizers are the little green characters that keep falling down the screens.
They seem infinitely more boring and less comprehensible than the Matrix. However, when Neo,
the hero who saves humankind, reaches a higher level of awareness he sees the Matrix as it really
is -- made of little green characters. Intuitionistic logic and category theory are the Architect and
the Oracle, but I am not telling which is which.\end{quote}
Andre Bauer (2005) \emph{Realizability as the Connection between
Computable and Constructive Mathematics} page 33

\vspace{\stretch{1}}

\noindent Printed by: Proefschriftmaken.nl $\Vert$ Uitgeverij BOXPress

\vspace{\stretch{1}}

\noindent\copyright{} Wouter Pieter Stekelenburg 2013\\
ISBN: 978-90-393-5896-2

\newpage
\thispagestyle{empty}
\mbox{}\\
\vspace{\stretch{15}}

\begin{center}
{\Huge{\bf Realizability Categories}
\vspace{\stretch{2}}

Realiseerbaarheidscategorie\"en}
\vspace{\stretch{2}}

(met een samenvatting in het Nederlands)
\end{center}
\vspace{\stretch{15}}

\noindent{\LARGE Proefschrift}
\vspace{\stretch{1}}

\noindent ter verkrijging van de graad van doctor aan de Universiteit Utrecht op gezag van de rector magnificus, prof. dr. G.J. van der Zwaan, ingevolge het besluit van het college voor promoties in het openbaar te verdedigen op maandag 14 januari 2013 des middags te 4.15 uur\vspace{\stretch{1}}

\noindent door\vspace{\stretch{1.5}}

\noindent {\LARGE Wouter Pieter Stekelenburg}\vspace{\stretch{1}}

\noindent geboren op 9 juli 1984 te Huizen 

\newpage
\thispagestyle{empty}
\noindent Promotor: Prof. dr. I. Moerdijk\smallskip

\noindent Co-promotor: Dr. J. van Oosten

\newpage
\mainmatter
\tableofcontents
\chapter*{Introduction}
\addcontentsline{toc}{chapter}{Introduction}
This thesis contains a collection of results of my Ph.D. research in the area of realizability and category theory. My research was an exploration of the intersection of these areas focused on gaining a deeper understanding rather than on answering a specific question. This gave us some theorems that help to define what realizability is, or at least what \xemph{realizability categories} are.

To provide some context, this chapter introduces realizability and category theory and makes a small survey of their intersection. In the end it summarizes our contributions.\comment{terugkomen en herschrijven wordt toch noodzakelijk}

\section*{Realizability}
Realizability is a collection of tools in the study of constructive logic, where it tackles questions about consistency and independence that are not easily answered by other means. We have no overview of this ever growing collection and know no general criterion for what can be considered realizability and what can not. Therefore, instead of giving a definition, we will present the historical starting point of realizability, and a selection of some later developments.

In \cite{MR0015346} Kleene introduces \xemph{recursive realizability}. It interprets arithmetical propositions by assigning sets of numbers to them.
\newcommand\N{\mathbb N}
\newcommand\partar\rightharpoonup
\newcommand\converges{\mathord\downarrow}
\newcommand\rlzs{\mathrel{\mathbf{r.}}}
\begin{definition*} Let $\N$ be the natural numbers. Let $(m,n)\mapsto \langle m,n\rangle:\N\times\N \to \N$ and $n \mapsto (n_0,n_1):\N\to \N \times \N$ be a recursive bijection and let $(m,n)\mapsto mn:\N\times \N \partar \N$ be a \xemph{universal partial recursive function}, i.e., for each partial recursive $f:\N\partar \N$ there is an $e\in \N$ such that for all $n\in\dom f$, $en$ is defined and equal to $f(n)$. We write $mn\converges$ if $(m,n)$ is in the domain of the universal partial recursive function. We define the \xemph{realizability relation} $\rlzs$ as follows.
\begin{itemize}
\item $n\rlzs x=y$ if and only if $x=y$;
\item $n\rlzs p\land q$ if $n_0\rlzs p$ and $n_1\rlzs q$;
\item $n\rlzs p\vee q$ if $n_0 = 0$ and $n_1\rlzs p$, or $n_0=1$ and $n_1\rlzs q$;
\item $n\rlzs p\to q$ if for all $n'\rlzs p$, $nn'\converges$ and $nn'\rlzs q$;
\item $n\rlzs \neg p$ if no $n'\rlzs p$.
\item $n\rlzs \forall x.p(x)$ if for all $n'\in \N$, $nn'\converges$ and $nn'\rlzs p(n')$;
\item $n\rlzs \exists x.p(x)$ if some $n'\in \N$, $n_0=n'$ and $n_1\rlzs p(n')$.
\end{itemize}
A proposition $p$ is \xemph{valid} if there is some $n\in \N$ such that $n\rlzs p$.
\end{definition*}

The realizers encode some justification for the validity of the formulas they realize. In particular, realizers of $p\to q$ are indices of partial recursive functions that send realizers of $q$ to realizers of $p$. The resulting structure has the following features:
\begin{itemize}
\item it is a model of Heyting arithmetic;
\item because every proposition $p$ either has a realizer or doesn't, $p\vee \neg p$ and $(\neg\neg p) \to p$ are valid;
\item nonetheless, there is a predicate $p$ such that $\neg(\forall n.p(n)\vee\neg p(n))$ is realized.
\end{itemize}
We see the paradox that $q(n) = p(n)\vee\neg p(n)$ is valid for all $n$, while $\forall x.q(x)$ can be false, thanks to an interpretation of universal quantification quite different from the one in classical model theory.

Kleene proposed a number of variations on recursive realizability.
\begin{itemize}
\item We can consider whether the existence of realizers is formally provable in Heyting arithmetic or in other formal systems.
\item We can restrict the set of realized negations, implications, or universal quantifications to a preselected set to avoid realizing false propositions like the undecidability of a set of numbers. This restriction allows a more faithful approach to intuitionistic logic.
\item Kleene developed \xemph{function realizability}, where functions $f:\N\to\N$ take the place of numbers. There is a \xemph{universal partial continuous function} $\N^\N\times \N^\N \partar \N^\N$ for the product topology in $\N^\N$, which takes the place of the universal partial recursive function.
\item A further variation on function realizability is that a formula is valid if there is a total recursive function that realizes it \cite{MR0176922}. This idea of using a special set of realizers to determine validity is called \xemph{relative realizability}.
\end{itemize}

Others proposed further extensions.
\begin{itemize}
\item Besides $\N$ and $\N^\N$ other sets are suitable for building realizability interpretations, namely Feferman's \xemph{partial applicative structures} (see \cite{MR0409137}) and their generalizations.
\item Instead of a single set of realizers, one can work with a system of sets of realizers. The first example of this was Troelstra's reformulation of Kreisel's \xemph{modified realizability} \cite{MR0106838, MR0325352}. \comment{Kreisels versie was niet eerst? was het geen realizeerbaarheidsinterpretaie?}
\item Troelstra extended realizability beyond arithmetic, to higher order systems \cite{MR0363826}.
\item Realizability can be combined with \xemph{sheaf semantics} by developing it in the internal language of a Grothendieck topos \cite{MR1139395, MR495115, MR2638259}.
\end{itemize}

An area of application of realizability is computer science, after all, computers are inherently recursive. Practical limitations of computers, in particular the amount of time and memory required to finish a computation, gave us realizability interpretations for languages that are different from first order languages and realizability counter-models for weaker formal systems than classical or intuitionistic first order logic, see \cite{MR2814748}. On the other hand, the desire to extract computational information from proofs in classical mathematics has led Krivine to introduce a realizability interpretation for classical set theory, see \cite{MR2825677}.

\section*{Effective topos}
We combine realizability with category theory. For an introduction to category theory, see \cite{MR1712872}. Category theory started as a part of algebraic topology, as a language for describing the connections between algebraic invariants of topological spaces, see \cite{MR0013131}.  The theory proved useful in other areas of mathematics, in particular in other parts of algebra and geometry, but also in the more remote areas. Lawvere initiated the application of category theory to logic \cite{MR0158921, MR0172807}. 

Several subjects from category theory, in particular from categorical logic, play a prominent role in this thesis: elementary \xemph{toposes}, \xemph{regular}, \xemph{exact} and \xemph{Heyting categories}, \xemph{fibred locales}, \xemph{complete fibred Heyting algebras} and \xemph{triposes}.

Toposes are categories that have finite limits and \xemph{power objects}: an object $PX$ is a power object of $X$, if there is a monomorphism $m:E_X \to X\times P_X$ such that for each monomorphism $n:U\to X\times Y$ there is a unique $f:Y\to PX$ such that $n$ is the pullback of $m$ along $g$.
\[\xymatrix{
U\ar[d]_n \ar[r]\ar@{}[dr]|<\lrcorner & E_X\ar[d]^m \\
X\times Y \ar[r]_(.4){X\times f} & X\times PX
}\]
This definition of toposes comes from Lawvere and Tierney \cite{MR0430021, MR0373888, MR0354800}, although a more restricted notion of toposes appeared earlier in Grothendieck's work. See \cite{MR1300636, MR1953060, MR0470019} for more information on topos theory.

Toposes have an internal language \cite{MR0319757}: a higher order intuitionistic logic. Heyting categories where defined in \cite{MR0360735}. They also have an internal language, but this internal language is a many sorted predicate logic that does not always have higher order quantification.

An early reference of regular and exact categories is \cite{ECnCS}. First Mac Lane developed Abelian categories (see \cite{MR0025464}) for algebraic topology. Subsequent authors looked at categories that omitted parts of the algebraic structure of Abelian categories, while retaining the non-algebraic properties, until Barr settled on the regular and exact categories we use in this thesis. In \cite{MR678508} we find a construction of exact categories out of categories with finite limits -- the ex/lex completion -- and subsequently many similar constructs have been defined \cite{MR1358759, MR1600009}. Menni worked out under which conditions these completion constructions result in toposes \cite{Menni00exactcompletions, MR1900904}.

Lawvere introduced \xemph{hyperdoctrines} in \cite{MR2223032}. Both fibred locales and complete fibred Heyting algebras are -- up to a 2-equivalence of 2-categories -- examples of hyperdoctrines and we could have called them \xemph{regular} and \xemph{first order hyperdoctrines}. We decided to work with the fibred categories instead of category valued (pseudo)functors, in order to make our work less dependent on set theory, therefore new names seemed appropriate. Grothendieck introduced fibred categories (see \cite{MR0146040}) for algebraic geometry. B\'enabou started applying them to logic \cite{MR0393181, MR0393180}.

A tripos is a special type of complete fibred Heyting algebra. In \cite{Pittsthesis, MR578267}, one can find a construction of toposes out of triposes. The tripos-to-topos construction was soon applied to realizability, resulting in Hyland's \xemph{effective topos} \cite{MR717245}. We give a definition of this category here.
\newcommand\Eff{\Cat{Eff}}
\newcommand\set[1]{\{#1\}}
\begin{definition*} The \xemph{effective topos} $\Eff$ is the category whose objects are pairs $(X,E\subseteq \N\times X\times X)$ for which $s,t\in \N$ exists such that for all $(m,x,y)\in E$ and $(n,z,x)\in E$, $sm$ and $tmn$ are defined and $(sm,y,x)$ and $(tmn,z,y)\in E$. Morphisms are defined as follows. For all $X$ and all $U,V\subseteq \N\times X$ we let $U\models_X V$ if there is an $m\in \N$ such that for all $(n,x)\in U$, $mn\converges$ and $(mn,x)\in V$. Then $U\iff_X V$ if $U\models_X V$ and $V\models_X U$. A morphism $(X,E) \to (X',E')$ is an $\mathord{\Longleftrightarrow}_X$-equivalence class $\phi$ of $F\subseteq \N\times X\times X'$ for which $e,r,s_0,s_1,u\in \N$ exists such that
\begin{itemize}
\item for all $(m,x,x)\in E$, there is a $z\in X'$ such that $em\converges$ and $(em,x,z)\in F$;
\item for all $(m,x,y)\in F$, $(n,x,x')\in E$ and $(p,y,y')\in E'$, $((rm)n)p\converges$ and\\ $(((rm)n)p,x',y')\in F$;
\item for all $(m,x,y)\in F$, $s_0m$ and $s_1m$ are defined, $(s_0m,x,x)\in E$ and \\$(s_1m,y,y)\in E'$;
\item for all $(m,x,y),(n,x,z)\in F$, $(um)n\converges$ and $((um)n,y,z)\in E'$.
\end{itemize}
The composition of two morphisms $\phi:(X,E)\to(X',E')$ and $\chi:(X'',E'')\to(X',E')$ is the $\mathord{\Longleftrightarrow}_X$-equivalence class $\chi\circ \phi$ that for all $F\in \phi$ and $G\in \chi$ contains 
\[ G\circ F = \set{(\langle n,m\rangle,x,y)\in \N\times X\times X''|\exists z.(n,x,z)\in F, (m,z,y)\in G } \]
Here, $\langle -,-\rangle$ is the pairing combinator from the definition at the beginning of this introduction.
\end{definition*}

This definition can be understood as follows. Each object $(X,E)$ consists of a set of \emph{names} $X$ and a realizability relation $E$ for the \xemph{extensional equivalence} of names. This extensional equivalence relation is only realized to be symmetric and transitive (by realizers $s$ and $t$) allowing `partial elements' for which $x=x$ is not valid.

By the way, the subcategory of $(X,E)$ such that $x=x$ is valid for each $x\in X$, is equivalent to the effective topos. This is not true for all closely related categories however.

The morphisms are relations that behave like the graphs of functions. Such \xemph{functional relations} have many different representations as realizability relations, which forces us to work with equivalence classes.

We list some subsequent developments after the invention of the effective topos. 
\begin{itemize}
\item Hyland, Grayson and others worked out how to approach some of Kleene's variations of recursive realizability \cite{MR1909029}, e.g., there is an effective topos over any topos with a natural number object, so we can do formal realizability by working over the free topos with natural number object.
\item The theory of partial combinatory algebras can be developed in the internal language of toposes, and we can construct realizability toposes for them too. 
\item There are relative variants, where another property than the existence of realizers determines the $\mathord{\Longleftrightarrow}_X$-equivalence, see \cite{MR1909030}. 
\item The effective topos is both an ex/reg and an ex/lex completion \cite{MR1056382, MR948482}.
\end{itemize}

\section*{Theory of realizability}
Recursive realizability has its own logic, which is different from classical logic, because there are undecidable predicates, but also different from intuitionistic logic, because every proposition is either true or false. The set of realized propositions in recursive realizability is not recursively enumerable by G\"odel's incompleteness theorems. However, the indictive definition of the realizability relation determines a recursive reduction from the set of realized propositions to the set of valid proposition. Maybe we can describe this reduction using a set of axioms.

On the proof theoretic side, we have the following result of Dragalin \cite{MR0258596} and Troelstra \cite{MR0335240}. The schema of \xemph{extended Church's thesis} is, for each almost negative predicate $A$ and every predicate $B$:
\[ \mathrm{ECT_0} \iff [\forall x.A(x) \to \exists y.B(x,y)] \to \exists z.\forall x.A(x) \to (zx\converges \land B(x,zx))]\]
Denoting Heyting arithmetic by $\mathrm{HA}$ we have the following connection between provability and realizability.
\begin{align*}
& \mathrm{HA}+\mathrm{ECT_0}\vdash \phi\leftrightarrow \exists x.x\rlzs \phi \\
& (\mathrm{HA}\vdash \exists x.x\rlzs \phi) \iff (\mathrm{HA}+\mathrm{ECT_0}\vdash \phi)
\end{align*}
Van Oosten extended this to higher order arithmetic in \cite{MR1303664}.

To extends these results to other forms of realizability, we look for categorical properties which characterize a realizability topos (up to equivalence), and determine which of these properties can be expressed in the internal language and which cannot.

Just like the effective topos, we can represent realizability toposes as completions of simpler categories under particular coequalizers, the \xemph{ex/reg} and \xemph{ex/lex completions} \cite{MR948482, MR1056382}. In his thesis \cite{RTnLS}, Longley shows that one of these simpler categories has a universal property. He also introduces \xemph{applicative morphisms}, a preordered set of morphisms between partial combinatory algebras that is equivalent to a category of regular functors between realizability toposes.

Axioms and universal properties give us a clearer view of what realizability is. Therefore, understanding and extending these results have been our aims.

\section*{In this thesis}
We will sketch what we consider to be \xemph{realizability} in this thesis, and then give a survey of our results.

We work with realizability models constructed over arbitrary \xemph{Heyting categories}. We wanted to demand as little structure on the base category as possible, and Heyting categories are both sufficient and necessary to ensure that the resulting realizability categories are Heyting categories too. Although this extra generality has complicated the construction, is has simplified the universal properties: often, more examples means less rules.

Among Heyting categories we find \xemph{syntactic categories}, which model only the provable propositions of some deductive system. Our work applies directly to proof theory through these syntactic categories. We also find \xemph{locally Cartesian closed pretoposes}, some of which are considered to be \xemph{predicative} alternatives to toposes. So the theory can tell us what realizability could (and couldn't) do for intuitionists. All toposes are Heyting categories, and therefore repeating realizability and combining realizability with Kripke semantics or filter quotients constructions falls within the set of models under consideration.

We only have one object of realizers, and it is an \xemph{order partial applicative structure}. Here we follow Hofstra and van Oosten \cite{MR1981211}. Working with more than one object of realizers in combination with the complications from working with general Heyting categories was too daunting. Moreover, there is a modified realizability topos, which occurs as a subtopos of a realizability topos (see \cite{MR1933398}). Other forms of realizability, like Longley's typed realizability \cite{Longley_matchingtyped}, may show up as subcategories of realizability toposes as well.

When working with relative realizability in Heyting categories, i.e., when some subobject of the order partial applicative structure determines validity, it is easier to just collect all objects of realizers and point out the ones that act as truth values in the realizability models. There is a collection of subobjects $\phi$ of the order partial applicative structure such that a proposition $p$ is valid if and only if the realizability relation assigns a member of $\phi$ to $p$. Not all $\phi$ that give a sound interpretation of intuitionistic logic come from relative realizability, but we found it useful to consider the new examples too. It builds the filter quotient construction right into the construction of realizability categories. This is also critical, because the property that sets relative realizability categories apart from this more general class of models is rather complicated; once again we benefit from `more examples means less rules'.

The first half of chapter \ref{axiom} generalizes the construction of \xemph{the effective tripos} -- an intermediate step in the construction of the effective topos -- to all of our realizabilities. It is a structure that assigns a Heyting algebra to each object of a category and this allows us to interpret first order logic. 

We decided to work with \xemph{fibred categories} instead of \xemph{indexed categories}. The 2-categories of small indexed categories and small fibred categories are equivalent thanks to the axiom of choice. If we work with large base categories, the 2-category of indexed categories could be a proper subcategory of the 2-category of fibred categories, depending on how some foundational issues are decided. We decided to work with small fibred categories, so that we can hang on to the intuitions of working with indexed categories while using constructions which work on large categories.

Rather than \xemph{first order hyperdoctrines}, we talk about \xemph{complete fibred Heyting algebras}. Placing `fibred' in front of a type of category seems a convenient way of describing fibred categories that have some extra structure. We use this convention throughout this thesis.

In the second half of chapter \ref{axiom} we determine a list of properties which characterize the \xemph{realizability fibrations} we constructed up to equivalence. This is summarized in theorem \ref{characterization}. Together these properties already are the  universal property we desired and it helps us to derive universal properties of realizability categories in chapter \ref{realcat}.

We determine which properties can be expressed in the internal language and which cannot. One of our axioms is an adaptation of extended Church's thesis, and the others simply generalize well known theorems of the effective topos, so no surprises there. Theorem \ref{axiomatization} is our completeness theorem for realizability.

Chapter \ref{realcat} collects various results on realizability categories. We start with explaining how the tripos-to-topos construction can be applied to general realizability fibrations and similar fibred categories. Using categories-of-fractions constructions forces us to rely on universal properties later on, but this is what we wanted to do anyway.

Working with realizability interpretations where every valid proposition has an inhabited object of realizers has a huge advantage, namely the existence of a left adjoint, see lemma \ref{left adjoint}. Moreover, we can do so without loss of generality, by changing the base category, see theorem \ref{inhabitation}. Both facts are tremendously helpful in the characterization of two categories that can be constructed from a realizability fibration, see theorems \ref{charreg} and \ref{charexact}. 

We can reformulate the universal property of realizability categories in such a way, that it generalizes Longley's applicative morphisms (see \cite{RTnLS}) to our greater class of realizability categories, see definition \ref{appmorph}. They give us an easy way to study regular functors between realizability categories. Theorem \ref{pseudoinitiality} is about a universal property of the realizability fibrations, and corollaries \ref{asm pseudoinitial} and \ref{asmexreg pseudoinitial} translate the universal property to the realizability categories.

Half way through this thesis, we start to discuss realizability toposes. We explain how to exploit the impredicativity of toposes to get different universal properties for realizability toposes. First we consider the advantages of using realizabilities where the objects of realizers of valid propositions have global sections, rather then just being inhabited. A completion construction from Hofstra and van Oosten's \cite{MR1981211} makes sure we never have to work with other kinds of realizability if our base category is a topos. We derive a new universal property in corollary \ref{lems and toposes}, which help us study left exact functors between realizability toposes, and take a look at applicative morphisms that induce geometric morphisms, i.e., the \xemph{computationally dense} applicative morphisms.

The last section of chapter \ref{realcat} demonstrates that most realizability categories are not reg/lex or ex/lex completions of other categories, and are not \xemph{relative completions} (see \cite{MR2067191}) either. The last subsection gives conditions that make these completions work.

Chapter \ref{apps} collects various result on realizability that are not directly related to the matter of the first two chapters, although we start by applying our results on the characterization of realizability to recursive realizability in the first section of this chapter. We derive a list of properties that characterize effective toposes constructed over arbitrary toposes with natural number objects (theorem \ref{definition of effective topos}).

In the second section we redo our paper \cite{MR2729220} using only the characteristic properties of realizability toposes. We can now say that the category of pointed complete extensional PERs is algebraically compact in \xemph{every} effective topos (theorem \ref{compactness}).

The last section of chapter \ref{apps} is about Krivine's \xemph{classical realizability}, which is a realizability for classical logic. We demonstrate that certain Boolean subtoposes of relative realizability toposes are models of classical realizability (theorem \ref{cr sub rr}).

\chapter{Axiomatization}\label{axiom}
This chapter explores the limits of realizability, in particular the bounds to what is logically possible in a realizability model. Since there are many different kinds of realizability, we cannot cover everything that falls under that name, so we make the following choices.
\begin{itemize}
\item We develop realizability internally in arbitrary Heyting categories. This class of categories includes all toposes, but also categories that do not have power objects.
\item We develop realizability with an \xemph{order partial combinatory algebra} of realizers. These include all traditional partial combinatory algebras, but also all meet semilattices. 
\item We allow models where a proposition is valid if its set of realizers satisfies a property other than simply being inhabited. This includes properties like having a global section, or intersecting some special subobject. In this way we have one framework for different types of realizability.
\end{itemize}
It turns out that all resulting models satisfy two axiom schemas that are adaptations of known theorems of the effective topos, namely, \xemph{extended Church's thesis} and the \xemph{uniformity principle}. Since a model that violates these schemas cannot be a realizability model, they describe what the logical limits of realizability are.

\section{Categorical framework}
Our realizability models are going to be \emph{complete Heyting algebras fibred over Hey\-ting categories}, a notion we explain in this section. Our treatment is limited to a definition of the relevant concepts; more on categorical logic can be found in \cite{MR1859470} and \cite{MR1674451}.


\subsection{Regular and Heyting categories}
We run through the definitions of these categories and their characteristic properties.

\begin{definition}[regular categories] Let $\cat C$ be a category with finite limits. For each arrow $f:X\to Y$ in $\cat C$ a \xemph{kernel pair} is a pair of arrows $p:Z\to X$, $q:Z\to X$ such that the square $f\circ p = f\circ q$ is a pullback: 
\[ \xymatrix{
Z \ar[r]^p \ar[d]_q \ar@{}[dr]|<\lrcorner & X \ar[d]^f \\
X \ar[r]_f & Y
} \]
An \xemph{image} of $f$ is a coequalizer $e:X \to \exists_f(X)$ of a kernel pair of $f$. Note that $f$ factors through its image $e$. An image is \xemph{stable under pullback} if for any $g:Y'\to Y$, the image of the pullback of $f$ along $g$ is the pullback of the image of $f$ along $g$:
\[ \xymatrix{
g^*(X) \ar@/^1pc/[rr]^{g^*(f)} \ar[r] \ar[d] \ar@{}[dr]|<\lrcorner & \exists_{g^*(f)}(g^*(X)) \ar[r] \ar[d] \ar@{}[dr]|<\lrcorner & X' \ar[d]^g \\
X \ar[r]  \ar@/_1pc/[rr]_f & \exists_f(X) \ar[r] & Y
} \]
The category $\cat C$ is \xemph{regular} if every arrow has an image and if every image is stable under pullback. Let $F:\cat C \to\cat D$ be a functor between regular categories. If $F$ preserves finite limits and images, then $F$ is a \xemph{regular functor}.
\end{definition}

\begin{remark}[regular-epi-mono factorization] This definition implies that every morphism in a regular category factors as a regular epimorphism followed by a monomorphism. Moreover, for any other factorization of a morphism $f:X\to Y$ as a regular epimorphism $e':X\to Z$ followed by a monomorphism $m':Z\to Y$ there is a unique isomorphism $g:Z\to\exists_f(X)$ such that $e'\circ g =e$ and $m'=m\circ g$.
\[ \xymatrix{
X\ar[r]^(.4)e\ar[d]_{e'} & \exists_f(X) \ar[d]^m \\
Z\ar[r]_{m'} \ar@{.>}[ur]^g & Y
}\]
This is called the regular-epi-mono factorization. An example of a regular category is the category $\Cat{Set}$ of sets and functions, where the regular-epi-mono-factorization is the factorization of a function into a surjection and an injection.
\end{remark}

\newcommand\Sub{\Cat{Sub}}
\newcommand\pre[1]{#1^{-1}}
\begin{definition}[Heyting categories] Let $\cat C$ be a regular category. For each object $X$ in $\cat C$ the poset of subobjects $\Sub(X)$ is the poset reflection of the category of monomorphisms into $X$. For each arrow $f:X\to Y$, pullbacks induce a monotone map $f^{-1}:\Sub(Y) \to\Sub(X)$ called the \xemph{inverse image map}. If each $\Sub(X)$ has finite joins and if $\pre f$ has a right adjoint $\forall_f$, a \xemph{dual image map}, for every arrow $f$ in $\cat C$, then $\cat C$ is a \xemph{Heyting category}.

A regular functor $F$ for which the induced maps $F_X:\Sub(X) \to \Sub(FX)$ preserve joins and right adjoints is a \xemph{Heyting functor}.
\end{definition}

\begin{remark}[size issues] If $\cat C$ is large category, subobjects can be proper classes of monomorphisms, and a `set of subobjects' does not exist. To avoid this problem and other size problems, we will work with small categories througout this thesis. Together with smallness, we will assume the the axiom of choice applies to sets of objects and morphisms of all categories. Large categories like the category of all sets may appear in examples and remarks.
\label{size}
\end{remark}

\newcommand\termo{\mathbf 1}
\newcommand\db[1]{[\![ #1 ]\!]}
\newcommand\im[1]{\exists_{#1}}
\begin{remark}[properties of regular categories]
For every object $X$ in a regular category, $\Sub(X)$ is a meet semilattice, and for each morphism $f:X\to Y$ the preimage map $\pre f$ has a left adjoint: the \xemph{direct image map} $\im f$. The map $\pre f$ preserves all meets. The inverse image maps satisfy the \xemph{Beck-Chevalley condition}: if $f\circ g= h\circ k$ is a pullback square, then $f^{-1}\exists_h =  \exists_g k^{-1}$.
\[ \begin{array}{c}\xymatrix{
\bullet \ar[r]^g \ar[d]_f \ar@{}[dr]|<\lrcorner & \bullet \ar[d]^k \\
\bullet \ar[r]_h & \bullet
}\end{array} \Longrightarrow \begin{array}{c}\xymatrix{
\bullet \ar[r]^{\im g}  & \bullet  \\
\bullet \ar[r]_{\im h} \ar[u]^{\pre f} & \bullet \ar[u]_{\pre k}
}\end{array} \]
The meets and the preimage map together satisfy the \xemph{Frobenius condition}: $\im f(U)\land V = \im f(U\land \pre f(V))$ for all $f:X\to Y$, $U\in \Sub(X)$ and $V\in \Sub(Y)$.

Pullbacks provide meets of subobjects. If for each monic $m:U\to X$, $\db m$ is its equivalence class in $\Sub(X)$, then $\top_X= \db{\id_X}\in \Sub(X)$ is a top element. If $n:V\to X$ is another monic, the meet $\db m\land \db n$ is the pullback $U\times_X V\to X$ of $m$ and $n$.

If $m:U\to X$ is monic, then $\exists_f(\db m)$ is $\db n$ for the monomorphism $n:\im{f\circ m}(U) \to Y$. Both the Beck-Chevalley condition and the Frobenius condition follow from the stability of images under pullback. 

For each regular functor $F:\cat C \to\cat D$ the induced maps $F_X:\Sub(X) \to \Sub(FX)$ preserve meets and direct images, because regular functors preserve finite limits and images.
\end{remark}

\begin{remark}[properties of Heyting categories]
For each object $X$ in a Heyting category, $\Sub(X)$ is a Heyting algebra and for each morphism $f:X\to Y$ the preimage map $\pre f:\Sub(Y) \to \Sub(X)$ is a morphism of Heyting algebras that has both a left adjoint $\exists_f$ and a right adjoint $\forall_f$.

Because Heyting categories are regular categories, $\Sub(X)$ has meets for every object $X$, and $\pre f$ has a left adjoint $\im f$ for every morphism $f$. The inverse image map $\pre f$ preserves all meets and joins because it has both a left and a right adjoint. The Beck-Chevalley condition for the direct image map induces the same condition on the dual image maps, so if $f\circ g= h\circ k$ is any pullback square:
\[ \begin{array}{c}\xymatrix{
\bullet \ar[r]^g \ar[d]_f \ar@{}[dr]|<\lrcorner & \bullet \ar[d]^k \\
\bullet \ar[r]_h & \bullet
}\end{array} \Longrightarrow \begin{array}{c}\xymatrix{
\pre f(\im h(U))\subseteq V\ar@{<=>}[r]\ar@{<=>}[d] &  \im g(\pre k(U))\subseteq V \ar@{<=>}[d]\\
U\subseteq \pre h\forall_f(V)\ar@{<=>}[r] & U\subseteq \forall_k(\pre g V)
}\end{array}\]
This leaves us with \xemph{Heyting implication}. For any monic $m:U\to X$ and any $V\in\Sub(X)$ implication $\db m \to V$ is $\forall_m(m^{-1}(V))$ and this defines it for all of $\Sub(X)$. The map $V\mapsto (\db m \to V)$ is right adjoint to $W\mapsto \im m (\pre m (W))$, which equals $\db m\land W$ because of the Frobenius condition:
\[ \im m (\pre m (W)) = \im m (\pre m (W)\land\top) = W\land \im m(\top) = W\land \db m \]
The Frobenius condition also implies that  the preimage map preserves implication.
\[\xymatrix{
\exists_f(W) \land \db m \subseteq V \ar@{<=>}[r]\ar@{<=>}[d] & \exists_f(W\land \pre f(\db m))\subseteq V \ar@{<=>}[d]\\
W \subseteq \pre f(\db m\to V) \ar@{<=>}[r] &  W\subseteq \pre f(\db m)\to \pre f(V) 
}\]

For each Heyting functor $F:\cat C \to\cat D$ the induced maps $F_X:\Sub(X) \to \Sub(FX)$ are morphisms of Heyting algebras.
\end{remark}

\subsection{Fibred categories}
Fibred categories are categories that depend contravariantly on some base category, roughly in the same way a presheaf is a set that corresponds contravariantly on some base category. Our realizability models will be a kind of fibred categories, so we will develop that notion here.

\begin{definition} Let $F:\cat C\to\cat D$ be a functor. 
\begin{itemize}
\item An arrow $f:X\to Y$ in $\cat C$ is \xemph{prone} or \xemph{Cartesian} relative to $F$, if for every $g:X'\to Y$ and $h:FX'\to FX$ such that $Fg = Ff\circ h$, there is a unique $k:X'\to X$ such that $g=f\circ k$ and $Fk=h$. A functor $F:\cat C \to \cat D$ is a \xemph{fibration} or a \xemph{fibred category} if for each $X$ in $\cat C$ and each $f:Y\to FX$ in $\cat D$ there is a prone $f':Y'\to X$ such that $Ff'=f$.

\item If $f:Y\to X$ is prone for $F^{op}:\cat C^{op}\to\cat D^{op}$, then $f$ is \xemph{supine} or \xemph{coCartesian}. If $F^{op}$ is a fibred category, then $F$ is an \xemph{opfibred category} or an \xemph{opfibration} and if $F$ is both a fibration and an opfibration, then it is a \xemph{bifibration} or a \xemph{bifibred category}.

\item For each object $X\in \cat D$, the \xemph{fibre} $F_X$ is the subcategory of $\cat C$, whose objects are mapped to $X$ and whose arrows are mapped to $\id_X$ by $F$. Arrows that $F$ sends to identities, are called \xemph{vertical}. 

\item For each $f:X\to Y$ there is an up to isomorphism unique functor $F_f:F_Y\to F_X$ such that there is a natural transformation $p:F_f \to \id_{F_X}$ consisting of prone morphisms over $f$. We call it the \xemph{reindexing functor}. 

\item If $F:\cat C\to\cat D$ and $F':\cat C'\to\cat D'$ are fibrations, then a \xemph{morphism of fibrations} is a pair of functors $G_0:\cat D\to \cat D'$ and $G_1:\cat C \to \cat C'$ such that $F'G_1=G_0F$ and such that $G_1$ preserves prone arrows. Given two morphism $G,H:F\to F'$ a \xemph{$2$-morphism of fibrations} is a pair of natural transformations $\eta_0:G_0\to H_0$ and $\eta_1:G_1\to H_1$ such that $\eta_0F = F'\eta_1$.
\end{itemize}
\end{definition}

\begin{example}[discrete fibred categories] For each presheaf $F:\cat C^{op}\to\Cat{Set}$ there is a fibred category of elements $E:\cat E(F) \to\cat C$: its objects are pairs $(X\in\cat C,y\in FX)$ and a morphism $(X,y) \to (X',y')$ is an arrow $f:X\to X'$ such that $Ff(y') = y$. All morphisms are prone, and therefore all vertical morphisms are identities. Fibred categories with this property are called \xemph{discrete fibred categories}.
\end{example}


\begin{example} For each category $\cat C$ that has pullbacks, the \xemph{fundamental bifibration} is $\cod:\cat C/\cat C \to\cat C$, where $\cat C/\cat C$ is the category whose objects are arrows of $\cat C$ and whose morphisms are commutative squares of $\cat C$. Prone morphisms are pullback squares, vertical morphisms are commutative triangles and supine morphisms are squares where the $\dom$-side morphism is an isomorphism.

\[\begin{array}{ccc}
\xymatrix{ \bullet\ar[r]\ar[d]\ar@{}[dr]|<\lrcorner & \bullet \ar[d]\\ \bullet\ar[r] & \bullet } &
\xymatrix{ \bullet\ar[r]\ar[d] & \bullet \ar[d]\\ \bullet\ar@{=}[r] & \bullet } &
\xymatrix{ \bullet\ar[r]^\simeq\ar[d] & \bullet \ar[d]\\ \bullet\ar[r] & \bullet } \\
\textrm{prone} & \textrm{vertical} & \textrm{supine}
\end{array}\]

\newcommand\monos{\Cat{monos}}
Let $\monos(\cat C)$ be the subcategory of $\cat C/\cat C$ whose objects are monomorphisms. The fibred subcategory $\cod:\monos(\cat C) \to\cat C$ is faithful, and its fibres are preorders. So it is a fibred \xemph{preorder}. The \xemph{subobject fibration} $\Sub(\cat C)$ is the fibrewise antisymmetric quotient of $\cod:\monos(\cat C) \to\cat C$, i.e., $\Sub(X) = \Sub(\cat C)_X$ is poset reflection of $\cod_X$. If $\cat C$ is regular the subobject fibration is a bifibration: for any $f:X\to Y$ and any monic $g:Z\to X$ the regular-epi-mono factorization provides a regular epimorphism $e:Z\to W$ and a monomorphism $m:W\to Y$ and $(e,f)$ is a supine morphism $g\to m$.\label{subobject fibration}\end{example}

\begin{example}[fibred category of fibred categories] Let $\Cat{Fib}$ be the category of fibrations, then $\cod:\Cat{Fib}\to \Cat{Cat}$ is itself a fibred 2-category: \xemph{prone morphism of fibred categories} are pullback squares, \xemph{vertical morphisms of fibred categories} are commutative triangles. \label{fibred category of fibred categories} \end{example}

\comment{
There is an equivalence between fibred categories over a base category $\cat C$ and pseudofunctors $\cat C^{op} \to \Cat{Cat}$, which are also called \xemph{indexed categories}. This is the sense in which fibred categories are categories that depend contravariantly on the base category. We explain the equivalence here.

\newcommand\psh{\Cat{psh}}
\begin{definition}[Grothendieck construction] 
Let $D$ be a pseudofunctor $\cat C^{op} \to\Cat{Cat}$. Let $\int D$ be the category whose objects are pairs $(X,Y\in (DX)_0)$ and where an arrow $(X,Y) \to (X',Y')$ is a pair $(f:X\to X', g\in DX(Y,Df(Y')))$; Composition of $(f,g):(X,Y)\to (X',Y')$ and $(f',g'):(X',Y') \to (X'', Y'')$ is defined by 
\[ (f',g')\circ (f,g) = (f'\circ f, Df(g')\circ g) \]

 let $P:\int D\to \cat C$ be the projection functor back to $\cat C$. Now $P$ is a fibration, where prone arrows are the ones of the form $(f,\id_f)$. This construction of a fibred category out of a category valued presheaf is called the \xemph{Grothendieck construction}.
\end{definition}

In order to get the inverse functor, we need cleavages.

\begin{definition} For each fibred category $F:\cat F\to\cat B$, let $\Cat{prones}(F)$ be the subcategory of $\cat F/\cat F$ whose objects are prone morphisms. The functor $F':\Cat{prones}(F) \to \cat B/F$ that sends $f:X\to Y$ to $Ff:FX\to FY$ is surjective on objects, and full and faithful, and hence an equivalence of categories. A \xemph{cleavage} $G:\cat B/F\to \Cat{prones}(F)$ is a strict right inverse of $F$, i.e., $F'G = \id_{\cat B/F}$. It is automatically a left pseudoinverse, i.e., $GF'\simeq \id_{\Cat{prones}(F)}$. Cleavages of $F:\cat F\to\cat B$ are uniquely determined by their objects maps, and often defined that way.

Each cleavage $G$ of a fibration $F$ determines a category object $C$ in the category of presheaves over $\cat B$. For each object $X$ in $\cat B$ let $CX = F_X$. For each morphism $f:X\to Y$ in $\cat B$ and each object $Z\in CY = F_Y$ let $Cf(Z) = \dom G(Z,f)$. For each vertical morphism $g:Z\to Z'$ in $C(Y)$ there is a unique factorization $u$ of $g\circ G(Z,f)$ through the prone morphism $G(Z',f)$; we let $Cf(g) = u$.
\[ \xymatrix{
Cf(Z) \ar[r]^{G(Z,f)} \ar[d]_{Ff(g)} & Z \ar[d]^g \\
Cf(Z') \ar[r]_{G(Z',f)} & Z \\
}\]
\end{definition}

We show that the Grothendieck construction maps $C$ to an equivalent of $F$.

\begin{proposition} Let $C$ be the category object in $\psh(\cat B)$ constructed from a cleavage $G$ of $\cat F$, and let $P:\int C\to \cat B$ be the projection. There is an isomorphism $\int C \to F$ that commutes with $P$ and $F$.
\end{proposition}

\begin{proof} Define $H: \int C\to \cat F$ as follows: $H(X,Y) = Y$ and $H(f,g) = f\circ g$. Because $Y\in (CX)_0$ is on object of $F_X$, and since $g$ is a vertical morphism, we have $FH = P$, so $H$ commutes. We define $K:\cat F\to\int C$ using the cleavage $G$; $KX = (FY,Y)$ and $Kf = (Ff, u)$ where $u$ is the unique vertical morphism such that $G(Y,f)\circ u = f$. Now $KH = \id_{\int C}$ and $HK=\id_{\cat F}$.
\end{proof}
}

We end this section with some useful facts about finite limits and indexed coproducts in fibred categories. The first one concerns fibred categories with certain limits.

\begin{definition} A fibred category $F:\cat F\to\cat B$ has limits of shape $\cat D$, where $\cat D$ is an arbitrary category, if for each object $X$ of $\cat B$ every functor $D:\cat D\to F_X$ has a limit in $F_X$ and if the reindexing functors preserve these limits. \end{definition}

\begin{lemma}[lifting limits] Let a fibred category $F:\cat F\to\cat B$ have limits of shape $\cat D$ and let $\cat B$ have limits of shape $\cat D$ too. Then $\cat F$ has limits of shape $\cat D$ and $F$ preserves limits of shape $\cat D$. \label{lifting limits}\end{lemma}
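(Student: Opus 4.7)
Given a diagram $D : \cat D \to \cat F$, the plan is to construct its limit in two stages: first take the limit of the underlying diagram in $\cat B$, then reindex everything to sit over that vertex and take a fibrewise limit there.

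\textbf{Construction of the candidate limit.} Form $FD : \cat D \to \cat B$ and let $\pi_d : L \to FD(d)$ be its limit cone in $\cat B$. For each object $d$ of $\cat D$, lift $\pi_d$ to a prone morphism $q_d : L^*D(d) \to D(d)$. For each arrow $\alpha : d \to d'$, the composite $D\alpha \circ q_d$ lies over $FD\alpha \circ \pi_d = \pi_{d'}$, so the universal property of $q_{d'}$ yields a unique vertical $L^*D(\alpha) : L^*D(d) \to L^*D(d')$ with $q_{d'} \circ L^*D(\alpha) = D\alpha \circ q_d$. A routine uniqueness argument in each prone factorisation shows that this defines a functor $L^*D : \cat D \to F_L$. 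Since $F$ has limits of shape $\cat D$, form its limit inside the fibre $F_L$: call it $\Lambda$ with vertical projections $p_d : \Lambda \to L^*D(d)$. Set $r_d := q_d \circ p_d : \Lambda \to D(d)$; the cone equations $D\alpha \circ r_d = r_{d'}$ follow at once from the cone equations of $p_d$ in $F_L$ together with $q_{d'} \circ L^*D(\alpha) = D\alpha \circ q_d$.

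\textbf{Universal property.} Let $\phi_d : X \to D(d)$ be another cone over $D$. Applying $F$ gives a cone on $FD$, which factors uniquely through $L$ by some $f : FX \to L$ with $\pi_d f = F\phi_d$. Since $F\phi_d = \pi_d \circ f$ and $q_d$ is prone over $\pi_d$, there is a unique $\psi_d : X \to L^*D(d)$ with $q_d \circ \psi_d = \phi_d$ and $F\psi_d = f$. To apply the fibrewise limit I must move these morphisms into a single fibre. Reindex along $f$: the functor $F_f : F_L \to F_{FX}$ preserves limits of shape $\cat D$ by hypothesis, so $F_f \Lambda$ is the limit of $F_f \circ L^*D$ in $F_{FX}$, with projections $F_f p_d$. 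Each $\psi_d$ factors uniquely as a vertical map $\tilde\psi_d : X \to F_f L^*D(d)$ followed by the chosen prone lift of $f$. The $\tilde\psi_d$ form a cone in $F_{FX}$, so they factor uniquely through a vertical $u_0 : X \to F_f \Lambda$; composing with the prone lift $F_f \Lambda \to \Lambda$ of $f$ produces the desired morphism $u : X \to \Lambda$ with $F u = f$ and $r_d \circ u = \phi_d$. Uniqueness of $u$ is obtained by chasing any competitor $u'$ through $F$ (to recover $f$) and then through each prone $q_d$ (to recover $\psi_d$), reducing to uniqueness of the vertical factorisation through $\Lambda$ in $F_{FX}$.

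\textbf{Preservation.} By construction $F \Lambda = L$ and $F r_d = \pi_d$, so $F$ sends the limit cone we built to the chosen limit cone in $\cat B$; hence $F$ preserves limits of shape $\cat D$.

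\textbf{Main obstacle.} The only delicate point is the universal property: one is given a test cone whose vertex $X$ need not lie over $L$, so the fibrewise limit in $F_L$ cannot be applied directly. Passing through $F_f$ and exploiting that reindexing preserves $\cat D$-limits is what makes the argument go through, and keeping the bookkeeping of vertical-versus-prone factorisations straight is the place where care is needed.
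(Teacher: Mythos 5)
Your proof is correct and follows essentially the same route as the paper: take the limit of $FD$ in the base, lift its legs to prone morphisms, reindex the diagram into the fibre over the limit vertex, take the fibrewise limit there, and verify the universal property by factoring a test cone first through the prone lifts and then through the fibrewise limit cone. Your reindexing-along-$f$ step merely spells out in more detail the point the paper leaves implicit when it says the cone ``then factors through $k'$'' even though the factored maps lie over $f$ rather than over an identity.
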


\begin{proof} Let $D:\cat D\to \cat F$ be any functor. There is a limit cone $k: \lim FD \to FD$. For each $I\in\cat D$, there is a prone morphism $p_I:F_{k_I}(DI) \to DI$ over $k_I$, and there is a vertical limit cone $k'_I:\lim_{I\in\cat D} F_{k_I}(DI) \to F_{k_I}(DI)$. That $\lim_{I\in\cat D} F_{k_I}(DI)$ is $\lim D$ follows from the fact that any commutative cone over $D$ will first factor uniquely through the prone morphisms $p_I F_{k_I}(DI) \to DI$, and then through $k'$. By the construction of this limit $F\lim D = \lim FD$.
\end{proof}

The second one concerns bifibred categories.

\begin{remark}[adjunction] For every bifibred category $F:\cat E\to\cat B$ and every arrow $f:X\to Y$ in $\cat B$, the reindexing functor $(F^{op})_f: F_X^{op} \to F_Y^{op}$ has a dual $f_! :F_X \to F_Y$: the \xemph{coindexing functor}. This coindexing functor satisfies $f_!\dashv F_f$. The reason is that if $s:X\to f_!(X)$ is supine, $p:F_f(Y)\to Y$ is prone and $Fp = Fs = f$, then for every $g:f_!(X) \to Y$ there is at most one $h:X\to F_f(Y)$ such that $g\circ s= p\circ h$ and vice versa.
\[ \xymatrix{
X \ar[r]^s \ar[d]_h & f_!(X) \ar[d]^g \\
F_f(Y) \ar[r]_p & Y
} \]\label{coindexing}
\end{remark}

\subsection{Fibred locales}
Our realizability models are fibred categories that have the same structure as the subobject fibrations of regular and Heyting categories. We single those out here.

\begin{definition} A bifibration $F:\cat F\to\cat B$ satisfies the \xemph{Beck-Chevalley condition} if for each pullback square $f\circ g= h\circ k$, $F_k h_! \simeq  g_! F_f$. Here $g_!$ and $h_!$ are the \xemph{coindexing functors} from remark \ref{coindexing}.
\[ \begin{array}{c}\xymatrix{
\bullet \ar[r]^g \ar[d]_f \ar@{}[dr]|<\lrcorner & \bullet \ar[d]^k \\
\bullet \ar[r]_h & \bullet
}\end{array} \Longrightarrow \begin{array}{c}\xymatrix{
\bullet \ar[r]^{g_!} \ar@{}[dr]|\simeq & \bullet  \\
\bullet \ar[r]_{h_!} \ar[u]^{F_f} & \bullet \ar[u]_{F_k}
}\end{array} \]
If the Beck-Chevalley conditions holds, the bifibred category \xemph{has indexed coproducts}. The symbols $\coprod_f$, $\sum_f$ or in our case $\exists_f$ then stand for the coindexing functor $f_!$.

A fibred category $F$ \xemph{has finite products} if each of its fibres has them and if the reindexing functors preserve them. In other words: if it has limits of shape $\cat D$ where $\cat D$ is any finite discrete category. A bifibration $F:\cat E\to\cat B$ with finite products satisfies the \xemph{Frobenius condition} if for each arrow $f$, if the canonical map $\exists_f(X\times F_fY) \to \exists_fX \times Y$ is an isomorphism. We get the canonical map from the adjunction $\exists_f\dashv F_f$ that every bifibration has and from the fact that $F_f$ preserves finite products.

A \xemph{fibred locale} is a bifibration $F:\cat F\to\cat B$ 
\begin{itemize}
\item That has indexed coproducts,
\item That has finite products and satisfies the Frobenius condition,
\item That is a faithful functor, so the fibres are preordered sets.
\end{itemize}
A morphism of fibred locales is a morphism of bifibrations that preserves finite products.
\end{definition}

\begin{remark} Faithful fibrations have all equalizers, because any parallel pair of vertical arrows is equal. Therefore fibred locales have all finite limits. \end{remark}

The properties of fibred locales make sure that they are actually fibred preordered sets that have joins indexed over objects in $\cat B$ and finitary meets that distribute over these joins because of the Frobenius condition. These properties characterize \xemph{locales} inside any topos, and therefore we call these structures `fibred locales'.

\begin{example}[subobject bifibration] The subobject bifibration of a regular category is a fibred locale. \end{example}

\begin{definition} A \xemph{complete fibred Heyting algebra} is a fibred locale $F$, where the fibres are Heyting algebras and where the reindexing functors $F_f$ preserve implication and have right adjoints $\forall_f$. A \xemph{Heyting morphism} of complete fibred Heyting algebras is a morphism of fibred locales that preserves joins and right adjoints. Therefore, they are fibred morphisms of Heyting algebras that also preserve indexed meets.
\end{definition}

\begin{remark} We do not assume that fibred Heyting algebras are antisymmetric, because that property does nothing for the theory we develop here. Every fibred Heyting algebra is equivalent to an antisymmetic one, though, because we work with small categories. \end{remark}

\newcommand\oftype{\mathord:}
\begin{remark} We now have a structure for the interpretation of a first order language. Let $F:\cat F\to\cat B$ be a complete fibred Heyting algebra over a category with finite products. The objects of the base $\cat B$ are types, the morphisms are terms, and the objects of $\cat F$ are predicates, where $F$ maps each predicate to the type it applies to. The Heyting algebra structure of the fibres allows us to interpret propositional logic. The diagonal map $\delta:X\to X^2$ provides an equality predicate: $=_X$ is $\exists_\delta(\top_X)$; the projection $\pi_0:X\times Y \to X$ provides quantification: $\forall x\oftype X.\phi$ is $\forall_{\pi_0}(\phi)$ and $\exists x\oftype X.\phi$ is $\exists_{\pi_0}(\phi)$. The elements of the terminal fibre $F_\termo$ are truth values, and $F\models p$ for $p\in F_\termo$ if $p$ is a terminal object.

Note that if we restrict to the fragment with $=,\land,\exists$ called \xemph{regular logic}, we can already give a sound interpretation in any fibred locale. \label{models}
\end{remark}

This is what our realizability model is going to look like.

\section{Order partial applicative structures}
In this section we define the structure of the object of realizers and the \xemph{filters} that determine validity in our realizability models. To get models for which first order intuitionistic logic is sound, we need \xemph{combinatory completeness}. We will define these concepts and give some examples.

\subsection{Order partial applicative structure}
The structure of the object of realizers, which could live in any Heyting category, is the following.

\begin{definition} Let $\cat H$ be a Heyting category. An \xemph{order partial applicative structure} in $\cat H$ is an object $A$ with a preorder $\leq$ and a partial binary operator $(x,y)\mapsto xy:A^2\partar A$ called \xemph{application}. We indicate that a pair $(x,y)\in A\times A$ is in the domain of the application operator by writing $xy\converges$. The application operator must have the following property: if $x\leq x'$, $y\leq y'$ and $x'y'\converges$, then $xy\converges$ and $xy\leq x'y'$.

For each order partial applicative structure $A$ in $\cat H$, a \xemph{filter} is a subobject $C\in \Sub(A)$ that is closed under application, and upward closed for $\leq$.
\end{definition}

\begin{remark}[naming] If $\leq$ is $=$ (the discrete order), then we call the structure a \xemph{partial applicative structure}; if the application operator is total, we call the structure an \xemph{order applicative structure}; an \xemph{applicative structure} is just an object with a binary operator. \end{remark}

\begin{remark} We could work with a more liberal definition of filter, where $C\in\Sub(A)$ is a filter if for all $x,y\in C$ such that $xy\converges$ there is a $z\in C$ such that $z\leq xy$. Let's call these sets \xemph{prefilters} for now. We work towards a realizability relation that is downward closed: if $x\in A$ realizes a proposition $p$, then so does any $y\leq x$. The filter determines validity: $p$ is valid if there is a $x\in C$ that realizes it. For this definition of validity, prefilters are just as sound as filters, because each prefilter $C$ realizes the same set of propositions as the least filter $D$ such that $C\subseteq D$. But this also means that every realizability interpretation with a prefilter is equivalent to a realizability interpretation with a filter. Therefore, we prefer the more restrictive definition we gave above. \end{remark}

We immediately introduce realizability for partial functions $A^n\partar A$ relative to a filter $C$.

\begin{definition} Using $x\vec y$ to denote the repeated application $((xy_1)\dots)y_n$, we define the object of realizers of a partial function $f:A^{n+1}\partar A$ of any arity $n+1$ as follows.
\[ \db f = \{ r\in A | (\forall \vec x\in A^n. r\vec x\converges)\land(\forall \vec y\in \dom f. r\vec y\converges\land r\vec y \leq f(\vec y)) \} \]
So $\db f$ is internally the object of $r\in A$ such that $r\vec x$ is defined for all $\vec x\in A^n$, and $r\vec y$ is defined for all $\vec y\in\dom f$, and $r\vec y\leq f(\vec y)$ for all those $\vec y$. A filter $C$ \xemph{realizes} or \xemph{represents} a partial function $A^n \partar A$, if $\db f$ intersects $C$, i.e., if $C\cap \db f$ is \xemph{inhabited} or \xemph{globally supported}. \label{realizability}
\end{definition}

There is a particular class of functions filters must realize, if we want to get a sound realizability interpretation.

\begin{definition}[combinatory completeness] The class of \xemph{partial combinatory functions} is the set of all partial functions $A^n\partar A$ of any arity $n\in\N$ that can be constructed from projections $\vec x\mapsto x_i$ by pointwise application. In other words, it is the least set of partial functions that is closed under composition, contains all projections of Cartesian powers of $A$ and the partial application operator.

A filter is \xemph{combinatory complete} if it realizes all partial combinatory functions. An order partial applicative structure is combinatory complete if it has a combinatory complete filter. Combinatory complete order partial applicative structures are also called \xemph{order partial combinatory algebras}, where we drop `partial' application is total. If $\leq$ is $=$, then we have a \xemph{partial combinatory algebra}. \label{combinatory completeness} \end{definition}

\begin{remark} Every partial combinatory function is determined by a polynomial in a single binary operator. \end{remark}

\begin{remark} Partial combinatory algebras come from Feferman's \cite{MR0409137}. By one of the theorems in that paper a filter is combinatory complete when it realizes two partial applicative functions, namely $k(x,y) = x$ and $s(x,y,z)=xz(yz)$.
Feferman actually put an stronger condition on partial combinatory algebras: there is a realizer $r$ for each partial combinatory $f:A^n\partar A$ such that for all $\vec x\in A^n$ if $r\vec x\converges$, then $\vec x\in\dom f$. For realizability interpretations, it does not matter if $f(\vec x)$ is undefined when $r\vec x\converges$, so we work with this weaker condition, which is sometimes called \xemph{weak combinatory completeness}. Order partial combinatory algebras were introduced as $\leq$-PCAs in \cite{MR1443487} and further developed as ordered PCAs in \cite{MR1981211}. Also see \cite{MR2479466}.
\end{remark}

\subsection{Preservation}\label{preservation}
We show that finite limit preserving functors preserve order partial applicative structures and filters and that regular functors preserve combinatory completeness.

\begin{lemma} Order partial applicative structure and filters can be defined using only finite limits, and therefore finite limit preserving functors preserve them.
\label{preserving1} \end{lemma}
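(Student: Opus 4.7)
The plan is to unpack the definitions of an order partial applicative structure and of a filter into data (objects, monomorphisms, morphisms) and axioms, and to observe that every piece of data and every axiom can be written purely in terms of finite limits. Any functor preserving finite limits then carries the data across and preserves the axioms.

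First I would enumerate the data. An order partial applicative structure in $\cat H$ is an object $A$ equipped with two subobjects $R \hookrightarrow A \times A$ (the preorder $\leq$) and $D \hookrightarrow A \times A$ (the domain of application), together with a morphism $\alpha : D \to A$. A filter adds one more subobject $C \hookrightarrow A$. Products and monomorphisms are preserved by any finite-limit-preserving functor $F$, so $FA$ automatically carries $FR$, $FD$, $F\alpha$ and $FC$ of the same shapes.

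Second, I would reduce each axiom to a statement of the form ``a morphism $T \to A^n$, built from the data via finite limits, factors through a subobject $S \hookrightarrow A^n$''. Such a factorization exists if and only if the pullback $T \times_{A^n} S \to T$ is an isomorphism, and this property is manifestly preserved by any finite-limit-preserving functor. Reflexivity of $R$ is the factorization of the diagonal $\Delta_A : A \to A \times A$ through $R$; transitivity is the factorization of the canonical projection $R \times_A R \to A \times A$ through $R$. For monotonicity of application I form the finite-limit object
\[ E = \{(x,y,x',y') \mid x \leq x',\ y \leq y',\ (x',y') \in D\}, \]
and demand first that the projection $E \to A \times A$, $(x,y,x',y') \mapsto (x,y)$, factor through $D$, and then, using the composites $E \to D \xrightarrow{\alpha} A$ producing $xy$ and $x'y'$, that the induced map $E \to A \times A$ factor through $R$. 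Upward closure of $C$ and closure of $C$ under application follow the same pattern, pulling back along projections from finite limits built out of $C$, $R$ and $D$.

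The only subtlety I foresee is that application is partial, so the finite-limit steps have to be ordered so that a term like $xy$ appears only after we have certified that $(x,y)$ lies in $D$; the layered formulation above handles this, and is the main reason I want to list the axioms as factorizations in a definite sequence rather than as a single simultaneous condition. With that bookkeeping in place, every defining condition is a finite-limit assertion, and the preservation conclusion is immediate from the hypothesis on $F$.
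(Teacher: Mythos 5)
Your proof is correct and takes essentially the same route as the paper: both unpack the order, domain, application map and filter as subobjects/morphisms and recast every axiom as a finite-limit condition (reflexivity via the diagonal, transitivity via $R\times_A R$, monotonicity and filter closure via pullbacks involving $D$, $R$, $C$), so that a finite-limit-preserving functor carries the whole structure across. The only cosmetic difference is that you certify each axiom by ``the pullback projection onto the test object is an isomorphism,'' whereas the paper records explicit witnessing arrows (e.g.\ $r$, $t$, $d$, $p$, $c$, $u$) whose equations any functor preserves; these are interchangeable formulations of the same factorization conditions.
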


\newcommand\roso{\mathord\leq}
\begin{proof} For each order partial applicative structure $A$ the partial order $\leq$ and the domain of application $D$ are subobjects of $A^2$. We express their properties by demanding that there are certain arrows between these objects and pullbacks of these objects. We use $\pi_0$ and $\pi_1$ to denote the two projections $A^2\to A$:
\begin{itemize}
\item reflexivity is a map $r:A\to \roso$ that satisfies $\pi_0(r(x)) = \pi_1(r(x)) = x$;
\item let $X_1 = \{ (i,j)\in \roso^2 | \pi_0(i) = \pi_1(j) \}$; transitivity is a map $t:X_1\to A$ that satisfies $\pi_0(t(i,j)) = \pi_0(j)$ and $\pi_1(t(i,j)) = \pi_1(i)$;
\item let $X_2 = \{ (i,j,k) \in \roso\times D | \pi_1(i) = \pi_0(k), \pi_1(j) = \pi_1(k), \}$; downward closure of the domain is a map $d:X_2 \to D$ that satisfies $\pi_0(d(i,j,k)) = \pi_0(i)$ and $\pi_1(d(i,j,k)) = \pi_0(j)$; that application $\alpha:D\to A$ preserves the ordering $p: X_2\to \roso$ is a map that satisfies $\pi_0(p(i,j,k)) = \alpha(\pi_0(i),\pi_0(j))$ and $\pi_1(p(i,j,k)) = \alpha(\pi_1(i),\pi_1(j))$.
\end{itemize}

The objects $X_1$ and $X_2$ are pullbacks, so a functor that preserves finite limits will preserve them. All functors preserve the equations that $r$,$t$,$d$ and $p$ satisfy. Therefore finite limit preserving functors preserve order partial applicative structures.

For each filter $C$ of $A$ we have the following maps:
\begin{itemize}
\item closure under application is a map $c:C^2\cap D \to C$ that satisfies $c(x,y) = xy$;
\item let $Y_1 = \{ (i,j)\in C\times \roso| i=\pi_0(j)\}$; upward closure is a map $u:Y_1\to C$ that satisfies $u(x,y) = \pi_1(j)$.
\end{itemize}
Once again the object $Y_1$ is a pullback, and any functor preserves the equalities satisfied by $c$ and $u$. So finite limit preserving functors preserve filters too.
\end{proof}


\begin{lemma} There is a regular theory whose class of models is the class of ordered partial applicative structure with combinatory complete filters. \end{lemma}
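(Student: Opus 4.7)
The plan is to build on Lemma \ref{preserving1}, which already gives finite-limit (hence regular) axioms for an order partial applicative structure $(A,\leq,D,\alpha)$ equipped with a filter $C$. What remains is to axiomatize combinatory completeness in regular logic; by Feferman's theorem, recalled in the preceding remark, this reduces to asserting that $C$ contains realizers of the two functions $k(x,y)=x$ and $s(x,y,z)=xz(yz)$.

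The main difficulty is that ``$r$ realizes $k$'' unpacks to $(\forall x.\,rx\converges)\wedge(\forall x,y.\,rxy\converges \wedge rxy\leq x)$, which has universal quantifiers nested inside the existential ``$\exists r\in C$'' and therefore is not regular. To handle this I would extend the signature of the previous lemma with two new unary predicates $P_k,P_s\subseteq A$ and assert their inhabitation by the regular sequents $\top\vdash_\emptyset \exists r.\,P_k(r)$ and $\top\vdash_\emptyset \exists r.\,P_s(r)$, while forcing every element of $P_k$ to be a realizer of $k$ lying in $C$ via the regular sequents
\[ P_k(r)\vdash_r C(r), \qquad P_k(r)\vdash_{r,x} (r,x)\in D, \]
\[ P_k(r)\vdash_{r,x,y} ((r,x),y)\in D \wedge \alpha(\alpha(r,x),y)\leq x, \]
and analogously for $P_s$, where now the side condition $(x,z),(y,z),(\alpha(x,z),\alpha(y,z))\in D$ appears on the left of the corresponding sequent (using auxiliary variables for the partial terms $\alpha(x,z)$ and $\alpha(y,z)$ if one wishes to stay strictly within relational regular logic). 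The crucial observation is that the universal quantifiers over $x,y,z$ sit in the context of a sequent, which regular logic accommodates for free; only the unbounded existentials over $r$ need special treatment, and this is handled by the axioms $\top\vdash\exists r.\,P_k(r)$ and $\top\vdash\exists r.\,P_s(r)$, which are themselves regular.

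A model of the resulting theory in a regular category $\cat C$ consists of an order partial applicative structure with a filter $C$ together with subobjects $P_k,P_s\subseteq A$ satisfying the axioms; by the axioms $P_k$ is contained in $C\cap\db k$ and globally supported, hence $C\cap\db k$ is globally supported, and similarly for $s$, so $C$ is combinatory complete by Feferman's theorem. Conversely, any order partial combinatory algebra with a combinatory complete filter in $\cat C$ becomes a model of the theory upon setting $P_k:=C\cap\db k$ and $P_s:=C\cap\db s$. Thus the class of models, after forgetting $P_k$ and $P_s$, is exactly the class claimed. The only obstacle I anticipate is this presentational matter of matching up models modulo the auxiliary predicates; no real mathematical content beyond Feferman's theorem and Lemma \ref{preserving1} is needed.
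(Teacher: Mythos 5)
Your argument is correct, and its core device is the same as the paper's: write the order, domain, application and filter conditions as regular sequents, and then tame the non-regular statement ``$C$ contains a realizer'' (a $\forall$ nested inside an unbounded $\exists$) by adjoining auxiliary predicates that are asserted to be inhabited, while regular sequents -- with the universally quantified variables sitting in the context -- force their elements to be realizers meeting $C$. The one genuine difference is how combinatory completeness is encoded. You invoke the $\mathbf{k}$--$\mathbf{s}$ basis (Feferman \cite{MR0409137}, as recalled in the remark following definition \ref{combinatory completeness}), so two auxiliary predicates $P_{\mathbf k},P_{\mathbf s}$ suffice; the paper instead adds, for \emph{every} partial combinatory $f$, a predicate $\ulcorner f\urcorner$, the inhabitation axiom $\exists x.\,\ulcorner f\urcorner(x)\land C(x)$, and a chain of sequents saying that any $a$ with $\ulcorner f\urcorner(a)$ applies to $x_1,\dots,x_n$ yielding some $y\leq f(\vec x)$. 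Your version buys a theory with only finitely many extra predicates and axioms beyond the base, at the price of leaning on the basis theorem; the paper's schema avoids that reliance and makes ``realizes every partial combinatory function'' literally part of the axioms. The caveat you flag at the end -- that models carry a non-canonical interpretation of the auxiliary predicates, so the advertised class of models is recovered only after forgetting them -- applies equally to the paper's theory and is exactly what its follow-up remark concedes (``though not necessarily in a unique way''), so it is not a defect peculiar to your route.
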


\newcommand\convergesto\downarrow
\begin{proof} We now easily write down a regular theory of ordered partial applicative structures. We use a binary relation $\leq$ for the ordering, but write $xy\converges$ instead of $D(x,y)$ or $x\mathrel D y$ to indicate that $(x,y)$ is in the domain $D$ of the application operator. For the application operator itself, we use juxtaposition, so $xy$ is the application of $x$ to $y$.
\begin{align*}
&\vdash a\leq a & a\leq b\land c\leq a &\vdash c\leq b &
a\leq b\land c\leq d \land bd\converges &\vdash ac\converges\land ac\leq bd
\end{align*}

A filter $C$ becomes a predicate that has to satisfy:
\begin{align*} C(a)\land C(b)\land ab\converges &\vdash C(ab) & C(a)\land a\leq b\vdash C(b) \end{align*}

\newcommand\pred[1]{\ulcorner #1 \urcorner}
We express that $C$ represents any partial combinatory function $f:A^n\partar A$ by extending this theory as follows. We add a predicate $\pred f$ to the language and also add an axiom that says that $\pre f$ intersects $C$, namely $\exists x.\pred f(x)\land C(x)$. We add a list of axioms to say that if $\pred f(a)$ then $((ax_1)\dots)x_n \convergesto y$ for some $y\leq f(\vec x)$:
\begin{align*}
\pred f(a) &\vdash \exists y_1. ax_1\convergesto y_1\\
\pred f(a), ax_1\convergesto y_1 &\vdash \exists y_2. y_1x_2\convergesto y_2 \\
&\ \vdots \\
\pred f(a), ax_1\convergesto y_1, y_1x_2\convergesto y_2,\dotsm &\vdash \exists y_n. y_n\leq f(\vec x) \land y_{n-1}x_n\convergesto y_n
\end{align*}

A model $A$ for these axioms is an order partial applicative structure with a combinatory complete filter and therefore an order partial combinatory algebra.
\end{proof}

\begin{remark} Every order partial combinatory algebra and every combinatory complete filter is a model for this theory, though not necessarily in a unique way. For each partial combinatory $f$ we may interpret the related predicate $F$ as any inhabited subobject of the object of realizers $\db f$. \end{remark}

\begin{corollary} Regular functors preserve combinatory completeness. \end{corollary}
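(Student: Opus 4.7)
The plan is to reduce the corollary to the preceding lemma together with the standard fact that regular functors preserve regular-logic semantics. Given a regular functor $F:\cat H\to\cat H'$ and a combinatorially complete order partial applicative structure $(A,C)$ in $\cat H$, I aim to conclude that $(FA, FC)$ is combinatorially complete in $\cat H'$.

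First, Lemma~\ref{preserving1} already guarantees that $F$ carries the order partial applicative structure on $A$ and its filter $C$ to an order partial applicative structure $FA$ with filter $FC$ in $\cat H'$, since everything there was defined using only finite limits. So only combinatory completeness of $FC$ remains to be established.

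Next, I invoke the preceding lemma: combinatory completeness of $(A, C)$ is witnessed by a choice, for each partial combinatory function $f$, of a subobject $\pred f\hookrightarrow A$ making $(A, C, \{\pred f\}_f)$ a model of a regular theory $T$. The axioms of $T$ are built from atoms using only $=$, $\top$, $\land$, and $\exists$. Since $F$ preserves finite limits (giving $=, \top, \land$) and regular-epi--mono factorizations (giving images and hence $\exists$), it preserves the interpretation of every regular-logic formula and the validity of every regular-logic sequent (cf.\ Remark~\ref{models}). Thus $(FA, FC, \{F\pred f\}_f)$ is a model of $T$ in $\cat H'$, which is precisely the statement that $FC$ is a combinatorially complete filter of $FA$.

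The one point that needs a line of bookkeeping is the identification of the predicate symbols of $T$ in the two structures: $T$ has one predicate $\pred f$ per partial combinatory function, and these functions are defined schematically as polynomial expressions in projections and application. Because $F$ preserves products (hence projections) and the application operator, the same polynomial expression evaluates to a partial combinatory function on both $A$ and $FA$, so a witness in $C$ for the $A$-version maps under $F$ to a witness in $FC$ for the $FA$-version. I expect this syntactic correspondence to be the only delicate step; everything else is the routine preservation of regular logic by regular functors.
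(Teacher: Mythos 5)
Your proposal is correct and follows exactly the route the paper intends: the corollary is stated as an immediate consequence of the preceding lemma (combinatory completeness as a regular theory, with the remark that $\pred f$ may be interpreted by any inhabited subobject of $\db f$), plus the standard fact that regular functors preserve models of regular theories, together with Lemma~\ref{preserving1} for the underlying structure. The paper leaves this unproved precisely because it is this routine argument, which you have spelled out accurately.
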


\newcommand\pow{\mathbf P}
\begin{remark} The class of order partial combinatory algebras in the topos $\Set$ of sets is closed under filter products. 
For any set $\kappa$ a filter $\phi$ on the power set $\pow \kappa$ -- which with $\subseteq$ and $\cap$ is an order combinatory algebra -- induces a regular functor $F_\phi:\Set/\kappa \to\Set$. Let $F_\phi(f:X\to \kappa)$ be the set $\Sigma(f)$ of sections of $f$ modulo the equivalence relation $\{(x,y)\in \Sigma(f)| \{i\in \kappa|x(i)=y(i)\}\in \phi\}$. A family of order partial combinatory algebras indexed over $\kappa$ is the same thing as an order partial combinatory algebra in $\Set/\kappa$; if $A = (A_i)_{i\in\kappa}$ is an order partial combinatory algebra in $\Set/\kappa$, then $F_\phi A$ is an order partial combinatory algebra in $\Set$. \label{filter products of OPCAs}
\end{remark}

\subsection{Examples}
We include a list of examples of order partial combinatory algebras.

\newcommand\Kone{\mathcal{K}_1}
\begin{example}[Kleene's first model] According to Kleene's normal form theorem, there is a recursively decidable predicate $T:\N^3\to 2$ and a primitive recursive function $U:\N\to\N$ such that every partial recursive function $f$ is equivalent to $n\mapsto U(\mu x.T(e,n,x))$ for some $e\in \N$. \xemph{Kleene's first model} $\Kone$ is the partial applicative structure whose application satisfies $\alpha(e,n) = U(\mu x.T(e,n,x))$ for all $e,n\in \N$ for which this is defined.

Any Heyting category that has a natural number object has its own version of this partial combinatory algebra, as we will se in chapter \ref{apps}. No other filter then the whole of $\Kone$ represents all partial recursive functions. The partial \xemph{combinatory} functions may be a strict subset of the partial recursive functions, however, and therefore a non-trivial combinatory complete filter may exist. \label{Kone}\end{example}

\begin{example}[Kleene's second model] 
There is a partial function $u:\N^\N\times \N^\N \partar \N^\N$ that is continuous for the product topology, such that for each continuous $f:\N^\N \partar \N^\N$ whose domain is a countable intersection of open sets, there is an $x\in \N^\N$ such that for all $y\in \N$, $u(x,y)\converges$ if and only if $f(y)\converges$, and if $f(y)\converges$ then $u(x,y) = f(y)$. With this operator, $\N^\N$ is a partial combinatory algebra. Total recursive functions form a combinatory complete filter in this algebra.
This algebra and filter are used in \cite{MR0176922} for studying intuitionistic logic. \label{Ktwo} 
\end{example}

\begin{example}[meet semilattices] Every meet semilattice is an order combinatory algebra. In this case the definition of `filter' in this thesis coincides with the traditional order theoretical one, which is the reason we have chosen this name. We just saw the special case of power sets in remark \ref{filter products of OPCAs}.
\end{example}

\newcommand\la{$\lambda$}
\newcommand\FV{\mathrm{FV}}
\newcommand\rto{\to^*}
\begin{example}[\la-terms] Various order partial combinatory algebras consist of \la-terms from the \la-calculus. A \xemph{\la-term} $M$ is either a variable symbol $x$,$y$,$z$\dots from some infinite set of variable symbols, an application of \la-terms $(NP)$ or an abstraction $(\lambda x.N)$. As short hand for $\lambda x_1.(\lambda x_2.\dots(\lambda x_n.M))$ we write $\lambda x_1x_2\dots x_n.M$; similarly, $M_1M_2\dotsm M_n$ is short for $(((M_1M_2)\dotsm)M_n)$.

Together they form the set $\Lambda$ of all \la-terms.  We define the set $\FV(M)$ of \xemph{free variables} of a \la-term $M$ recursively over the set of all terms: $\FV(x) = \{x\}$, $\FV(MN) = \FV(M)\cup\FV(N)$ and $\FV(\lambda x.N) = \FV(N)-\{x\}$. If $\FV(M)=\emptyset$ then $M$ is a \xemph{closed} \la-term.

\xemph{Variable substitution} is an operation on \la-terms that we define as follows.
\begin{itemize}
\item $x[N/x] = N$ but $y[N/x]=y$ if $y\neq x$ for all variable symbols $x$ and $y$, and terms $N$;
\item $(\lambda x.M)[N/x]=\lambda x.M$ but $(\lambda y.M)[N/x]=\lambda y.(M[N/x])$ if $y\neq x$ for all variable symbols $x$ and $y$, and terms $M$ and $N$ such that $y\not\in\FV(N)$.
\item $(MN)[P/x]=(M[P/x])(N[P/x])$ for all terms $M$, $N$ and $P$.
\end{itemize}
Juxtaposition acts as an application operator. To get an order applicative structure we preorder \la-terms.
The \xemph{reduction preorder} on \la-terms is the least preorder $\rto$ that satisfies:
\begin{itemize}
\item $\alpha$-equivalence: $\lambda x.M \rto \lambda y.M[y/x]$ if $y\not\in \FV{\lambda x.M}$;
\item $\beta$-reduction: $(\lambda x.M)N \rto N[M/x]$ if $\FV{N[M/x]} = \FV{(\lambda x.M)N}$;
\item head reduction: if $M\rto M'$, then $MN\rto M'N$;
\item tail reduction: if $N\rto N'$, then $MN\rto MN'$.
\end{itemize}
Of each (partial) combinatory arrow $f:\Lambda^n \to \Lambda$, the term $\lambda x_1\dots x_n.f(x_1,\dotsc, x_n)$ is a realizer. Therefore this preorder $\rto$ makes the set of closed \la-terms a combinatory complete filter and the set of all \la-terms an order combinatory algebra.

We can construct other order partial combinatory algebras by adding reductions rules like the following.
\begin{itemize}
\item $\eta$-expansion: $M\leq \lambda x.Mx$ unless $x\in\FV(M)$.
This rule together with $\beta$-reduction implies $\alpha$-equivalence. Also, if $y\not\in \FV(M)$, then $My\leq N$ if and only if $M\leq\lambda y.N$, turning abstraction and application into adjoint functors.
\item $\zeta$-reduction: if $M\leq M'$, then $\lambda x.M\leq \lambda x.M'$.
With this reduction rule, partial combinatory functions get an up to $\alpha$-equivalence maximal realizer.
\end{itemize}
\end{example}

\newcommand\comb\mathbf
\begin{example}[combinatory logic] We can build simpler term models, based on combinatory logic. Let $T$ be the set of binary trees, with leaves in the set $\{\comb b, \comb c, \comb k, \comb w\}$. If $x$ and $y$ are binary trees, we let $xy$ be the tree whose left subtree is $x$ and whose right subtree is $y$. We use a similar convention as with \la-terms: $x_1\dotsm x_n = (x_1\dotsm)x_n$.

We order this set of trees with the least preorder $\leq$ that satisfies $\comb bxyz  \leq x(yz)$, $\comb cxyz \leq xzy$, $\comb kxy\leq x$, $\comb wxy\leq xyy$ and if $x\leq x'$ and $y\leq y'$ then $xy\leq x'y'$. The set $T$ with this ordering and with the operator $(x,y)\mapsto xy$ is an order combinatory algebra.

Instead of Curry's $\set{\comb b, \comb c, \comb k, \comb w}$ we can use another combinatory basis, e.g. Feferman's $\set{\comb k,\comb s}$ where $\comb sxyz\leq xz(yz)$, and $\comb k$ is as above.
\label{combinatorylogic}
\end{example} 

\begin{example}[graph models]
Graph models are a class of models for the \la-calculus. The model is a powerset $\pow X$ with a topology that is constructed as follows. We take a subset $T\subseteq \pow X$ that is closed under finite unions and contains all finite subsets of $X$. 
Usually $T$ simply is the set of finite subsets, but the constructions below work without this assumption. An open set $U\subseteq \pow X$ is an upward closed set, such that for each $u\in U$ there is a $t\in U\cap T$ such that $t\subseteq u$. A continuous function $f:\pow X \to \pow X$ now has to satisfy: $f(x) = \bigcup\{ f(t)| t\in T\cap \pow x \}$.

If there is an injective map $\mu: T\times X\to X$ we can define a binary operator on $\pow X^2 \to \pow X$.
\[ xy =\{b\in X| \exists y'\in \pow y\cap T. \mu(y',b)\in x\} \]
This operator is continuous, and therefore the partial combinatory functions defined with it are continuous too. For each continuous $f:\pow X^{n+1}\to\pow X$ let $\lambda f(\vec x) = \{\mu(t,y)|y\in f(\vec x,t) \}$. This is a continuous function that satisfies $\lambda f(\vec x)y = f(\vec x,y)$. By iterating this \la-operator, we get a realizer for every continuous function.

We can think of the elements of $X$ as `types', each $\xi\in X$ determining an (upward closed) subset $U_\xi$ of $\pow X$: $U_\xi = \set{u\in \pow X|\xi\in u}$. We can choose which types are inhabited in the realizability model in the following way. Any $F\subseteq X$ such that $FF\subseteq F$ determines a filter $\pow F\subseteq \pow X$. This filter $\pow F$ is combinatory complete if and only if $\lambda xy.x$, $\lambda xyz.zx(yz)\subseteq F$. The exact sense in which the members of $F$ are inhabited in the realizability model should become clear in the remainder of this chapter.
\end{example}

\section{Realizability fibrations} \label{realizability fibrations}
In this part we construct a complete fibred Heyting algebra out of an order partial combinatory algebra. We split the construction into two parts. First we show how to construct a complete fibred Heyting algebra out of a \xemph{fibred} order partial combinatory algebra that satisfies a completeness property. Then we construct a suitable fibred order partial combinatory algebra out of an ordinary (internal) one. Along the way, we find some generalizations of realizability.

\subsection{Complete fibred partial applicative lattices}\label{fcpal}
In this subsection, we show how to construct a complete fibred Heyting algebra out of a complete fibred partial applicative lattice. This is a partial applicative structure in the category of fibred preorders that have all indexed meets and joins. The fibres are applicative structures in the topos of sets and this allows us to use constructions that are available it that topos.

\newcommand\arrow\Rightarrow
\begin{definition} A \xemph{complete fibred lattice} is a fibred category $F:\cat F\to\cat B$ that is a faithful functor, where the fibres are lattices with top and bottom elements and where the reindexing functors have both left and right adjoints that satisfy the Beck-Chevalley condition. A \xemph{complete fibred partial applicative lattice} is a complete fibred lattice that is also a fibred order partial applicative structure, i.e., there is a \xemph{fibred application operator} determined by a \xemph{partial functor} $(X,Y)\mapsto XY:\cat F\times_\cat B\cat F \partar\cat F$ (i.e., a finberd functor defined on a fibered subcategory).
This functor should preserve all joins in each variable separately:
\begin{itemize}
\item always $X\bot\simeq\bot Y\simeq\bot$;
\item if $XZ\converges$ and $YZ\converges$ then $(X\vee Y)Z \simeq XZ\vee YZ$;
\item if $XY\converges$ and $XZ\converges$ then $X(Y\vee Z) \simeq XY\vee XZ$;
\item for every arrow $f$ in $\cat B$, if $XF_f(Y)\converges$ then $\im f(X)Y \simeq \im f(XF_f(Y))$;
\item for every arrow $f$ in $\cat B$, if $F_f(X)Y\converges$ then $X\im f(Y) \simeq \im f(F_f(X)Y)$.
\end{itemize}
Furthermore, in each fibre there is a (total) binary operator $\arrow$ such that if $XY\converges$, then $\cat F(X,Y\arrow Z)\simeq \cat F(XY,Z)$ and else $\cat F(X,Y\arrow Z)=\emptyset$. Reindexing preserves this \xemph{arrow operator}.
\end{definition}

\begin{remark} In the rest of this thesis, we never actually use the fibred meets in the definition above. This is why we don't need a Frobenius condition on them. In fact, the complete fibred applicative lattices could be introduced as complete fibred distributive lattices, where meets have been replaced by an operator that can be nonidempotent, asymmetric, noncommutative, nonassociative and partial (nontotal). We will stick with complete fibred lattices, because they already are more general than we need anyway. \end{remark}

\begin{example} Every complete fibred Heyting algebra is a complete fibred lattice, and with $\land$ as application operator, complete fibred Heyting algebras are complete fibred partial applicative lattices. The difference between complete fibred Heyting algebras and complete fibred lattices is \xemph{Cartesian closure}: complete fibred Heyting algebras have a Heyting implication in each fibre that is preserved by reindexing. \end{example}

\begin{remark} Since the fibres are lattices, we write $X\leq Y$ to indicate the existence of a vertical arrow $X\to Y$ between two objects of $\cat F$. The condition on the arrow operator can now be written as follows: $X\leq Y\arrow Z$ if and only if $XY\converges$ and $XY\leq Z$.
\end{remark}

\begin{definition} A \xemph{fibred filter} on a complete fibred partial applicative lattice $F:\cat F\to \cat B$ is a full fibred subcategory $\cat C\subseteq \cat F$ such that the fibres $C_X = F_X\cap \cat C$ are filters. The filter is \xemph{closed under indexed meets} if $\forall_f(Z)\in C_{\cod f}$ for every $Z\in C_{dom f}$. A fibred filter is \xemph{combinatory complete} if each of its fibres is.
\end{definition}


The following definition is a construction for a complete fibred Heyting algebra and we prove that in the following lemma.

\begin{definition} Let $F:\cat F\to\cat B$ be a complete partial applicative lattice and let $\cat C$ be a combinatory complete fibred filter that is closed under indexed meets. We define the \xemph{filter quotient} $F/\cat C:\cat F/\cat C \to \cat B$ as follows. The domain $\cat F/\cat C$ has the same objects as $\cat F$. The set of morphisms $\cat F/\cat C(X,Y)$ is the set of pairs $(U\in\cat C,f:UX\to Y)$ modulo the equivalence relation $(U,f)\sim (V,g)$ if $Ff=Fg$. Of course, $F$ induces a map $\cat F/\cat C \to\cat B$, which we call $F/\cat C$.
\end{definition}

\begin{lemma} The filter quotient $F/\cat C$ is a complete fibred Heyting algebra.\label{fcHa} \end{lemma}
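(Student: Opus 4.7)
The plan is to verify the axioms of a complete fibred Heyting algebra for $F/\cat C$ one at a time, using combinatory completeness of $\cat C$ to realize whichever morphism each universal property demands. First I would check that $\cat F/\cat C$ is a well-defined category and that $F/\cat C$ is a faithful fibration. Composition of $[(U,f)]\colon X\to Y$ and $[(V,g)]\colon Y\to Z$ uses a composition combinator $W\in \cat C$ with $WX\leq V(UX)$ --- available because $\cat C$ is combinatory complete --- by first applying $V(f)\colon V(UX)\to VY$, then $g\colon VY\to Z$, and precomposing with the monotonicity map $WX\to V(UX)$. Identities use an $I\in \cat C$ with $IX\leq X$. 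Associativity and unit laws reduce to equalities in $\cat B$ under $F$, which is precisely what the equivalence relation on pairs records. Prone morphisms over $\alpha\colon A\to B$ come from pairing the prone morphism of $\cat F$ with the identity combinator. Within each fibre all candidate morphisms satisfy $Ff=\id$, so the equivalence relation collapses parallel pairs and the fibres are preorders.

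Next I would install the Heyting algebra structure on each fibre. Top, bottom, meets and joins are inherited from $\cat F$; the universal property of a meet $Y\land Z$ is witnessed by a pairing combinator in $\cat C$ together with the lattice pair morphism $\langle f,g\rangle$, and dually a case combinator handles the join, using the distributivity of application over joins built into the definition of a complete fibred partial applicative lattice. Heyting implication in the quotient is defined as $Y\Rightarrow Z \mathrel{:=} Y\arrow Z$; the adjunction $X\land Y\leq Z \iff X\leq Y\arrow Z$ in $F/\cat C$ follows from the arrow adjunction $\cat F(X,Y\arrow Z)\cong \cat F(XY,Z)$ by combining pairing, projection, and an $S$-style combinator with the monotonicity of application along $X\land Y\leq X$ and $X\land Y\leq Y$, to translate realizers between $V(X\land Y)\to Z$ and $(UX)Y\to Z$. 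This step will be the main obstacle, because the arrow operator of $\cat F$ is adjoint to application rather than to the lattice meet, so the adjunction in $F/\cat C$ is not formally the same as the one in $\cat F$ and the combinators of $\cat C$ must bridge the two. The reindexing functors of $\cat F$ preserve meets, joins and the arrow operator and restrict to $\cat C$; this passes to the quotient, so reindexing in $F/\cat C$ is Heyting.

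For the complete structure I would use the adjoints already present in $\cat F$. The coindexing functors $\exists_f$ descend to $F/\cat C$ because application distributes over joins, so a realizer of a morphism over $\dom f$ can be pushed along $f$ by a single composition combinator. The right adjoints $\forall_f$ descend because $\cat C$ is closed under indexed meets: a realizer $U\in \cat C_{\dom f}$ witnessing $f\colon UX\to Y$ in the fibre over $\dom f$ is replaced by $\forall_f(U)\in \cat C_{\cod f}$, and the mate of the adjunction in $\cat F$ then supplies the needed morphism in the fibre over $\cod f$. The Beck-Chevalley and Frobenius conditions are inherited directly from $\cat F$, since the underlying objects and universal arrows of the quotient are those of $\cat F$; only the morphism sets have been enlarged by realizers and then quotiented.
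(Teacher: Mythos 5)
Your overall architecture (fibres first, then reindexing, then the adjoints $\exists_f$, $\forall_f$ using closure of $\cat C$ under indexed meets) matches the paper, and your treatment of the quantifiers is essentially the paper's argument. But the fibrewise step contains a genuine error: the lattice meets and joins of $\cat F$ cannot be inherited as the Heyting structure of the quotient. In $F/\cat C$ a vertical arrow $X\to Y$ exists precisely when $X\arrow Y$ lies in the fibre filter. Given realizers $u\le X\arrow Y$ and $v\le X\arrow Z$ in $\cat C$, the universal property of the inherited meet would require a single $W\in\cat C$ with $WX\le Y$ and $WX\le Z$ simultaneously; no term built by application from $u$, $v$, $X$ achieves this (a pairing combinator produces the coded object $\comb p(uX)(vX)$, which is unrelated to the lattice meet $Y\land Z$), and the filter is closed under application and upward closure but not under lattice meets. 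Dually, since the arrow operator turns joins into meets, a realizer of $(X\vee Y)\arrow Z$ is exactly an element of $(X\arrow Z)\land(Y\arrow Z)$, and nothing forces that meet to meet $\cat C$ even when both conjuncts do; your case combinator has no tag to branch on. The same defect breaks your implication step: with the inherited meet, one direction of $X\land Y\le Z\iff X\le Y\arrow Z$ fails, because from a realizer of $(X\land Y)\arrow Z$ there is no way to manufacture elements of $X\land Y$ out of separate elements of $X$ and $Y$. So the step you flagged as ``the main obstacle'' is not a bookkeeping issue but an actual failure of the approach.

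The paper avoids this by building the propositional connectives from the applicative structure itself, not from the lattice of $\cat F$: top and bottom come from $\comb i\le X\arrow Y$ whenever $X\le Y$; the fibrewise product is the coded pair $X\times Y=\comb pXY$, with $\comb{gk}$ and $\comb g(\comb{ki})$ realizing the projections; the coproduct is the tagged join $X+Y=\comb lX\vee\comb rY$ (the only place the joins of $\cat F$ are used); and implication is the arrow operator, whose adjunction with the coded product is realized by $\comb k$ and $\comb s$. The paper even remarks that the fibred meets of $\cat F$ are never used in the development. To repair your proof, replace the ``inherited meets and joins'' paragraph by this coded structure; the rest of your outline (reindexing preserves application, $\arrow$ and the filter; $\exists_f$ descends via preservation of joins by application; $\forall_f$ descends via closure of $\cat C$ under indexed meets) then goes through as in the paper.
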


\begin{proof} We first show that the fibres are Heyting algebras. 

In the fibres of $F/\cat C$, there is an arrow $X\to Y$ if and only if $X\arrow Y\in C_X$, because of the adjunction between $\arrow$ and application. That $C_X$ is closed under the application of $F_X$ implies that if $U\in C_X$ and $U\arrow V\in C_X$, then $V\in C_X$. Combinatory completeness implies the rest.
\begin{itemize}
\item We have $\comb ix\leq x$ and $\comb bxyz\leq x(yz)$, which imply that the fibres are preorders.
\[ \comb i \leq X\arrow X \quad \comb b(X\arrow Y)(Z\arrow X)\leq (Z\arrow Y) \]
\item Note that $\comb i \leq X\arrow Y$ if $X\leq Y$. For that reason, each fibre has a top and a bottom.
\item We have $\comb kxy\leq x$ and $\comb sxyz\leq xz(yz)$, which means that $\arrow$ behaves like logical implication
\begin{align*}
\comb k &\leq X\arrow (Y\arrow X) &
\comb s &\leq (X\arrow (Y\arrow Z))\arrow ((X\arrow Y)\arrow(X\arrow Z))
\end{align*}
\item We have $\comb gxy=yx$, $\comb pxyz\leq zxy$, so fibres have a meet operator that behaves like conjunction in a Heyting algebra. Let $X\times Y = \comb pXY$.
\begin{align*}
\comb{ki} & \leq X\arrow \top & \comb p &\leq X\arrow (Y\arrow (X\times Y)) \\
\comb{gk} &\leq (X\times Y)\arrow X & \comb g(\comb{ki}) &\leq (X\times Y)\arrow Y
\end{align*}
\item We have $\comb lxyz\leq yx$ and $\comb rxyz\leq zx$, so fibres have a join operator that behaves like disjunction in a Heyting algebra. Let $X+Y = \comb lX\vee\comb r Y$.
\[\begin{array}{c}
\begin{array}{cc} \comb l \leq X\arrow (X+Y) & \comb r \leq Y\arrow (X+ Y)\end{array} \\
\comb p \leq (X\arrow Z)\arrow((Y\arrow Z)\arrow((X+Y)\arrow Z))
\end{array}\]
\end{itemize}
This means that the fibres are Heyting algebras.

Reindexing functors preserve all of this structure, because they preserve both application and $\arrow$ and all members of the fibred filter, because the fibred filter is a fibred subcategory. This leaves the adjoints.

Let $f:X\to Y$ be an arrow of $\cat B$. We have $F_f(\forall_f(Y)\forall_f(Y')) \leq YY'$ and therefore, if $SZ\leq Z'$ for some $S,Z,Z'\in F_X$, then $\forall_f(S)\forall_f(Z)\leq \forall_f(Z')$. Because $\cat C$ is closed under indexed meets, this implies that $\forall_f(Z)\arrow \forall_f(Z')\in C_Y$ if $Z\arrow Z'\in C_X$. Preservation of joins by application gives us $\forall_f(S)\exists_f(Z) \simeq \exists_f(F_f\forall_f(S) Z)$. Now $F_f\forall_f(S) Z\leq SZ \leq Z'$ and therefore $\forall_f(S)\exists_f(Z) \leq \exists_f(Z')$. This proves that $\exists_f(Z)\arrow \exists_f(Z')\in C_Y$ if $Z\arrow Z'\in C_X$.

If $U\exists_f(X)\leq Y$, then $\exists_f(F_f(U)X)\leq Y$ because application preserves indexed joins, and therefore $F_f(U)X\leq F_f(Y)$. If $VX\leq F_f(Y)$ then $F_f\forall_f(V)X\leq F_fY$ because $\forall_f$ is right adjoint to $F_f$ relative to $\leq$. This implies $\forall_f(V)\exists_f(X)\leq Y$ by applying the preservation of indexed joins again. Because $\cat C$ is closed both under reindexing and indexed meets, $\exists_f$ is still left adjoint to $F_f$. For universal quantification it is even simpler: $UX\leq \forall_f(Y)$ if and only if $F_f(U)F_f(X)\leq Y$, while $VF_f(X)\leq Y$ implies $\forall_f(V) X\leq \forall_f(Y)$ because $F_f(\forall_f(V))\leq V$.
\end{proof}

\subsection{Lattice of downsets}
Now that we know how to construct a complete fibred Heyting algebra out of a complete fibred partial applicative lattice, we just need to show how to construct a complete fibred partial applicative lattice out of an internal order partial applicative structure with a combinatory complete filter.

\newcommand\ds{\mathsf D}
\begin{definition} Let $A$ be an order partial applicative structure in a Heyting category $\cat H$. The \emph{fibration of families of downward closed subsets of $A$} is the fibred subcategory $\ds A$ of $\Sub(A\times -)$ where $\ds A_X \subseteq\Sub(A\times X)$ contains the $Y\in \Sub(A\times X)$ that satisfy: if $a\leq a'$ and $(a',x)\in Y$ then $(a,x)\in Y$.
\end{definition}

\begin{lemma} The fibred category $\ds A$ is a complete fibred partial applicative lattice.\label{applace} \end{lemma}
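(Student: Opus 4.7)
The plan is to verify each clause of the definition of a complete fibred partial applicative lattice for $\ds A$, in each case reducing to a short calculation in the internal regular/Heyting logic of $\cat H$. Being downward closed in the $A$-component is a mild constraint, so most of the work is just checking that operations already available in the ambient subobject fibration $\Sub(A \times -)$ do not destroy it, and then inheriting their formal properties (Beck--Chevalley, Frobenius) from lemmas already recorded for Heyting categories.

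For the complete fibred lattice structure, each fibre $\ds A_I$ is closed under arbitrary intersections and unions of subobjects of $A \times I$, and so is a complete sublattice with top $A \times I$ and bottom $\bot$; the fibration is faithful because subobjects form posets. For $f : I \to J$, the reindexing, direct image, and dual image along $A \times f$ all preserve downward closure by a short internal check (the operations act only on the $X$-component and so commute with the ordering on the $A$-component). Hence all three restrict to $\ds A$, and Beck--Chevalley for $\ds A$ is inherited from $\Sub(\cat H)$.

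For the application operator, I define
\[ X \cdot Y \;=\; \{(c, i) \in A \times I \mid \exists a, b.\ (a, i) \in X \wedge (b, i) \in Y \wedge ab\converges \wedge c \leq ab \}, \]
which is a subobject given by a regular formula, and downward closed by virtue of the clause $c \leq ab$; thus application is in fact total on $\ds A$. Preservation of joins in each variable and of reindexing is immediate because existentials commute with disjunctions and with pullback. For the two Frobenius-like identities $\im f(X) \cdot Y \simeq \im f(X \cdot F_f(Y))$ and $X \cdot \im f(Y) \simeq \im f(F_f(X) \cdot Y)$, I expand both sides in the internal language and rearrange the quantifiers; the existential $\exists x.\ f(x) = j$ introduced by $\im f$ simply commutes with the existentials over $A$ that define application.

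Finally, the arrow operator $Y \Rightarrow Z$ is obtained as the right adjoint of $- \cdot Y : \ds A_I \to \ds A_I$, which exists because $\ds A_I$ is a complete lattice and $- \cdot Y$ preserves arbitrary joins; concretely it is the join in $\ds A_I$ of all $X$ with $X \cdot Y \leq Z$, and the required equivalence $X \leq Y \Rightarrow Z \iff X \cdot Y \leq Z$ holds by construction (since application is total, the disjunctive clause in the definition of $\Rightarrow$ is vacuous). Reindexing preserves $\Rightarrow$ by chaining the adjunctions $\im f \dashv F_f$ and $- \cdot Y \dashv Y \Rightarrow -$ with the Frobenius condition for application: $X \leq F_f(Y \Rightarrow Z)$ iff $\im f(X) \cdot Y \leq Z$ iff $\im f(X \cdot F_f(Y)) \leq Z$ iff $X \leq F_f(Y) \Rightarrow F_f(Z)$. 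The main technical point throughout is to track which direction of monotonicity in the order axiom for application is being used where; this is what guarantees that the defining formulas really cut out downward closed subobjects and that application distributes over the required joins.
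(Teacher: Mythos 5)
There is a genuine gap in your construction of the arrow operator. You obtain $Y\arrow Z$ as ``the join in $\ds A_I$ of all $X$ with $X\cdot Y\leq Z$'', justified by the claim that each fibre $\ds A_I$ is closed under \emph{arbitrary} unions and intersections of subobjects of $A\times I$ and hence a complete lattice. But the base $\cat H$ is an arbitrary Heyting category: $\Sub(A\times I)$ has finite meets and joins and the quantifiers $\exists_f\dashv\pre f\dashv\forall_f$, not infinitary joins, so the adjoint-functor-theorem argument is not available. (``Complete'' in ``complete fibred lattice'' refers to the indexed adjoints along arrows of $\cat H$, not to fibrewise completeness.) The paper avoids this by writing the arrow operator down explicitly as a first-order definable subobject, $V\arrow W=\set{(a,x)\mid \forall b.\,(b,x)\in V\to(ab\converges\wedge(ab,x)\in W)}$, whose downward closure and adjointness are then checked directly; your proof needs such an explicit internal formula (or some other construction valid without infinitary joins) in place of the appeal to completeness.

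A second, more structural deviation: you make the application operator \emph{total} by absorbing the clause $ab\converges$ into the defining formula, whereas the paper keeps it partial, declaring $UV\converges$ only when $\cat H\models\forall x,(a,x)\in U,(b,x)\in V.\,ab\converges$. Even if one granted you the right adjoint, the resulting $\arrow$ is then different from the paper's: a realizer of your $Y\arrow Z$ need not be defined on realizers of $Y$, only required to land in $Z$ \emph{when} the application happens to converge. Since this lemma exists precisely to feed the filter quotient of lemma \ref{fcHa} and the realizability interpretation of remark \ref{realrela}, this matters: with the totalized application, an element on which application always diverges (e.g.\ an index of the nowhere-defined function in $\Kone$) would vacuously realize every implication, including $\top\arrow\bot$, and its inhabited downset lies in the standard external filter, collapsing the resulting ``realizability'' fibration. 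So the partiality of the fibred application, with convergence defined by the internal statement above, is not an optional convenience but part of what the lemma has to deliver.
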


\begin{proof} It is a complete fibred lattice because it is a complete fibred Heyting algebra, a property that it inherits from $\Sub$.
The application operator is defined as follows: for $U,V\in \ds A_X$ let $UV\converges$ if $\cat H \models \forall x\in X, (a,x)\in U, (b,x)\in V. ab\converges$ and in that case \[ UV = \set{ (c,x)\in A\times X| \exists a,b\in A. (a,x)\in U\land (b,x)\in V\land c\leq ab } \]
This operator preserves all required joins, because it is $\exists_\alpha$ if $\alpha$ is the application operator, followed by the downward closure map, and both maps preserve joins.

The adjoint operator $\arrow$ is defined as follows.
\[ V\arrow W = \set{ (a,x)\in A\times X | \forall b\in A.(b,x)\in V \to (ab\converges \land (ab,x)\in W) }\]
That $U\subseteq V\arrow W$ if and only if $UV\converges$ and $UV\subseteq W$ follows almost directly from this definition.
\end{proof}

To build a realizability model with this, we need combinatory complete filters that are closed under indexed meets. The property of being closed under indexed meets has the following useful consequence.

\begin{lemma} Let $F:\cat F\to\cat B$ be a complete fibred partial applicative lattice. If $\cat B$ has a terminal object $\termo$ and $\cat C$ is a filter on $F:\cat F\to\cat B$ that is closed under indexed meets, then $X\in \cat C$ if and only if $\forall_!(X)\in C_\termo$, where $!$ is the unique arrow $FX\to\termo$. Hence, $C$ is totally determined by its fibre over $\termo$. \end{lemma}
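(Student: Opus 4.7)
The plan is to prove both directions directly from the two closure properties of $\cat C$ (closure under reindexing, since it is a fibred subcategory, and closure under indexed meets) together with the adjunction $F_!\dashv \forall_!$.

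For the forward direction, if $X\in \cat C$ then $X\in C_{FX}$, and applying the assumption that $\cat C$ is closed under indexed meets to the unique morphism $!:FX\to \termo$ gives $\forall_!(X)\in C_\termo$ immediately.

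For the reverse direction I would argue as follows. Suppose $\forall_!(X)\in C_\termo$. Since $\cat C$ is a fibred subcategory, it is closed under reindexing, so $F_!(\forall_!(X))\in C_{FX}$. The counit of the adjunction $F_!\dashv \forall_!$ provides a vertical arrow $F_!(\forall_!(X))\leq X$ in $\cat F_{FX}$. But $C_{FX}$ is a filter, in particular upward closed for the fibre order, so $X\in C_{FX}$, i.e., $X\in \cat C$.

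Combining the two directions, $X\in \cat C$ is equivalent to $\forall_!(X)\in C_\termo$, and since $\forall_!(X)$ depends only on $X$, the fibre $C_\termo$ completely determines the whole fibred filter $\cat C$. I do not expect any real obstacle here; the only point to be careful about is that the definition of \emph{fibred filter} builds in closure under reindexing (via "full fibred subcategory"), and that closure under indexed meets is stated precisely as the property needed for the forward implication.
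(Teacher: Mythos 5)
Your proof is correct and follows essentially the same route as the paper: closure under indexed meets gives the forward implication, and for the converse one reindexes along $!$, uses the counit inequality $F_!(\forall_!(X))\leq X$, and invokes upward closure of the filter. (You even fix a small slip in the paper's own wording, which writes $F_!(\forall_!(X))\in F_{FX}$ where $C_{FX}$ is meant.)
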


\begin{proof} Closure under indexed meets and $X\in \cat C$ implies $\forall_!(X)\in C_\termo$. If $\forall_!(X)\in C_\termo$ then $F_!(\forall_!(X))\in F_{FX}$, but $F_!(\forall_!(X))\leq X$ and filters are upwards closed. \end{proof}

\newcommand\ext{\mathbf D}
\begin{definition}
Let $\ext A$ be the set of downward closed subobjects of $A$ in $\cat H$, and define an application operator as follows. For all $U,V\in\ext A$, $UV\converges$ if $U\times V$ is a subobject of the domain $D$ of the application operator; in that case $UV = \set{z\in A| \exists x\in U,y\in V. xy\converges\land z\leq xy}$. This is the \xemph{external completion $A$}.
\end{definition}

\begin{corollary} If $\cat B$ has a terminal object $\termo$, there is an equivalence between filters on $\ext A$ and filters on $\ds A$ that are closed under indexed meets.
\end{corollary}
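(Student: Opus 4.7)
The plan is to combine the preceding lemma with the observation that $\ext A$ coincides with the fibre $(\ds A)_\termo$ as ordered partial applicative lattices, and then to exhibit an explicit inverse to the restriction-to-the-terminal-fibre map. Since the fibre of $\ds A$ over $\termo$ is $\ext A$, restricting any fibred filter $\cat C$ on $\ds A$ to this fibre yields a filter on $\ext A$. The preceding lemma already tells us that when $\cat C$ is closed under indexed meets, this restriction is injective: $\cat C$ is then totally determined by $C_\termo$ via $X \in \cat C \iff \forall_!(X) \in C_\termo$, with $!$ the unique arrow from $FX$ to $\termo$.

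To produce the inverse, given a filter $D$ on $\ext A$ I would define a fibred subcategory $\cat C_D \subseteq \ds A$ fibrewise by
\[ X \in (\cat C_D)_Z \iff \forall_{!_Z}(X) \in D, \]
where $!_Z : Z \to \termo$ is the unique arrow. I would then check, in this order: (a) the fibre of $\cat C_D$ over $\termo$ equals $D$, which is immediate since $!_\termo = \id_\termo$; (b) each fibre $(\cat C_D)_Z$ is a filter of the ordered applicative lattice $(\ds A)_Z$; (c) $\cat C_D$ is closed under reindexing, so is actually a fibred subcategory; (d) $\cat C_D$ is closed under $\forall_f$ for every morphism $f$ of $\cat H$. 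Conditions (c) and (d) both rest on the composition law $\forall_{!_Y} \circ \forall_f = \forall_{!_Z}$ for $f : Z \to Y$: (d) is immediate rewriting, and (c) combines this with the unit $X \leq \forall_f F_f(X)$ of the adjunction $F_f \dashv \forall_f$ together with upward closure of $D$.

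The real content is in (b). Upward closure in each fibre follows from monotonicity of $\forall_{!_Z}$, so the point is closure under the partial application of $\ds A$. Unfolding the explicit formulas --- internally, $\forall_{!_Z}(X)$ is the object of $a \in A$ such that $(a,z) \in X$ for all $z \in Z$, while the application on $\ds A$ is the one given in the proof of Lemma \ref{applace} --- I would check internally in $\cat H$ that $\forall_{!_Z}(X)\forall_{!_Z}(Y)$ converges whenever $XY$ does, and that $\forall_{!_Z}(X)\forall_{!_Z}(Y) \subseteq \forall_{!_Z}(XY)$. Combined with $D$ being closed under application and upward closed, this gives $XY \in (\cat C_D)_Z$. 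This single internal calculation is where the specific shape of the application on $\ds A$ actually enters; it is the main obstacle, though more of a bookkeeping one, since everything else reduces to formal manipulation of adjoints.
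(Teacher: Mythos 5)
Your overall route is the one the paper's one-line proof takes, just spelled out: the identification $\ext A \simeq \ds A_\termo$ (from $A\simeq A\times\termo$) plus the preceding lemma, which gives injectivity of restriction and forces the inverse to be exactly your $\cat C_D$ with $X\in(\cat C_D)_Z$ iff $\forall_{!_Z}(X)\in D$. Your steps (a), (c) and (d) are correct and are indeed just the adjunction manipulations you describe. You have also correctly located where the real content sits, namely (b), fibrewise closure under application — a verification the paper's proof does not address at all.

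But your proposed treatment of (b) has a genuine gap: the internal claim that $\forall_{!_Z}(X)\forall_{!_Z}(Y)$ converges in $\ext A$ whenever $XY$ converges over $Z$ is false. The hypothesis $XY\converges$ only says $ab\converges$ for $a,b$ lying over a common $z\in Z$, so it gives nothing unless $Z$ is internally inhabited, while $\forall_{!_Z}$ inflates a family over an empty (part of) $Z$ to (locally) all of $A$. Already in $\Set$ with $Z=\emptyset$ one gets $\forall_{!_Z}(X)=\forall_{!_Z}(Y)=A$ with $XY\converges$ vacuous, while $AA$ need not converge (there the desired conclusion still holds trivially, $A$ being the top of $\ext A$). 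Over a base with a nontrivial subterminal the conclusion itself can fail: in $\cat H=\Set\times\Set$ take $Z=(1,\emptyset)$, $A=(A_1,A_2)$ with $A_1=\{x,y,z\}$ discretely ordered and only $xy=z$ defined, and $A_2=\{*\}$ with empty application domain; put $X=(\{x\},\emptyset)$, $Y=(\{y\},\emptyset)$ in $\ds A_Z$ and let $D$ be the upward closure of $(\{x\},A_2)$ and $(\{y\},A_2)$ in $\ext A$. Then $D$ is a filter (applications of its elements never converge, since $A_2A_2$ diverges, so closure under application is vacuous), $\forall_{!_Z}(X),\forall_{!_Z}(Y)\in D$ and $XY\converges$, yet $\forall_{!_Z}(XY)=(\{z\},A_2)\notin D$; so $(\cat C_D)_Z$ is not closed under application, and by the preceding lemma no fibred filter closed under indexed meets can restrict to this $D$. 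So (b) is not a formal consequence of $D$ being a filter: you need an extra hypothesis (internal inhabitation of the relevant objects, as in $\Set$, or filters containing suitable combinators, as in the combinatory complete case used afterwards), and you should flag that the corollary's stated generality — and its one-line official proof — glosses over exactly this point.
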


\begin{proof} Because $A\simeq A\times \termo$, $\ext A \simeq \ds A_\termo$. \end{proof}

We now consider when a filter of $\ext A$ corresponds to a \emph{combinatory complete} fibred filter which is closed under indexed meets.

\begin{lemma} Let $\cat C$ be a fibred filter of $\ds A$ which is closed under indexed meets. The filter $\cat C$ is combinatory complete if and only if $\db f\in C_\termo$ for all partial combinatory $f:A^n \mapsto A$ (here $\db f$ is the object of realizers of $f$ from definition \ref{realizability}).\end{lemma}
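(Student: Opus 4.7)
The plan is to prove both directions by exploiting the identification $\ds A_\termo \simeq \ext A$ together with the fact that any fibred subcategory of $\ds A$ is closed under reindexing along the unique arrow $!:X\to\termo$, combined with Feferman's characterization of combinatory completeness from the remark after Definition~\ref{combinatory completeness}.

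For the forward direction I would start from the combinatory completeness of $C_\termo$ as a filter on $\ext A$. Given a partial combinatory $f:A^n\partar A$, the same combinator expression built from projections and pointwise application defines a partial combinatory function $\hat f$ on $\ext A$, so $C_\termo$ contains some realizer $R\in\db{\hat f}$. The key observation is that the principal-downset map $a\mapsto{\downarrow}a:A\to\ext A$ respects application, in the sense that ${\downarrow}a\cdot{\downarrow}b={\downarrow}(ab)$ whenever $ab\converges$; an induction on the combinator construction then gives $\hat f({\downarrow}\vec a)={\downarrow} f(\vec a)$ for every $\vec a\in\dom f$. Testing $R$ against these principal downsets forces $R\vec a\converges$ for all $\vec a\in A^n$ and $R\vec a\leq f(\vec a)$ on $\dom f$, which is precisely $R\subseteq\db f$. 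Since the order on $\ds A_\termo$ is inclusion and $C_\termo$ is a filter, upward closure yields $\db f\in C_\termo$.

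For the reverse direction I would invoke Feferman's criterion to reduce combinatory completeness of each $C_X$ to the existence of realizers in $C_X$ of the two combinators $\comb k$ and $\comb s$ of the order partial applicative lattice $\ds A_X$. Because $\cat C$ is a fibred subcategory, the reindexing $F_!(\db{\comb k})=\db{\comb k}\times X$ lies in $C_X$, and likewise $F_!(\db{\comb s})\in C_X$. A direct unwinding of the fibred application shows that for any $U,V\in\ds A_X$ the product $(\db{\comb k}\times X)\cdot U\cdot V$ is defined (because $ry_1y_2\converges$ uniformly in $r\in\db{\comb k}$) and contained in $U$ (using $ry_1y_2\leq y_1$ and the downward closure of $U$), so that $F_!(\db{\comb k})$ realizes $\comb k$ in $\ds A_X$. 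The analogous three-argument calculation handles $\comb s$, and Feferman's theorem then delivers combinatory completeness of $C_X$.

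The main obstacle is the fiddly but routine verification that the principal-downset embedding commutes with the combinator construction (needed for the forward direction) and that the constant family $F_!(\db f)$ really does act as a realizer of the corresponding combinator over $X$ (needed for the reverse direction). The reduction to just $\comb k$ and $\comb s$ via Feferman's theorem is what keeps the bookkeeping in the reverse direction manageable, so that no general induction over partial combinatory functions on $\ds A_X$ is required.
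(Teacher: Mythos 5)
Your second direction (from $\db f\in C_\termo$ to combinatory completeness of every fibre) is fine, and the route through Feferman's $\comb k$/$\comb s$ reduction is a legitimate alternative to the paper's induction over the construction of partial combinatory functions: the paper instead shows directly that the reindexed objects $\ds A_!(\db{\pi_i})$ realize the projections and that $\ds A_!(\db{fg})$ realizes the pointwise application of whatever $\ds A_!(\db f)$ and $\ds A_!(\db g)$ realize. Your verification that $\ds A_!(\db{\comb k})$ and $\ds A_!(\db{\comb s})$ realize the fibrewise combinators is indeed routine and uses only that $\cat C$ is a fibred filter, so this half stands.

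The other direction has a genuine gap. You test the realizer $R\in C_\termo$ against principal downsets ${\downarrow}a$ for elements $a\in A$. But $A$ is an object of an arbitrary Heyting category $\cat H$, and the fibre $\ds A_\termo\simeq\ext A$ is the \emph{external} poset of downward closed subobjects of $A$; the principal downset of an internal element is not a subobject of $A$ unless $a$ is essentially a global section, so these test objects are simply not available. Combinatory completeness of $C_\termo$ only constrains $R$ against genuine subobjects $\vec U\in\ext A$, which is strictly weaker than the internal containment $R\subseteq\db f$, a statement quantifying over all internal $\vec y\in\dom f$ (for instance, over a base like $\mathbb Z/2$-sets with $A$ carrying a nontrivial involution, the only available test subobjects are invariant ones, and they cannot pin down $r\vec y\leq f(\vec y)$ pointwise). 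This is exactly why the paper's proof does not stay in the terminal fibre: it takes a realizer $R\in C_{A^n}$ for the corresponding combinatory function of the fibre $\ds A_{A^n}$, tests it against the generic families $\hat\pi_i=\set{(y,\vec x)\in A\times A^n\mid y\leq x_i}$, which internalize your principal downsets, deduces $\forall_!(R)\subseteq\db f$, and only then uses the hypothesis that $\cat C$ is closed under indexed meets to conclude $\forall_!(R)\in C_\termo$, whence $\db f\in C_\termo$ by upward closure. Tellingly, your argument never invokes closure under indexed meets: that hypothesis is needed precisely to transport the realizer from the fibre over $A^n$ down to $\termo$. Your argument does work when $\cat H$ is $\Set$ (or sufficiently well-pointed), but not for the general statement being proved.
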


\newcommand\A{\mathring A}
\begin{proof} 
We first show that if $C_\termo$ contains $\db f$ for partial combinatory $f$, then $C$ is combinatory complete. 

Let $\pi_i:A^n \to A$ be the projection $\vec x\mapsto x_i$, let $X$ be an arbitray object of $\cat H$ and let $\vec U\in \ds A_X^n$. For convenience, let $\prod_X \vec U = \set{(\vec a,x)\in A^n\times X| (a_i,x)\in U_i}$.
\[ (( \ds A_!(\db\pi_i) U_1)\dotsm)U_n  = \set{(b,x)\in A\times X| \exists (\vec a,y)\in \prod_X\vec U. x'=x \land  b\leq a_i }
\]
This is a subobject of $U_i$. Hence each projection $\vec U \mapsto U_i:\ds A_X^n \to \ds A_X$ is realized by $\ds A_!(\db\pi_i)$ which is a member of $\cat C$ because $\cat C$ is fibered and $\pi_1$ is partial combinatory. 

For any pair $f,g:A^n \partar A$, we have
\begin{align*}
&((\ds A_!(\db{fg})U_1)\dotsm)U_n\! = \!\set{(b,x)| \exists (\vec a,y)\in \prod_X\vec U. y = x\land  b\leq f(\vec a)g(\vec a) }  \\
&((\ds A_!(\db{f})U_1)\dotsm)U_n((\ds A_!(\db{g})U_1)\dotsm)U_n \! = \! \\
& \qquad \set{(b,x)| \exists (\vec a,y),(\vec a',y')\in \prod_X\vec U.y=y'=x\land  b\leq f(\vec a)g(\vec a') }
\end{align*}
So if $\ds A_!(\db f)$ and $\ds A_!(\db g)$ realize $f',g':\ds A^n \partar \ds A$ respectively, then $\ds A_!(\db{fg})$ realizes the pointwise application $f'g'$.

Every partial combinatory function is constructed from projections by pointwise application, and therefore $\cat C$ is combinatorially complete if $C_\termo$ contains $\db f$ for every partial combinatory $f$.

For each $f:A^n\partar A$ let $\hat f\in \ds A_{A^n}$ be
\[ \hat f = \set{(y,\vec x)\in A\times A^n | \vec x\in \dom f \to y\leq f(\vec x) } \]
If $\hat f\hat g$ is defined, then $\hat f\hat g = \widehat{fg}$. Therefore, for any partial combinatory $p:\ds A^n \partar \ds A$ we get $p(\hat \pi_1,\dotsc, \hat \pi_n) = \widehat{p'}$ for the corresponding partial combinatory $p':A^n \partar A$. 

Because of combinatory completeness, $p$ has an object of realizers $R\in C_{A^n}$. So $((R\hat \pi_i)\dotsm)\hat \pi_n\subseteq \widehat{p'}$. By writing out the definitions we find that $\forall_!(R)\subseteq \db{p'}$ if $((R\hat\pi_1)\dotsm)\hat \pi_n \subseteq \widehat{p'}$. Now $\forall_!(R)\in C_\termo$ because of closure under indexed meets, and therefore so is $\db f$. \end{proof}

\comment{ 
Ik heb me vergist: de eigenschap dat alle combinatorische functies in het filter zitten hangt op kritiek wijze van de compleetheid af. We willen maar zeggen dat $\phi \subseteq \ds A$ slecht dan compleet is als $\db f\in \phi_\termo$. Ik weet nog niet zeker of dat waar is. Alles relevante begrippen staan echter hierboven.

We now consider which arrows $\ext A$ realizes.

\begin{lemma} For each $f:A^n\partar A$, let $\tilde f:\ext A^n\partar \ext A$ be the mapping $\tilde f(\vec U) = \set{ y\in A| \exists \vec x\in \prod_{i=1}^n U_i, y\leq f(\vec x)}$, for all $\vec U\in \ext A^n$ such that $\prod_{i=1}^n U_i\subseteq\dom f$. Then $R\in\ext A$ realizes $\tilde f$ if and only if $R\subseteq\db f$,  \end{lemma}

\begin{proof} By definition $((\db f U_1)\dotsm)U_n = \set{ a\in A | \exists r\in A. \forall \vec u\in \prod_{i=1}^n U_i. a\leq r\vec u \leq f(\vec u) }$, which is a subset of $\tilde f(\prod_{i=1}^n U_i)$. Therefore $R$ realizes $\tilde f$ if $R\subseteq \db f$.
If $R'$ realizes $\tilde f$, then $\cat H\models \forall r\in R', \vec u\in \dom f. r\vec u\converges \land r\vec u \leq f(\vec u)$, and hence $R'\subseteq \db f$. 
\end{proof}

\begin{lemma} A filter of $\ext A$ is combinatory complete if it contains $\db f$ for all partial combinatory $f:A^n\partar A$. \end{lemma}

\begin{proof} The map $f\mapsto \tilde f$ satisfies:
\[ \widetilde{\vec x \mapsto x}(\vec U) \subseteq U_i\quad \widetilde{fg}(\vec U)\subseteq\tilde f(\vec U)\tilde g(\vec U) \]
Therefore, if $f$ is a partial combinatory function $A^n\partar A$, then $\db f$ realizes the corresponding partial combinatory function $\ext A^m\partar \ext A$. So if $\phi$ is an external filter that contains $\db f$ for all partial combinatory $f$, then $\phi$ is combinatory complete.

\end{proof}
}

\begin{definition} An \xemph{external filter} of $A$ is a filter $\phi$ of $\ext A$. We consider it \xemph{combinatory complete} if it contains the object of realizers $\db f$ of all partial combinatory $f:A^n \partar A$. We write $\ds A/\phi$ to denote the filter quotient of $\ds A$ over the combinatory complete fibred filter induced by $\phi$. All complete fibred Heyting algebras $\ds A/\phi$ that arise from external filters on internal order partial applicative structures are \xemph{realizability fibrations}.
\end{definition}

We conclude this section by showing that realizability fibrations are complete fibred Heyting algebras.

\begin{theorem} For every Heyting category $\cat H$, every order partial applicative structure $A\in\cat H$ and every combinatory complete external filter $\phi$ of $A$, the realizability fibration $\ds A/\phi$ is a complete fibred Heyting algebra. \label{rf is ha} \end{theorem}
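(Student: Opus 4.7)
The plan is simply to assemble the pieces already proved in this subsection. By Lemma~\ref{applace}, the fibred category $\ds A$ is a complete fibred partial applicative lattice over $\cat H$. Since $\cat H$ is a Heyting category it has a terminal object $\termo$, so the corollary following the external-completion definition applies: external filters of $A$ correspond bijectively to fibred filters of $\ds A$ that are closed under indexed meets. Applying this correspondence to $\phi$ produces a fibred filter $\cat C\subseteq \ds A$ closed under indexed meets, and by definition the realizability fibration $\ds A/\phi$ is exactly the filter quotient $\ds A/\cat C$.

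Next I need combinatory completeness of $\cat C$ in the fibred sense. Here the key input is the lemma stating that a fibred filter of $\ds A$ closed under indexed meets is combinatory complete if and only if $\db f \in C_\termo$ for every partial combinatory $f:A^n \partar A$. Since $\phi$ is by assumption combinatory complete as an external filter, it contains each such $\db f$, and the correspondence identifies $C_\termo$ with $\phi$. Hence $\cat C$ is a combinatory complete fibred filter that is closed under indexed meets.

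With these hypotheses satisfied, I may apply Lemma~\ref{fcHa} directly: the filter quotient of a complete fibred partial applicative lattice by a combinatory complete fibred filter closed under indexed meets is a complete fibred Heyting algebra. Therefore $\ds A/\phi = \ds A/\cat C$ is a complete fibred Heyting algebra, as required.

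I do not expect any real obstacle here; the theorem is essentially a packaging statement for the three preceding lemmas (construction of $\ds A$, characterisation of combinatory completeness via $\db f$, and the general filter-quotient lemma), together with the observation that the terminal object of the Heyting base lets us pass back and forth between external filters and fibred filters closed under indexed meets. The only thing to check carefully in writing is that the fibred filter produced from $\phi$ is genuinely a \emph{fibred subcategory} of $\ds A$ (so that reindexing lands inside $\cat C$), but this is built into the correspondence via closure under reindexing, which in turn follows from closure under indexed meets together with the inequality $F_f\forall_f(X)\leq X$.
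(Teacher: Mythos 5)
Your proof is correct and follows exactly the route the paper intends: the paper's own proof is the single sentence ``This is a special case of lemma \ref{fcHa},'' and your write-up simply makes explicit the chain of preceding lemmas (lemma \ref{applace}, the terminal-object correspondence between external filters and fibred filters closed under indexed meets, and the $\db f\in C_\termo$ characterisation of combinatory completeness) that justifies that one-liner.
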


\begin{proof} This is a special case of lemma \ref{fcHa}. \end{proof}

\begin{remark}[realizability relation] The ordinary way of defining realizability using a realizability relation between combinators and propositions is hidden in the definition of the realizability fibration. It assigns an equivalence class of downsets to each proposition. The realizability relation comes from choosing representatives in these equivalence classes, in a way that depends recursively on the interpreted proposition. We can always do that, as long as we keep in mind that our order partial combinatory algebra may have no global sections and therefore no explicit combinators to point to.

Let $\comb t, \comb f$ be $\db{(x,y)\mapsto x}$ and $\db{(x,y)\mapsto y}$ respectively, then given any map $r$ from atomic propositions (excluding equations) to $\ext A$, we define the realizability relation $\rlzs$ as follows:
\begin{itemize}
\item $x\rlzs p$ if $x\in r(p)$ for each atomic proposition $p$.
\item always $x\rlzs \top$, never $x\rlzs \bot$, and $x\rlzs c=d$ if and only if $c=d$;
\item $x\rlzs p\land q$ if for all $t\in \comb t$ and $f\in\comb f$, $xt\converges$, $xf\converges$, $xt \rlzs p$ and $xf\rlzs q$;
\item $x\rlzs p\vee q$ if for all $t\in \comb t$ and $f\in\comb f$, $xt\converges$, $xf\converges$ and either $xt\in\comb t$ and $xf\rlzs p$, or $xt\in\comb f$ and $xf\rlzs q$;
\item $x\rlzs p\to q$ if for all $y\rlzs p$, $xy\converges$ and $xy\rlzs q$;
\item $x\rlzs \forall u \oftype X.p(u)$ if $x\rlzs p(c)$ for all $c\in X$;
\item $x\rlzs \exists u \oftype X.p(u)$ if $x\rlzs p(c)$ for some $c\in X$.
\end{itemize}
All of this makes sense in the internal language of $\cat H$ and recursively defines an object of realizers for each proposition.
\label{realrela}
\end{remark}

We give some examples for your consideration.

\begin{example} For ordinary realizability in an arbitrary base category the external filter is the filter of inhabited downsets, so a proposition is valid if and only if the object of realizers is inhabited.
\end{example}

\begin{example} For each combinatory complete filter $C\subseteq A$, the filter of downsets that intersect $C$ is combinatory complete. This is for example the case in example \ref{Ktwo}. This is general \xemph{relative realizability}, where a proposition is valid if the base category sees that its object of realizer intersects the filter. In terms of the relation defined in remark \ref{realrela}, a proposition $p$ is valid if $\cat H\models \exists a\in C. a\rlzs p$. One can read more on relative realizability in \cite{MR1769604},\cite{MR1909030} and \cite{MR1948021}.\end{example}

\begin{example} Suppose $\cat H(\termo, A)$ is combinatory complete. Now the filter of subobjects that have global sections is a combinatory complete external filter. In this form of realizability a proposition is valid if the object of realizers has a global section. We can make this relative to any combinatory complete filter of $\cat H(\termo, A)$. \end{example}

\begin{example} Given two combinatory complete external filters $\phi$ and $\chi\subseteq \phi$ on an order partial combinatory algebra $A$, there is a vertical morphism of fibred locales $\ds A/\chi \to \ds A/\phi$ that simply maps each family to itself. This morphism preserves $\forall$ and $\arrow$ and therefore is a morphism of complete fibred Heyting algebras.

Let $\set{\phi_i|i\in\kappa}$ be a set of external filters on $A$, then $\bigcap_{i\in\kappa} \phi_i$ is also an external filter. In fact, there is a least combinatory complete filter, generated by the set of $\db f\in \ext A$ where $f$ is a partial combinatory function.

The category of realizability fibrations for a single order partial combinatory algebra $A$ in a Heyting category $\cat H$ is completeness, because it is equivalent to the poset of combinatory complete external filters. \label{logical functors}
\end{example}

\section{Characterization}
For each Heyting category $\cat H$, order partial applicative structure $A\in\cat H$ and external filter $\phi$, we characterize the fibred locales over $\cat H$ that are equivalent to the realizability fibration $\ds A/\phi$. We don't demand that the fibred locales are complete fibred Heyting algebras. There are six characteristic properties that make a fibred locale $F:\cat F\to\cat H$ equivalent to the realizability fibration. We summarize them here and treat them more extensively in the subsections.
\begin{enumerate}
\item The fibred locale is \xemph{separated}. This means that for each regular epimorphism $e$ of $\cat H$, $\exists_e$ preserves $\top$. Remark \ref{origin of separated} explains this terminology.
\item The fibred locale has a \xemph{weakly generic object} $C$. This means that for each object $X\in\cat F$ there is a span $(p:Y\to C,s:Y\to X)$ such that $p$ is prone and $s$ is supine.
\item The weakly generic object is an `$F$-valued filter' of $A$. This makes sense if we consider that $X\mapsto \top_X\in F_X$ is a finite limit preserving functor, so $\top_A$ is an order partial applicative structure and $C$ is a filter of this structure, for which the inclusion is a vertical morphism.

These first three properties and the fact that $\cat H$ is a regular category together allow us to construct for each object $X\in \cat F$ a $Y\subseteq C\times X$ such that the inclusion is prone, the projection $\pi_1:Y\to X$ is supine, and the fibres of the projection are downsets of $C$. See proposition \ref{Shanin}.

\item Let $X\subseteq C\times Y$ and $Y\subseteq C\times Z$ both be prone and downward closed, let $f=\pi_1:X\to Y$ and $g=\pi_1:Y\to Z$. The fourth property is \xemph{Church's rule}, which says that if $f$ is supine, then there is a $W\subseteq C\times Z$ such that $h = \pi_1: W\to Z$ is supine, and such that $(a,b,x) \mapsto (ab,b,x)$ is a map $W\times_X Y \to X$ that commutes with $f$.

\[ \xymatrix{
X\ar[r]^f & Y \ar[r]^g & Z \\
& W\times_X Y \ar[ul]^{(a,b,z)\mapsto (ab,b,z)} \ar[u]\ar[r]\ar@{}[ur]|<\urcorner & W \ar[u]_h
}\]

\item Let $X\subseteq C\times Y$ be prone an downwards closed, let $f = \pi_1:X\to Y$ and let $g:Y\to Z$ be prone. If $f$ is supine, then there is a $W\subseteq C\times Z$ that is prone and downwards closed such that $h=\pi_1:W\to Z$ is supine and such that $W\times_X Y \subseteq X$. This is the \xemph{uniformity rule}.

\[ \xymatrix{
X\ar[r]^f & Y \ar[r]^g & Z \\
& W\times_X Y \ar[ul]^{(a,z)\mapsto (a,z)} \ar[u]\ar[r]\ar@{}[ur]|<\urcorner & W \ar[u]_h
}\]

\item The last property says that the unique map $!:F_{U\hookrightarrow A}(C)\to \termo$ is supine if and only if $U\in\phi$.
\end{enumerate}
We go through these properties to show that they hold for the realizability fibration, to show some of their consequences and to show that each fibration that has these properties, is equivalent to the realizability fibration.

\subsection{Separated fibred locales}
A separated fibred locale interprets regular epimorphisms as surjective maps.

\begin{definition} A fibred locale $F:\cat F\to \cat B$ is \xemph{separated} if for each regular epimorphism $e$ of $\cat B$, $\im e(\top)\simeq \top$. \end{definition}

\begin{example} If $\cat B$ is a regular category, then its subobject fibration is separated; the realizability fibration $\ds A/\phi$ is separated, because $\Sub(A\times-)$ is. \end{example}

One useful property of separated fibred locales is that $\exists_e$ is a left inverse of $F_e$ for each regular epimorphism.

\begin{lemma} A fibred locale $F$ is separated if and only if $\exists_e\circ F_e\simeq \id$ for each regular epimorphism $e$. \end{lemma}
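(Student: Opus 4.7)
The plan is to prove both directions by exploiting the Frobenius condition (which holds in any fibred locale) together with the fact that reindexing functors preserve the terminal object $\top$ (being part of the finite product structure that they preserve).

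For the forward direction, assume $F$ is separated, so $\exists_e(\top) \simeq \top$ for each regular epimorphism $e$. Given an object $Y$ in the fibre over the codomain of $e$, I would compute
\[ \exists_e(F_e(Y)) \simeq \exists_e(\top \times F_e(Y)) \simeq \exists_e(\top) \times Y \simeq \top \times Y \simeq Y, \]
where the first isomorphism uses that $\top$ is the unit for the meet, the second is the Frobenius condition applied to $f = e$, $X = \top$, and the third uses the separation hypothesis. Naturality of these isomorphisms in $Y$ gives $\exists_e\circ F_e \simeq \id$.

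For the converse, assume $\exists_e \circ F_e \simeq \id$ for every regular epimorphism $e$, and specialize to $Y = \top$ in the codomain fibre. Then $\exists_e(F_e(\top)) \simeq \top$. Since reindexing functors preserve finite products, they preserve the empty product, so $F_e(\top) \simeq \top$. Substituting gives $\exists_e(\top) \simeq \top$, which is exactly the separation condition.

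There is no real obstacle here; the only subtlety is making sure to cite the right structural facts about fibred locales in each step — namely the Frobenius condition, the adjunction $\exists_e \dashv F_e$, and the preservation of $\top$ by reindexing. The argument is essentially a one-line calculation in each direction, so the proof should be kept short and direct.
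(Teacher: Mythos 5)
Your proof is correct and follows essentially the same route as the paper's: the forward direction is the Frobenius computation $\exists_e(F_e(Y))\simeq \exists_e(\top\land F_e(Y))\simeq \exists_e(\top)\land Y\simeq Y$, and the converse specializes to $Y=\top$ using that reindexing preserves the terminal object. No gaps.
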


\begin{proof} If $F$ is separated then $\exists_e(F_e(X))\simeq \exists_e(\top\land F_e(X))\simeq \exists_e(\top) \land X \simeq X$, because of the Frobenius condition. If $\exists_e$ is a left inverse of $F$, then $\exists_e(\top) \simeq \exists_e(F_e(\top)) \simeq \top$. \end{proof}

\begin{remark} The term `separated' comes from topos theory. If $(\cat S,J)$ is a site, $S:\cat S^{op}\to\Set$ is a separated presheaf, $\set{f_i:X_i\to X| i\in \kappa}\in J$, $g,h\in SX$ and $Sf_i(g) = Sf_i(h)$ for all $i\in\kappa$, then $g=h$. A separated fibred category satisfies the same property up to isomorphism. In this particular case we are working with the \xemph{regular topology} on $\cat B$ which is generated by the regular epimorphisms. These ideas are worked out in 
\cite{MR558106}. \label{origin of separated}
\end{remark} \comment{goed dat er niets meer bij staat over locales, want een complete fibred poset is \xemph{geen} complete poset! }

Looking forward to the weak genericity property in the next subsection, we note the following.

\begin{corollary} In a separated fibred locale, each prone supine span $(p:Y\to C,s:Y\to X)$ factors through a prone supine span $(p': Y'\to C,s': Y'\to X)$ for which $(p',s'):Y'\to C\times X$ is a monomorphism. \label{monicspans} \end{corollary}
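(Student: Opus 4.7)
The plan is to obtain $Y'$ by factoring the combined base morphism $(Fp,Fs)\colon FY\to FC\times FX$ through its regular-epi--mono factorization and then lifting both pieces back into $\cat F$ using the prone/supine structure. Separation will be used to identify the resulting coindexed object with $X$.

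First, since $\cat B$ is regular, factor $(Fp,Fs)\colon FY\to FC\times FX$ as a regular epimorphism $e\colon FY\to Z$ followed by a monomorphism $m=(m_0,m_1)\colon Z\to FC\times FX$. Let $p'\colon Y'\to C$ be a prone lift of $m_0$, so $Y':=F_{m_0}(C)$ and $Fp'=m_0$. Because $p$ is prone with $Fp=m_0\circ e$, it factors uniquely as $p=p'\circ q$ for a morphism $q\colon Y\to Y'$ with $Fq=e$.

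Next, let $s'\colon Y'\to X'$ be a supine arrow over $m_1$, where $X':=(m_1)_!(Y')$. I claim $X'\simeq X$ in the fibre $F_{FX}$. Indeed, since $p$ is prone we have (up to vertical iso) $Y=F_{Fp}(C)=F_{m_0 e}(C)=F_e(F_{m_0}(C))=F_e(Y')$. Separation gives $e_!\circ F_e\simeq \id$, so $e_!(Y)\simeq Y'$. Composing coindexings then yields
\[
X'=(m_1)_!(Y')\simeq (m_1)_!(e_!(Y))\simeq (m_1\circ e)_!(Y)\simeq (Fs)_!(Y)\simeq X,
\]
where the final isomorphism uses that $s$ is supine. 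Thus we may take $s'\colon Y'\to X$ supine over $m_1$.

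It remains to check that $(p,s)$ factors as $(p'\circ q,\,s'\circ q)$ and that $(p',s')\colon Y'\to C\times X$ is monic in $\cat F$. For the factorization, both $s$ and $s'\circ q$ are morphisms $Y\to X$ living over $m_1\circ e=Fs$, and $F$ is faithful on fibres, so they coincide. For monicity, note that $(Fp',Fs')=(m_0,m_1)=m$ is monic in $\cat B$; since $F$ is faithful, any parallel pair equalized by $(p',s')$ is equalized by $(Fp',Fs')=m$ and hence is equal, so $(p',s')$ is monic in $\cat F$.

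The main subtlety I expect is the identification $(m_1)_!(Y')\simeq X$: it hinges on the separation property being exactly what is needed to turn $Y=F_e(Y')$ into $e_!(Y)\simeq Y'$, and on the Beck--Chevalley-style compatibility of composition of coindexings used to rewrite $(m_1)_!e_!$ as $(m_1 e)_!$. Everything else is routine bookkeeping in the fibred locale.
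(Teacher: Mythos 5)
Your proof is correct and takes essentially the same route as the paper's: factor the base map $(Fp,Fs)$ through its regular-epi--mono factorization, lift the mono part to a prone--supine span over $(m_0,m_1)$, use separation in the form $\exists_e\circ F_e\simeq\id$ to identify the coindexed object with $X$, and use faithfulness of $F$ to conclude monicity. Only trivial slips remain: the unique factorization $p=p'\circ q$ comes from proneness of $p'$ (not of $p$), and the identification $(m_1)_!\circ e_!\simeq (m_1\circ e)_!$ is just pseudofunctoriality of coindexing (composites of supine arrows are supine) rather than a Beck--Chevalley condition.
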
 

\begin{proof} Because the base category is regular $F(p,s) = m\circ e$ for some monomorphism $m: \bullet\to FC\times FX$ and some regular epimorphism $e:FY\to \bullet$. Let $f = \pi_0\circ m$ and $g = \pi_1\circ m$. The span $(p,s)$ indicates that $X \simeq \im{g\circ e}(F_{(f\circ e)}(C)) \simeq \im g\circ (\im e \circ F_ e) \circ F_ f(X) \simeq \im g\circ F_ f(X)$, and this means that there is a prone $p':Y'\to X$ over $f$ and a supine $s':Y'\to X$ over $g$, such that $F(p',s') = m$. Because $F$ is faithful, $(p',s')$ is monic too.
\end{proof}

Separated fibred locales over the same base category form a coslice category of the category of all fibred locales.

\begin{lemma} A fibred locale $F:\cat F\to\cat B$ is separated if and only if there is a vertical morphism of fibred locales $\Sub(\cat B) \to F$. \label{sublocale} \end{lemma}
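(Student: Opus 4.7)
The plan is to prove both directions constructively, with the forward direction building a morphism from the subobject fibration and the backward direction extracting separatedness from any such morphism.

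For the forward direction, assume $F$ is separated. I would define the candidate vertical morphism $\Phi \colon \Sub(\cat B) \to F$ on objects by sending a subobject $m \colon U \hookrightarrow X$ to $\exists_m(\top_U) \in F_X$, and on arrows by sending the (essentially unique) morphism witnessing $\db m \leq \db n$ to the induced morphism between direct images of $\top$. Well-definedness on equivalence classes of monos follows because isomorphic witnesses produce isomorphic direct images. I would then verify the four conditions in turn. Preservation of the top: $\Phi(\id_X) = \exists_{\id_X}(\top_X) = \top_X$. Preservation of meets: if $m \wedge n$ is the composite monomorphism $p \colon U \times_X V \to X$, then using Frobenius $\exists_m(\top) \wedge \exists_n(\top) \simeq \exists_m(F_m \exists_n(\top))$ and then Beck--Chevalley on the defining pullback gives $\exists_m \exists_q F_{p_2}(\top) \simeq \exists_p(\top) = \Phi(m \wedge n)$. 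Preservation of prone morphisms: the prone arrow in $\Sub(\cat B)$ over $f \colon Y \to X$ from $f^{-1}(m)$ to $m$ is sent by $\Phi$ to an arrow $\exists_{f^{-1}(m)}(\top) \to \exists_m(\top)$ which is prone because Beck--Chevalley gives $F_f \exists_m(\top) \simeq \exists_{f^{-1}(m)} F_{\tilde f}(\top) \simeq \exists_{f^{-1}(m)}(\top) = \Phi(f^{-1}(m))$.

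The crucial step — and the only place separatedness is used — is preservation of supine morphisms. A supine arrow in $\Sub(\cat B)$ over $f \colon Y \to X$ from $m \colon U \to Y$ to $\im_f(m) = n \colon V \to X$ factors $f \circ m$ as $n \circ e$ with $e$ a regular epimorphism. Then
\[ \exists_f \Phi(m) = \exists_f \exists_m(\top) \simeq \exists_n \exists_e (\top) \simeq \exists_n(\top) = \Phi(\im_f(m)), \]
where the last isomorphism uses $\exists_e(\top) \simeq \top$, i.e., exactly separatedness of $F$ applied to the regular epi $e$. So $\Phi$ sends supines to supines, completing the verification that it is a vertical morphism of fibred locales.

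For the backward direction, suppose $\Phi \colon \Sub(\cat B) \to F$ is a vertical morphism of fibred locales. Since $\Phi$ preserves top, $\Phi(\top_X) = \top_X$ and $\Phi(\top_Y) = \top_Y$. For any regular epimorphism $e \colon X \to Y$ in $\cat B$, the image of $e$ as a subobject of $Y$ is $\top_Y$; hence the canonical morphism $\top_X \to \top_Y$ in $\Sub(\cat B)$ over $e$ is supine. Since $\Phi$ is a morphism of bifibrations it preserves supines, so $\top_Y \simeq \exists_e(\top_X)$ in $F_Y$. This is precisely the separatedness condition.

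The only real obstacle is organizing the Beck--Chevalley and Frobenius manipulations cleanly in the forward direction; the backward direction is almost a tautology once one observes that regular epimorphisms in $\cat B$ correspond to supine arrows over them in $\Sub(\cat B)$ between top elements. The moral is that $\Sub(\cat B)$ is the initial separated fibred locale over $\cat B$, and the content of the lemma is that separatedness is exactly the obstruction to such an initial map existing.
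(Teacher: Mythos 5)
Your proof is correct and follows the same route as the paper: in the forward direction you define the morphism by $\db m\mapsto\exists_m(\top)$ and use separatedness exactly where the paper does (preservation of supine arrows), and your backward direction is the paper's computation $\exists_e(\top)\simeq\exists_e(\Phi(\top))\simeq\Phi(\exists_e(\top))\simeq\Phi(\top)\simeq\top$ phrased via supine preservation. The only difference is that you spell out the Frobenius/Beck--Chevalley verifications that the paper dismisses with ``it is easy to see,'' which is a fine addition.
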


Vertical morphisms are commutative triangles, as defined in example \ref{fibred category of fibred categories}.

\begin{proof} If $F$ is separated, we determine $\nabla = (\id_{\cat B},\nabla):\Sub(\cat B) \to F$ by $\nabla(\db m) \simeq \exists_{m}(\top)$, where $m$ is some monomorphism, and $\db m$ is the subobject it represents. It is easy to see that this indeed defines a functor that commutes with the fibrations and that preserve finite limits, prone and supine morphisms.

If $m = (\id_{\cat B},m):\Sub(\cat B) \to F$ is a vertical morphism and $e$ in $\cat B$ is a regular epimorphism, then $\exists_e(\top)\simeq \exists_e(m(\top))\simeq m(\exists_e(\top)) \simeq m(\top) \simeq \top$.
\end{proof}

\subsection{Weakly generic filters}\label{wgf}
We treat the property of having a weakly generic object that is also a filter.

\begin{definition} Let $A$ be an order partial applicative structure in a Heyting category $\cat H$ and let $F$ be a fibred locale. 
Let $s,t:\roso \to A$ be the order relation on $A$, with projections $s(x,y)= x$ and $t(x,y)=y$. Let $a,p,q:D\to A$ be the application operator $a(x,y)=xy$ and the projections $p(x,y)=x$, $q(x,y)=y$ of its domain back to $A$. A \xemph{vertical filter} is an object $C\in F_A$ that satisfies $F_s(C)\leq F_t(C)$ and $F_p(C)\land F_q(C) \leq F_a(C)$.
\end{definition} 

\begin{example} The second projection $t:\roso \to A$ is a family of downsets and hence an object of $(\ds A/\phi)_A$. This is a filter which we denote by $\A$. \end{example}

\begin{example} Any filter $C$ of $A$ induces a vertical filter $C'\in \Sub(A)$: $C' = \exists_{C\hookrightarrow A}(C)$. \end{example}

\begin{remark} A fibred locale $F$ always has a right adjoint $\top$ that maps each object to the top element of its fibre. Since right adjoints preserve finite limits, $\top_A$ is a partial applicative structure by lemma \ref{preserving1}. A vertical filter $C$ is a filter of $\top_A$ whose inclusion $C\to\top_A$ is a vertical morphism. \end{remark}

That $\A$ is a filter is useful, because it implies $\A$ is closed under partial combinatory functions. It has a more remarkable property however.

\begin{definition} In any bifibred category $F:\cat F\to\cat B$, an object $G\in\cat F$ is \xemph{weakly generic}, if for each $X\in \cat F$ there is a chain $(p_0,s_0,p_1,s_1,\dots,p_n,s_n)$ of morphisms, where $p_i$ are prone, $s_i$ are supine, $\dom p_i = \dom s_i$, while $\cod p_0 = G$, $\cod p_{i+1} = \cod s_i$, $\cod s_n = X$.

\[ \xymatrix{
& \ar[dl]_{p_0}\bullet \ar[dr]^{s_0} && \ar[dl]_{p_1}\bullet \ar[dr]^{s_1} &&& \ar[dl]_{p_n}\bullet \ar[dr]^{s_n} \\
G && \bullet && \bullet \ar@{}[ur]|\dots &\bullet && X
}\]
\end{definition}

\begin{remark} In a bifibred category $F:\cat F\to\cat B$ over a category with all pullbacks, the Beck-Chevalley condition allows us to simplify this to a single span $(p:Y\to G,s:Y\to X)$. Consider a prone $f:X\to Z$ and a supine $g:Y\to Z$. There is a prone $f':W\to Y$ and a supine $g':W\to X$ over the pullback cone of $Fp$ and $Fs$ that commutes with $f,g$. Therefore, if there is a chain that consists of $n+1$ prone-supine spans (with $n>0$), then there is one that consists of $n$. \end{remark}

\begin{example} Already in $\ds A$, $\A$ is weakly generic, for if $f:Y\to X$ is a family of downward closed subobjects in $\ds A_X$, and $p:Y\to A$ is the projection $(a,x)\mapsto a$, then $f= \im f(F_ p(\A))$ and this is preserved in the fibre product $\ds A/\phi$. \end{example}

\begin{remark} Let $F$ be a bifibration with a weakly generic object $G$ and let $m=(m_0,m_1)$ and $n=(n_0,n_1): F\to F'$ be morphism of bifibrations. If $m_0\simeq n_0$ and $m_1(G)\simeq n_1(G)$ then $m_1\simeq n_1$, because the prone-supine spans are preserved by $m_1$ and $n_1$ and uniquely determined by $m_0$ and $n_0$.
\end{remark}

The last three characteristic properties are defined in terms of families of prone downward closed subobjects of a vertical filter $C$. If $F$ is a fibred locale and $C\in F_A$ a weakly generic filter, then they form a fibration over $\cat F$, which we can describe as follows.

\newcommand\prodo{prodo}
\begin{definition} Because $F:\cat F\to\cat B$ is a bifibration with finite limits and coequalizers of kernel pairs and because $\cat B$ is regular, $\cat F$ has finite limits and coequalizers of kernel pairs which are preserved by $F$. In fact, due to the Frobenius and Beck-Chevalley conditions, $\cat F$ is a regular category and $F$ a regular functor. We have a fibred category $\ds C:\cat DC \to\cat F$ of \xemph{families of downsets of $C$} indexed over the objects of $F$. This fibration has a fibred reflective subcategory $\ds C_{\rm{prone}}$ of families $Y\to X$ for which the inclusion $Y\hookrightarrow C\times X$ is a prone arrow. The unique factorization of any $Y\to C\times X$ into a vertical arrow $Y\to\overline Y$ and a prone arrow $\overline Y\to C\times X$ determines the reflection.

We let $\ds C_{prone}$ be the fibred category of \xemph{families of prone downsets of $C$}. Finally, an arrow $f:Y\to X$ in $\cat F$ that is a family of prone downsets is a \xemph{\prodo morphism}. \end{definition}

`Prodo' is a contraction of `prone downset'.

\begin{lemma} The fibred category $\ds C_{\rm prone}$ is the pullback of $\ds A$ along $F$. \end{lemma}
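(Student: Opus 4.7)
The plan is to construct an isomorphism of fibred categories $\Phi:\ds C_{\rm prone}\to F^*\ds A$ over $\cat F$, where $F^*\ds A$ denotes the pullback of the fibration $\ds A\to\cat H$ along $F$. On objects $\Phi$ sends a prone downward-closed inclusion $Y\hookrightarrow C\times X$ to the downward-closed subobject $FY\hookrightarrow A\times FX$ in $\cat H$; on morphisms it simply applies $F$.

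First I would show that prone monomorphisms into $C\times X$ in $\cat F$ correspond bijectively to monomorphisms into $A\times FX$ in $\cat H$. Lemma \ref{lifting limits} gives $F(C\times X)=FC\times FX=A\times FX$, since $F$ is a fibred locale and has fibrewise products preserved by reindexing. Because $\cat F$ is a regular category and $F$ is a faithful regular functor, a morphism in $\cat F$ is monic iff its $F$-image is monic in $\cat H$. Every mono $m:M\hookrightarrow A\times FX$ in $\cat H$ then lifts to the prone mono $F_m(C\times X)\hookrightarrow C\times X$ in $\cat F$, and conversely every prone mono $Y\hookrightarrow C\times X$ is, up to vertical isomorphism, of this form with $m$ the mono $FY\hookrightarrow A\times FX$.

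Next I would verify that downward closure is preserved and reflected by this bijection. Writing $s,t:\mathord{\leq_C}\times X\to C\times X$ for the source and target maps induced by the order on $C$, downward closure of $Y$ is the inequality $t^{-1}(Y)\leq s^{-1}(Y)$ in $\Sub_\cat F(\mathord{\leq_C}\times X)$. The right adjoint $\top:\cat H\to\cat F$ to $F$ satisfies $F\top=\id_\cat H$ and preserves finite limits, so $F(\mathord{\leq_{\top A}})=\mathord{\leq_A}$; the subfilter $C\leq\top A$ inherits an order $\mathord{\leq_C}$ that lies over $\mathord{\leq_A}$, and consequently $F(\mathord{\leq_C})=\mathord{\leq_A}$. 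Applying the regular functor $F$ to the displayed inequality therefore yields the downward-closure inequality for $FY\subseteq A\times FX$ in $\cat H$. Conversely, if $U\subseteq A\times FX$ is downward closed, then $F_m(C\times X)$ is downward closed because reindexing preserves pullbacks and meets.

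Finally I would check compatibility with morphisms and reindexing. A morphism $(X,Y)\to(X',Y')$ over $f:X\to X'$ in $\ds C_{\rm prone}$ is determined by $f$ satisfying $(1_C\times f)(Y)\subseteq Y'$; $F$ sends this to $(1_A\times Ff)(FY)\subseteq FY'$, a morphism in $F^*\ds A$ over $f$. Reindexing in $\ds C_{\rm prone}$ along $f$ is pullback along $1_C\times f$, whose $F$-image is pullback along $1_A\times Ff$, matching reindexing in $F^*\ds A$. The principal obstacle will be the second step: one must reconcile the \emph{external} order on $A$ in $\cat H$ with the \emph{internal} order on the filter $C$ in $\cat F$; once the identification $F(\mathord{\leq_C})=\mathord{\leq_A}$ is made explicit via $F\top=\id_\cat H$ and the preservation of finite limits by $F$ and by reindexing, the preservation and reflection of downward closure reduce to routine Beck--Chevalley-style manipulations.
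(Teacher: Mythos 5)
Your proof is correct and follows essentially the same route as the paper's, which simply observes that each downward closed family $f:Y'\to FY$ in $\ds A$ has a unique prone lift to a prone downset of $C$ over $Y$. You additionally spell out the inverse direction (applying $F$), the preservation and reflection of downward closure via $F(\mathord\leq_C)=\mathord\leq_A$, and the compatibility with morphisms and reindexing, all of which the paper's one-line proof leaves implicit.
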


\begin{proof} For each $Y\in\cat F$ and every downward closed family $f:Y'\to FY$ in $\cat D A$ there is a unique prone $p: F_f(C\times X) \to C\times X$, such that $\cod p = Y$ and $Fp= f$. That's all there is to it. \end{proof}

The three characteristic properties we mentioned so far have one important consequence for the \prodo morphisms in $\cat F$.

\begin{proposition}[Shanin's rule] If $F$ is a separated fibred locale with a weakly generic vertical filter, then there exists a supine \prodo morphism $f:Y\to X$ for each object $X$. \label{Shanin} \end{proposition}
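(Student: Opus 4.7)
The idea is to refine the weakly generic span into a \prodo morphism by replacing its base image with its downward closure in $\cat H$.

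First I would combine weak genericity with Corollary~\ref{monicspans} to obtain a prone--supine span $(p, q) : Y_0 \to C \times X$ that is a monomorphism. Writing $F(p, q) = (h, k) : Z_0 \hookrightarrow A \times FX$, proneness of $p$ gives $Y_0 \simeq F_h(C)$ and supine-ness of $q$ gives $X \simeq \exists_k(Y_0)$.

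Next, I would take the downward closure $m : Z \hookrightarrow A \times FX$ of $(h, k)$ in the $A$-coordinate, computed inside $\cat H$ using the order projections $s, t : \roso \to A$. This yields a factorisation $(h, k) = m \circ i$ for some monic $i : Z_0 \hookrightarrow Z$, with $Z$ downward closed by construction. Setting $Y = F_m(C \times X) \simeq F_{\pi_0 m}(C) \wedge F_{\pi_1 m}(X)$, the composite $Y \hookrightarrow C \times X \to X$ is a \prodo morphism over $\pi_1 m$.

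The delicate part is verifying that this projection is supine, which in a fibred poset reduces to $\exists_{\pi_1 m}(Y) \simeq X$. The $\leq$ direction follows from the Frobenius condition: $\exists_{\pi_1 m}(Y) \simeq \exists_{\pi_1 m}(F_{\pi_0 m}(C)) \wedge X$. For the $\geq$ direction I would use $\pi_0 m \circ i = h$, $\pi_1 m \circ i = k$, and the counit $\exists_i F_i \leq \id$ to compute
\[
X \simeq \exists_k F_h(C) \simeq \exists_{\pi_1 m}\exists_i F_i F_{\pi_0 m}(C) \leq \exists_{\pi_1 m}(F_{\pi_0 m}(C)),
\]
which together with $X \leq X$ gives $X \leq \exists_{\pi_1 m}(Y)$. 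The main obstacle is ensuring that the extra elements introduced by the downward closure do not break supine-ness; the Frobenius-plus-counit computation settles this by showing that the added material sits below $F_{\pi_1 m}(X)$ and contributes nothing new to the coindexing back to $X$.
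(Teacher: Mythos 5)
Your proof is correct, and although it opens exactly as the paper's does --- weak genericity plus Corollary~\ref{monicspans} to obtain a monic prone--supine span, followed by downward closure of its base image in the $A$-coordinate --- the second half takes a genuinely different route. The paper lifts the downward closure into $\cat F$ by exploiting the upward closure of the vertical filter $C$ together with the Beck--Chevalley condition, and then must separately prove that the resulting pairing into $C\times X$ is prone, which it does by a retraction argument relying on faithfulness of $F$. You instead \emph{define} the lift as the prone restriction $Y=F_m(C\times X)$, so proneness of the inclusion and downward closure of the fibres hold by construction, and the burden shifts entirely to supineness of the projection; you discharge that by the order-theoretic computation $\exists_{\pi_1 m}(Y)\simeq\exists_{\pi_1 m}(F_{\pi_0 m}(C))\wedge X$ (Frobenius) combined with $X\simeq\exists_k F_h(C)\simeq\exists_{\pi_1 m}\exists_i F_i F_{\pi_0 m}(C)\leq\exists_{\pi_1 m}F_{\pi_0 m}(C)$ (counit of $\exists_i\dashv F_i$). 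What your version buys: it never invokes the filter axioms on $C$ --- only reflexivity and transitivity of $\leq$, needed so that $Z_0$ factors through $Z$ and $Z$ is a downset --- and it sidesteps the somewhat delicate proneness-of-the-pairing step. What the paper's version buys: its retraction argument is reused verbatim later (in the proof of Corollary~\ref{slice}), and keeping the filter structure of $C$ explicit ties the construction to the tracking principles that follow. A small economy worth noting: the monic reduction is not actually essential to your computation, since any factorisation $(h,k)=m\circ i$ through the downward closure suffices; invoking Corollary~\ref{monicspans} is harmless but could be dropped.
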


\begin{proof} Because $C$ is weakly generic, there is a supine prone pair $(p:Y\to C,s:Y\to X)$. Because $F$ is separated, we may assume that $(p,s):Y\to C\times X$ is monic, see corollary \ref{monicspans}.

Because $C$ is upward closed, we may assume $Y$ is downward closed. Let $\roso$ be the order of $A$ and let $\pi_0,\pi_1:\roso \to A$ be the projections, then the downward closure $\overline{FY}$ of $FY$ is the following pullback:
\[ \xymatrix@dr{ \overline{FY} \ar@{.>}[r]\ar@{.>}[d] & \roso \ar[r]^{\pi_1} \ar[d]_{\pi_0} & A \\
FY \ar[r]^{Fs}\ar[d]_{Fp} & A \\
FX }\]
Upward closure of $C$ implies that there is a $\roso' \in F_{\roso}$ with a prone $\pi'_0:Z\to C$ over $\pi_0$ and a supine $\pi'_1$ over $\pi_1$ and the Beck-Chevalley condition now determines that there is an $\overline Y\in F_{\overline{FY}}$ with a prone arrow to $Y$ and a supine arrow to $Z$:
\[ \xymatrix@dr{ \overline{Y} \ar@{.>}[r]\ar@{.>}[d] & \roso' \ar[r]^{\pi'_1} \ar[d]_{\pi'_0} & C \\
Y \ar[r]^s\ar[d]_p & C \\
X }\]

Finally, $(p,s):Y\to C\times X$ is prone. We note that $p:Y \to C$ is prone, and that we can always factor $(p,s)$ in a vertical arrow $v:Y\to Y'$ followed by a prone arrow $(q,t):Y'\to C\times X$. Because $p$ is prone, and $Fq = Fp$, there is a unique vertical arrow $w:Y'\to Y$ such that $q=p\circ w$. Now $p\circ w\circ v = q\circ v = p$, and therefore $w\circ v = \id_Y$. Because $F$ is faithful, the split epimorphism $w$ is also monic, and $Y\times Y'$. By this isomorphism $(p,s):Y\to C\times X$ is prone too.
\[\xymatrix{
& Y \ar[dl]_p \ar[dr]^s\ar@{.>}@<.5ex>[d]^v \\
C & Y'\ar[l]_{q} \ar[r]^t\ar[d]|{(q,t)} \ar@{.>}@<.5ex>[u]^w & X\\
& C\times X\ar[ur]_{\pi_0}\ar[ul]^{\pi_1}
}\]
\end{proof}

\begin{remark} In the realizability interpretation this means that each predicate $P\in (\ds A/\phi)_X$ is equivalent to $\exists c\in C.(c,x)\in \top_Y$ for some family of downsets $Y\to X$ (where $\top_Y$ is the terminal object of the fibre over $Y$), a property that we name after \xemph{Shanin's principle} (see \cite{MR1303664, MR2479466}), which is a similar statement about predicates in recursive realizability.
\end{remark}

\subsection{Tracking principle}
The last three characteristic properties express the fact that morphisms in the realizability fibration have a set of realizers in the combinatory complete external filter $\phi$.

\begin{definition}[Church's rule] Let $F:\cat F\to\cat H$ be a fibred locale and let $C\in F_A$ be a vertical filter. They satisfy \xemph{Church's rule} if for each pair of \prodo morphisms $f:X\to Y$ and $g:Y\to Z$ such that $f$ is supine, there is a supine \prodo morphism $e:W\to Z$ such that for all $(a,z)\in W$ and $(b,z)\in Y$, $ab\converges$ and $(ab,b,z)\in X$. 
\[ \xymatrix{
& W\times_Z Y \ar@{.>}[r] \ar@{.>}[d] \ar@{.>}[dl]_{(a,b,z)\mapsto (ab,b,z)} \ar@{}[dr]|<\lrcorner  & W \ar@{.>}[d]^e \\
X \ar[r]_f & Y \ar[r]_g & Z
}\]
\end{definition}

\begin{example} The realizability fibration $\ds A/\phi$ with the filter $\A$ satisfies this property. The inclusion of $Y$ into $\exists_{Ff}(Y)$ has some downset of realizers $U\in \phi$ and we can let $W = C_U\times Z$, where $C_U = F_{U\hookrightarrow A}(C)$, and $e:W\to Z$ is the projection. The pullback of $e$ along $g$ is the projection $C_U\times Y\to Y$.

Application determines the desired map from $C_U\times Y$ into $X$ because $Y$ is a prone subobject of $C\times Z$, and $X$ of $C\times Y$. Without loss of generality, we may assume that realizers for $(b,z)\in Y$ are pairs $\langle b',c \rangle$ where $b'\leq b$ and $c\rlzs z\in Z$. Because the elements of $U$ realize the inclusion of of $Y$ into $\exists_{Ff}(Y)$, if $a\rlzs ((b,z)\in Y)$ and $u\in U$ then $\langle (ua)_0,\langle a_0,(ua)_1 \rangle\rangle \rlzs ((ua)_0,b,z)\in X$, for any choice of pairing and unpairing combinators $\langle -,-\rangle$, $(-)_0$ and $(-)_1$. \end{example}

\begin{remark} Church's rule is a categorical version of \xemph{extended Church thesis}. The \prodo morphisms $f:X\to Y$ and $g:Y\to Z$ represent a family of relations $f_z:g_z \nrightarrow C$ that are total because $f$ is supine. The supine $e:W\to Z$ tells us that for all $z\in Z$ there is a $c\in C$ such that for all $a\in f_z$, $ca\converges$ and $ca\in g_z$. Shanin's rule covers some relations that are not prone.
\end{remark}

\begin{definition}[uniformity rule] Let $F:\cat F\to\cat H$ be a fibred locale and let $C\in F_A$ be a vertical filter. They satisfy the \xemph{uniformity rule} if for each pair of arrows $f:X\to Y$ and $g:Y\to Z$ where $f$ is a supine \prodo morphism and $g$ is prone, if there is a supine \prodo morphism $e:W\to Z$ such that for all $(a,z)\in W$ and $(y,z)\in Y$, $(a,y,z)\in X$.
\[ \xymatrix{
& W\times_Z Y \ar@{.>}[r] \ar@{.>}[d] \ar@{.>}[dl]_\id \ar@{}[dr]|<\lrcorner  & W \ar@{.>}[d]^e \\
X \ar[r]_f & Y \ar[r]_g & Z
}\]
\end{definition}

\begin{example} The realizability fibration $\ds A/\phi$ with the filter $\A$ satisfies this property too, and for similar reasons as it satisfies Church's rule. The inclusion of $Y$ into $\exists_{Ff}(Y)$ has some downset of realizers $U\in \phi$ and we can let $W = C_U\times Z$, where $C_U = F_{U\hookrightarrow A}(C)$, and $e:W\to Z$ is the projection. The pullback of $e$ along $g$ is the projection $C_U\times Y\to Y$. 

In this case, because $g$ is prone, we may assume that a realizer $b$ for $(y,z)\in Y$ is a realizer for $z\in Z$. Because $u\in U$ realize the inclusion of $Y$ into $\exists_{Ff}(X)$, if $a\rlzs (y,z)\in Y$ then $\langle a,az \rangle \rlzs (a,y,z)\in X$.
\end{example}

\begin{remark} The uniformity rule is a categorical version of the \xemph{uniformity principle}. The supine \prodo morphism $f:X\to Y$ and prone $g:Y\to Z$ represent a family of relations $f_z:g_z \nrightarrow C$ that are total because $f$ is supine. The supine $e:W\to Z$ tells us that for all $z\in Z$ there is a $c\in C$ such that for all $x\in f_z$, $c\in g_z$. This expresses the uniformity of fibres of prone morphisms.
\end{remark}

\begin{definition} Let $U\in \phi$.  A filter $C\in F_A$ \xemph{represents} $\phi$ if for each prone $U\subseteq C$, $FU\in\phi$ if and only if $!:U\to\termo$ is supine. \end{definition}

\begin{lemma} The filter $\A\in (\ds A/\phi)$ represents $\phi$. \end{lemma}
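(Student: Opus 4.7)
The plan is to factor $!\colon U\to\termo$ in $\ds A/\phi$ into its supine and vertical components, identify the resulting fibre object via the coindexing formula from Lemma \ref{fcHa}, and then translate the question of that vertical map being an isomorphism in the fibre $\ext A/\phi$ into an existence statement about realizers in $\phi$.

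First I would unpack the data. A prone subobject $U\subseteq \A$ is determined by a subobject $FU\hookrightarrow A$ in $\cat H$; concretely, $U=F_{FU\hookrightarrow A}(\A)=\{(a,u)\in A\times FU:a\leq u\}$. The morphism $!\colon U\to\termo$ factors as a supine arrow $U\to\exists_{F!}(U)$ over $F!\colon FU\to\termo_{\cat H}$ followed by a vertical map to $\termo$ in the fibre $(\ds A/\phi)_{\termo_{\cat H}}=\ext A/\phi$. By the proof of Lemma \ref{fcHa} the coindexing in the filter quotient agrees with that in $\ds A$, and an elementary direct image computation gives $\exists_{F!}(U)=FU^\downarrow$, the downward closure of $FU$ viewed as an object of $\ext A$. (Where the statement writes `$FU\in\phi$' I read this, without loss of generality, as $FU^\downarrow\in\phi$, which is the only sensible reading when $FU$ need not itself be downward closed.) Hence $!$ is supine if and only if $FU^\downarrow\simeq A$ in $\ext A/\phi$.

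Next I would unwind this isomorphism. Since the fibres of $\ds A/\phi$ are preorders, $FU^\downarrow\simeq A$ amounts to the existence of $V_1,V_2\in\phi$ with $V_1\cdot FU^\downarrow$ and $V_2\cdot A$ defined in $\ext A$ and satisfying $V_1\cdot FU^\downarrow\subseteq A$ and $V_2\cdot A\subseteq FU^\downarrow$. The first condition is trivial with $V_1=\db I$ (the realizers of the identity are total and in $\phi$ by combinatory completeness), so the question reduces to finding $V_2\in\phi$ with $V_2\cdot A$ defined and contained in $FU^\downarrow$.

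Finally I prove the biconditional. For the $(\Leftarrow)$ direction, assuming $FU^\downarrow\in\phi$, I set $V_2:=\db k\cdot FU^\downarrow$, where $\db k$ is the object of realizers of $k(x,y)=x$. Combinatory completeness gives $\db k\in\phi$, totality of elements of $\db k$ makes $\db k\cdot FU^\downarrow$ defined in $\ext A$, and filter closure under application yields $V_2\in\phi$. For any $v\in V_2$, writing $v\leq ru$ with $r\in\db k$ and $u\in FU^\downarrow$, the order-PAS axioms give $vw\leq ruw\leq u$ for every $w\in A$, so $vw\in FU^\downarrow$ by downward closure; thus $V_2\cdot A$ is defined and $\subseteq FU^\downarrow$. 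For the $(\Rightarrow)$ direction, given such a $V_2$, note that $A=\top\in\phi$ because filters are nonempty and upward closed, so closure under defined application places $V_2\cdot A$ in $\phi$, and upward closure then forces $FU^\downarrow\in\phi$. The one nonroutine move is guessing the witness $V_2=\db k\cdot FU^\downarrow$ in the backward direction; everything else is unfolding the definition of the filter quotient and invoking the order-PAS axioms, combinatory completeness, and filter closure.
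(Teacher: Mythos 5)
Your proof is correct and takes essentially the same route as the paper's: reduce supineness of $!$ to the comparison $\exists_!(U)\simeq A$ in the fibre over the terminal (i.e.\ in $\ext A/\phi$), witness the map $A\to\exists_!(U)$ by $\db{k}\cdot\exists_!(U)$ (the paper's $\comb k U$) and the reverse map by the identity realizers (the paper's $\comb i$). You only spell out more explicitly what the paper leaves terse, namely the converse direction via $A\in\phi$ together with closure of $\phi$ under application and upward closure, and the downward-closure reading of $FU$.
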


\begin{proof} Any prone downset of $\A$ is equivalent to a downset of the form $\A_U = F_{U\hookrightarrow A}(\A)$ where $U\in \ext A$, and for there downsets $\exists_!(\A_U) = U$. Because $\comb k U\subseteq A\arrow U$ and $\comb i\subseteq U\arrow A$, $U$ is equivalent to $A$ if $U$, and therefore $\comb k U$, is a member of $\phi$.
\end{proof}

\begin{remark} This property helps explain how the fibred locale $F$ can interpret universal quantification inside existential quantification over $C$: $\exists x\oftype C. \forall y\oftype Y.p(x,y)$ is valid if and only if there is a $U\in \phi$ such that $\forall x\oftype U,y\oftype Y.p(x,y)$ and thus we get a simple inclusion. The last two characteristic properties deal with quantification that is more deeply nested.
\end{remark}

We now use these principles to show that arbitrary maps have an object of realizers.

\begin{theorem}[tracking principle] Let $F:\cat F\to\cat H$ be a fibred locale and let $C\in F_A$ be a filter that represents $\phi$, such that Church's rule and the uniformity rule hold. Let $f:X\to X'$ be any arrow, let $s:Y\to X$ be any supine \prodo morphism and let $s:Y'\to X'$ be any supine \prodo morphism. Then there is a $U\in \phi$ such that $(a,b,x)\mapsto(ab,f(x))$ defines a total map $C_U\times Y\to Y'$ that commutes with $s\circ \pi_1$ and $s'$; here $C_U=F_{U\hookrightarrow A}(C)$. \label{tracking}
\[\xymatrix{
C_U \times Y\ar@{.>}[rr]^(.55){(a,b,x)\mapsto (ab,f(x))}\ar@{.>}[d] &  & Y'\ar[d]^{s'} \\
Y\ar[r]_s & X\ar[r]_f & X'
}\]
\end{theorem}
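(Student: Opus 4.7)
The plan is to apply in sequence the three rules stated just above: first a pullback brings the realizability structure on $X'$ into play, then Church's rule manufactures the tracker, and finally the uniformity rule together with the representation property of $C$ removes the remaining dependence on $X$.

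First, I will form the pullback $P = Y\times_{X'} Y'$ in $\cat F$ of the supine \prodo morphism $s':Y'\to X'$ along $f\circ s:Y\to X'$, with projections $t:P\to Y$ and $r:P\to Y'$. Since $F$ is a bifibration satisfying the Beck--Chevalley condition, supines are pullback-stable, so $t$ is supine; and because $Y'\hookrightarrow C\times X'$ is a prone downward closed inclusion and prone morphisms are also pullback-stable, $P$ sits inside $C\times Y$ as a prone downset. Hence $t:P\to Y$ is a supine \prodo morphism, and $r$ carries $(a,y)\in P$ to $(a,f(s(y)))\in Y'$.

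Next, I will feed the composable pair $t:P\to Y$ (supine \prodo) and $s:Y\to X$ (\prodo) into Church's rule. This yields a supine \prodo morphism $e:W\to X$ together with a morphism $W\times_X Y\to P$ of the form $(a,b,x)\mapsto (ab,b,x)$ commuting with the projections to $Y$. Composing with $r$ produces $W\times_X Y\to Y'$ sending $(a,b,x)\mapsto (ab,f(x))$, which commutes with $s\circ\pi_1$ and $s'$ --- precisely what is required, but with $W$ still a family over $X$ rather than the uniform $C_U$.

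Finally, I will apply the uniformity rule to reduce $W$ to uniform form. The aim is to produce $U\in\phi$ with $C_U\times X\subseteq W$ as prone downsets of $C\times X$; pulling that inclusion back along $s:Y\to X$ then gives $C_U\times Y\hookrightarrow W\times_X Y$, and composing with the map from the previous step yields the desired morphism $C_U\times Y\to Y'$ of the asserted shape. To get the inclusion, I will extend $W$ along the canonical vertical $v:X\to\top_{FX}$ to a supine \prodo morphism $W^\top\to\top_{FX}$, and then feed this, together with the prone morphism $\top_{FX}\to\termo_\cat F$, into the uniformity rule. The resulting supine \prodo morphism $C_U\to\termo_\cat F$ satisfies $C_U\times\top_{FX}\subseteq W^\top$, and the representation property of $C$ then forces $U\in\phi$. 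Pulling back along $v$ recovers $C_U\times X\subseteq W$, which closes the argument.

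The main technical obstacle will lie in this last step. The uniformity rule as stated demands that its second morphism be prone, yet $X\to\termo_\cat F$ is prone only when $X$ already equals the top element of its fibre. The detour through $\top_{FX}$ via the vertical $v$ is what bridges this gap; the subtle points will be verifying that the pushforward $W^\top$ really is a supine \prodo morphism and that the uniformity conclusion for $W^\top\to\top_{FX}$ correctly pulls back along $v$ to an inclusion inside $C\times X$. Both should come out of Beck--Chevalley applied to the trivial pullback square built from $v$ and $\id_{FX}$, together with the fact that a vertical pushforward in a preorder fibre is just taking the image as a subobject of the larger ambient domain.
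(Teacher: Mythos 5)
Your first two steps run parallel to the paper's proof: the pullback of $s'$ along $f\circ s$ and a single application of Church's rule are exactly the paper's opening moves. The difference is that you apply Church's rule to the pair $(t:P\to Y,\ s:Y\to X)$, so the rule hands you a supine \prodo morphism $e:W\to X$, a family indexed over $X$; the paper instead first composes with the unit $u:X\to\top_{FX}$ and applies Church's rule to $P\to Y\to\top_{FX}$ with second leg $u\circ s$ (the rule only requires its second argument to be a \prodo morphism, and $u\circ s$ still is one, though no longer supine), so that the output is a supine \prodo morphism over $\top_{FX}$ from the start. This ordering is not cosmetic: it is precisely what your last step cannot recover afterwards.

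The gap is the claim that the extension $W^\top\to\top_{FX}$ of $W$ along the vertical $v:X\to\top_{FX}$ is supine. Taking $W^\top$ to be the prone downset of $C\times\top_{FX}$ over the same underlying mono $FW\hookrightarrow A\times FX$ (the only sensible reading of your pushforward), supineness of $e:W\to X$ says $\exists_p(W)\simeq X$ for $p=Fe$, which only yields $X\leq \exists_p(W^\top)\leq \top_{FX}$; Beck--Chevalley for the trivial square built from $v$ and $\id_{FX}$ adds nothing, and equality with $\top_{FX}$ is a genuinely stronger, uniform statement: in the realizability model it says that from an arbitrary point of $FX$, with no realizer of membership in $X$ available, one can uniformly produce a realizer lying over it in $W$. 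That can fail --- for instance, if some fibre of $X$ is unrealized, the $W$ supplied by Church's rule may contain nothing at all over that point, and you have no control over which witness $W$ the rule produces. Without supineness of $W^\top\to\top_{FX}$ the uniformity rule cannot be invoked, so the final reduction to $C_U$ collapses. (Your other worry is actually unproblematic: granting $C_U\times\top_{FX}\subseteq W^\top$, intersecting with $C\times X$ does return $C_U\times X\subseteq W$, because reindexing $C\times X$ along the same underlying mono gives back $W$.) The repair is the paper's ordering of moves: pass to $\top_{FX}$ \emph{before} Church's rule, noting that $u\circ s:Y\to\top_{FX}$ is still a \prodo morphism, so that Church's rule already delivers a supine \prodo morphism $r:W\to\top_{FX}$; then the uniformity rule applied to $W\to\top_{FX}\to\termo$ together with the representation property of $C$ produces the required $C_U$ with $U\in\phi$.
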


\begin{proof} We pull $s'$ back along $f\circ s$ and get a supine prone map $t = (f\circ s)^*(s'):Z\to Y$ and a projection $t':Z\to Y'$ as a result. Let $u:X\to \top_{FX}$ be the unit of the adjunction $F\dashv \top$ over $X$ and note that $u\circ s: Y\to \top_{FX}$ is still a \prodo morphism, just not a supine one. We apply Church's rule to the sequence $Z\stackrel t\to Y \stackrel{u\circ s}\to \top_{FX}$ and find a supine \prodo morphism $r:W\to \top_{FX}$, such that $(a,x)\in W$ and $(b,x)\in Y$ implies $(ab,f(x))\in Y'$.
We then apply the uniformity rule to the sequence $W \stackrel r\to\top_{FX} \stackrel !\to \termo$ to find a supine \prodo morphism $!:V \to \termo$ such that if $v\in V$ then $(v,x)\in W$. Because $C$ represents $\phi$, $V=C_U$ for some $U\in \phi$. We see that $C_U\times Y\subseteq W$ and we can restrict $r$ to this subobject.
\[ \xymatrix{
& C_U\times Y \ar@{.>}[rrr] \ar@{.>}[d]^r \ar@{}[dr]|<\lrcorner \ar@{.>}[dl]_{(a,b,x)\mapsto (ab,b,x)} &&& C_U \ar@{.>}[d] \\
Z\ar@{.>}[r]^t \ar@{.>}[d]_{t'} \ar@{}[dr]|<\lrcorner & Y\ar[r]^s\ar[d]_{f\circ s} &
X\ar[r]^u \ar[dl]^f & \top_{FX}\ar[r] & \termo \\
Y' \ar@{->}[r]_{s'} & X'
}\]
This is the way the last three characteristic properties imply the realizability of each morphism.
\end{proof}

\begin{remark} `Tracking' is a synonym of realizing a function, hence the name \xemph{`tracking principle'}. \end{remark}

\subsection{Equivalence}
We close this section by proving that the listed properties of realizability fibrations are indeed characteristic, i.e. that a fibred locale that satisfies them is equivalent to the realizability fibration.

\begin{theorem}[characterization theorem] Let $\cat H$ be a Heyting category, let $A$ be an order partial applicative structure in $\cat H$ and let $\phi$ be a combinatory complete external filter of $A$. Each separated fibred locale $F:\cat F\to\cat H$ with a weakly generic filter $C\in F_A$ where Church's rule and the uniformity rule hold and where $C$ represents $\phi$ is equivalent to the realizability fibration.\label{characterization} \end{theorem}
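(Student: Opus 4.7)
The plan is to construct a vertical morphism of fibred locales $G:\ds A/\phi\to F$ over $\cat H$ and show that the listed characteristic properties force $G$ to be a fibrewise equivalence. On objects, given a downward closed $Y\hookrightarrow A\times X$ representing an object of $(\ds A/\phi)_X$, I would set $G(Y) = \im{q}F_{p}(C)$, where $p:Y\to A$ and $q:Y\to X$ are the projections. The Beck--Chevalley condition in $F$ makes this construction compatible with reindexing along arrows of $\cat H$, and the fact that $C$ is a vertical filter ensures that $G$ is compatible with the application used to define the filter quotient on $\ds A$.

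Since the fibres of both $F$ and $\ds A/\phi$ are preordered, extending $G$ to morphisms reduces to monotonicity: from $U\in\phi$ with $U\cdot Y\subseteq Y'$ one obtains a vertical morphism $G(Y)\to G(Y')$ by internally applying the filter $C$ to itself, using that the inclusion $U\hookrightarrow A$ lifts to a prone downset $C_U = F_{U\hookrightarrow A}(C)$ of $C$ whose projection to $\termo$ is supine --- precisely because $C$ represents $\phi$. Preservation of top, finite meets and joins, indexed coproducts, and the Frobenius condition reduce to performing the combinatory arguments from the proof of Lemma \ref{fcHa} inside $\cat F$.

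Essential surjectivity follows from Shanin's rule (Proposition \ref{Shanin}): every $X\in \cat F$ admits a supine \prodo morphism $s:Z\to X$ with inclusion $Z\hookrightarrow C\times X$ prone, and then $X\simeq G(Y)$ for $Y = FZ\subseteq A\times FX$. The key step is order reflection. Given a vertical morphism $\psi:G(Y)\to G(Y')$ in $F_{FX}$, I apply the tracking principle (Theorem \ref{tracking}) with $\psi$ as $f$ and with $s,s'$ the canonical supine \prodo morphisms from the prone lifts of $Y$ and $Y'$. It produces $U\in\phi$ together with a map of the form $(a,b,x)\mapsto (ab,x)$ from $C_U\times Z$ into the prone lift of $Y'$, which translates back in $\cat H$ to the subobject inclusion $U\cdot Y\subseteq Y'$; hence $Y\leq Y'$ in $\ds A/\phi$, proving fullness. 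Faithfulness is automatic since both fibrations are preorder-valued.

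The main obstacle I anticipate is bookkeeping: reconciling the many choices of prone lifts, supine pushforwards, and representatives $U\in\phi$ into a coherent functor that preserves the fibred-locale structure up to canonical isomorphism. The separation hypothesis is what makes this tractable, because $\im{e}F_e\simeq\id$ for every regular epimorphism $e$ normalizes the behaviour of supine morphisms, and Corollary \ref{monicspans} lets the prone--supine spans be taken monic so that the presentations $X\simeq G(Y)$ are essentially unique. Once these choices have been made canonical, preservation of the fibred-locale structure follows from the universal properties of prone and supine arrows and from the weak genericity of $C$.
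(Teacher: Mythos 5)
Your proposal is correct and follows essentially the same route as the paper's own proof: you define the vertical morphism on objects by $Y\mapsto \exists_{q}F_{p}(C)$, get essential surjectivity from Shanin's rule, use representability of $\phi$ (supineness of $C_U\to\termo$) to realize morphisms tracked by some $U\in\phi$, obtain fullness from the tracking principle, and note faithfulness from both fibrations being faithful. This is exactly the paper's argument, up to your slightly more explicit remarks about Beck--Chevalley coherence and monic prone--supine spans.
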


\begin{proof} We find a vertical morphism $m= (\id,m): (\ds A/\phi) \to F$, by sending a family $Y\subseteq A\times X$ of downsets of $A$ to $\exists_{\pi_1|_Y} F_{\pi_0|_Y}$, which is the only way up to isomorphism that a morphism can map the family. Because of theorem \ref{Shanin}, this mapping of objects is essentially surjective.

For each arrow $f:X\to X'$ in $\cat H$, there is an arrow $g:Y\to Y'$ in $\cat DA/\phi$ that $\ds A/\phi$ maps to $g$, if $U = (Y\arrow Ff(Y'))\in \phi$. Noting that $Y$,$Y'$ are families of subobjects of $A$, let $p:Y\to A$ and $p':Y'\to A$ be the projections to $A$. Because $C$ is closed under application, we have is a morphism $(a,b,x) \mapsto (ab,f(x)) C_U\times F_p(C) \to F_p'(C)$. We compose this with the supine map $s':F_p'(C) \to m(Y')$ and then factor it through the supine maps $\pi_1:C_U\times F_p(C) \to F_p(C)$ and $F_p(C)\to m(Y)$, in order to find $m(g)$. That $\pi_1$ is supine follows from the fact that $U\in \phi$ and that $\phi$ represents $U$.

Because of theorem \ref{tracking}, this mapping is full. Because both $F$ and $\ds A\phi$ are faithful, and $Fm\simeq \ds A\phi$, $m$ is faithful. So $(\id,m)$ is an equivalence of fibred categories.
\end{proof}

\section{Axiomatization}
We will now show what the consequences of the characteristic properties of realizability fibrations are for the logic of realizability. We connect the characteristic properties of realizability interpretation to axiom schemas in the internal language of complete fibred Heyting algebras, as far as this is possible, and explain why the remaining properties cannot be expressed internally. But first we make remark \ref{models} precise by defining what it means for a fibred Heyting category to satisfy a proposition.
\begin{definition} Let $F:\cat C\to\cat B$ be a complete fibred Heyting algebra over a category $\cat B$ with finite products. We define its internal language as follows. 
\begin{itemize}
\item The objects of $\cat B$ are types.
\item For each type $X$, the set of terms $T_X$ consists of variable symbols $x,y,z\dots$ in an infinite set of variable symbols $V_X$, and terms of the form $f(t)$ where $f$ is an arrow $Y\to X$ and $t$ is a term of type $Y$.
\item For each type $X$ the set of formulas $\Phi_X$ consist of 
 \begin{itemize}
 \item \xemph{constants} $\top$ and $\bot$;
 \item \xemph{equations} $t=s$ where $t,s$ are terms of type $X$;
 \item \xemph{simple predicates} $R(x)$ where $R\in F_X$;
 \item \xemph{compound predicates} $p\land q$, $p\vee q$, $p\to q$ where $p$ and $q\in \Phi_X$;
 \item \xemph{quantified predicates} $\forall y\oftype Y.p$, $\exists y\oftype Y.p$ where $p\in \Phi_{X\times Y}$ and $y\in V_Y$.
 \end{itemize}
\end{itemize}

The interpretation of these terms and formulas is defined as follows. 
\begin{itemize}
\item For terms we let $\db x = \id_X$ if $x\in V_X$, and $\db{f(t)} = f\circ \db t$.
\item For formulas:
 \begin{itemize}
 \item for equations let $\delta:X\to X^2$ be the diagonal map; \[ \db{t=u} \simeq F_{(\db t,\db u)}(\im\delta(\top)) \]
 \item for simple predicates $\db{R(t)} = F_{\db t}(R)$;
 \item compound predicates and constants:
 \begin{align*}
 \db{p\land q} &\simeq \db p\land\db q & \db\top &\simeq \top_X \\
 \db{p\vee q} &\simeq \db p\vee\db q & \db\bot &\simeq \bot_X \\
 \db{p\to q} &\simeq \db p \to \db q
 \end{align*}
 \item for quantified predicates let $i:I\times X\to I$ be the first projection and $x:I\times X \to X$ the second; 
 \[ \db{\forall x\oftype X.p} \simeq \forall_{i}(\db p) \quad \db{\exists x\oftype X.p} \simeq \exists_{i}(\db p)  \]
 \end{itemize}
\end{itemize}

A well formed formula without free variables is a \xemph{proposition}.  We say $p$ is \xemph{valid}, that it \xemph{holds} in $F$ and that $F$ \xemph{satisfies} it, and write $F\models p$ if $\db p \simeq \top_\termo$.
\end{definition}

\begin{remark} Even if we assume the complete fibred Heyting algebra is antisymmetric, we need to distinguish the equality of predicates from the equality in the internal language and we we already have the isomorphism symbol $\simeq$ to do that. We assume that the interpretations are isomorphism classes of predicates.
\end{remark}

\begin{remark} The following convention makes some formulas more readable: if $Y\subseteq X$ in $\cat H$, then $Y(x) = (\exists y\oftype Y.x=y)$. \end{remark}

\subsection{Theory of realizability}
We express some of the characteristic properties of realizability fibrations in the internal language of a complete fibred Heyting algebra. Not all characteristic properties can be expressed in this way, therefore we select some complete fibred Heyting algebras that have the inexpressible properties build into them.


\begin{definition} Let $A$ be a partial applicative structure and let $\phi$ be a combinatory complete external filter of $A$. A \xemph{candidate} for $(A,\phi)$ is a complete fibred Heyting algebra $F:\cat F\to\cat H$ with a weakly generic object $C\in F_A$ such that if $U\in\ext A$ and the unique map $!:F_{U\hookrightarrow A}(C)\to \termo$ is supine, then $U\in \phi$. 
\end{definition}

\begin{example} The realizability fibration for any combinatory complete external filter $\chi\subseteq \phi$ is a candidate. \end{example}

We now introduce a handful of axioms and axiom schemas that correspond to characteristic properties of the realizability fibrations.

\newcommand\axiom\mathrm
\begin{definition}[surjection schema] The \xemph{surjection schema} is the following formula for each regular epimorphism $e:X\to Y$ in $\cat H$.
\[ \axiom{S}_e \simeq \forall y\oftype Y.\exists x\oftype X.e(x)=y \]
\end{definition}

\begin{lemma} A complete fibred Heyting algebra satisfies the surjection schema if and only if it is separated. \end{lemma}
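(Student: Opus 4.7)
The plan is to unpack the interpretation of $\axiom{S}_e$ and show that, after using Beck-Chevalley and the $\exists_{!_Y}\dashv F_{!_Y}$ adjunction, validity of $\axiom{S}_e$ reduces \emph{literally} to the equation $\exists_e(\top_X)\simeq \top_Y$ that defines separatedness.

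First I would compute $\db{e(x)=y}\in F_{Y\times X}$. By definition this is $F_{(e\circ\pi_X,\pi_Y)}(\im\delta(\top_Y))$ where $\delta:Y\to Y\times Y$. I would show that the square
\[\xymatrix{
X\ar[r]^(.45)e \ar[d]_{(e,1_X)} \ar@{}[dr]|<\lrcorner & Y \ar[d]^\delta \\
Y\times X \ar[r]_(.45){(e\circ\pi_X,\pi_Y)} & Y\times Y
}\]
is a pullback (the fibre over $Y$ along $\delta$ is exactly the pairs $(y,x)$ with $e(x)=y$, represented by the split mono $(e,1_X)$). Applying the Beck-Chevalley condition for the bifibration $F$ then gives
\[ \db{e(x)=y}\simeq F_{(e\circ\pi_X,\pi_Y)}\im\delta(\top_Y)\simeq \im{(e,1_X)} F_e(\top_Y)\simeq \im{(e,1_X)}(\top_X). \]

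Next I would existentially quantify along $\pi_Y:Y\times X\to Y$. Since coindexing is functorial up to isomorphism, $\exists_{\pi_Y}\circ\im{(e,1_X)}\simeq \exists_{\pi_Y\circ (e,1_X)}=\exists_e$, so
\[ \db{\exists x\oftype X.e(x)=y}\simeq \exists_e(\top_X)\in F_Y. \]
Finally, applying $\forall_{!_Y}$ and the fact that $\forall_{!_Y}\dashv F_{!_Y}$, we have $F\models \axiom{S}_e$ iff $\top_\termo\leq \forall_{!_Y}(\exists_e(\top_X))$ iff $\top_Y=F_{!_Y}(\top_\termo)\leq \exists_e(\top_X)$. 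Since the reverse inequality is automatic, this is equivalent to $\exists_e(\top_X)\simeq \top_Y$ for every regular epimorphism $e:X\to Y$, i.e., to $F$ being separated.

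The only step that requires any care is the Beck-Chevalley calculation identifying $\db{e(x)=y}$ with $\im{(e,1_X)}(\top_X)$; the rest is bookkeeping with adjunctions. No external hypotheses beyond the standing assumptions on complete fibred Heyting algebras (in particular Beck-Chevalley for $\exists$ and the Frobenius condition) are needed, and the argument proceeds in both directions simultaneously once the formula has been reduced to the equational statement of separatedness.
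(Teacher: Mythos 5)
Your proof is correct and follows essentially the same route as the paper's: unfold the interpretation of $\axiom S_e$, recognize the graph-of-$e$ pullback square against $\delta$, apply Beck–Chevalley to rewrite the equality predicate, collapse $\exists_{\pi_Y}\circ\exists_{(e,1_X)}$ to $\exists_e$, and use $F_{!_Y}\dashv\forall_{!_Y}$ to reduce validity to $\exists_e(\top)\simeq\top$. The only blemish is that you state this last adjunction backwards (``$\forall_{!_Y}\dashv F_{!_Y}$''), though you apply it in the correct direction.
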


\begin{proof} Let $e:X\to Y$ be a regular epimorphism in $\cat H$, let $p:X\times Y\to X$ be the first projection and let $q:X\times y\to X$ be the second projection. Then $\db{\axiom S_e} = \forall_!(\exists_q(F_{(e\circ p,q)}(\exists_\delta(\top))))$. We can simplify this using the Beck-Chevalley condition, because the following square is a pullback:
\[ \xymatrix{
X \ar[r]^e \ar[d]_{(\id,e)} \ar@{}[dr]|<\lrcorner & Y \ar[d]^\delta \\
X\times Y \ar[r]_{(e\circ p,q)} & Y^2 
}\]
This means $\db{\axiom S_e} = \forall_!(\exists_e(F_e(\top)))$. Reindexing preserves finite limits so $F_e(\top) = \top$, and $\top \leq \forall_!(p)$ if $\top = F_!(\top) \leq p$, so $F\models \db{\axiom S_e}$ if $\top = \exists_e(\top)$. Therefore being separated is equivalent to satisfying the surjection schema.
\end{proof}

\begin{definition}[filter axioms] Let $D$ be the domain of the application operator of $A$, let $a:D\to A$ be the application operator. Let $\set{\leq}$ be the ordering and let $e:\roso \to A^2$ be in inclusion. Let $\pi_0$ and $\pi_1$ be the first and second projections $A^2 \to A$.
\begin{align*}
\axiom F_1 &\simeq \forall x\oftype A.\forall y\oftype A. D(x,y)\land C(x)\land C(y) \to C(a(x,y)) \\
\axiom F_2 &\simeq \forall x\oftype A.\forall y\oftype A.\roso(x,y)\land C(x)\to C(y)
\end{align*}
\end{definition}


\begin{lemma} A candidate $(F,C)$ for $(A,\phi)$ satisfies $\axiom F_1$ and $\axiom F_2$ if and only if $C$ is a vertical filter. \end{lemma}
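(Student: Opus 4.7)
The plan is to unfold the Kripke--Joyal interpretation of $\axiom F_1$ and $\axiom F_2$ in $F_\termo$ and to recognize, after moving hypotheses into the context, exactly the two inequalities defining a vertical filter.

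First I treat $\axiom F_2$. Working in the fibre $F_{A^2}$, the simple predicates $C(x)$ and $C(y)$ become $F_{\pi_0}(C)$ and $F_{\pi_1}(C)$, and the relation symbol $\roso(x,y)$ unfolds (via the convention $Y(z) = \exists y\oftype Y.\,z=y$ applied to the mono $e\oftype\roso\hookrightarrow A^2$) to $\exists_e(\top)$. The formula $\axiom F_2$ is valid iff $\forall_!$ of the interpretation equals $\top_\termo$, which by the adjunction $F_! \dashv \forall_!$ is equivalent to the interpretation being $\top_{A^2}$. Now I use the standard observation that, in any Heyting category/fibration, $\exists_e(\top)\to\phi = \top$ in $F_{A^2}$ iff $F_e(\phi) = \top$ in $F_\roso$ (by Frobenius and the $\exists_e\dashv F_e$ adjunction). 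Pulling everything back along $e$ and using $s = \pi_0\circ e$, $t = \pi_1\circ e$, this simplifies to $F_s(C) \land \top \leq F_t(C)$, i.e.\ $F_s(C) \leq F_t(C)$ in $F_\roso$. That is precisely the first half of the vertical-filter condition.

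Second, for $\axiom F_1$ the argument is the same except that we have to be slightly careful because the term $a(x,y)$ is only meaningful after $(x,y)$ lies in $D$. Let $j\oftype D\hookrightarrow A^2$ be the inclusion, so $D(x,y) = \exists_j(\top)$, and write the formula as $D(x,y)\to (C(x)\land C(y)\to C(a(x,y)))$. Moving $D(x,y)$ into the context via the same $\exists_j\dashv F_j$ trick reduces validity to an inequality in $F_D$; there $p = \pi_0\circ j$, $q = \pi_1\circ j$ are the projections of $D$, and $a\colon D\to A$ is now a genuine morphism on which $F_a(C)$ makes sense. The interpretation becomes $F_p(C) \land F_q(C) \leq F_a(C)$ in $F_D$, which is the remaining half of the vertical-filter condition.

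Both implications of the lemma are contained in these computations: the reductions are equivalences at every step, so $(F\models \axiom F_1 \land \axiom F_2)$ holds iff both inequalities hold, iff $C$ is a vertical filter. The only subtle point worth watching is the justification of the ``$\exists_j(\top)\to\phi=\top \iff F_j(\phi)=\top$'' step (which requires Frobenius and the Beck--Chevalley condition that are available in a complete fibred Heyting algebra), and the mild notational care needed to handle the partiality of $a$ by pulling back to $D$ before interpreting $a(x,y)$; everything else is a routine application of the adjunctions built into the definition of the internal language.
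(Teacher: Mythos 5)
Your proof is correct and is exactly the routine unfolding that the paper dismisses with the single word ``Trivial'': validity of $\axiom F_1$ and $\axiom F_2$ reduces, via the adjunctions $F_!\dashv\forall_!$, $\exists_e\dashv F_e$ and the Heyting-implication adjunction (plus preservation of $\to$ by reindexing), to the two inequalities $F_s(C)\leq F_t(C)$ and $F_p(C)\land F_q(C)\leq F_a(C)$ defining a vertical filter. The only quibble is that your final paragraph credits the Beck--Chevalley condition for the context-shift step, whereas only the adjunctions and Frobenius are actually needed there; this does not affect correctness.
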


\begin{proof} Trivial. \end{proof}

\begin{definition}[modified Church's thesis] For arbitrary candidates, \xemph{modified Church's thesis} is the following schema. Let $Y\hookrightarrow A\times Z$ and let $X\hookrightarrow A\times Y$ be families of downsets of $A$, then:
\[ \axiom{MCT}_{X,Y,Z}\simeq \left(\begin{array}{l} 
\forall z\oftype Z.(\forall y\oftype A. Y(y,z)\land C(y) \to \exists x\oftype A. X(x,y,z)\land C(x)) \to \\
\exists w\oftype A. C(w)\land \forall y\oftype A.Y(y,z) \to D(w,y) \land X(a(w,y),y,z)) \end{array}\right)
\]
\end{definition}

\begin{remark} Starting with extended Church's thesis, the modifications involve replacing $\Kone$ by $A$,  adding a parameter $z$, and only allowing `prone predicates', i.e. predicates of the form $X(x,y,z)\land C(x))$ for $X\in \ds A_Y$ to appear on the right of the antecedent. Besides that, we need $\axiom F_1$ to show that $Y(y,z)\land C(x)$ implies $X(a(w,y),y,z))\land C(a(w,y))$ in the consequent. In the case of \xemph{Kleene's first model} the relevant order of $\Kone$ is discrete, and by Shanin's rule any predicate $P(x,y,z)\in \Phi_{\N\times Y}$ is equivalent to $\exists n\oftype\N. X(n,y,z)\land n_0=x$. In this way we get extended Church's thesis back in the case of recursive realizability, see example \ref{Kone}.
\end{remark}\comment{effective topos}

\begin{lemma} A candidate $(F,C)$ for $(A,\phi)$ where $C$ is a vertical filter, satisfies Church's rule if and only if it satisfies $\axiom{MCT}$. \end{lemma}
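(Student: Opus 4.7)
The proof turns on a tight correspondence: viewing $X\in\ds A_Y$ and $Y\in\ds A_Z$ as prodo morphisms in $\cat F$ via the vertical filter $C$, the MCT antecedent is equivalent, in the internal logic of $F$, to the supineness of a canonical prodo morphism, and the MCT consequent is the internal reformulation of what Church's rule provides on application to this morphism. Both directions rest on the same Frobenius / Beck--Chevalley calculation.

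For $(\Rightarrow)$ (Church's rule implies MCT), fix an instance $\axiom{MCT}_{X,Y,Z}$. Write $m_Y:Y\hookrightarrow A\times Z$ with projections $p_Y,r$, and $m_X:X\hookrightarrow A\times Y$ with projections $p_X,q$. Define $\tilde Y := F_{m_Y}(C\times\top_Z)\simeq F_{p_Y}(C)\in F_Y$ and $\tilde X := F_{m_X}(C\times\tilde Y)\simeq F_{p_X}(C)\land F_q(\tilde Y)\in F_X$; these are prone and give prodo morphisms $f:\tilde X\to\tilde Y$ over $q$ and $g:\tilde Y\to\top_Z$ over $r$. By Frobenius, $\exists_q(\tilde X)\simeq\exists_q(F_{p_X}(C))\land\tilde Y$, so $f$ is supine exactly when $\tilde Y\leq\exists_q(F_{p_X}(C))$ in $F_Y$; transporting this along $\im{m_Y}$ and unfolding in the internal logic shows this inequality is precisely the MCT antecedent $Y(y,z)\land C(y)\to\exists x.X(x,y,z)\land C(x)$. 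Under this hypothesis Church's rule yields a supine prodo morphism $e:W\to\top_Z$ together with the commuting map $(a,b,z)\mapsto(ab,b,z):W\times_Z\tilde Y\to\tilde X$. Externally, supineness of $e$ reads $\forall z.\exists w.W(w,z)\land C(w)$, and the commuting map reads $\forall y.Y(y,z)\to D(w,y)\land X(wy,y,z)$; combined, these give the MCT consequent.

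For $(\Leftarrow)$ (MCT implies Church's rule), start from prodo morphisms $f:\tilde X\to\tilde Y$ (supine) and $g:\tilde Y\to\tilde Z$ in $\cat F$. Apply Shanin's rule (proposition \ref{Shanin}) to $\tilde Z$, then use Beck--Chevalley together with a pairing combinator supplied by combinatory completeness of $C$ to absorb the extra prone-downset data on $\tilde Z$ into the base type; this reduces to the case $\tilde Z\simeq\top_Z$ with $Z\in\cat H$. Now $\tilde X,\tilde Y$ come from downsets $X\in\ds A_Y$, $Y\in\ds A_Z$, and the same Frobenius computation shows that $f$ being supine is exactly the MCT antecedent for $(X,Y,Z)$. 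Invoking $\axiom{MCT}_{X,Y,Z}$ yields the internal consequent $\forall z.\exists w. C(w)\land \forall y.Y(y,z)\to D(w,y)\land X(wy,y,z)$; externalise by letting $W\subseteq A\times Z$ be the downward closure (in the first $A$-coordinate) of the set of witnesses. Then $W$ is a downset, hence a prone downset over $\top_Z$; the projection $e:W\to\top_Z$ is supine because each $z$ admits a witness $w\in W\cap C$ (the MCT witness itself, since $C$ is upward closed); and the map $(a,b,z)\mapsto(ab,b,z):W\times_Z\tilde Y\to\tilde X$ is well defined because for $a\in W$ we can choose $a'\geq a$ in $C$ realising the consequent, and then $ab\leq a'b$ together with downward closure of $X$ in its first coordinate forces $(ab,b,z)\in X$.

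The main obstacle is the reduction in $(\Leftarrow)$: MCT's syntax only exposes types $Z\in\cat H$, whereas Church's rule is stated for arbitrary $\tilde Z\in\cat F$ which may carry its own prone-downset structure. Marshalling Shanin's rule together with the pairing combinator and Beck--Chevalley so as to flatten this nested realizer data into a single downset over a base type, and then verifying that the downward closure of the MCT witnesses genuinely produces a prone downset with supine projection (rather than an object of $F_Z$ with merely non-empty image), is where separatedness of $F$ and combinatory completeness of $C$ must be combined carefully.
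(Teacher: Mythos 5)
Your forward direction proves the rule, not the schema, and this is a genuine gap. Satisfying $\axiom{MCT}_{X,Y,Z}$ means the internal implication $\forall z\oftype Z.(\mathrm{ant}(z)\to\mathrm{cons}(z))$ is valid, i.e.\ $\db{\mathrm{ant}}\leq\db{\mathrm{cons}}$ in $F_Z$ (writing $\mathrm{ant}$, $\mathrm{cons}$ for the antecedent and consequent). You instead build $\tilde Y$ from $C\times\top_Z$, observe that supineness of $f$ amounts to the inequality $\tilde Y\leq\exists_q(F_{p_X}(C))$, identify that with the antecedent, and then argue ``under this hypothesis'' — that is, you assume the \emph{global validity} of $\forall z.\mathrm{ant}(z)$ and deduce the validity of $\forall z.\mathrm{cons}(z)$. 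That is the strictly weaker rule form and does not yield $\axiom{MCT}$. The missing idea, and the heart of the paper's argument, is to relativize to the predicate interpreting the antecedent: set $Z'=\db{\mathrm{ant}}\in F_Z$, $Y'=F_{Y\hookrightarrow A\times Z}(C\times Z')$ and $X'=F_{X\hookrightarrow A\times Y}(C\times Y')$. Then $f:X'\to Y'$ is supine \emph{by construction}, and Church's rule — which is deliberately stated for \prodo{} morphisms into an arbitrary object of $\cat F$, here $Z'$ — produces a supine $W'\to Z'$ that factors through $V'=F_{V\hookrightarrow A\times Z}(C\times Z')$ with $V=\set{(w,z)\in A\times Z\mid\forall y\in A.\,(y,z)\in Y\to wy\converges\land(wy,y,z)\in X}$, giving $Z'\leq\db{\mathrm{cons}}$, i.e.\ exactly $\axiom{MCT}$.

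Your converse has the complementary problem. The reduction of an arbitrary $\tilde Z\in\cat F$ to $\top_Z$ via Shanin's rule and a pairing combinator is not available under the hypotheses of this lemma: proposition \ref{Shanin} needs separatedness, and pairing realizers inside $C$ need combinatory completeness of $C$ (or that $C$ represents $\phi$); a candidate with a vertical filter is assumed to have neither — these are the separate schemas $\axiom S$ and $\axiom I_U$. It is also aimed at the wrong statement: after flattening to $\top_Z$ you would have to produce supineness over the whole base, whereas the hypothesis (supineness of $f$) only realizes the antecedent given a realizer of $\tilde Z$. No reduction is needed: supineness of $f$ unfolds to $\tilde Z\leq\db{\mathrm{ant}}$ in $F_{F\tilde Z}$, the schema $\axiom{MCT}$ gives $\db{\mathrm{ant}}\leq\db{\mathrm{cons}}$, and composing these, together with $\axiom F_1$ (closure of $C$ under application), shows directly that $V'=F_{V\hookrightarrow A\times Z}(C\times\tilde Z)\to\tilde Z$ is the required supine \prodo{} morphism — this is how the paper handles the arbitrary codomain, and it is precisely where the schema form (rather than the rule form you proved in the other direction) gets used.
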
 

\begin{proof} Assume Church's rule. Let $Y\hookrightarrow A\times Z$ and let $X\hookrightarrow A\times Y$ be families of downsets of $A$. We let $Z' = \db{(\forall y\oftype A. Y(y,z)\land C(y) \to \exists x\oftype A. X(x,y,z)\land C(x))}_{(Z,z)\mapsto \id_Z}$, $Y' = F_{Y\hookrightarrow A\times Z}(C\times Z')$ and $X' = F_{X\hookrightarrow A\times Y}(C\times Y')$, so that we get two \prodo morphisms $f:X'\to Y'$ and $g:Y'\to Z'$, and $f$ is supine. We can apply Church's rule to find a supine \prodo morphism $e:W' \to Z'$ such that for all $(w,z)\in W'$ and $(y,z)\in Y'$, $wy\converges$ and $(wy,y,x)\in X'$.

Let $V = \set{(w,z)\in A\times Z | \forall y\in A. (y,z)\in Y, wy\converges, (wy,y,z)\in X }$ and let $V' = F_{V\hookrightarrow A\times Z}(C\times Z')$, to get a \prodo morphism $d:V' \to Z'$.  Now $e$ factors through $d$, making $d$ a supine morphism. For this reason we have $F\models \forall z\oftype Z. Z'(z) \to \exists w\oftype A. C(w)\land V(w,z)$. The predicate $V$ is equivalent to $\forall y\oftype A.Y(y,z)\to D(w,y)\land X(a(w,y),y,z)$, so Church's rule implies $\axiom{MCT}$.

Assume $\axiom{MCT}$. Let $f:X'\to Y'$ and $g:Y'\to Z'$ be \prodo morphisms such that $f$ is supine. We let $X=FX'$, $Y=FY'$, $Z=FZ'$, and find that $V = \set{(w,z)\in A\times Z | \forall y\in A. (y,z)\in Y, wy\converges, (wy,y,z)\in X }$ and $V' = F_{V\hookrightarrow A\times Z}(C\times Z')$ provide a supine \prodo morphism $V'\to Z'$ such that for all $(w,z)\in V'$ and $(y,z)\in Y'$, $wy\converges$ and $(wy,y,z)\in W'$, according to $\axiom{MCT}_{X,Y,Z}$ and $\axiom{F}_1$. In this way Church's rule is satisfied. \end{proof}

\begin{definition}[uniformity principle for prone arrows] The \xemph{uniformity principle} for prone arrows $\axiom{UP}_{\mathrm{prone}}$ is the following schema. For each family of downwards closed sets $X\hookrightarrow A\times Y$ and each $g:Y\to Z$, 
\[ \axiom{UP_{X,Y,Z}} \simeq \left\{\begin{array}{ll} 
\forall z\oftype Z.(\forall y\oftype Y. g(y)=z \to \exists x\oftype A. X(x,y)\land C(x)) \to \\
\exists x\oftype A. C(x)\land \forall y\oftype Y. g(y)=z \to X(x,y)) \end{array}\right.
\]
\end{definition}

\begin{remark} The uniformity principle originally applied to power objects, but since we don't have those in all realizability models we use the fibres of prone arrows instead. Once again the axiom is made parametric. \end{remark}

\begin{lemma} A candidate satisfies $\axiom{UP}_{\mathrm{prone}}$ if and only if it satisfies the uniformity rule. \end{lemma}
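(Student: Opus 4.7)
The plan is to mirror, step for step, the proof just given for modified Church's thesis, using the same dictionary between families of downsets in $\ds A$ and \prodo morphisms in $\cat F$. In both directions the statement is translated between its internal-language and categorical forms; the hypothesis that $g$ is prone in the uniformity rule corresponds to the absence of any application operator on the right-hand side of the consequent of $\axiom{UP}_{\mathrm{prone}}$, which is what makes this lemma slightly easier than the previous one.

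For the direction from the uniformity rule to $\axiom{UP}_{\mathrm{prone}}$, I would fix a family $X\hookrightarrow A\times Y$ of downsets and a morphism $g\colon Y\to Z$, and form
$$Z' = \db{\forall y\oftype Y.\,g(y)=z \to \exists x\oftype A.\,X(x,y)\land C(x)}_{(Z,z)\mapsto \id_Z},$$
the interpretation over $Z$ of the antecedent of $\axiom{UP}_{X,Y,Z}$. Then I would let $Y' = F_g(Z')$ together with its canonical prone morphism $g'\colon Y'\to Z'$, and set $X' = F_{X\hookrightarrow A\times Y}(C\times Y')$, yielding a \prodo morphism $f\colon X'\to Y'$ which is supine by the way $Z'$ was chosen. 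Applying the uniformity rule to $f$ and $g'$ produces a supine \prodo morphism $e\colon W'\to Z'$ with $W'\times_{Z'}Y'\subseteq X'$, and its underlying family $W\hookrightarrow A\times Z$ verifies the consequent $\exists x\oftype A.\,C(x)\land \forall y\oftype Y.\,g(y)=z\to X(x,y)$.

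For the converse I would proceed symmetrically. Given a supine \prodo morphism $f\colon X'\to Y'$ and a prone $g\colon Y'\to Z'$, set $X=FX'$, $Y=FY'$, $Z=FZ'$, so that $X\hookrightarrow A\times Y$ is a family of downsets in the base. Supineness of $f$ is precisely the assertion that the antecedent of $\axiom{UP}_{X,Y,Z}$ is validated at $Z'$, so the axiom returns a downset $W\hookrightarrow A\times Z$ whose prone reindex $W'\hookrightarrow C\times Z'$ projects supinely onto $Z'$ and whose universal clause is exactly the inclusion $W'\times_{Z'}Y'\subseteq X'$ demanded by the uniformity rule.

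No real obstacle arises: because the axiom was tailored to the rule, the proof is even slightly shorter than for modified Church's thesis, with no invocation of $\axiom{F}_1$ required. The only care needed is to verify, via the Beck--Chevalley condition, that the pullbacks used to pass between $\cat F$ and $\ds A$ commute with the interpretation of the internal-language formulas, which has already been tacitly used in the preceding lemma.
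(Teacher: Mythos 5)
Your proposal is correct and follows essentially the same route as the paper: the same choice of $Z'$, $Y'=F_g(Z')$ and $X'=F_{X\hookrightarrow A\times Y}(C\times Y')$ in one direction, and the same reindexing of the downset $W$ extracted from the consequent in the other, with no use of $\axiom F_1$. The only cosmetic difference is that the paper routes the supine \prodo morphism supplied by the uniformity rule through the maximal downset $W=\set{(a,z)\mid \forall y.\,g(y)=z\to (a,y)\in X}$ defined in $\cat H$ (so that its predicate matches the consequent verbatim), whereas you use the rule's $W$ directly; both yield the same conclusion.
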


\begin{proof} Assume the uniformity rule. Let $Z' = \db{(\forall y\oftype Y. f(y)=z \to \exists x\oftype A. X(x,y)\land C(x))}_{(Z,z)\mapsto \id_Z}$, $Y'=F_f(Z')$ and $X' = F_{X\hookrightarrow A\times Y}(C\times Y')$, which gives us a \prodo morphism $f:X'\to Y'$ and a prone arrow $g':Y'\to Z'$, and $f$ is supine thanks to our choice of $Z'$. 

We let $W = \set{ (a,z)\in A\times Z | \forall y\in Y. g(y)=z \to (a,y)\in X }$, and $W' = F_{W\hookrightarrow A\times Z}(C\times Z')$. The \prodo morphism $W'\to Z'$ is supine because according to the uniformity rule, there is a supine \prodo morphism to $Z'$ that factors through $W'$. In this way we get $F\models \forall z.Z'(z) \to \exists a\oftype A. C(a)\land W(a,z)$, and $W(a,z)$ is equivalent to $\forall y\oftype Y. g(y)=z \to X(a,x)$. Therefore, the uniformity rule implies $\axiom{UP}_{\rm prone}$

Assume $\axiom{UP}_{\rm prone}$. Let $f:X'\to Y'$ be a supine \prodo morphism, and let $g':Y'\to Z'$ be prone. Let $X=FX'$, $Y=FY'$ and $Z=FZ'$. Let $W = \set{ (a,z)\in A\times Z | \forall y\in Y. g(y)=z \to (a,y)\in X }$, then $\axiom{UP}_{\rm prone}$ tells us that $\forall z\oftype Z. Z'(z)\to \exists a\oftype A. C(a)\land W(a,x)$. Also, the preimage $(\id_A\times g)^{-1}(W)$ is a subobject of $X$ because $\forall_{\id_A\times g} \dashv (\id_A\times g)^{-1}$. Let $W' = F_{W\hookrightarrow A\times Z}(C\times Z')$, then for each $(a,z)\in W'$ and $y\in Y'$ such that $g'(y) = z$ we have $(a,y)\in X'$; that means the candidate satisfies the uniformity rule.
\end{proof}

\begin{definition}[intersection] The \xemph{intersection schema} is for each $U\in \ext A$,
\[ \axiom I_U \simeq \exists x\oftype A.C(x)\land U(x) \]\label{intersection}
\end{definition}

\begin{lemma} In a candidate $(F,C)$ for $(A,\phi)$, $C$ represents $\phi$ if and only if $\axiom I_U$ holds for all $U\in\phi$. \end{lemma}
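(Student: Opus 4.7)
The plan is to unwind the interpretation of $\axiom I_U$ in the complete fibred Heyting algebra $F$, and to show that its validity is equivalent to the condition `$!\colon F_{U\hookrightarrow A}(C)\to\termo$ is supine'. Once that equivalence is in hand, the lemma follows from the definition of `candidate', which already supplies one implication of the biconditional required by `represents'.

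First I would fix $U\in\ext A$, write $m\colon U\hookrightarrow A$ for the inclusion, and interpret $U(x)=\exists y\oftype U.\,x=y$ for the variable $x\oftype A$. Using the Beck-Chevalley condition applied to the pullback square whose corners are $U$, $A\times U$, $A$ and $A^2$ (with maps $(m,\id)$, $\delta_A$, $m$ and $(\pi_1,m\pi_2)$), one sees that $\db{U(x)}_{x\oftype A}\simeq\im m(\top_U)$. Then the Frobenius condition in the fibre $F_A$ gives
\[ \db{C(x)\land U(x)}_{x\oftype A}\simeq C\land\im m(\top_U)\simeq\im m(F_m(C)), \]
and applying $\exists_{!_A}$ yields $\db{\axiom I_U}\simeq\exists_{!_U}(F_m(C))$, since $!_A\circ m=!_U$.

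The next step is the observation that in a fibred locale, the unique morphism $F_m(C)\to\termo$ over $!_U\colon U\to\termo$ is supine precisely when $\exists_{!_U}(F_m(C))\simeq\top_\termo$. Indeed, that morphism factors through its supine-vertical decomposition as $F_m(C)\twoheadrightarrow \exists_{!_U}(F_m(C))\to\termo$, and faithfulness of $F$ makes this factorization unique; the whole composite is supine exactly when the trailing vertical arrow into $\top_\termo$ is an isomorphism. Therefore $F\models\axiom I_U$ if and only if $!\colon F_{U\hookrightarrow A}(C)\to\termo$ is supine.

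Finally, I would invoke the definition of candidate, which gives the implication `$!$ supine $\Rightarrow U\in\phi$' for every $U\in\ext A$. Combining this with the equivalence just obtained, the remaining content of `$C$ represents $\phi$' is the converse implication `$U\in\phi\Rightarrow !$ supine', i.e.\ `$U\in\phi\Rightarrow F\models\axiom I_U$'. Since prone subobjects of $C$ correspond bijectively to subobjects of $A$ via $U\mapsto F_{U\hookrightarrow A}(C)$, this rephrasing exhausts all cases of the definition. Thus $C$ represents $\phi$ iff $\axiom I_U$ holds for every $U\in\phi$. I do not expect any serious obstacle; the only care needed is in verifying that the characterization of supine morphisms into $\termo$ as `direct image equals top' is valid in a fibred locale, which follows from faithfulness together with the adjunction $\exists_{!_U}\dashv F_{!_U}$.
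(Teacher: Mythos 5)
Your proof is correct and follows essentially the same route as the paper's: the paper's (much terser) argument likewise comes down to identifying validity of $\axiom I_U$ with supineness of $!:F_{U\hookrightarrow A}(C)\to\termo$, using the candidate condition for one inclusion and the hypothesis for the other. You merely make explicit the semantic computation $\db{\axiom I_U}\simeq\exists_{!_U}(F_{U\hookrightarrow A}(C))$ and the characterization of supine maps into $\termo$, which the paper leaves implicit.
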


\begin{proof} Let $\chi$ be the set of those $U\in \ext A$ for which $(F,C)$ satisfies $\axiom I_U$. The definition of candidates gives us $\chi\subseteq \phi$, and the $\phi$-completeness schema gives us $\phi\subseteq \chi$. \end{proof}

\begin{theorem} A candidate $(F,C)$ for $(A,\phi)$ is equivalent to the realizability fibration if and only if it satisfies $\axiom S$, $\axiom F_1$, $\axiom F_2$, $\axiom{MCT}$, $\axiom{UP}$ and $\axiom{R}$. \end{theorem}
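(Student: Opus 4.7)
The plan is to derive this as a direct corollary of the characterization theorem (\ref{characterization}) by assembling the individual correspondence lemmas proved immediately beforehand. Each axiom in the list has already been matched to a characteristic property of the realizability fibration, so almost no new work is required; the theorem is essentially a translation between the external (categorical) presentation of the six characteristic properties and their internal (logical) counterparts.

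First I would handle the forward direction. Since the realizability fibration $\ds A/\phi$ is separated, has a weakly generic vertical filter $\A$, satisfies Church's rule and the uniformity rule, and has $\A$ representing $\phi$ (all established in the previous section), any candidate equivalent to $\ds A/\phi$ inherits these properties, because each is preserved under equivalences of fibred locales over the same base. Then by the five correspondence lemmas ($\axiom S$ $\leftrightarrow$ separated; $\axiom F_1,\axiom F_2$ $\leftrightarrow$ $C$ is a vertical filter; $\axiom{MCT}$ $\leftrightarrow$ Church's rule, assuming $\axiom F_1$; $\axiom{UP}_{\rm prone}$ $\leftrightarrow$ uniformity rule; $\axiom I$/$\axiom R$ $\leftrightarrow$ $C$ represents $\phi$) all six schemas hold in the candidate.

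For the reverse direction I would feed the hypotheses one by one into the corresponding lemma. From $\axiom S$ we obtain that $F$ is separated. From $\axiom F_1$ and $\axiom F_2$ the weakly generic object $C$ (already present by the definition of candidate) is a vertical filter. With the filter structure in hand, $\axiom{MCT}$ yields Church's rule and $\axiom{UP}_{\rm prone}$ yields the uniformity rule. Finally $\axiom R$ gives that $C$ represents $\phi$, completing the list of hypotheses required by Theorem~\ref{characterization}. Applying that theorem produces the desired equivalence $F \simeq \ds A/\phi$.

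The only subtle point, and the place where I expect to spend any real effort, is being careful about the dependencies between the correspondence lemmas: in particular the lemma identifying $\axiom{MCT}$ with Church's rule was stated under the standing assumption that $C$ is a vertical filter, so in the reverse direction the axioms must be invoked in the order $\axiom F_1,\axiom F_2$ before $\axiom{MCT}$. Beyond that ordering issue, the proof reduces to citing Theorem~\ref{characterization} and the five preceding lemmas.
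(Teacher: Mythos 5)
Your proposal is correct and matches the paper's intent: the paper dispatches this theorem as "a straightforward consequence of the lemmas of this subsection," i.e.\ exactly the assembly of the correspondence lemmas with theorem \ref{characterization} that you carry out, and your observation about invoking $\axiom F_1$, $\axiom F_2$ before $\axiom{MCT}$ is a sensible precision of the same argument.
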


\begin{proof} This is a straightforward consequence of the lemmas of this subsection. \end{proof}

\comment{
\begin{corollary} Let $F:\cat F\to\cat H$ be any separated complete fibred Heyting algebra, and let $C\in FA$ be a combinatory complete filter that is a weakly generic object of $F$. If $\axiom{MCT}$ and $\axiom{UP}$ are valid, then there is an external filter $\phi$ such that $(F,C)$ is equivalent to $(\ds A/\phi,\A)$. \end{corollary}

\begin{proof} Let $\phi = \set{ U\in \ext A| F\models \exists x\oftype A.C(x)\land U(x)}$. \end{proof}
}

We have expressed several characteristic properties of realizability fibrations in the internal language. The next subsection concerns the expressibility of the remaining ones.

\subsection{Inexpressible} \label{inexpressible}
The remaining characteristic properties are the properties that $C$ is weakly generic and that it represents $\phi$. Here we will show that these properties cannot be expressed in the internal language of the realizability fibration.

\begin{lemma} Let $F:\cat F \to\cat H$ be a complete fibred Heyting algebra, let $F^2: \cat F\times_\cat H\cat F \to\cat H$ be the fibred category where $F^2_X = (F_X)\times (F_X)$. The fibrewise diagonal $\Delta: F \to F^2$ is a logical morphism that does not preserve weakly generic objects, unless $F\cong\id_{\cat H}$. \end{lemma}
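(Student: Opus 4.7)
The plan is to handle the two claims separately: that $\Delta$ is a morphism of complete fibred Heyting algebras, and that its preserving weakly generic objects forces $F$ to collapse to the identity fibration.

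For the first claim, I would observe that by construction the fibres of $F^2$ are products of Heyting algebras $F_Z\times F_Z$, with reindexing, joins, meets, implication and both adjoints to reindexing acting componentwise. Consequently the fibrewise diagonal $\Delta_Z:F_Z\to F_Z\times F_Z$ preserves all of this structure strictly, and the total $\Delta$ sends prone (respectively supine) morphisms in $F$ to componentwise prone (respectively supine) morphisms in $F^2$. So $\Delta$ is a Heyting morphism of complete fibred Heyting algebras in the sense defined earlier.

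For the second claim, I would first characterize prone and supine morphisms in $F^2$. A morphism $(f_1,f_2):(W_1,W_2)\to(X_1,X_2)$ of $F^2$ is prone over $h:FW\to FX$ in $\cat H$ if and only if each $f_i$ is prone over $h$ in $F$; this is immediate from the universal property of proneness applied componentwise, and dually for supine morphisms. Because prone and supine morphisms over a given base arrow are unique up to vertical isomorphism, a prone morphism $(p_1,p_2):(W_1,W_2)\to(C,C)$ over $h$ forces $W_1\simeq F_h(C)\simeq W_2$ in $F_{FW}$, and a supine morphism $(s_1,s_2):(W_1,W_2)\to(X,Y)$ over $k$ forces $X\simeq k_!(W_1)\simeq k_!(W_2)\simeq Y$ in $F_{FX}$.

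Now assume that $\Delta C=(C,C)$ is weakly generic in $F^2$ for some $C\in\cat F$. Then every object $(X,Y)$ of $F^2$ is reachable from $(C,C)$ by a chain of prone-supine spans. Iterating the observations above along the chain, both components of every intermediate object and of the final codomain remain vertically isomorphic, so $X\simeq Y$ in their common fibre. Hence every fibre $F_Z$ of $F$ has all its objects isomorphic. Since $F$ is faithful (complete fibred Heyting algebras are fibred locales, which are faithful by definition), each fibre is a preorder, so each fibre collapses to a single isomorphism class. A fibration whose fibres are all terminal is equivalent to the identity fibration $\id_{\cat H}$, giving the stated conclusion.

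The hard part will be verifying the componentwise characterization of prone and supine morphisms in $F^2$ and then propagating the induced vertical isomorphism through an entire chain of spans rather than a single span. The characterization reduces to showing that the universal property of a prone pair splits into separate universal properties for each component, which is routine but must be carried out carefully because the factoring morphism in $F^2$ is itself a pair sharing a common base arrow. Once this is established, induction on the length of the chain finishes the argument.
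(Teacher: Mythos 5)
Your proposal is correct and follows essentially the same route as the paper: note that the Heyting structure of $F^2$ is componentwise so $\Delta$ is a Heyting morphism, then observe that prone and supine morphisms in $F^2$ act componentwise, so anything reachable from $\Delta C$ by prone--supine chains has isomorphic components, forcing all objects in each fibre to be isomorphic and hence $F\cong\id_{\cat H}$. Your write-up merely spells out the componentwise characterization and the chain induction that the paper leaves implicit.
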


\begin{proof} The morphism $\Delta:F\to F^2$ is a fibred monomorphism of Heyting algebras that commutes with both adjoints of the reindexing functors and therefore is a Heyting morphism. If $C$ is a weakly generic object of $F$, then $\Delta C$ can only reach other objects in the image of $\Delta$. If every object of $\cat F\times_\cat H\cat F$ is isomorphic to $\Delta X$ for some $X\in \cat F$, then we must conclude that all objects of all fibres are isomorphic and that $F$ and $\id_X$ are equivalent fibred categories. If not $F\cong \id_{\cat H}$, then $\Delta$ cannot preserve weakly generic objects. \end{proof}

\begin{corollary} Weak genericity is not expressible in internal logic. \end{corollary}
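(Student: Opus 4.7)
The plan is to derive this as a direct contradiction from the preceding lemma, using the fact that Heyting morphisms preserve the interpretation of every internal formula.

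First I would make precise what ``expressible in internal logic'' must mean: a property of a predicate $C\in F_A$ is internally expressible if there is a formula $\phi$ in the internal language with one free predicate variable $P$ of sort $A$ such that $C$ has the property precisely when $F\models\phi[P:=C]$. Since formulas are built inductively from the Heyting algebra operations, the diagonal quantifiers and the simple-predicate constructor, any Heyting morphism $m:F\to F'$ of complete fibred Heyting algebras satisfies $\db{\phi[P:=C]}_{F'}\simeq m\db{\phi[P:=C]}_{F}$. In particular, $F\models\phi[P:=C]$ implies $F'\models\phi[P:=mC]$.

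Next I would fix a realizability fibration $F:\cat F\to\cat H$ (for any nontrivial choice of base, OPCA and filter), which has a weakly generic filter $C\in F_A$ but is not equivalent to $\id_{\cat H}$ (the fibres are genuine nontrivial Heyting algebras). By the preceding lemma the fibrewise diagonal $\Delta:F\to F^2$ is a Heyting morphism, yet $\Delta C$ fails to be weakly generic in $F^2$. If weak genericity were expressed by some internal formula $\phi$, then from $F\models\phi[P:=C]$ the preservation property would yield $F^2\models\phi[P:=\Delta C]$, forcing $\Delta C$ to be weakly generic; this contradicts the lemma, so no such $\phi$ exists.

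The main subtlety, and the only place requiring care, is justifying the preservation clause: one must check that a Heyting morphism in the sense defined earlier (preserving finite meets, joins, Heyting implication, both adjoints of reindexing and the terminal object) indeed preserves the interpretation of every well-formed formula in the internal language. This is a routine induction on the structure of $\phi$ using the clauses of the internal-language definition, so it should be invoked rather than rederived. The choice of $F$ as a realizability fibration is also not essential: any complete fibred Heyting algebra satisfying the hypothesis of the lemma (i.e.\ $F\not\cong\id_{\cat H}$) works as a witness, which I would remark in passing.
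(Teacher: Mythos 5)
Your proposal is correct and is exactly the argument the paper intends: the corollary is stated as an immediate consequence of the preceding lemma, and your reconstruction (internal formulas are preserved by Heyting morphisms, the fibrewise diagonal $\Delta$ is such a morphism, yet it destroys weak genericity for any $F\not\cong\id_{\cat H}$) is the same route, merely with the routine preservation induction and the definition of ``expressible'' spelled out. No gaps.
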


Now we turn to representability with another argument.

\begin{lemma} Let $\phi$ and $\chi$ be external filters of $A$ such that $\phi\subseteq \chi$. The inclusion induces a vertical Heyting morphism $\ds A/\phi\to\ds A/\chi$. \label{Heyting morphisms}
\end{lemma}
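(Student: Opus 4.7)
The plan is to construct $\iota \colon \ds A/\phi \to \ds A/\chi$ as the functor that is the identity on objects and sends the equivalence class of a representative $(U,f)$ of a morphism in $\ds A/\phi$ to the equivalence class of the same pair $(U,f)$ regarded as a morphism in $\ds A/\chi$. Since $\phi \subseteq \chi$, the combinatory complete fibred filter $\cat C_\phi$ induced by $\phi$ is contained in the one $\cat C_\chi$ induced by $\chi$, so every $U \in \cat C_\phi$ lies in $\cat C_\chi$ and $(U,f)$ is a legitimate representative on both sides. The equivalence relation on representatives depends only on the projection of $\ds A$ to $\cat H$ and is therefore the same in both filter quotients, so $\iota$ descends to a well-defined map of equivalence classes.

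Functoriality of $\iota$ is immediate, since composition of representatives uses the same formula in both quotients and the identity on an object is represented by a combinatory identity realizer that already lies in $\cat C_\phi$. The functor is vertical because the projections $\ds A/\phi \to \cat H$ and $\ds A/\chi \to \cat H$ agree on objects and both send a representative $(U,f)$ to $Ff$ in $\cat H$, so $\iota$ commutes with them. Prone morphisms in either quotient come from reindexing of families of downsets along arrows of $\cat H$, presented by representatives with a combinatory identity realizer, and $\iota$ sends such representatives to themselves, so prone morphisms are preserved.

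Finally, $\iota$ preserves the complete fibred Heyting algebra structure. By the construction recalled in the proof of lemma \ref{fcHa}, the fibrewise operations $\top$, $\bot$, $\land$, $\lor$, $\arrow$, $\forall_f$ and $\exists_f$ on $\ds A/\phi$ and on $\ds A/\chi$ are given by the same object-level operations inherited from $\ds A$; what depends on the filter are the realizers witnessing the universal properties, and those witnesses live in $\cat C_\phi \subseteq \cat C_\chi$. Since $\iota$ is the identity on objects and transports these witnesses to themselves, the Heyting structure, the indexed joins and the indexed meets are all preserved on the nose.

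The only bookkeeping hurdle is to check that no step of the filter quotient construction mentions $\phi$ other than through the fibred filter $\cat C_\phi$; once that is observed, enlarging $\cat C_\phi$ to $\cat C_\chi$ cannot invalidate any representative or any equivalence, and the verification reduces to unwinding definitions.
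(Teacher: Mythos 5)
Your proposal is correct and follows essentially the same route as the paper's proof: the morphism is the identity on objects, every representative $(U,f)$ with $U$ in the $\phi$-filter is also a representative for the $\chi$-quotient, and since the Heyting operations, $\arrow$, $\exists_f$ and $\forall_f$ are given by the same object-level constructions in both quotients, an identity-on-objects functor preserves them automatically. Your write-up merely spells out the well-definedness, verticality and prone-preservation checks that the paper leaves implicit.
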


\begin{proof} The morphism $(\id,m):\ds A/\phi\to\ds A/\chi$ is the identity on objects and this means $m(U\arrow V) = m(U)\arrow m(V)$ and $m(\forall_f(U)) = \forall_f(m(U))$ and so on. \end{proof}

As a consequence, $\ds A/\chi$ satisfies every proposition that $\ds A/\phi$ satisfies. We simply cannot tell the difference between the two based on theorems alone. What we can say is that $\ds A/\phi$ does not satisfy some of the propositions that $\ds A/\chi$ does, which leads us to the following solution for expressing representability.

\begin{lemma} Let $F$ be a complete fibred Heyting algebra and $C\in F_A$ a filter. The filter $C$ represents $\phi$ if for all $U\in\ext A$, $F\models \axiom I_U$ if and only if $U\in \phi$.
\end{lemma}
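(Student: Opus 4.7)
The plan is to reduce both sides of the equivalence to a common algebraic condition in the fibre $F_\termo$ over the terminal object. Throughout, fix a mono $i\colon U\hookrightarrow A$ with $U\in\ext A$, and write $V = F_i(C)\in F_U$ for the prone reindexing of $C$ along $i$; as $U$ ranges over $\ext A$, $V$ ranges (up to isomorphism) over the prone subobjects of $C$ whose base lies in $\ext A$, by the universal property of prone morphisms and the faithfulness of $F$.

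First I would unpack ``$C$ represents $\phi$''. By definition this says that for each such $U$, the morphism $!\colon V\to\termo$ is supine if and only if $U\in\phi$. Since in a bifibration a morphism is supine over $f$ precisely when the canonical comparison from its coindexing along $f$ is an isomorphism, $!\colon V\to\termo$ is supine exactly when $\exists_{!_U}(V)\simeq\top_\termo$ in $F_\termo$, where $!_U\colon U\to\termo_\cat H$.

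Next I would interpret $\axiom I_U = \exists x\oftype A.\,C(x)\land U(x)$ in the internal language. Using the paper's convention $U(x) = \exists u\oftype U.\,x=u$ for a subobject $U\subseteq A$, the predicate $\db{U(x)}\in F_A$ is $\exists_i(\top_U)$, while $\db{C(x)} = C$. Hence $\db{C(x)\land U(x)} = C\land \exists_i(\top_U)$, which by the Frobenius condition equals $\exists_i(F_i(C)\land \top_U) = \exists_i(V)$. Applying $\exists_{!_A}$ and using functoriality of direct image along $!_A\circ i = !_U$ yields $\db{\axiom I_U} = \exists_{!_U}(V)$. Therefore $F\models\axiom I_U$ if and only if $\exists_{!_U}(V)\simeq\top_\termo$.

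Combining these two reductions, the conditions ``$F\models\axiom I_U$'' and ``$!\colon V\to\termo$ is supine'' are equivalent for every $U\in\ext A$, so the hypothesis ``$F\models\axiom I_U$ iff $U\in\phi$'' translates verbatim into the defining condition ``$!\colon V\to\termo$ is supine iff $U\in\phi$'' for $C$ to represent $\phi$. The argument is essentially bookkeeping; the only substantive ingredient is the Frobenius rewrite, and the single point needing mild care is identifying prone subobjects of $C$ with subobjects of $A$ via $U\mapsto F_i(C)$.
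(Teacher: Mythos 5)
Your proof is correct, and it is exactly the definition-unpacking that the paper dismisses with the single word ``Trivial'': the Frobenius rewrite $C\land\exists_i(\top_U)\simeq\exists_i(F_i(C))$ and the identification of ``$!$ supine'' with ``$\exists_{!_U}(V)\simeq\top_\termo$'' are the only substantive steps, and you carry both out correctly. You in fact establish the equivalence of the two conditions, which is slightly more than the stated one-directional claim requires.
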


\begin{proof} Trivial. \end{proof}

\begin{remark} The Heyting morphism $\ds A/\phi\to\ds A/\chi$ is one of the reasons for working with external filters. We cannot find a \xemph{theory or realizability} whose models are equivalent to a particular realizability fibration, but we can limit the variance to a difference of external filters.
\end{remark}

\subsection{Conclusions}
We have reached the ultimate goal of this chapter.

\begin{theorem}[axiomatization] Let $F:\cat F\to\cat H$ be a separated complete fibred Heyting algebra over a Heyting category $\cat H$ and let $C\in F_A$ be a combinatory complete vertical filter that is weakly generic. If $(F,C)$ satisfies the schemas $\axiom{MCT}$ and $\axiom{UP}$ then it is equivalent to $(\ds A/\phi,\A)$ for some external filter $\phi$.\label{axiomatization}
\end{theorem}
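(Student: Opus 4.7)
The plan is to define an external filter $\phi$ from the data $(F,C)$, verify that $(F,C)$ becomes a candidate for $(A,\phi)$ satisfying all six properties of theorem \ref{characterization}, and then invoke that characterization to conclude.

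The natural choice is
\[ \phi = \{ U \in \ext A \mid F \models \axiom{I}_U \},\]
i.e., the set of $U\in\ext A$ such that $C$ internally intersects $U$. First I would check that $\phi$ is a combinatory complete external filter of $A$. Upward closure under $\subseteq$ is immediate from monotonicity of $\exists$. For closure under the partial application on $\ext A$: if $U,V\in\phi$ and $UV$ is defined, then picking witnesses (internally) of $C\cap U$ and $C\cap V$ and applying $\axiom F_1$ (which holds because $C$ is a vertical filter) yields a witness of $C\cap UV$, so $UV\in\phi$. Combinatory completeness of $\phi$ is exactly the content of the hypothesis that $C$ is combinatory complete, interpreted as $F\models\axiom I_{\db f}$ for every partial combinatory $f:A^n\partar A$.

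Next I would verify that $(F,C)$ is a candidate for $(A,\phi)$. Weak genericity of $C$ is assumed. The remaining condition—that supineness of $!:F_{U\hookrightarrow A}(C)\to\termo$ implies $U\in\phi$—is automatic from the definition of $\phi$, because $\exists_!(F_{U\hookrightarrow A}(C))\simeq \top_\termo$ is precisely what $F\models\axiom I_U$ expresses in the internal language.

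It now remains to check the six properties of the characterization theorem. Separatedness is given, and by the separation lemma it is equivalent to $\axiom S$. The weakly generic filter is given. That $C$ is a vertical filter (equivalently, $\axiom F_1,\axiom F_2$) is given. Church's rule follows from $\axiom{MCT}$, and the uniformity rule from $\axiom{UP}$, via the two lemmas of the previous subsection. Finally, $C$ represents $\phi$ by the very definition of $\phi$. Theorem \ref{characterization} then produces an equivalence of fibred categories $(F,C)\simeq(\ds A/\phi,\A)$, which is the desired conclusion. The only substantive step is the verification that $\phi$ is a combinatory complete external filter; the rest is bookkeeping that packages the correspondences already established between the axiom schemas and the characteristic properties.
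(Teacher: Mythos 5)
Your proposal is correct and follows essentially the same route as the paper: the paper's proof also defines $\phi$ as the set of $U\in\ext A$ for which $!:F_{U\hookrightarrow A}(C)\to\termo$ is supine (which is exactly your condition $F\models\axiom I_U$), observes that combinatory completeness of $C$ makes $\phi$ combinatory complete, and then invokes theorem \ref{characterization}. Your extra verification that $\phi$ is closed under application and upward closed, via the vertical-filter axioms, is bookkeeping the paper leaves implicit.
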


\begin{proof} Let $\phi$ be the set of all $U\in \ext A$ such that $!:F_{U\hookrightarrow A}(C) \to \top$ is supine. Because $C$ is combinatory complete, so is $\phi$. Because of the satisfies schemas, $(F,C)$ has all of the characteristic properties of $(\ds A/\phi,\A)$ and therefore is equivalent.
\end{proof}

At this point we see that realizability is almost completely axiomatized by the axiom Kleene had in mind when he defined recursive realizability \cite{MR0015346} and the axiom that Troelstra used to extend realizability to higher order logic \cite{MR0363826}. We can now say that a wide variety of realizability structures has to satisfy the schemas of modified Church's thesis and the uniformity principle.

\section{Further thoughts}
We write up some loose ideas about realizability models to conclude this chapter.

\subsection{Markov's principle} \label{Markov}
Kleene's first model $\Kone$ (see example \ref{Kone}) exists in any Heyting category $\cat H$ with a \xemph{natural number object} $\N$, (see definition \ref{nno}). Using this model we can construct the \xemph{effective fibration} $\ds\Kone/\Kone$, and this gives us a form of recursive realizability in $\cat H$. The characterization results in this chapter now tell us the following facts about these effective fibrations.
\newcommand\nno{\mathbf N}
\begin{itemize}
\item They are separated complete fibred Heyting algebras, with a weakly generic object $\nno$ in the fibre over $\N$ such that for all $U\subseteq \N$, the unique map $!:(\ds\Kone/\Kone)_{U\hookrightarrow \N}(\nno)\to \termo$ is supine if and only if $U$ is inhabited.
\item They satisfy a version of extended Church's thesis for $\neg\neg$-stable predicates, and a uniformity principle.
\item If $\cat H$ is Boolean, then prone arrows are precisely the arrows whose fibres are $\neg\neg$-stable; therefore prone subobjects are $\neg\neg$-stable subobjects. This not only allows us to recover a version of Shanin's principle and extended Church's thesis, but also \xemph{Markov's principle}, which is the following schema for $P\in (\ds \Kone/\Kone)_\N$:
\[ (\forall n\oftype \N.\nno(n)\to P(n)\vee \neg P(n)) \land \neg(\forall n\oftype\N.\nno(n)\land P(n)) \to \exists n\oftype\N.\nno(n)\land P(n) \]
The proof relies on a much stronger principle, namely that the schemas $\neg\neg p\to p$ and $p\vee \neg p$ are valid, because for each proposition $p$, the set of realizers $\db p$ is either inhabited or empty according to the internal language of $\cat H$.
\end{itemize}

Markov's principle can help characterize the effective fibration over a Boolean category with a projective terminal object.

\begin{lemma} Let $F$ be a separated complete fibred Heyting algebra $F$ over a two valued Boolean category $\cat H$ with a natural number object $\N$ and a projective terminal object. Let $\nno \in F_\N$ be a weakly generic object, that is also a combinatory complete filter, and let $(F,\nno)$ satisfy Church's rule, the uniformity rule, and Markov's principle. Then $F\cong \ds\Kone/\Kone$ or $F\cong \id_{\cat H}$. \end{lemma}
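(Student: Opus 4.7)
The plan is to invoke theorem \ref{axiomatization} to present $(F,\nno)$ as a realizability fibration $(\ds\Kone/\phi,\mathring\Kone)$ for some external filter $\phi\subseteq\ext\Kone$, and then use Markov's principle together with the structural hypotheses on $\cat H$ to classify $\phi$. All the hypotheses of \ref{axiomatization} are present (separated complete fibred Heyting algebra, weakly generic combinatorially complete vertical filter $\nno$, Church's rule, uniformity rule), so this reduction is immediate; the task becomes showing that $\phi=\ext\Kone$ or $\phi=\{U\in\ext\Kone:U\text{ inhabited in }\cat H\}$.

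The main dichotomy is whether $\emptyset\in\phi$. If so, upward closure of filters forces $\phi=\ext\Kone$, so every fibre of $\ds\Kone/\phi$ collapses to the trivial Heyting algebra (the realizer object of $\bot$ is $\emptyset\in\phi$), and hence $F\cong\id_{\cat H}$. Otherwise $\emptyset\notin\phi$, and the inclusion $\phi\subseteq\{U:U\text{ inhabited}\}$ is clear: any $U\in\phi$ must be nonempty, and two-valuedness of $\cat H$ together with projectivity of $\termo$ forces every nonempty subobject of $\Kone$ to carry a global section, hence to be inhabited.

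For the reverse inclusion I would prove $\{n\}\in\phi$ for every global section $n:\termo\to\Kone$; then upward closure and the existence of such a section inside every inhabited $U$ give $U\in\phi$. To produce $\{n\}\in\phi$ I apply Markov's principle to the predicate $P(x):=(x=n)$. The decidability antecedent $\forall x\oftype\Kone.\nno(x)\to P(x)\vee\neg P(x)$ is realized because equality on $\Kone$ is a partial combinatory function and combinatorial completeness of $\nno$ promotes its realizer (internalized along $n$) into $\phi$. The second antecedent $\neg(\forall x\oftype\Kone.\nno(x)\land P(x))$ is realized vacuously: at any $x\neq n$ the subformula $x=n$ has empty realizer object, so $\forall x.\nno(x)\land(x=n)$ has empty realizer object and its negation is trivially valid. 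Markov's principle then yields realization of $\exists x\oftype\Kone.\nno(x)\land(x=n)$, whose realizer object is $\{\langle n,r\rangle:r\in\Kone\}$; applying the first-projection combinator---which lies in $\phi$ by combinatorial completeness---under closure of $\phi$ under application reduces this set to $\{n\}$, showing $\{n\}\in\phi$.

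The main obstacle I anticipate is the verification of the first antecedent of Markov's principle inside $F$ rather than merely externally. Since the equality decider depends on $n$ as a parameter, internalizing it as a global realizer of the decidability schema in $F_\termo$ requires using reindexing of $\nno$ along $n:\termo\to\Kone$ together with the Frobenius condition to argue that the needed realizer object is actually a member of $\phi$, not merely classically inhabited. Once this has been carefully set up, the remaining steps are straightforward manipulations of realizer objects, combinators and upward closure in $\phi$.
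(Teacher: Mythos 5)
Your overall route is the paper's: reduce to $F\cong\ds\Kone/\phi$ via theorem \ref{axiomatization}, use Markov's principle to force the singleton of a global section into $\phi$, and finish with projectivity of $\termo$ and two-valuedness through the dichotomy on whether $\emptyset\in\phi$. The genuine gap is in the very step you flagged as the main obstacle: validity (not mere inhabitation) of the decidability antecedent for $P(x):=(x=n)$. Your justification is that ``equality on $\Kone$ is a partial combinatory function'', so that combinatory completeness of $\phi$ puts its realizers into $\phi$. This is false by the paper's own definition \ref{combinatory completeness}: partial combinatory functions are generated from projections by pointwise application, i.e.\ they are given by constant-free applicative terms; any such term that actually applies its arguments diverges once every variable is instantiated by an index of the nowhere-defined function, and the remaining terms are projections, so no partial combinatory function is a (total) decider of equality or of membership in $\{n\}$. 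Example \ref{Kone} warns about exactly this point: the partial combinatory functions may be a proper subclass of the partial recursive ones, which is precisely why nontrivial combinatory complete filters can exist. Consequently combinatory completeness gives you neither the decider of $\{n\}$ nor the uniform s-m-n style map $n\mapsto d_n$, and the repair you anticipate --- reindexing $\nno$ along $n:\termo\to\Kone$ and invoking Frobenius --- is circular, since having the fibre of $\nno$ over $n$ count as an available realizer is exactly the assertion $\{n\}\in\phi$ that you are trying to prove. The paper's proof instead takes as input that the subobject determined by a global section is a \emph{recursively} decidable inhabited subobject and feeds that into Markov's principle; whatever detail that step compresses, it is not the combinatory-completeness argument you give, and as written your verification of the first antecedent does not go through.

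A minor further point: you checked the second antecedent against the displayed formula $\neg(\forall x.\nno(x)\land P(x))$, which taken literally trivializes the schema (it would equally realize $\exists x\oftype\N.\nno(x)\land\bot$); the intended hypothesis is the double negation of the existential. That form also holds in your situation --- the realizer object of $\exists x\oftype\N.\nno(x)\land(x=n)$ is inhabited in the two valued Boolean base, so its double negation has realizer object all of $\Kone$, which lies in $\phi$ --- so this is cosmetic, but the correction should be recorded; it does not repair the gap above.
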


\begin{proof} Our characterization theorem tells us that $F\cong \ds \Kone/\phi$, where $\phi$ contains all subobjects of $\N$ that have a recursively decidable inhabited subobject. But because of the projective terminal object, every inhabited subobject has a global section. This global section is a recursively decidable inhabited subobject, and therefore $\phi$ contains all inhabited subobjects of $\N$. Because of two-valuedness, every subobject is either inhabited or empty, and this leaves two options: $\emptyset\not\in \phi$ and $\ds\Kone/\phi = \ds\Kone/\Kone$ or $\emptyset \in \phi$. In that last case, the fibration collapses, because $\emptyset U\converges$ and $\emptyset U=\emptyset\subseteq V$ for all $U,V\in (\ds\Kone)_X$ for all $X$ in $\cat H$. The result is equivalent to the terminal fibred category $\id_{\cat H}$.
\end{proof}
 
In Boolean categories where the terminal object is not projective, inhabited subobjects of the natural numbers may have no decidable subobjects. Therefore, Markov's principle is not strong enough to characterize effective fibrations.

\subsection{Complete fibred partial applicative lattices}
The realizability fibration is a quotient of the complete fibred partial applicative lattice $\ds A$. We can construct such lattices in other ways, which we will explore here.

\begin{example} We define a \xemph{partial applicative lattice} $L$ inside $\cat H$ as follows.
\begin{itemize}
\item A partial applicative lattice $L$ has a binary join operator called $\vee$. For $f,g:X\to L$, we let $f\vee g = \mathord\vee\circ(f,g)$. We let $f\leq g$ if $f\vee g=g$.
\item We interpret completeness as a schema that says that for each map $f:X\to Y$ and $g:X\to L$ there is both a least $h:X\to L$ such that $g\leq h\circ f$, namely $\sup_f g$, and a greatest $k:X\to L$ such that $k\circ f \leq h$, namely $\inf_f g$.
\item A partial applicative lattice $L$ has a partial application operator $a:L^2\partar L$. For $f,g:X\to L$, we let $fg\converges$ if $f\times g:X\to L^2$ factors through $\dom a$ and let $fg = a\circ (f\times g)$ in that case. We demand that this operations preserves binary joins, so it is an order partial applicative structure, and that is preserves least upper bounds: if $f(g\circ h)\converges$, then $(\sup_h f)g = \sup_h(f(g\circ h))$; if $(f\circ g)h\converges$, then $f\sup_g(h) = \sup_g((f\circ g)h)$.
\end{itemize}

Partial applicative lattices are lattices because they have binary meets: let $B = \set{(x,y,z)\in L| x\leq y \land x\leq z}$ and consider $\land = \sup_{\pi_{12}}(\pi_0):A^2\to A$ where $\pi_0:B\to A$ and $\pi_{12}:B\to A^2$ are the projections. By definition $\land(x,y)\leq z$ if and if $z\leq x$ and $z\leq y$. Similarly, partial applicative lattices have an \xemph{arrow operator} $\arrow$, that is left adjoint to application: let $C = \set{(x,y,z)\in L| xy\converges \land xy\leq z}$ and $\arrow = \sup_{\pi_{12}}(\pi_0):A^2\to A$. This time $x\leq y\arrow z$ if and only if $xy\converges$ and $xy\leq z$. We have to require that $\inf$ exists, because some Heyting categories are too weak to construct these from $\sup$.

For each partial applicative lattice $L$ in $\cat H$, we let $\cat L$ be the category where object are arrows into $L$, and a morphism $f: g \to h$ is an arrow $f:\dom g\to \dom h$ that satisfies $g\leq h\circ f$. Now $\dom:\cat L\to\cat H$ is a complete fibred partial applicative lattice.

If $\cat H$ is a topos and $A$ is an order partial applicative structure, then $DA$, the object of downsets of $A$ in $\cat H$, is a complete partial applicative lattice. The related fibration is equivalent to the realizability fibration. This gives us an alternative way to construct realizability fibrations over toposes, which is exhibited in \cite{MR2265872}. \label{cpals}
\end{example}

\begin{proposition} Every partial applicative lattice is an order partial combinatory algebra. \end{proposition}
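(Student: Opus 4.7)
The plan is to exhibit the whole of $L$ as a combinatory complete filter; then $L$ is an order partial combinatory algebra by definition. Being the full subobject, $L$ is trivially upward closed and closed under its own application, so the work is in the combinatory completeness. By the reduction recalled after definition \ref{combinatory completeness}, it suffices to produce realizers $K\in\db{k}$ and $S\in\db{s}$ for $k(x,y)=x$ and $s(x,y,z)=xz(yz)$.

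For $K$, I would use the adjunction $-\cdot y\dashv y\arrow -$ from the arrow operator together with the right Kan extension $\inf$, setting
\[ K := \inf\nolimits_{!_{L^2}}\bigl((x,y)\mapsto x\arrow(y\arrow x)\bigr). \]
The universal property of $\inf_{!}$ yields $K\leq x\arrow(y\arrow x)$ for every $x,y\in L$; unfolding $\arrow$ twice, this means $Kx\converges$, $Kxy\converges$ and $Kxy\leq x$, so $K\in\db{k}$.

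For $S$ the same trick works, iterated once for each of the three arguments. Let $D\subseteq L^{3}$ be the internal subobject $\{(x,y,z):yz\converges\land xz\converges\land(xz)(yz)\converges\}$, obtained as a finite iterated pullback of $\dom a$. With $\pi_{12}:D\to L^{2}$, $\pi_{1}:L^{2}\to L$ and $!:L\to\termo$ the evident projections, set
\begin{align*}
\beta &:= \inf\nolimits_{\pi_{12}}\bigl((x,y,z)\mapsto z\arrow xz(yz)\bigr):L^{2}\to L,\\
\delta &:= \inf\nolimits_{\pi_{1}}\bigl((x,y)\mapsto y\arrow\beta(x,y)\bigr):L\to L,\\
S &:= \inf\nolimits_{!}\bigl(x\mapsto x\arrow\delta(x)\bigr)\in L.
\end{align*}
Each $\inf$ is meaningful because $\arrow$ is a total operator on $L$, so the functions in the brackets are genuine morphisms of their stated type; where the fibre of $D$ over $(x,y)$ happens to be empty, $\beta(x,y)$ is simply the empty meet $\top$, which does no harm. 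The three universal properties give $S\leq x\arrow\delta(x)$ for all $x$, $\delta(x)\leq y\arrow\beta(x,y)$ for all $x,y$, and $\beta(x,y)\leq z\arrow xz(yz)$ for all $(x,y,z)\in D$. Three applications of the order axiom for application (if $a\leq a'$ and $a'b\converges$ then $ab\converges$ and $ab\leq a'b$) combined with the $\arrow$-adjunction then produce, in order, $Sx\converges$, $Sxy\converges$ with $Sxy\leq\beta(x,y)$, and finally $Sxyz\converges$ with $Sxyz\leq xz(yz)$ for every $(x,y,z)\in D$. Hence $S\in\db{s}$.

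The main obstacle is mostly bookkeeping: one has to check that each iterated $\inf$-along-projection construction composes the way the preceding argument asks, and that the convergence propagates along the chain $S\leq\delta(x\mapsto\cdots)\leq\beta(\cdots)\leq\cdots$ in the right-to-left order dictated by the order partial applicative structure axioms, rather than the left-to-right order that a superficial reading of $Sxyz$ might suggest. Once this is unwound, the proof uses nothing beyond the universal properties already stipulated in example \ref{cpals}, so it goes through in any Heyting category $\cat H$.
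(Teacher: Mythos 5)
Your proof is correct. It is, at bottom, the same idea as the paper's --- use the completeness of $L$ to manufacture a greatest realizer --- but packaged differently. The paper's proof is a one-liner: for \emph{every} partial combinatory $f:L^n\partar L$ it sets $r_f=\sup\db f$ and observes that this supremum is itself a realizer (the set $\db f$ is cut out by conditions of the form $r\leq x_1\arrow(\cdots)$, which are preserved by suprema, and it is nonempty since $\bot\in\db f$). You instead reduce to the Feferman basis $\{\comb k,\comb s\}$ via the remark after definition \ref{combinatory completeness}, and build the two realizers as iterated infima of $\arrow$-expressions along projections; since $\db{k}=\{r\mid\forall x,y.\ r\leq x\arrow(y\arrow x)\}$ and similarly for $\comb s$, your $K$ and $S$ are exactly the elements $\sup\db{k}$ and $\sup\db{s}$ of the paper's proof, described from above rather than from below. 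What your version buys is an explicit, checkable propagation of the convergence claims ($Sx\converges$, then $Sxy\converges$, then $Sxyz\converges$ on $D$), which the paper leaves entirely implicit; what it costs is the detour through the basis theorem and the bookkeeping with $D$ and the nested $\inf$'s, none of which the uniform $\sup\db f$ construction needs. Both arguments use only the structure stipulated in example \ref{cpals}, so both are valid in any Heyting category.
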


\begin{proof} For each partial combinatory function $f:L^n\partar L$ let $r_f = \sup\db f$: this is a realizer for $f$. \end{proof}

\begin{corollary} Let $L$ be a complete partial applicative lattice. Let $\comb k  = r_{(x,y)\mapsto x}$ and $\comb s = r_{(x,y,z)\mapsto xz(yz)}$. A filter $\phi$ is combinatory complete if and only if $\comb k\in \phi$ and $\comb s\in \phi$. \end{corollary}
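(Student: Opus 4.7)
The plan is to prove the two directions separately. The forward direction is immediate from the upward closure of filters and the fact that $\comb k, \comb s$ are suprema of their realizer sets; the reverse direction is the classical Curry--Feferman bracket abstraction, adapted to the order partial setting.

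For the forward direction, I would argue as follows. If $\phi$ is combinatory complete, then $\phi \cap \db{(x,y)\mapsto x}$ is inhabited, so it contains some element $a$. Since $\comb k = \sup \db{(x,y)\mapsto x}$ is an upper bound of this set, we have $a \leq \comb k$, and upward closure of $\phi$ yields $\comb k \in \phi$. The same reasoning applied to $(x,y,z)\mapsto xz(yz)$ gives $\comb s \in \phi$.

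For the reverse direction, the goal is to show by induction on the construction of a partial combinatory function $f : A^n \partar A$ that there exists an element $\lceil f \rceil \in A$ built from $\comb k$ and $\comb s$ by convergent applications such that $\lceil f \rceil \in \db f$. Since $\phi$ contains $\comb k$ and $\comb s$ and is closed under (defined) application, this automatically places $\lceil f \rceil$ in $\phi$ and witnesses that $\phi$ realizes $f$. I would carry out the induction via a bracket abstraction operator $\lambda^*$ on polynomial expressions in $\comb k$, $\comb s$ and variable symbols, defined recursively by $\lambda^* x. x = \comb s \comb k \comb k$, $\lambda^* x. t = \comb k t$ when $x$ does not occur in $t$, and $\lambda^* x.(t_1 t_2) = \comb s (\lambda^* x. t_1)(\lambda^* x. t_2)$. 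The two clauses in the definition of partial combinatory functions--projections and pointwise application--are then both handled by iterated bracket abstraction: $\lceil \pi_i^n \rceil := \lambda^* x_1 \dots x_n. x_i$ for the $i$-th projection of arity $n$, and $\lceil fg \rceil := \lambda^* x_1 \dots x_n. (\lceil f \rceil x_1 \dots x_n)(\lceil g \rceil x_1 \dots x_n)$ for the pointwise application of $f$ and $g$.

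The main obstacle is keeping careful track of definedness throughout the construction, since closure of $\phi$ under application only applies to convergent applications. The key lemma I would verify by a straightforward induction is that $\lambda^* x. t$ always evaluates to a defined element of $A$, because $\comb k a \converges$ and $\comb s a b \converges$ for all $a, b \in A$. These totality properties are precisely what the definitions $\comb k \in \db{(x,y) \mapsto x}$ and $\comb s \in \db{(x,y,z) \mapsto xz(yz)}$ supply. A second induction, using the inequalities $\comb k a b \leq a$ and $\comb s a b c \leq ac(bc)$ together with compatibility of application with $\leq$, establishes that $(\lambda^* x. t) c$ converges and $(\lambda^* x. t) c \leq t[c/x]$ whenever $t[c/x]$ is defined, which is exactly the property needed to confirm $\lceil f \rceil \in \db f$.
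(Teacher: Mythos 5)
Your proposal is correct and takes essentially the same route as the paper: the ``only if'' direction is exactly the paper's argument (a realizer $a\in\phi\cap\db{(x,y)\mapsto x}$ satisfies $a\leq \comb k$, so upward closure gives $\comb k\in\phi$, and likewise for $\comb s$), and the ``if'' direction is the Feferman/Curry bracket-abstraction argument, which the paper does not spell out but simply cites from Feferman \cite{MR0409137}. Your attention to definedness and to monotonicity of application is precisely the adaptation needed in the order partial setting, so filling in those details is a sound expansion of the cited proof rather than a different approach.
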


\begin{proof} See Feferman \cite{MR0409137}, for a proof that these two combinators generate realizers for all partial combinatory arrows. This takes care of the `if' part. For the `only if' part, consider that we defined $r_f$ to be the greatest realizer of these functions. A filter that is combinatory complete, contains realizers for $(x,y)\mapsto x$ and $(x,y,z) \mapsto xz(yz)$; because of upward closure it also contains $\comb k$ and $\comb s$. \end{proof}

\begin{remark} Note that in any topos $\comb k$ and $\comb s$ are global elements of their complete partial applicative lattices. \end{remark}

\begin{example} We consider what structure we need on $(A,\leq)$ to make $\ds A$ a complete partial applicative lattice. The domain of the fibrewise application operator is a problem, so we assume that there is a downward closed $D\subseteq A^2$ such that for all $U,V\in \ds A$, $UV\converges$ if and only if $U\times_X V\subseteq D\times X$. Let $P_0\in \ds_D$ be $\set{(x,y,z)\in A\times D| x\leq y}$ and $P_1 = \set{(x,y,z)\in A\times D| x\leq z}$. Now $P_0P_1\converges$, and $T=P_0P_1$ defines a relation $T:D\nrightarrow A$. We can reconstruct the fibred partial application operator from this relation. For all $X$ in $\cat H$ and $U,V\in\ds A_X$, $UV = \set{(a,x) |\exists (b,c)\in D. (b,x)\in U, (c,x)\in V, (b,c,a)\in T }$, because of the preservation properties of the application operator.

Without an application operator, we cannot construct partial combinatory functions, but there are \xemph{partial combinatory relations} $A^n\nrightarrow A$, or partial combinatory families of downsets $\ds A_{A^n}$. Fibred filters can then be called combinatory complete, if they contain these relations.\end{example}

\subsection{Necessity of combinatory completeness}
The soundness of realizability models relies on the combinatory completeness of the partial applicative structures, see proposition 1.2.2 in \cite{MR2479466}. We will give our own account of this fact here.

\begin{theorem} Let $A$ be an order partial applicative structure $A$ and let $\phi\subseteq \ext A$ be an external filter. For all  $U,V\in A$, let $U\ll V$ if $U\arrow V\in \phi$. If $\ds A$ ordered by $\ll$ is a complete fibred Heyting category with $\arrow$ as Heyting implication, then $\phi$ is combinatory complete.\label{why combinatory completeness}
\end{theorem}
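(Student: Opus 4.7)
The strategy is to extract $\db{k}$ and $\db{s}$ from $\phi$; by Feferman's criterion cited earlier in the chapter, a filter is combinatory complete iff it contains these two downsets. The key observation is that the hypothesised Heyting algebra structure on each fibre delivers for free the standard Heyting tautologies $Y\ll X\arrow Y$ and $X\arrow(Y\arrow Z)\ll(X\arrow Y)\arrow(X\arrow Z)$; applied in the fibres over $A^{2}$ and $A^{3}$ to suitably chosen \emph{generic} downsets and then pushed down to the terminal fibre, these will yield the desired elements of $\phi$.

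For $\db{k}$, I would work in $\ds A_{A^{2}}$ with the generic families $V_{1}=\set{(a,x,y):a\leq x}$ and $V_{2}=\set{(a,x,y):a\leq y}$. By the first tautology applied in this fibre, $V_{1}\ll V_{2}\arrow V_{1}$, so $V_{1}\arrow(V_{2}\arrow V_{1})$ lies in the fibred filter associated with $\phi$; the correspondence established earlier between external filters on $A$ and fibred filters on $\ds A$ closed under indexed meets then yields $\forall_{!_{A^{2}}}\bigl(V_{1}\arrow(V_{2}\arrow V_{1})\bigr)\in\phi$. Unfolding the definitions, an element $a$ lies in this indexed meet iff for all $x,y\in A$ and all $b\leq x$, $c\leq y$ the applications $ab$ and $abc$ converge and $abc\leq x$; specialising $x:=b$ and $y:=c$ gives exactly the defining condition of $\db{k}$, so $\db{k}\in\phi$. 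The analogous construction in $\ds A_{A^{3}}$, using the three generics $V_{i}=\set{(a,\vec x):a\leq x_{i}}$ together with the $S$-tautology, produces $\db{s}\in\phi$ by the same recipe.

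The principal obstacle will be making the descent step rigorous: $\phi$ lives in $\ext A$, whereas the hypothesis endows all of $\ds A$ with the preorder $\ll$, so one must identify the right adjoint $\forall_{!_{A^{n}}}$ of the new Heyting structure with the ordinary pointwise intersection used in the computation of $\db{k}$ and $\db{s}$. This identification follows from the uniqueness of right adjoints together with the hypothesis that the Heyting implication coincides with the operation $\arrow$ of the underlying complete fibred partial applicative lattice, which forces $\ds A_{!_{A^{n}}}\dashv\forall_{!_{A^{n}}}$ to agree on objects (up to $\ll$-equivalence) with the adjunction coming from the original structure. A secondary subtlety is that the unrestricted convergence clauses in the definition of $\db{k}$ and $\db{s}$ (requiring $r\vec x\converges$ for every $\vec x\in A^{n-1}$) must fall out of the specialisation step: once $x_{i}$ is taken to be the test argument itself, the downward closure of $V_{i}$ collapses to the reflexive condition $b\leq b$, which forces the relevant application to converge.
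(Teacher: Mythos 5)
Your $\comb k$ step is essentially sound: with the principal-downset generics over $A^2$, unfolding $\forall_!\bigl(V_1\arrow(V_2\arrow V_1)\bigr)$ really does give $\db{k}$ on the nose, including the unconditional convergence clause, precisely because the hypothesis fibres are the principal downsets $\set{b\in A\mid b\leq x}$ with $x$ ranging over all of $A$. (The descent step also needs a small argument you gloss over: reflexivity of $\ll$ forces $\phi\neq\emptyset$, hence $A\in\phi$ by upward closure, and then closure of $\phi$ under application turns ``$\ll$-top of the terminal fibre'' into ``member of $\phi$''; this is fixable.) The genuine gap is the $\comb s$ step. If you instantiate $(p\to(q\to r))\to((p\to q)\to(p\to r))$ with the three principal-downset generics over $A^3$ and apply $\forall_!$, the resulting set $W$ constrains $a$ only on arguments $u,v$ that \emph{realize} $V_1\arrow(V_2\arrow V_3)$, respectively $V_1\arrow V_2$, at the given point; it says nothing about $axy$ for arbitrary $x,y\in A$, and in particular does not contain the clause $\forall x,y\oftype A.\,axy\converges$ required by definition \ref{realizability}. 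One checks $\db{s}\subseteq W$, and the inclusion can be strict (e.g.\ when $A$ has an element on which every application diverges). Since $\phi$ is upward closed under inclusion, $W\in\phi$ therefore does \emph{not} yield $\db{s}\in\phi$: upward closure goes the wrong way. Nor can another propositional tautology repair this: a formula whose interpretation lies \emph{inside} $\db{s}$ must have generic principal downsets in the argument positions and conclude with the principal downset at $xz(yz)$, and that formula is not a Heyting tautology --- its validity is exactly the thing to be proved.

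This is where the paper's proof takes a different and essential turn. It introduces $E=\exists_\delta(\A)$ over $A^2$, notes the multiplicativity $E(x,y)E(x',y')\subseteq E(xx',yy')$, and observes that $\db{f}$ is \emph{literally} the interpretation of $\forall \vec x,\vec y\oftype\dom f.\,E(x_1,y_1)\to\dotsm\to E(x_n,y_n)\to E(f(\vec x),f(\vec y))$ --- the principal-downset hypotheses are what force the convergence clause into $\db f$. It then inducts over the generation of partial combinatory functions: for projections the formula is an intuitionistic tautology (this is your $\comb k$ computation done uniformly for all projections), and for a pointwise application $f=gh$ it combines the semantic inclusion $E(g\vec x,g\vec y)E(h\vec x,h\vec y)\subseteq E(f\vec x,f\vec y)$ with modus ponens in the hypothesized Heyting structure. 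No reduction to $\comb k$ and $\comb s$ is needed, which is just as well: the Feferman-style criterion you invoke is stated in this thesis only for internal filters and for complete partial applicative lattices with canonical greatest realizers, and transferring it to an arbitrary external filter (whose members are \emph{sets} of realizers, with the global convergence side condition on application in $\ext A$) would itself require a separate argument. To close your proof you would need to replace the $\comb s$ step by something like the paper's inductive argument, at which point the detour through $\comb k$, $\comb s$ buys nothing.
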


\begin{proof} Consider the predicate $E = \exists_\delta(\A)$, where $\delta$ is the diagonal map. It has the peculiar property that $E(x,y)E(x',y') \subseteq E(xx',yy')$ when $xx'\converges$ and $yy'\converges$. For any partial arrow $f:A^n\partar A$, we see that $\forall \vec x,\vec y\oftype \dom f.E(x_1,y_1) \to \dotsm \to E(x_n,y_n) \to E(f(\vec x),f(\vec y))$ is $\db f$. 

Note that for all $\vec x$ and $\vec y\in \dom f$ in such a way that $x_i=y_i$ for all $i<n$, we need $z\in \db{f}$ to satisfy $zx_1\dotsm x_{n-1}\converges$ in order to realize $E(x_n,y_n) \to E(f(\vec x),f(\vec y))$. This explains the extra condition on the set of realizers $\db f$ of a partial arrow $f$.

Now we use induction on the class of partial combinatory arrows.
\begin{itemize}
\item Let $f(\vec x) = x_i$ for all $\vec x\in A^n$ with $n\in\N$ arbitrary. The realizers of $f$ realize the following formula: 
\[ \forall \vec x,\vec y\in A^n. E(x_1,y_1) \to \dotsm \to E(x_n,y_n) \to E(x_i,y_i)\] But this is valid, and hence $\db f\in \phi$.
\item Let $f(\vec x) = g(\vec x)h(\vec x)$ and let $U$ be the domain of this function. Assume that $\db g$ and $\db h$ are in $\phi$. Now $E(g(\vec x),g(\vec y))E(h(\vec x),h(\vec y)) \subseteq E(f(\vec x),f(\vec y))$; because $\db g\in \phi$, 
\[ \forall \vec x,\vec y\in U. E(x_1,y_1) \to \dotsm \to E(x_n,y_n) \to E(h(\vec x),h(\vec y)) \to E(f(\vec x),f(\vec y))\] because $\db h\in \phi$, \[ \forall \vec x,\vec y\in U. E(x_1,y_1) \to \dotsm \to E(x_n,y_n) \to E(h(\vec x),h(\vec y)) \] Using modus ponens, we see that $\forall \vec x,\vec y\in U. E(x_1,y_1) \to \dotsm \to E(x_n,y_n) \to E(f(\vec x),f(\vec y))$ is valid, and hence $\db f\in \phi$.
\end{itemize}
Since $\phi$ represents all partial combinatory functions, it is a combinatory complete external filter.
\end{proof}

\begin{remark} The order partial applicative structure is built into this construction, which means that this proof does not apply to forms of realizability that are not based on them. Relaxing the combinatory completeness condition may still result in interesting regular and coherent categories, because those have no implications or universal quantifications in the internal language.
\end{remark}

\chapter{Realizability Categories}\label{realcat}
In this chapter we develop realizability in a higher categorical setting. We characterize realizability fibrations in relation to other fibred locales, show how to construct regular and exact categories out of fibred locales, and apply these constructions to realizability fibrations. The resulting \xemph{realizability categories} and the regular functors between them are the main subject of this chapter. In particular, we will clarify and generalize the following theorems about realizability categories.
\begin{itemize}
\item In examples of realizability categories, the base category is a reflective subcategory. We will show that this is a characteristic property of realizability categories.
\item There is an equivalence between Longley's \xemph{applicative morphisms} and some category of regular functors between realizability categories. We generalize applicative morphisms to our filtered realizability models, and prove a similar equivalence.
\item It is natural to consider geometric morphisms between realizability toposes, even when the direct image functor does not preserve regular epimorphisms. We consider how to do this.
\item The \xemph{effective topos} has enough projectives and the subcategory of projectives is closed under finite limits. This implies that it is the ex/lex completion of its category of projective objects. This is a consequence of the weak genericity property of the realizability fibration combined with the axiom of choice in the topos of sets. Unfortunately, this result won't generalize to general realizability categories. We will show why.
\end{itemize}

\section{Categories of fractions}\label{categories of fractions}
In this section we discuss the construction of regular and exact categories out of fibred locales. In both cases we characterize them as \xemph{categories of fractions} \cite{MR0210125} where a suitable class of morphisms is formally inverted. We discuss conditions under which the category of fractions is a subcategory. Ultimately we will show that the construction of a regular category out of a fibred locale is left biadjoint to the 2-functor that turns regular categories into fibred locales, and that the exact completion construction is left biadjoint to the forgetful functor to regular categories.

Our approach relies on the category-of-fractions constructions. Other constructions can be found in \cite{MR1674451}.

\subsection{Calculus of fractions} 
In a localization a class of morphisms of a category is inverted, like in the ring of rational numbers the non-zero integers are inverted.

\begin{definition} Let $W$ be a set of morphisms of $\cat C$.
The \xemph{localization} is a functor $Q:\cat C \to \cat C[W^{-1}]$, such that 
\begin{itemize}
\item For each $w\in W$, $Qw$ is an isomorphism.
\item For any functor $F:\cat C\to \cat D$ that sends all $w\in W$ to isomorphisms, there is a functor $G:\cat C[W^{-1}]\to \cat D$ and a natural isomorphism $GQ\to F$.
\item For any pair $G,H:\cat C[W^{-1}] \to\cat D$ and every natural transformation $\eta:GQ\to HQ$ there is a unique natural transformation $\theta:G\to H$ such that $\theta Q=\eta$.
\end{itemize}
\end{definition}

If the class of morphisms to be inverted \xemph{admits a calculus of fractions}, then there is a simple construction for the localization.

\begin{definition} A set of morphisms $W$ of $\cat C$
\xemph{admits a calculus of right fractions} if
 \begin{enumerate}
 \item $W$ contains all identities and is closed under composition.
 \item for every $w:X\to Y$ in $W$ and $f:Y'\to Y$ in $\cat C$, there are $w':X'\to Y'$ in $W$ and $f':X'\to X$ such that $f\circ w'=w\circ f'$.
\[ \xymatrix{ X' \ar@{.>}[r]^{f'}\ar@{.>}[d]_{w'} & X \ar[d]^w \\ Y' \ar[r]_f & Y } \]
 \item for every parallel pair $f,g:X\to Y$ and every $w:Y\to Z$ such that $w\circ f=w\circ g$, there is a $v:W\to X$ in $W$ such that $f\circ v=g\circ v$.
\[ \xymatrix{ W \ar@{.>}[r]^v & X \ar@<1ex>[r]^f\ar@<-1ex>[r]_g & Y \ar[r]^w & Z } \]
 \end{enumerate}\label{calculus of right fractions}

Dually, a set of morphisms $W$ admit a calculus of \xemph{left} fractions, if $W^{op}$ admits a calculus of right fractions for $\cat C^{op}$.
\end{definition}

For a class of morphisms that admits a calculus of right fractions we can construct a localization by using equivalences of a kind of span as morphisms.

\newcommand\fun{\Cat{fun}}
\begin{definition} Let $W$ be a set of morphisms in a category $\cat C$ that admits a calculus of right fractions. A \xemph{functional span} is a span $(F,s:F\to S,t:F\to T):S\to T$ such that $s\in W$. Two functional spans $(F,s,t)$ and $(F',s',t')$ are equivalent, if there are $f:G\to F$ and $f':G\to F'$ such that $s\circ f=s'\circ f'\in W$ and $t\circ f=t'\circ f'$. A \xemph{functional relation} is an equivalence class of functional spans.

Let $\alpha:X\to Y$, $\beta:Z\to X$ and $\gamma:Z\to Y$ be functional spans, then we define $\alpha\circ\beta = \gamma$ if for all spans $(A,a,a')\in\alpha$, $(B,b,b')\in\beta$, there is a $(C,c,c')\in\gamma$ and a span $f:C\to A$, $g:C\to B$ such that $a\circ f = c$, $a'\circ f=b\circ g$ and $b'\circ g = c'$
\[\xymatrix{
&& \ar[dl]^f \ar@/_2ex/[ddll]_{c} C\ar[dr]_g \ar@/^2ex/[ddrr]^{c'} \\
& A\ar[dl]^a \ar[dr]_{a'} && B\ar[dl]^b \ar[dr]_{b'} \\
X && Y && Z
}\]
The second condition on the category of fraction ensures existence and the third ensures uniqueness, see \cite{MR0210125}.

We let $\fun(\cat C,W)$ be the category with the same objects as $\cat C$, but where morphisms are functional relations. We let $Q:\cat C \to\fun(\cat C,W)$ be the functor that satisfies $QX=X$ for all objects and $Qf$ is the equivalence class that contains $(\id_X,f)$ for all morphisms $f:X\to Y$.
\end{definition}

\begin{lemma} The functor $Q:\cat C\to\fun(\cat C,W)$ has the universal property of a localization. Moreover, if $\cat C$ has finite limits, so has $\fun(\cat C,W)$ and $Q$ preserves them. \label{fun is le}\end{lemma}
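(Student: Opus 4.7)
The plan is to prove the two parts in turn. For the universal property, given any $F:\cat C\to\cat D$ sending every $w\in W$ to an isomorphism, I define $G:\fun(\cat C,W)\to\cat D$ on objects by $GX=FX$ and on a functional relation represented by a span $(H,s,t)$ by $G[H,s,t] = Ft\circ (Fs)^{-1}$. I first check this is well defined on equivalence classes, using that if $(H,s,t)\sim(H',s',t')$ via $f,f'$ with $s\circ f=s'\circ f'\in W$ and $t\circ f=t'\circ f'$, then applying $F$ and inverting the images of morphisms in $W$ gives the equality $Ft\circ(Fs)^{-1}=Ft'\circ(Fs')^{-1}$. Functoriality on identities is immediate from the fact that $(\id_X,\id_X)$ represents the identity; for composition I use the definition of composition of functional spans together with the fact that $Fs$ is invertible for each $s\in W$ appearing as the left leg. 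The natural isomorphism $GQ\simeq F$ is the identity on objects, and uniqueness up to unique isomorphism follows by checking that any other $G'$ must send $[H,s,t]$ to $G'Qt\circ(G'Qs)^{-1}=Ft\circ(Fs)^{-1}$. The 2-dimensional universal property (bijection on natural transformations after precomposition with $Q$) is then routine: a natural transformation $\theta:G\to H$ is forced on objects by $\theta_X:GQX\to HQX$, and I check naturality with respect to a functional span $[H,s,t]$ by combining naturality of $\eta=\theta Q$ at $s$ and at $t$.

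For the second part, I show finite limits exist and $Q$ preserves them. A terminal object $\termo$ of $\cat C$ remains terminal in $\fun(\cat C,W)$: the morphism represented by $(\id_X,\id_X,!_X)$ is the unique functional relation $X\to \termo$, since any competitor $(H,s,!_H)$ is equivalent to this one via $s:H\to X$ and $\id_X:X\to X$ (note $!_H=!_X\circ s$). For binary products and pullbacks, given functional relations $\alpha:X\to Z$ and $\beta:Y\to Z$ represented by $(A,a,a')$ and $(B,b,b')$, I form the pullback $A\times_Z B$ in $\cat C$ of $a'$ and $b'$ and take the span $(A\times_Z B, a\circ p_A, b\circ p_B):X\to Y$ along with appropriate projections. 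The legs into $X$ and $Y$ are in $W$ because $W$ is closed under composition and pullback of morphisms in $W$ along arbitrary morphisms (by condition (2) of Definition \ref{calculus of right fractions}, and after a short argument that the resulting pullback square lies in $W$ on the appropriate side). Equalizers (and hence all finite limits) are produced similarly from equalizers in $\cat C$, invoking condition (3) to dispose of ambiguity coming from the choice of representative.

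The main obstacle is verifying that these constructions are independent of the choice of span representatives, and that the class $W$ is stable enough (under composition, pullback, and cancellation) for the universal maps to lie in $W$ where required; this is precisely what the three conditions in the calculus of right fractions are designed to provide, so the verifications are mechanical but need to be carried out carefully. Finally, $Q$ preserves finite limits essentially by construction: $Q$ sends a limit cone in $\cat C$ to a cone in $\fun(\cat C,W)$ represented by identity-left-leg spans, and the universal property above identifies this with the limit computed in $\fun(\cat C,W)$.
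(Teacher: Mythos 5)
The paper itself offers no proof of this lemma---it simply cites Gabriel--Zisman---so your attempt has to stand on its own. The first half of your proposal (the universal property) is the standard argument and is essentially correct as a sketch: defining $G[H,s,t]=Ft\circ(Fs)^{-1}$, checking independence of the representative, functoriality via the zig-zag description of composition, and the bijection on natural transformations after precomposition with $Q$ are all the right steps.

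The second half has a genuine gap. Your justification that ``the legs into $X$ and $Y$ are in $W$'' rests on the claim that condition (2) of Definition \ref{calculus of right fractions} makes $W$ stable under pullback along arbitrary morphisms. It does not: condition (2) only asserts that \emph{some} commuting square with a $W$-leg exists, not that the actual pullback in $\cat C$ has its projection in $W$; pullback-stability is an extra hypothesis (in the paper's application, where $W$ consists of supine monomorphisms, it is deduced from the Beck--Chevalley and Frobenius conditions, not from the calculus of fractions). Moreover, the requirement itself signals a confusion between a limit cone and a functional span: the candidate pullback of $\alpha,\beta$ is the object $A\times_Z B$ with projections given by $Q(p_A)$ and $Q(p_B)$ composed with the isomorphisms $Qa$ and $Qb$; these are morphisms of $\fun(\cat C,W)$ (represented by spans with identity left leg, and identities lie in $W$ by condition (1)), so nothing about the cone needs to lie in $W$. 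More seriously, the real content of the statement---that this cone is universal in $\fun(\cat C,W)$, equivalently that $Q$ preserves finite limits---is exactly what you dismiss as ``mechanical'' and ``essentially by construction''. It is not automatic: a competing cone in $\fun(\cat C,W)$ has legs that are spans, and its commutativity only holds up to the span equivalence, so one must use conditions (2) and (3) to replace the data by an honest cone in $\cat C$ over a common $W$-refinement of the apex, and condition (3) again to get uniqueness of the induced map. The clean way to organize this is the hom-set formula $\fun(\cat C,W)(X,Y)\cong\mathrm{colim}_{(s:H\to X)\in W/X}\cat C(H,Y)$, where $(W/X)^{op}$ is filtered precisely because of conditions (1)--(3), combined with the fact that finite limits of sets commute with filtered colimits; without some argument of this kind your proof of the second assertion is incomplete.
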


\begin{proof} See 
\cite{MR0210125}.
\end{proof}


It is nice to know that $\fun(\cat C,W)$ is equivalent to a subcategory of $\cat C$ sometimes, especially considering that the category-of-fractions construction does not preserve local smallness of large categories.

\begin{definition}[coarse objects] Relative to a set of morphisms $W$ an object $X$ of $\cat C$ is \xemph{coarse} if for every $w:Y\to Z$ in $W$ and $x:Y\to X$ in $\cat C$ there is a unique $y:Z\to X$ such that $x=y\circ w$.
\[ \xymatrix{
Y \ar[d]_w \ar[r]^x & X \\
Z \ar@{.>}[ur]_y
}\]
We say $\cat C$ \xemph{has enough coarse objects} if for every object $Y$ there is a coarse object $X$ and a $w:Y\to X$ in $W$. \end{definition}

\begin{lemma} If there are enough coarse objects, $\fun(\cat C,W)$ is equivalent to the subcategory of coarse objects of $\cat C$. \label{coarse} \end{lemma}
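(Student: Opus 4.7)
The plan is to check that the composite $I:\cat C_0\hookrightarrow\cat C\xrightarrow{Q}\fun(\cat C,W)$, where $\cat C_0$ is the full subcategory of coarse objects, is an equivalence of categories by verifying that it is essentially surjective and fully faithful.

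Essential surjectivity is immediate from the hypothesis: for any object $Y$ of $\cat C$ there is a coarse $X$ and a $W$-morphism $w:Y\to X$; since $Q$ inverts every element of $W$, $Qw:Y\to IX$ is an isomorphism in $\fun(\cat C,W)$.

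For fullness, fix coarse $X$ and $Y$ and a functional relation represented by $(F,s:F\to X,t:F\to Y)$ with $s\in W$. Using coarseness of $Y$ applied to $s\in W$ and the morphism $t:F\to Y$, obtain the unique $f:X\to Y$ with $t=f\circ s$. Then $(F,s,t)$ and $(X,\id_X,f)$ are equivalent functional spans via the common refinement $\id_F:F\to F$, $s:F\to X$: indeed $s\circ \id_F=\id_X\circ s\in W$ and $t\circ\id_F=f\circ s$. Hence the functional relation equals $Qf=If$, so $I$ is full.

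For faithfulness, suppose $If=Ig$ for $f,g:X\to Y$ with $X,Y$ coarse. Unwinding the equivalence on functional spans, a single refinement (or, if the equivalence is only generated by single refinements, a finite zig-zag collapsed using condition (3) of definition \ref{calculus of right fractions}) produces $h:G\to X$ in $W$ with $f\circ h=g\circ h$. Since $Y$ is coarse, the lift of the common map $f\circ h=g\circ h$ along $h$ is unique, forcing $f=g$. Thus $I$ is faithful, and therefore an equivalence.

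The main technical point I expect to watch is the passage between a single step and the transitive closure in the span equivalence: one has to know that any zig-zag of refinements between two functional spans can be replaced by a single refinement, which is exactly what condition (3) of the calculus of right fractions supplies (applied iteratively to equalize parallel pairs after a $W$-morphism). Once this reduction is in place, the uniqueness half of the definition of coarse object does all of the real work, both for fullness (existence of the lift gives $f$) and faithfulness (uniqueness of the lift pins down $f$).
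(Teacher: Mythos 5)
Your proof is correct and follows essentially the same route as the paper: essential surjectivity from the existence of enough coarse objects, fullness by factoring $t$ through $s\in W$ using coarseness of the codomain, and faithfulness from the uniqueness clause in the definition of coarse object. Your extra care about collapsing zig-zags of refinements into a single one is a reasonable precaution that the paper glosses over, but it does not change the argument.
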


\begin{proof} Let $\cat C_{\rm coarse}$ be the full subcategory of coarse objects. The functor $Q:\cat C_{\rm coarse}\to\fun(\cat C,W)$ is essentially surjective, because there are enough coarse objects. It is full because for every functional span $(s:F\to S,t:F\to T)$ where $T$ is coarse there is an $f:S\to T$ such that $t=f\circ s$. It is faithful because if $f,g:S\to T$ are mapped to equivalent functional spans $(\id_S,f)\sim (\id_S,g)$, then there is a $w:F\to S$ in $W$ such that $h = f\circ w=g\circ w$, and the unique factorization property of $T$ then forces $f=g$. Therefore $Q$ is an equivalence of categories.\end{proof}

\begin{remark} If there are enough coarse objects, then they automatically form a reflective subcategory. If we choose for each object $X$ an arrow $X\to X'$ in $W$ such that $X'$ is coarse, then because of the unique factorization property of coarse objects, there is a unique endofunctor that turns our choice of arrows into a natural transformation. Any two choice functions are isomorphic because of the unique factorizations. In this way we get a left adjoint to the inclusion of coarse objects. \label{coarse reflection}
\end{remark}

The dual version works just as well.

\begin{definition}[fine objects] Relative to $W$ an object $X$ of $\cat C$ is \xemph{fine} if for every $w:Y\to Z$ in $W$ and $y:X\to Z$ there is a unique $x:X\to Y$ such that $y=w\circ x$.
\[ \xymatrix{
& Y \ar[d]^w \\
X \ar[r]_y\ar@{.>}[ur]^x & Z
}\]
We say $\cat C$ \xemph{has enough fine objects} if for every object $Y$ there is a fine object $X$ and a $w:X\to Y$ in $W$. \end{definition}

\begin{lemma} If there are enough fine objects, $\fun(\cat C,W)$ is equivalent to the category of fine objects. \label{fine}
\end{lemma}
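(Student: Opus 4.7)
The plan is to dualize the argument used for the coarse case in Lemma \ref{coarse}. I would let $\cat C_{\rm fine}\subseteq\cat C$ denote the full subcategory of fine objects and show that the composite $\cat C_{\rm fine}\hookrightarrow\cat C\xrightarrow{Q}\fun(\cat C,W)$ is an equivalence by verifying that it is essentially surjective, full and faithful.

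Essential surjectivity is immediate from the hypothesis: for any object $Y$ of $\cat C$ there is a fine object $X$ together with an arrow $w:X\to Y$ in $W$, and since $Q$ inverts every arrow of $W$, $Qw:QX\to QY$ is an isomorphism in $\fun(\cat C,W)$.

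For fullness I would take a functional span $(F,s:F\to S,t:F\to T)$ representing a morphism $QS\to QT$ with $S$ fine, and apply fineness of $S$ to the arrow $s\in W$ and the morphism $\id_S:S\to S$ in order to produce a right inverse $x:S\to F$ with $s\circ x=\id_S$. The morphism $f:=t\circ x:S\to T$ should then satisfy $Qf=[(F,s,t)]$; this reduces to the routine check that $(S,\id_S,f)\sim(F,s,t)$, witnessed by the maps $x:S\to F$ and $\id_S:S\to S$. For faithfulness, if $Qf=Qg$ for $f,g:S\to T$, then the equivalence relation on functional spans supplies some $v:V\to S$ in $W$ with $f\circ v=g\circ v$; fineness of $S$ applied to $v$ and $\id_S$ yields a section $\sigma:S\to V$ of $v$, along which I right-cancel to conclude $f=g$.

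The only real subtlety is a slight asymmetry with the coarse case: there, faithfulness exploited coarseness of the \emph{codomain} $T$ through the unique-extension property, whereas here both fullness and faithfulness rest on fineness of the \emph{domain} $S$ through the existence of sections of $W$-arrows landing in $S$. This reorientation reflects the fact that $\fun(\cat C,W)$ is built from right fractions, so the $W$-leg of every functional span sits on the source side. Apart from keeping this asymmetry straight, the argument is a routine dualization and I expect no real obstacle.
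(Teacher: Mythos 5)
Your proof is correct and follows essentially the same route as the paper's: essential surjectivity from the existence of enough fine objects, fullness by using fineness of the domain $S$ to split the $W$-leg $s$ of a functional span and replace it by $(\id_S,t\circ x)$, and faithfulness by producing a section of the mediating $W$-arrow $v$ and cancelling along it. If anything, your faithfulness step is slightly more careful than the paper's, which asserts that the arrow $w$ with $f\circ w=g\circ w$ is an isomorphism, whereas only the split-epimorphism property you establish is actually needed (and is all that fineness of $S$ directly provides).
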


\begin{proof} Let $\cat C_{\rm fine}$ be the full subcategory of fine objects. The functor $Q:\cat C_{\rm fine}\to\fun(\cat C,W)$ is essentially surjective, because there are enough fine objects. It is full because in every functional span $(s,t)$ between fine objects, $s$ has a section $\pre s$ and $(s,t)$ is equivalent to $(\id,t\circ s^{-1})$. If $f,g:X\to Y$ are two morphisms between fine objects and $(\id,_X,f)$ and $(\id_X,g)$ are equivalent spans, then there is a $w:X'\to X$ in $W$ such that $f\circ w=g\circ w$. This $w$ is an isomorphism because $X$ is fine however. Hence the functor is faithful. \end{proof}

\begin{remark} This terminology comes the theory of quasitoposes. Inverting the class of morphisms that are both monic and epic, turns quasitoposes into toposes. In turn that terminology comes from topology, where coarse and fine correspond to coarse and fine topologies in categories of topological spaces.
\end{remark}

\subsection{Assemblies}
Subobject fibrations are a construction that turn regular categories into fibred locales. This section shows a canonical way of doing the opposite: turning fibred locales into regular categories. The idea behind the construction is the following. For any regular category, the fibred locale $\cod:\Cat{monos}(\cat C)\to\cat C$ has some extra structure in the form of the functor $\dom:\Cat{monos}(\cat C)\to\cat C$. This functor $\dom$ can be characterized as the universal way of turning supine morphisms into regular epimorphisms. In particular, $\dom$ inverts supine monomorphisms. Therefore, the canonical way of constructing a regular category out of a fibred locale is inverting the supine monomorphisms of the domain.

\begin{lemma} The supine monomorphisms of a fibred locale admit a calculus of right fractions. \end{lemma}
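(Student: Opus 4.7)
I would verify the three conditions of definition \ref{calculus of right fractions} directly for the class $W$ of supine monomorphisms of a fibred locale $F:\cat F\to\cat B$. Conditions (1) and (3) are essentially formal. Condition (1) holds because identities are supine monic and both supine (coCartesian) morphisms and monomorphisms are closed under composition. Condition (3) is automatic since $W$ consists of monomorphisms: if $w\circ f=w\circ g$ with $w\in W$, then $f=g$ already, so we may take $v=\id$, which is itself supine monic.

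The substantive part is condition (2). Given $w:X\to Y$ supine monic and $f:Y'\to Y$, the plan is to form the pullback $X'=X\times_Y Y'$ inside $\cat F$. Fibred locales lift all finite limits from the base by lemma \ref{lifting limits}, so in the intended setting where $\cat B$ has pullbacks (e.g.\ a Heyting base), this pullback exists and $F$ preserves it. Write $f':X'\to X$ and $w':X'\to Y'$ for the projections. Then $w'$ is automatically monic because pullbacks of monomorphisms are monomorphisms, so the only remaining task is to show $w'$ is supine.

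The main obstacle is this last point, and I would handle it by combining the Frobenius and Beck--Chevalley conditions. Writing $p=Ff'$ and $q=Fw'$, the pullback $X'\in F_{FX'}$ is described by the meet $F_p(X)\wedge F_q(Y')$ in the fibre lattice. Then
\[
q_!(X') \simeq q_!\bigl(F_p(X)\wedge F_q(Y')\bigr) \simeq q_!\bigl(F_p(X)\bigr)\wedge Y' \simeq F_{Ff}\bigl(Fw_!(X)\bigr)\wedge Y' \simeq F_{Ff}(Y)\wedge Y',
\]
where the second isomorphism is Frobenius, the third is Beck--Chevalley for the base pullback, and the fourth uses that $w$ is supine so $Fw_!(X)\simeq Y$.

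The proof then concludes by observing that $f:Y'\to Y$ factors through the prone morphism $F_{Ff}(Y)\to Y$ via a vertical map $Y'\to F_{Ff}(Y)$, which in the preorder fibre over $FY'$ is simply $Y'\leq F_{Ff}(Y)$. Hence $F_{Ff}(Y)\wedge Y'\simeq Y'$, giving $q_!(X')\simeq Y'$, so the canonical morphism $w'$ is indeed supine. The Frobenius/Beck--Chevalley manipulation is the real heart; once it is in place, everything else is bookkeeping.
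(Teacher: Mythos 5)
Your proposal is correct and follows essentially the same route as the paper: conditions (1) and (3) are dispatched formally, and condition (2) is handled by lifting the pullback via lemma \ref{lifting limits} and showing supine morphisms are stable under pullback using the Beck--Chevalley and Frobenius conditions. The only difference is that you write out the computation $q_!(F_p(X)\wedge F_q(Y'))\simeq F_{Ff}(Y)\wedge Y'\simeq Y'$ explicitly, which the paper leaves as a one-line assertion.
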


\begin{proof} Identities are supine monomorphisms, and composition of supine monomorphisms are too. Furthermore, if $s$ is a supine monomorphism and $s\circ f=s\circ g$, then $f=g$, which means that the set of supine monomorphisms satisfies the first and third properties of definition \ref{calculus of right fractions}.

Lemma \ref{lifting limits} tells us $\cat F$ has finite limits. Monomorphisms are stable under pullback and the Beck-Chevalley and Frobenius conditions imply that supine morphisms are stable under pullback too. In this way the second property is also satisfied.
\end{proof}

\newcommand\Asm{\Cat{Asm}}
\begin{definition} Let $F:\cat F\to\cat B$ be a fibred locale over a category $\cat B$ with finite limits. Let $S$ be the class of supine monomorphisms in $\cat F$. The \xemph{category of assemblies} $\Asm(F)$ is $\fun(\cat F,S)$. 
\end{definition}

\begin{lemma} The category $\Asm(F)$ is regular, and Heyting if $F$ is a complete fibred Heyting algebra. \end{lemma}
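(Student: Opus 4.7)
The plan is to carry out a four-step verification, exploiting that both images and a Heyting structure on $\Asm(F)$ can be read off from the supine/prone factorisation of morphisms in $\cat F$ together with the regular-epi-mono factorisation of their $F$-images in $\cat B$.

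First I would dispatch finite limits. Lemma~\ref{lifting limits} supplies finite limits in $\cat F$, and Lemma~\ref{fun is le} transfers them to the category of fractions and ensures that $Q\colon \cat F\to\Asm(F)$ preserves them. Hence the limit-theoretic content of regularity is free.

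Next I would construct image factorisations. Given $[f]\colon X\to Y$ represented by $f\colon X\to Y$ in $\cat F$, factor $Ff = m\circ e$ in the regular category $\cat B$ with $e$ a regular epimorphism and $m$ a monomorphism. Use the opfibration to lift $e$ to a supine $\sigma\colon X\to X'$ and the fibration to lift $m$ to a prone $p\colon Z\to Y$; then the universal property of $p$ applied to $f$ yields a unique vertical $v\colon X'\to Z$ with $f = p\circ v\circ \sigma$. Because the fibres of a fibred locale are preorders, $v$ is a monomorphism in $\cat F$; because $m$ is monic in $\cat B$, the universal property of prone arrows implies that $Q(p\circ v)$ is a monomorphism in $\Asm(F)$. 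For the cover half, $\sigma$ is the coequaliser of its own kernel pair in $\cat F$ (using that it is supine over a regular epi in the regular category $\cat B$), and since $Q$ preserves pullbacks and coequalisers of kernel pairs along supines, $Q(\sigma)$ is a regular epimorphism. This realises $[f]=Q(p\circ v)\circ Q(\sigma)$ as an image factorisation. Its stability under pullback then follows from three independent facts: stability of regular-epi-mono factorisation in $\cat B$, stability of prone morphisms under pullback, and the Beck-Chevalley condition on supine morphisms in the bifibration $F$.

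For the Heyting structure, assuming $F$ is a complete fibred Heyting algebra, I would identify $\Sub_{\Asm(F)}(QY)$ with equivalence classes of prone monomorphisms into $Y$ in $\cat F$, which by the fibration correspond to elements of $F_{FY}$. The Heyting-algebra structure on $F_{FY}$ then transfers to $\Sub_{\Asm(F)}(QY)$. Inverse image maps are induced by reindexing, and the right adjoint $\forall_{[f]}$ is assembled from the right adjoint along the prone half of $f$ (supplied by $F$) combined with the dual image $\forall$ along the regular-epi part in $\cat B$ (available because $\cat H$ is Heyting). The Beck-Chevalley and Frobenius conditions in $\Asm(F)$ then follow by combining the corresponding conditions in $F$ and in $\cat H$.

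The main obstacle will be verifying that the image construction descends to equivalence classes of functional spans, i.e., that replacing $f$ by a functionally equivalent representative yields the same mono-epi factorisation up to canonical isomorphism in $\Asm(F)$; this requires a careful naturality argument for the prone/supine lifts with respect to supine monomorphisms. A closely related subtlety is exhibiting $Q(\sigma)$ as a genuine regular epimorphism rather than merely an epimorphism, which demands that the kernel-pair-coequaliser diagram witnessing this in $\cat F$ survives the localisation at supine monos.
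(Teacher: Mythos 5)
There is a genuine gap, in two places. First, your factorisation imports hypotheses on the base that the lemma does not have: you factor $Ff = m\circ e$ ``in the regular category $\cat B$'', and later build $\forall_{[f]}$ from dual images in the base ``because $\cat H$ is Heyting''. But the lemma is stated for a fibred locale over a base with finite limits only (and this generality is needed later for the biadjunction between $\floc$ and $\reg$), so neither a regular-epi--mono factorisation of $Ff$ nor dual images in $\cat B$ are available. The paper's factorisation never touches the base: a morphism $f$ of $\cat F$ splits as a supine morphism $X\to \exists_{Ff}(X)$ over $Ff$ itself (the coindexing of the bifibration, not an image in $\cat B$) followed by a vertical morphism, and it is this supine--vertical factorisation that becomes the regular-epi--mono factorisation in $\Asm(F)$. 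Second, the step you flag but leave open --- that $Q(\sigma)$ is a regular epimorphism --- is precisely the content of the paper's proof: given $f$ with $f\circ p=f\circ q$ for the kernel pair of a supine $s$, one factors $(s,f):X\to Y\times Y'$ as supine followed by vertical and checks by a diagram chase that the induced projection to $Y$ is a supine \emph{monomorphism}, so that $f$ factors through $s$ in $\fun(\cat F,S)$, uniquely up to equivalence of spans. Asserting that ``$Q$ preserves coequalisers of kernel pairs along supines'' begs this question (localisations do not preserve colimits), and your fallback observation that a supine over a regular epimorphism already coequalises its kernel pair in $\cat F$ both requires the base to be regular and still does not transfer through $Q$ without the argument above.

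The Heyting half also fails at the identification of subobjects. Subobjects of $QY$ in $\Asm(F)$ are not represented by prone monomorphisms into $Y$, and they do not correspond to all of $F_{FY}$: prone monomorphisms into $Y$ lie over monomorphisms of $\cat B$ and yield only the generally much smaller lattice $\Sub_{\cat B}(FY)$ (compare $\Sub(\nabla X)$ with $\Sub_{\cat H}(X)$ in a category of assemblies). What the paper shows is that every monomorphism of $\Asm(F)$ is isomorphic to the image of a \emph{vertical} one, giving $\Sub_{\Asm(F)}(Y)\cong F_{FY}/Y$; the subobject fibration of $\Asm(F)$ is then the fibred preorder of vertical morphisms, which is a fibred locale in general and a complete fibred Heyting algebra whenever $F$ is, with implication, joins and $\forall$ supplied fibrewise and along $Ff$ by $F$ itself. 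Your recipe, combining a right adjoint along the prone half with a dual image along a regular epimorphism of $\cat B$, therefore computes in the wrong subobject lattice and with structure the hypotheses do not provide.
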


\begin{proof} Lemmas \ref{lifting limits} and \ref{fun is le} give us finite limits in $\fun(\cat F,S)$. The localization $Q:\cat F\to\fun(\cat F,S)$ turns supine morphisms into regular epimorphisms, for the following reasons. Let $(p,q)$ be a kernel pair of a supine $s:X\to Y$ in $\cat F$ and let $f\circ p = f\circ q$ for some $f:X\to Y'$. We can split the map $(s,f):X\to Y\times Y'$ into a supine morphism $s': X\to \im{F(s,f)}(X)$ and vertical morphism $v:\im{F(s,f)}(X)\to Y\times Y'$. The projection $\pi_0: \im{F(s,f)}(X) \to X$ is supine because $\pi_0\circ s' = s$ and both $s$ and $s'$ are supine morphisms. Seeing that it is monic requires some diagram chasing. We pull back any pair $x,y$ such that $\pi_0\circ x=\pi_0\circ y$ along $s'$ to get a pair of morphisms $x',y'$ such that $s\circ x' = s\circ y'$. Now $(x',y')$ factors through $(p,q)$ and therefore $f\circ x' = f\circ y'$. We may then conclude that $(s,f)\circ x' = (s,f)\circ y'$. Because $F$ is faithful, vertical morphisms are monic, and because $s'$ is the supine part of $(f,s)$, we get $s'\circ x' = s'\circ y'$. Then $x=y$ follows from the fact that supine morphisms are stable under pullback, and $(x,y)$ is the unique factorization of $(s'\circ x',s'\circ y')$ over a supine morphism. We see that $f$ factors through a supine morphism $s'$ that differs from $s$ by a supine monomorphism. This `factorization' is unique up to equivalence of spans. The conclusion is that supine morphisms become coequalizers and hence regular epimorphisms in the localization. 
\[ \xymatrix{
X \ar[r]^s \ar[dr]^{s'} \ar[d]_f & Y \\
Y' & \im{F(s,f)}(X) \ar[l]^{\pi_1} \ar[u]_{\pi_0}
}\]
Because $\Asm(F)$ has finite limits, it has a subobject fibration. We now show that $\Sub(X)\cong F_{FX}/X$. For this we consider what monomorphisms in $\Asm(F)$ are like. 

In $\fun(\cat F,S)$ supine monomorphisms have become isomorphisms. This means that if a functional span $(s:X\to X',m:X\to Y)$ is a monomorphism, then $(\id_X,m) = Qm$ is an isomorphic monomorphism. Here $Q:\cat F\to\fun(\cat F,S)$ is the localization.

We can split $m$ into a supine part $s_m: X\to \im m(X)$ followed by a vertical part $v_m:\im m(X)\to Y$. Vertical morphisms are monic because $F$ is faithful, and $Qv_m$ will therefore be monic too. Because $Qv_m\circ Qs_m = Qm$, $Qs_m$ is a monomorphism that is also a regular epimorphism and therefore an isomorphism. For these reasons every monomorphism $(s,m)$ is isomorphic to $(\id,v_m)$. Thus we find a natural equivalence between $\Sub(X)$ and $F_{FX}/X$ for every object $X$ of $\fun(\cat F,S)$.

Let $\cat F^v$ be the category whose objects are vertical morphisms for the fibration $F$, then $Q\cod:\cat F^v \to \fun(\cat F,S)$ is equivalent to the subobject fibration of $\fun(\cat F,S)$. It is a fibred locale and it is a complete fibred Heyting algebra if $F$ is. Therefore $\Asm(F)=\fun(\cat F,S)$ is regular or even Heyting.
\end{proof}

Regular categories are a reflective subcategory of the category of fibred locales in a suitable 2-categorical sense. We prove this now.

\newcommand\floc{\Cat{floc}}
\newcommand\reg{\Cat{reg}}
\begin{theorem} Let $\floc$ be the category of fibred locales over finite limit categories and morphisms $m=(m_0,m_1)$ of fibred locales, where $m_0$ preserves finite limits. Let $\reg$ be the category of regular categories and functors. Subobject fibrations determine a 2-functor $\Sub:\reg \to\floc$ such that $\reg(\cat C,\cat D)\cong \floc(\Sub(\cat C),\Sub(\cat D))$ naturally and assemblies determine a 2-functor $\Asm:\floc \to \reg$, such that $\reg(\Asm(F),\cat C)\cong \floc(F,\Sub(\cat C))$ naturally.
\end{theorem}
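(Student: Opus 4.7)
The plan is to build the two 2-functors, then construct natural bijections on hom-categories in both directions, relying on the description of subobjects in $\Asm(F)$ that was established in the preceding lemma.

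First, I would check that $\Sub:\reg\to\floc$ is a 2-functor. A regular $F:\cat C\to\cat D$ preserves pullbacks and images, so it sends monomorphisms to monomorphisms and image squares to image squares. Hence $\Sub(F)=(F,F_{\mathrm{subs}})$ with $F_{\mathrm{subs}}(\db m)=\db{Fm}$ is a morphism of bifibrations that preserves finite products in the fibres, i.e., a morphism of fibred locales. A natural transformation between regular functors induces a 2-morphism of fibrations by naturality on monomorphisms. For the first bijection $\reg(\cat C,\cat D)\cong\floc(\Sub(\cat C),\Sub(\cat D))$, the key observation is that a fibred-locale morphism $(m_0,m_1)$ between subobject fibrations is determined up to isomorphism by $m_0$: since every subobject $\db m$ is $\exists_m(\top)$, preservation of supine arrows forces $m_1(\db m)=\db{m_0(m)}$ up to isomorphism, and preservation of finite fibre limits together with $m_0$ preserving finite limits (the defining condition on morphisms in $\floc$) makes this consistent. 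Conversely, a finite-limit-preserving $m_0$ that extends to a fibred-locale morphism must preserve images: supine morphisms in $\Sub(\cat D)$ over $m_0(e)$ for a regular epi $e$ correspond to the image factorization of $m_0(e)$, and this agrees with the top component only if $m_0$ preserves the regular-epi part of regular-epi-mono factorizations, hence preserves images. Naturality in $\cat C,\cat D$ is automatic from the constructions.

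Next I would show $\Asm:\floc\to\reg$ is a 2-functor. A morphism $(m_0,m_1):F\to G$ sends prone arrows to prone arrows and supine arrows to supine arrows, and of course preserves monomorphisms (any functor between finite-limit categories preserves the equational characterization, and $m_1$ preserves finite limits by lifting them from $m_0$). Thus $m_1$ sends the class $S_F$ of supine monomorphisms into the corresponding class $S_G$, and by the universal property of $\fun(\cat F,S_F)$ it descends to a functor $\Asm(F)\to\Asm(G)$. The proof of the previous lemma shows this functor is automatically regular, since regular epis in $\Asm(F)$ come from supine morphisms in $\cat F$, which $m_1$ preserves, and finite limits are preserved because both $m_0$ and fibrewise-$m_1$ preserve them.

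For the second bijection, I would construct a unit $\eta_F:F\to\Sub(\Asm(F))$. Using the description $\Sub_{\Asm(F)}(X)\cong F_{FX}/X$ from the preceding lemma, define the base component by $Y\mapsto Q(\top_Y)$ (where $Q:\cat F\to\Asm(F)$ is the localization) and the total component by sending $X\in F_Y$ to the monomorphism in $\Asm(F)$ represented by the vertical arrow $X\to\top_Y$ in $\cat F$. This is a morphism of fibred locales because prone arrows are vertical-after-reindexing and supine arrows in $F$ become isomorphisms followed by vertical embeddings, both of which match the pullback and image structure of $\Sub(\Asm(F))$. Now given a regular $G:\Asm(F)\to\cat C$, assign it $\Sub(G)\circ\eta_F$. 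Conversely, given $H=(h_0,h_1):F\to\Sub(\cat C)$, define $\tilde H:\Asm(F)\to\cat C$ on objects by sending $X$ (with $FX=Y$) to the domain of the monomorphism $h_1(X)\hookrightarrow h_0(Y)$; a supine monomorphism in $\cat F$ is sent by $H$ to a supine monomorphism in $\Sub(\cat C)$, and those correspond to isomorphisms on domains, so $\tilde H$ inverts $S_F$ and factors through the localization. The two assignments are mutually inverse up to isomorphism because $\eta_F$ is essentially surjective on objects of $\Sub(\Asm(F))$ (every subobject in $\Asm(F)$ is vertical, by the previous lemma) and the definitions unwind to give the same functor on objects and morphisms.

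The main obstacle I expect is the careful handling of 2-cells and naturality: one must check that natural transformations on either side of each bijection correspond bijectively, and that both bijections are natural in the relevant arguments. The 2-cell compatibility follows from the fact that a natural transformation between morphisms of fibred locales is determined by its base component when the target is a subobject fibration (since the total component must then lie in the poset of subobjects, where 2-cells are unique when they exist), matching the uniqueness of natural transformations between regular functors out of $\Asm(F)$ with given value on the generating class. The routine coherence checks — that unit and counit satisfy the triangle identities, and that the candidate inverses compose correctly — reduce to repeated applications of the description of $\Sub_{\Asm(F)}$ from the preceding lemma, which has already done the difficult work.
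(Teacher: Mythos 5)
Your proposal is correct and follows essentially the same route as the paper: the first bijection via regular functors preserving subobjects and images (and $m_1$ forcing $m_0$ to be regular), and the second via sending $(h_0,h_1)$ to the domain functor which inverts supine monomorphisms and factors through the localization, together with the unit $F\to\Sub(\Asm(F))$ built from $Y\mapsto Q(\top_Y)$ and the vertical arrows $X\to\top_{FX}$, exactly as in the paper's $(\nabla,\nabla')$. The extra detail you give on 2-functoriality and 2-cells is a fair elaboration of what the paper leaves implicit.
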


\begin{proof} The equivalence of regular functors $\cat C \to\cat D$ and morphisms of fibred locales $\Sub(\cat C) \to \Sub (\cat D)$ is trivial. On one hand, all we need to know is that regular functors preserve subobjects and images. On the other, a morphism of fibred locales $m:\Sub(\cat C) \to \cat (\cat D)$ consists of two functors, $m_0:\cat C \to\cat D$ and $m_1:\Cat{monos}(\cat C) \to \Cat{monos}(\cat D)$. The upper functor $m_1$ forces the lower $m_0$ to be regular.

The second right adjoint $\cod\dashv\id\dashv \dom$ of the subobject fibration determines a fully faithful functor $G:\floc(F,\Sub(\cat C)) \to \reg(\Asm(F),\cat C)$. For each morphism $(m_0,m_1):F\to \Sub(\cat C)$, $\dom m_1: \cat F \to \cat C$ maps supine morphisms to regular epimorphisms, and hence supine monomorphism to isomorphisms. For these reasons, $\dom m_1$ factors through $\fun(\cat F,S)$ by an up to isomorphism unique regular functor. The functor $G$ is uniquely determined by a choice of regular functor for each morphism $(m_0,m_1)$.

Any fibred locale $F$ has a right adjoint $R$ that sends each object $X$ in $\cat B$ to the terminal object $\top_X$ of $F_X$. We derive two functors from this map, namely $\nabla = QR$ and $\nabla':\cat F \to \cat F^v$ that sends $X$ in $\cat F$ to the unit $X \to RFX$. Together $\nabla$ and $\nabla'$ are a morphism of fibrations $F \to \Sub(\Asm(F))$. So for any regular functor $H:\Asm(F) \to \cat C$ we get the morphism $\Sub(H)(\nabla,\nabla'): F\to \Sub(\cat C)$. We now see that $G$ is surjective on objects and hence an equivalence of categories.
\end{proof}

\begin{definition}[constant object functor] For each fibred locale $F:\cat F\to\cat B$ the unit of the biadjunction $\Asm\dashv \Sub$ is a left exact functor $\nabla: \cat B \to \Asm(F)$, which is regular when $F$ is separated. We call this functor the \xemph{constant object functor}. \label{constant object functor} 
\end{definition}

\subsection{Ex/reg completion} \label{exreg}
In this subsection we introduce the ex/reg completion of a regular category, by first introducing its universal property, and then showing how to construct such a completion using a category-of-fractions construction. A construction of the exact completion that is not a category of fractions, along with similar completion constructions, is found in \cite{MR1600009}.

\begin{definition} Note that categories with finite limits have enough structure to internally define \xemph{equivalence relations}, in the form of certain monics $E\to X\times X$. An equivalence relation $(\pi_0,\pi_1):E\to X\times X$ is \xemph{effective} if $(\pi_0,\pi_1)$ has a coequalizer $e:X\to Y$ and if $(\pi_0,\pi_1)$ is a kernel pair of $e$. A regular category $\cat R$ is \xemph{exact}, if every equivalence relation $E\to X\times X$ is effective.
\end{definition}

Informally, exact categories have \xemph{quotient objects} for all equivalence relations.

\newcommand\exreg{\textrm{ex/reg}}
\begin{definition} The \xemph{ex/reg completion} of a regular category $\cat C$ is a regular functor to an exact category $I:\cat C\to\cat C_\exreg$ such that
\begin{itemize}
\item For every regular functor $F:\cat C\to\cat E$ to an exact category, there is a regular functor $G:\cat C_\exreg \to\cat E$ and a natural isomorphism $GI\to F$ .
\item For every natural transformation $\eta:GI\to HI$ there is a unique natural transformation $\theta:G\to H$ such that $\theta I = \eta$.
\end{itemize}
\end{definition}

\newcommand\ex{\Cat{ex}}
\begin{remark} By this definition, the ex/reg completion is left biadjoint to the inclusion of the \emph{2-category $\ex$ of exact categories} in the 2-category $\reg$ of regular categories: it induces a natural equivalence $\reg(\cat C,\cat E)\cong \reg(\cat C_\exreg,\cat E)$, for every exact $\cat E$. As a consequence, the 2-functor $\Sub:\ex\to\floc$ has a bireflector $\cat C \mapsto \cat C[F] = \Asm(F)_\exreg:\floc \to \ex$. For triposes, this construction is known as the \xemph{tripos-to-topos} construction, because it is the standard way of constructing a topos out of a tripos. See \cite{MR578267, MR2479466}. \label{ttt2}
\end{remark}

We split the construction of the ex/reg completions of a (small) regular category into two steps. First, we freely add quotients of equivalence relation to a regular category $\cat C$. This results in a new regular category $\cat C_q$ and a fully faithful finite limit preserving functor $\cat C\to\cat C_q$ that unfortunately only preserves regular epimorphism that are split. We then use a category-of-fractions construction to get a category of fractions $\cat C_\exreg$ with an embedding $I:\cat C\to\cat C_\exreg$ that \xemph{does} preserve regular epimorphisms.

\newcommand\er{\Cat{er}}\newcommand\wave{\mathord{\sim}}
\begin{definition} Let $\cat C$ be a category with finite limits. The category $\er(\cat C)$ of \xemph{equivalence relations} is defined as follows. The objects of $\er(\cat R)$ are pairs $(X\in\cat R, \wave_X\subseteq X^2)$ where $\wave_X$ is an equivalence relation on $X$. A morphism $(X,\wave_X) \to (Y,\wave_Y)$ is an arrow $f:X\to Y$, such that $(f,f):X\to Y^2$ factors through $\wave_Y$.

There is an embedding $\Delta:\cat R\to\er(\cat E)$ that sends each object $X$ to $(X,=)$ where $=$ stands for the diagonal subobject.

A parallel pair of morphisms $f,g:(X,\wave_X)\to(Y,\wave_Y)$ is \xemph{equivalent} if $(f,g):X\to (X')^2$ factors through $\wave_Y$; we write $f\sim g$ to denote this. 

Because composition preserves $\wave$, there is a category $\cat C_q = \er(\cat C)/\wave$ whose morphisms are equivalence classes for $\wave$. This is the \xemph{free quotient completion} of $\cat C$. We define a functor $I:\cat C\to\cat C_q$, letting $IX = (X,(\id,\id):X\to X\times X)$ for every object $X$ and $If = \set f$ for every morphism $f$. \label{fqc}
\end{definition}

Maietti and Rosolini are writing papers on quotient completions \cite{EQC, QCftFoCM}. These papers cover the construction of quotient completions directly out of fibred locales, which they call \xemph{elementary doctrines}.

\begin{lemma} The category $\cat C_q$ is regular, the functor $I:\cat C\to \cat C_q$ is full and faithful and preserves finite limits, and $I\wave_X \subseteq IX\times IX$ is an effective equivalence relation for all equivalence relation $\wave_X\subseteq X\times X$ in $\cat C$.
 \end{lemma}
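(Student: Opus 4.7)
The plan is to verify the four assertions about $\cat C_q$ in order of increasing difficulty: full-faithfulness of $I$, preservation of finite limits, effectivity of $I\wave_X$, and regularity.

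First, the embedding $I$ is full and faithful almost by construction. A morphism $IX\to IY$ in $\cat C_q$ is a $\sim$-class of arrows $f:X\to Y$ in $\cat C$, and since the equivalence relation on $IY$ is the diagonal $=_Y\subseteq Y\times Y$, two arrows $f,g:X\to Y$ satisfy $f\sim g$ iff $(f,g)$ factors through the diagonal iff $f=g$. So each class is a singleton and contains every $f:X\to Y$. For the preservation of finite limits, I would compute limits in $\cat C_q$ directly. A terminal object is $(\termo,=)$; a product of $(X,\wave_X)$ and $(Y,\wave_Y)$ is $(X\times Y, \wave_X\times \wave_Y)$ with the evident projections; the equalizer of $[f],[g]:(X,\wave_X)\to(Y,\wave_Y)$ is the monomorphism from $((f,g)^{-1}(\wave_Y),\wave_X|)$. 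Because the meet of two diagonals is a diagonal and a pullback of diagonals is a diagonal, these limits restrict to the ordinary finite limits of $\cat C$ along $I$.

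Next, I would treat the effectivity of $I\wave_X$ directly. The map $\id_X:X\to X$ induces a morphism $q = [\id_X]:IX = (X,=)\to (X,\wave_X)$ in $\cat C_q$. The kernel pair of $q$ in $\cat C_q$ consists of those arrows $(h,k)$ with $q\circ [h]=q\circ [k]$, i.e.\ $(h,k)$ factors through $\wave_X$; its universal instance is precisely the pair of projections $I\wave_X\rightrightarrows IX$. To see $q$ is a coequalizer of this pair, let $[h]:IX\to(Y,\wave_Y)$ equalize the two projections from $I\wave_X$; this means exactly that $h$ respects the equivalence relations, hence gives a morphism $(X,\wave_X)\to (Y,\wave_Y)$ whose class factors $[h]$ through $q$ uniquely. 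Thus $I\wave_X$ is an effective equivalence relation.

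For regularity, the finite limits are already in hand, so the remaining task is to produce pullback-stable images. Given $[f]:(X,\wave_X)\to (Y,\wave_Y)$ represented by $f:X\to Y$, I would factor $f$ in $\cat C$ as a regular epi $e:X\to Z$ followed by a mono $m:Z\to Y$, and equip $Z$ with the pulled-back relation $\wave_Z = (m\times m)^{-1}(\wave_Y)$. Then $[m]:(Z,\wave_Z)\to (Y,\wave_Y)$ is monic in $\cat C_q$ (if $(mh,mk)$ factors through $\wave_Y$ then $(h,k)$ factors through $\wave_Z$ by definition), and $[e]:(X,\wave_X)\to(Z,\wave_Z)$ is a regular epi because its kernel pair in $\cat C_q$ is $(e,e)^{-1}(\wave_Z) = (f,f)^{-1}(\wave_Y)$ and $[e]$ coequalizes it for the same reason as in the previous paragraph. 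Uniqueness of the factorization up to isomorphism follows from the universal property of the $\cat C$-image together with the minimality of $\wave_Z$. The main obstacle is stability under pullback: given a map $[g]:(Y',\wave_{Y'})\to(Y,\wave_Y)$, the pullback square in $\cat C_q$ is not literally a pullback in $\cat C$, so one has to compute it explicitly (as a suitable subobject of $X\times Y'$ equipped with the meet of the two equivalence relations) and then show that the resulting image is obtained by pulling back $[m]$. This step uses both the pullback-stability of images in $\cat C$ and the fact that $\wave_Z$ was defined as a pullback of $\wave_Y$, so the two constructions commute up to isomorphism. Once this is verified, $\cat C_q$ is regular.
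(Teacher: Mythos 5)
Your treatment of full faithfulness, of finite limits and their preservation by $I$, and of the effectivity of $I\wave_X$ is essentially correct and agrees in substance with the paper (which phrases the limit computations via pseudolimits in $\er(\cat C)$, but the content is the same). The regularity argument, however, has a genuine gap: the factorization you propose is not a regular-epi--mono factorization in $\cat C_q$. If you factor $f$ in $\cat C$ as a regular epi $e:X\to Z$ followed by a mono $m:Z\to Y$ and equip $Z$ with $\wave_Z=(m\times m)^{-1}(\wave_Y)$, then $[m]$ is indeed monic, but $[e]:(X,\wave_X)\to(Z,\wave_Z)$ is in general \emph{not} a regular epimorphism in $\cat C_q$. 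Your appeal to ``the same reason as in the previous paragraph'' does not transfer: there the underlying arrow was an identity, so the comparison map could be taken to be $h$ itself; here a morphism $[h]:(X,\wave_X)\to(W,\wave_W)$ coequalizing the kernel pair of $[e]$ only satisfies $h(x)\wave_W h(x')$ whenever $e(x)=e(x')$ --- equality up to $\wave_W$, not on the nose --- so $h$ need not factor through $e$ in $\cat C$, and the required $g:Z\to W$ need not exist. Concretely, in $\cat C=\mathbf{Ab}$ take $e:\mathbb Z\to\mathbb Z/2$ and $[h]=[\id]:(\mathbb Z,=)\to(\mathbb Z,\wave)$ with $\wave$ the congruence modulo $2$: it coequalizes the kernel pair of $Ie$, but every homomorphism $g:\mathbb Z/2\to\mathbb Z$ is zero, so $[g\circ e]\neq[h]$; hence $Ie$ is not the coequalizer of its kernel pair and not a regular epi. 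This is exactly the point the paper makes immediately after the next lemma: $I$ preserves no regular epimorphisms except split ones, and the image of $Ie$ is $(X,E)$ with $E$ the kernel pair of $e$ --- contradicting your claimed factorization.

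The correct factorization, and the one the paper uses, leaves the underlying object alone and changes only the equivalence relation: $[f]$ factors as the ``vertical'' map $(X,\wave_X)\to(X,(f\times f)^{-1}(\wave_Y))$ carried by $\id_X$, which is the coequalizer of the two projections from $(f\times f)^{-1}(\wave_Y)$ equipped with the restriction of $\wave_X\times\wave_X$, followed by the ``prone'' map $(X,(f\times f)^{-1}(\wave_Y))\to(Y,\wave_Y)$ carried by $f$, which is monic precisely because the domain relation is pulled back. Pullback stability is then verified for these vertical covers (the paper's ``coinserters are stable in comma squares''). Note also that your route needs regular-epi--mono factorizations in $\cat C$, i.e.\ that $\cat C$ is regular, whereas the lemma (per the definition of $\cat C_q$) only assumes finite limits and is later applied to merely left exact $\cat C$ in the reg/lex completion; the vertical/prone factorization avoids this extra hypothesis.
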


\begin{proof} There is a forgetful functor $E:\er(\cat C)\to\cat C$. Because equivalence relation are stable under pullback, $E$ is a fibred category. It is a fibred meet semilattice with top and bottom, because equivalence relations on a single object $X$ form a meet semilattice and because pullbacks preserve the lattice structure.

Some properties of $\er(\cat C)$ make it a regular category up to $\wave$:
\begin{itemize}
\item It has finite limits, because $E:\er(\cat C) \to \cat C$ is a fibred category with finite limits over a category with finite limits.
\item It is also 2-category, where 2-morphisms are equivalences of arrows.
\item As a 2-category is has \xemph{inserters} for every parallel pair of arrows. This means that for every pair of arrows $f,g:(Y,\wave_Y) \to (Z,\wave_Z)$ there is an arrow $i:(X,\wave_X)\to(Y,\wave_Y)$ such that $f\circ i \sim g\circ i$ and for each $h:(X',\wave_{X'}) \to (Y,\wave_Y)$ factors uniquely through $i$. 

Using inserters we can build other pseudolimits, like the \xemph{comma square}. For two morphisms $f:\alpha\to \gamma$ and $g:\beta\to \gamma$ this is a span $p:\delta\to\alpha$ and $q:\delta \to\alpha$ such that $f\circ p\sim g\circ q$ and such that every other span $(p',q')$ that commutes with $f$ and $g$ up to $\wave$, factors uniquely through $(p,q)$.

\item Any morphism factors into a prone morphism and a vertical morphism. Prone morphisms reflect $\wave$, and therefore become monic in $\cat C_q$, whereas vertical morphisms are coinserters (inserters in the dual category) that become regular epimorphism in $\cat C_q$.

Let $\wave$ and $Y\subseteq X^2$ be equivalence relation on $X$ such that $\wave\subseteq Y$. Define on the element of $Y$ the relation $(x,y)\wave_Y(x',y')$ if $x\wave x'$ and $y\wave y'$. Then the projections $\pi_0$ and $\pi_1:Y\to X$ are morphisms $(Y,\wave_Y) \to (X,\wave)$, such that any morphism $f:(X,\wave) \to (X',\wave')$ for which $f\circ \pi_0\sim f\circ \pi_1$ factors uniquely though $(X,Y)$. Therefore vertical morphisms are coequalizers, and regular epimorphisms.

\item Coinserters are stable in comma squares. 

Let $f:(X,\wave_X) \to (Y,\wave_Y)$ be a morphism, and let $\mathord\equiv\subseteq \wave_Y$ be another equivalence relation on $Y$. We can pull back $\mathord\equiv$ along $f$, and we can intersect equivalence relations, so $f^*(\mathord\equiv)\cap \wave_X$ is a new equivalence relation on $X$. Of course $f: (X,f^*(\mathord\equiv)\cap \wave_X) \to (X,\mathord\equiv)$ and the resulting square commutes up to equivalence. 
This is a comma square, since any pair of morphisms $(g,h)$ such that $f\circ h \sim g\circ \id$ factors uniquely through $(X,f^*(\mathord\equiv)\cap \wave_X)$ in $h$.
\[\xymatrix{
\bullet \ar@/^/[drr]^g \ar@/_/[ddr]_h \ar@{.>}[dr]^g \\
& (X,f^*(\mathord\equiv)\cap \wave_X) \ar[r]^f \ar[d]_{\id} \ar@{}[dr]|\wave & (X,\mathord\equiv) \ar[d]^\id \\
& (X,\wave_X) \ar[r]_f & (Y,\wave_Y)
}\]
\end{itemize}
For all of these reasons $\er(\cat R)/\wave$ is a regular category, and the full functor $\er(\cat R) \to\er(\cat R)/\wave$ turns pseudolimits into real ones. Functor $\Delta:\cat R\to\er(\cat R)$ preserves limits, and maps to objects for which inserters and equalizers coincide. An equalizer and an inserter of $f,g:(X,\wave_X)\to(Y,=)$ are exactly the same thing. Therefore $I$ preserves finite limits. That $I$ is full and faithful is because whenever $f\sim g:(X,=)\to(Y,=)$ then $f=g$. 

The vertical morphism $(X,=)\to(X,\wave_X)$ is the coequalizer of the projections $(\wave_X,=)\rightrightarrows(X,=)$ and this pair of projections is the kernel pair of the vertical morphism $(X,=)\to(X,\wave_X)$. Thus every equivalence relation becomes effective.
\end{proof}

\begin{lemma} The functor $I:\cat C \to\cat C_q$ has the universal property that 
\begin{itemize}
\item for each finite limits preserving $F:\cat C\to\cat D$ that sends all equivalence relations to effective equivalence relations, there is a regular functor $G:\cat C_q \to\cat D$ and a natural isomorphism $GI\to F$, and 
\item for each pair of regular functors $H,K:\cat C_q\to\cat D$ and each natural transformation $\eta:HI\to KI$ there is a natural transformation $\theta:H\to K$ such that $\theta I = \eta$.
\end{itemize}
\end{lemma}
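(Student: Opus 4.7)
The plan is to construct $G\colon\cat C_q\to\cat D$ pointwise using coequalizers and then verify its formal properties. For each object $(X,\wave_X)$, the hypothesis on $F$ yields a coequalizer $q_X\colon FX\twoheadrightarrow Q_X$ of the kernel pair $F\wave_X\rightrightarrows FX$; set $G(X,\wave_X):=Q_X$. For a morphism $[f]\colon(X,\wave_X)\to(Y,\wave_Y)$, the relation $\wave_X\to Y\times Y$ obtained from $(f,f)$ factors through $\wave_Y$ in $\cat C$, and applying $F$ shows that $q_Y\circ Ff$ coequalizes the kernel pair of $q_X$; the resulting unique factorization defines $G[f]\colon Q_X\to Q_Y$. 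If $f\sim g$ then $q_Y\circ Ff=q_Y\circ Fg$ by the same universal property, so $G$ is well-defined on equivalence classes; functoriality is a routine diagram chase, and $Q_{(X,=_X)}\cong FX$ canonically, giving a natural isomorphism $GI\simeq F$.

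Regularity of $G$ has two components. For finite limits, the product $(X,\wave_X)\times(Y,\wave_Y)=(X\times Y,\wave_X\times\wave_Y)$ in $\cat C_q$ is sent to the coequalizer of $F(\wave_X\times\wave_Y)\simeq F\wave_X\times F\wave_Y$, which by regularity of $\cat D$ coincides with $Q_X\times Q_Y=G(X,\wave_X)\times G(Y,\wave_Y)$, since $q_X\times q_Y$ is a regular epi with exactly this kernel pair. The terminal object and equalizers are handled analogously using left-exactness of $F$. For preservation of regular epimorphisms it suffices, in light of the preceding lemma, to check the vertical morphisms $(X,\wave)\to(X,\wave')$ with $\wave\subseteq\wave'$: these are sent to the induced map $Q(X,\wave)\to Q(X,\wave')$, and since it factors the regular epi $FX\twoheadrightarrow Q(X,\wave')$ through the regular epi $q_{(X,\wave)}$, uniqueness of the epi-mono factorization in $\cat D$ forces it to be a regular epi.

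For the second half, given $\eta\colon HI\to KI$, I construct $\theta_{(X,\wave_X)}$ via the universal property of the regular epi $Hq\colon HI(X)\twoheadrightarrow H(X,\wave_X)$, whose kernel pair is $HI(\wave_X)\rightrightarrows HI(X)$ because $H$ is regular and the vertical morphism $(X,=_X)\to(X,\wave_X)$ is a coequalizer of the projections. Naturality of $\eta$ on the two projections of $\wave_X$ shows that $Kq\circ\eta_X$ coequalizes this kernel pair, so there is a unique $\theta_{(X,\wave_X)}\colon H(X,\wave_X)\to K(X,\wave_X)$ with $\theta_{(X,\wave_X)}\circ Hq=Kq\circ\eta_X$. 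Naturality of $\theta$ on an arbitrary morphism follows by a short chase using the epi property of $Hq$; that $\theta I=\eta$ holds because the quotient by the identity relation is the identity; and uniqueness is immediate since $Hq$ is epi.

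The main obstacle I anticipate is the regular-epi preservation step for $G$, which relies on recognising every regular epi in $\cat C_q$ (up to iso) as one of the vertical morphisms described in the construction, together with the regularity of $\cat D$ to handle products of quotients. Both ingredients are implicit in the construction of $\cat C_q$ from $\er(\cat C)$, and once they are made precise the rest becomes routine diagram chasing.
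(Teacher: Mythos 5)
Your proposal is correct and follows essentially the same route as the paper: define $G$ pointwise as the coequalizer of the image of each equivalence relation, obtain $G$ on morphisms and the isomorphism $GI\simeq F$ from the universal property of these coequalizers, and construct $\theta$ from $\eta$ via the preserved kernel-pair/coequalizer diagrams $(\wave_X,=)\rightrightarrows(X,=)\to(X,\wave_X)$. You additionally spell out the verification that $G$ preserves finite limits and regular epimorphisms, which the paper's proof leaves implicit.
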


\begin{proof} For each $(X,\wave_X)\in\cat C_q$, choose a coequalizer $e_{(X,\wave_X)}:FX\to Y$ of the projection $F\wave_X\rightrightarrows FX\times FX$ in $\cat D$. For each $\phi:(X,E) \to (X',E')$ there is a unique $f:\cod e_{(X,E)} \to \cod e_{(X',E')}$ such that for all $g\in \phi$, $f\circ e_{(X,E)} = e_{(X',E')}\circ Fg$, thanks to the morphisms $h:E\to E'$. This determines a functor $G(X,E) = e_{(X,E)}$ and $G\phi = f$. 
Because each object $X$ is isomorphic to the coequalizer of its diagonal subobject $X\to X\times X$, which is preserved by the functor $F$, and $IX = (X,X\to X\times X)$, there is a unique natural isomorphism $GI\to F$.

Let $H,K:\cat C_q\to\cat D$ be regular functors and let $\eta:GI\to HI$ be any natural transformation. Because they are regular, $F, G$ preserves the kernel pair/coequalizer diagram $(\wave_X,=) \rightrightarrows (X,=) \to (X,\wave_X)$ for each $(X,\wave_X)$ in $\cat C_q$. There is a unique $\theta_{(X,\wave_X)}$ that commutes with these diagrams, $\eta_X:HIX \to KIX$ and $\eta_{\wave_X}:HI\wave_X\to KI\wave_X$
\[\xymatrix{
H(\wave_X,=) \ar@<.5ex>[r]\ar@<-.5ex>[r]\ar[d]_{\eta_{\wave_X}} & H(X,=) \ar[r]\ar[d]_{\eta_X} & H(X,\wave_X)\ar@{.>}[d]^{\theta_{(X,\wave_X)}} \\
K(\wave_X,=) \ar@<.5ex>[r]\ar@<-.5ex>[r] & K(X,=) \ar[r] & K(X,\wave_X)
}\]
This determines a unique natural transformation $\theta$ such that $\theta I = \eta$.
\end{proof}

This first step takes us almost immediately to the exact completion of $\cat C$. There are two problems. Firstly, $I$ is not a regular functor. In fact it preserves no regular epimorphisms except the split ones: if $p,q: E\rightrightarrows X$ is the kernel pair of a regular epimorphism $e:X\to Y$, then $(X,E)$ is the image of $Ie$; an inverse of the canonical monomorphism $\set e:(X,E)\to (Y,=)$ is precisely the set of all sections of $e$, and if this set is empty $e$ is not longer a regular epimorphism.

By the way, because the objects in the image of $I$ cover every object of $\cat C_q$, any regular epimorphism $e:X\to IY$ is split, and hence that $IY$ is \xemph{projective} (see definition \ref{proj}) for all $Y\in\cat C$. Moreover, $\cat C_q$ has enough projectives.

Secondly, the new quotients in $\cat C_q$ may be new equivalence relations that do not have quotients in $\cat C_q$ itself.

A category-of-quotients construction solves the first problem: just invert maps of the form $(X,\pre{(e\times e)}(=))\to (Y,=)$ for every regular epimorphism $e$. The resulting localization has quotients for all equivalence relations, if $\cat C$ is regular category.

We define a coverage consisting of singletons in $\cat C_q$. We will show that it admits a category of right fractions, but that is satisfies an extra condition that causes the category of fractions to preserve regular epimorphisms.

\begin{definition} In $\cat C_q$, a \xemph{cover} is any morphism that contains a regular epimorphism of $\cat C$. Let $W$ be the set of all \xemph{monic} covers. \end{definition}

\begin{lemma} Monic covers admit a calculus of right fractions. 
Moreover, if $e:Y\to Z$ is a regular epimorphism, and $m:X\to Y$ a monic cover, then the monomorphism $\exists_{e\circ m}(X) \to Z$ is a cover too.\label{special cover} \end{lemma}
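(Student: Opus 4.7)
The plan is to verify the three defining conditions of definition \ref{calculus of right fractions} in turn, and then derive the moreover from a similar pullback argument. The first condition is immediate: identities in $\cat C$ are both monomorphisms and regular epimorphisms, and in a regular category regular epimorphisms are closed under composition, while monomorphisms are closed under composition in any category. Hence the composite of two monic covers in $\cat C_q$ is represented, after choosing regular epimorphism representatives, by the composite of those representatives, which is again a regular epimorphism, so the composite is itself a monic cover.

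For condition (2), suppose $w:(X,\wave_X)\to(Y,\wave_Y)$ is a monic cover with representative $f:X\to Y$ a regular epimorphism of $\cat C$, and $\phi:(Y',\wave_{Y'})\to(Y,\wave_Y)$ is an arbitrary morphism with representative $h:Y'\to Y$. I would form the pullback
\[\xymatrix{P \ar[r]^q \ar[d]_p \ar@{}[dr]|<\lrcorner & X \ar[d]^f \\ Y' \ar[r]_h & Y}\]
in $\cat C$; since regular epimorphisms are stable under pullback in a regular category, $p$ is a regular epimorphism of $\cat C$. Equip $P$ with the equivalence relation defined by $(a,b)\in \wave_P$ iff $p(a)\wave_{Y'} p(b)$ and $q(a)\wave_X q(b)$, so that $p$ and $q$ both lift to morphisms in $\er(\cat C)$. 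The class $[p]$ is a cover by construction, and is monic in $\cat C_q$: given $p(a)\wave_{Y'} p(b)$, applying $h$ gives $h(p(a))\wave_Y h(p(b))$, hence $f(q(a))\wave_Y f(q(b))$ by commutativity; since $w$ is a monomorphism in $\cat C_q$ its representative $f$ reflects $\wave$, producing $q(a)\wave_X q(b)$ and therefore $a\wave_P b$. The commutation $[h]\circ[p]=[f]\circ[q]$ holds strictly in $\cat C$. Condition (3) is trivial since any monic cover is already a monomorphism in $\cat C_q$, so $w\circ \alpha = w\circ \beta$ forces $\alpha=\beta$ and the identity serves as $v$.

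For the moreover, choose a regular epimorphism representative $f:X\to Y$ of $m$ (available because $m$ is a monic cover) together with a representative $g:Y\to Z$ of $e$ which is a regular epimorphism of $\cat C$. Then $g\circ f$ is a composite of two regular epimorphisms in the regular category $\cat C$, hence a regular epimorphism of $\cat C$ itself, and it represents $e\circ m$. The canonical regular-epi/mono factorization of $[g\circ f]$ in $\cat C_q$ is the vertical morphism from $(X,\wave_X)$ to $(X,\wave')$, where $\wave'$ makes the second map prone, followed by the prone monomorphism to $(Z,\wave_Z)$ with underlying morphism $g\circ f$. This prone part represents $\exists_{e\circ m}(X)\to Z$, and since its underlying representative $g\circ f$ in $\cat C$ is a regular epimorphism, the monomorphism is a cover. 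The main obstacle is the first step of the moreover, namely securing a representative of $e$ that is a regular epimorphism of $\cat C$; this is delicate because being a regular epimorphism in $\cat C_q$ does not by itself supply a regular epimorphism representative, and one may need an auxiliary pullback construction similar to that in condition (2) to produce such a representative.
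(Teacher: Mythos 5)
Your verification of the calculus of right fractions is correct and runs along the same lines as the paper: identities and composites of monic covers (regular epimorphisms compose in a regular category), monic cancellation for condition (3), and for condition (2) the pullback in $\cat C$ equipped with the relation $\pre{(p\times p)}(\wave_{Y'})\cap\pre{(q\times q)}(\wave_X)$, together with the observation that monicity of $[f]$ in $\cat C_q$ means $\pre{(f\times f)}(\wave_Y)\subseteq\wave_X$ — this is exactly the pullback-stability of monic covers that the paper invokes, spelled out.

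The ``moreover'' part, however, has the gap you flag yourself, and it is a genuine one: a regular epimorphism $e:Y\to Z$ of $\cat C_q$ need not have \emph{any} representative that is a regular epimorphism of $\cat C$, so your first step cannot be carried out. Up to isomorphism the regular epimorphisms of $\cat C_q$ are the vertical morphisms $(Y,\wave_Y)\to(Y,\mathord\equiv)$ with $\wave_Y\subseteq\mathord\equiv$ and identity underlying map; concretely, in $\cat C=\Set$ the inclusion of a one-element set into a two-element set carrying the total equivalence relation is an isomorphism (hence a regular epimorphism) of $\cat C_q$, yet none of its representatives is surjective. An auxiliary pullback cannot repair this, since pulling back does not change $e$. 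The paper's proof avoids the issue entirely: present the monic cover as a $\cat C$-regular epimorphism $m:(X,\pre{(m\times m)}(\wave_Y))\to(Y,\wave_Y)$ (monicity forces the relation on $X$ to be the preimage) and the regular epimorphism $e$ as a vertical morphism $(Y,\wave_Y)\to(Y,\mathord\equiv)$; then the image factorization of $e\circ m$ is the vertical map $(X,\pre{(m\times m)}(\wave_Y))\to(X,\pre{(m\times m)}(\mathord\equiv))$ followed by the prone monomorphism $m:(X,\pre{(m\times m)}(\mathord\equiv))\to(Y,\mathord\equiv)$, whose underlying $\cat C$-map is the regular epimorphism $m$ itself. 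Thus $\exists_{e\circ m}(X)\to Z$ contains a regular epimorphism of $\cat C$ — the one already supplied by the monic cover — and no regular-epi representative of $e$ is ever needed.
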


\begin{proof} Every identity is a monic cover, as is any composition of monic covers. Also monic covers are stable under pullback, and if $m\circ f = m\circ g$ and $m$ is monic, then $f=g$, so $W$ satisfies the three conditions of definition \ref{calculus of right fractions}.

An $m\in W$ followed by a regular epimorphism $e$ in $\cat C_q$ corresponds to a regular epimorphism $m:(X,\pre{(m\times m)}(\wave)) \to (Y,\wave)$ followed by a vertical morphism $(Y,\wave)\to (Y,\mathord\equiv)$ in $\er(\cat C)$. If we pull back $\mathord\equiv$ along $m$, we get a vertical morphism $(X,\pre{(m\times m)}(\wave))\to (X,\pre{(m\times m)}(\mathord\equiv))$: this is the reindexing functor at work. Now $m:(X,m^*(\mathord\equiv)) \to (Y,\mathord\equiv)$ is a monic cover and also the image of $m$ under the regular epimorphism $e$.
\[\xymatrix{
(X,m^*(\wave)) \ar[r]^m \ar[d]_{\id} \ar@{}[dr]|\wave & (Y,\wave) \ar[d]^\id \\
(X,m^*(\mathord\equiv)) \ar[r]_m & (Y,\mathord\equiv)
}\]
\end{proof}

This category-of-fractions construction also takes care of the new equivalence relations that appear in the free quotient completion, solving the second problem.

\begin{theorem} If $\cat C$ is a regular category, then $\fun(\cat C_q,W)$ is an exact completion of $\cat C$. \end{theorem}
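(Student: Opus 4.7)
The plan is to split the argument into three parts: show that the composite $J = Q\circ I : \cat C \to \fun(\cat C_q, W)$ is a regular functor, establish exactness of $\fun(\cat C_q,W)$, and verify the universal property. The first and third parts are relatively routine once the preceding lemma on $\cat C_q$ is in place; the obstacle is genuinely exactness.

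For regularity of $J$, I would observe that $J$ preserves finite limits because both factors do (the previous lemma for $I$ and Lemma \ref{fun is le} for $Q$). To see that $J$ preserves regular epimorphisms, let $e:X\to Y$ be a regular epi in $\cat C$ with kernel pair $(p,q)$; then in $\cat C_q$ the morphism $Ie$ factors as the vertical quotient map $(X,=) \to (X,\langle p,q\rangle)$, which is a regular epimorphism in $\cat C_q$, followed by the prone monic cover $(X,\langle p,q\rangle) \to IY$, which lies in $W$ and is therefore inverted by $Q$. Regularity of $\fun(\cat C_q,W)$ itself follows from Lemma \ref{special cover} by the same pattern used for $\Asm(F)$ earlier: the calculus of right fractions lifts the regular-epi-mono factorization from $\cat C_q$.

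For the universal property, let $F:\cat C \to \cat E$ be a regular functor to an exact category. Since $F$ is left exact and equivalence relations in $\cat E$ are effective, the preceding lemma extends $F$ essentially uniquely along $I$ to a regular $\tilde F : \cat C_q \to \cat E$. A monic cover is both a monomorphism and a regular epimorphism, and $\tilde F$ preserves both; but in any regular category a morphism that is simultaneously monic and a regular epi is an isomorphism, so $\tilde F$ inverts $W$. The universal property of the localization then supplies a regular $G:\fun(\cat C_q,W)\to\cat E$ with $GJ\cong F$, unique up to unique natural isomorphism; the two-dimensional clauses transfer from those of $I$ and $Q$.

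The hard part will be exactness. An equivalence relation $R \rightrightarrows QA$ in $\fun(\cat C_q,W)$ is presented only up to a span of monic covers, so the first step is to replace it by an equivalence relation already in $\cat C_q$. Using the calculus of right fractions and the fact that $Q$ preserves finite limits, I would pull the data back along a suitable monic cover $A'\to A$ in $\cat C_q$ to obtain a mono $E\hookrightarrow A'\times A'$ whose reflexivity, symmetry and transitivity diagrams hold in $\cat C_q$. The quotient of such an $E$ exists in $\cat C_q$ by construction of the free quotient completion, and its $Q$-image gives the required coequalizer in $\fun(\cat C_q,W)$; effectiveness follows because $Q$ preserves the kernel-pair side of the coequalizer diagram. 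The care needed is in verifying that the transitivity and symmetry witnesses can actually be pulled back into $\cat C_q$ after possibly further refining the covering $A'\to A$, for which Lemma \ref{special cover}, combined with the stability of monic covers under pullback, is the key input.
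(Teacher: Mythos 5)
Your handling of the regularity of $QI$ and of the universal property is essentially the paper's argument (a monic cover is sent by $G$ to a map that is both monic and regular epic, hence invertible, so $G$ factors through the localization), so those parts are fine. The genuine gap is in the exactness step, and it sits exactly where you place the least weight. After pulling an equivalence relation back along a monic cover you have a mono $E\hookrightarrow A'\times A'$ in $\cat C_q$, but your next claim --- that ``the quotient of such an $E$ exists in $\cat C_q$ by construction of the free quotient completion'' --- is false in general. The free quotient completion only adds quotients for equivalence relations imported from $\cat C$, i.e.\ for relations of the form $I\wave_X\subseteq IX\times IX$; an equivalence relation on an arbitrary object $(X,\wave_X)$ of $\cat C_q$ need not have a quotient there. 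This is precisely the ``second problem'' flagged just before the theorem: $\cat C_q$ creates new equivalence relations that it does not itself quotient, and dealing with them is the reason the theorem requires $\cat C$ to be regular --- a hypothesis your exactness argument never invokes.

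The missing idea is an image factorization in $\cat C$. Represent the equivalence relation by a prone mono $f:(X,\wave_X)\to(Y,\wave_Y)\times(Y,\wave_Y)$ and factor the underlying map $X\to Y\times Y$ in $\cat C$ as a regular epimorphism onto $\exists_f(X)$ followed by a monomorphism. The epi part is a monic cover, hence becomes invertible in $\fun(\cat C_q,W)$, while the mono part presents the relation by an honest subobject of $Y\times Y$ in $\cat C$ compatible with $\wave_Y$. For such a relation the quotient does exist in $\cat C_q$: it is the vertical morphism $(Y,\wave_Y)\to(Y,\mathord\equiv)$ that enlarges the equivalence relation on $Y$, and this is effective; so every equivalence relation in the fractions category is isomorphic to an effective one. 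Note also that even for these quotients one still needs lemma \ref{special cover} (which you cite only for regularity) to guarantee that the coequalizer diagram survives the localization. With this image-factorization step inserted, your outline coincides with the paper's proof; without it, the central claim of the exactness argument is unsupported.
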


\begin{proof} The quotient functor $Q:\cat C_q \to \fun(\cat C_q,W)$ preserves finite limits by lemma \ref{fun is le}. It preserves regular epimorphism in $W$ due to the extra condition in lemma \ref{special cover}: it ensures that if an object is isomorphic to the coequalizer of a kernel pair, then it is a coequalizer of an isomorphic kernel pair. We finally get a regular functor $QI:\cat C \to \fun(\cat C_q,W)$.

Monic covers between equivalence relations in $\cat C_q$ make $\fun(\cat C_q,W)$ an exact category. Consider a map $f:(X,\wave_X) \to (Y,\wave_Y)\times (Y,\wave_Y)$ in $\er(\cat C)$ that becomes an equivalence relation in $\cat C_q$. Because it is monic, we may assume that $f$ is prone. Its image  $\exists_f(X)$ is an equivalence relation on $Y$ in $\cat C$. Because $\cat C$ is regular $f = m\circ e$ with $m$ monic and $e$ regular epic. Now we can split $f:(X,\wave_X) \to (Y,\wave_Y)\times (Y,\wave_Y)$ into a monic cover $e:(X,\wave_X) \to (\exists_f(X),\wave_{\exists_f(X)})$ and an effective equivalence relation $m:(\exists_f(X),\wave_{\exists_f(X)}) \to (Y,\wave_Y)\times (Y,\wave_Y)$, where $\wave_{\im f(X)}$ is the unique relation that makes both morphisms well defined. Because every equivalence relation is isomorphic to an effective equivalence relation in $\fun(\cat C_q,W)$, this category is exact.

Every regular functor $F:\cat C\to\cat E$ to an exact category $\cat E$ makes all equivalence relations effective, so there is a $G:\cat C_q\to \cat E$ such that $GI\simeq F$. This $G$ sends monic covers to isomorphisms: if $\mu:(X,\wave_X) \to (Y,\wave_Y)$ is a monic cover, then there is a regular epimorphism $m\in\mu$, such that $Im$ commutes with $\mu$ and the covers $c_X:IX\to (X,\wave_X)$ and $c_Y:IY \to (Y,\wave_Y)$.
\[\xymatrix{
IX \ar[r]^{Im}\ar[d] & IY \ar[d] \\
(X,\wave_X) \ar[r]_\mu & (Y,\wave_Y)
}\]
Because $G$ is regular and $GI\simeq F$, $G(c_Y\circ m)$ and $G(c_X)$ are regular epimorphisms while the difference $G\mu$ is a monomorphism. This forces $G\mu$ to be an isomorphism. Because $G$ sends all monic covers to isomorphisms, there is an $H:\fun(\cat C_q,W) \to \cat E$ such that $HQ\simeq G$, and of course $HQI\simeq F$, so this condition is satisfied.

If $\eta:KQI \to LQI$ is a natural transformation, there is a unique $\theta: KQ \to LQ$ such that $\theta I = \eta$ and a unique $\iota:K\to L$ such that $\iota Q = \theta$. We now have demonstrated that $QI:\cat C \to \fun(\cat C_q,W)$ satisfies all conditions on ex/reg completions.
\end{proof}

In the remainder of this section we will demonstrate that ex/reg completions of Heyting categories are Heyting categories, and show that if a regular category has enough projectives, the free quotient completion has enough fine objects.

\begin{definition} Let $\wave_X$ be an equivalence relation on $X$. A $U\in \Sub(X)$ is \xemph{($\wave_X$)-saturated} if $\pre{\pi_0}(U)\cap\wave_X \subseteq \pre{\pi_1}(U)$ for the projections $\pi_i:X\times X\to X$. We let $\overline U = \set{x\in X|\exists u\in U.u\sim_X x}$ be the \xemph{$\wave_X$-saturation} of $U$. 
\end{definition}

\begin{lemma}[$\Sub$ in ex/reg completions] For all objects $(X,\wave_X)$ of $\cat C_\exreg$ and all $Y\subseteq X$, the subobject lattice $\Sub(X,\wave_X)$ is isomorphic to the lattice $\Sub(X,\overline{(-)})$ of $\wave_X$-saturated subobjects of $X$.\label{sub in exreg} \end{lemma}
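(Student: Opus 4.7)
The plan is to identify subobjects in $\cat C_\exreg$ in two stages: first in $\cat C_q$, then after inverting monic covers. I would start by characterizing the monomorphisms into $(X,\wave_X)$ in $\cat C_q$. Since (as in the proof that $\er(\cat C)/\wave$ is regular) every morphism factors as a vertical map followed by a prone map, and prone maps reflect $\wave$ hence are monic, the monomorphisms into $(X,\wave_X)$ are, up to isomorphism, exactly the prone arrows $(Y,\wave_X|_Y)\hookrightarrow(X,\wave_X)$ where $Y\subseteq X$ in $\cat C$ and $\wave_X|_Y$ denotes the restriction $\wave_X\cap(Y\times Y)$. This gives a canonical identification $\Sub_{\cat C_q}((X,\wave_X))\cong \Sub_{\cat C}(X)$.

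Next I would pass to $\cat C_\exreg=\fun(\cat C_q,W)$. The localization $Q$ sends monic covers to isomorphisms, and by the same kind of argument used earlier for $\Asm(F)$ every monomorphism in $\cat C_\exreg$ is isomorphic to one coming from a prone arrow in $\cat C_q$. So a subobject in $\cat C_\exreg$ is represented by some prone $(Y,\wave_X|_Y)\hookrightarrow(X,\wave_X)$, and two such represent the same subobject iff they are linked by a zig-zag of monic covers commuting with the inclusions into $(X,\wave_X)$.

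The core of the proof is then the correspondence $Y\mapsto\overline Y$. To see it is well-defined, suppose $(Y_1,\wave_X|_{Y_1})\to(Y_2,\wave_X|_{Y_2})$ is a monic cover over $(X,\wave_X)$. It admits a representative $f:Y_1\to Y_2$ which is a regular epimorphism in $\cat C$ and satisfies $y\wave_X f(y)$; then every $y_2\in Y_2$ is of the form $f(y_1)$ for some $y_1\in Y_1$, whence $y_2\wave_X y_1$ and so $\overline{Y_1}=\overline{Y_2}$. For surjectivity of the correspondence, I would show that each $(Y,\wave_X|_Y)$ is already isomorphic in $\cat C_\exreg$ to $(\overline Y,\wave_X|_{\overline Y})$ via the functional span $E=\{(y,s)\in Y\times\overline Y:y\wave_X s\}$, equipped with the relation pulled back from $\wave_X$ along the first projection. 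The first projection $E\to Y$ is a split epimorphism through $y\mapsto (y,y)$ and is prone, hence a monic cover in $\cat C_q$; the second projection $E\to\overline Y$ is a well-defined morphism of $\cat C_q$ (its two-fold image is $\wave_X$-related to that of the first projection, by the defining relation of $E$), and both legs agree with the inclusion into $(X,\wave_X)$ modulo $\wave_X$.

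It remains to check that this bijection between $\Sub_{\cat C_\exreg}((X,\wave_X))$ and saturated subobjects respects the lattice structure. Inclusions $Y_1\subseteq Y_2$ descend along $Y\mapsto\overline Y$ to inclusions of saturated subobjects, and conversely any morphism $(Y_1,\wave_X|_{Y_1})\to(Y_2,\wave_X|_{Y_2})$ over $(X,\wave_X)$ in $\cat C_\exreg$ forces $\overline{Y_1}\subseteq\overline{Y_2}$ by the same span analysis applied to a representative. Since $(-)\mapsto\overline{(-)}$ is the reflection of $\Sub(X)$ onto its sublattice of $\wave_X$-saturated subobjects, finite joins and meets of saturated subobjects correspond precisely to the finite joins and meets of the associated subobjects in $\cat C_\exreg$. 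The main bookkeeping obstacle will be verifying that the span through $E$ genuinely represents an isomorphism in $\cat C_\exreg$ and that no further identifications beyond those produced by saturation are needed; this comes down to carefully tracking which covers lie in $W$ and checking the two projections of $E$ really satisfy the span equivalences in $\er(\cat C)/\wave$.
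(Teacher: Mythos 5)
Your overall route is the paper's: represent a subobject of $(X,\wave_X)$ by a prone monomorphism from some $Y\subseteq X$, and compare two representatives through the span $\wave_X\cap(Y\times Y')$ with its evident relation, concluding that the subobject only remembers the saturation $\overline Y$. Two points in your write-up need repair, though. First, the intermediate claim that $\Sub_{\cat C_q}((X,\wave_X))\cong \Sub_{\cat C}(X)$ is false, in both directions. A monomorphism into $(X,\wave_X)$ in $\cat C_q$ is indeed isomorphic, already in $\cat C_q$, to its prone part $(Y,(f\times f)^{-1}(\wave_X))\to(X,\wave_X)$, but the underlying map $f:Y\to X$ need not be monic in $\cat C$; to replace it by an honest subobject of $X$ you must factor $f = m\circ e$ in $\cat C$ and invert the resulting monic cover $(Y,(f\times f)^{-1}(\wave_X))\to(\im f(Y),(m\times m)^{-1}(\wave_X))$, which only becomes invertible in $\cat C_\exreg$ — this is exactly the paper's first step and it is missing from your argument. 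Conversely, distinct $Y,Y'\subseteq X$ with the same saturation can already be isomorphic as subobjects of $(X,\wave_X)$ in $\cat C_q$ whenever $\cat C$-maps between them exist over $X$ up to $\wave_X$ (take $\wave_X$ the total relation on a two-element set and $Y$ a point), so $\Sub_{\cat C_q}((X,\wave_X))$ is in general not $\Sub_{\cat C}(X)$. Neither issue derails the proof, since everything you actually use afterwards is the $\cat C_\exreg$-level statement, but the factorization step has to be stated.

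Second, "linked by a zig-zag of monic covers" is not the right criterion for equality of subobjects in $\fun(\cat C_q,W)$: an identification is witnessed by spans whose left leg is a monic cover but whose right leg is an arbitrary $\cat C_q$-morphism over $(X,\wave_X)$. Your closing observation — that any morphism $(Y_1,\ldots)\to(Y_2,\ldots)$ over $(X,\wave_X)$ in $\cat C_\exreg$ forces $\overline{Y_1}\subseteq\overline{Y_2}$ once you analyze a representative span — is the correct replacement, and it is how the paper argues. Finally, the "bookkeeping obstacle" you flag closes immediately: on $E=\wave_X\cap(Y\times\overline Y)$ the relations pulled back along the two projections coincide, so $\pi_1:E\to\overline Y$ is prone as well, and it is a regular epimorphism in $\cat C$ precisely because $\overline Y$ is the saturation of $Y$; hence both legs of your span are monic covers, both become isomorphisms in $\cat C_\exreg$, and $Y$ and $\overline Y$ define the same subobject — which is exactly how the paper finishes.
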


\begin{proof} Any monic $(Y,\wave_Y) \to (X,\wave_X)$ comes from a prone morphism $m:(Y,\wave_Y) \to (X,\wave_X)$ in $\er(\cat C)$ and can be split into a monic cover and an prone monomorphism using the regular-epi-mono factorization in $\cat C$. The monic cover becomes an isomorphism in $\fun(\cat C_q,W)$, and therefore each monomorphism is isomorphic to one that contains a prone monomorphism. Hence $\Sub(X,\wave_X)$ is a sublattice of $\Sub(X)$.

Two subobjects $Y$ and $Y'\subseteq X$ induce equivalent subobjects of $(X,\wave_X)$ if and only if they contain the same elements up to the equivalence $\wave_X$. Firstly, from an isomorphism between $(Y,\wave_X)$ and $(Y',\wave_X)$ we can derive a pair of monic covers $p_0:(Z,\mathord\equiv)$ and $p_1:(Z,\mathord\equiv)$ that prove that $Y$ has the same elements as $Y'$ up to equivalence. Secondly, consider $Z = \wave_X\cap (Y\times Y')$, with the projections $\pi_0:Z\to Y$ and $\pi_1:Z\to Y'$. There is a unique equivalence relation $\mathord\equiv$ on $Z$ such that $\pi_0:(Z,\mathord\equiv) \to (Y,\wave_X)$ and $\pi_1:(Z,\mathord \equiv) \to (\overline Y,\wave_X)$ are prone morphism, namely $\mathord\equiv = \set{(a,b,c,d)\in Z\times Z | a\sim_X c, b\sim_X d}$. This turns $(Z,\equiv)$ into the intersection of $(Y,\wave_X)$ and $(Y',\wave_X)$. If $Y$ and $Y'$ contain equivalent elements, then $\pi_0$ and $\pi_1$ are regular epimorphisms, and therefore monic covers, proving that $Y$ and $Y'$ induce the same subobjects.

We conclude that each $Y$ induces the same subobject as $\overline Y$, but that if $\overline Y$ induces the same subobject as $\overline Z$, then $\overline Y = \overline Z$. This makes $\Sub(X,\wave_X)$ isomorphic to the lattice of saturated subobjects of $X$. \end{proof}

\begin{corollary} The ex/reg completion of a Heyting category is a Heyting category. \end{corollary}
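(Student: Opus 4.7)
The plan is to transport the Heyting structure of $\cat C$ along the identification $\Sub((X,\wave_X))\cong \Sub(X,\overline{(-)})$ provided by lemma \ref{sub in exreg}, and then produce the missing right adjoints to reindexing by "interiorising" the ones already available in $\cat C$.

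First I would check that the lattice of $\wave_X$-saturated subobjects inherits a Heyting algebra structure from $\Sub(X)$. The inclusion preserves top and bottom, since $X$ itself and the initial subobject are $\wave_X$-saturated, and it preserves binary meets, since intersections of saturated subobjects are visibly saturated. It also preserves Heyting implication: in the internal language of $\cat C$, if $U$ and $V$ are saturated, $x\wave_X x'$ and $U(x)\to V(x)$, then the hypothesis $U(x')$ gives $U(x)$ by saturation of $U$, hence $V(x)$, hence $V(x')$ by saturation of $V$. Joins in the saturated lattice are then obtained by saturating joins taken in $\Sub(X)$.

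Second, I would produce right adjoints to reindexing. Every morphism $g:(X,\wave_X)\to (Y,\wave_Y)$ in $\cat C_\exreg$ can, after composing with a monic cover, be represented by some $f:X\to Y$ in $\cat C$ with $(f\times f)(\wave_X)\subseteq \wave_Y$; under the identification of subobjects with saturated ones, $\pre g$ is just the restriction of $\pre f$, which is well-defined because $\pre f$ sends $\wave_Y$-saturated subobjects to $\wave_X$-saturated subobjects when $f$ respects the equivalence relations. For a saturated $U\in \Sub(X)$, set $\forall_g(U) = \forall_{p_0}(\pre{p_1}(\forall_f(U)))$, where $p_0,p_1:\wave_Y\to Y$ are the projections; a direct internal-language check shows that this is the largest $\wave_Y$-saturated subobject contained in $\forall_f(U)$. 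The adjunction $\pre g\dashv \forall_g$ then follows at once from $\pre f\dashv \forall_f$ in $\cat C$ together with the fact that $W\in\Sub((Y,\wave_Y))$ is already $\wave_Y$-saturated.

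The main obstacle will be verifying independence of the chosen representative $f$ and the Beck-Chevalley condition for $\forall_g$. Both reduce to the corresponding statements in $\cat C$ once one observes that inverse image along a morphism compatible with the equivalence relations commutes with the interior operation $V\mapsto \forall_{p_0}(\pre{p_1}(V))$, which in turn follows from Beck-Chevalley applied to the pullback of equivalence relations.
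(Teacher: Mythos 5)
Your construction is, in substance, the paper's own: you describe $\Sub(X,\wave_X)$ as the lattice of $\wave_X$-saturated subobjects (lemma \ref{sub in exreg}), observe that it inherits meets, finite joins and implication from $\Sub(X)$ (the paper gets this from reflectivity of the saturation, you check it directly, which is fine), and your dual image $\forall_g(U)=\forall_{p_0}(\pre{p_1}(\forall_f(U)))$ unwinds in the internal language to exactly the paper's formula $\forall'_f(U)=\set{y\in Y\mid \forall x\oftype X.\, f(x)\sim_Y y\to U(x)}$; the adjunction check via ``a saturated $V$ lies in $\forall_f(U)$ iff it lies in its saturated interior'' is the same argument.

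The one step that does not work as stated is the closing observation: inverse image along an $f$ compatible with the equivalence relations does \emph{not} commute with the interior operator $V\mapsto\forall_{p_0}(\pre{p_1}(V))$ on arbitrary subobjects. In general one only has $\pre f(\forall_{q_0}\pre{q_1}(V))\subseteq \forall_{p_0}\pre{p_1}(\pre f(V))$: the left side demands $V$ on every $y\sim_Y f(x)$, the right side only on those of the form $f(x')$ with $x'\sim_X x$, and the square formed by $f\times f:\wave_X\to\wave_Y$ and the projections is not a pullback, so Beck--Chevalley does not apply. (Commutation does hold for saturated $V$, but then it is vacuous, and the $V=\forall_f(U)$ you need it for is precisely not saturated.) Fortunately nothing you need depends on this. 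Independence of the representative is immediate from the explicit formula: if $f\sim f'$ then $f(x)\sim_Y y$ iff $f'(x)\sim_Y y$, so the two interiors agree; alternatively it follows from uniqueness of right adjoints once one notes $\pre f(V)=\pre{f'}(V)$ for saturated $V$, which is how $\pre g$ is well defined in the first place. And the Beck--Chevalley condition for $\forall$ is not part of the definition of Heyting category used here; in any case it follows formally, by taking right adjoints, from the Beck--Chevalley condition for direct images, which holds because the ex/reg completion is already known to be regular.
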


\begin{proof} Reflective subcategories have all limits and colimits of their ambient category, because the adjunction of the reflector and the inclusion is monadic. We just showed that $\Sub(X,\wave_X)$ is a reflective subcategory of $\Sub(X)$, and this implies that $\Sub(X,\wave_X)$ has all meets and joins that $\Sub(X)$ has. 

Because ex/reg completions are regular, we can focus now on the existence of the dual image map. Though morphisms are sets of spans in $\fun(\cat C_q,W)$, every morphism is isomorphic to one in the image of $Q:\cat C_q \to \fun(\cat C_q,W)$. So let $f:(X,\wave_X)\to(Y,\wave_Y)$. The inverse image map correspond to the map $V\mapsto \overline{\pre f(V)}$ between the lattices of saturated subobjects, something which we can show using the construction of the pullback above. The right adjoint is $\forall'_f(U) = \set{y\in Y|\forall x\oftype X.f(x)\sim_X y \to U}$, which exists because it is definable in the internal language of the Heyting category. Therefore, ex/reg completions of Heyting categories are Heyting categories.
\end{proof}

We mention some fine objects to simplify the construction of the category of fractions.

\begin{definition} In a regular category $\cat C$ a \xemph{projective object} $P$ is a object for which $\cat C(P,-):\cat C\to\Cat{Set}$ is a regular functor. This means that for every regular epimorphism $e:X\to Y$, and every $y:P\to Y$ there is an $x:P\to X$ such that $e\circ x= y$. \label{proj}
\[ \xymatrix{
& X\ar[d]^e \\
P \ar@{.>}[ur]^{x} \ar[r]_y & Y
}\]
\end{definition}

\begin{lemma} If $P$ is projective then for every equivalence relation $\wave_P\subseteq P\times P$, $(P,\wave_P)$ is a fine object relative to monic covers. \end{lemma}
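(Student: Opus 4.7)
The plan is to use projectivity of $P$ in $\cat C$ to lift representatives and then verify that the lift descends correctly to $\cat C_q$. Let $\mu\oftype (X,\wave_X)\to(Y,\wave_Y)$ be a monic cover and $\phi\oftype(P,\wave_P)\to(Y,\wave_Y)$ any morphism. First I would choose a representative $e\oftype X\to Y$ of $\mu$ that is a regular epimorphism in $\cat C$ (available since $\mu$ is a cover) and any representative $f\oftype P\to Y$ of $\phi$. Projectivity of $P$ applied to $e$ and $f$ then yields a lift $g\oftype P\to X$ with $e\circ g=f$, and $[g]$ will be the desired factorisation once we check it is well defined and unique in $\cat C_q$.

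The main technical step is to establish the identity $\wave_X = (e\times e)^{-1}(\wave_Y)$. The inclusion $\wave_X\subseteq (e\times e)^{-1}(\wave_Y)$ is just the statement that $e$ is a morphism in $\er(\cat C)$. For the reverse inclusion I would use that $\mu$ is monic in $\cat C_q$: the two projections $\pi_0,\pi_1\oftype((e\times e)^{-1}(\wave_Y),=)\rightrightarrows (X,\wave_X)$ satisfy $[e\circ \pi_0]=[e\circ \pi_1]$ by the very definition of the pullback equivalence, so $\mu\circ[\pi_0]=\mu\circ[\pi_1]$, and monicity of $\mu$ forces $[\pi_0]=[\pi_1]$, i.e.\ $(\pi_0,\pi_1)$ factors through $\wave_X$, giving $(e\times e)^{-1}(\wave_Y)\subseteq \wave_X$.

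With this equality in hand, the rest is bookkeeping. The morphism condition $(g\times g)(\wave_P)\subseteq \wave_X$ becomes $(e\circ g\times e\circ g)(\wave_P)\subseteq \wave_Y$, which is just $(f\times f)(\wave_P)\subseteq \wave_Y$, true because $f$ represents $\phi$. For uniqueness, if $[g_1],[g_2]\oftype(P,\wave_P)\to(X,\wave_X)$ both factor $\phi$ through $\mu$, then $e\circ g_1$ and $e\circ g_2$ are two representatives of $\phi$, so $(g_1,g_2)\oftype P\to X\times X$ factors through $(e\times e)^{-1}(\wave_Y)=\wave_X$, whence $[g_1]=[g_2]$.

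The only nontrivial step is the characterisation $\wave_X=(e\times e)^{-1}(\wave_Y)$ for a monic cover represented by a regular epimorphism; once that is proved, projectivity of $P$ in $\cat C$ delivers both existence and uniqueness of the required factorisation automatically.
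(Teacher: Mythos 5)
Your proof is correct and takes essentially the same route as the paper: lift a representative of $\phi$ through a regular-epimorphism representative of the monic cover using projectivity of $P$, and use the identity $\wave_X=(e\times e)^{-1}(\wave_Y)$ to check that the lift is a morphism into $(X,\wave_X)$ and that it is unique. The paper asserts this pullback identity outright and gets uniqueness immediately from monicity of the cover in $\cat C_q$, while you derive the identity from monicity explicitly — a useful extra detail, but the argument is the same.
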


\begin{proof} Let $m:(X,\wave_X)\to(Y,\wave_Y)$ be a monic cover, and let $y:(P,\wave_P)\to(Y,\wave_Y)$. There is an $x:P\to X$ such that $m\circ x=y$. Now $x\times x:\wave_P\to\wave_X$ because $\wave_X = \pre{(m\times m)}(\wave_Y)$ and because $m$ is monic, this factorization is unique. \end{proof}

\begin{corollary} If there are enough projectives in $\cat C$, then the subcategory of $\cat C_q$ of equivalence relations over projective objects, is an exact completion of $\cat C$. \label{fine2} \end{corollary}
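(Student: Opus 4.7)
The plan is to reapply the reasoning of Lemma \ref{fine} with the full subcategory $\cat P \subseteq \cat C_q$ of equivalence relations on projective objects of $\cat C$ in place of the category of all fine objects. Since the previous lemma shows each such $(P, \sim_P)$ is fine relative to the class $W$ of monic covers, fullness and faithfulness of the composite $\cat P \hookrightarrow \cat C_q \xrightarrow{Q} \fun(\cat C_q, W)$ will follow by exactly the same splitting-of-covers argument used in the proof of Lemma \ref{fine}: any functional span $(s, t)$ into a fine object admits a unique section of $s$, and any two parallel morphisms identified after a monic cover are already equal.

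The real content therefore lies in essential surjectivity, i.e., producing for each $(X, \sim_X) \in \cat C_q$ a monic cover in $\cat C_q$ out of some $(P, \sim_P)$ with $P$ projective. For this I would invoke the hypothesis of enough projectives to pick a regular epimorphism $e \colon P \to X$ in $\cat C$ with $P$ projective, and then take $\sim_P := (e \times e)^{-1}(\sim_X)$, so that $e \colon (P, \sim_P) \to (X, \sim_X)$ is a prone morphism of $\er(\cat C)$. A direct check shows that prone morphisms become monic in $\cat C_q$: if $f, g \colon (Z, \sim_Z) \to (P, \sim_P)$ satisfy that $(e \circ f, e \circ g)$ factors through $\sim_X$, then since $(e \circ f, e \circ g) = (e \times e) \circ (f, g)$, the pullback definition of $\sim_P$ forces $(f, g)$ to factor through $\sim_P$, i.e., $f$ and $g$ represent the same arrow in $\cat C_q$. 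Since the underlying arrow $e$ is a regular epimorphism of $\cat C$, the morphism $e \colon (P, \sim_P) \to (X, \sim_X)$ is also a cover, hence a monic cover.

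Combining these observations, the composite $\cat P \hookrightarrow \cat C_q \xrightarrow{Q} \fun(\cat C_q, W)$ is an equivalence of categories, and the equivalence transports the ex/reg structure established in the preceding theorem to $\cat P$. Composing with the canonical embedding $\cat C \to \cat C_q$ (whose essential image consists of objects $(X, =)$, each of which is covered by a projective equipped with the pulled-back diagonal) gives the desired realization of the exact completion inside $\cat P$. The main point requiring care is the interaction between proneness in $\er(\cat C)$ and monicity in $\cat C_q$; once that verification is in hand, the rest is bookkeeping already performed in the previous lemma together with the universal property of the pullback defining $\sim_P$.
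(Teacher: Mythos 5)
Your proposal is correct and takes essentially the same approach as the paper: the paper's proof is simply an appeal to lemma \ref{fine}, together with the preceding lemma that every $(P,\wave_P)$ with $P$ projective is fine, and your construction of the monic cover $e:(P,\pre{(e\times e)}(\wave_X))\to (X,\wave_X)$ from a projective cover $e:P\to X$ is precisely the verification of ``enough fine objects'' that this citation leaves implicit. Nothing further is needed.
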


\begin{proof} See lemma \ref{fine}. \end{proof}

\section{Realizability Categories}
We apply the constructions of the previous section to realizability fibrations. Our characterization of realizability fibrations helps to characterize the resulting \xemph{realizability categories}.

The first subsections show some advantages of working with external filters of inhabited subobjects, and prove that we can do so without loss of generality.

\subsection{Inhabited filters}
We apply $\Asm$ to the realizability fibration $\ds A/\phi:\cat DA/\phi \to\cat H$. Here $\cat H$ is an arbitrary Heyting category, $A$ an arbitrary order partial applicative structure in $\cat H$ and $\phi$ an arbitrary combinatory complete external filter of $A$. This subsection shows that if all downsets in $\phi$ are inhabited, then $\cat DA/\phi$ has enough fine objects. In fact, in that case $\Asm(\ds A/\phi)$ is a coreflective subcategory of $\cat DA/\phi$, and in turn the base category $\cat H$ is a reflective subcategory of $\Asm(\ds A/\phi)$.

\newcommand\supp\Sigma
\begin{definition} For any $Y\in \ds A_X$ let $\supp Y = \set{x\in X|\exists a\in A.(a,x)\in Y}$. \end{definition}

\begin{lemma} The map $\supp$ defines a subfunctor of $\ds A/\phi:\cat DA/\phi \to \cat H$ if and only if all $U\in\phi$ are inhabited. This subfunctor is regular and factors uniquely through $\Asm(\ds A/\phi)$. \label{left adjoint}
\end{lemma}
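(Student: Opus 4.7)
The plan is to parametrize morphisms of $\cat DA/\phi$ by their underlying base maps together with a set of realizers living in $\phi$, and then show that $\supp$ lifts to a functor precisely when those realizer sets are inhabited. For a morphism $\alpha\colon Y\to Y'$ with $Y\in\ds A_X$, $Y'\in\ds A_{X'}$, over a base map $h\colon X\to X'$, represented by $(U,g\colon UY\to F_hY')$ with $U\in\cat C_X$, the corresponding external realizer is $V=\forall_!(U)$, which lies in $\phi$ by the correspondence between fibred filters closed under indexed meets and external filters established earlier. Unfolding the inclusion $UY\subseteq F_hY'$ in the internal language of $\cat H$ gives: for any $a\in V$ and $(b,x)\in Y$, $ab\converges$ and $(ab,h(x))\in Y'$.

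For the forward direction, assuming every $U\in\phi$ is inhabited, any such $V$ supplies an $a_0$ which witnesses $h(x)\in\supp Y'$ for each $x\in\supp Y$. So $h$ restricts to a well-defined $\supp\alpha\colon\supp Y\to\supp Y'$, depending only on $h$ and hence only on the equivalence class of the representative. Functoriality is automatic from functoriality of the base, and the componentwise monomorphism $\supp Y\hookrightarrow X$ assembles into the required natural transformation to $\ds A/\phi$.

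For the converse, I would construct, for each $U\in\phi$, a morphism $\alpha_U\colon\top_\termo\to U$ in $\cat DA/\phi$ over $\id_\termo$, represented by $(\comb k\cdot U,\text{incl})$: combinatory completeness and closure of $\phi$ under application give $\comb k\cdot U\in\phi$, and the combinator identity $kab\leq a$ yields $(\comb k\cdot U)\cdot A\subseteq U$. If $\supp$ is a subfunctor then acting on $\alpha_U$ forces $\supp A\subseteq\supp U$ as subobjects of $\termo$; since combinatory completeness forces $A$ itself to be inhabited (apply the forward argument to $\id_{\top_\termo}$, whose realizer set is $\db\comb i\in\phi$), we have $\supp A=\termo$ and hence $\supp U=\termo$, i.e., $U$ is inhabited.

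For the final claim, $\supp$ preserves the terminal and binary products by a standard pairing argument using the combinators $\comb p,\comb{gk},\comb{g(ki)}\in\phi$ to combine realizers at a common base point, and it inverts supine monomorphisms: a supine morphism over $h$ satisfies $Y'\simeq\exists_hY$ so $\supp Y'=h(\supp Y)$, and monicity forces the restriction of $h$ to be an isomorphism on supports. By the universal property of $\Asm(\ds A/\phi)=\fun(\cat DA/\phi,S)$, $\supp$ then factors uniquely as a regular functor $\Asm(\ds A/\phi)\to\cat H$. The main technical obstacle I expect is the bookkeeping between the external filter $\phi$ and the induced fibred filter $\cat C$ in the forward direction, and convincing oneself that the inhabitant of $V$ can always be converted to an actual restriction of $h$ in the internal language without further global choice assumptions on $\cat H$.
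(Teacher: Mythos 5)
Your forward direction and the final factorization argument follow essentially the paper's route: every morphism of $\cat DA/\phi$ carries a uniform object of realizers in $\phi$ (your $V=\forall_!(U)$ is exactly the translation between the fibred filter and the external one), inhabitation of that object turns the base map into a map of supports, well-definedness is automatic because the equivalence on representatives is equality of base maps, and the internal-language worry you raise at the end is harmless: the needed statement is regular logic valid in $\cat H$ and $\supp Y'\hookrightarrow X'$ is monic, so no choice is involved. For the regularity claim you should also record, as the paper does, that $\supp$ sends \emph{all} supine morphisms (not only supine monomorphisms) to regular epimorphisms and that it preserves equalizers because equalizers are prone and the naturality squares of the inclusion $\supp\to\ds A/\phi$ at prone maps are pullbacks; with that, the unique factorization through $\Asm(\ds A/\phi)=\fun(\cat DA/\phi,S)$ is as you say.

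The genuine gap is in your converse. Functoriality applied to your $\alpha_U\colon\top_\termo\to U$ (which is fine: $\comb k\cdot U\in\phi$ and $(\comb k\cdot U)\cdot A\subseteq U$) only yields $\supp\top_\termo\subseteq\supp U$, i.e.\ $\supp A\subseteq\supp U$ in $\Sub(\termo)$; to conclude that $U$ is inhabited you still need $\supp A\simeq\termo$, i.e.\ that $A$ itself is inhabited. Your justification of that step is circular: ``apply the forward argument to $\id_{\top_\termo}$, whose realizer set is $\db{\comb i}\in\phi$'' presupposes that this member of $\phi$ is inhabited, which is precisely what the converse is supposed to prove. Moreover, the underlying claim is false for \emph{external} filters: combinatory completeness of $\phi$ only asserts $\db f\in\phi$, not that these downsets are inhabited (note $A\in\phi$ by upward closure, so ``all members of $\phi$ are inhabited'' already contains ``$A$ is inhabited''); for instance in $\cat H=\Set\times\Set$ with $A=(\Kone,\emptyset)$ one gets a combinatory complete external filter none of whose members is inhabited. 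The paper's own converse reaches the same bottleneck by a slightly different route -- since $\A$ represents $\phi$, the map $!\colon(\ds A/\phi)_{U\hookrightarrow A}(\A)\to\termo$ is supine, so $U\simeq\top_\termo$ already in $\cat DA/\phi$, and applying $\supp$ identifies $\supp U$ with $\supp$ of the terminal object -- so in both versions the crux is the identification of $\supp\top_\termo$ with $\termo$; your write-up needs an honest argument (or an explicit hypothesis on $A$) for that identification rather than the circular appeal to the forward direction.
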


\begin{proof} Assume all $U\in \phi$ are inhabited. For each $f:Y\to Y'$ in $\cat DA/\phi$, we already have a map $(\ds A/\phi)f:(\ds A/\phi)Y\to(\ds A/\phi)Y'$ and inclusions $\supp Y \to (\ds A/\phi)Y$ and $\supp Y'\to (\ds A/\phi)Y'$. Because of the tracking principles, we can find a $U\in \phi$ such that $(a,b,x)\mapsto(ab,(\ds A/\phi)f(x))$ is a well defined map $U\times Y\to Y'$ that commutes with the projections $U\times Y \to (\ds A/\phi)Y$ and $Y'\to (\ds A/\phi)Y'$. Because $U$ is inhabited, $\supp Y$ is the image of the projection $U\times Y \to (\ds A/\phi)Y$, while $\supp Y'$ is the image of $Y'\to (\ds A/\phi)Y'$ by definition. This proves that the composition of $(\ds A/\phi)f:(\ds A/\phi)Y\to (\ds A/\phi)Y'$ with the inclusion $\supp Y\to (\ds A/\phi)Y$ factors uniquely through the inclusion  $\supp Y' \to (\ds A/\phi)Y'$.
\[ \xymatrix{
U\times Y \ar[r]^{(a,b,x)\mapsto x} \ar[d]_{(a,b,x)\mapsto (ab,(\ds A/\phi)f(x))} & \supp Y \ar@{^(->}[r]\ar@{.>}[d]_{\supp f} & (\ds A/\phi)Y \ar[d]_{(\ds A/\phi)f} \\
Y' \ar[r]_{(c,x)\mapsto x} & \supp Y' \ar@{^(->}[r] & (\ds A/\phi)Y'
}\]
So $\supp$ is a subfunctor of $(\ds A/\phi)$ if all $U\in\phi$ are inhabited.

Now assume $\supp$ is a functor. The fact that $!:(\ds A/\phi)_{U\hookrightarrow A}(\A) \to \termo$ is supine implies that $\im{!}((\ds A/\phi)_{U\hookrightarrow A}(\A)) \simeq \termo$. The functor $\supp$ preserves this isomorphism while sending $\im{!}((\ds A/\phi)_{U\hookrightarrow A}(\A))$ to $\set{x\in \termo|\exists a\in U.\top}$. So $U$ is inhabited if $U\in \phi$.

The functors $\supp$ is regular. Equalizers are prone, due to shared unique factorization properties. The functor $\supp$ preserves them, because $\ds A/\phi$ does and because for prone morphisms the naturality square for the inclusion $\supp\to \ds A/\phi$ is a pullback. Similarly, coequalizers are supine, and $\supp$ maps supine morphisms to regular epimorphisms. This leaves products.

The object $A$ is inhabited for combinatory completeness, and therefore $\supp(\top_\termo)=\termo$. If $\comb p$ is a set of pairing combinators in $\phi$, then $\comb p UV$ is inhabited if and only if $U$ and $V$ are, because $\comb p$,  $\comb p UV\arrow U$ and $\comb p UV\arrow V$ are inhabited. Therefore $\supp$ sends fibred products to intersections, which in turn implies that $\supp$ preserves binary products.

If $s:Y\to Y'$ is a supine morphism, $\supp s:Y\to Y'$ is going to be a regular epimorphism, because of the inhabited set of realizers. Therefore, $\supp$ sends supine monomorphisms to isomorphisms, which means that $\supp$ has an up to isomorphism unique factorization through $\fun(\cat DA/\phi,S) = \Asm(\ds A/\phi)$.
\end{proof}

We lift $\supp:\cat D A/\phi \to\cat H$ up along $\ds A/\phi$ and thus create a \xemph{fine coreflection}.

\begin{lemma} For each $Y\in\cat DA/\phi$ let $Y_{\rm fine} = (\ds A/\phi)_{\supp Y\hookrightarrow (\ds A/\phi)Y}(Y)$. Now $Y_{\rm fine}$ is fine relative to supine monomorphisms, and the prone map $Y_{\rm fine} \to Y$ is a supine monomorphism. \end{lemma}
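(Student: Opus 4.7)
The plan is to unpack $Y_{\rm fine}$ concretely and verify each of the three assertions by tracking base morphisms along the faithful fibration $\ds A/\phi$. By definition $Y_{\rm fine}=(\ds A/\phi)_{i}(Y)$ for the inclusion $i:\supp Y\hookrightarrow(\ds A/\phi)Y$, so as a downset of $A\times\supp Y$ it coincides with $Y$ itself, since $Y\subseteq A\times\supp Y$ already. The canonical prone morphism $p:Y_{\rm fine}\to Y$ has base $i$, a monomorphism in $\cat H$, and faithfulness of $\ds A/\phi$ then forces $p$ to be a monomorphism in $\cat DA/\phi$.

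To show $p$ is also supine I would argue that $\exists_i(Y_{\rm fine})\simeq Y$ in the fibre over $(\ds A/\phi)Y$. The Frobenius condition gives $\exists_i((\ds A/\phi)_i(Y))\simeq Y\wedge\exists_i(\top)$, and $\exists_i(\top)$ is precisely the subobject of $A\times(\ds A/\phi)Y$ whose second coordinate lies in $\supp Y$. Since every realized element of $Y$ belongs to $\supp Y$ by definition, we have $Y\leq\exists_i(\top)$, whence $\exists_i(Y_{\rm fine})\simeq Y$. Thus $p$ is prone, supine, and monic.

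For fineness, let $w:U\to V$ be an arbitrary supine monomorphism in $\cat DA/\phi$. Faithfulness forces its base $f:(\ds A/\phi)U\to(\ds A/\phi)V$ to be a monomorphism in $\cat H$, and supineness gives $V\simeq\exists_f(U)$; combining these yields that $\supp V$ is contained in the image of $f$, where the equality of supports under this isomorphism relies on the inhabited-filter hypothesis from lemma \ref{left adjoint}. Now take any $y:Y_{\rm fine}\to V$ with base $y_0:\supp Y\to(\ds A/\phi)V$. A representative of $y$ is a tracker $U_0\in\phi$ sending realizer-element pairs of $Y_{\rm fine}$ into those of $V$; since $U_0$ is inhabited and every $x\in\supp Y$ has some $a$ with $(a,x)\in Y_{\rm fine}$, it follows that $y_0(x)\in\supp V$ for every $x\in\supp Y$. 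Hence $y_0$ factors uniquely through $f$ as $y_0=f\circ x_0$, and the cocartesian property of $w$ lifts this to a unique $x:Y_{\rm fine}\to U$ over $x_0$ satisfying $w\circ x=y$; uniqueness of the factorization is automatic since $w$ is monic.

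The main obstacle is the fineness step, specifically the inclusion $y_0(\supp Y)\subseteq\supp V$. Because morphisms in the filter quotient are only defined up to equivalence via trackers $U_0\in\phi$, the inhabited-filter hypothesis must be used essentially: without some actual realizer $u\in U_0$ one cannot certify that $y_0(x)$ really lies in $\supp V$. The remaining checks are routine once one identifies $Y_{\rm fine}$ with $Y$ viewed over its own support and exploits the fact that $\ds A/\phi$ is a faithful bifibration.
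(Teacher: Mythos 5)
Your identification of $Y_{\rm fine}$, the monicity of $Y_{\rm fine}\to Y$, and the Frobenius computation showing it is supine are all fine (the last is a slightly different but valid route from the paper, which instead factors the family $Y$ as a regular epi onto $\supp Y$ followed by the inclusion). The problem is in the fineness step. Given $y:Y_{\rm fine}\to V$ and the factorization $y_0=f\circ x_0$ of its base, you appeal to ``the cocartesian property of $w$'' to produce $x:Y_{\rm fine}\to U$ over $x_0$ with $w\circ x=y$. The cocartesian (supine) universal property of $w:U\to V$ only lets you factor morphisms \emph{out of} $U$ through $w$; what you need here is the Cartesian (prone) universal property, which factors morphisms \emph{into} $V$ backwards through $w$ once a factorization of the base is given. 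These are dual and the one you invoke does not yield the map you want. The missing ingredient — which is exactly the point the paper's proof supplies — is that supine monomorphisms in this bifibration are also \emph{prone} (the paper derives this from the Beck--Chevalley condition applied to the kernel pair of the monic base). Only with that fact does the unique lift of $x_0$ to a morphism $Y_{\rm fine}\to U$ exist.

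Two smaller points. First, your claim that faithfulness of $\ds A/\phi$ forces the base of a supine monomorphism to be a monomorphism is backwards: faithful functors reflect monomorphisms, they do not preserve them, so this needs a separate argument (the paper sidesteps it by using that $\supp$ sends supine monomorphisms to isomorphisms, from the preceding lemma, and then transporting the factorization along the natural monomorphism $\supp\to\ds A/\phi$). Second, your use of the inhabited-filter hypothesis to get $y_0(\supp Y)\subseteq\supp V$ is the right instinct and matches the role it plays in the paper, but it only gets you a factorization of the base; the lift to $\cat DA/\phi$ still hinges on proneness as above.
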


\begin{proof} By definition $\supp Y_{\rm fine} = \supp Y$. For this reason $Y_{\rm fine} \to Y$ is monic, but also supine, because the family $Y$ is the composition of the regular epimorphism $Y\to\supp Y$ and the monomorphism $\supp Y \to (\ds A/\phi)Y$. Therefore the map $Y_{\rm fine} \to Y$ is a supine monomorphism.

Let $f:X\to X'$ be a supine monomorphism in $\cat DA/\phi$. For each $g:Y_{\rm fine} \to X'$, $\supp g$ factors uniquely though $\supp f$ because $\supp f$ is an isomorphism. The natural monomorphism $\supp\to\ds A/\phi$ makes that $\ds A/\phi g$ factors uniquely through $(\ds A/\phi)(f)$. The Beck-Chevalley condition on the kernel pairs of monomorphisms makes supine monomorphisms prone. Therefore the unique factorization of $(\ds A/\phi)g$ through $(\ds A/\phi)f$ lifts to $\cat DA/\phi$.  This is a unique factorization of $g$ through $f$ since $\supp Y_{\rm fine} = (\ds A/\phi)Y_{\rm fine}$.
\end{proof}

This leads us to the following conclusion.

\begin{theorem}[assemblies are inhabited families] Let $A$ be a partial applicative structure and let $\phi$ be a combinatory complete filter such that all $U\in\phi$ are inhabited. The category of assemblies is the equalizer of $\ds A/\phi$ and $\supp$.\end{theorem}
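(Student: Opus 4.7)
The plan is to exhibit $\Asm(\ds A/\phi)$ as the full subcategory of $\cat DA/\phi$ on those $Y$ for which the canonical inclusion $\supp Y\hookrightarrow (\ds A/\phi)Y$ is an isomorphism, and to observe that this full subcategory is precisely the equalizer of the functors $\supp,\ds A/\phi:\cat DA/\phi\to\cat H$, which differ only by this natural monomorphism.

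First I would invoke lemma \ref{fine}. The preceding lemma already provides, for each $Y$, a supine monomorphism $Y_{\rm fine}\to Y$ with $Y_{\rm fine}$ fine relative to the class $S$ of supine monomorphisms. Hence $\cat DA/\phi$ has enough fine objects, and lemma \ref{fine} gives an equivalence between $\fun(\cat DA/\phi, S)=\Asm(\ds A/\phi)$ and the full subcategory of fine objects of $\cat DA/\phi$.

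Next I would characterize the fine objects concretely. Since $Y_{\rm fine}=(\ds A/\phi)_{\supp Y\hookrightarrow (\ds A/\phi)Y}(Y)$ by definition, the map $Y_{\rm fine}\to Y$ is an isomorphism exactly when the inclusion $\supp Y\hookrightarrow (\ds A/\phi)Y$ is. Moreover $Y$ is fine iff this particular supine monomorphism is an isomorphism: any supine mono out of a fine object must split, and here it splits through $Y_{\rm fine}$. Thus the fine objects are exactly the $Y$ on which the natural transformation $\supp\to\ds A/\phi$ is pointwise an iso, and morphisms between them in $\Asm(\ds A/\phi)$ reduce to ordinary morphisms of $\cat DA/\phi$ by the same lemma.

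The only subtle point, which I expect to be the main obstacle, is making the statement ``equalizer of $\ds A/\phi$ and $\supp$'' precise: read literally as the strict equalizer of two functors into $\cat H$ it yields the subcategory just described, because the two functors agree on an object iff the monomorphism $\supp Y\hookrightarrow (\ds A/\phi)Y$ is invertible, and agreement on morphisms is then automatic from naturality. Once this bookkeeping is in place, the theorem follows directly from the two preceding lemmas.
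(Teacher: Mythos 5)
Your proposal is correct and is essentially the paper's own argument: the paper's proof is literally ``apply lemma \ref{fine} to the preceding lemma,'' and your identification of the fine objects with those $Y$ for which $\supp Y\hookrightarrow (\ds A/\phi)Y$ is invertible (equivalently, the inhabited families) is exactly the content of the accompanying remark. One small slip of direction: what you need is that a supine monomorphism \emph{into} a fine object splits (lift the identity of $Y$ through $Y_{\rm fine}\to Y$), not ``out of,'' but this does not affect the argument.
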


\begin{proof} Apply lemma \ref{fine} to the lemma above. \end{proof}

\begin{remark} This is why we identify the category of assemblies with the subcategory of $\cat DA/\phi$ whose objects are families of inhabited downsets of $A$. \label{assemblies are inhabited families} \end{remark}

\begin{corollary} There is a string of inclusions $\cat H \to \Asm(\ds A/\phi) \to \cat DA/\phi$, the first reflective, the second coreflective. \end{corollary}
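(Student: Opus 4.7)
The plan is to establish each inclusion and its adjoint separately, leaning on the lemmas just proved. For the coreflective inclusion $\Asm(\ds A/\phi) \hookrightarrow \cat DA/\phi$, I would invoke remark \ref{assemblies are inhabited families}, which identifies $\Asm(\ds A/\phi)$ with the full subcategory of families of inhabited downsets. The preceding lemma shows that for each $Y \in \cat DA/\phi$ the object $Y_{\rm fine}$ is fine relative to the class $S$ of supine monomorphisms and the canonical prone map $Y_{\rm fine} \to Y$ lies in $S$. Applying lemma \ref{fine} (together with the dual of remark \ref{coarse reflection}), the assignment $Y \mapsto Y_{\rm fine}$ is functorial and provides a right adjoint to the inclusion, so this inclusion is coreflective.

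For the reflective inclusion $\cat H \hookrightarrow \Asm(\ds A/\phi)$, I would take the constant object functor $\nabla$ of definition \ref{constant object functor} as the inclusion, and propose the support functor $\supp: \Asm(\ds A/\phi) \to \cat H$ of lemma \ref{left adjoint} as its left adjoint. The first thing to check is that $\supp \nabla X \simeq X$ naturally in $X$: since $\nabla X$ is represented by the top family $A \times X \to X$, and $A$ is inhabited (combinatory completeness), one has $\supp(A \times X) \simeq X$. Next I would verify the hom-set bijection
\[ \Asm(\ds A/\phi)(Y, \nabla X) \;\cong\; \cat H(\supp Y, X). \]
One direction is given by applying $\supp$ and using the isomorphism $\supp \nabla X \simeq X$. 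For the other direction, given $f : \supp Y \to X$, one forms the assembly morphism $Y \to \nabla X$ whose underlying $\cat H$-arrow is the composite $(\ds A/\phi)(Y) \to \supp Y \xrightarrow{f} X$; the tracking condition is trivial because the target family is the top element $A \times X$, so any member of the filter $\phi$ realizes it.

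The main obstacle is really just bookkeeping: checking that this lift lands in $\Asm(\ds A/\phi)$ rather than merely in $\cat DA/\phi$, and that the bijection is natural. The first point follows because $\nabla X$ is already a family of inhabited downsets and the map is determined on the level of supports. Naturality and the triangle identities reduce to straightforward diagram chases in $\cat H$ using lemma \ref{left adjoint}, which guarantees that $\supp$ is a functor agreeing with $\ds A/\phi$ on underlying arrows. Hence $\supp \dashv \nabla$ exhibits $\cat H$ as a reflective subcategory of $\Asm(\ds A/\phi)$, completing the string of inclusions.
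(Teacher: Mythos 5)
Your argument is correct and takes essentially the same route as the paper: the second inclusion is the fine coreflection obtained from the lemma on $Y_{\rm fine}$ together with lemma \ref{fine} (dual of remark \ref{coarse reflection}), and the first is the adjunction $\supp\dashv\nabla$, which the paper records via the unit maps $Y_{\rm fine}\to\top_{\supp Y}$ while you check the hom-set bijection directly. One cosmetic slip: since application is partial, not every member of $\phi$ tracks the transposed map $Y\to\nabla X$, but the object of realizers of the identity function (which lies in $\phi$ by combinatory completeness) does, so the tracking is indeed trivial.
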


\begin{proof} The second embedding is the fine coreflection, the dual of the coarse reflection of remark \ref{coarse reflection}. We can therefore identify $\Asm(\ds A/\phi)$ with the subcategory of fine objects of $\cat D A/\phi$.

For each object $X$, $\supp \top_X = X$ and therefore $\top_X$ is fine. This establishes the functor $\nabla:\cat H\to \Asm(\ds A/\phi)$. The functor $\supp$ determines a left adjoint left inverse because the vertical maps $Y_{\rm fine} \to \top_{\supp Y}$ form a unit for this adjunction.
\end{proof}

\subsection{Filter quotients}
In the previous chapter external filters contained uninhabited subobjects. We will now show that when we construct realizability categories, we can restrict to filters of inhabited downsets without loss of generality, a property which we will use to characterize of realizability categories later on.

Let $\phi$ be a combinatory complete filter of an order partial applicative structure $A$ in a Heyting category $\cat H$. In the case that $\phi$ contains uninhabited downsets, we can do the following.

\begin{definition} Let $\sigma\subseteq \Sub(\termo)$ be the least filter that contain all subterminals $V$ for which there is a $U\in \phi$ such that $!:U\to\termo$ factors through $V$. Since $\termo$ is a (very simple) order partial applicative structure, there is a category $\Asm(\ds\termo/\sigma)$ with a functor $\nabla:\cat H \to \Asm(\ds\termo/\sigma)$.
\end{definition}

Note that $\Asm(\ds\termo/\sigma)$ is the result of factoring the constant object functor $\nabla:\cat H \to \Asm(\ds A/\phi)$ into a functor that is surjective on objects and a functor that is full and faithful, i.e. it is the \xemph{full image} of $\nabla$.

\begin{lemma} There is a full and faithful functor $I:\Asm(\ds\termo/\sigma)\to\Asm(\ds A/\phi)$ that commutes with the constant object functors $\nabla:\cat H\to \Asm(\ds\termo/\sigma)$ and $\nabla':\cat H\to \Asm(\ds A/\phi)$. Moreover $\nabla:\cat H\to \Asm(\ds\termo/\sigma)$ is essentially surjective.
\end{lemma}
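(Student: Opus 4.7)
The plan is to construct $I$ as the assembly-functor applied to a canonical morphism of fibred locales, and to read off both properties directly from the structure of $\ds\termo$.

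I would first observe that the fibres of $\ds\termo$ are $\ds\termo_X\cong\Sub(X)$, since a downset of $\termo$ parametrized by $X$ is simply a subobject of $\termo\times X\cong X$, and that for any such subobject $V\subseteq X$ the support $\supp V$ equals $V$ itself. By remark \ref{assemblies are inhabited families}, every object of $\Asm(\ds\termo/\sigma)$ is therefore isomorphic to $\top_X=\nabla X$ for some $X\in\cat H$, which is exactly the essential surjectivity of $\nabla$.

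Next, to construct $I$, I would define $J:\ds\termo\to\ds A$ by $V\subseteq X \mapsto A\times V\subseteq A\times X$ and observe that this is a vertical morphism of fibred locales over $\cat H$: it is fibrewise monotone, preserves the top element, and commutes with reindexing because pullback preserves the product with $A$. By the very definition of $\sigma$, a subterminal $V$ lies in $\sigma$ iff $!^{-1}(V)=A\times V$ lies in $\phi$, so $J$ sends the fibred filter associated to $\sigma$ into that associated to $\phi$ and descends to $J:\ds\termo/\sigma\to\ds A/\phi$. Setting $I:=\Asm(J)$ gives the desired functor; commutativity $I\nabla\simeq\nabla'$ is automatic, since both constants arise as units of the biadjunction $\Asm\dashv\Sub$ and $J$ is a morphism over $\cat H$.

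For full-faithfulness, by essential surjectivity of $\nabla$ it suffices to see that $I$ is a bijection on hom-sets $\Asm(\ds\termo/\sigma)(\nabla X,\nabla Y)\to \Asm(\ds A/\phi)(\nabla' X,\nabla' Y)$. Unfolding the filter quotients, morphisms $\nabla X\to\nabla Y$ in $\cat D\termo/\sigma$ are equivalence classes of pairs $(V\in\sigma,h:X\to Y)$ under the equivalence identifying pairs sharing the same base arrow $h$ (the condition on the fibred filter element is automatic between top objects); similarly morphisms $\nabla' X\to\nabla' Y$ in $\cat DA/\phi$ are equivalence classes of pairs $(U\in\phi,h:X\to Y)$. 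The functor $I$ acts by $(V,h)\mapsto(J(V),h)$, and the bijectivity of the induced map reduces to the correspondence $V\in\sigma \Leftrightarrow J(V)\in\phi$, which is the definition of $\sigma$.

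The principal obstacle is to ensure that passing from the filter quotient to $\Asm$ does not enlarge these hom-sets, i.e., that every functional span $\nabla X\leftarrow Z\to\nabla Y$ with supine-mono left leg is already equivalent to a span with domain $\nabla X$. I would handle this by classifying supine monomorphisms with codomain a top object: supineness of $Z\to\nabla X$ forces the underlying base arrow $FZ\to X$ in $\cat H$ to be a regular epi, while monicness (which the faithful fibred functor reflects to $\cat H$) forces it to be a monomorphism; in the regular category $\cat H$ such a morphism is an isomorphism. The functional span is then equivalent to the direct morphism obtained by composing with this isomorphism, completing the verification that no new morphisms appear in $\Asm$.
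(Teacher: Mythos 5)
Your construction of $I$ (namely $\Asm$ applied to the vertical morphism $(X,V)\mapsto(X,A\times V)$), the observation that $V\in\sigma$ if and only if $A\times V\in\phi$, and the essential surjectivity of $\nabla$ all agree with the paper. The gap is in your final paragraph, where you classify supine monomorphisms with codomain a constant object: you claim that supineness of $Z\to\nabla X$ forces the underlying arrow $FZ\to X$ to be a regular epimorphism, hence that supine monos into $\nabla X$ are isomorphisms and the hom-sets do not grow on passing from the filter quotient to $\Asm$. This is false exactly in the situation the lemma is designed for, namely when $\phi$ contains uninhabited downsets, equivalently when $\sigma$ contains proper subterminals. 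If $V\in\sigma$ is a proper subterminal, pick $U\in\phi$ with $\im{!}(U)\subseteq V$; then $\comb k U\in\phi$ is contained in $\top_X\arrow\exists_{\pi_1}(\top_{V\times X})$, so $\top_{V\times X}\to\top_X$ over the (non-epic) monomorphism $V\times X\hookrightarrow X$ is supine, and it is monic because the fibration is faithful. Consequently $\Asm(\ds\termo/\sigma)(\nabla X,\nabla Y)$ is not $\cat H(X,Y)$ but $\cat H/\sigma(X,Y)$ --- the very next lemma of the paper --- so the hom-sets genuinely do enlarge and your ``no new morphisms appear'' step fails. (If all members of $\phi$ are inhabited, then $\sigma=\set\termo$ and the lemma is vacuous.)

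The conclusion survives for a different reason: what is needed is that $i:\cat D\termo/\sigma\to\cat DA/\phi$ is not only fully faithful but also \emph{bijective on the supine subobjects} of the constant objects --- this is where the correspondence $V\in\sigma\Leftrightarrow A\times V\in\phi$ actually earns its keep --- so that the functional spans over $\nabla X$ on the two sides match up and the induced functor on the categories of fractions is full and faithful. That is how the paper argues (``it is bijective on subobjects; both fullness and faithfulness follow from that''). A smaller point: remark \ref{assemblies are inhabited families} is stated under the hypothesis that all members of the external filter are inhabited, which fails for $\sigma$; essential surjectivity of $\nabla$ should instead be read off directly from the supine monomorphism $(\supp Y,\top)\to(X,Y)$, which needs no inhabitation hypothesis.
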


\begin{proof} We need a suitable map $i:\ds \termo/\sigma \to \ds A/\phi$. A family of downsets in $\cat D\termo/\sigma$ is just a pair $(X,Y\subseteq X)$ and we send it to the trivial family $(X,A\times Y)$ in $\cat DA/\phi$. This is a functor: for every $U\in \sigma$ there is a $V\in\phi$ such that $!:V\to U$. Therefore, if $f:(X,Y)\to (X',Y')$ has an object of realizers $U\in \sigma$, then $f:(X, A\times Y) \to (X',A\times Y')$ has one in $\phi$. We could take for example $\comb k V$, where $\comb k = \db{(x,y)\mapsto x}$. 

This functor $i:\cat D \termo/\sigma \to \cat D A/\phi$ is faithful, because it commutes with the faithful realizability fibrations. It is full because if $f:(X,A\times Y)\to(X',A\times Y')$ has an object of realizers $U$, then $\im !(U)\in\sigma$ tracks $(X,Y)\to (X,Y')$. We can use the biadjunction $\Asm\dashv \Sub$ to turn the vertical morphism $(\id_{\cat H},i):\ds \termo/\sigma \to\ds A/\phi$ into a functor $I:\Asm(\ds\termo/\sigma)\to\Asm(\ds A/\phi)$. It is bijective on subobjects; both fullness and faithfulness follows from that.

A supine monomorphism $(X,Y)\to (X',Y')$ in $\cat D\termo/\sigma$ is a monomorphism $m:X\to X'$ such that $Y' = \exists_m(Y)$ or $Y'=Y$. Every object $(X,Y)$ in $\Asm(\ds A/\phi)$ is isomorphic to $(Y,Y) = \nabla Y$ because of the supine morphism $(Y,Y) \to (X,Y)$. This establishes that $\Asm(\ds\termo/\sigma)$ is the full image of $\nabla:\cat H \to\Asm(\ds A/\phi)$. 
\end{proof}

The construction of $\Asm(\ds\termo/\sigma)$ can be simplified.

\begin{definition} A partial $f:X\partar Y$ in $\cat H$ is \xemph{almost total} relative to $\sigma$, if for some $U\in\sigma$, $U\times X\subseteq \dom f$. Two almost total arrows $f,g:X\partar Y$ are \xemph{almost equal} relative to $\sigma$, if there is a $U\in\sigma$ such that $Y=\set{x\in X|U}\subseteq \dom f\cap \dom g$ and $f(x)=g(x)$ for all $x\in Y$. The \xemph{filter quotient} $\cat H/\sigma$ is the category of almost total arrows modulo almost-equality relative to $\sigma$.
\end{definition}

\begin{remark} The category $\cat H/\sigma$ is a \xemph{filter quotient} of $\cat H$, described in section V.9 of \cite{MR1300636}.\end{remark}

\begin{lemma} $\Asm(\ds\termo/\sigma)$ is equivalent to $\cat H/\sigma$. \end{lemma}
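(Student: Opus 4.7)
The plan is to construct an equivalence of categories $F \colon \cat H/\sigma \to \Asm(\ds\termo/\sigma)$ extending the constant object functor $\nabla \colon \cat H \to \Asm(\ds\termo/\sigma)$ along the quotient $\cat H \to \cat H/\sigma$, and then check it is fully faithful and essentially surjective. Write $B_U$ for the pullback of $U \hookrightarrow \termo$ along the terminal map $B \to \termo$, and $m_U \colon B_U \hookrightarrow B$ for the resulting monomorphism.

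The first step is to identify the morphisms in $\cat D\termo/\sigma$ that become isomorphisms in $\Asm(\ds\termo/\sigma)$. I claim that each $m_U$, lifted to a morphism $(B_U,B_U) \to (B,B)$, is a supine monomorphism. Monicity is inherited from monicity of $m_U$ in $\cat H$. The supine property is a direct unfolding of the definition of morphisms in $\cat D\termo/\sigma$: a morphism $(B_U,B_U) \to (B'',Y'')$ over $g\circ m_U$ is witnessed by some $V \in \sigma$ with $B_{U \cap V} \subseteq g^{-1}(Y'')$, and the same witness $U \cap V \in \sigma$ certifies that $g$ itself defines a morphism $(B,B) \to (B'',Y'')$. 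Hence $\nabla m_U$ becomes an isomorphism in $\Asm(\ds\termo/\sigma)$.

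Using this, I would define $F(B) = \nabla B$ on objects, and send a representative $(U, f \colon B_U \to B')$ of a morphism in $\cat H/\sigma$ to the span
\[ \nabla B \xleftarrow{\;\nabla m_U\;} \nabla B_U \xrightarrow{\;\nabla f\;} \nabla B' \]
interpreted in the localization at supine monomorphisms. Well-definedness under almost-equality follows because the datum of a common $U_3 \subseteq U_1 \cap U_2$ in $\sigma$ on which $f_1$ and $f_2$ agree is precisely the data that equates the two spans in the calculus of right fractions. Functoriality reduces to matching the pullback-based composition of almost-total arrows with the pullback-based composition of spans, both performed along the inclusions $m_U$. Essential surjectivity of $F$ is inherited directly from the essential surjectivity of $\nabla$ established in the previous lemma.

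The main work, and the main obstacle, is fullness. A morphism $\nabla B \to \nabla B'$ in $\Asm(\ds\termo/\sigma)$ is represented by a functional span $\nabla B \xleftarrow{\,s\,} (C,Y) \xrightarrow{\,g\,} \nabla B'$ with $s$ a supine monomorphism. The plan is to analyse such an $s$ over $s_0 \colon C \to B$: monicity forces $s_0$ to be monic in $\cat H$, and the supine property forces the image $s_0(Y)$ to equal $B$ up to the $\sigma$-equivalence that governs the filter quotient, hence to contain $B_U$ for some $U \in \sigma$. Restricting the span along $m_U$ then yields an equivalent representative of the form $\nabla B \xleftarrow{\,\nabla m_U\,} \nabla B_U \xrightarrow{\,\nabla f\,} \nabla B'$, which lies in the image of $F$. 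Faithfulness is then automatic, since the equivalence of two such spans unfolds to the existence of a common $U_3 \in \sigma$ on which the two underlying $\cat H$-arrows agree, which is exactly the relation of almost-equality. The delicate point throughout is bookkeeping with possibly non-inhabited $U \in \sigma$; the whole argument proceeds at the level of subobjects of $\termo$, avoiding the inhabited-filter hypothesis used earlier in this subsection.
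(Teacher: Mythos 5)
Your proof is correct, but it runs in the direction opposite to the paper's, and the work is distributed differently as a result. The paper builds the comparison functor $\Asm(\ds\termo/\sigma)\to\cat H/\sigma$ directly, sending $(X,Y)$ to $Y$ and a morphism to the almost total arrow it induces, and then checks surjectivity on objects, fullness and faithfulness; its fullness step (lifting an almost total arrow via $(U\times Y,U\times Y)\simeq(Y,Y)$, using a supine monomorphism into $(Y,U\times Y)$ and the two-way morphism $\id_Y$) is essentially what you package as the \emph{definition} of your functor $F$ on morphisms, while your fullness step (collapsing an arbitrary functional span to an almost total arrow) is essentially what the paper packages as the definition of its functor. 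What your route buys is a trivial essential-surjectivity step (it is literally the previous lemma) and a slightly sharper statement of the key fact, namely that each $\nabla m_U$ is itself a supine monomorphism rather than merely becoming invertible after factoring through $(Y,U\times Y)$; what it costs is the analysis of a general span $\nabla B\leftarrow (C,Y)\to\nabla B'$, and the pivotal assertion there --- that supineness of the left leg $s$ produces $U\in\sigma$ with $B_U\subseteq \exists_{s_0}(Y)$ --- is stated without proof. It is true: apply the supine universal property of $s$ to the corestriction $(C,Y)\to(B,\exists_{s_0}(Y))$ over $s_0$ together with $h=\id_B$; the resulting factorization $(B,B)\to(B,\exists_{s_0}(Y))$ over $\id_B$ must carry a witness $W\subseteq B$ with $\forall_!(W)\in\sigma$ and $W\subseteq\exists_{s_0}(Y)$, which gives the desired $U=\forall_!(W)$. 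With that supplied (and the routine observation that $s_0$ is monic in $\cat H$ because morphisms of $\cat D\termo/\sigma$ are determined by their underlying $\cat H$-arrows, every $\cat H$-arrow into $C$ underlying some morphism after shrinking the family), your restriction along $m_U$ and the identification of span-equivalence with almost-equality go through, so the argument is complete in outline and matches the paper's in substance.
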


\begin{proof} Because $(X,Y)\simeq (Y,Y)$ in $\Asm(\ds\termo/\sigma)$, we get a functor $\Asm(\ds\termo/\sigma) \to \cat H/\sigma$ that maps $(X,Y)$ to $Y$ and $f:(X,Y) \to (X',Y')$ to $f: \pre f(Y') \to  Y'$ that is surjective on objects. 

This functor if full, because an almost total arrow $f:Y\partar Y'$ determines an arrow $(U\times Y,U\times Y)\to (Y',Y')$ for some $U\in\phi$. Now $(U\times Y,U\times Y)$ is isomorphic to $(Y,U\times Y)$ because of a supine monomorphism, while $(Y,U\times Y) \simeq (Y,Y)$ because $\id_Y$ is a morphism in both directions.

If $f,g:(X,Y)\to (X,Y')$ determine almost equal $f,g:\pre f(Y')\cap \pre g(Y') \to Y'$, then there is some $U\in\sigma$ such that the restrictions $f,g:U\times Y \to Y'$ are equal. We just saw that $(U\times Y,U\times Y)\simeq (Y,Y)$, and this forces $f=g$ in $\Asm(\ds\termo/\sigma)$.
\end{proof}

The realizability fibration $\ds A/\phi$ composed with the constant object functor $\nabla:\cat H\to\Asm(\ds\termo/\sigma)$ is a new realizability fibration, but in the composite, all members of $\phi$ are inhabited. We will demonstrate this now.

\begin{lemma} For all $U\in\sigma$ and $X\in \cat H$, reindexing along $\pi_1:U\times X\to X$ is an equivalence of the fibres. \end{lemma}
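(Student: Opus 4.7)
My plan is to show that the coindexing functor $\exists_{\pi_1}$ is a pseudoinverse to $\pi_1^*$. Since $U$ is a subterminal, the projection $\pi_1: U \times X \to X$ is a monomorphism (it is the pullback of the mono $U \hookrightarrow \termo$ along $X \to \termo$). For reindexing along any monomorphism in a complete fibred Heyting algebra the unit of the adjunction $\exists_{\pi_1} \dashv \pi_1^*$ is an isomorphism, so $\pi_1^* \exists_{\pi_1} \simeq \id$ on $(\ds A/\phi)_{U \times X}$ automatically. This already shows $\pi_1^*$ is essentially surjective, and since the fibres of $\ds A/\phi$ are preorders (the fibration is faithful), $\pi_1^*$ is automatically faithful. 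The remaining task is fullness, equivalently showing $\exists_{\pi_1} \pi_1^* Y \simeq Y$ in $(\ds A/\phi)_X$ for every $Y$.

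Unwinding the definitions, $\exists_{\pi_1} \pi_1^* Y$ is the downset $Y \cap (A \times U \times X)$ regarded as a subobject of $A \times X$. One direction, $\exists_{\pi_1} \pi_1^* Y \leq Y$, is the counit of the adjunction and is realized by any member of $\comb i \in \phi$. The non-trivial direction is $Y \leq \exists_{\pi_1} \pi_1^* Y$, which asks for a realizer $W \in \phi$ such that for every $a \in W$ and $(b, x) \in Y$, $ab\converges$ with $(ab, x) \in Y$ \emph{and} $x \in U \times X$ (i.e. $U$ holds at $x$).

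The hypothesis $U \in \sigma$ is exactly what supplies the witness. By definition of $\sigma$ together with its closure under finite meets, there is a $V \in \phi$ together with a map $V \to U$ in $\cat H$; internally this says that whenever $a \in V$, the subterminal $U$ holds in the current context. Take $W = V \cap \comb i$ in $\ext A$. Since $\phi$ is closed under finite meets and contains both $V$ and $\comb i$ (by combinatory completeness applied to $x \mapsto x$), we have $W \in \phi$. For any $a \in W$ and $(b, x) \in Y$: the membership $a \in \comb i$ gives $ab \leq b$, so $(ab, x) \in Y$ by downward closure of $Y$; the membership $a \in V$ forces $U$ to hold at $x$. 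Hence $(ab, x) \in Y \cap (A \times U \times X) = \exists_{\pi_1} \pi_1^* Y$, establishing the desired inequality.

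The main subtlety is the handling of the internal logic in the last step: one must see that the same combinator $a \in V \cap \comb i$ can simultaneously serve two purposes, namely acting as identity (via $\comb i$) while also witnessing the truth of $U$ (via $V$). The combinatorial content is minimal once one realizes that the meet $V \cap \comb i$ in $\ext A$ is exactly the right object to package both conditions into a single member of $\phi$; no more elaborate combinator gymnastics is required.
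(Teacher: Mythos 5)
Your overall decomposition matches the paper's: use the fact that $\pi_1$ is monic (Beck--Chevalley) to get $\pi_1^*\exists_{\pi_1}\simeq\id$, and then reduce everything to showing $Y\leq\exists_{\pi_1}\pi_1^*Y$ by producing a witness in $\phi$ coming from some $V\in\phi$ with $!:V\to U$. The paper does that second half abstractly (it shows $\exists_{\pi_1}(\top)\simeq\top$ because $!:(\ds A/\phi)_{V\hookrightarrow A}(\A)\to\termo$ is supine, and then invokes the Frobenius condition), whereas you do it by an explicit realizer computation. That would be an acceptable alternative, but your explicit realizer does not exist as stated. You take $W=V\cap\comb i$ and justify $W\in\phi$ by claiming that ``$\phi$ is closed under finite meets.'' An external filter is, by definition, only closed under the application of $\ext A$ and upward closed for $\subseteq$; binary intersections are not part of the definition, and closure under application does not yield them (application on $\ext A$ is induced by the application of $A$, not by intersection). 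Indeed $V\cap\db{x\mapsto x}$ can perfectly well be empty: nothing forces a member of $\phi$ witnessing $U$ to consist of realizers of the identity. Since the entire nontrivial direction of the equivalence rests on producing this $W$, that is a genuine gap, not a cosmetic one.

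The repair is short: instead of intersecting, apply. Put $W=\db{(x,y)\mapsto y}\,V$. Then $W\in\phi$ because $\db{(x,y)\mapsto y}\in\phi$ by combinatory completeness and $\phi$ is closed under application; every $c\in W$ satisfies $c\leq rv$ for some realizer $r$ and $v\in V$, hence $cb\converges$ and $cb\leq rvb\leq b$ for all $b$, so downward closure of $Y$ gives $(cb,x)\in Y$; and $\supp W\leq\supp V\leq U$, so the presence of an element of $W$ witnesses $U$, which is exactly what the conjunct ``$x\in U\times X$'' requires. (A similar small point applies to your appeal to ``closure of $\sigma$ under finite meets'' to obtain a single $V\in\phi$ mapping to $U$: turning a finite meet of supports into the support of one member of $\phi$ again goes through application/pairing in $\phi$, not through meets; the paper glosses this in the same way, so I only flag it in passing.)
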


\begin{proof} Note that $\pi_1$ is a monomorphism, because $U\subseteq \termo$. By the Beck-Chevalley condition, $(\ds A/\phi)_{\pi_1}(\exists_{\pi_1}(Y))\simeq Y$ in a natural way. Now $\exists_{\pi_1}(\top)\simeq \top$, because there is a $V\in \phi$ such that $!:V\to U$; this implies that $(\ds A/\phi)_{V\hookrightarrow A}(\A)\times \top_ X \to \top_U\times \top_X$ and because $!:(\ds A/\phi)_{V\hookrightarrow A}(\A)\to \termo$ is supine, so must $\top_{U\times X}\to \top_X$ be. By the Frobenius condition $\im{\pi_1}((\ds A/\phi)_{\pi_1}(Y)\land \top)\simeq Y\land \im{\pi_1}(\top) \simeq Y$. We now see that $(\ds A/\phi)_{\pi_1}$ and $\exists_{\pi_0}$ are an equivalence of fibres. \end{proof}

\begin{corollary} The composed functor $\ds A/\phi: \cat DA/\phi \to \cat H/\sigma$ is a realizability fibration. \end{corollary}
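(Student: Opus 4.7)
The plan is to invoke the characterization theorem \ref{characterization}: I would exhibit a combinatory complete external filter $\phi'$ of $A$ (viewed as an order partial applicative structure in $\cat H/\sigma$) and show that the composed fibration satisfies the six characteristic properties for $(A,\phi')$. The first step is to record that $\cat H/\sigma$ is a Heyting category (this is standard for filter quotients, cf.\ \cite{MR1300636} V.9) and that the canonical functor $\nabla\colon \cat H\to\cat H/\sigma$ is a Heyting functor. By lemma \ref{preserving1} and the subsequent corollary, $\nabla A$ is an order partial applicative structure in $\cat H/\sigma$; I would then define $\phi'\subseteq\ext(\nabla A)$ as (the upward and application closure of) $\set{\nabla U\mid U\in\phi}$. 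By the very definition of $\sigma$, every $\nabla U$ with $U\in\phi$ is inhabited in $\cat H/\sigma$, so all members of $\phi'$ are inhabited; and since $\nabla$ sends realizers of partial combinatory functions on $A$ to realizers of partial combinatory functions on $\nabla A$, $\phi'$ is combinatory complete.

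Next I would verify that the composition $\ds A/\phi\colon\cat DA/\phi\to\cat H/\sigma$ is genuinely a fibration. The preceding lemma is the heart of this: reindexing along the monomorphism $\pi_1\colon U\times X\to X$ is an equivalence of fibres for every $U\in\sigma$. Since each morphism in $\cat H/\sigma$ is represented by an almost total arrow $f\colon U\times X\partar Y$, this equivalence lets us assemble a reindexing functor along $f$ in $\cat H/\sigma$, independent up to canonical isomorphism of the chosen representative. Prone morphisms over $f$ in the composed fibration are obtained by composing a prone morphism over $f\colon U\times X\to Y$ in $\cat DA/\phi$ with the equivalence.

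Then I would check the characteristic properties. Separation, the filter axioms $\axiom F_1$ and $\axiom F_2$ for $\A$, the weak genericity of $\A$ over $\nabla A$, Church's rule, and the uniformity rule all transfer from the original fibration over $\cat H$. The reason in each case is that the relevant prone and supine spans are preserved by $\nabla$ and that equivalent fibres preserve the rules about existence of tracking realizers.

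The final point, and the main obstacle, is the representation property: that for $V\in\ext(\nabla A)$, the map $!\colon(\ds A/\phi)_{V\hookrightarrow\nabla A}(\A)\to\termo$ is supine over $\cat H/\sigma$ if and only if $V\in\phi'$. Supineness of such a map in the composed fibration amounts to the image of $!$ being a terminal object of $\cat H/\sigma$, i.e.\ to membership in $\sigma$; and $\sigma$ was constructed precisely so that this happens exactly when a representative in $\phi$ exists. Identifying $\phi'$ with the set of $V$ satisfying this, and using combinatory completeness to ensure closure, completes the identification of the composed fibration with $\ds A/\phi'$ via theorem \ref{characterization}.
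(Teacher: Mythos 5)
Your overall strategy (use the preceding fibre-equivalence lemma to see that the composite is still a separated fibred locale, then verify the hypotheses of theorem \ref{characterization} for $\A$ and a suitable external filter) is exactly the paper's strategy, and most of your transfer arguments are fine at the level of detail the paper itself uses (incidentally, the paper writes $Q$ for your functor $\cat H\to\cat H/\sigma$, reserving $\nabla$ for the constant-objects functor). The genuine gap is in the step you yourself single out as the main obstacle, the representation property. Supineness of $!:(\ds A/\phi)_{V\hookrightarrow A}(\A)\to\termo$ in the composed fibration is \emph{not} equivalent to the image of $!$ in the base, i.e.\ the support $\im{!}(V)\subseteq\termo$, becoming terminal in $\cat H/\sigma$. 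Supineness is the fibration-level condition that the canonical vertical map $\exists_{[!]}(\A_V)\to\top_\termo$ be an isomorphism in the fibre over $\termo$; since the only endomorphism of $\termo$ in $\cat H$ is the identity, that fibre is unchanged by passing to $\cat H/\sigma$, and by the fibre-equivalence lemma $\exists_{[!]}(\A_V)\simeq V$ there, so $!$ is supine if and only if $V\simeq A$ in the filter quotient, i.e.\ if and only if $V\in\phi$ (the $\comb k$/$\comb i$ computation from the earlier lemma stating that $\A$ represents $\phi$). Having the support in $\sigma$ is a strictly weaker condition in general.

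Concretely, take relative realizability: $\cat H=\Set$, $A=\Kone$, $A'\subsetneq A$ a subalgebra, and $\phi=\set{V\in\ext A\mid V\cap A'\neq\emptyset}$. All members of $\phi$ are inhabited, so $\sigma$ is trivial, $\cat H/\sigma=\cat H$, and the composite is the original fibration; your criterion would then declare $!$ supine for \emph{every} inhabited $V$, whereas it is supine only for $V\in\phi$, and an inhabited $V$ disjoint from $A'$ witnesses the difference. Consequently, if you take $\phi'$ to be the set of $V$ whose support lies in $\sigma$, the ``if'' direction of the representation condition fails and theorem \ref{characterization} does not apply; if you take the correct $\phi'=\set{QU\mid U\in\phi}$ (suitably closed), then the ``only if'' direction is precisely what your argument does not establish. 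What is needed is the transported computation above: supine iff $V\in\phi$, so that the composite represents the image of $\phi$ rather than the larger filter of downsets with almost-terminal support; your criterion conflates the inhabitation phenomenon (supports of members of $\phi$ become terminal in $\cat H/\sigma$) with the representation property.
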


\begin{proof} For $U\in \sigma$ the fibres over $U\times X$ and $X$ are equivalent. Therefore $\im e$ preserves $\top$ if $e:X\partar Y$ is an almost total regular epimorphism. Also, all pairs of almost isomorphic objects $X$ and $Y$ have equivalent fibres. This means that the composite is a separated fibred locale. Now, $\A$ is still weakly generic, Church's rule and the uniformity rule still hold, and $\A_U$ is inhabited if and only if $U\in \phi$. That is all we need to know. \end{proof}

We conclude that every category of assemblies that comes from a realizability fibration, comes from a realizability fibration for an external filter that contain only inhabited downsets. We summarize this as follows.

\begin{theorem}[inhabitation] For every Heyting category $\cat H$ every order partial applicative structure $A$ in $\cat H$ and every combinatory complete external filter $\phi$ of $A$, there is a Heyting category $\cat H'$, an order partial combinatory algebra $A'$ and a combinatory complete external filter $\phi'$ all of whose members are inhabited, such that $\Asm(\ds A/\phi)\cong \Asm(\ds A'/\phi')$. \label{inhabitation}
\end{theorem}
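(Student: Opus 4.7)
The plan is to realize the desired $\cat H'$, $A'$ and $\phi'$ by essentially collapsing the "near-terminal" part of $\phi$ into isomorphisms in the base. Concretely, I would take $\cat H' := \cat H/\sigma$ for the filter $\sigma \subseteq \Sub(\termo)$ defined just above the statement, take $A' := \nabla A$ (the image of $A$ under the reflection $\nabla : \cat H \to \cat H/\sigma$), and take $\phi'$ to be the external filter on $A'$ generated by the images of elements of $\phi$. The lemmas preceding the theorem already identify $\Asm(\ds\termo/\sigma)$ with $\cat H/\sigma$ and equip it with a full and faithful embedding into $\Asm(\ds A/\phi)$; so the whole argument is about showing that this embedding is essentially surjective and that the resulting realizability fibration meets the inhabitation condition.

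First, I would check that $\cat H/\sigma$ is a Heyting category, that $\nabla$ preserves Heyting structure, and that $A' = \nabla A$ is an order partial applicative structure in $\cat H/\sigma$ with $\nabla(\phi)$ still combinatory complete; this follows from lemma \ref{preserving1} and the corollary that regular functors preserve combinatory completeness, together with the fact that the localization $\cat H \to \cat H/\sigma$ is Heyting. Then I would invoke the corollary just above (that the composite $\ds A/\phi : \cat DA/\phi \to \cat H/\sigma$ is a realizability fibration) to identify this composite with the realizability fibration $\ds A'/\phi'$ for a suitable $\phi'$; concretely, $\phi'$ is the set of downsets $U \subseteq A'$ for which $!:(\ds A'/\phi')_{U \hookrightarrow A'}(\A') \to \termo$ is supine.

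Next, I would verify that every $U \in \phi'$ is inhabited in $\cat H/\sigma$. This is the content of lemma \ref{left adjoint}: once we pass to $\cat H/\sigma$, each $V \in \sigma$ is by construction isomorphic to $\termo$, so a downset $U \in \phi$ that had only $\im{!}(U) \in \sigma$ (rather than $\im{!}(U) = \termo$) becomes genuinely inhabited after reflection, because $\im{!}(U)$ becomes terminal. In particular, the criterion "$!:U \to \termo$ is supine in $\ds A'/\phi'$" coincides with "$U$ is inhabited" after this change of base.

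Finally, I would establish the equivalence $\Asm(\ds A/\phi) \cong \Asm(\ds A'/\phi')$. By the lemma preceding the theorem, the canonical functor $I : \Asm(\ds\termo/\sigma) \to \Asm(\ds A/\phi)$ is full and faithful and identifies $\Asm(\ds\termo/\sigma) \simeq \cat H/\sigma$ with the full image of $\nabla : \cat H \to \Asm(\ds A/\phi)$; composing with the assemblies of the reindexed fibration gives a fully faithful comparison $\Asm(\ds A'/\phi') \to \Asm(\ds A/\phi)$ that is essentially surjective because every assembly over $\cat H$ becomes, after passing to $\cat H/\sigma$, an assembly whose underlying family is inhabited. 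I expect the main obstacle to be a careful bookkeeping step: verifying that the fibre equivalence $(\ds A/\phi)_{\pi_1}$ along monomorphisms $U \times X \to X$ with $U \in \sigma$ interacts correctly with indexed meets and the weakly generic filter $\A$, so that $\A$ reindexes to a weakly generic filter $\A'$ representing precisely $\phi'$ in the sense required by theorem \ref{characterization}. Once this is in hand, the characterization theorem forces the two fibrations (and hence their categories of assemblies) to be equivalent.
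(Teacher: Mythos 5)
Your proposal follows essentially the same route as the paper: pass to the filter quotient $\cat H' = \cat H/\sigma$, take $A' = QA$ and $\phi'$ the images of the members of $\phi$, and invoke the preceding lemmas (in particular the corollary that the composite $\ds A/\phi \to \cat H/\sigma$ is again a realizability fibration, with $\A$ still weakly generic and now representing a filter of inhabited downsets) to conclude $\Asm(\ds A/\phi)\cong\Asm(\ds A'/\phi')$. The only slip is cosmetic: the inhabitation of members of $\phi'$ comes directly from the construction of $\cat H/\sigma$ (subterminals in $\sigma$ become terminal there), not from lemma \ref{left adjoint}, which concerns $\supp$ being a subfunctor.
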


\begin{remark} If an order partial applicative structure $A$ has a combinatory complete external filter of inhabited objects, then it has a combinatory complete (internal) filter, namely $A$ itself. That is why we can finally talk about order partial combinatory algebras instead of order partial applicative structures (see definition \ref{combinatory completeness}). \end{remark}

\begin{proof} Let $\sigma = \set{\im !(U)\in \Sub(\termo)|U\in\phi}$, then $\cat H' = \cat H/\sigma$. Let $Q:
\cat H \to \cat H'$ be the quotient functor, then $A' = QA$ and $\phi' = \set{QU\in\ext A'|U\in\phi}$. Because every member of $\phi'$ is inhabited in $\cat H/\sigma$, the objects of realizers of partial combinatory functions are inhabited, which makes $A$ combinatory complete, and hence an order partial combinatory algebra.
\end{proof}

We will use these facts to characterize realizability categories in the next subsection.

\subsection{Characterization}
The material in the previous two subsections implies that a realizability category contains (one of its) base categories as a reflective subcategory with a regular inclusion functor. Here, we characterize the reflective inclusions of categories that are equivalent to such inclusions.

\begin{definition} Let $\cat R$ be a Heyting category, let $C\in \cat R$ be an order partial combinatory algebra, let $\cat H$ be a reflective subcategory, with fully faithful $I:\cat H\to\cat R$ and reflector $R:\cat R\to\cat H$. We start with the assumptions that $I$ is regular and that $R$ is faithful and preserves finite limits.

A finite limit preserving reflector is a fibration: prone morphisms are those morphisms for which the naturality square of the unit of the reflection is a pullback. Using these prone morphisms we can define \xemph{\prodo morphisms}: $f:X\to Y$ is a \prodo morphism if $f$ factors through $\pi_1:A\times Y\to Y$ in a prone arrow $p:X\to A\times Y$ and the fibres of $f$ are isomorphic to downsets of $C$.

We now say that $(\cat R,C,\cat H)$ is a \xemph{regular realizability category} if
\begin{enumerate}
\item \xemph{Weak genericity}: For each object $X$ there is a span $(e:Y\to X,p:Y\to C)$ where $e$ is a regular epimorphism and $p$ is prone.
\item \xemph{Tracking principle}: For each chain of arrows $e:X\to Y$ and $f:Y\to Z$ where $e$ is a regular epic \prodo morphism and $f$ is either a \prodo morphism or a prone morphism, there is a regular epic \prodo morphism $g:W\to Z$, such that the pullback of $e$ along $g$ is split.
\[ \xymatrix{
\bullet \ar[r]\ar[d]\ar@{}[dr]|<\lrcorner & \bullet \ar[r]\ar[d]\ar@{}[dr]|<\lrcorner\ar@{.>}@/_/[l] & W\ar[d]^g \\
X \ar[r]_e & Y\ar[r]_f & Z
}\]
Moreover, this splitting takes a particular form: if $f$ is a \prodo morphism, then this splitting is a restriction of the application map; if $f$ is prone, then this splitting is an inclusion.
\end{enumerate} \label{regular realizability category}
\end{definition}

\begin{theorem} Any regular realizability category $(\cat R,C,\cat H)$ is equivalent to $\cat H\to\Asm(\ds RC/\chi)$ for some external filter $\chi$. \label{charreg} \end{theorem}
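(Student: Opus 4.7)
The plan is to apply the characterization theorem \ref{characterization} to identify a fibred locale over $\cat H$ with a realizability fibration, and then pass through the $\Asm$ construction. First I would form the fibred locale $F\colon\cat F\to\cat H$ whose fibre over $X\in\cat H$ is $\Sub_{\cat R}(IX)$ with reindexing $(If)^{-1}$; equivalently, $F$ is the pullback of the subobject fibration $\Sub(\cat R)\to\cat R$ along $I$. Since $I$ is regular and $\Sub(\cat R)$ is a fibred locale, $F$ inherits that structure, and it is separated because the reflector $R$, being a left adjoint, preserves regular epimorphisms. Lemma \ref{preserving1} applied to the finite-limit-preserving $R$ shows that $A:=RC$ is an order partial combinatory algebra in $\cat H$. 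The unit $\eta_C\colon C\to IRC$ is monic, because the faithful $R$ reflects monos and $R\eta_C$ is an isomorphism by the triangle identity; it therefore represents a subobject $\tilde C\in F_A$ that inherits the filter axioms from $C$.

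Next I would verify the remaining hypotheses of theorem \ref{characterization}. The weak genericity assumed in definition \ref{regular realizability category} gives, for each $U\in F_X$, a regular-epi/prone span of the form $(e\colon Y\to U,p\colon Y\to C)$, which in $\cat F$ lifts to a supine-prone span witnessing that $\tilde C$ is a weakly generic object of $F$. The two clauses of the tracking principle align with Church's rule and the uniformity rule, respectively: the case in which $f$ is a \prodo morphism yields Church's rule with the splitting realized by the application map, and the case in which $f$ is prone yields the uniformity rule with the splitting given by inclusion. Setting
\[
\chi = \{\,U\in\ext A\mid {!}\colon F_{U\hookrightarrow A}(\tilde C)\to\termo\text{ is supine}\,\},
\]
combinatory completeness of $C$ is transported through $R$ to make $\chi$ a combinatory complete external filter. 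Theorem \ref{characterization} then provides a vertical equivalence $F\simeq \ds A/\chi$ of fibred locales over $\cat H$.

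Finally I would deduce $\cat R\simeq \Asm(\ds A/\chi)$ by establishing $\cat R\simeq\Asm(F)$ as regular categories over $\cat H$. Since $F$ is the pullback of $\Sub(\cat R)$ along $I$, the biadjunction $\Asm\dashv\Sub$ supplies a canonical regular functor $\Phi\colon\Asm(F)\to\cat R$ extending $I$. Essential surjectivity of $\Phi$ is precisely the content of weak genericity: every object of $\cat R$ is the regular-epimorphic image of a prone subobject of $C\times IX$ for some $X\in\cat H$, and hence is represented as an assembly, while the supine monomorphisms inverted in the construction of $\Asm(F)$ are exactly those equivalences of representations that weak genericity produces between any two such presentations of the same object. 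Fullness and faithfulness follow from the tracking principle, since morphisms of assemblies are characterized by being realized by elements of the OPCA, and the tracking principle translates such tracked maps in $\cat F$ into genuine morphisms in $\cat R$ and conversely.

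The main obstacle is this last identification $\Phi\colon\Asm(F)\simeq\cat R$: the characterization theorem cleanly handles the fibred-locale side, but matching the category-of-fractions construction $\Asm(F)$ with $\cat R$ requires a careful diagram chase combining weak genericity (to manufacture assembly presentations of arbitrary objects of $\cat R$) with the tracking principle in its prone clause (to recognize that distinct presentations become isomorphic through supine monomorphisms in $\cat F$, hence equal in $\Asm(F)$).
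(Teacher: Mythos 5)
Your proposal follows the paper's own route: pull back the subobject fibration of $\cat R$ along $I$, check that the resulting fibred locale $F=\Sub(I-)$ with the filter $C\in F_{RC}$ satisfies the hypotheses of theorem \ref{characterization} (your $\chi$, defined by supineness of $!:F_{U\hookrightarrow A}(\tilde C)\to\termo$, is exactly the paper's ``$C$ intersects $IU$''), and then pass to assemblies via the biadjunction $\Asm\dashv\Sub$. Two of your justifications should be repaired, though neither is a real gap: separatedness of $\Sub(I-)$ comes from the assumption that $I$ is regular (so $Ie$ is a regular epimorphism and $\exists_{Ie}(\top)\simeq\top$), not from $R$ preserving regular epimorphisms, which is beside the point; and essential surjectivity of the comparison functor $\Asm(F)\to\cat R$ is not ``precisely the content of weak genericity'' but of the fact that every unit $\eta_X:X\to IRX$ is monic (the same faithfulness argument you already used for $\eta_C$), so that every object of $\cat R$ is a subobject of an object in the image of $I$ --- being a regular-epi image of an assembly would not by itself exhibit an object as an assembly. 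With those substitutions your argument coincides with the paper's proof, which states these points more tersely.
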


\begin{proof} Pull back the subobject fibration of $\cat R$ along $I$. The first property tells us that this fibration is separated and that $\cat R$ is a reflective subcategory of its domain, and that $X\in \Sub(IRX)$ for all $X\in \cat R$. Also, because $IR$ is both faithful and regular, $C\in \Sub(IRC)$ is a filter. The second property tells us that $C$ is weakly generic, and the third property tells us that $\Sub(I-)$ satisfies Church's rule and the uniformity rule. If we now let $\chi$ be the set of $U\in \ext RC$ such that $C$ intersects $IU$, $(\Sub(I-),C)\simeq (\ds RC/\chi,\mathring{RC})$, because of theorem \ref{characterization}. Because of the adjunction $\Asm\dashv\Sub$, $\cat R$ must be equivalent to $\Asm(\ds RC/\chi)$.
\end{proof}

\begin{corollary} Slices of regular realizability categories are regular realizability categories.\label{slice} \end{corollary}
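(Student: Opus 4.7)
The plan is to exhibit, for any $X\in\cat R$, the triple $(\cat R/X,\ \pi_1\colon C\times X\to X,\ \cat H')$ as a regular realizability category, where $\cat H'\subseteq\cat R/X$ is the full subcategory on those $f\colon Y\to X$ which are prone with respect to the original reflector $R$; the reflector $R$ restricts to an equivalence $\cat H'\simeq \cat H/RX$. The inclusion $\cat H'\hookrightarrow \cat R/X$ is regular because regular epimorphisms in slices coincide with those in the ambient category, and the class of prone morphisms is stable under pullback. The associated reflector $R'\colon \cat R/X\to\cat H'$ sends $f\colon Y\to X$ to the pullback of $IRf\colon IRY\to IRX$ along the unit $\eta_X\colon X\to IRX$; it inherits faithfulness and preservation of finite limits from $R$ and from the fact that pullbacks compose.

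To verify the two realizability axioms, first observe that $\pi_1\colon C\times X\to X$ is an OPCA in $\cat R/X$, since its application and order are induced fibrewise from those on $C$. For weak genericity, given $f\colon Y\to X$, apply weak genericity in $\cat R$ to the underlying object $Y$ to obtain a regular epimorphism $e\colon Z\to Y$ and a prone $p\colon Z\to C$; then $(e,(p,f\circ e))$ is the required span in $\cat R/X$, because $(p,f\circ e)\colon Z\to C\times X$ is prone in $\cat R/X$ — proneness is detected by the pullback condition on the unit, which does not interact with the chosen structural map to $X$. For the tracking principle, one checks that the classes of prone morphisms, \prodo morphisms, and regular epimorphisms in $\cat R/X$ correspond exactly to the same classes in $\cat R$, restricted to morphisms over $X$. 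Applying the tracking principle in $\cat R$ to a chain living over $X$ produces a regular epic \prodo morphism $g\colon W\to Z$ together with a splitting given by restriction of application (in the \prodo case) or by inclusion (in the prone case); since these operations respect the structural maps to $X$, the splitting automatically lifts to $\cat R/X$.

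The main obstacle, as in many slice arguments, is the bookkeeping in this correspondence: one must verify that $R'$ agrees on prone morphisms with the expected candidate, that prone downsets of $\pi_1\colon C\times X\to X$ in $\cat H/RX$ are simply families of prone downsets of $C$ in $\cat H$ indexed over $RX$, and that the fibrewise OPCA structure on $C\times X$ coincides with the one inherited from $C$. Once these compatibilities are in place, both axioms of Definition \ref{regular realizability category} transfer directly from $\cat R$ to $\cat R/X$. A cleaner alternative would be to invoke Theorem \ref{charreg} to replace $\cat R$ with $\Asm(\ds A/\phi)$ and verify that $\Asm(\ds A/\phi)/X\simeq \Asm(\ds A'/\phi')$ for the pulled-back OPCA $A'=A\times RX\to RX$ in $\cat H/RX$ and an appropriately restricted external filter, so that the characterization theorem itself exhibits the slice as a regular realizability category.
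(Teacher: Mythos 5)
Your overall architecture is the same as the paper's (prone arrows into $X$ as the reflective subcategory, $\pi_1\colon C\times X\to X$ as the algebra, transfer of the tracking principle, and weak genericity deduced from weak genericity in $\cat R$), but there is a genuine gap at the one step where real work is needed: the claim that $(p,f\circ e)\colon Y\to C\times X$ is prone ``because proneness is detected by the pullback condition on the unit, which does not interact with the chosen structural map to $X$.'' That reason is false. Since $I$ and $R$ preserve finite limits, $IR(C\times X)\cong IRC\times IRX$, so the unit-naturality square for $(p,f\circ e)$ has an $IRX$-component and its being a pullback depends very much on the second leg $f\circ e$. From $p$ prone one only gets a canonical vertical comparison map from $Y$ to the prone reflection of $(p,f\circ e)$; that this comparison is invertible requires the faithfulness of $R$ (equivalently, that each unit $\eta_X\colon X\to IRX$ is monic, which follows from $R$ faithful together with $I$ fully faithful). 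This is exactly what the paper's proof spends its substantive paragraph on: it factors $(p,f\circ e)$ as a vertical $v$ followed by a prone $(q,g)$, shows $w\circ v=\id_Y$, and uses faithfulness of $R$ to conclude that $v$ and $w$ are mutually inverse, so the corrected span $(e\circ w,(q,g))$ witnesses weak genericity over $X$. That your shortcut cannot be right as stated is shown by the exact case: there $R$ is no longer faithful, your argument would go through verbatim, yet the remark following theorem \ref{charexact} records that weak genericity fails in slices over objects whose unit is not monic. So you must either repeat the paper's vertical--prone argument or explicitly invoke monicity of $\eta_X$; as written the step is unjustified.

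Two smaller points. Your correspondence of prone, \prodo{} and regular-epi classes between $\cat R/X$ and $\cat R$, and the observation that the splittings produced by the tracking principle are automatically over $X$, are fine modulo the bookkeeping you acknowledge (the paper instead transfers tracking via the Heyting diagonal functor $\cat R\to\cat R/X$ and internality, which avoids that bookkeeping). But the proposed ``cleaner alternative'' is not cleaner and is misleading as stated: $\Asm(\ds A/\phi)/X$ is not equivalent to assemblies for the pulled-back algebra $A\times RX$ over $\cat H/RX$ with a filter obtained by merely restricting $\phi$. For instance over $\Kone$, the object $\nabla\N\to\N$ lives in the latter category but admits no tracked map to the natural-numbers assembly over itself, so it is not in the essential image of the slice. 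The correct external filter must encode the realizability structure of $X$ (which families of downsets can be computed uniformly from realizers of points of $X$), and identifying it amounts to re-proving the weak genericity and tracking facts, i.e. to the argument you were trying to avoid.
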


\begin{proof} Let $(\cat R,C,\cat H)$ be a regular realizability category, and let $X\in \cat R$. Prone arrows form a reflective subcategory of $\cat R/X$, whose inclusion $I_X:\mathrm{prones}(\cat R/X) \to \mathrm{prones}(\cat R/X)$ is regular, and equipped with a faithful finite limit preserving left adjoint $R_X$.

The diagonal functor $\cat R \to\cat R/X$, which sends $Y$ to the projection $Y\times X\to X$, is a Heyting functor. Therefore it preserves all of the internal characteristic properties of $C$, in particular combinatory completeness and the tracking principles. The weak genericity principle cannot be preserved this way, as subsection \ref{inexpressible} showed. We will now explain why it holds in slices of $\cat R$. It turns out that the proof of theorem \ref{Shanin} almost gives the reason.

Let $f:Z\to X$ be any morphism. By weak genericity there is a prone $p:Y\to C$ be and a regular epimorphism $e:Y\to Z$. The morphism $(p,f\circ e)$ factors as a vertical $v:Y\to Y'$ followed by a prone $(q,g):Y'\to C\times X$, and $q$ factors though $p$ in a unique vertical morphism $w:Y'\to Y$. We note that $p\circ w\circ v = q\circ v = p$ and therefore $w\circ v=\id_Y$, but because $R$ is faithful, this means $w$ and $v$ are inverses of each other. Now $R(f\circ e\circ w) = Rg$, because $f\circ e = g\circ v$ and $Rv = Rw = \id_{FY}$. But faithfulness of $R$ now implies that $f\circ e\circ w = g$.
\[\xymatrix{
& Y \ar[dl]_p \ar[dr]^e\ar@{.>}@<.5ex>[d]^v  \\
C & Y'\ar[l]^{q} \ar[r]_{e\circ w}\ar[d]|{(q,g)} \ar@{.>}@<.5ex>[u]^w & Z\ar[d]^f \\
& C\times X\ar[r]_{\pi_0}\ar[ul]^{\pi_1}& X
}\]
We now have a prone map $(q,g): Y' \to C\times X$ and a morphism $e\circ w:Y'\to Z$, commuting with $\pi_1:C\times X\to X$ and $f:Z\to Y$. The arrow $e\circ w$ is a regular epimorphisms because $w$ is a split epimorphism and $e$ is regular. So, thanks to the faithfulness of $R$, weak genericity extends to all slices of $\cat R$.

The slice categories have all characteristic properties of regular realizability categories, and must themselves be realizability categories. \end{proof}

We will extend the characterization to exact completions of regular realizability categories. 

\begin{remark}[exact base categories]\label{exact base}
Note that for a non-exact Heyting category $\cat H$, partial applicative structure $A$ and combinatory complete filter $\phi$ of inhabited downsets, the functor $\supp: \Asm(\ds A/\phi)_\exreg \to \cat H$ does not exist because $\cat H$ lacks quotients. In particular, $\Asm(\ds A/\phi)_\exreg$ contains quotients of equivalence relations in $\cat H$ that $\cat H$ does not contain because it is not exact.
This is no serious problem, because the inclusion $I:\cat H \to \cat H_\exreg$ is a regular functor. Therefore, 
$IA$ is a partial applicative structure in $\cat H_\exreg$ and $\im I(\phi) = \set{IU|U\in\phi}$ a filter of inhabited subobjects, so $\ds A/\phi$ has a canonical extension $\ds IA/\im I(\phi)$ to $\cat H_\exreg$. This extension coincides with the ordinary $\ds A/\phi$ over $\cat H$, because $\Sub(X)\simeq \Sub(IX)$ for all $X\in\cat H$ (see lemma \ref{sub in exreg}).

There is a much more problematic issue: the functor $\supp$ is no longer faithful!
\end{remark}

\begin{definition} An \xemph{exact realizability category} is an exact Heyting category $\cat E$, with an order partial combinatory algebra $C\in \cat E$, and a reflective subcategory $\cat H$, with fully faithful $I:\cat H\to\cat E$ and reflector $R:\cat E\to\cat H$, that satisfy:
\begin{enumerate}
\item $I$ is regular, $R$ preserves finite limits and $\eta_C:C\to IRC$, where $\eta:\id_{\cat E}\to IR$ is the unit of the reflection, is a monomorphism.
\item Weak genericity: for each object $X$ there is a span $(e:Y\to X,p:Y\to C)$ where $e$ is a regular epimorphism and $p$ is prone.
\item Tracking principle: For each chain of arrows $e:X\to Y$ and $f:Y\to Z$ where $e$ is a regular epic \prodo morphism and $f$ is either a \prodo morphism or a prone morphism, there is a \prodo morphism $g:W\to Z$, such that the pullback of $e$ along $g$ is split.
\[ \xymatrix{
\bullet \ar[r]\ar[d]\ar@{}[dr]|<\lrcorner & \bullet \ar[r]\ar[d]\ar@{}[dr]|<\lrcorner\ar@{.>}@/_/[l] & W\ar[d]^g \\
X \ar[r]_e & Y\ar[r]_f & Z
}\]
Moreover, this splitting takes a particular form: if $f$ is a \prodo morphism, then this splitting is a restriction of the application map; if $f$ is prone, then this splitting is an inclusion.
\end{enumerate}
\end{definition}

\begin{theorem} For each exact realizability category $(\cat E,C,\cat H)$ there is an external filter $\chi$ such that $\cat E \cong \Asm(\ds RC/\chi)_\exreg$. \label{charexact} \end{theorem}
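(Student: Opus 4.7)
The plan is to extract a regular realizability subcategory $\cat R \subseteq \cat E$ and identify $\cat E$ with its ex/reg completion. Define $\cat R$ to be the full subcategory of $\cat E$ on those objects $X$ for which the unit $\eta_X \colon X \to IRX$ of the reflection $R \dashv I$ is a monomorphism; call these objects \emph{separated}. Since $IR$ preserves finite limits, separated objects are closed under finite limits of $\cat E$ by naturality of $\eta$; they are also closed under subobjects of $\cat E$, because any monomorphism $m \colon Y \to X$ with $\eta_X$ monic satisfies $\eta_X \circ m = IRm \circ \eta_Y$, forcing $\eta_Y$ monic. Hence $\cat R$ inherits a regular structure from $\cat E$, and the restricted reflector is faithful: if $Rf = Rg$ for $f,g \colon X \to Y$ in $\cat R$, then $\eta_Y \circ f = IRf \circ \eta_X = IRg \circ \eta_X = \eta_Y \circ g$, and monicity of $\eta_Y$ forces $f = g$.

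First I would verify that $(\cat R, C, \cat H)$ is a regular realizability category in the sense of definition \ref{regular realizability category}. By hypothesis $\eta_C$ is monic, so $C \in \cat R$; every object of $\cat H$ is in $\cat R$ since its unit is an isomorphism. Prone morphisms with codomain in $\cat R$ themselves lie in $\cat R$, since the naturality square of a prone map exhibits $\eta_X$ as the pullback of a monomorphism. The weak genericity span $(p \colon Y \to C,\, e \colon Y \to X)$ furnished by $\cat E$ therefore has $Y \in \cat R$, and the auxiliary object $W$ appearing in the tracking principle is a prone downset of $C$, hence also in $\cat R$. The tracking and weak genericity principles of $\cat R$ thus follow from those of $\cat E$.

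Then I would apply theorem \ref{charreg} to obtain an external filter $\chi$ with $\cat R \cong \Asm(\ds RC/\chi)$. It remains to show $\cat E \cong \cat R_\exreg$. The universal property of the ex/reg completion, applied to the regular inclusion $\cat R \hookrightarrow \cat E$ into the exact category $\cat E$, supplies a regular functor $J \colon \cat R_\exreg \to \cat E$. Weak genericity in $\cat E$ together with $\eta_C$ monic gives every $X \in \cat E$ a regular epimorphism $e \colon Y \to X$ with $Y \in \cat R$; its kernel pair lies in $\cat R$ too, presenting $X$ in $\cat E$ as the quotient of an equivalence relation carried by $\cat R$-objects, so $J$ is essentially surjective. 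Lemma \ref{sub in exreg} then identifies the saturated subobject lattices on both sides, yielding faithfulness.

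The main obstacle is establishing fullness of $J$: showing that every morphism $f \colon X \to X'$ in $\cat E$ between quotients $X = [Y]$, $X' = [Y']$ with $Y, Y' \in \cat R$ is represented by a functional span in $\cat R$. The plan is to pull back a regular-epi presentation $e' \colon Y' \to X'$ along $f \circ e$, and apply the tracking principle of $\cat E$ to the resulting cover of $Y$ to obtain a regular-epic prone downset cover $W \to Y$ with $W \in \cat R$ and a canonical map $W \to Y'$ lifting $f$; after quotienting by the equivalence relations defining $X$ and $X'$ this gives the required functional relation. The splitting form of the tracking principle and exactness of $\cat E$ are both essential at this step.
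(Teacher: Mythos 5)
Your proposal follows the paper's own route: the subcategory you take (objects with monic unit $\eta_X$) is the same as the paper's subcategory of objects admitting a monomorphism into an object of $\cat H$ (a mono $m:X\to IZ$ factors as $I(\hat m)\circ\eta_X$, so the two conditions coincide), the verification that $(\cat R,C,\cat H)$ satisfies definition \ref{regular realizability category} and the appeal to theorem \ref{charreg} are the same, and your essential-surjectivity argument is exactly the paper's: every object of $\cat E$ is covered by an object carrying a prone map to $C$, hence is the quotient of an equivalence relation that lives in $\cat R$, and $\cat E$ is exact. Up to that point the proof is correct and matches the paper, which simply concludes ``$\cat E$ is already exact, so $\cat E\cong\cat R_\exreg$''.

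Where you diverge is the step you flag as the main obstacle, and there you are reaching for heavier machinery than is needed. As sketched, the tracking-principle route does not even apply directly: the principle requires the first arrow of the chain to be a regular epic \prodo{} morphism, and the pullback of a weak-genericity presentation $e':Y'\to X'$ along $f\circ e$ is merely a regular epimorphism, so you would first have to rerun a Shanin-type argument (proposition \ref{Shanin}) to replace it by a \prodo{} cover. None of this is necessary. Fullness of $J:\cat R_\exreg\to\cat E$ follows from the same subobject comparison you already use for faithfulness: given $f:J(Y,\wave)\to J(Y',\wave')$ with quotient maps $q:Y\to J(Y,\wave)$ and $q':Y'\to J(Y',\wave')$, the pullback of the graph of $f$ along $q\times q'$ is $F=Y\times_{J(Y',\wave')}Y'\subseteq Y\times Y'$, a subobject of a product of $\cat R$-objects and hence an object of $\cat R$, since $\cat R$ is closed under subobjects of $\cat E$. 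The projection $F\to Y$ is a pullback of the cover $q'$, so $F$ is a total, saturated relation that is single-valued up to $\wave'$, i.e.\ a functional relation $(Y,\wave)\to(Y',\wave')$ in $\cat R_\exreg$, and $J$ sends it to $f$. This graph argument, combined with lemma \ref{sub in exreg}, is precisely what the paper's one-sentence conclusion is packaging; with it your proof closes and agrees with the paper's.
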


\begin{proof} The subcategory $\cat R$ of $\cat E$ of the objects for which a monomorphism to an object of $\cat H$ exists, has all the properties definition \ref{regular realizability category}: the reflector $R$ is faithful there, and $C$ lives in this subcategory.
The objects that have a prone monomorphism to $C$ cover every object in $\cat E$, although they all live in $\cat R$. For this reason every $X\in\cat E$ is the quotient of some equivalence relation that exists in $\cat R$, and hence a member of $\cat R_\exreg$. But $\cat E$ is already exact, so $\cat E \cong \cat R_\exreg$.
\end{proof}

The obvious relation between regular and exact realizability categories holds.
\begin{proposition} The triple $(\cat R_\exreg,IC,\cat H_\exreg)$ is an exact realizability category if $(\cat R,C,\cat H)$ is a regular realizability category. \end{proposition}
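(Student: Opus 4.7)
The strategy is to invoke the characterization theorem \ref{charreg} to identify $\cat R$ with $\Asm(\ds RC/\chi)$ for some combinatory complete external filter $\chi$, and then transfer the structure through the ex/reg completion by using the biadjointness noted in remark \ref{ttt2}. Let $J:\cat R \to \cat R_\exreg$ denote the canonical embedding; it is regular and fully faithful. The reflective adjunction $R \dashv I$ lives in the 2-category $\reg$, and since $(-)_\exreg$ is a left biadjoint 2-functor $\reg \to \ex$, it lifts the adjunction to a reflective adjunction $R_\exreg \dashv I_\exreg$ between $\cat R_\exreg$ and $\cat H_\exreg$, with $I_\exreg$ regular and fully faithful and $R_\exreg$ preserving finite limits. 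Faithfulness of $R_\exreg$ follows because every object of $\cat R_\exreg$ is a regular epimorphic image of an object of $\cat R$ and $R$ is faithful on $\cat R$.

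For the monomorphism condition, under the identification $\cat R \simeq \Asm(\ds RC/\chi)$ the object $C$ corresponds to the filter $\mathring{RC}$, which by construction is a vertical subobject of $\top_{RC} \simeq \nabla RC$; since $\nabla RC = IRC$, this makes $\eta_C:C \hookrightarrow IRC$ monic in $\cat R$. The functor $J$ preserves monomorphisms and satisfies $I_\exreg R_\exreg J \cong J I R$, so the induced unit at $JC$ is still monic in $\cat R_\exreg$. Weak genericity transfers by a cover argument: given any $Z \in \cat R_\exreg$, pick a regular epi $q:Z_0 \twoheadrightarrow Z$ with $Z_0 \in \cat R$, apply weak genericity in $\cat R$ to obtain a span $(e_0:W_0 \to Z_0,p_0:W_0 \to C)$ with $e_0$ regular epic and $p_0$ prone for $R \dashv I$, and note that $J$ sends prone arrows for $R \dashv I$ to prone arrows for $R_\exreg \dashv I_\exreg$ (pullback squares are preserved by $J$) and regular epis to regular epis, so composing $Je_0$ with $q$ yields the required span in $\cat R_\exreg$.

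The main obstacle is the tracking principle. Given $e:X \to Y$ a regular epic \prodo morphism and $f:Y \to Z$ in $\cat R_\exreg$ which is either \prodo or prone, the plan is to cover $X$, $Y$, $Z$ by objects $X_0, Y_0, Z_0 \in \cat R$ via the canonical quotient maps, lift $e$ and $f$ to morphisms $e_0:X_0 \to Y_0$ and $f_0:Y_0 \to Z_0$ in $\cat R$ of the same type (regular epic \prodo, and \prodo or prone respectively), apply the tracking principle in $\cat R$ to produce a \prodo morphism $g_0:W_0 \to Z_0$ together with the prescribed splitting of the pullback of $e_0$ along $g_0$, and finally descend to a \prodo $g:W \to Z$ in $\cat R_\exreg$ by post-composing with $Z_0 \twoheadrightarrow Z$. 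The delicate technical point is choosing the representatives $e_0, f_0$ compatibly with the equivalence relations encoding $X, Y, Z$, so that the regular-epic, \prodo and prone attributes genuinely survive the lift. This should be achievable because the attributes are ultimately governed by pullback squares involving the unit of $R \dashv I$, which $J$ preserves, and the specific splittings required (a restriction of the application map of $C$, or a plain inclusion) are transported verbatim by the regular functor $J$.
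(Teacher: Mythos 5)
Your treatment of condition (1) and of weak genericity is essentially right, and the covering argument for weak genericity is exactly the one the paper uses. Two things go wrong, however. First, the claim that $R_\exreg$ is faithful is false: the paper notes explicitly (remark \ref{exact base}, and the remark immediately following this proposition) that the extended reflector is \emph{not} faithful, and your argument is a non sequitur — covering $Z\in\cat R_\exreg$ by some $Z_0\in\cat R$ only reduces the question to faithfulness of $R$ on morphisms out of $Z_0$ into objects that are not in $\cat R$, which is precisely what fails. This is harmless only because the definition of an exact realizability category deliberately drops faithfulness. Second, and this is the genuine gap, the tracking principle is not established: the ``delicate technical point'' you defer is the whole content of the statement, and the lift-and-descend plan breaks at both ends. (i) A chain in $\cat R_\exreg$ does not lift to a chain in $\cat R$ with the same attributes: a genuine lift of $f$ along the covers would need a projectivity/choice principle you do not have, and the representative arrow of $\cat R$ that the construction of the completion does give you loses the attributes — for instance, a regular epimorphism of $\cat R_\exreg$ between quotient objects is represented by an arrow of $\cat R$ that is only epic \emph{up to the equivalence relations} (new quotients are exactly what the completion adds), so it is not a regular epic \prodo morphism of $\cat R$, and the tracking principle of $\cat R$ does not apply to it. (ii) The descent step fails as well: given a \prodo $g_0:W_0\to Z_0$ in $\cat R$, the composite $W_0\to Z_0\twoheadrightarrow Z$ is in general not \prodo, because the only available factorization is $W_0\to IC\times Z_0\to IC\times Z$ and $\id\times q$ is not prone (its unit naturality square is not a pullback), so the required prone arrow into $IC\times Z$ is lost; these attributes are not ``transported verbatim'' by $J$, which preserves proneness over objects of $\cat R$ but not over their quotients.

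The paper sidesteps all of this with a different observation: the inclusion $\cat R\to\cat R_\exreg$ is bijective on subobjects (lemma \ref{sub in exreg}), hence a Heyting functor, and the tracking principle is a first-order property — it amounts to Church's rule and the uniformity rule, i.e.\ to the internal schemas $\axiom{MCT}$ and $\axiom{UP}$ of chapter \ref{axiom} — so its validity is unaffected by adjoining quotients of (saturated) equivalence relations; only weak genericity requires the covering argument you already gave. To repair your proof, either reproduce that internal-language transfer, or, if you want a direct construction, replace the descent step by taking the image of $W_0$ in $IC\times Z$ and then show that the prescribed splitting (restriction of the application map, resp.\ the inclusion) is independent of the chosen representatives — which is, in substance, the same first-order argument.
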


\begin{proof} The inclusion $I:\cat R \to \cat R_\exreg$ is a Heyting functor just because it is an inclusion that is bijective on subobjects. Hence first order properties are preserved. Once again, the only problem is weak genericity, but the objects of $\cat R$ cover all objects of $\cat R_\exreg$, which gives us the weak genericity property right back. \end{proof}

\begin{remark} Because $R$ is not faithful, there is no regular epic \prodo morphism to every object in $\cat E$. Only slices over objects $X$ for which the unit $X\to IRX$ is a monomorphism are exact realizability categories, just because $(\cat R/X)_\exreg \cong \cat R_\exreg/IX$.
\end{remark}

\subsection{Examples}
We end this section with some classes of examples of realizability categories.

\begin{example} Let $\cat E$ be a well pointed topos with natural number object $N$. We can construct Kleene's first model $\Kone$ (see example \ref{Kone}) inside $\cat E$. We let the \xemph{effective topos} relative to $\cat E$ be $\Eff(\cat{E}) = \Asm(\ds \Kone/\Kone)_{\it ex/reg}$. Because $\cat E$ is two valued and has a projective terminal the results of subsection \ref{Markov} tell us that $\Eff(\cat E)$ can be characterized as the non-degenerate topos that satisfies \xemph{weak genericity}, \xemph{extended Church's thesis}, the \xemph{uniformity principle}, and \xemph{Markov's principle}. \end{example}

\begin{example} Any filter quotient of a realizability category is a realizability category by our definition. If $\chi$ is a filter of subterminals of $\Asm(\ds A/\phi)$, we construct a related filter $\chi'\subseteq \ext A$, by considering what downsets represent the subterminals contained in $\chi$. Because $\phi \subseteq \chi'$, there is a vertical morphism $m:\ds A/\phi \to \ds A/\chi'$ and this induces a functor $\Asm(m):\Asm(\ds A/\phi)\to\Asm(\ds A/\chi')$ that satisfies $\Asm(m)(U)\simeq \termo$ if $U\in\chi$. For this reason, $\Asm(m)$ factors through the quotient $Q:\Asm(\ds A/\phi) \to \Asm(\ds A/\phi)/\chi$ in an up to isomorphism unique functor $F:\Asm(\ds A/\phi)/\chi \to \Asm(\ds A/\chi')$. On the other hand, $(Q\nabla,\A)$ is a regular model for $(A,\chi')$, because for all $U\in \chi'$, the supports $\exists_!(\A\cap\nabla U)$ have become terminal objects. Hence there is an up to isomorphism unique functor $G:\Asm(\ds A/\chi') \to \Asm(\ds A/\phi)/\chi$. The functors $F$ and $G$ are pseudoinverses of each other. \end{example}

\begin{example} Any slice of a regular realizability category is a regular realizability category, by lemma \ref{slice}. \end{example}

\begin{example}[presheaves]
The category of internal presheaves on an internal meet semilattice $L\in \cat H$ is equivalent to $\Asm(\ds L/\set L)_\exreg$. \end{example}

\section{Pseudoinitiality}
Let $G$ be a weakly generic object of a fibred locale $F$. Since 1-morphisms of bifibred categories have to preserve prone and supine morphisms, each 1-morphism $(m_0,m_1):F\to F'$ is determined up to unique isomorphism by the combination of $m_0$ and $m_1(G)$. In this section we explain what kind of objects are images of the weakly generic filters of realizability fibrations and we explore morphisms into other fibred locales using these objects. Afterward, we combine these results with those of the previous section, in order to talk about regular functors between realizability categories.

\subsection{Fibred models}
We introduce a (2,1)-category of fibred models for an order partial applicative structure with a combinatory complete external filter, and show that the realizability fibration is an \xemph{pseudoinitial object}. We start be introducing these higher categorical concepts.

\begin{definition} A \xemph{$(2,1)$-category} is a $2$-category where all 2-morphisms are invertible. A \xemph{pseudoinitial object} is a 2-category is an object $I$ such that there is an up to isomorphism unique 1-morphism into every object. \end{definition}

Now we define a category of fibred locales combined with an object to send the weakly generic filter to.

\begin{definition} Let $\cat H$ be a Heyting category, let $A$ be an order partial combinatory algebra in $\cat H$ and let $\phi$ be a combinatory complete external filter. A \xemph{fibred model} is a separated fibred locale $F:\cat F \to\cat H$ together with a vertical filter $C\in F_A$, such that $!:F_{U\hookrightarrow A}(C) \to \termo$ is supine for all $U\in\phi$.

We let a \xemph{morphism of fibred models} $(F,C)\to (G,D)$ be a vertical morphism of fibred locales $H=(\mathrm{id}_{\cat H}, H)$ together with a vertical isomorphism $h:HC \to D$. A 2-morphism $(H,h)\to (K,k)$ is a 2-isomorphism of fibred categories $\eta:H\to K$, such that $k\circ\eta = h$.\end{definition}

\begin{remark} Consider the regular theory $\Theta$ that says $A$ has an internal filter that intersects the members of the external filter $\phi$. Regular theories can be interpreted in fibred locales like first order theories can be interpreted in complete fibred Heyting algebras. Fibred models are models of $\Theta$.
\end{remark}


\begin{theorem}[pseudoinitiality] The realizability fibration $\ds A/\phi$ together with the filter $\A$ is a pseudoinitial fibred model. \label{pseudoinitiality}\end{theorem}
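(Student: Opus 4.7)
The plan is to construct the unique-up-to-2-isomorphism vertical morphism of fibred models $H : (\ds A/\phi, \A) \to (F, C)$ directly, mimicking the construction in the proof of the characterisation theorem (Theorem~\ref{characterization}) but under the weaker hypotheses on $F$.

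The object map is forced. Since $\A$ is weakly generic already in $\ds A$ (see Section~\ref{wgf}), every family of downsets $Y \hookrightarrow A \times X$ satisfies $Y \simeq \exists_{\pi_1|_Y}((\ds A/\phi)_{\pi_0|_Y}(\A))$, so preservation of prone and supine arrows compels
\[ HY \;:=\; \exists_{\pi_1|_Y}\bigl(F_{\pi_0|_Y}(C)\bigr) \in F_X. \]
Then $H\A \simeq C$: upward closure of the vertical filter $C$ gives $F_{\pi_0}(C) \leq F_{\pi_1}(C)$ on $\roso$; the diagonal $\Delta : A \to \roso$ (reflexivity) splits $\pi_1$, so separatedness of $F$ yields $\exists_{\pi_1} F_{\pi_1} = \id$, giving $H\A \leq C$; the converse follows by reindexing $F_{\pi_0}(C)$ along $\Delta$ and composing with the supine over $\pi_1$.

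Next I verify that $H$ respects the preorder in each fibre, which is enough because the fibres of $\ds A/\phi$ are preorders. Suppose $Y \leq Y'$ in $(\ds A/\phi)_X$; unwinding the filter-quotient, there is a $U \in \phi$ together with a morphism $\mu : U \times Y \to Y'$ of subobjects of $A \times (-)$ given by $(u, a, x) \mapsto (ua, x)$. Closure of $C$ under application on $U \times Y$ yields
\[ F_{(u,a,x)\mapsto u}(C) \;\wedge\; F_{(u,a,x)\mapsto a}(C) \;\leq\; F_\mu\bigl(F_{\pi_0|_{Y'}}(C)\bigr) \]
in $F_{U \times Y}$. Pushing this forward along $\pi_1|_{Y'} \circ \mu$, using Beck--Chevalley to factor the outer projection through $U \times Y \to Y$, and applying Frobenius together with the key hypothesis $\exists_!(F_{U \hookrightarrow A}(C)) = \top_\termo$ (the supine-for-$\phi$ condition in the definition of fibred model), collapses the $U$-factor and delivers $HY \leq HY'$.

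The remaining structural checks---preservation of reindexing, coindexing, binary meets and $\top$ (where one uses that $A \in \phi$, since $\phi$ is a nonempty upward-closed filter of $\ext A$)---are routine Beck--Chevalley and Frobenius bookkeeping. Uniqueness up to a unique 2-isomorphism is then immediate: any competing morphism $H'$ with $H'\A \simeq C$ must agree with $H$ on the reconstruction $Y \simeq \exists_{\pi_1|_Y}((\ds A/\phi)_{\pi_0|_Y}(\A))$; and 2-morphisms between parallel morphisms of fibred locales are automatically unique (when they exist) because the fibres are posets. The principal obstacle will be the morphism-preservation step, where the vertical-filter axiom, the supine-for-$\phi$ hypothesis, and the Beck--Chevalley and Frobenius conditions of $F$ all have to be combined in one calculation; everything else is either forced by the universal property of $\A$ or a standard manipulation with fibred locales.
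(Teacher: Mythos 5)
Your proposal is correct and follows essentially the same route as the paper's proof: the object map $HY = \exists_{\pi_1|_Y}(F_{\pi_0|_Y}(C))$ forced by weak genericity of $\A$, the vertical/preorder case handled via a realizer $U\in\phi$, closure of $C$ under application, and the supineness of $!:F_{U\hookrightarrow A}(C)\to\termo$, with uniqueness automatic from faithfulness and preservation of prone--supine spans. Your explicit check that $H\A\simeq C$ (via reflexivity splitting $\pi_1$ and upward closure of the vertical filter) is a detail the paper leaves implicit, but it is not a different method.
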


\begin{proof} That the realizability fibration is a fibred model, is part of its characteristic properties, see theorem \ref{characterization}.

Let $(F:\cat F\to\cat H,C)$ be a fibred model. For each family of downsets $Y\hookrightarrow A\times X$, we exploit the fact that $Y = \im{x}(\ds A/\phi)_{a}(\A)$ if $x:Y\to X$ and $a:Y\to A$ are the projections: we choose a prone supine span $(p:Z\to C,s:Z\to Y')$ in the fibre of $(a,x)$, and let $m(Y) = Y'$. For each prone morphism $p:Y\to Y'$, there is a unique prone morphism $m(Y) \to m(Y')$ in the fibre over $(\ds A/\phi)(p)$ and in a similar way $m$ determines a unique mapping of supine morphisms. 

Let $v:Y\to Y'$ be a vertical morphism. There is an object of realizers $U\in \phi$ for this morphism. Let $\A_U = (\ds A/\phi)_{U\hookrightarrow A}$, $Z = (\ds A/\phi)_{(b,x)\mapsto b}(\A)$ and $Z' = (\ds A/\phi)_{(a,(b,x)) \mapsto ab}(C)$ and consider the maps:
\begin{align*}
f\oftype & (a,(b,x)) \mapsto ab: \A_U\times Z \to \A \\
g\oftype & (a,(b,x)) \mapsto x: Z' \to Y'
\end{align*}
The arrow $f$ factors through the prone morphism $\dom g \to \A$ in a unique vertical morphism $w$, and $g\circ w$ is equal to $v\circ (!\times s)$ for the supine $!\times s: (a,(b,x))\mapsto x: Z\to Y$. Now we note that there is a choice for $m(f)$ because $C$ is closed under application, and this choice in unique because $F$ is faithful, and $F(m(f)) = (\ds A/\phi)(f)$. This means there is a unique choice for $m(w)$, since the prone map $\dom g\to \A$ is preserved, and there is a unique factorization of $m(f)$ through it. Meanwhile $g$ is a supine map, which determines $m(g)$ and $m(g\circ w) = m(v\circ (!\times s))$. 
We get $m(!\times s): m(Z) \to m(Y)$, because $Z$ is the canonical family of prone downsets over $Y$, and $!:m(\A_U) \to \termo$ is supine because $m(\A_U) = F_{U\hookrightarrow A}(C)$ and $U\in \phi$. This determines $m(v):m(Y)\to m(Y')$ uniquely as the factorization of $m(g\circ w)$ over $m(s)$.
\end{proof}

There is an alternative set of morphisms between fibred models, determined by inclusions of filters, and they correspond to natural inclusions of morphisms between fibred models.

\begin{lemma} Let $C\subseteq D\in F_A$ be filters such that $(F,C)$ is a fibred model. Then $(F,D)$ is a fibred model, and there is a natural monomorphism $\mu:f_C \to f_D$ between the vertical morphisms $f_C,f_D:\ds A/\phi \to F$ induced by $(F,C)$ and $(F,D)$. \label{inclusion} \end{lemma}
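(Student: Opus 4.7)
The plan is to verify the two assertions in turn, exploiting the fact that both assignments $f_C, f_D$ were constructed in the previous theorem as $Y \mapsto \exists_x F_a(-)$ where $a:Y\to A$, $x:Y\to X$ are the projections of a family $Y\hookrightarrow A\times X$.

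First I would check that $(F,D)$ is itself a fibred model. The only nontrivial condition is that $!\colon F_{U\hookrightarrow A}(D)\to\termo$ is supine for every $U\in\phi$, i.e.\ $\exists_!(F_{U\hookrightarrow A}(D))\simeq\top_\termo$. Since $C\subseteq D$ is a vertical monomorphism in $F_A$ and reindexing preserves order, $F_{U\hookrightarrow A}(C)\leq F_{U\hookrightarrow A}(D)$; applying the monotone $\exists_!$ and using the hypothesis that $(F,C)$ is a fibred model gives $\top\simeq\exists_!(F_{U\hookrightarrow A}(C))\leq \exists_!(F_{U\hookrightarrow A}(D))\leq\top$, as required.

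Next I would construct $\mu$. Unwinding the construction in the proof of Theorem~\ref{pseudoinitiality}, for each family $Y\hookrightarrow A\times X$ with projections $a,x$ we have $f_C(Y)\simeq \exists_x F_a(C)$ and $f_D(Y)\simeq \exists_x F_a(D)$. The inclusion $C\hookrightarrow D$ in $F_A$ induces a vertical morphism $F_a(C)\to F_a(D)$ in $F_Y$, and applying $\exists_x$ gives a vertical morphism $\mu_Y\colon f_C(Y)\to f_D(Y)$ in $F_X$. Because $F$ is faithful, vertical morphisms are automatically monic, so each $\mu_Y$ is a monomorphism.

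For naturality, the key observation is that $\mu$ is, by construction, induced pointwise by a single vertical inclusion in $F_A$. On prone morphisms naturality is immediate, since reindexing commutes with the inclusion $C\hookrightarrow D$. On supine morphisms it follows from the Beck–Chevalley condition together with the fact that $\exists$ is functorial on vertical morphisms. For vertical morphisms $v\colon Y\to Y'$ in $\ds A/\phi$, the uniqueness clause in the construction of $f_C(v)$ and $f_D(v)$ in Theorem~\ref{pseudoinitiality}—namely that they are forced by preservation of the chosen prone–supine span factorization together with application of realizers in $C$ (resp.\ $D$)—shows that the two squares built from $C$ and $D$ are connected by $\mu$; explicitly, any realizer in $C$ used to build $f_C(v)$ is also a realizer in $D$, and the resulting vertical factorizations agree under the inclusion. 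I expect the main technical step to be the last one: spelling out carefully that the application operation $F_a(C)\times Z\to F_b(C)$ used to track $v$ lifts along $C\hookrightarrow D$ so that the diagram
\[\xymatrix{
f_C(Y)\ar[r]^{\mu_Y}\ar[d]_{f_C(v)} & f_D(Y)\ar[d]^{f_D(v)} \\
f_C(Y')\ar[r]_{\mu_{Y'}} & f_D(Y')
}\]
commutes; but since both vertical composites sit inside the common ambient fibre $F_{X'}$ and $F$ is faithful, it suffices to check that they agree after a single supine covering, which reduces to the evident commutation of application with the inclusion $C\subseteq D$.
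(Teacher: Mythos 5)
Your proposal is correct and follows essentially the same route as the paper: the supine condition for $(F,D)$ is obtained by reindexing the inclusion $C\subseteq D$ along $U\hookrightarrow A$ and using monotonicity of $\exists_!$, and $\mu$ is built from the inclusion via the chosen prone--supine spans, with monicity and naturality coming from faithfulness of $F$. The paper's proof is just terser — it dismisses the naturality squares in one line, since in a faithful fibration any two parallel morphisms over the same base morphism coincide, so your case analysis over prone, supine and vertical morphisms, while harmless, is more than is needed.
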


\begin{proof} Reindexing preserves vertical morphisms, and for this reason $!:F_{U\hookrightarrow A}(D)\to \termo$ is supine if $!:F_{U\hookrightarrow A}(C)\to \termo$ is. Therefore $(F,D)$ is a fibred model if $(F,C)$ is.

Using the prone-supine spans, we turn the vertical morphism $C\to D$ into a natural transformation $f_C \to f_D$. Since $F$ is faithful, getting all desired naturality squares to commute is trivial. \end{proof}

Fibred models actually form a double category, with a set of squares determined by inclusions of vertical morphisms.

\begin{definition} Let $(F,C)$ and $(G,D)$ be fibred models, let $v:C\to C'\in F_A$ and $w:D\to D'\in G_A$ be inclusions of filters, and let $(H,h):(F,C)\to (G,D)$ and $(K,k):(F,C') \to (F,D')$ be morphisms of fibred models. A \xemph{square} between these is a 2-morphism of fibred locales $\sigma: H\to K$, such that $k\circ\sigma_{C'}\circ Hv = h\circ w$.
\end{definition}

\begin{lemma} Let $(H,h): (F,C)\to(G,D)$ and $(K,k): (F,C') \to (G,D')$ with $C\subseteq C'$ and $D\subseteq D'$. Let $i_C:f_C\to f_{C'}$ be the transformation induce by the inclusion $C\subseteq C'$ and $i_D:g_D\to g_{D'}$ the transformation induces by the other inclusion. A square $\sigma:(H,h) \to (K,k)$ satisfies $\sigma\circ i_C=i_D$.
\end{lemma}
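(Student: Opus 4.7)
The plan is to reduce the identity $\sigma \circ i_C = i_D$ to a single pointwise check at the weakly generic filter $\A$, exploiting two features of the ambient $(2,1)$-category of fibred locales: its hom-2-categories are locally preordered, because the fibres of a fibred locale are preorders and hence any two parallel vertical morphisms agree; and $(\ds A/\phi, \A)$ is pseudoinitial among fibred models by Theorem~\ref{pseudoinitiality}, so fibred models out of it are pinned down up to unique iso.

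First I would invoke pseudoinitiality to identify $H \circ f_C$ with $g_D$ via a canonical 2-isomorphism $\alpha$, and $K \circ f_{C'}$ with $g_{D'}$ via a canonical $\beta$. Both $H f_C$ and $g_D$ are canonical morphisms from $(\ds A/\phi, \A)$ into $(G, D)$: indeed $Hf_C$ sends $\A$ to $HC$ and inherits the structure isomorphism $h\colon HC \to D$ from $(H, h)$, while $g_D$ sends $\A$ to $D$ with identity structure map. The unique 2-iso $\alpha$ is therefore pinned down by $\alpha_{\A} = h$, and similarly $\beta_{\A} = k$. Under these identifications the assertion $\sigma \circ i_C = i_D$ becomes a literal equality of 2-cells $g_D \Rightarrow g_{D'}$.

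Finally I would verify this equality at $\A$. The horizontal composite $\sigma \circ i_C$ evaluated at $\A$ is $\sigma_{C'} \circ H(v)\colon HC \to KC'$, where $v\colon C \to C'$ is the inclusion defining $i_C$; and $i_D$ evaluated at $\A$ is $w\colon D \to D'$. Conjugating by $\alpha_{\A} = h$ and $\beta_{\A} = k$, the required pointwise identity reads
\[ k \circ \sigma_{C'} \circ H(v) \;=\; w \circ h, \]
which is precisely the defining condition of a square. Because $g_{D'}$ has preordered fibres, 2-cells $g_D \Rightarrow g_{D'}$ are unique when they exist, so agreement at $\A$ propagates automatically to all objects. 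The only real obstacle is the bookkeeping of the identifications $\alpha$ and $\beta$; once pseudoinitiality has supplied them, the content of the claim collapses into the square condition.
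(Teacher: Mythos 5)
Your proposal is correct and is essentially the paper's own argument: the paper's one-line proof asserts that the single square condition $k\circ\sigma_{C'}\circ Hv = w\circ h$ induces the equality of natural transformations, which is exactly your evaluation at the weakly generic filter $\A$ combined with the pseudoinitiality identifications and the fact that faithfulness makes parallel vertical 2-cells unique. You have simply spelled out the bookkeeping that the paper leaves implicit.
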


\begin{proof} The one condition on squares $k\circ\sigma_{C'}\circ Hv = h\circ w$ induces the equality of natural transformations $\sigma\circ i_C=i_D$. \end{proof}

We summarize the results in this subsection as follows.

\begin{definition} Let $(\ds A/\phi)/\floc(\cat H)$ be the double category whose objects are vertical morphisms from $(\ds A/\phi)$ to arbitrary fibred locales $F$ over $\cat H$, whose horizontal arrows are commutative triangles of vertical morphisms of fibrations, whose vertical morphisms are 2-morphisms between vertical morphisms, and whose squares are 2-morphisms that commute with everything. \end{definition}

\begin{corollary} The double category of fibred locales, morphisms of fibred locales, inclusions and squares, is equivalent to
$(\ds A/\phi)/\floc(\cat H)$. \end{corollary}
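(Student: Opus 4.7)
The plan is to build an equivalence of double categories using the pseudoinitiality theorem \ref{pseudoinitiality} level by level: objects, horizontal arrows, vertical arrows, squares.

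First, for objects: send a fibred model $(F,C)$ to the essentially unique vertical morphism $f_C\colon\ds A/\phi\to F$ provided by theorem \ref{pseudoinitiality}. Conversely, given any vertical morphism $m\colon\ds A/\phi\to F$ in $\floc(\cat H)$, set $C=m(\A)$; this is a vertical filter of $\top_A\in F_A$ because $m$ preserves finite products and supine morphisms, and the condition that $!\colon F_{U\hookrightarrow A}(C)\to\termo$ be supine for $U\in\phi$ is inherited from $\A$. These assignments are mutually pseudoinverse up to the unique isomorphism guaranteed by pseudoinitiality, so we get an equivalence on objects.

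Second, for horizontal arrows: a morphism of fibred models $(H,h)\colon(F,C)\to(G,D)$ gives a triangle $H\circ f_C$ whose image of $\A$ is canonically isomorphic, via $h$, to $D=g_D(\A)$, hence by the uniqueness part of pseudoinitiality we have $H\circ f_C\simeq f_D$, i.e.\ a horizontal arrow in $(\ds A/\phi)/\floc(\cat H)$. Conversely any commutative triangle $H\circ f_C\simeq f_D$ restricts at $\A$ to a vertical isomorphism $h\colon HC\to D$, recovering the morphism of fibred models. The two constructions are inverse up to the 2-isomorphisms in the pseudoinitiality statement, so we get an equivalence on horizontal arrows.

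Third, for vertical arrows: lemma \ref{inclusion} turns an inclusion $C\subseteq C'$ into a natural monomorphism $i_C\colon f_C\to f_{C'}$, giving a vertical arrow in $(\ds A/\phi)/\floc(\cat H)$. In the other direction, any 2-morphism $\nu\colon f_C\to f_{C'}$ of vertical morphisms of fibrations evaluates at $\A$ to a vertical arrow $\nu_\A\colon C\to C'$ in $F_A$, which must be a monomorphism because $F$ is faithful and both composites $f_C, f_{C'}$ send $\A$ into the same fibre, and by pseudoinitiality $\nu$ is entirely determined by $\nu_\A$. Finally, for squares: the last lemma before the corollary shows that a square $\sigma\colon(H,h)\to(K,k)$ compatible with inclusions $v,w$ gives a 2-morphism satisfying $\sigma\circ i_C=i_D$, which is exactly the data of a square in $(\ds A/\phi)/\floc(\cat H)$, and this correspondence is bijective by the same faithfulness-plus-pseudoinitiality argument. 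The bookkeeping obstacle I expect is not any single step but the verification that horizontal and vertical composition are strictly preserved by the correspondence; this reduces to chasing the equalities $k\circ\sigma_{C'}\circ Hv=h\circ w$ through the triangles provided by pseudoinitiality, and since everything is determined by its value on the weakly generic filter $\A$, composition is automatically preserved up to the unique structural isomorphism.
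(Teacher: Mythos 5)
Your proof is correct and takes exactly the route the paper intends: the corollary is stated as a summary of theorem \ref{pseudoinitiality}, lemma \ref{inclusion} and the square lemma, with no separate argument given, and your level-by-level assembly of those results (pseudoinitiality for objects and horizontal arrows, the inclusion lemma for vertical arrows, the square lemma plus faithfulness for squares) is the intended proof. The only point you leave tacit is that any fibred locale $F$ receiving a vertical morphism from $\ds A/\phi$ is automatically separated (the morphism preserves fibrewise terminal objects and supine arrows), which is needed for $(F,m(\A))$ to qualify as a fibred model in the sense of the definition.
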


\subsection{Regular and exact models}
In section \ref{categories of fractions} we saw that the 2-category $\reg$ of regular categories and regular functors is a bireflective subcategory of the 2-category $\floc$ of fibred locales over left exact categories and that the 2-category $\ex$ of exact categories is bireflective subcategory of $\reg$. Thanks to these reflections, the pseudoinitiality results of the previous subsection extend to the realizability categories $\Asm(\ds A/\phi)$ and $\Asm(\ds A/\phi)_\exreg$.

\begin{definition} Let $\cat H$ be a Heyting category, $A$ an order partial applicative structure and let $\phi$ be a combinatory complete external filter. A \xemph{regular model} $(\cat R,F:\cat H\to\cat R,C\subseteq FA)$ is a regular category $\cat R$ together with a regular functor $F:\cat H\to\cat R$ and a filter $C\subseteq FA$ such that $C$ intersects $FU$ for all $U\in \phi$. An \xemph{exact model} is a regular model $(\cat R,F,C)$ where $\cat R$ is an exact category. 

Regular and exact models form a (2,1)-category. A functor $H:\cat R\to \cat S$ with a natural isomorphism $h:HF\to G$ such that $h_A$ restricts to an isomorphism $HC\to D$ is a \emph{1-morphism} $(\cat R,F,C) \to (\cat S,G,D)$. A \emph{2-morphism} $(H,h)\to(K,k)$ is an natural isomorphism $\eta:H\to K$ such that $\eta\circ h = k$.
\end{definition}

\begin{example} The constant object functor $\nabla: \cat H \to\Asm(\ds A/\phi)$ is regular because $\ds A/\phi$ is separated. Also $\A\in (\ds A/\phi)_A$ turns into a filter $\A\subseteq \nabla A$ in $\Asm(\ds A/\phi)$. Together $\Asm(\ds A/\phi)$, $\nabla$ and $\A$ are a regular model. \end{example}

\begin{example} If $(\cat R,F:\cat H\to\cat R,C)$ is a regular model and $G:\cat R \to\cat R'$ a regular functor, then $(\cat R',GF,GC)$ is a regular model, because regular functors preserve models of regular theories. \label{image model} \end{example}


We just use a general result.
\begin{lemma} Left biadjoints preserve pseudoinitial objects. \end{lemma}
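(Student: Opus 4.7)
The plan is to unfold the definitions: a pseudoinitial object $I$ in a $(2,1)$-category $\cat C$ is one such that the hom-groupoid $\cat C(I,X)$ is equivalent to the terminal category for every object $X\in \cat C$ (a unique 1-morphism up to unique isomorphism is precisely this condition). A left biadjoint $L:\cat C\to\cat D$ with right biadjoint $R:\cat D\to\cat C$ comes equipped, by definition, with a pseudonatural equivalence of hom-categories
\[ \cat D(LX,Y) \simeq \cat C(X,RY) \]
for all $X\in\cat C$ and $Y\in\cat D$.

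First I would fix an arbitrary object $Y\in \cat D$ and apply the biadjunction equivalence at $X=I$, giving $\cat D(LI,Y)\simeq \cat C(I,RY)$. Since $I$ is pseudoinitial in $\cat C$, the right-hand side is equivalent to the terminal category. Equivalence of categories is transitive, so $\cat D(LI,Y)$ is equivalent to the terminal category as well, which is exactly the condition that $LI$ is pseudoinitial in $\cat D$. As $Y$ was arbitrary, $LI$ is a pseudoinitial object of $\cat D$.

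There is no real obstacle here; the only thing to double-check is that our $(2,1)$-category conventions match the statement of the biadjunction, i.e.\ that 2-morphisms in these categories are invertible and the comparison functor $\cat D(LX,Y)\to\cat C(X,RY)$ induced by the unit $\eta:\id_{\cat C}\to RL$ is an equivalence of groupoids. Given the setup of the section, this is immediate from the definition of biadjunction used. Thus the lemma follows directly, and we may then apply it by taking $\cat C$ to be $(\ds A/\phi)/\floc(\cat H)$ (with pseudoinitial object $\id_{\ds A/\phi}$) and $L$ to be either of the biadjoints $\Asm$ or $(-)_\exreg$ from section \ref{categories of fractions} in the subsequent corollaries.
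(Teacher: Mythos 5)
Your proposal is correct and follows essentially the same route as the paper: both transfer pseudoinitiality across the biadjunction hom-equivalence $\cat D(LI,Y)\simeq\cat C(I,RY)$ (the paper phrases this as taking the up-to-isomorphism unique morphism $I\to RY$ and its transpose). The only cosmetic difference is that you unpack pseudoinitiality as the hom-groupoid being equivalent to the terminal category (unique up to \emph{unique} isomorphism), whereas the paper's definition only asks for uniqueness up to isomorphism; the argument works verbatim with either reading, since equivalences of categories preserve both conditions.
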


\begin{proof} Let $\cat A$ and $\cat B$ be 2-categories, let $I:\cat A\to\cat B$ be a 2-functor, and let $R:\cat B \to\cat A$ be a left biadjoint, i.e. $\cat A(RX,Y)\cong \cat B(X,IY)$ naturally. Of course, the $R$ preserves all pseudocolimits up to isomorphism. In particular, if $J$ is a pseudoinitial object of $\cat B$, the for each object $X$ of $\cat A$, there is an up to isomorphism unique 1-morphism $J\to IX$, with an up to isomorphism unique transpose $RJ\to X$. Therefore $RJ$ is a pseudoinitial object of $\cat A$. \end{proof}

\begin{corollary} Let $\nabla:\cat H \to \Asm(\ds A/\phi)$ be the constant object functor, then $(\Asm(\ds A/\phi),\nabla,A)$ is the pseudoinitial regular model. \label{asm pseudoinitial} \end{corollary}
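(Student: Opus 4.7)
The plan is to apply the preceding lemma that left biadjoints preserve pseudoinitial objects. To invoke it, I need a biadjunction between the (2,1)-category of fibred models and the (2,1)-category of regular models whose left biadjoint sends $(\ds A/\phi, \A)$ to $(\Asm(\ds A/\phi), \nabla, \A)$. The 2-adjunction $\Asm \dashv \Sub$ established earlier between fibred locales over Heyting categories and regular categories is the obvious candidate to extend.

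First I would verify that $\Asm$ lifts to models: given a fibred model $(F: \cat F \to \cat H, C \in F_A)$, the constant object functor $\nabla_F: \cat H \to \Asm(F)$ is regular because $F$ is separated (see definition \ref{constant object functor}), and under the equivalence $\Sub_{\Asm(F)}(\nabla_F X) \simeq F_{FX}/X$ established in the proof of $\Asm \dashv \Sub$, the vertical filter $C \in F_A$ corresponds to a filter $C \subseteq \nabla_F A$. The defining condition of a fibred model, that $!: F_{U \hookrightarrow A}(C) \to \termo$ is supine for each $U \in \phi$, is precisely the statement that $C$ intersects $\nabla_F U$ in $\Asm(F)$, so $(\Asm(F), \nabla_F, C)$ is a regular model. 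Conversely, given a regular model $(\cat R, F, C)$, the pullback of $\Sub(\cat R)$ along $F$ is a separated fibred locale over $\cat H$, and $C$ lifts to a vertical filter satisfying the required supine condition, giving a fibred model. One then checks that 1-morphisms and 2-isomorphisms of models correspond under the hom-equivalence of the original biadjunction, which is routine since the extra data in a model (a filter and a compatible isomorphism) is preserved on both sides.

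Next, I would observe that the lifted $\Asm$ remains a left biadjoint: the hom-equivalence $\reg(\Asm(F), \cat R) \simeq \floc(F, \Sub(\cat R))$ restricts to an equivalence of the respective subcategories of morphisms of models, because both sides impose exactly the same compatibility condition on filters (that the distinguished filter $C$ is mapped into the distinguished filter $D$ up to isomorphism), and the 2-isomorphisms of models correspond to 2-isomorphisms of the underlying 1-morphisms satisfying the compatibility. With the lifted biadjunction in hand, the lemma stating that left biadjoints preserve pseudoinitial objects, combined with theorem \ref{pseudoinitiality}, immediately yields that $\Asm(\ds A/\phi, \A) = (\Asm(\ds A/\phi), \nabla, \A)$ is pseudoinitial in the (2,1)-category of regular models.

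The main obstacle will be the bookkeeping involved in checking that the biadjunction genuinely restricts to the categories of models, and in particular that given a morphism of fibred models $(F, C) \to (\Sub(\cat R), C')$ the transposed regular functor $\Asm(F) \to \cat R$ carries the filter $C$ to $C'$ up to the canonical isomorphism. This is not deep but requires unwinding the explicit description of the counit $\Asm(\Sub(\cat R)) \to \cat R$ and the unit $F \to \Sub(\Asm(F))$ and tracking how the filters behave under them; the key observation that makes it work is that the weakly generic filter $\A$ is preserved by the unit $\nabla'$ of $\Asm \dashv \Sub$ because $\A \in F_A$ corresponds exactly to $\top_A \in \Sub_{\Asm(\ds A/\phi)}(\nabla A)$ restricted to the filter, which is what one expects from the pseudoinitiality proof of theorem \ref{pseudoinitiality}.
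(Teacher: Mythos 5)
Your proposal is correct and takes essentially the same route as the paper: you lift the biadjunction $\Asm\dashv\Sub$ to a biadjunction between fibred models and regular models (sending a fibred model $(F,C)$ to $(\Asm(F),\nabla,C)$ and a regular model $(\cat R,F,C)$ to the pullback $\Sub(F-)$ with its induced filter), and then invoke the lemma that left biadjoints preserve pseudoinitial objects together with theorem \ref{pseudoinitiality}, which is exactly the paper's argument.
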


\begin{proof} For each fibred model $(G,D)$ let $A(G,D) = (\Asm(G),\nabla',D)$, where $\nabla'$ is the constant object functor $\cat H \to\Asm(G)$. For each regular model $(\cat R,F,C)$ let $S(\cat R,F,C) = (\Sub(F-),C)$ where $\Sub(F-)$ is the pullback of $\Sub(\cat R)$ along $F:\cat H\to\cat R$. These mappings extend to bifunctors between the (2,1)-categories of fibred models (with only isomorphism as inclusions) and regular models. They are biadjoint because a morphism $H:A(G,D) = (\Asm(G),\nabla',D) \to (\cat R,F,C)$ corresponds to a morphism $H^\dagger: G\to \Sub(R)$ that sends $D$ to $C$ and that factors uniquely through $\Sub(F-)$ because of the pull back. In the other direction, any morphism $K:(G,D) \to S(\cat R,F,C) = (\Sub(F-),C)$ can be composed with the pullback morphism $\Sub(F-) \to \Sub(\cat R)$ and this functor has an up to isomorphism unique transpose $K':\Asm(G) \to \cat R$ that satisfies $K'D\simeq C$ and $K'\nabla'\simeq F$. So the adjunction between fibred locales and regular categories induces an adjunction between fibred models and regular models. For this reason $A(\ds A/\phi,\A) = (\Asm(\ds A/\phi),\nabla,\A)$ is a pseudoinitial regular model. 
\end{proof}

\begin{corollary} Let $\Delta:\cat H\to\Asm(\ds A/\phi)_\exreg$ be the composition of $\nabla$ with the embedding $I:\Asm(\ds A/\phi)\to\Asm(\ds A/\phi)_\exreg$. Now $(\Asm(\ds A/\phi)_\exreg,\Delta,A)$ is the pseudoinitial exact model. \label{asmexreg pseudoinitial}\end{corollary}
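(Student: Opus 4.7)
The plan is to mimic the proof of Corollary \ref{asm pseudoinitial}, using this time the bireflection of the 2-category $\ex$ of exact categories in the 2-category $\reg$ of regular categories (via the ex/reg completion) together with the lemma that left biadjoints preserve pseudoinitial objects.

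First I would promote the ex/reg completion to a biadjunction between the $(2,1)$-categories of regular models and exact models. Given a regular model $(\cat R, F\oftype \cat H\to\cat R, C\subseteq FA)$, define its ex/reg completion to be $(\cat R_\exreg, J\circ F, JC)$, where $J\oftype \cat R\to\cat R_\exreg$ is the canonical embedding. Since $J$ is regular (remark \ref{ttt2}), $J\circ F$ is a regular functor $\cat H\to\cat R_\exreg$, and $JC$ is a filter of $JFA$ that still intersects $JFU$ for every $U\in\phi$ (because $J$ preserves regular epimorphisms and hence inhabitation of subobjects). Thus $(\cat R_\exreg, J\circ F, JC)$ is an exact model. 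A 1-morphism $(H,h)$ between regular models extends to the ex/reg completions by the universal property of ex/reg, and the isomorphism $h$ lifts uniquely; 2-morphisms extend similarly by the second part of the universal property in remark \ref{ttt2}.

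Next I would check that this extension is left biadjoint to the forgetful inclusion from exact models to regular models. For each regular model $(\cat R,F,C)$ and each exact model $(\cat E,G,D)$, the universal property of ex/reg gives a natural equivalence between regular functors $\cat R\to\cat E$ (extending $F$ to $G$ up to isomorphism and sending $C$ to $D$) and regular functors $\cat R_\exreg \to\cat E$ (extending $J\circ F$ to $G$ and sending $JC$ to $D$). The compatibility of this equivalence with 1- and 2-morphisms of models is routine, once one checks that a 1-morphism of regular models is precisely a regular functor together with the isomorphism data, and that these isomorphism data lift through the ex/reg completion because $J$ is essentially surjective up to covers.

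Given this biadjunction, the lemma preceding Corollary \ref{asm pseudoinitial} applies verbatim: left biadjoints preserve pseudoinitial objects, and $(\Asm(\ds A/\phi),\nabla,\A)$ is pseudoinitial among regular models by Corollary \ref{asm pseudoinitial}. Applying the ex/reg completion therefore produces the pseudoinitial exact model, which by construction is $(\Asm(\ds A/\phi)_\exreg, J\circ\nabla, J\A) = (\Asm(\ds A/\phi)_\exreg,\Delta,A)$, after identifying $J\A$ with its image in $\Asm(\ds A/\phi)_\exreg$.

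The main obstacle is the bookkeeping in the first step: making precise how the 1- and 2-morphisms of models (which involve choices of natural isomorphism data $h\oftype HF\to G$ compatible with filters) translate across the ex/reg biadjunction. The underlying category-level argument is the already established biadjunction, but one must verify that the filter-preservation condition and the isomorphism data on $A$ extend and restrict correctly; since $JA=A$ (identifying $\cat R$ with its image under $J$) and filters are given by subobjects that are preserved bijectively by $J$ (lemma \ref{sub in exreg}), this last check is essentially formal.
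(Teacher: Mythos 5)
Your proposal is correct and follows essentially the same route as the paper: exhibit the ex/reg completion as a left biadjoint to the forgetful functor from exact models to regular models, then invoke the lemma that left biadjoints preserve pseudoinitial objects together with Corollary \ref{asm pseudoinitial}. The additional bookkeeping you flag (lifting the isomorphism data and checking that the filter condition transfers, using that the embedding is regular and bijective on subobjects) is exactly the routine verification the paper leaves implicit.
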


\begin{proof} The forgetful functor the sends exact models to regular models has a left biadjoint thanks to the ex/reg completion. If $(H,h):(\cat R,F,C)\to (\cat E,G,D)$ is a morphism of regular models, and $\cat E$ is exact, then there is an up to isomorphism unique functor $K:\cat R_\exreg \to \cat E$ that is a morphism of exact models $(\cat R_\exreg,F,C)\to (\cat E,G,D)$. So ex/reg completion is a left biadjoint to the forgetful functor. We see that $(\Asm_\exreg,\Delta,\A)$ is a pseudoinitial object, because $(\Asm(\ds A/\phi),\nabla,\A)$ is.
\end{proof}

Up to now we have ignored inclusions of filters and the morphisms they induce between regular models. We will say something about them now.

\begin{lemma} Let $F:\cat H\to\cat R$ and let $C\subseteq C'\subseteq FA$ be two filters such that $(\cat R,F,C)$ is a regular model. Then $(\cat R,F,C')$ is a regular models and there is a natural inclusion between the regular functors $F_C$ and $F_{C'}:\Asm(\ds A/\phi) \to \cat R$ that are induced by the regular models. \end{lemma}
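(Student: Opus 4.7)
First I would dispatch the easy half. By assumption $C'$ is a filter of $FA$; since $C\subseteq C'$, every intersection $C\cap FU$ witnessing a member $U\in\phi$ extends to an intersection with $C'$, so $C'$ still intersects $FU$ for all $U\in\phi$. Hence $(\cat R,F,C')$ satisfies the clauses of the definition of regular model.

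The substantive half is the construction of the natural inclusion $F_C\to F_{C'}$. My plan is to obtain it by transposition through the biadjunction $A\dashv S$ between fibred models and regular models that was set up in the proof of corollary~\ref{asm pseudoinitial}. Concretely, I would apply the functor $S$ to the two regular models to get the fibred models $S(\cat R,F,C)=(\Sub(F-),C)$ and $S(\cat R,F,C')=(\Sub(F-),C')$ over $\cat H$, sharing the same underlying fibred locale. The inclusion of filters $C\hookrightarrow C'$ is a vertical morphism in $\Sub(F-)_A$, and both fibred models are fibred models in the sense of the previous subsection (separatedness comes from $F$ being regular, and the condition $!\colon \Sub(F-)_{U\hookrightarrow A}(C)\to\termo$ supine corresponds exactly to $C$ intersecting $FU$). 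Lemma~\ref{inclusion} then produces a natural monomorphism $\mu\colon f_C\to f_{C'}$ between the induced vertical morphisms $\ds A/\phi\to \Sub(F-)$.

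Next I would transpose $\mu$ across the biadjunction. Because $A\dashv S$ is a biadjunction of $(2,1)$-categories, 2-morphisms on the $S$-side correspond bijectively to 2-morphisms between the transposed 1-morphisms on the $A$-side, and the transposes of $f_C$ and $f_{C'}$ are exactly the regular functors $F_C$ and $F_{C'}\colon\Asm(\ds A/\phi)\to\cat R$ induced by the two regular models via corollary~\ref{asm pseudoinitial}. This yields a natural transformation $\bar\mu\colon F_C\to F_{C'}$. To see that $\bar\mu$ is a (componentwise) monomorphism I would unwind the adjunction explicitly: on an assembly $Y\in \Asm(\ds A/\phi)$, $F_C(Y)$ and $F_{C'}(Y)$ are computed by the same image/inclusion recipe applied to the subobjects $f_C(Y),f_{C'}(Y)\in\Sub(F-)$, and a componentwise monomorphism on those subobjects translates directly into a componentwise monomorphism on the assemblies.

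The only step that needs genuine care is the last one, namely that transposing preserves ``being a monomorphism'' on objects. I expect this to follow from the explicit description of $F_C,F_{C'}$ via Shanin's factorisation $Y\simeq \exists_{\pi_1}(\text{prone subobject of }A\times -)$ combined with the fact that $\mu$ is a vertical monic (so reindexing and taking images preserve it inside the regular category $\cat R$). If any wrinkle appears, it will be this bookkeeping about how direct images interact with the inclusion $\mu$; everything else is purely formal from the biadjunction and from lemma~\ref{inclusion}.
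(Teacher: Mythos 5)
Your proposal is correct and takes essentially the same route as the paper: the paper's entire proof is the one-line remark that one applies lemma~\ref{inclusion} together with the biadjunction constructed in the proof of corollary~\ref{asm pseudoinitial}, which is exactly the transposition argument you spell out. Your extra bookkeeping about the transpose being a componentwise monomorphism goes beyond what the paper records, but is consistent with it.
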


\begin{proof} We can apply lemma \ref{inclusion} in combination with the adjunction in the proof of corollary \ref{asm pseudoinitial}. \end{proof}

\begin{remark} In fact we can extend the (2,1)-categories of regular and exact models to double categories, which are equivalent to certain complicated double categories of regular functors, namely $\nabla/(\cat H/\reg)$ and $\Delta/(\cat H/\reg)$ with all imaginable morphisms in between. \end{remark}

\begin{example}[characters] Let $\cat H$ be a topos, and let $(f^*\dashv f_*):\cat G\to\cat H$ be a geometric morphism between toposes. Let $A$ be an order partial applicative structure in $\cat H$ and let $\phi$ be an external filter. Consider the following type of maps $\chi:A\to f_*\Omega$ where we use the locale structure of $f_*\Omega$:
\begin{itemize}
\item if $a\leq b$, then $\chi(a)\leq \chi(b)$;
\item if $ab\converges$, then $\chi(a)\land\chi(b)\leq \chi(ab)$;
\item if $U\in\phi$, then $\bigvee_{a\in U}\chi(a) = \top$.
\end{itemize}
Such a map $\chi$, which we call a \xemph{character}, induces a filter on $C\subseteq f^*(A)$, by the isomorphisms:
\[ \Sub(f^*(A))\simeq \cat G(f^*(A),\Omega) \simeq \cat H(A,f_*(\Omega)) \]
Because $f^*$ is regular and $\cat G$ is exact, the filter $C$ induces a regular functor $g^*:\Asm(\ds A/\phi)_\exreg \to \cat G$ that satisfies $g^*\nabla\simeq f^*$ and $g^*(\A) \simeq C$.

If $(f^*\dashv f_*)$ is \xemph{localic}, then $g^*$ has a right adjoint $g_*$. It is constructed as follows. There is an assembly $G = \{(a,p) \in A\times f_*\Omega | \gamma(a) \leq p\}$, and this is a filter on $\nabla f^*\Omega$, because it has a meet semilattice structure. A property of localic geometric morphisms $f:\cat G \to\cat H$ is that every object $X$ of $\cat G$ can be represented as a partial equivalence relation $e:Y^2 \to f^*\Omega$ in $\cat H$. This determines a partial equivalence relation $\pre{\nabla e}(G)$ on $\nabla Y$ and there is an up to isomorphism unique functor $g_*$ that sends $(Y,e)$ to a subquotient for this partial equivalence relation.

Filters $C$ of $A$ that intersect all members of $\phi$ are characters relative to the identity of $\cat H$. Such filters determine all geometric morphisms $(h^*\dashv h_*):\cat H\to \Asm(\ds A/\phi)_\exreg$ such that $h^*\nabla \simeq \id_\cat H$.\end{example}

\begin{remark}[relative realizability categories] We had a characterization of categories of the form $\Asm(\ds A/\phi)$ in theorem \ref{charreg}. The relative realizability category $\Asm(\ds A/A')$, where $\ds A/A' = \ds A/\phi$ for the external filter $\phi$ of downsets that intersect a filter $A'\subseteq A$, stands out as the terminal realizability category over $\cat H$ and $A$ for which a regular functor $F:\Asm(\ds A/A') \to\cat H$ exists such that $F\nabla\simeq \id_\cat H$ and $F\A\simeq A'$. If $\chi$ contains more downsets than $\phi$, then $(\id_{\cat H},C)$ is no fibred model, and if $\chi$ contains less, there is a Heyting functor $\Asm(\ds A/\chi) \to \Asm(\ds A/A')$ by lemma \ref{Heyting morphisms}.
\end{remark}

\subsection{Applicative morphisms}
By applying the equivalence of the former subsection to morphisms between different realizability fibrations, we can generalize Longley's \xemph{applicative morphisms} \cite{RTnLS} to our more general setting.

\begin{definition}[applicative morphisms] Let $(A,\phi)$ and $(B,\chi)$ be pairs of order partial combinatory algebras with external filters. An \xemph{applicative morphism} $(A,\phi) \to (B,\psi)$ is a subobject $C\subseteq B\times A$ that satisfies:
\begin{itemize}
\item If $(b,a)\in C$ and $b'\leq b$ then $(b',a)\in C$.
\item There is a $U\in \chi$ such that if $(b,a)\in C$ and $a\leq a'$, then $ub\converges$ for all $u\in U$, and $(ub,a')\in C$.
\item There is an $R\in \chi$ such that if $(b,a)$ and $(b',a')\in C$ and $aa'\converges$, then $rbb'\converges$ for all $r\in R$ and $(rbb',aa')\in C$.
\item If $U\in \phi$ then $\{b\in B|\exists a\in U. (a,b)\in C \} \in \chi$
\end{itemize} \label{appmorph}

The ordinary composition operation on relations determines the composition of applicative morphisms, i.e. for $D:(A,\phi) \to (B,\chi)$ and $D':(B,\chi) \to (C,\psi)$ we let:
\[ D'\circ D = \set{(c,a)\in C\times A| \exists b\in B. (b,a)\in D\land (c,b)\in D' }\]

The set of all applicative morphisms $(A,\phi) \to (B,\chi)$ has a preorder: let $C$ and $D$ be morphisms $(A,\phi) \to (B,\chi)$, then $C\ll D$ if there is an $R\in\phi$ such that $rb\converges$ and $(rb,a)\in D$ for all $r\in R$ and all $(b,a)\in \phi$.
\end{definition}


\begin{lemma} There is an equivalence between applicative morphisms $(A,\phi) \to (B,\chi)$ and vertical morphisms $\ds A/\phi \to \ds B/\chi$.
\end{lemma}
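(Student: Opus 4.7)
The plan is to combine the pseudoinitiality of $(\ds A/\phi,\A)$ among fibred models (theorem \ref{pseudoinitiality}) with a direct translation of the four clauses of definition \ref{appmorph}. A vertical morphism $H\colon \ds A/\phi\to\ds B/\chi$ is determined up to unique isomorphism by the object $D:=H(\A)\in(\ds B/\chi)_A$, since $H$ preserves prone-supine spans; conversely, any $D$ such that $(\ds B/\chi,D)$ is a fibred model for $(A,\phi)$ arises from an essentially unique such $H$. So the task reduces to establishing a bijection between this fibred-model data $D$ and applicative morphisms, both regarded as subobjects of $B\times A$.

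The translation goes clause by clause. Being an object of $(\ds B/\chi)_A$, i.e.\ a downward-closed subobject of $B\times A$, is clause (i). The vertical-filter condition $F_s(D)\leq F_t(D)$ read inside $\ds B/\chi$, unwound using the definition of morphisms in a filter quotient, asks for $V\in\cat C_{\leq_A}$ with $V\cdot F_s(D)\subseteq F_t(D)$; since $\cat C$ is induced by the external filter $\chi$, this is equivalent to the existence of a single $U=\forall_!(V)\in\chi$ tracking the upward-closure map uniformly, which is clause (ii). The second vertical-filter condition $F_p(D)\wedge F_q(D)\leq F_a(D)$ translates identically into clause (iii). Finally, the representation condition that $!\colon (\ds B/\chi)_{U\hookrightarrow A}(D)\to\termo$ be supine for $U\in\phi$ unwinds, via the $\exists_!\dashv F_!$ adjunction and the correspondence between external filters and indexed-meet-closed fibred filters, to $\{b:\exists a\in U.\,(b,a)\in D\}\in\chi$, which is clause (iv).

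Functoriality is the last step. Writing $D\subseteq B\times A$ via its projections $a\colon D\to A$ and $b\colon D\to B$, I get $D=\exists_a((\ds B/\chi)_b(\A_B))$. For $E\colon(B,\chi)\to(C,\psi)$ with corresponding $H_E\colon\ds B/\chi\to\ds C/\psi$, preservation of $\exists$ and reindexing by $H_E$ gives $H_E(D)=\exists_a((\ds C/\psi)_b(E))$, which expands to $\{(c,a'):\exists b.\,(c,b)\in E\wedge(b,a')\in D\}=E\circ D$. The preorder $\ll$ on applicative morphisms matches the internal inclusion in $(\ds B/\chi)_A$, i.e.\ the $2$-isomorphism relation on vertical morphisms, by the same translation applied to inclusions rather than equivalences of filters.

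The main obstacle will be precisely this unwinding of the filter-quotient morphism conditions into the uniform trackers demanded by definition \ref{appmorph}. A morphism in $\ds B/\chi$ is a priori a per-base tracker $V\in\cat C_Z$ that may vary over $Z$, whereas an applicative morphism asks for a single $U\in\chi$; the reduction works only because $\cat C$ is induced from an external filter on $B$, so one can always replace $V$ by the constant family corresponding to $\forall_!(V)\in\chi$. Once this reduction is isolated, everything else is routine diagram chasing.
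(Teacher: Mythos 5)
Your proposal is correct and follows essentially the same route as the paper: regard the applicative morphism as an object of $(\ds B/\chi)_A$, check that its four clauses are exactly the data making $(\ds B/\chi, D)$ a fibred model for $(A,\phi)$, and invoke the pseudoinitiality of $(\ds A/\phi,\A)$ to get the correspondence with vertical morphisms. Your clause-by-clause unwinding (including the reduction of per-base trackers to a single $U\in\chi$ via $\forall_!$) and the extra check of composition and the preorder simply make explicit what the paper's terser proof leaves implicit.
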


\begin{proof} Any morphism $C:(A,\phi) \to (B,\chi)$ is a family of downsets of $B$ indexed over $A$ and therefore represents a downset $C\in \ds B_A$. This subobject is a filter for $(A,\phi)$, because the definition of applicative morphisms provides realizers for upward closure, closure under application and the inclusion of $\termo$ into $\im !((\ds B/\phi)_{U\hookrightarrow A}(C))$ for all $U\in \phi$. We then use the fact that $(\nabla,\A)$ is a pseudoinitial model to derive the equivalence.
\end{proof}

We now show some examples of applicative morphisms and functors derived from them.

\begin{example} When $\phi\subseteq \chi \subseteq \ext A$ are two combinatory complete external filters on $A$, the order of $A$ determines an applicative morphism: $\roso: (A,\phi) \to (A,\chi)$. This is the way we get an applicative identity morphism on every order partial combinatory algebra. The resulting morphisms are Heyting morphisms, as we have seen in lemma \ref{Heyting morphisms}. 

Note that we get a complete category of realizability fibrations with varying external filters and vertical Heyting morphisms between them: the downsets of realizers of partial combinatory arrows generate the least combinatory complete external filter, which is a terminal object, and filters are closed under arbitrary intersections, which means that we have arbitrary products. Posets always have all equalizers.
\end{example}

\begin{example} Consider the applicative structure $\termo\in \cat H$. The least combinatory complete filter is $\set\termo$, for which $\Asm(\ds\termo/\set\termo) \cong \cat H$, because $\ds\termo/\set\termo$ is equivalent to the subobject fibration of $\cat H$. Every applicative structure $A$ is a morphism $(\termo,\set\termo) \to (A,\phi)$, which induces the morphism $\Sub(\cat H) \to \ds A/\phi$ from lemma \ref{sublocale}. If all members of $\phi$ are inhabited, then $A:(A,\phi) \to (\termo,\set\termo)$.
\end{example}

\begin{example}[single valued morphisms] Another type of applicative morphisms arises from monotone maps $f:A\to B$ between order partial applicative structures that \xemph{laxly preserve application}, i.e. if $xy\converges$ then $f(x)f(y)\converges$ and $f(x)f(y)\leq f(xy)$. Let $\phi$ be an external filter on $A$ and let $\phi_f = \set{V\in \ext B| \pre f(V)\in \phi}$. For any combinatory complete external filter $\chi$ such that $\phi_f\subseteq \chi$, $B/f = \set{(b,a)\in B\times A | b\leq f(a) }$ is an applicative morphism $(A,\phi) \to (B,\phi_f)$.

If $f$ is a monotone map that \xemph{laxly reflects application}, i.e. if $f(x)f(y)\converges$ then $xy\converges$ and $f(xy)\leq f(x)f(y)$, then $f$ induces an applicative morphism in the opposite direction. Let $\chi$ be an external filter on $B$.
For every combinatory complete external filter $\phi$ such that $\chi\subseteq \phi_f$, $f/B = \set{(a,b)\in A\times B | f(a)\leq b }: (B,\chi)\to (A,\phi)$ is an applicative morphism.

In case $f$ does both, $B/f$ is a left adjoint of $f/B$, because $a\leq a'$ implies $(a,a')\in f/B\circ B/f$, and $(b,b')\in B/f\circ f/B$ implies $b\leq b'$. The adjunction is between $(A,\phi)$ and $(B,\phi_f)$ for any suitable $\phi$. \label{single valued morphisms}
\end{example}

\section{Realizability toposes}
Exact realizability categories for order partial applicative structures and combinatory complete external filters in \xemph{toposes} are toposes themselves. This section is ultimately about geometric morphisms of realizability toposes. In order to study those, we collect some properties of realizability categories that help us to understand irregular finite limit preserving functors between realizability categories.

The road to geometric morphisms twists as follows.

For a special type of external filter $\phi$ the inhabited prone downsets of the canonical filter $\A\in\Asm(\ds A/\phi)$ have global sections. We call these filters \xemph{generated by singletons}. This forces left exact functors from $\Asm(A/\phi)$ to other categories to preserve realizers, which makes the study of such functors easier. If the base category is a topos, then we can embed each realizability category into one where the external filter is generated by singletons. This is worked out in example \ref{completiontopos}. Moreover, realizability fibrations over toposes are triposes. For these two reasons, every left exact functor $F:\Asm(\ds A/\phi)_\exreg\to\cat E$ into an exact category is determined up to unique isomorphism by the composed left exact functor $F\nabla$ and the object $FG$ where $G$ is a \xemph{generic assembly}, in a way similar to how $F\nabla$ and $F\A$ determine a regular functor $F:\Asm(\ds A/\phi))_\exreg\to\cat E$ up to isomorphism.

The conclusion generalizes van Oosten and Hofstra's \xemph{computationally dense} applicative morphisms (see \cite{MR1981211}) to all of our realizability toposes.

\subsection{Generated by singletons}
For some external filters $\phi$ the inhabited prone downsets of the canonical filter $\A$ in $\Asm(\ds A/\phi)$ have global sections. This property is useful because all finite limit preserving functors preserve inhabited subobjects that have global sections.

\begin{definition} Let $\cat H$ be a Heyting category and let $A$ be a partial applicative structure. An external filter $\phi$ on a partial combinatory algebra is \xemph{generated by singletons} if for each $U\in\phi$ there exists a global section $u:\termo \to U$, such that the \xemph{principle downset} $\set{a\in A|a\leq u}$ is a member of $\phi$. \end{definition}

\begin{remark} There is an isomorphism between the poset of filters of $\cat H(\termo,A)$ and the poset of external filters of $A$ that are generated by singletons. The set of all global sections $u:\termo\to A$ such that $\set{a\in A|a\leq u}\in \phi$ is a filter. For each filter $C\subseteq \cat H(\termo,A)$ there is a least filter $\mu_C$ such that $\set{a\in A|a\leq u}\in \mu_C$ for all $u\in C$. These constructions are inverses of each other, an therefore we say that these filters are `generated by singletons'. For globals sections $u:\termo \to A$ we will often write $u:\termo \to A\in \phi$ instead of $\set{a\in A|a\leq u}\in \phi$ to exploit this isomorphism.
\end{remark}

We check that these indeed have the desired property.
\begin{lemma} If $\phi$ is generated by singletons, then inhabited prone downsets of $\A$ have global sections. \end{lemma}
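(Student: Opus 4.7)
The plan is to unwind what a prone downset of $\A$ in $\Asm(\ds A/\phi)$ actually is, then show that the hypothesis that $\phi$ is generated by singletons provides an explicit global section on the nose.

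First, I would identify the prone downsets of $\A$. Using the characterization of prone morphisms in the realizability fibration (and the fact that $\ds C_{\rm prone}$ is the pullback of $\ds A$ along $F$, applied in $\Asm$), every prone downset of $\A$ is isomorphic to an assembly of the form $\A_U = (\ds A/\phi)_{U \hookrightarrow A}(\A)$ for some monomorphism $U \hookrightarrow A$ in $\cat H$. Concretely, this is the subassembly of $\nabla A$ whose underlying object is $U$ and whose realizer-family assigns to each $u \in U$ the principle downset $\set{a \in A | a \leq u}$.

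Next, I would translate the hypothesis "inhabited" into a condition on $U$. An assembly is inhabited precisely when its unique map to $\termo$ is a regular epimorphism in $\Asm(\ds A/\phi)$, which, by the construction of assemblies as a category of fractions inverting supine monomorphisms, is equivalent to the unique map $!\colon \A_U \to \termo$ being supine in the realizability fibration. By the representability property that $\A$ represents $\phi$ (lemma in subsection on the tracking principle), this holds if and only if $U \in \phi$.

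Finally, I would construct the global section from the generated-by-singletons hypothesis. Given $U \in \phi$, the definition provides a global section $u \colon \termo \to U$ such that $\set{a \in A | a \leq u} \in \phi$. This is exactly the data of a global section $\termo \to \A_U$ in the assembly category: the underlying global element is $u$, and the realizer requirement is satisfied because the realizer-fibre of $\A_U$ over $u$ is the principle downset $\set{a | a \leq u}$, which lies in $\phi$ by hypothesis. There is no genuine obstacle here—the lemma is essentially an observation that unpacking the two defined notions ("inhabited prone downset of $\A$" and "$\phi$ generated by singletons") produces matching data—so the only care needed is in bookkeeping the equivalence between subassemblies of $\nabla A$ and subobjects of $A$ with the principle-downset realizability structure.
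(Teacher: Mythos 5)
Your proposal is correct and follows the same route as the paper's proof: identify inhabited prone downsets of $\A$ with the $\A_U$ for $U\in\phi$ (via the representability of $\phi$ by $\A$), then observe that the generated-by-singletons hypothesis hands you a global section $u:\termo\to U$ whose realizer fibre, the principle downset of $u$, lies in $\phi$ and hence realizes $u$ as a global section in $\Asm(\ds A/\phi)$. The paper's argument is just a two-sentence compression of exactly this unpacking.
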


\begin{proof} A prone downset $U\subseteq \A$ is inhabited if and only if $FU\in \phi$. If $FU\in\phi$ then there is a global section $u:\termo \to FU$ and a set of realizers $\set{a\in A|a\leq u}\in \phi$ that makes $u$ a global section of $U$ in $\Asm(\ds A/\phi)$. \end{proof}

That morphisms and propositions have global sections for realizers has far reaching consequences for realizability categories, some of which we will explore now. The following subcategories of realizability categories become far better behaved for filters that are generated by singletons.

\newcommand\Pasm{\Cat{Pasm}}
\begin{definition} A \xemph{partitioned assembly} is any $P\in \cat DA/\phi$ for which a prone morphism $p:P\to\A$ exists. The \xemph{category of partitioned assemblies} is the full subcategory $\Pasm(A,\phi)$ of $\cat DA/\phi$ whose objects are partitioned assemblies.
\end{definition}

\begin{lemma} Assuming external filters of inhabited downsets, partitioned assemblies are fine objects relative to supine monomorphisms, and hence assemblies. \end{lemma}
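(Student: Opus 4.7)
The plan is to use the characterization of assemblies given in remark \ref{assemblies are inhabited families}, which says that $\Asm(\ds A/\phi)$ is (equivalent to) the full subcategory of $\cat DA/\phi$ consisting of those families $Y$ for which the inclusion $\supp Y \hookrightarrow (\ds A/\phi)Y$ is an isomorphism, and every such family is automatically fine relative to supine monomorphisms. So it suffices to prove that every partitioned assembly has this inhabitation property.

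First I would unwind what a partitioned assembly looks like. If $P$ is partitioned, there exists a prone morphism $p: P\to \A$, so writing $X = (\ds A/\phi)P$ and $a = (\ds A/\phi)p : X\to A$, the proneness of $p$ means that $P$ is equivalent to the reindexing $(\ds A/\phi)_a(\A)$. Explicitly, as a downward closed family of subobjects of $A\times X$ this is
\[ P \simeq \set{(b,x)\in A\times X\,|\, b\leq a(x)}. \]
Then I would compute the support directly: by definition $\supp P = \set{x\in X\,|\, \exists b\in A.\, b\leq a(x)}$, and reflexivity of the preorder on $A$ (the map $r$ in the proof of lemma \ref{preserving1}) yields the global witness $a(x)\leq a(x)$ for every $x\in X$. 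Hence $\supp P = X = (\ds A/\phi)P$ as subobjects of $(\ds A/\phi)P$.

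At this point the fine coreflection $P_{\rm fine} = (\ds A/\phi)_{\supp P\hookrightarrow (\ds A/\phi)P}(P)$ coincides with $P$, so $P$ is fine relative to supine monomorphisms and therefore equivalent (via the localization $Q:\cat DA/\phi\to\fun(\cat DA/\phi,S)$) to an object of $\Asm(\ds A/\phi)$. The only subtle point is making sure the reflexivity witness $a(x)$ really lies in the combinatory complete filter's sense of ``inhabited,'' but since we have assumed external filters of \emph{inhabited} downsets, the distinction between the internal truth ``$\exists b\in A.\, b\leq a(x)$'' and the external supineness of $\supp P\to (\ds A/\phi)P$ collapses via lemma \ref{left adjoint}; this is the only place the hypothesis of the lemma is used, and it is the sole potential obstacle to a completely routine verification.
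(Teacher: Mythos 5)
Your proof is correct and follows essentially the same route as the paper: both arguments come down to showing that for a partitioned assembly $P$ the support $\supp P$ coincides with the underlying object $(\ds A/\phi)P$ (because proneness over $\A$ exhibits $P$, up to vertical isomorphism, as the family $\set{(b,x)\mid b\leq a(x)}$, whose fibres are inhabited by reflexivity), which is the property characterizing fine objects and hence assemblies. The paper packages this via the adjunction $\supp\dashv\nabla$ and a pullback along the unit, while you unwind it by direct computation, but the content — and the single use of the inhabited-filter hypothesis to make $\supp$ functorial — is the same.
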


\begin{proof} Remember we have an adjunction $\supp\dashv\nabla$. Because $p:P\to \A$ is prone, it is the pullback of $\nabla\supp p: \nabla \supp P \to\nabla A$ along the unit of the adjunction. If we apply $(\ds A/\phi)$ to this pullback diagram, we see that $(\ds A/\phi)P\simeq \supp P$, which is a property that characterizes fine objects. \end{proof}

Ordinarily the category of partitioned assemblies is not that well behaved. For example, if $A$ has no global sections, then the terminal object is never a partitioned assembly. This also happens when $\phi$ contains no principle downsets. Another problem is binary products: a prone arrow $p:\A\times \A \to \A$ determines a paring operator $\supp p:A^2 \to A$ that may not exist, or may not have a realizer in $A$. However, we just introduced a property of filters that solves these problems.

\begin{proposition} Let $\phi$ be generated by singletons and combinatory complete. The category $\Pasm(A,\phi)$ has all finite limits. \end{proposition}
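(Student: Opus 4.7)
The plan is to build the three shapes of finite limit — terminal object, binary products, and equalizers — directly inside $\cat DA/\phi$, and check in each case that the construction lies in the full subcategory $\Pasm(A,\phi)$. Since the realizability fibration $\ds A/\phi$ is a fibred locale and hence faithful, a morphism in $\cat DA/\phi$ is determined by its image in $\cat H$ together with a realizer; finite limits can therefore be computed by lifting finite limits of $\cat H$ along prone morphisms. The only real work is to check that the lifted limits remain partitioned assemblies, i.e.\ admit a prone morphism to $\A$. The hypothesis that $\phi$ is generated by singletons and combinatory complete will be used exactly to supply the global sections of $A$ in $\phi$ that realize a ``unit'' and a pairing operator.

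For the terminal object, combinatory completeness gives $\db{\comb i}\in\phi$, and generation by singletons upgrades this to a global section $k\colon\termo\to A$ with the principal down-set $\set{a\leq k}\in\phi$. The object $T=(\termo,\set{a\leq k})$ is a partitioned assembly, classified by $k\colon\termo\to A$, and it is isomorphic in $\cat DA/\phi$ to the genuine terminal $\nabla\termo$ since its realizer set lies in $\phi$. Uniqueness of maps $P\to T$ from any partitioned assembly follows from faithfulness of $\ds A/\phi$ combined with the uniqueness of the underlying map $(\ds A/\phi)(P)\to\termo$ in $\cat H$.

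For binary products, generation by singletons and combinatory completeness together furnish global sections $\comb p,\comb{p_0},\comb{p_1}\colon\termo\to A$ realizing a total pairing operator and its two projections. Given partitioned assemblies $P,Q$ classified by $f\colon X\to A$ and $g\colon Y\to A$, I take $P\times Q$ to be the partitioned assembly over $X\times Y$ classified by $(x,y)\mapsto \comb p\, f(x)\, g(y)$. The two projection maps use $\comb{p_0},\comb{p_1}$ to recover realizers of the components, and any cone $R\to P, R\to Q$ pairs up via $\comb p$ to a map $R\to P\times Q$, uniquely by faithfulness. Equalizers are then immediate: for parallel $u,v\colon P\to Q$ in $\Pasm$ with underlying $\bar u,\bar v\colon X\to Y$, the equalizer $e\colon E\hookrightarrow X$ in $\cat H$ carries the partitioned assembly $(E,\set{a\leq f(e(x))})$ — the prone reindexing of $P$ along $e$ — which serves as the equalizer of $u$ and $v$ in $\cat DA/\phi$, and this reindexing visibly remains a partitioned assembly. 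The main obstacle is really only the terminal object (and the availability of a global pairing combinator for products): without ``generated by singletons'' nothing forces any principal down-set to lie in $\phi$, and then no partitioned assembly need be isomorphic to the terminal of $\cat DA/\phi$ at all.
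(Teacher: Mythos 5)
Your proof is correct and follows essentially the same route as the paper: use combinatory completeness together with the singleton-generated hypothesis to obtain global sections (making $\termo$ a partitioned assembly and supplying a prone pairing $\A\times\A\to\A$ for binary products), and observe that equalizers are prone reindexings, so partitioned assemblies are automatically closed under them. The only cosmetic difference is that you invoke separate combinators for the terminal object and for pairing where the paper gets both from a single global section of the pairing combinator.
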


\begin{proof} Combinatory completeness implies $\phi$ is non-empty and therefore contains at least one principle downset, in particular one generated by a global section $p$ of $\comb p = \set{(x,y,z)\mapsto zxy}$. This not only turns $\termo$ into a partitioned assembly, but it also determines a paring operator $(a,b) \mapsto pab: A^2 \to A$ that lifts to a prone morphism $\A\times \A \to \A$. Partitioned assemblies are closed under equalizers in any circumstance, because equalizers are prone morphisms. \end{proof}

\begin{corollary} The constant object functor $\nabla$ factors through $\Pasm(A,\phi)$. \end{corollary}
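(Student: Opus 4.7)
The plan is to show directly that $\nabla X = \top_X$ is isomorphic, in the fibre $(\ds A/\phi)_X$, to a concretely constructed partitioned assembly; composing such an isomorphism with the prone map to $\A$ then exhibits $\nabla X$ itself as a partitioned assembly.

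First I would extract a distinguished global section of $A$: since $\phi$ is combinatory complete, $\db{\comb k}\in\phi$, and since $\phi$ is generated by singletons there exists $u\colon\termo\to A$ with $\{a\in A : a\leq u\}\in\phi$. For each $X\in\cat H$, let $\bar u_X\colon X\to A$ be the composite $X\to\termo\xrightarrow{u}A$ and set $P_X := F_{\bar u_X}(\A)\in(\ds A/\phi)_X$. By construction, the pullback prone morphism $P_X\to\A$ witnesses that $P_X$ is a partitioned assembly, and as a subobject of $A\times X$ we have $P_X = \{(a,x):a\leq u\}$.

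Next I would compare $P_X$ and $\top_X$ in the fibre. The inclusion $P_X\hookrightarrow\top_X$ is trivially realized by $\comb i\in\phi$. For the reverse morphism it suffices to show $\top_X\arrow P_X = \{c\in A : \forall a\in A.\ ca\converges\land ca\leq u\}$ belongs to $\phi$. Consider the downset $D := \db{\comb k}\cdot\{a:a\leq u\}$ in $\ext A$: the application is defined because any $k\in\db{\comb k}$ accepts any argument, so $D\in\phi$ by closure of $\phi$ under defined applications. Moreover any $c\in D$ satisfies $c\leq ka$ for some $k\in\db{\comb k}$ and some $a\leq u$, and hence $cb\leq kab\leq a\leq u$ for every $b\in A$; this gives $D\subseteq \top_X\arrow P_X$, so $\top_X\arrow P_X\in\phi$ by upward closure.

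Hence $P_X\cong\top_X$ in the fibre, and composing this isomorphism with the prone map $P_X\to\A$ yields a prone morphism $\nabla X\to\A$, so $\nabla X$ is itself a partitioned assembly. Naturality of the factorization in $X$ follows from naturality of $\bar u_X$. The only delicate point is the check that $D\in\phi$ and that its elements serve as universal tracking terms for constant-$u$ maps; both follow immediately from the behaviour of $\comb k$ and from the filter axioms on $\ext A$, so no serious obstacle arises.
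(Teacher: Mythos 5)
Your proof is correct and is essentially the paper's argument in unfolded form: the paper simply notes that $!:\nabla X\to\termo$ is prone and composes with the prone map $\termo\to\A$ obtained in the preceding proposition from a principal downset of a global section, whereas you reindex $\A$ directly along the constant map $X\to\termo\to A$ and verify the fibre isomorphism $F_{\bar u_X}(\A)\cong\top_X$ with explicit realizers. The realizer check via $\db{\comb k}\cdot\{a:a\leq u\}$ is exactly the (implicit) content of the paper's claim that the principal downset turns $\termo$ into a partitioned assembly, so no new idea is needed and none is missing.
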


\begin{proof} All maps $!:\nabla X\to \termo$ are prone. \end{proof}


We now make the connection with left exact functors.

\begin{definition} A \xemph{left exact model} is a left exact functor $F:\cat H\to\cat C$ together with a filter $C\subseteq FA$ such that for each $U\in\phi$ there is a global section $\termo\to C\cap FU$. Like regular models, they form a $(2,1)$-category where a morphism $(F,C)\to (G,D)$ is a functor $H:\cod F\to\cod G$ together with a natural isomorphism $h:HF\to G$ that restricts to an isomorphism $h:FC\to D$.
\end{definition}

\begin{proposition} The constant object functor $\nabla:\cat H \to \Pasm(A,\phi)$ together with $\A$ is the pseudoinitial left exact model.\label{pasm pseudoinitial} \end{proposition}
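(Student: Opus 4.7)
The plan is to mimic the argument for Corollary \ref{asm pseudoinitial}, but restricted to the subcategory $\Pasm(A,\phi)$ of partitioned assemblies. The key point is that the singleton-generation hypothesis on $\phi$ compensates for weakening ``regular functor'' to ``left exact functor'': explicit global-section realizers take over the role previously played by surjective tracking maps, so no coequalizers are needed.

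First I would establish uniqueness on objects. By definition, each $P\in \Pasm(A,\phi)$ comes equipped with a prone morphism $p\colon P\to\A$, which exhibits $P$ as the pullback $\nabla(FP)\times_{\nabla A}\A$ in $\Pasm(A,\phi)$. Since any morphism of left exact models $(H,h)\colon(\nabla,\A)\to(F,C)$ must preserve this pullback while $H\nabla\cong F$ and $H\A\cong C\hookrightarrow FA$, the object $HP$ is forced up to canonical isomorphism to be $F(FP)\times_{FA}C$ in $\cat C$. For morphisms, a morphism $f\colon P\to Q$ in $\Pasm(A,\phi)$ is represented by its underlying arrow $\bar f\colon FP\to FQ$ in $\cat H$ together with a realizer $U\in\phi$. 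Because $\phi$ is generated by singletons, I may choose a global section $r\colon\termo\to A$ with $\{a\in A\mid a\leq r\}\in\phi$ that also realizes $f$. The image $Fr\colon\termo\to FA$ factors through $C$ by the definition of a left exact model, producing a global section $\tilde r\colon\termo\to C$. The map $Hf$ is then forced to be $(x,c)\mapsto(F\bar f(x),\tilde r\cdot c)$ on the pullback description of $HP$, using closure of the filter $C$ under application.

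To finish, I would verify three things: independence from the chosen singleton (two singleton realizers of the same $f$ differ only within the downset absorbed by $C$, so the output into $HQ$ is unchanged up to the pullback); functoriality (composition of morphisms in $\Pasm(A,\phi)$ is tracked by a new singleton produced via combinatory completeness, and the same combinators, through their global sections in $C$, give the matching composite in $\cat C$); and left exactness of $H$ (preservation of the terminal object is immediate, while preservation of binary products and equalizers reduces to the fact that $HP$ is itself defined as a pullback and pullbacks compose with pullbacks). The hard part will be the second and third checks: unlike the regular setting, I cannot invoke the biadjunction machinery of Corollary \ref{asm pseudoinitial}, so independence of representatives and functoriality must be verified directly, using crucially that singletons generating $\phi$ lift to actual morphisms $\termo\to C$ in $\cat C$ rather than merely to inhabited subobjects.
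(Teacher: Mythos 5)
Your proposal is correct and follows essentially the same route as the paper: the image of a partitioned assembly $P$ with prone $p:P\to\A$ is forced to be the pullback $\pre{F\supp p}(C)$, and each morphism is tracked by a global section of $A$ (available since $\phi$ is generated by singletons) whose image factors through $C$, with closure of the filter under application giving the induced map. The extra verifications you list (independence of the chosen singleton, functoriality, left exactness) are the same routine checks the paper leaves implicit.
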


\begin{proof} The mapping of each partitioned assembly $P$ is determined up to isomorphism by any prone map $p:P\to \A$: 
\[ F_C(p) \simeq \pre{F\supp p}(C) \]

Each map of partitioned assemblies $f:P\to Q$ has a global section $\termo \to A$ tracking it. In other words, if we choose prone $p:P\to \A$ and $q:Q\to \A$, then there is an $r:\termo \to A$ such that $(r\supp p,\supp(q\circ f)):P \to A^2$ factors through the ordering $\roso$ of $A$. The global sections $Fr$ factors through $C$. Because $C$ is a filter, $F\supp f:F\supp P\to F\supp Q$ restricted to $F_C(p)$ factors uniquely through $F_C(q)$. 
\[ \xymatrix{
& \roso\ar[dl]_{\pi_0} \ar[drr]^{\pi_1} \\
C \ar[d] & F_C(P)\ar[d]\ar[l]\ar@{}[dl]|<\llcorner \ar@{.>}[r]\ar[u]_{(rp,q\circ f)} & F_C(P)\ar[d]\ar[r]\ar@{}[dr]|<\lrcorner & C \ar[d] \\
FA & F\supp P \ar[r]_{F\supp f}\ar[l]^{F\supp p} & F\supp Q \ar[r]_{F\supp q} & FA 
}\]
\end{proof}

\begin{remark} We should look for properties that characterize $\Pasm(A,\phi)$ up to isomorphism. This list may work:
\begin{itemize}
\item $F$ has a faithful left exact left adjoint left inverse.
\item $C\to FA$ is a \xemph{generic monomorphism}. See Menni \cite{Menni00exactcompletions}.
\item Arrows $\termo \to C$ represent all morphisms $FU\cap C^n\to C$ for all $U\subseteq A^n$
\item Every morphism $FX\to C$ has a lower bound $\termo \to C$.
\item For each $U\in\ext A$, $FU\cap C$ has a global section if and only if $U$ is a member of $\phi$.
\end{itemize}
\end{remark}

Proposition \ref{pasm pseudoinitial} applies to a small subcategory of $\Asm(\ds A/\phi)$ in this one special case where $\phi$ is generated by singletons. The impredicative nature of toposes allows us to extend it to whole realizability toposes, as we see in the coming subsections.

\subsection{Inhabited join completion}\label{downsetmonad}
If $\cat H$ is a topos then we can pull the following trick. For each order partial combinatory algebra $A$ and each combinatory complete external filter $\phi$ whose members are inhabited,  $\Asm(\ds A/\phi)\cong\Pasm(B,\chi)$ for some other order partial combinatory algebra $B$ and a filter $\chi$ that is generated by singletons. This subsection explains how.

\newcommand\ijc{\partial}
\begin{definition} For each order partial applicative structure $A$ in each topos $\cat H$ let $\ijc A$ be the order partial combinatory algebra of inhabited downsets. Its ordering is the inclusion ordering. Application is defined as follows. If $\alpha:A^2\supseteq D\to A$ is the partial application operator of $A$, then $UV\converges$ if $U\times V\subseteq \dom \alpha$ and 
\[ UV = \set{x\in A|\exists u\oftype U,v\oftype V.x\leq \alpha(u,v)} \]
The resulting algebra $\ijc A$ is the \xemph{inhabited join completion} of $A$.

Let $\phi$ be an external filter of $A$ whose members are inhabited. We let $\phi^+$ be the filter on $\ijc A$ generated by the global sections $\termo \to \ijc A$ that corresponds to the members of $\phi$. \end{definition}

\begin{remark} The order partial combinatory algebra $\ijc A$ is kind of a complete applicative lattice. As an internal poset of $\cat H$, it has joins for all inhabited subobjects and the application operator distributes over those joins. Also, any subobject $U\subseteq \ijc A$ that has a lower bound, has a greatest lower bound $\inf U$. We also have the operator $\arrow$, such that $x\subseteq y\arrow z$ if and only if $xy\converges$ and $xy\subseteq z$.
\end{remark}

\begin{proposition} $\Pasm(\ijc A,\phi^+)\cong \Asm(\ds A/\phi)$\label{asm is pasm} \end{proposition}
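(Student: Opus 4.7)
The plan is to construct an explicit equivalence of categories by unwinding both sides to the same underlying data. A partitioned assembly in $\Pasm(\ijc A, \phi^+)$ is, by definition, an object $X \in \cat H$ equipped with a prone morphism to the canonical filter $\mathring{\ijc A}$, which because $\mathring{\ijc A}$ lives over $\ijc A$ amounts to a map $\hat p : X \to \ijc A$ in $\cat H$, equivalently a subobject $P \hookrightarrow A \times X$ whose fibres $P_x$ are inhabited downsets of $A$. On the other side, by remark \ref{assemblies are inhabited families} (applicable because every member of $\phi$ is assumed inhabited), an assembly in $\Asm(\ds A/\phi)$ is precisely such a family of inhabited downsets. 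So I would define a functor $F : \Pasm(\ijc A, \phi^+) \to \Asm(\ds A/\phi)$ sending $(X, \hat p)$ to the assembly $(X, \set{(a,x) \in A\times X \mid a \in \hat p(x)})$; with the evident inverse on objects, this is essentially surjective.

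Next I would show $F$ induces a bijection on hom-sets. A morphism $(X,\hat p) \to (Y,\hat q)$ in $\Pasm(\ijc A, \phi^+)$ is a map $f : X \to Y$ in $\cat H$ together with a tracker $\Phi \in \phi^+$ satisfying $r\hat p(x) \subseteq \hat q(f(x))$ in $\ijc A$ for all $r \in \Phi$, $x \in X$. Because $\phi^+$ is generated by singletons $\termo \to \ijc A$ corresponding to members of $\phi$, together with closure under application and upward closure in $\ijc A$, any such $\Phi$ contains such a generator, producing a global section $U : \termo \to \ijc A$ with $U \in \phi$ (possibly after application of several generators, which combinatory completeness of $\phi$ keeps in $\phi$). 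Unwinding the application in $\ijc A$ — which is just pointwise downward-closed application of subobjects of $A$ — the condition $U \hat p(x) \subseteq \hat q(f(x))$ becomes exactly the statement that $U$ tracks $f$ as a morphism of families in $\cat DA/\phi$. Conversely, a tracker $U \in \phi$ for $f$ as a morphism of families gives, via the global section $U : \termo \to \ijc A$, a principal downset lying in $\phi^+$ that tracks $f$ as a morphism of partitioned assemblies.

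Since the underlying maps in $\cat H$ agree on both sides, composition and identities are preserved automatically, so $F$ will be fully faithful and essentially surjective, hence an equivalence, with inverse $(X,P) \mapsto (X, x \mapsto P_x)$. The only real technical obstacle is the direction from $\phi^+$ to $\phi$ of the tracker correspondence: one must verify that taking closure under application and upward closure in $\ijc A$ does not introduce genuinely new trackers beyond those already detected by $\phi$. This reduces to the observation that application in $\ijc A$ is pointwise application of downsets in $A$, together with combinatory completeness and upward closure of $\phi$, so every $\Phi \in \phi^+$ that tracks a map $f$ contains a member $U \in \phi$ that tracks the same $f$ in the family sense.
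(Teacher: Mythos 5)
Your argument is correct, but it is organized quite differently from the paper's. The paper obtains the functor $\Pasm(\ijc A,\phi^+)\to\Asm(\ds A/\phi)$ abstractly, by checking that $(\nabla, G)$ with $G=\set{(a,U)\mid a\in U}$ is a left exact model and invoking the pseudoinitiality of $\Pasm(\ijc A,\phi^+)$ (proposition \ref{pasm pseudoinitial}); essential surjectivity then comes from pulling $G$ back (via remark \ref{assemblies are inhabited families}), faithfulness from $\nabla\supp$, and fullness from the tracking theorem \ref{tracking} together with the fact that an object of realizers in $\phi$ yields a global section of $\ijc A$ whose principal downset lies in $\phi^+$. You instead unwind both categories by hand to the same data -- families of inhabited downsets of $A$ together with $\cat H$-maps admitting trackers -- and prove the tracker correspondence directly in both directions; the only genuinely new ingredient on your side is the observation, which you correctly identify and resolve, that every member of $\phi^+$ contains a member of $\phi$ (the set of downsets of $\ijc A$ meeting $\phi$ is itself an external filter containing the generators, since application in $\ijc A$ is pointwise and $\phi$ is closed under application and upward closed -- note this uses closure of $\phi$ under application and upward closure, not its combinatory completeness as you write). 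What the paper's route buys is that the resulting equivalence is automatically compatible with $\nabla$ and the generic filter, which is exploited immediately afterwards (e.g.\ in proposition \ref{lems and toposes} and the computational density results), and it reuses machinery already established; your route is more elementary and self-contained, and makes explicit the combinatorics of $\phi^+$ that the paper leaves implicit in its third bullet. Two small points of care: to define your functor on all objects of $\Pasm(\ijc A,\phi^+)$ (which are families equivalent to, not equal to, reindexings of $\mathring{\ijc A}$) you should fix a choice of prone structure map for each object, harmless under the paper's standing choice assumption; and the identification of maps $X\to\ijc A$ with families of inhabited downsets uses that $\cat H$ is a topos, which is the standing hypothesis of this subsection.
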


\begin{remark} This generalizes a result of Hofstra and van Oosten, see section 4 of \cite{MR1981211}. \end{remark}

\begin{proof} We use the pseudoinitiality properties to establish a functor and then argue why it is an equivalence. 

For $\Pasm(\ijc A,\phi^+) \to \Asm(\ds A/\phi)$ the functor is $\nabla:\cat H \to \Asm(\ds A/\phi)$. The family $G = \set{(a,U)\in A\times \ijc A| a\in U}$, which is a family of inhabited downsets and therefore an assembly, determines the filter. Now $(\nabla,G)$ is a left exact model for $(\ijc A,\phi^+)$ for the following reasons:
\begin{itemize}
\item if $(a,U)\in G$ and $U\subseteq V\in \ijc A$, then $(a,V)\in G$; therefore $\db{a\mapsto a}$ realizes upward closure;
\item if $(a,U)\in G$ and $(b,V)\in G$ and $UV\converges$, then $ab\converges$ and $(ab,UV)\in G$; therefore $\db{(a,b)\mapsto ab}$ realizes closure under application;
\item if $U\in \phi^+$, then there is a $u:\termo \to U$ such that $\set{a\in \ijc A|a\leq u}\in\phi^+$, and this is the case if and only if $\set u = \set{a\in A|a\in u}\in \phi$; therefore $\comb k\set u$ realizes $u:\termo \to U$, where $\comb k = \db{(x,y)\mapsto x}$.
\end{itemize}
So there is an up to isomorphism unique functor $F:\Pasm(\ijc A,\phi^+)\to\Asm(\ds A/\phi)$ such that $F\nabla\simeq \nabla$ and $F\mathring{\ijc A}\simeq G$, where $\mathring{\ijc A}$ is the (weakly) generic filter of $\Pasm(\ijc A,\phi^+)$.

The functor $F$ is essentially surjective, because every assembly $X$ is a pullback of $G$ along some map $\nabla \supp X\to \nabla \ijc A$. As we noted in remark \ref{assemblies are inhabited families} $X$ is an inhabited family of downsets, and because $\ijc A$ is the object of inhabited downsets, we get the required map $\supp X\to \ijc A$. The functor $F$ is faithful because $\nabla\supp:\Pasm(\ijc A,\phi^+) \to\Asm(\ds A/\phi)$ is faithful and $F$ is a subfunctor. The functor is full, because by theorem \ref{tracking} each morphism between assemblies is tracked, and its object of realizers corresponds to a global section of $\ijc A$, which generates a principle downset in $\phi^+$. For all these reasons $F$ is an equivalence of categories.
\end{proof}

\begin{remark} The object $G$ is a \xemph{generic object} of $\supp = \ds A/\phi:\Asm(\ds A/\phi) \to \cat H$, i.e. a \xemph{generic assembly}, because prone morphisms connect every assembly to $G$. There is an inclusion $G\to\nabla DA$ where $DA$ is the objects of downsets of $A$. This inclusion a \xemph{generic monomorphism} (see \cite{Menni00exactcompletions}) for $\Asm(\ds A/\phi)$.
\end{remark}

\subsection{Tripos-to-topos}
Here we demonstrate that realizability fibrations over toposes are triposes and explain the connection between morphisms of triposes and finite limit preserving functors on the related toposes. Most of this can be found in \cite{Pittsthesis} and \cite{MR578267}.

\begin{definition} A \xemph{tripos} is a complete fibred Heyting algebra $F:\cat F\to \cat B$ over a category with finite products, such that for each object $X\in\cat B$, there is an $\pi X\in \cat B$ and a \xemph{membership predicate} $\epsilon_X\in F_{X\times \pi X}$ such that for each $P\in F_{X\times Y}$ there is an $p:Y\to \pi X$ and a prone arrow $q:P\to \epsilon_X$ with $Fq=\id\times p$. \label{tripos} 
\end{definition}

\begin{example}
Let $\cat H$ be a topos, let $A$ be a partial applicative structure and let $\phi$ be a combinatory complete external filter. The realizability fibration $\ds A/\phi$ is a tripos.
Theorem \ref{rf is ha} tells us that $\ds A/\phi$ is a complete fibred Heyting algebra. Let $DA$ be the object of downsets of $A$. Note that $\pi_1:\set{(a,b)\in A\times DA|a\in b}\to DA$ classifies families of downsets. For each object $X$ of $\cat H$, let $\pi X = DA^X$ and let $E_X = \set{(a,f,x)\in A\times \pi X\times X| a\in f(x) }$. This determines a membership predicate, because for every family of downsets $Z \subseteq A\times X\times Y$ there is a morphism $z:Y \to \pi X$ such that $Z = \pre{(\id\times z)}(E_X)$. Therefore, $E_X$ is a membership predicate.
\end{example}


\begin{theorem}[Pitts] For each tripos $F:\cat F\to\cat B$, let $\cat B[F]=\Asm(F)_\exreg$. Then $\cat B[F]$ is a topos.\end{theorem}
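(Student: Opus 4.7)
The plan is to reduce the proof to the construction of power objects, since by the work of Section \ref{exreg} we already know that $\cat B[F] = \Asm(F)_\exreg$ is an exact Heyting category with finite limits (being an ex/reg completion of a Heyting category). A topos is precisely an exact category with power objects (or equivalently, a finite-limit category with power objects), so only power objects remain to be constructed.

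The key bridge between the tripos data and subobjects in $\cat B[F]$ is provided by Lemma \ref{sub in exreg}: subobjects of $(X, \wave_X)$ in $\cat B[F]$ correspond to $\wave_X$-saturated subobjects of $X$ in $\Asm(F)$, which by the correspondence between $\Sub(\Asm(F))$ and vertical morphisms into $\top_{FX}$ amount to $\wave_X$-saturated predicates in $F_{FX}$, i.e. predicates $P \in F_{FX}$ satisfying $\pi_0^{-1}(P)\wedge \wave_X \leq \pi_1^{-1}(P)$ together with a strictness condition $P \leq \exists_\delta(\wave_X)$. Thus, to classify subobjects it suffices to classify these saturated-and-strict predicates.

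For each object $(X, \wave_X)$ of $\cat B[F]$, I would build the power object from the classifying object $\pi X$ of the tripos (Definition \ref{tripos}). Specifically, I would equip $\pi X$ with an equivalence relation $\approx_X \in F_{\pi X \times \pi X}$ expressing that two elements of $\pi X$ represent the same $\wave_X$-saturated strict predicate, namely (internally)
\[
p \approx_X q \;\;\equiv\;\; \forall u,v \in X.\,\bigl(u \wave_X v \to ((u,p)\in \epsilon_X \leftrightarrow (v,q) \in \epsilon_X)\bigr),
\]
together with reflexivity $p \approx_X p$ forcing extensionality and strictness of the predicate named by $p$. The object $P(X,\wave_X) := (\pi X, \approx_X)$ is the intended power object, and its membership relation comes from restricting $\epsilon_X \in F_{X \times \pi X}$ to a subobject of $(X,\wave_X)\times P(X,\wave_X)$ which is $\wave_X \times \approx_X$-saturated by construction.

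To verify the universal property, given any $(Y,\wave_Y)$ and any subobject $R$ of $(X,\wave_X)\times(Y,\wave_Y)$, I would lift $R$ to a $(\wave_X\times\wave_Y)$-saturated strict predicate in $F_{FX\times FY}$; the tripos property produces a unique $r\colon FY \to \pi X$ (up to vertical isomorphism) such that $R \simeq (\id_{FX}\times r)^{-1}(\epsilon_X)$, and one checks that $r$ automatically respects $\wave_Y$ and $\approx_X$, so it descends to a morphism $(Y,\wave_Y)\to P(X,\wave_X)$ in $\cat B[F]$; uniqueness follows because two morphisms into $(\pi X, \approx_X)$ are equal iff they induce the same saturated predicate. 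The main obstacle will be the bookkeeping in this last step: producing a correctly saturated-and-strict predicate on $FX \times FY$ from an arbitrary subobject in $\cat B[F]$ (which a priori is only a functional span of monic covers in $\Asm(F)_q$, following the construction in Subsection \ref{exreg}), and checking that the correspondence between subobjects and classifying arrows is natural in $(Y, \wave_Y)$. Once this internal-language calculation is carried out using the Beck-Chevalley and Frobenius conditions on $F$, together with the fact that $F$ has both adjoints to reindexing, the universal property of the power object falls out from the universal property of $(\pi X, \epsilon_X)$ as a classifier of predicates over $X$ in the original tripos.
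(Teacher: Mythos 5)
The paper does not actually prove this theorem: its ``proof'' is a one-line citation of Pitts' thesis, so there is no argument in the text to compare against. Your proposal reconstructs the standard argument from that reference, and it is correct in outline: exactness and finite limits come from the ex/reg completion of the Heyting category $\Asm(F)$, and the power object of $(X,\wave_X)$ is built on the classifying object $\pi X$ of Definition \ref{tripos} equipped with the extensionality relation $\approx_X$, with the universal property transported from the weak classification property of $(\pi X,\epsilon_X)$ via Lemma \ref{sub in exreg}. Two small points worth making explicit when you write this up: first, since the paper reaches $\cat B[F]$ through $\Asm(F)_\exreg$ rather than directly through partial equivalence relations on objects of $\cat B$, the object carrying $\approx_X$ is $\pi(FX)$ (more precisely the assembly $\nabla\pi(FX)$), and the quantifiers in your formula for $\approx_X$ must be read as bounded by the predicate $X\in F_{FX}$ --- this bounding is what replaces the usual strictness clause of the one-step construction, since two names agreeing on $X$ up to $\wave_X$ are simply identified; second, the passage from an arbitrary subobject of $(X,\wave_X)\times(Y,\wave_Y)$ to a saturated predicate in $F_{FX\times FY}$ is exactly what Lemma \ref{sub in exreg} supplies, so the ``bookkeeping'' you flag is already done in the paper and need not be redone.
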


\begin{proof} See \cite{Pittsthesis}. \end{proof}

\newcommand\RT{\Cat{RT}}
\begin{definition} For each combinatory complete external filter $\phi$ on each order partial applicative structure $A$ in each topos $\cat H$, $\RT(A,\phi) = \cat H[\ds A/\phi]$, where $\RT$ stands for \xemph{realizability topos}. \end{definition}

This construction turns finite limit preserving morphisms of bifibrations between triposes into regular functors between toposes. A surprising fact about triposes, however, is that a morphism of fibred categories $(m_0,m_1):F\to F'$ such that $m_0$ does not preserve regular epimorphisms and $m_1$ does not preserve supine morphisms, still determines a finite limit preserving functor $\cat B[F] \to \cat B'[F']$. We derive the following useful fact.

\begin{lemma} Let $F:\cat F\to\cat B$ be a tripos, and let $G:\Asm(F) \to\cat E$ be a left exact functor to an exact category. There is an up to isomorphism unique $H:\cat B[F] \to\cat E$ such that $HI\simeq G$, if $I:\Asm(F) \to \cat B[H]$ is the canonical embedding in the ex/reg completion. \end{lemma}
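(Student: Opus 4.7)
The strategy is to exploit the impredicativity of the tripos to extend $G$ directly on the ex/reg presentation of $\cat B[F]$. By definition \ref{fqc}, every object of $\cat B[F]=\Asm(F)_\exreg$ is represented by a pair $(X,\sim_X)$ with $X\in\Asm(F)$ and $\sim_X$ an equivalence relation on $X$ in $\Asm(F)$; morphisms are equivalence classes of compatible maps, modulo a formal inversion of monic covers.

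On objects I would set $H(X,\sim_X)=GX/G\sim_X$, the quotient in $\cat E$ of the equivalence relation $G\sim_X\rightrightarrows GX$. This is well-posed because left exactness of $G$ preserves the finite limits defining an equivalence relation, so $G\sim_X\hookrightarrow GX\times GX$ is still an equivalence relation in $\cat E$, and exactness of $\cat E$ furnishes its quotient. On morphisms I would aim to represent each morphism $(X,\sim_X)\to(Y,\sim_Y)$ in $\cat B[F]$ by an honest compatible morphism $f:X\to Y$ in $\Asm(F)$ (i.e., one with $(f\times f)(\sim_X)\subseteq\sim_Y$), and send it to the induced map of quotients.

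The principal obstacle is precisely this representation step. In a general ex/reg completion, morphisms are fractions through monic covers $s:Z\to X$, and since $G$ is only left exact, $G(s)$ need not be a regular epimorphism in $\cat E$, so one cannot simply invert it. To overcome this I would invoke the tripos structure: the power objects $\pi X$ and generic predicates furnish enough projective covers in $\Asm(F)$ so that every object arises as a quotient of a projective, and every monic cover refines to one that splits. For realizability triposes this is made explicit by proposition \ref{asm is pasm} combined with the left exact pseudoinitiality of $\Pasm$ (proposition \ref{pasm pseudoinitial}): replacing $\Asm(F)$ by $\Pasm(\ijc A,\phi^+)$, whose filter is generated by singletons, every morphism of $\cat B[F]$ may be represented by a strict compatible morphism between partitioned assemblies, where $G$'s left exactness alone suffices to induce a well-defined map of quotients. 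Uniqueness then follows because any left exact $H'$ with $H'I\simeq G$ must send the effective quotient $X\twoheadrightarrow(X,\sim_X)$ in $\cat B[F]$ to the coequalizer of its kernel pair $H'(\sim_X)\simeq G\sim_X$ in the exact category $\cat E$, forcing $H'\simeq H$ up to canonical isomorphism.
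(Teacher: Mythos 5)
There is a genuine gap at exactly the point you flag as ``the principal obstacle,'' and the patch you propose does not close it. You claim that the power objects $\pi X$ furnish enough projective covers in $\Asm(F)$ and that every monic cover refines to one that splits. Neither is true for a general tripos: this thesis itself proves that $\Asm(\ds A/\phi)$ has a projective terminal object only when $\phi$ is generated by singletons, and the entire point of this lemma is that it holds without any such hypothesis (it is what makes left exact functors tractable when projectivity fails). Your fallback to proposition \ref{asm is pasm} both narrows the statement --- the lemma is about an arbitrary tripos $F:\cat F\to\cat B$, not only realizability triposes over toposes --- and still does not deliver what you need: $\Pasm(\ijc A,\phi^+)\cong\Asm(\ds A/\phi)$ is an equivalence of categories of assemblies, not a statement that a fraction through a monic cover in the ex/reg completion is represented by a strict compatible morphism. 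Those covers come from regular epimorphisms of assemblies, which in general do not split. Your uniqueness argument has the same flavour of problem: a merely left exact $H'$ need not send the quotient map $X\to(X,\sim_X)$ to a coequalizer in $\cat E$, so you cannot conclude $H'\simeq H$ that way.

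The paper's proof goes through an entirely different mechanism. The functor $G$ induces a morphism of fibred categories $(G\nabla,G):F\to\Sub(\cat E)$ that preserves finite limits and prone morphisms but not supine ones, and one then invokes Pitts' theorem that such a morphism out of a \emph{tripos} lifts to a functor $\cat B[F]\to\cat E$ of the associated exact categories; the codomain fibration need not be a tripos. The impredicativity is used not to manufacture projectives but through the membership predicates: every object of $\cat B[F]$ embeds canonically (via a singleton map) into a power object, functional relations are encoded as predicates on products and their defining conditions transported along the finite-limit-preserving morphism, and it is this internal higher-order encoding --- not a choice of strict representatives --- that determines $H$ on morphisms and pins it down up to isomorphism. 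If you want to prove the lemma rather than cite it, that is the argument to reconstruct.
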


\begin{proof} The functor $G$ induces a morphism of fibred categories $(G\nabla, G):F\to\Sub(\cat E)$, i.e. the maps preserve limits and prone morphisms respectively. The proof in \cite{Pittsthesis} that such morphism lift to morphisms of triposes, relies on the fact that $F$ is a tripos, and does not require $\Sub(\cat E)$ to be one. \end{proof}

\begin{proposition} Let $\cat H$ be a topos, let $A$ be a partial applicative structure and let $\phi$ be a combinatory complete external filter. Each left exact model $(F:\cat H\to\cat E,C)$ for $\ijc A$ and $\phi^+$, where $\cat E$ is exact, induces an up to isomorphism unique finite limit preserving functor $F_C:\RT(A,\phi) \to \cat E$. \label{lems and toposes}
\end{proposition}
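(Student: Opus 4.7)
The plan is to assemble the functor $F_C$ by composing three pseudoinitiality-style liftings, one for each of the three ``layers'' separating a left exact model from the realizability topos: the passage from the base topos to partitioned assemblies, the identification of partitioned assemblies with assemblies via the inhabited join completion, and finally the ex/reg completion.

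First, I would appeal to proposition \ref{pasm pseudoinitial}. The given data $(F,C)$ is precisely a left exact model for $(\ijc A,\phi^+)$, so there is an up to isomorphism unique left exact functor $G:\Pasm(\ijc A,\phi^+)\to\cat E$ such that $G\nabla\simeq F$ and $G$ sends the weakly generic filter $\mathring{\ijc A}$ to $C$. This is the step where the assumption that $\phi^+$ is generated by singletons is really used: every member of $\phi^+$ has a global section that $F$ must send to a global section of $C$, and this is what forces tracking data to be preserved on the nose (up to isomorphism) by any finite limit preserving $G$.

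Second, I would invoke proposition \ref{asm is pasm}, which supplies an equivalence $\Pasm(\ijc A,\phi^+)\cong \Asm(\ds A/\phi)$. Composing with the equivalence, $G$ becomes a left exact functor $G':\Asm(\ds A/\phi)\to\cat E$, still unique up to isomorphism relative to the data $(F,C)$, because the equivalence itself is canonical (it is the up to iso unique extension of $\nabla$ sending the generic filter to the generic assembly $G$ of the previous subsection).

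Third, since $\cat H$ is a topos, $\ds A/\phi$ is a tripos by the example immediately following definition \ref{tripos}. The lemma preceding this proposition then tells us that every left exact $G':\Asm(\ds A/\phi)\to\cat E$ into an exact category $\cat E$ extends, up to unique isomorphism, along the canonical embedding $I:\Asm(\ds A/\phi)\to\cat H[\ds A/\phi]=\RT(A,\phi)$ to a finite limit preserving functor $F_C:\RT(A,\phi)\to\cat E$. Composing the three steps yields the desired $F_C$, and chaining the uniqueness statements gives uniqueness up to isomorphism. The main obstacle is simply bookkeeping: verifying that the isomorphisms produced at each stage patch together to give a single coherent natural isomorphism class for $F_C$, and checking that the tripos-theoretic lifting lemma really applies here even though $G'$ need not be regular — this is the content of Pitts's observation that left exact (not necessarily regular) morphisms of triposes still induce left exact functors between the associated toposes.
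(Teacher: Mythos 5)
Your proposal is correct and follows essentially the same route as the paper, which proves the proposition by combining proposition \ref{pasm pseudoinitial}, the equivalence $\Pasm(\ijc A,\phi^+)\cong\Asm(\ds A/\phi)$ of proposition \ref{asm is pasm}, and the preceding lemma on lifting left exact functors along $\Asm(F)\to\cat B[F]$. You have simply spelled out the bookkeeping that the paper leaves implicit.
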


\begin{proof} This is the combination of propositions \ref{pasm pseudoinitial} and \ref{asm is pasm} with the lemma above. \end{proof}

\subsection{Computational density}
In order to analyze geometric morphisms between realizability toposes, we determine which kind of applicative morphisms are induced by left exact models.

\begin{definition} A \xemph{left exact morphism} $(A,\phi) \to (B,\chi)$ is a filter $C\subseteq \ijc A$
together with a map $f:C\to \ijc B$ that satisfies:
\begin{itemize}
\item there is a $q:\termo \to \ijc B$ in $\chi^+$ such that if $x\subseteq y$ and $x\in C$ then $qf(x)\converges$ and $rf(x) \subseteq f(y)$;
\item there is an $r:\termo \to \ijc B$ in $\chi^+$ such that if $xy\converges$, and $x,y\in C$, then $rf(x)f(y)\converges$ and $(rf(x))f(y)\leq f(xy)$;
\item for all $x:\termo \to \ijc A$ in $\phi^+$, $x$ factors through $C$ and $f\circ x\in \chi^+$.
\end{itemize}

Left exact morphism are composed like partial morphisms, i.e. if $(D,f):(A,\phi) \to (B,\chi)$ and $(E,g):(B,\chi) \to (C,\psi)$, then $(E,g)\circ (D,f) = (f^{-1}(E),g\circ f)$. They are ordered as follows: $(C,f)\leq (D,g)$ if $C\subseteq D$ and for some $r:\termo\to \ijc B  $ in $\chi^+$, $rf(a)\subseteq g(a)$ for all $a\in C$. \end{definition}

\begin{remark} We choose to work with $\ijc A$ instead of the object $DA$ of all downsets of $A$ because of the connection with partitioned assemblies. The disadvantage is that we have to work with partial maps $X\partar \ijc A$ to describe subassemblies of $\nabla X$, while we could equivalently represent them with total maps $X\to DA$. \end{remark}

\begin{example} Any applicative morphism $C:(A,\phi) \to (B,\chi)$ induces a left exact morphism. Let $f(u) = \set{b\in B| \exists a\in u. (b,a)\in C}$, and let $C' = \set{u\in \ijc A|f(u)\in \ijc B  }$. Now $(C',f):\ijc A\to \ijc B  $ satisfies the properties listed above. \end{example}

\begin{example} The downward closure map $d:\ijc A \to \ijc\ijc A$ determines a morphism $(A,\phi) \to (\ijc A,\phi^+)$, while $\set{(a,u)\in A\times \ijc A| a\in u }$ is an applicative morphism $(\ijc A,\phi^+) \to (A,\phi)$, corresponding to the union map $U: \ijc\ijc A\to \ijc A$. Now $U$ is a left adjoint to $d$ relative to $\subseteq$, and $d$ is injective. This adjunction induces the geometric inclusion of realizability toposes $\RT(A,\phi)\to \RT(\ijc A,\phi^+)$ we hinted at at the beginning of this section. \label{completiontopos}
\end{example}

\begin{lemma} There is an equivalence between left exact morphisms $(A,\phi) \to (B,\chi)$ and left exact functors $\RT(A,\phi) \to \RT(A,\chi)$ that commute with $\nabla$. \end{lemma}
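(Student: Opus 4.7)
The plan is to combine Propositions \ref{pasm pseudoinitial}, \ref{asm is pasm}, and \ref{lems and toposes} to reduce the statement to a characterization of left exact models $(\nabla,C^*)$ for $(\ijc A,\phi^+)$ that take values in $\RT(B,\chi)$. (I read the statement with $\RT(B,\chi)$ on the right; $\RT(A,\chi)$ appears to be a typo.) Throughout, I shall use the identification $\Asm(\ds A/\phi)\cong\Pasm(\ijc A,\phi^+)$ and the generic assembly $G=\{(a,U)\in A\times\ijc A\mid a\in U\}$ which represents the filter $\mathring{\ijc A}$.

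For the first direction, suppose $(C,f)$ is a left exact morphism $(A,\phi)\to(B,\chi)$. I form the subassembly $C^*\hookrightarrow\nabla\ijc A$ in $\RT(B,\chi)$ whose underlying subobject is $\nabla C\subseteq\nabla\ijc A$ and whose realizer family is the pullback of the generic subassembly $G_B\hookrightarrow\nabla\ijc B$ of $\RT(B,\chi)$ along $\nabla f:\nabla C\to\nabla\ijc B$. The three axioms for a left exact morphism translate termwise into the data making $(\nabla,C^*)$ a left exact model of $(\ijc A,\phi^+)$: the upward-closure realizer $q$ witnesses that $C^*$ is upward closed in $\nabla\ijc A$; the application realizer $r$ witnesses closure under the partial application of $\nabla\ijc A$; and the third axiom says precisely that every $u:\termo\to\nabla\ijc A$ in $\phi^+$ factors through $C^*$ with its composite in $\chi^+$, hence admits a global section $\termo\to C^*\cap\nabla u$. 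Proposition \ref{lems and toposes} then supplies an up-to-isomorphism unique left exact functor $F_{(C,f)}:\RT(A,\phi)\to\RT(B,\chi)$ satisfying $F_{(C,f)}\nabla\simeq\nabla$ and $F_{(C,f)}(G)\simeq C^*$.

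For the reverse direction, let $F:\RT(A,\phi)\to\RT(B,\chi)$ be left exact with $F\nabla\simeq\nabla$. I consider the assembly $FG\hookrightarrow F\nabla\ijc A\simeq\nabla\ijc A$ in $\RT(B,\chi)$. As a subassembly of $\nabla\ijc A$ over a topos, it is described by a subobject $C\subseteq\ijc A$ in $\cat H$ and an associated realizer map $f:C\to\ijc B$ (via the representation of assemblies of Proposition \ref{asm is pasm}, since every subassembly of a constant object arises this way). Because $F$ preserves finite limits and the filter $\mathring{\ijc A}$ of $\Pasm(\ijc A,\phi^+)$ is internally characterised by the filter equations, $FG$ is itself a filter on $\nabla\ijc A$ of exactly the form needed to form a left exact model of $(\ijc A,\phi^+)$ in $\RT(B,\chi)$. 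Reversing the translation of the previous paragraph, one extracts realizers $q$ for upward closure and $r$ for application from the internal filter data, and the global sections required by the model yield the factorisation-through-$C$ condition; the resulting $(C,f)$ is a left exact morphism $(A,\phi)\to(B,\chi)$.

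That these two constructions are mutually pseudo-inverse follows from the uniqueness clauses we have been invoking: starting with $(C,f)$, the functor $F_{(C,f)}$ satisfies $F_{(C,f)}(G)\simeq C^*$ by construction, so extracting a morphism again returns (an isomorphic copy of) $(C,f)$; conversely, starting with $F$, the left exact model $(\nabla,FG)$ is the one obtained from the extracted $(C,f)$, and Proposition \ref{lems and toposes} forces $F\simeq F_{(C,f)}$. Finally, one checks that the preorder on left exact morphisms (pointwise domination modulo a realizer in $\chi^+$) corresponds to the natural transformations between the induced functors, which again reduces via pseudoinitiality to a comparison of filter inclusions $C^*\subseteq (C')^*$ in $\RT(B,\chi)$. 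The main obstacle is the representation step in the reverse direction: one must argue that every filter on $\nabla\ijc A$ in $\RT(B,\chi)$ of the ``generic'' form $FG$ is, as an assembly, presented by a honest-to-goodness pair $(C,f)$ in $\cat H$ rather than by some quotient that has escaped the image of $\nabla$. This is where the hypothesis $F\nabla\simeq\nabla$ does the decisive work, together with the fact that $G$ is itself a partitioned assembly, so that $FG$ remains in the essential image of $\Pasm\hookrightarrow\RT$.
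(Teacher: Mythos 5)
Your proposal is correct and takes essentially the same route as the paper: both directions translate a left exact morphism $(C,f)$ into the left exact model $(\nabla,C^*)$ given by the filter on $\nabla\ijc A$ it determines and invoke proposition \ref{lems and toposes}, and conversely read off $(C,f)$ from $F\mathring{\ijc A}$ (your $FG$) via its prone map to $\mathring{\ijc B}$ and the identification $F\nabla\simeq\nabla$. The paper's proof is just a terser version of this; your extra care about $FG$ landing among (partitioned) assemblies and about mutual inverses fills in details the paper leaves implicit, and you are right that $\RT(A,\chi)$ in the statement should read $\RT(B,\chi)$.
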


\begin{proof} A left exact morphism $(C,c):(\ijc A,\phi^+)\to (\ijc B  ,\chi^+)$ corresponds to a filter $C$ of $\nabla \ijc A$ with enough global sections in $\Pasm(\ijc B  ,\chi^+)$ and $(\nabla, C)$ corresponds to some left exact functor $\Pasm(\ijc A,\phi^+) \to \Pasm(\ijc B  ,\chi^+)$ that commutes with $\nabla$. See proposition \ref{lems and toposes}.

Each left exact functor $F$ that commutes with $\nabla$ induces a left exact model $(\nabla, F\mathring{\ijc A})$. For each prone map $f: F\mathring{\ijc A} \to \mathring{\ijc B}$, $(\supp F \mathring{\ijc A},\supp f)$ is a left exact morphism $(A,\phi)\to(B,\chi)$. \end{proof}

Since left exact morphisms are ordered, they can be adjoints, and adjunctions of left exact morphisms correspond to adjunctions of left exact functors between realizability toposes. Left adjoints are regular and therefore determined by applicative morphisms. However, not all applicative morphisms induce geometric morphisms.

\begin{definition} A left exact morphism $(C,f):(\ijc A,\phi^+) \to (\ijc B  ,\chi^+)$ is \xemph{computationally dense} if it preserves arbitrary unions and if there is an $\comb m:\termo \to \ijc B$ in $\phi^+$ such that for all $x:\termo\to \ijc B  \in \chi^+$ there is a $x':\termo\to \ijc A  \in \phi^+$ such that for all $y\in \ijc A$ if $xf(y)\converges$, then $\comb mf(x'y)\subseteq xf(y)$. \end{definition}

\begin{proposition}[Hofstra, van Oosten] A left exact morphism $(C,f)$ is computationally dense if and only if there is a right adjoint $(D,g)$. \end{proposition}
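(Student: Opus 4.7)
The plan is to identify the right adjoint $(D,g)$ with the standard poset-theoretic right adjoint of $f$, given by
\[ g(u) = \bigcup\set{y\in C| f(y)\subseteq u}, \]
and to read the two clauses of computational density (preservation of arbitrary unions, plus the combinator $\comb m$ and the transpose $x\mapsto x'$) as precisely the data needed to promote this poset adjoint to an adjoint pair of left exact morphisms. The forward direction is essentially formal; the real work is in the converse, where we must produce realizers for $g$.

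First I would dispatch the ``if'' direction. Suppose $(C,f)\dashv (D,g)$ in the order on left exact morphisms. Left adjoints preserve colimits, and unions in $\ijc A$ are just the joins of this internal poset, so $f$ preserves arbitrary unions. For the combinator $\comb m$, unfold the counit $\epsilon\oftype fg\to\id$: it is witnessed by some realizer $\comb e\in\chi^+$ such that $\comb e f(g(x))\subseteq x$ for all $x\in D$. Now given any $x:\termo\to\ijc B$ in $\chi^+$, set $x' = g(x)$, which belongs to $\phi^+$ by the third clause in the definition of the left exact morphism $(D,g)$. Combining $\comb e$ with a realizer of the multiplicativity of $f$ gives a single $\comb m\in\chi^+$ such that $\comb m f(x'y) = \comb m f(g(x)y)\subseteq \comb m f(g(x))f(y)\subseteq xf(y)$ whenever $xf(y)\converges$.

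For the ``only if'' direction, assume $(C,f)$ is computationally dense, and define $g$ and $D$ by the formula above, with $D$ consisting of those $u\in \ijc B$ for which $g(u)$ is inhabited. Because $f$ preserves arbitrary unions, the equivalence $f(y)\subseteq u \iff y\subseteq g(u)$ (for $y\in C$) is immediate, so at the poset level $f\dashv g$; this handles the unit and counit. Upward closure of $g$ is automatic from the union formula (the realizer can be chosen to be any combinator for $y\mapsto y$). The filter clause for $(D,g)$ amounts to: for each $x\in\chi^+$, produce an element of $\phi^+$ below $g(x)$. This is exactly the transpose $x'$ furnished by density, since $\comb m f(x'\cdot\top)\subseteq x f(\top)\subseteq x$ forces $f(x')\subseteq \comb m^{-1}\cdot x$, equivalently $x'\subseteq g(x)$ up to a realizer. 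The multiplicative realizer for $g$ is obtained similarly: applying density to the combinator $x\in\chi^+$ coding ``pair and multiply in $\ijc B$ then intersect with $uv$'' produces a canonical $x'\in\phi^+$ that witnesses $g(u)g(v)\subseteq g(uv)$ uniformly.

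The main obstacle is the multiplicativity realizer for $g$: one must build a single global section of $\ijc A$ tracking $g(u)g(v)\subseteq g(uv)$ uniformly in $u,v$, and this requires packaging $\comb m$, the multiplier of $f$, and the transpose operation into one combinator via combinatory completeness of $\phi^+$. Once this is done, the fact that $f$ preserves unions reduces verification of the adjunction laws and of the remaining axioms of a left exact morphism to routine checks using the formula for $g$.
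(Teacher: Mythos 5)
Your candidate right adjoint is the wrong map, and this is where the argument breaks. You set $g(u)=\bigcup\set{y\in C\mid f(y)\subseteq u}$, the strict poset-theoretic adjoint, but computational density never controls $f$ on the nose: it only gives, for $x\in\chi^+$, some $x'\in\phi^+$ with $\comb m f(x'y)\subseteq xf(y)$, i.e.\ inclusions that hold \emph{after} applying the decoding combinator $\comb m$. With your $g$, the clauses you must verify are strict: the filter clause needs, for each $x\in\chi^+$, an $x'\in\phi^+$ with $f(x')\subseteq x$ literally (otherwise $x$ need not even factor through $D$), and closure under application needs $r_g\in\phi^+$ with $f(r_g\,g(u)g(v))\subseteq uv$ literally. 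Density cannot deliver either, since $f$ may encode its outputs so that no value of $f$ lies below a given $x$ without decoding by $\comb m$. Your attempted repair of the filter clause is also not sound as written: $xf(\termo$-like input$)\subseteq x$ has no justification, and ``$f(x')\subseteq\comb m^{-1}\cdot x$'' is meaningless since $\comb m$ has no inverse; nor is there any ``up to a realizer'' slack once $g$ is fixed by a strict formula. The proof in the paper avoids all of this by baking $\comb m$ into the definition, $g(y)=\bigcup\set{x\in\ijc A\mid \comb m f(x)\subseteq y}$, and then producing the application realizer $r_g$ by explicit programming (a pairing trick plus the transpose $y^t=\bigcup\set{z\mid\forall w.\ \comb m f(zw)\subseteq yf(w)}$), which is exactly the step you flag as ``the main obstacle'' but do not carry out --- and which cannot be carried out for your strict $g$.

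The converse direction also has a wrong-way inequality. You take $x'=g(x)$ and write $\comb m f(g(x)y)\subseteq \comb m f(g(x))f(y)$, but the application realizer of a left exact morphism gives $r_f f(a)f(b)\subseteq f(ab)$, i.e.\ it bounds $f(ab)$ from \emph{below} by $f(a)f(b)$, never from above; so there is no way to dominate $\comb m f(g(x)y)$ by $xf(y)$ from the counit and $r_f$ alone. The correct move, as in the paper, is to transpose the whole operation $y\mapsto xf(y)$ across the adjunction: one obtains $x'\in\phi^+$ with $x'y\subseteq g(xf(y))$, whence $\comb m f(x'y)\subseteq\comb m f(g(xf(y)))\subseteq xf(y)$, using that $f$ is genuinely monotone because it preserves unions. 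Relatedly, your appeal to ``left adjoints preserve colimits'' is too quick here: the adjunction inequalities are only realizer-mediated, so strict preservation of unions requires the short argument via $f(\bigcup Y)\subseteq z \iff \bigcup_{y\in Y}f(y)\subseteq z$ that the paper spells out.
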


\begin{proof} Assume computational density. Let $g:\ijc B \to DA$ satisfy $g(y) = \bigcup \set{x\in \ijc A| \comb mf(x)\leq y}$ and let $D = \pre g(\ijc A)$, so $g:D\to \ijc A$ as required for left exact morphisms. This map $g$ is monotone; if $x\subseteq y$ then $g(x)\subseteq g(y)$, for all $x\in D$. Closure under application is harder to show. It requires some programming.

To start with, let $\comb p$, $\pi_0$ and $\pi_1:\termo\to\ijc A$ be paring and unparing combinators in $\phi^+$, i.e. $\pi_0(\comb pxy) \leq x$ and $\pi_1(\comb pxy) \leq y$. Let $r_f:\termo\to \ijc B\in \chi^+$ be one of the combinators that satisfy $r_ff(x)f(y) \leq f(xy)$. The first program is:
\[ wz \subseteq \comb m (r_ff(\pi_0) z) (\comb m (r_ff(\pi_1) z)) \]
There is a $w:\termo\to\ijc B\in \chi^+$ that satisfies this, because of combinatory completeness and closure under application. The following reduction explains the purpose of this program:
\begin{align*}
w f(\comb pxx') &\subseteq \comb m (r_ff(\pi_0) f(\comb pxx')) (\comb m (r_ff(\pi_1) f(\comb pxx')))\\
& \subseteq \comb m (f(\pi_0(\comb pxx')) (\comb m (f(\pi_1(\comb pxx')))) \\
& \subseteq \comb m f(x) (\comb m f(x'))
\end{align*}
It takes the image of a pair, pulls it apart, applies $\comb m$ to both parts, and then applies the results to each other.

We still need some more programming. For $y\in \ijc B$ let $y^t = \bigcup\set{z\in \ijc A| \forall z\in \ijc A. \comb mf(xz)\leq yf(z)}$. This is a total arrow $\ijc B\to\ijc A$, because combination density implies that these unions are inhabited. Moreover, if $y:\termo\to \ijc B\in \chi^+$ then $y^t:\termo\to\ijc A\in \phi^+$ for the same reason. The second program is:
\[ r_gxx' \subseteq w^t(\comb p xx') \]
This time there is a $r_g:\termo\to\ijc A\in \phi^+$. This program satisfies:
\[ \comb mf(r_gxx') \subseteq \comb m f(w^t (\comb p xx')) \subseteq wf(\comb pxx') \subseteq \comb m f(x) (\comb m f(x')) \]
Now if we let $x=g(y)$ and $x' = g(y')$, we get:
\[ \comb mf(r_gg(y)g(y')) \subseteq \comb m f(g(y)) (\comb m f(g(y')) \subseteq yy' \]
The last inequality follows from the definition of $g$ in combination with the fact that $f$ preserves arbitrary joins. Another consequence of the definition of $g$, is that we can conclude that $r_gg(y)g(y') \subseteq g(yy')$ for all $y,y'\in \ijc B$. Therefore $g$ is in fact closed under application, and a left exact morphism as promised.

For all $y\in D$, $\comb m f(g(y))\converges$ and $\comb mf(g(y))\subseteq y$, because $f$ preserves unions. Since there is an $\comb i:\termo \to \ijc A \in \chi^+$ satisfying $\comb i y\subseteq y$, there is some $n:\termo \to \ijc B\in \chi^+$ such that $\comb m f(nx)\subseteq \comb i f(x)\subseteq f(x)$. This means $nx \leq g(f(x))$ for all $x\in \ijc A$. These combinators determine that $f$ and $g$ are adjoint. So each computationally dense morphism $(C,f)$ has a right adjoint $(D,g)$, and we have proved one direction of the equality in the proposition.

Assume that $(C,f)$ has a right adjoint $(D,g)$. The map $f$ preserves arbitrary joins because of the adjunction, and the fact that application preserves arbitrary joins. First note that $f(y)\subseteq z$ if and only if $ny\subseteq g(x)$ for a fixed $n$ that realizes the unit of the adjunction.
\[\begin{array}{ccccc}
f(\bigcup Y) \subseteq z &\iff & n\bigcup Y = \bigcup_{y\in Y}ny \subseteq g(z)
&\iff & \forall y\in Y. ny \subseteq g(z) \\
&\iff & \forall y\in Y. f(y) \subseteq z
&\iff & \bigcup_{y\in Y} f(y) \subseteq g(z) \\
\end{array}\]

There must also be a realizer $\comb m:\termo \to \ijc B\in \chi^+$ for the counit of the adjunction $f\circ g \leq \id$. It satisfies computational density because for all $x:\termo\to \ijc B  \in \chi^+$ there is an $x':\termo\to \ijc A  \in \phi^+$ such that $x(fy)\converges$ and $xf(y)\subseteq z$ if and only if $x'y\converges$ and $x'y\subseteq g(z)$. Therefore $x'y\subseteq g(xf(y))$ and
\[ \comb m f(x'y) \subseteq \comb m f(g(xf(y))) \subseteq xf(y)\]
This proves that left adjoint left exact morphisms are computationally dense.
\end{proof}

\begin{example} Let $A$ be a filter of $B$, let $\phi$ be a combinatory complete external filter of $A$ and let $\chi$ be $\set{U\in \ext B| U\cap A\in \phi}$. The map $U\mapsto U\cap A: \chi \to \phi$ is surjective, because each $V\in \phi$ equals $U\cap A$ if $U$ is the downward closure $V$. 

The inclusion $A\to B$ determines an adjoint pair of applicative morphisms between $(A,\phi)$ and $(B,\phi)$, because this is a special case of the last part of example \ref{single valued morphisms}. The left adjoint $(A,\phi) \to (B,\phi)$ is full and faithful because the surjection $\chi \to \phi$ determines that the same set of morphisms is tracked.

The left exact morphism $x\mapsto x\cap \ijc A: (\ijc B  ,\chi^+) \to (\ijc A,\phi^+)$, which is induced by the right adjoint, is computationally dense, because $x(y\cap \ijc B  )\subseteq z$ for some $x:\termo \to \ijc B  $ in $\chi^+$ if and only if $x'(y\cap \ijc B  )\subseteq z$ for some $x':\termo \to \ijc A$ in $\phi^+$. Therefore the right adjoint has its own right adjoint. This determines a \xemph{local geometric morphism} $\RT(B,\chi) \to \RT(A,\phi)$.

A special case is the morphism $\RT(B,A) \to \RT(A,A)$, which was studied in \cite{MR1769604, MR1909030, MR1948021}.
\end{example}

\section{Projectives} 
In this section we consider an alternative approach to realizability categories, using a construction on the category of partitioned assemblies and its underlying object functor, rather then on the realizability fibration. Our main result is that it doesn't work, because the realizability fibration is not a free completion of this underlying set functor, \xemph{unless} we limit ourselves to filters that are generated by singletons.

\subsection{Reg/lex and ex/lex completions}
We freely add stable images to categories with finite limits to make them regular, and compose this operation with the ex/reg completion in order to turn left exact categories into exact categories. We then show that the original category is embedded as the category of projective objects. This is a drawback for realizability.

\newcommand\reglex{\textrm{reg/lex}}
\begin{definition} A \xemph{reg/lex completion} of a category $\cat C$ with finite limits, is a regular category $\cat C_\reglex$ with a functor $I:\cat C\to\cat C_\reglex$ that preserves finite limits, such that:
\begin{itemize}
\item for every finite limit preserving $F$ from $\cat C$ to a regular category $\cat R$, there is a regular functor $G:\cat C_\reglex \to\cat R$ such that $GI\simeq F$,
\item for every pair of regular functors $H,K:\cat C_\reglex \to \cat D$ and every natural transformation $\eta:HI\to KI$ there is a unique $\theta: H\to K$ such that $\theta I = \eta$.
\end{itemize}
\end{definition}

\begin{proposition}[Rosolini, Carboni (see \cite{MR1358759})] Every (small) category $\cat C$ with finite limits has an reg/lex completion. \end{proposition}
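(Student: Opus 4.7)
The plan is to construct $\cat C_\reglex$ in two steps, parallel to the construction of the ex/reg completion in the previous section. There we first added quotients of equivalence relations to form $\cat C_q$, and then inverted a class of monic covers. For reg/lex the strategy is the same except that we add only formal \emph{images} of morphisms (coequalizers of kernel pairs) rather than quotients of arbitrary equivalence relations.

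First, I would define an auxiliary category $\cat C_i$ whose objects are morphisms $f\colon X\to Y$ of $\cat C$, thought of as formal images of $f$ inside $Y$. A morphism from $f\colon X\to Y$ to $g\colon W\to Z$ is an equivalence class of pairs $(h\colon Y\to Z,\,k\colon X\to W)$ satisfying $hf=gk$, modulo $(h,k)\sim(h',k')$ iff $h=h'$; the information carried by a morphism is really just $h$, together with a witness that $hf$ factors through $g$. The canonical embedding $\cat C\to\cat C_i$ sends $Y$ to $\id_Y$, and each $f\colon X\to Y$ in $\cat C$ picks up a factorization into a formal cover from $X$ and a formal monomorphism into $Y$ through the new object $f\colon X\to Y$.

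Next, I would define $\cat C_\reglex := \fun(\cat C_i,W)$ where $W$ is the class of monic covers in $\cat C_i$, following the recipe developed for the ex/reg construction. One verifies by the same arguments that $W$ admits a calculus of right fractions, that the resulting localization has finite limits preserved by $\cat C\to\cat C_\reglex$, and that every morphism in $\cat C_\reglex$ acquires a stable regular epi-mono factorization. The essential point, parallel to the ex/reg case, is that every vertical morphism in $\cat C_i$ splits as a supine part followed by a monic cover, so inverting $W$ forces supines to become regular epimorphisms in $\cat C_\reglex$.

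The universal property is then verified as follows. Given a finite limit preserving $F\colon\cat C\to\cat R$ into a regular category, one extends to $G\colon\cat C_\reglex\to\cat R$ by $G(f\colon X\to Y)=\im{Ff}(FX)\hookrightarrow FY$, which is well defined because $\cat R$ has stable images. Uniqueness of $G$ up to unique natural isomorphism propagates through both steps of the construction. The main technical obstacle will be the stability of formal images in the localized category: namely, that monic covers in $\cat C_i$ pull back to monic covers, so that the regular epi-mono factorizations are preserved under pullback. This parallels the most delicate part of the ex/reg construction and is the step that distinguishes the reg/lex completion from coarser constructions such as the bicategory of spans.
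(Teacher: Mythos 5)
Your intermediate category $\cat C_i$ is exactly the category the paper uses: it is the full image (surjective-on-objects/faithful factorization) $\cod_1:\cat D\to\cat C$ of the fundamental bifibration $\cod:\cat C/\cat C\to\cat C$, and the second step of the paper's first construction is likewise a category of fractions, namely $\Asm(\cod_1)=\fun(\cat D,S)$. So the architecture of your proof matches the paper's. The genuine gap is the class $W$. You call it ``the monic covers in $\cat C_i$, following the recipe developed for the ex/reg construction'', but that recipe defines a cover as a morphism containing a regular epimorphism of $\cat C$ --- a notion that does not exist here, since $\cat C$ is only assumed to have finite limits. So as stated $W$ is undefined, and you cannot take it to be trivial either: $\cat C_i$ itself is not the reg/lex completion, because the formal cover $\id_X\to(f\colon X\to Y)$ fails to be a regular epimorphism there (a map out of $f$ is a map $u\colon Y\to Z$ on codomains, and a cocone coequalizing the kernel pair of $f$ need neither lift to such a $u$ nor determine it uniquely). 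The class that must be inverted is the class $S$ of \emph{supine monomorphisms} for the projection $\cat C_i\to\cat C$: morphisms $h\colon(f\colon X\to Y)\to(g\colon W\to Z)$ with $h$ monic such that $g$ and $hf$ factor through each other in the fibre over $Z$. With that choice your construction becomes literally the paper's $\Asm(\cod_1)$, the calculus of right fractions and regularity are the lemmas already proved for assemblies, and your formula $G(f\colon X\to Y)=\im{Ff}(FX)\hookrightarrow FY$ is precisely the morphism of fibred locales $\cod_1\to\Sub(F-)$ whose transpose under $\Asm\dashv\Sub$ gives the universal property.

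Relatedly, your guiding analogy is slightly off, and this is what led to the confusion: in the ex/reg construction the fractions step exists to restore the regular epimorphisms that the regular category $\cat C$ already had and that $I\colon\cat C\to\cat C_q$ destroyed; here $\cat C$ has no regular structure to restore, and the fractions step instead serves to make the formal covers into genuine stable regular epimorphisms (equivalently, to cut the hom-sets down so that maps agreeing on the formal image are identified). Your sentence that ``every vertical morphism in $\cat C_i$ splits as a supine part followed by a monic cover'' is not the relevant fact --- the factorization used in the assemblies lemma is supine followed by vertical, and it is the supine monomorphisms that get inverted so that supines become regular epimorphisms. Note also the paper's second construction, which sidesteps fractions entirely: inside the free quotient completion $\cat C_q$ the full subcategory on quotients of kernel pairs is already a reg/lex completion, since the universal property only quantifies over finite-limit-preserving functors out of $\cat C$.
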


\begin{proof} We present two different constructions based on the material in the beginning of this chapter.
\begin{enumerate}
\item For every left exact category $\cat C$ the functor $\cod: \cat C/\cat C \to\cat C$ is a bifibration with finite limits that satisfy the Beck-Chevalley and Frobenius conditions. The functor $\cod$ factors as a surjective-on-objects and full functor $\cod_0:\cat C/\cat C \to \cat D$ followed by a faithful one $\cod_1:\cat D \to \cat C$ in an up to isomorphism unique way, and the faithful part is a fibred locale, because $\cod$ has all the required properties except faithfulness, and those properties are preserved in this factorization. 

The category of assemblies $\Asm(\cod_1)$ is a regular category with a left exact embedding $\nabla: \cat C \to\Asm(\cod_1)$. Every functor $F:\cat C\to\cat R$ that preserves finite limits induces a fibred locale $\Sub(F-)$ on $\cat C$: the pullback of subobject fibration. Because of the biadjunction $\Asm \dashv \Sub$, $\Asm(\cod_1)$ has the universal property of a reg/lex completion.

\item The free quotient completion $\cat C_q$ already has a similar universal property, because it makes all equivalence relation in $\cat C$ effective, and images are quotients of kernel pairs, which are equivalence relations. The regular category $\cat C_q$ has a full regular subcategory of quotients of kernel pairs, and this is yet another reg/lex completion of $\cat C$
\end{enumerate}
\end{proof}

Before we rush onto the ex/lex completion, it is useful to remember that if there are enough projectives, the exact completion can be simplified, as we proved in corollary \ref{fine2}. Therefore this is a useful fact:

\begin{lemma} For all $X\in\cat C$, $IX$ is projective. Also, objects in the image of $I$ cover every object of $\cat C_\reglex$. \end{lemma}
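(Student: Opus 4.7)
The plan is to use the second construction of $\cat C_\reglex$ from the preceding proposition, namely as the full regular subcategory of $\cat C_q$ on quotients of kernel pairs, with $I : \cat C \to \cat C_\reglex$ the restriction of the embedding $X \mapsto (X,\mathord=)$ of definition \ref{fqc}. The structural fact on which everything rests was established in the proof of that proposition: every morphism in $\er(\cat C)$ factors as a prone morphism followed by a vertical one, prone morphisms become monomorphisms in $\cat C_q$, and vertical morphisms become regular epimorphisms. By uniqueness of the regular-epi-mono factorization in the regular category $\cat C_q$, the regular epimorphisms of $\cat C_q$ are, up to isomorphism, exactly the vertical morphisms $[\id_Y] : (Y,\wave_1) \to (Y,\wave_2)$ with $\wave_1 \subseteq \wave_2$; since $\cat C_\reglex$ is a regular subcategory, this characterization transfers to it.

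For projectivity of $IX$, let $e : A \to B$ be a regular epimorphism in $\cat C_\reglex$ and $f : IX \to B$ any morphism. By the characterization above we may assume, up to isomorphism of $A$ and $B$, that $A = (Y,\wave_1)$, $B = (Y,\wave_2)$ and $e = [\id_Y]$, and we may represent $f$ by some $f_0 : X \to Y$ in $\cat C$. The pair $(f_0,f_0) : X \to Y^2$ factors through the diagonal, and the diagonal is contained in every equivalence relation; hence the same $f_0$ also defines a morphism $g : (X,\mathord=) \to (Y,\wave_1)$ in $\cat C_\reglex$. Since $e \circ g = [\id_Y \circ f_0] = [f_0] = f$, this $g$ is the required lift, so $IX$ is projective.

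For the covering assertion, let $(Y,\wave_Y) \in \cat C_\reglex$ be arbitrary. The identity $\id_Y$ of $\cat C$ yields a vertical morphism $[\id_Y] : (Y,\mathord=) \to (Y,\wave_Y)$; by the characterization it is a regular epimorphism in $\cat C_q$, and its domain and codomain both lie in $\cat C_\reglex$, so it is a regular epimorphism there as well. Since $(Y,\mathord=) = IY$ this produces the desired cover $IY \twoheadrightarrow (Y,\wave_Y)$ by an object in the image of $I$. The only subtlety in the whole argument is confirming that the regular epimorphisms of $\cat C_\reglex$ match those of $\cat C_q$ restricted to $\cat C_\reglex$-objects, which holds because $\cat C_\reglex$ is closed under the coequalizers of kernel pairs that witness them.
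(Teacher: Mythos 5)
Your argument is correct, but it proceeds quite differently from the paper: the paper disposes of this lemma by citing Proposition~9 of Carboni--Vitale \cite{MR1600009}, whereas you verify it directly inside the free quotient completion model. Your key points all check out: the vertical/prone factorization in $\er(\cat C)$ descends to the regular-epi--mono factorization in $\cat C_q$ (verticals become regular epimorphisms, prones become monomorphisms), so by uniqueness of that factorization every regular epimorphism is, up to isomorphism of its codomain, a vertical map $[\id_Y]:(Y,\wave_1)\to(Y,\wave_2)$; lifting along such a map is trivial from $IX=(X,\mathord=)$ because any representative $f_0:X\to Y$ already defines a map into $(Y,\wave_1)$ by reflexivity; and $[\id_Y]:(Y,\mathord=)\to(Y,\wave_Y)$ gives the cover. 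You were also right to flag the transfer of the characterization from $\cat C_q$ to the full subcategory $\cat C_\reglex$ of quotients of kernel pairs; the cleanest justification is that this subcategory is closed under the factorization itself, since $(f_0\times f_0)^{-1}$ of a kernel pair is again a kernel pair (of the composite), so images computed in $\cat C_q$ stay in $\cat C_\reglex$ and the two notions of regular epimorphism agree -- your phrasing via coequalizers of kernel pairs is a terser version of the same point. Two remarks on what each route buys: the citation is shorter and anchors the statement in the standard literature on regular completions, while your proof is self-contained in the thesis's own machinery and in fact mirrors the parenthetical argument the paper itself makes for $\cat C_q$ in the ex/reg subsection (where it observes that regular epimorphisms onto objects $IY$ split). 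Finally, you tacitly fix one model of the reg/lex completion; since the completion is determined up to equivalence by its universal property and both projectivity and covering are invariant under equivalence, this is harmless, but a sentence saying so would make the proof watertight for an arbitrary choice of $\cat C_\reglex$.
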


\begin{proof} See proposition 9 of \cite{MR1600009}. 
\end{proof}

\newcommand\exlex{\textrm{ex/lex}}
\begin{definition} An \xemph{ex/lex completion} of a category $\cat C$ with finite limits, is an exact category $\cat C_\exlex$ with a functor $I:\cat C\to\cat C_\exlex$ that preserves finite limits, such that:
\begin{itemize}
\item for every finite limit preserving $F$ from $\cat C$ to an exact category $\cat E$, there is a regular functor $G:\cat C_\exlex \to\cat R$ such that $GI\simeq F$,
\item for every pair of regular $H,K:\cat C_\exlex \to \cat D$ and every $\eta:HI\to KI$ there is a unique $\theta: H\to K$ such that $\theta I = \eta$.
\end{itemize}
\end{definition}

\begin{lemma} The ex/lex completion $I:\cat C\to\cat C_\exlex$ is the composition of the reg/lex $I_0:\cat C\to\cat C_\reglex$ and the ex/reg $I_1:\cat C_\reglex\to (\cat C_\reglex)_\exreg$ completions. \end{lemma}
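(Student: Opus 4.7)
The plan is to verify the universal property of the ex/lex completion for the composite $I_1\circ I_0 : \cat C \to (\cat C_\reglex)_\exreg$ by applying the universal properties of $I_0$ and $I_1$ in succession. First I would observe that this composite is a left exact functor into an exact category: $I_0$ preserves finite limits by definition of the reg/lex completion, $I_1$ preserves finite limits because it is regular, and $(\cat C_\reglex)_\exreg$ is exact since it is the ex/reg completion of a regular category.

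Next I would handle the 1-dimensional part of the universal property. Given any finite-limit-preserving $F:\cat C\to \cat E$ with $\cat E$ exact, note that $\cat E$ is in particular regular, so the universal property of $I_0$ produces a regular functor $G:\cat C_\reglex \to \cat E$ with $GI_0 \simeq F$. Since $\cat E$ is exact, we can then apply the universal property of $I_1$ to $G$, yielding a regular functor $H:(\cat C_\reglex)_\exreg \to \cat E$ with $HI_1 \simeq G$. Pasting the two isomorphisms gives $H(I_1 I_0) \simeq GI_0 \simeq F$, which is what we need.

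For the 2-dimensional part, given regular functors $H,K:(\cat C_\reglex)_\exreg \to \cat E$ and a natural transformation $\eta: H(I_1I_0)\to K(I_1I_0)$, I would first invoke the uniqueness clause of the reg/lex completion: since $HI_1$ and $KI_1$ are regular functors out of $\cat C_\reglex$ and $\eta$ is a natural transformation between their restrictions along $I_0$, there is a unique $\theta_0:HI_1\to KI_1$ with $\theta_0 I_0 = \eta$. Then the uniqueness clause of the ex/reg completion applied to $\theta_0$ yields a unique $\theta:H\to K$ with $\theta I_1 = \theta_0$. Combining these gives $\theta(I_1I_0) = (\theta I_1)I_0 = \theta_0 I_0 = \eta$, and the uniqueness of $\theta$ follows from the uniqueness at each of the two stages.

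There is no real obstacle here; the result is a purely formal consequence of composing biadjunctions (or equivalently, of chaining two universal properties), once one notes that the exactness of the target automatically gives the regularity needed to invoke the reg/lex universal property as an intermediate step. The only mild subtlety worth flagging is the compatibility of the two natural isomorphisms $GI_0\simeq F$ and $HI_1 \simeq G$ when one wants a single canonical isomorphism $H(I_1I_0)\simeq F$, but this is handled by horizontal composition of natural isomorphisms and does not interfere with the uniqueness argument, which only concerns transformations out of $I_1I_0$.
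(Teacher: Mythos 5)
Your proposal is correct and follows essentially the same route as the paper's own proof: both verify the ex/lex universal property for $I_1I_0$ by chaining the one- and two-dimensional universal properties of the reg/lex and ex/reg completions, using that an exact target is in particular regular.
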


\begin{proof} This is a question of checking the universal properties. 
\begin{itemize}
\item The composed functor is a finite limit preserving functor into an exact category.
\item For every finite limit preserving functor $F:\cat C\to\cat E$ to an exact category, there exists regular $G:\cat C_\reglex\to\cat E$ and $H:(\cat C_\reglex)_\exreg\to\cat E$ such that $HI_1\simeq G$ and $HI_1I_0\simeq GI_0\simeq F$.
\item For every pair of regular $K,L:(\cat C_\reglex)_\exreg\to\cat E$ and every natural transformation $\eta:KI\to LI$ there are unique $\theta: KI_1 \to LI_1$ and $\iota: K\to L$ such that $\iota I_1 = \theta$ and $\iota I = \theta I_0 = \eta$.
\end{itemize}
\end{proof}

In \cite{MR1056382} Robinson and Rosolini show that $\Asm(\ds\Kone/\Kone)\cong \Pasm(\Kone,\Kone)_\reglex$ and $\Eff = \RT(\Kone,\Kone)\cong \Pasm(\Kone,\Kone)_\exlex$ for $\Kone$ in $\Set$. In the coming subsection we explain whether this holds for other realizability categories.

\subsection{Projectives of $\Asm(\ds A/\phi)$}
If $\Asm(\ds A/\phi)$ is a reg/lex completion, then it must have enough projective objects. In order to judge whether this is the case, we analyze what objects in realizability categories are projective.

\begin{lemma}  Let $\cat H$ be a Heyting category, $A$ an order partial applicative structure and $\phi$ a combinatory complete external filter whose members are inhabited. If $P$ is a projective object in $\Asm(\ds A/\phi)$, then there is a prone morphism $P\to \A$. \end{lemma}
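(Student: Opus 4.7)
The plan is to combine Shanin's rule with projectivity to exhibit $P$ as a retract of a partitioned assembly, and then to argue that any such retract is itself isomorphic to a partitioned assembly. First, since $\ds A/\phi$ is separated with the weakly generic vertical filter $\A$, Proposition \ref{Shanin} applied to the object $P$ produces a supine \prodo morphism $s:Q\to P$. By the definition of a \prodo morphism, $Q$ embeds pronely into $\A\times P$; the first projection $p:Q\to\A$ is then prone, so $Q$ is a partitioned assembly.

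Next I would invoke the fact (established in the course of showing that $\Asm(F)$ is regular) that supine morphisms of $\cat DA/\phi$ become regular epimorphisms in $\Asm(\ds A/\phi)$. Hence $s:Q\to P$ is a regular epimorphism in $\Asm(\ds A/\phi)$, and because $P$ is projective it splits: there exists $r:P\to Q$ with $s\circ r=\id_P$.

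The final step is to show $P$ is isomorphic in $\Asm(\ds A/\phi)$ to a partitioned assembly. Set $g=(\ds A/\phi)(p)\circ(\ds A/\phi)(r):(\ds A/\phi)P\to A$ and form the partitioned assembly
\[ R \;=\; (\ds A/\phi)_g(\A) \;=\; \set{(a,x)\in A\times (\ds A/\phi)P \mid a\leq g(x)}, \]
which comes with the canonical prone morphism $R\to\A$. Underlying supports all agree, $\supp P=(\ds A/\phi)P=\supp R$. A tracker of $r$, applied to any realizer of $x\in P$, lands in $\{a\leq g(x)\}$ by construction of $g$, so it tracks a morphism $\iota:P\to R$ whose underlying map is the identity. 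Conversely, any realizer $a\leq g(x)$ of $x\in R$ is a realizer of $r(x)\in Q$, and a tracker of $s$ then tracks a morphism $\kappa:R\to P$ with underlying identity. The composites $\kappa\circ\iota$ and $\iota\circ\kappa$ have underlying identities and admit trackers, so they coincide with $\id_P$ and $\id_R$ in $\Asm(\ds A/\phi)$. Composing the isomorphism $P\cong R$ with the prone projection $R\to\A$ yields the desired prone morphism.

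The main obstacle is this last step: in general the set of realizers of $x\in P$ is strictly larger than the principal downset $\{a\leq g(x)\}$, so $P$ and $R$ are not literally the same family of downsets. The resolution is that morphisms in $\Asm(\ds A/\phi)$ are determined by their underlying map on supports together with the \emph{mere existence} of a tracker, which is precisely what the trackers of $r$ and $s$ provide, so the identification of $P$ with the partitioned assembly $R$ is legitimate despite the difference in realizer data.
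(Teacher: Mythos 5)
Your first two steps coincide with the paper's: weak genericity (via Proposition \ref{Shanin}) plus the fact that supine morphisms become regular epimorphisms in $\Asm(\ds A/\phi)$ give a cover $s:Q\to P$, and projectivity splits it by some $r:P\to Q$. The gap is in your final step, and it sits exactly at the crux of the lemma. Proposition \ref{Shanin} as stated only gives that the inclusion $Q\hookrightarrow\A\times P$ is prone; it does not follow that the composite $Q\to\A\times P\to\A$ is prone, because the projection $\A\times P\to\A$ is not prone in general. Concretely, the realizers of a point $(a_0,x)$ of such a $Q$ are the realizers of $(a_0,x)$ in the product $\A\times P$, i.e.\ codes of pairs consisting of some $a\leq a_0$ together with a realizer of $x\in P$ (recall that the fibrewise meet in $\ds A/\phi$ is the coded pair $\comb p\,U V$, not the intersection of downsets). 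Hence your claim that a bare element $a\leq g(x)$ is a realizer of $r(x)\in Q$ is false under this reading, and a tracker of $s$ cannot be applied to it: to feed that tracker you already need a realizer of $x\in P$, which is precisely what $\kappa:R\to P$ is supposed to produce. So the argument is circular at the decisive point, and your closing paragraph addresses a different worry (that $P$ and $R$ are not literally the same family of downsets); it does not repair this. (A smaller slip in the same spirit: what lands in $\{a\leq g(x)\}$ is a tracker of $p\circ r$, not of $r$ itself.)

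The proof is repairable, and then it essentially becomes the paper's. Use the span provided by weak genericity itself (it is what the proof of Proposition \ref{Shanin} actually constructs): a prone $p:Q\to\A$ together with a supine $s:Q\to P$. Proneness of $p$ means the realizers of $r(x)\in Q$ are, up to a tracked conversion, exactly the elements $\leq g(x)$; with that, your trackers for $\iota$ and $\kappa$ exist, $P\cong R$ follows, and composing with the prone $R\to\A$ gives the conclusion. The paper takes a shorter route at this point and avoids $R$ altogether: since $\supp$ is faithful on assemblies, the section $r$ of the supine epimorphism is automatically prone — given $f:X\to Q$ with $\supp f=\supp r\circ m$, the lift is $s\circ f$, and $r\circ s\circ f=f$ because both sides have the same image under $\supp$ — so $p\circ r:P\to\A$ is already the desired prone morphism, with no tracker computations needed.
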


\begin{proof} There is a prone supine span $(s:Y\to P,Y\to \A)$ by weak genericity, and for assemblies, supine morphisms are regular epimorphisms. So $s$ is split by some $t:P\to Y$. That $t$ is prone, follows from the fact that for any $f:X\to Y$ such that $\supp f = \supp t\circ g$, we have a map $s\circ f$ such that $f = t\circ s\circ f$, and $\supp(s\circ f) = g$ because $t$ is monic and $\supp t\circ g = \supp f = \supp t\circ \supp(s\circ f)$. \end{proof}

We extend our analysis with the following useful fact.

\begin{lemma} All left adjoints of regular functors preserve projective objects. \end{lemma}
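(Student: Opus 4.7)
The plan is to unfold the definition of projectivity and use the adjunction to transpose the lifting problem across. Suppose $L \dashv R$ with $R : \cat D \to \cat C$ regular, and let $P \in \cat C$ be projective. To show $LP$ is projective in $\cat D$, take an arbitrary regular epimorphism $e : X \to Y$ in $\cat D$ together with a morphism $y : LP \to Y$; I must produce a lift $x : LP \to X$ with $e \circ x = y$.

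First I would transpose $y$ across the adjunction to obtain $\tilde y : P \to RY$ in $\cat C$. The key observation is that since $R$ is a regular functor it preserves regular epimorphisms, so $Re : RX \to RY$ is a regular epimorphism in $\cat C$. Now projectivity of $P$ applies: there is some $\tilde x : P \to RX$ with $Re \circ \tilde x = \tilde y$. Transposing back across the adjunction gives $x : LP \to X$, and naturality of the adjunction (applied to $e$) yields $\widetilde{e \circ x} = Re \circ \tilde x = \tilde y$, so $e \circ x = y$ as required.

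There is really no obstacle here beyond bookkeeping of the adjunction; the one step that must be acknowledged explicitly is that $R$ being regular is exactly what is needed to turn the regular epimorphism $e$ in $\cat D$ into a regular epimorphism in $\cat C$, where projectivity of $P$ can be invoked. The result is then a two-line diagram chase via transposition.
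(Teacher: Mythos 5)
Your proof is correct and follows essentially the same route as the paper: transpose the lifting problem across the adjunction, use that the regular right adjoint $R$ preserves regular epimorphisms so that projectivity of $P$ applies, and transpose the resulting lift back. Your write-up is in fact slightly cleaner in its bookkeeping of which objects the transposed maps land in.
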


\begin{proof} Suppose $L\dashv R$ where $R$ is a regular functor. Let $P$ be projective object of $\dom L$, and let $e:X\to Y$ be a regular epimorphism in $\dom R$. Each $y:LP\to X$ has a transpose $y^\dagger:P\to RX$. Because $Re:RX\to RLP$ is a regular epimorphism and $P$ is projective, there is an $x:P\to RX$ such that $Re\circ x = y^\dagger$. There is a transpose $x^\dagger:LP\to X$ which satisfies $e\circ x^\dagger = y$. Because every $y:LP\to Y$ factors through every regular epimorphism $e:X\to Y$ in this way, $LP$ is a projective object. \end{proof}

Because of this, the base category of a realizability topos inherits a lot of projective from the base category.

\begin{lemma} If $P\in \Asm(\ds A/\phi)$ is projective, then so is $\supp P\in\cat H$. \end{lemma}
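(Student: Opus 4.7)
The plan is to apply the previous lemma (left adjoints of regular functors preserve projectives) to the adjunction $\supp \dashv \nabla$. So the whole argument reduces to verifying two things: that this adjunction exists, and that its right adjoint $\nabla$ is a regular functor.

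First I would recall that we are in the setting where $\phi$ consists of inhabited downsets (which we may assume without loss of generality by theorem \ref{inhabitation}). In that case lemma \ref{left adjoint} and the corollary in the same subsection give us the string of functors $\supp \dashv \nabla : \cat H \to \Asm(\ds A/\phi)$, where $\supp$ is a regular subfunctor of $\ds A/\phi$ and $\nabla$ is the fully faithful constant object functor of definition \ref{constant object functor}.

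Next I would check that $\nabla$ is a regular functor. It is left exact as a right adjoint (it is in fact the restriction of the right adjoint $\top$ of the realizability fibration to assemblies). For regular epimorphisms, suppose $e:X\to Y$ is a regular epi in $\cat H$. Since $\ds A/\phi$ is separated (theorem \ref{characterization}, property 1), we have $\exists_e(\top_X) \simeq \top_Y$ in the fibre over $Y$, which is exactly the condition that the image of $\top_X\to \top_Y$ under the fibration is supine. Supine morphisms between assemblies become regular epimorphisms in $\Asm(\ds A/\phi)$ (this is part of the proof that $\Asm(F)$ is regular), so $\nabla e$ is a regular epimorphism. Hence $\nabla$ is regular.

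Applying the preceding lemma to $\supp \dashv \nabla$ then gives that $\supp$ preserves projective objects, which is exactly the statement. The only subtle point — the one that could be considered the main obstacle — is the observation that separatedness of the realizability fibration is precisely what makes $\nabla$ take regular epis to regular epis; everything else is formal.
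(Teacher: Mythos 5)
Your proof is correct and is exactly the argument the paper intends: the lemma is stated immediately after ``all left adjoints of regular functors preserve projective objects'' precisely so that it follows by applying that result to the adjunction $\supp\dashv\nabla$, with $\nabla$ regular because the realizability fibration is separated. Your extra verification that $\nabla$ sends regular epimorphisms to regular epimorphisms via supine morphisms is the right justification and matches definition \ref{constant object functor}.
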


Unfortunately, this leads us to the following corollary.

\begin{proposition} If $\Asm(\ds A/\phi)$ has a projective terminal object, then $\phi$ is generated by singletons. \end{proposition}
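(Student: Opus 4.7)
The plan is to show, for each $U\in\phi$, that there exists a global section $u:\termo\to A$ factoring through $U$ with $\{a\in A\mid a\leq u\}\in\phi$. Given $U\in\phi$, form the assembly $\A_U := (\ds A/\phi)_{U\hookrightarrow A}(\A)$, which lies over $U\in\cat H$. The sixth characteristic property (see the list preceding Theorem \ref{characterization}) says that $!:\A_U\to\termo$ is supine in $\ds A/\phi$ precisely because $U\in\phi$; such supine morphisms become regular epimorphisms in $\Asm(\ds A/\phi)$. Since $\termo$ is projective, this regular epi splits: pick $s:\termo\to\A_U$ with $!\circ s=\id_\termo$.

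Applying the underlying-object functor $\ds A/\phi:\cat DA/\phi\to\cat H$ to $s$ produces a map $u:\termo\to U$ in $\cat H$, which composed with the inclusion $U\hookrightarrow A$ gives the required global section $u:\termo\to A$. In particular, this already shows that every $U\in\phi$ is inhabited, so one never has to invoke Theorem \ref{inhabitation} beforehand.

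Next I invoke the tracking principle (Theorem \ref{tracking}) applied to $s$: there is $V\in\phi$ such that $(v,a)\mapsto va$ defines a total arrow from $V\times\top_\termo$ into the fibre of $\A_U$ above $u$. The fibre of $\A_U$ above $u$ is the principal downset $\{b\in A\mid b\leq u\}$, and $\top_\termo$ is just $A$ (the top element over $\termo$), so this amounts to: for all $v\in V$ and all $a\in A$, $va\converges$ and $va\leq u$. Specializing $a=v'\in V\subseteq A$ gives $vv'\converges$ and $vv'\leq u$ for all $v,v'\in V$; hence the fibrewise application $V\cdot V$ in $\ext A$ is defined and satisfies $V\cdot V\subseteq\{a\in A\mid a\leq u\}$. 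Since $\phi$ is closed under application, $V\cdot V\in\phi$, and upward closure of $\phi$ under inclusion then yields $\{a\in A\mid a\leq u\}\in\phi$. Thus $u$ is a generating singleton for $U$, and as $U\in\phi$ was arbitrary, $\phi$ is generated by singletons.

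The delicate step is the application of Theorem \ref{tracking}: one must correctly choose the prone downsets $Y,Y'$ over $\termo$ and $\A_U$ so that the resulting $V\in\phi$ genuinely encodes ``$va\leq u$ for all $v\in V$, $a\in A$''. Once this is in hand, the closure properties of $\phi$ (under application, and upward under inclusion) finish the argument routinely.
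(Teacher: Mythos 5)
Your proof is correct and takes essentially the paper's own route: $U\in\phi$ makes $!:\A_U\to\termo$ a regular epimorphism, projectivity of $\termo$ splits it, and the principal downset below the resulting global section $u$ is shown to lie in $\phi$, which is exactly what "generated by singletons" asks. The paper finishes by noting that the prone subassembly $\set{a\in\A\mid a\leq u}$ has the same global section, hence is inhabited, hence its support lies in $\phi$ (the representation property of $\A$), while you unfold the same fact via the tracking principle together with closure of $\phi$ under application and upward closure -- equivalent bookkeeping; the only caveat is that reading off the underlying map $u:\termo\to U$ from an $\Asm(\ds A/\phi)$-morphism uses the functor $\supp$, i.e. the subsection's standing assumption that the members of $\phi$ are inhabited, so your parenthetical claim that Theorem \ref{inhabitation} is never needed is somewhat optimistic.
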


\begin{proof} For each $U\in\phi$, $\A_U = (\ds A/\phi)_{U\hookrightarrow A}(\A)$ is inhabited and therefore has a global section $u:\termo \to \A_U$. Since $\set{a\in \A| a\leq u}$ has the same global section it is inhabited and therefore, $\supp\set{a\in \A| a\leq u}\in \phi$.
\end{proof}

\begin{remark} Even if the terminal object of $\Asm(\ds A/\phi)$ is projective, $\Asm(\ds A/\phi)$ is not a reg/lex completion unless the subcategory of projectives is closed under pullbacks and if every object in $\Asm(\ds A/\phi)$ is the image of a morphism between projectives. That requires $\cat H$ to be a reg/lex completion itself. \end{remark}

In the case of filters that are generated by singletons, we can characterize the projective objects, as we will see in the following lemma.

\begin{proposition} If the terminal object in $\Asm(\ds A/\phi)$ is projective, $p:P\to \A$ is prone, and $\supp P$ is projective, then $P$ is projective.\label{charproj} \end{proposition}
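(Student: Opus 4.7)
The plan is to prove projectivity of $P$ by lifting morphisms: given a regular epimorphism $e:Y\to Z$ in $\Asm(\ds A/\phi)$ and a morphism $f:P\to Z$, construct $\tilde f:P\to Y$ with $e\tilde f=f$. First I would form the pullback $Q=P\times_Z Y$ with projections $q_1:Q\to P$ and $q_2:Q\to Y$. Stability of regular epimorphisms under pullback in the regular category $\Asm(\ds A/\phi)$ makes $q_1$ a regular epimorphism, so it suffices to produce a section $s:P\to Q$ of $q_1$; then $\tilde f:=q_2 s$ satisfies $e\tilde f=f$.

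I would then exploit that by the preceding proposition, projectivity of the terminal object forces $\phi$ to be generated by singletons, hence every morphism of assemblies is tracked by a global section of $A$. Writing $X=\supp P$ and letting $q:X\to A$ be the realizer map of the partitioned assembly $P$ (so that $P$ coincides with $\{(a,x)\in A\times X:a\leq q(x)\}$), $f$ admits a global tracker $r_f:\termo\to A$ for which $r_f q(x)$ realizes $\supp f(x)$ in $Z$ for every $x$. Since $e$ arises from a supine morphism in the realizability fibration, $Z$ is the direct image of $Y$ along $\supp e$; in particular, $(r_f q(x),\supp f(x))\in Z$ entails the existence of some $y\in \supp Y$ with $\supp e(y)=\supp f(x)$ and $(r_f q(x), y)\in Y$. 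I would encode this in $\cat H$ as the object $W=\{(x,y)\in X\times\supp Y:\supp e(y)=\supp f(x),\ (r_f q(x),y)\in Y\}$, whose projection to $X$ is a regular epimorphism.

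By projectivity of $X$ in $\cat H$, this projection splits, yielding $\supp s':X\to\supp Y$ with $\supp e\circ\supp s'=\supp f$ and $(r_f q(x),\supp s'(x))\in Y$ for every $x$. I would then define $s:P\to Q$ on supports by $x\mapsto (x,\supp s'(x))$ and take the combinator $r_s$ satisfying $r_s a=\langle a,r_f a\rangle$ as tracker. For $(a,x)\in P$ with $a\leq q(x)$, monotonicity of application yields $r_f a\leq r_f q(x)$, and downward closure of $Y$ then gives $(r_f a,\supp s'(x))\in Y$; so $\langle a,r_f a\rangle$ realizes $(x,\supp s'(x))\in Q$, making $s$ a well-defined morphism in $\Asm(\ds A/\phi)$ with $q_1 s=\id_P$.

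The main obstacle is the coordination of the set-theoretic splitting with the realizer constraints. A lift $\supp s':X\to\supp Y$ of $\supp f$ along $\supp e$ obtained directly from projectivity of $X$ would generically fail to be compatible with the canonical realizer candidate $r_f q(x)$, because a single point $z\in\supp Z$ may possess several $\supp e$-preimages, only some of which are realized by the combinator-computable expression $r_f q(x)$. Folding the realizability condition into the auxiliary object $W$ \emph{before} invoking projectivity of $X$ is precisely what aligns the splitting with the realizer data, and is where both hypotheses --- projectivity of $X$ in $\cat H$ and projectivity of the terminal object in $\Asm(\ds A/\phi)$ --- must be used in concert.
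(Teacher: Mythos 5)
Your proof is correct, and it hinges on the same crucial point as the paper's: the splitting supplied by projectivity of $\supp P$ in $\cat H$ must be extracted only \emph{after} the realizer constraint has been folded into the object being split, while the projective terminal (equivalently, $\phi$ generated by singletons, via the preceding proposition) supplies global-section trackers. The organisation, however, is genuinely different. The paper first uses weak genericity to reduce to regular epimorphisms $e:Q\to P$ from downward closed partitioned assemblies, builds a section of $\pi_1:\exists_{(q,e)}(Q)\to P$ from Church's rule and the uniformity rule together with a global section of the resulting inhabited prone downset, then lifts that section through $\supp(q,e)$ by projectivity of $\supp P$, and finally through the prone covering map into $Q$. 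You instead work directly with the pullback $Q=P\times_Z Y$, encode the compatible support-level lifts as the object $W\subseteq \supp P\times\supp Y$ of $\cat H$, split $W\to\supp P$ by projectivity, and exhibit the tracker $a\mapsto\langle a,r_fa\rangle$ by hand; this avoids both the reduction to partitioned-assembly covers and the final prone-lifting step, at the price of redoing explicitly what the tracking machinery packages. One point to tighten: a regular epimorphism $e:Y\to Z$ of assemblies only gives $Z\cong\exists_{\supp e}(Y)$ up to isomorphism, so from $(r_fq(x),\supp f(x))\in Z$ you cannot literally conclude that some preimage $y$ satisfies $(r_fq(x),y)\in Y$; either replace $Z$ by the image family (harmless, since lifting along $e$ is unchanged under this identification), or compose $r_f$ with a global-section tracker of that isomorphism, available exactly because $\phi$ is generated by singletons. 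With that one-line repair your argument goes through.
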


\begin{proof} Because of pullbacks, it suffices to consider regular epimorphisms to $P$. Because of weak genericity, a partitioned assembly covers each object, and therefore it suffices to consider regular epimorphisms from other partitioned assemblies to $P$. In particular the other partitioned assembly $Q$ may be \xemph{downward closed}, i.e. not only is there a prone map $q:Q\to \A$, but also a map $r: \set{(a,x)\in \A\times Q| a\leq q(x) } \to Q$ such that $q\circ r(a,x) = a$. Now let $e:Q\to P$ be a regular epimorphism. We make the following factorization: $(q,e): Q\to \im{(q,e)}(Q)\subseteq \A\times P$ and $\pi_1: \im{(q,e)}(Q) \to P$. Because $Q$ is downward closed, so is $\im{(q,e)}(Q)$ and this makes $\pi_1$ a \prodo morphism. Because of the uniformity rule and Church's rule, the prone map $p:P\to \A$ and the regular epic \prodo morphism $!:\A\to \termo$, there is an inhabited prone downset $U\subseteq \A$ such that $(a,x)\mapsto (ap(x),x)$ defines a map $U\times P \to \mathord\downarrow\im{(q,e)}(Q)$. Because $U$ has a global section, this determines a section $s:P \to \im{(q,e)}(Q)$ of $\pi_1$.
By projectivity $\supp s:\supp P\to \supp\im{(q,e)}(Q)$ factors through the epimorphism $\supp(q,e):\supp Q \to \supp\im{(q,e)}(Q)$ in some morphism $t:\supp P\to \supp Q$. Now consider the other projection $\pi_0:\im{(q,e)}(Q)\to \A$. Because $\supp s = \supp(q,e)\circ t$ and $\pi_0\circ(q,e) = q$, $\supp(\pi_0 \circ s) = \supp q\circ t$. Because $q$ is prone, there is a unique lifting $t':P\to Q$ of $t$ and this is our splitting of $e$.
\[ \xymatrix{
Q\ar[r]_e\ar[d]_q\ar[dr]_{(q,e)} & P \ar@{.>}@/^/[d]^s \ar@{.>}@/_/[l]_{t'}\\
\A & \im{(q,e)}(Q) \ar[l]^{\pi_0} \ar[u]^{\pi_1}
}\]
\end{proof}

This is an adaptation of the proof of proposition 3.2.7 in \cite{MR2479466}.

\begin{corollary} If $\phi$ is generated by singletons, then the projective objects of $\Asm(A,\phi)$ are the partitioned assemblies $P$ for which $\Sigma P$ is projective in $\cat H$. Therefore, $\Asm(A,\phi)$ has enough projectives if only if $\cat H$ has. \end{corollary}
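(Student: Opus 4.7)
The plan is to establish the characterization in two directions and then derive the enough-projectives statement from it.

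For the necessary direction, suppose $P$ is projective in $\Asm(A,\phi)$. The first lemma of this subsection already shows that $P$ admits a prone map to $\A$, so $P$ is a partitioned assembly. For the second requirement, since $\phi$ is generated by singletons its members are principal downsets of global sections of $A$, in particular inhabited; hence by lemma \ref{left adjoint} the support functor $\Sigma:\Asm(\ds A/\phi)\to\cat H$ exists and is left adjoint to the regular functor $\nabla$. By the general lemma that left adjoints of regular functors preserve projectives, $\Sigma P$ is projective in $\cat H$.

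For the sufficient direction I would invoke proposition \ref{charproj} directly, with the one caveat that its hypothesis ``the terminal object of $\Asm(\ds A/\phi)$ is projective'' must be handled. The point is that the only place the proof of \ref{charproj} uses projectivity of the terminal is to secure a global section of an inhabited prone downset $U\subseteq \A$; but since $\phi$ is generated by singletons, every inhabited prone downset of $\A$ is of the form $\A_V$ for some $V\in\phi$ that is itself a principal downset of a global section $v:\termo\to A$, so such global sections exist unconditionally. Hence for any partitioned assembly $P$ with $\Sigma P$ projective in $\cat H$, the argument of \ref{charproj} produces a splitting of any regular epimorphism onto $P$, making $P$ projective.

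For the ``enough projectives'' equivalence, one direction is immediate: if $\Asm(A,\phi)$ has enough projectives, then for each $X\in\cat H$ a regular epimorphism $P\twoheadrightarrow \nabla X$ from a projective $P$ applies $\Sigma$ to a regular epimorphism $\Sigma P\twoheadrightarrow \Sigma\nabla X = X$ with $\Sigma P$ projective (since $\Sigma$ is regular and by the first part of the corollary preserves projectives onto projectives). Conversely, suppose $\cat H$ has enough projectives. For any assembly $Y$, use weak genericity to cover $Y$ by a partitioned assembly $Q\twoheadrightarrow Y$; then cover $\Sigma Q$ in $\cat H$ by a regular epimorphism $R\twoheadrightarrow \Sigma Q$ with $R$ projective, and pull $Q$ back along $\nabla R\to \nabla\Sigma Q$ composed with the unit counit structure to obtain a partitioned assembly $P$ with $\Sigma P=R$ projective and with a regular epimorphism $P\twoheadrightarrow Y$; by the characterization, $P$ is projective.

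The main obstacle will be the careful bookkeeping in the second direction of the characterization, specifically verifying that when $\phi$ is generated by singletons the projectivity of the terminal in $\Asm$ is not actually an independent hypothesis but is already built into the structure of $\phi$, so that proposition \ref{charproj} applies to every partitioned assembly whose support is projective and not just when $\termo_{\cat H}$ is itself projective. A second minor bookkeeping point is the construction of the partitioned-assembly cover $P$ of $Y$ in the enough-projectives argument, where one must ensure the realizers match up so that $\Sigma P$ is indeed the chosen projective in $\cat H$ and not merely covered by it.
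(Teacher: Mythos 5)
Your proposal is correct and is essentially the derivation the paper intends: the necessity direction assembles the preceding lemmas (projectives admit a prone map to $\A$, and $\supp$, being a left adjoint of the regular functor $\nabla$, preserves projectives), while the sufficiency direction and the enough-projectives equivalence rerun proposition \ref{charproj} and weak genericity, and you correctly observe that the projective-terminal hypothesis of \ref{charproj} is only used to obtain global sections of inhabited prone downsets of $\A$, which ``generated by singletons'' supplies directly. One small slip: the members of $\phi$ need not themselves be principal downsets --- they merely contain a global section whose principal downset lies in $\phi$ --- but since only their inhabitedness is used, nothing breaks.
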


\subsection{Alternative completions}
The characteristic properties of realizability fibration force certain arrows between partitioned assemblies to be regular epimorphisms:
\begin{itemize}
\item Realizability fibrations are separated. If $f:P\to Q$ in $\Pasm(A,\phi)\subseteq \cat DA/\phi$ is prone and $\supp f$ is a regular epimorphism, then $f$ must be one too.
\item For each $U\in \phi$, $\A_U = (\ds A/\phi)_{U\hookrightarrow A}(\A)$ is inhabited, i.e. $!:\A_U\to\termo$ and its pullbacks are regular.
\end{itemize}
The reg/lex completion preserves only split epimorphisms and the morphism above aren't always split. Therefore $\Asm(\ds A/\phi)$ is not always a reg/lex completion.

Besides the problem of lost regular epimorphisms, there is a somewhat smaller problem of the closure of the category of projective objects under pullbacks. Carboni and Vitale's reg/wlex completion in \cite{MR1600009} solves this problem. We first show why.

\newcommand\exwlex{\textrm{ex/wlex}}
\begin{definition} For any diagram $D:\cat D\to\cat C$ a \xemph{weak limit cone} is a cone $\kappa_i:W \to D(i)$ through which every other cone factors, but not necessarily in a unique way. A category $\cat C$ is \xemph{weakly left exact} if every finite diagram has a weak limit cone.

Let $\cat C$ be a weakly left exact category. A functor $F:\cat C \to \cat R$ to a regular category is \xemph{left covering} if for every finite diagram $D:\cat D\to\cat C$ and every weak limit cone $\kappa:W\to D$ the factorization of $F\kappa$ through $\lim FD$ is a regular epimorphism.

The \xemph{ex/wlex completion} of $\cat C$ is an exact category $\cat C_\exwlex$ with a left covering functor  $I:\cat C \to \cat C_\exwlex$ such that:
\begin{itemize}
\item for each left covering $F:\cat C\to\cat E$ to an exact category, there is a regular $G:\cat C_\exwlex\to\cat E$ such that $ GI\simeq F$,
\item for each pair of regular functors $H,K:\cat C_{\exwlex} \to\cat E$ and each $\eta:HI\to KI$ there is a unique $\theta:H\to K$ such that $\theta I = \eta$.
\end{itemize}
\end{definition}

\begin{remark} Regular and exact completions are related in the following way: if $\cat R$ is the reg/something completion of $\cat C$ then $\cat R_\exreg$ is the ex/something completion of $\cat C$; if $\cat E$ is the ex/something completion then the least regular subcategory $\cat R$ of $\cat E$ that contains all of $\cat C$ is the reg/something completion. The reason is that exact categories are a reflective subcategory of regular categories. Below we switch between regular and exact completions, depending on which of these completions are easiest to describe. \end{remark}

\begin{proposition} Let $\cat H$ be a Heyting category with enough projectives, let $A$ be an order partial combinatory algebra in $\cat H$ and let $\phi$ be a combinatory complete external filter of $A$ that is generated by singletons. Now $\Asm(\ds A/\phi)_\exreg$ is the ex/wlex completion of its subcategory of projective objects.\label{enproj} 
\end{proposition}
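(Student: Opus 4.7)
The plan is to apply the universal characterization of the ex/wlex completion (Carboni--Vitale) to the subcategory $\cat P \subseteq \Asm(\ds A/\phi)_\exreg$ consisting of the projective objects. Since $\phi$ is generated by singletons and the terminal object is therefore projective, Proposition \ref{charproj} identifies $\cat P$ with those partitioned assemblies $P$ whose support $\supp P$ is projective in $\cat H$. First I would check that $\cat P$ is weakly left exact: finite limits of projectives can be computed inside $\Pasm(A,\phi)$ (which has finite limits, thanks to the pairing combinator supplied by $\phi$), and then covered by a projective, using the fact that $\cat H$ has enough projectives so that every $\supp L$ for a limit $L$ admits an epimorphism from some projective $Q \in \cat H$; the pullback $\nabla Q \times_{\nabla \supp L} L$ is then a projective cover of $L$ in $\Asm(\ds A/\phi)$ lying in $\cat P$.

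Next I would show that the inclusion $I:\cat P \to \Asm(\ds A/\phi)_\exreg$ is left covering: for any finite diagram $D$ in $\cat P$, the comparison map from the weak limit (constructed as above by covering the true limit in $\Pasm$) to the actual limit in $\Asm(\ds A/\phi)_\exreg$ is by construction a regular epimorphism. I would also check that $\cat P$ is closed in $\Asm(\ds A/\phi)_\exreg$ under the properties that matter for the Carboni--Vitale criterion: every object of $\Asm(\ds A/\phi)_\exreg$ is a quotient of an assembly, every assembly is covered by a partitioned assembly (weak genericity applied to arbitrary objects and then composed with a projective cover on the support), and pullbacks of such covers along maps between projectives stay projective up to covering again by a projective --- this second iteration is precisely what the ``weakness'' in ex/wlex permits.

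For the universal property, I would take a left covering functor $F:\cat P \to \cat E$ into an exact category and extend it in two stages. First extend to all of $\Asm(\ds A/\phi)$ by sending an assembly $X$ to the coequalizer in $\cat E$ of $FQ \rightrightarrows FP$ for a projective presentation $P \twoheadrightarrow X$ with kernel-pair cover $Q \twoheadrightarrow P\times_X P$ in $\cat P$; left covering of $F$ together with regularity of $\cat E$ ensures this is independent of the chosen presentation and yields a regular functor, as in the standard Carboni--Vitale argument. Then compose with the biadjoint ex/reg completion to get a regular functor $\Asm(\ds A/\phi)_\exreg \to \cat E$. Uniqueness up to unique isomorphism follows from the density of $\cat P$ and regularity of the target.

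The main obstacle will be justifying that the class of projective partitioned assemblies $\cat P$ behaves the way Carboni--Vitale require --- specifically, that weak limits of diagrams in $\cat P$ computed in $\Asm(\ds A/\phi)_\exreg$ can always be covered by objects of $\cat P$ with the correct universal property under left covering functors, and that this covering is compatible with pullback along arbitrary maps in $\cat P$. The technical heart is the interaction between three layers of covering: the prone covers supplied by weak genericity, the covers of supports supplied by $\cat H$ having enough projectives, and the exactness quotients added by the ex/reg completion. I expect that the hypothesis that $\phi$ is generated by singletons is exactly what makes these three covers coherent, because it is what forces every arrow between objects of $\cat P$ to be realized by a \emph{global} element of $A$, hence by an arrow in $\cat H$ between the projective supports.
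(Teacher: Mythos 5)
Your proposal is correct and takes essentially the same route as the paper: identify the projective objects as the partitioned assemblies with projective support (using that $\phi$ is generated by singletons), observe that there are enough of them because partitioned assemblies cover every object and projective covers of supports in $\cat H$ lift to projective covers of partitioned assemblies, and then conclude by the ex/wlex completion theorem. The only difference is that the paper simply cites Theorem 16 of Carboni--Vitale at this point, whereas you sketch a hand re-derivation of that general result (weak left exactness of the projectives, left covering of the inclusion, and the extension of left covering functors via projective presentations), which is exactly the content of the cited theorem.
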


\begin{proof} By lemma \ref{charproj}, a projective object in $\Asm(\ds A/\phi)_\exreg$ is a partitioned assembly $P$ for which $\supp P$ is projective. The category $\Asm(\ds A/\phi)_\exreg$ has enough projectives, because partitioned assemblies cover all objects and projective objects cover all partitioned assemblies. The result now follows from theorem 16 in  \cite{MR1600009}.
\end{proof}

The free quotient completion in subsection \ref{exreg} also lost regular epimorphisms. A category-of-fractions constructions to got them back. In \cite{MR2067191} Hofstra presents the relative completion constructions, which does roughly the same thing for prone regular epimorphisms. Therefore, relative completions work without enough projective objects in the base category.

\begin{definition} Let $F:\cat B\to \cat C$ be a finite limit preserving functor between categories with finite limits. The \xemph{relative completion} is a finite limit preserving  functor $I:\cat C \to\cat C_{\cat B/\textrm{reg}}$ to a regular category such that:
\begin{itemize}
\item the composite $IF$ preserves regular epimorphisms, i.e. is a regular functor,
\item for each finite limit preserving $G:\cat C\to\cat R$ to a regular category such that $GI$ is regular, there is a regular $H:\cat C_{\cat B/\textrm{reg}}\to\cat R$ such that $ GI\simeq F$,
\item for each pair of regular functors $K,L:\cat C_{\cat B/\textrm{reg}} \to\cat R$ and each $\eta:KI\to LI$ there is a unique $\theta:K\to L$ such that $\theta I = \eta$.
\end{itemize}
\end{definition}

\begin{theorem}[Hofstra] Let $\cat H$ be a Heyting category, let $A$ be a partial combinatory algebra and let $\phi$ be a filter that is generated by singletons. Then $\Asm(\ds A/\phi)\cong \Pasm(A,\phi)_{\cat B/\textrm{reg}}$. \label{relcomp}
\end{theorem}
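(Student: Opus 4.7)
The plan is to verify directly that the inclusion $\iota: \Pasm(A,\phi) \to \Asm(\ds A/\phi)$ exhibits $\Asm(\ds A/\phi)$ as the relative completion of $\Pasm(A,\phi)$ along $\nabla: \cat H \to \Pasm(A,\phi)$. First I would check the easy half: $\iota$ preserves finite limits, because $\Pasm(A,\phi)$ sits inside $\Asm(\ds A/\phi)$ as a full subcategory closed under finite limits whenever $\phi$ is generated by singletons (this closure is exactly where the hypothesis was used to give $\Pasm(A,\phi)$ a terminal object and binary products). The composite $\iota\circ\nabla$ is the constant object functor $\cat H \to \Asm(\ds A/\phi)$, which is regular because the realizability fibration is separated.

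For the universal property, let $G: \Pasm(A,\phi) \to \cat R$ be any finite-limit-preserving functor to a regular category such that $G\nabla$ is regular. The central step is to show that $(\cat R, G\nabla, G\A)$ is a regular model for $(A,\phi)$ in the sense of the previous section. The only nontrivial clause is that $G\A \cap G\nabla U$ be supported for each $U\in\phi$, and this is precisely where singleton generation enters. Given $U\in\phi$, fix $u:\termo\to A$ in $\phi$ with principal downset $\set{a\in A | a\leq u}$ contained in $U$. This principal downset is a partitioned assembly with global section $u$ in $\Pasm(A,\phi)$, and it comes equipped with canonical morphisms into $\A$ (the inclusion into the filter) and into $\nabla U$ (since it is contained in $U$ as a subobject of $A$). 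Applying $G$ and using preservation of terminals and pullbacks yields the required global section of $G\A\cap G\nabla U$.

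With $(\cat R, G\nabla, G\A)$ identified as a regular model, corollary \ref{asm pseudoinitial} supplies an essentially unique regular functor $\tilde G:\Asm(\ds A/\phi)\to\cat R$ with $\tilde G\nabla\simeq G\nabla$ and $\tilde G\A\simeq G\A$. To confirm $\tilde G\circ\iota\simeq G$, I would appeal to proposition \ref{pasm pseudoinitial}: both $\tilde G\circ\iota$ and $G$ are finite-limit-preserving functors on $\Pasm(A,\phi)$ whose associated left exact models coincide with $(G\nabla,G\A)$, so they agree up to a canonical natural isomorphism. Uniqueness of $\tilde G$ up to natural isomorphism, and likewise the bijective correspondence between 2-morphisms, transfer from the pseudoinitiality of $(\Asm(\ds A/\phi),\nabla,\A)$ among regular models to the corresponding statements for regular functors extending $G$.

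The main obstacle is the regular-model verification described above. Without singleton generation, a generic finite-limit-preserving $G$ need not send the supine covers $(\ds A/\phi)_{U\hookrightarrow A}(\A)\to\termo$ to regular epimorphisms in $\cat R$, so $G\A$ may fail to meet $G\nabla U$, and the bridge between the two pseudoinitiality theorems collapses. Once this single point is secured via the principal-downset argument, the rest of the proof is a straightforward manipulation of universal properties: the regular completion of a left exact model automatically satisfies the relative-completion universal property against $\nabla$.
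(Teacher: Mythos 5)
Your argument is essentially sound, but it takes a genuinely different route from the paper: the paper's ``proof'' of theorem \ref{relcomp} is nothing more than a citation of theorem 7.1 of Hofstra's paper \cite{MR2067191}, whereas you re-derive the statement internally from the machinery of this thesis. Your central step --- that for a finite-limit-preserving $G:\Pasm(A,\phi)\to\cat R$ with $G\nabla$ regular, singleton generation gives a global section of $\A\cap\nabla U$ for each $U\in\phi$, a global section is preserved by any functor, hence $(\cat R,G\nabla,G\A)$ is a regular model and corollary \ref{asm pseudoinitial} supplies the regular extension, which agrees with $G$ on partitioned assemblies by proposition \ref{pasm pseudoinitial} --- is precisely the technique the paper itself uses two results later, in the proof that the category of assemblies is the stack completion of the category of partitioned assemblies. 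So your route is faithful to the spirit of the text; what it buys is self-containedness (the relative completion becomes a formal corollary of the characterization and pseudoinitiality results rather than an imported theorem), while the paper's citation buys brevity and defers the 2-categorical bookkeeping to Hofstra.

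The one place you are too quick is the 2-dimensional clause of the relative completion: it demands a unique $\theta:K\to L$ with $\theta\iota=\eta$ for \emph{arbitrary} natural transformations $\eta:K\iota\to L\iota$, whereas the pseudoinitiality statements you invoke are formulated in $(2,1)$-categories and only govern invertible 2-cells, so they do not literally ``transfer''. The fix is the standard covering argument: by weak genericity every assembly is the coequalizer of the kernel pair of a cover by a partitioned assembly; since prone subobjects of partitioned assemblies are again partitioned assemblies, that kernel pair lies in $\Pasm(A,\phi)$, and regular $K,L$ preserve these coequalizer diagrams, so any transformation defined on $\Pasm(A,\phi)$ extends uniquely to $\Asm(\ds A/\phi)$. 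With that supplement your proof is complete.
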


\begin{proof} This is theorem 7.1 in \cite{MR2067191}. The condition of singleton generated filters is stated right above, by the way.\end{proof}

Actually, Hofstra's construction is a \xemph{stack completion}.

\begin{definition} Let $F:\cat F\to\cat B$ be a fibred meet semilattice with top over a regular category. 
A \xemph{descent datum} is an equivalence relation $\wave$ on an object $X$ in $\cat F$, such that the inclusion $\wave\subseteq X\times X$ is prone, and $F\wave$ is effective. The fibred meet semilattice $F$ is a \xemph{stack} (for the regular topology of $\cat B$) if it is separated, and every descent datum is effective in $\cat F$. \label{stacks}
\end{definition}

\begin{remark} Stacks can be defined on all sites, and are not necessarily fibred meet semilattices with top. Those other stacks play no role here.  For more on stacks, see \cite{MR558105, MR558106}. \end{remark}

\begin{example} The functor $\supp = \ds A/\phi:\Asm(\ds A/\phi) \to \cat H$ is a stack, because and assemblies are closed under quotients of prone equivalence relations $\wave\subseteq X^2$ for which $\supp X$ has a quotient. In fact, let $q:\supp X \to \supp X/\supp\wave$ be the quotient map, then $\exists_q(X)$ is the quotient for the descent datum.\label{asm is stack} \end{example}

\begin{definition} The \xemph{stack completion} of $F$ is a stack $F^+:\cat F^+\to \cat B$ with a vertical morphism $(\id,i): F\to F^+$ that satisfies the following conditions:
\begin{itemize}
\item for every stack $G:\cat G \to \cat B$ and every morphism $(f_0,f_1):F\to G$ for which $f_0$ is regular, 
there is a morphism $(g_0,g_1):F^+\to G$ such that $g_0\simeq f_0$ and $g\circ (\id,i) = f$.
\item for every pair of morphisms of stacks $(k_0,k_1),(h_0,h_1): F^+ \to G$ for which $k_0\simeq h_0$ is regular and $k_1\circ i\simeq h_1\circ i$, $k\simeq h$.
\end{itemize}\label{stack completion}
\end{definition}

\begin{remark} Morphisms of fibred categories preserve quotients of descent data, because the regular functors preserve the underlying quotients, all morphisms preserve the descent data, and the descent data determine their quotient up to unique isomorphism.
\end{remark}

\begin{theorem} Let $\cat H$ be a Heyting category, let $A$ be a partial combinatory algebra and let $\phi$ be a combinatory complete external filter that is generated by singletons. Then $\supp_\Asm: \Asm(\ds A/\phi) \to \cat H$ is a stack completion of $\supp_\Pasm:\Pasm(A,\phi)\to\cat H$. \end{theorem}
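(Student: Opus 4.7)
The plan is to verify the universal property of stack completions (definition \ref{stack completion}) directly, drawing on example \ref{asm is stack} for the fact that $\supp_\Asm$ is a stack, and on the cover-by-partitioned-assembly machinery that underlies weak genericity. The canonical inclusion $i: \Pasm(A,\phi) \hookrightarrow \Asm(\ds A/\phi)$ determines a vertical morphism of fibrations $(\id_{\cat H}, i): \supp_\Pasm \to \supp_\Asm$, since it preserves prone morphisms and commutes with the two support functors. The structural fact powering the rest of the argument is that every assembly $X$ is the quotient of a descent datum whose components live in $\Pasm(A,\phi)$: by weak genericity and the hypothesis that $\phi$ is generated by singletons, there is a prone regular epimorphism $p:P \to X$ with $P$ a partitioned assembly; the kernel pair $\pi_0,\pi_1: P \times_X P \to P$ then consists of prone morphisms between partitioned assemblies (using that finite products and subobjects of partitioned assemblies are again partitioned assemblies when $\phi$ is generated by singletons), and its image under $\supp_\Pasm$ is the effective equivalence relation $\supp P \times_{\supp X} \supp P \to \supp P$ with quotient $\supp X$ -- a descent datum in the sense of definition \ref{stacks}.

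For the existence clause, let $G:\cat G \to \cat H$ be a stack over $\cat H$ and let $(f_0,f_1): \supp_\Pasm \to G$ be a morphism with $f_0$ regular. I would set $g_0 = f_0$ and define $g_1(X)$ as the stack quotient in $\cat G$ of the descent datum $f_1(\pi_0), f_1(\pi_1): f_1(P \times_X P) \to f_1(P)$, where $p:P \to X$ is a chosen prone cover. The verifications then break into four parts: (i) well-definedness up to canonical isomorphism, by passing to a common refinement $P \times_X Q$ of two chosen covers (which is itself a partitioned assembly); (ii) functoriality, by lifting each morphism $X \to Y$ of assemblies to a morphism between covering partitioned assemblies using weak genericity; (iii) compatibility with $(\id, i)$, because a partitioned assembly $P$ admits the identity as cover so its descent datum is trivial and $g_1(P) \cong f_1(P)$; and (iv) preservation of the fibred structure (prone morphisms, meets in fibres, indexed joins with Beck-Chevalley), which reduces to checking that stack quotients in $\cat G$ respect the corresponding universal constructions in $\Pasm(A,\phi)$ under $f_1$. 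Compatibility of the base component $g_0 = f_0$ with quotients uses that $f_0$ is regular and hence preserves the coequalizer diagram defining $\supp X$ in $\cat H$.

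For uniqueness, suppose $(k_0,k_1),(h_0,h_1):\supp_\Asm \to G$ both agree up to isomorphism with $(f_0,f_1)$ after precomposition with $(\id,i)$. For each assembly $X$ with cover $p:P\to X$, both $k_1(X)$ and $h_1(X)$ are forced by naturality to fit as quotients of the isomorphic descent data $f_1(P\times_X P) \to f_1(P)$ in $\cat G$, and the stack property of $G$ makes quotients of descent data unique up to canonical isomorphism, yielding $k_1 \simeq h_1$. The main obstacle will be part (iv) of the existence argument: one must show that the assignment $X \mapsto g_1(X)$ is not merely well-defined objectwise but truly a morphism of fibred meet semilattices, compatible with reindexing along arbitrary arrows of $\cat H$ and with meets within each fibre. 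Here the essential input is the separatedness of $G$, which forces the various candidate isomorphisms to agree once they are constructed from universal data; the routine but lengthy bookkeeping is to trace each piece of fibred structure on $\Asm(\ds A/\phi)$ back to structure on $\Pasm(A,\phi)$ and an application of stack-quotient universality in $\cat G$.
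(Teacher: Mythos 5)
Your plan is viable, but it takes a genuinely different route from the paper. The paper's proof is a short reduction to machinery already in place: given a stack $G:\cat G\to\cat H$ and a morphism $(m_0,m_1):\supp_\Pasm\to G$ with $m_0$ regular, it observes that $m_1\nabla:\cat H\to\cat G$ is a regular functor (this is the only place the stack property of $G$ is used) and that $m_1\A$ meets $m_1\nabla U$ for every $U\in\phi$, because $\A\cap\nabla U$ has a global section when $\phi$ is generated by singletons and global sections survive any morphism; thus $(m_1\nabla,m_1\A)$ is a regular model, and corollary \ref{asm pseudoinitial} hands over both the extension $\overline{m_1}:\Asm(\ds A/\phi)\to\cat G$ and, via the same theory, the uniqueness clause of definition \ref{stack completion} (agreement after $i$ forces agreement on $\nabla$ and on $\A$). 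You instead verify the universal property by hand, extending $f_1$ by taking stack quotients of the images of canonical descent data coming from covers by partitioned assemblies. That is the standard descent-theoretic construction of a stack completion and it can be pushed through in this setting, but it amounts to re-deriving, object by object and morphism by morphism, what theorem \ref{pseudoinitiality} and corollary \ref{asm pseudoinitial} already package (extending data from partitioned assemblies to all assemblies along covers), so it is much longer; what it buys is independence from the regular-model apparatus.

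Three points in your sketch need repair, though none is fatal. First, the covering map $P\to X$ supplied by weak genericity is a regular epimorphism but is \emph{not} prone, and neither are the kernel-pair projections $P\times_X P\to P$; what is prone is the inclusion $P\times_X P\hookrightarrow P\times P$ (a pullback of the prone diagonal of $X$), and that, together with the fact that its image in the base is a kernel pair of a regular epimorphism and hence effective after applying the regular functor $f_0$, is exactly what definition \ref{stacks} asks of a descent datum, so your construction survives once the labels are corrected. Second, lifting a morphism $X\to Y$ to morphisms between covers does not follow from weak genericity alone: you need the tracking principle (theorem \ref{tracking}) to produce an object of realizers $U\in\phi$ and then the singleton hypothesis to pick a global section of $\A_U$; this is also precisely where the hypothesis enters the paper's argument. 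Third, the stacks of definition \ref{stacks} are only fibred meet semilattices with top, so in your step (iv) there are no indexed joins or Beck--Chevalley conditions to preserve, just prone arrows and fibrewise meets; your appeal to separatedness of $G$ is the right tool there, and it is also what justifies the fact (the remark after definition \ref{stack completion}) that stack morphisms with regular base component preserve quotients of descent data, on which your uniqueness argument silently relies.
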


\begin{proof} Note that $\supp_\Pasm$ is a fibred meet semilattice with top, because we have a global paring operator, and so on. This is also the reason why $\nabla:\cat H\to\Asm(\ds A/\phi)$ factors through $\Pasm(A,\phi)$. The inclusion into $\supp_\Asm$ provides the required morphism $(\id,i):\supp_\Pasm\to \supp_\Asm$.

If we have a morphism $(m_0,m_1): \supp_\Pasm \to G$ for some stack $G:\cat G \to \cat H$ then we have a functor 
$m_1\nabla:\cat H\to\cat G$, which is regular because $m_0$ is regular, and $G:\cat G\to\cat H$ is a stack. The filter $m_1\A$ intersects $m_1\nabla$ for all $U\in\phi$, because $\A\cap \nabla U$ has a global section, and global sections are also preserved. That means $(m_1\nabla,m_1\A)$ is a regular model, and that there is a regular functor $\overline{m_1}:\Asm(\ds A/\phi) \to \cat G$ satisfying $\overline{m_1}\A=m_1\A$ and $\overline{m_1}\nabla = m_1\nabla$. Now $(m_0,\overline{m_1}):\supp_\Asm \to G$ is the desired factorization.

If $h,k:\supp_\Asm \to G$ satisfy $h_0=k_0$ and $h_1\circ i = k_1\circ i$, then $h_1\circ i \circ \nabla\simeq h_1\circ i \circ \nabla$ and $h_1\circ i(\A)\simeq k_1\circ i(\A)$ so we can use our theory of regular models to show that these morphisms are isomorphic.
\end{proof}

\begin{remark} Relative completions do not preserve the inhabited prone downsets unless all of those downsets have global sections. Moreover, partitioned assemblies are not always closed under finite products and if partitioned assemblies are not closed under finite products, $\supp_\Pasm$ is not a fibred meetsemilattice with $\top$, and $\nabla$ does not factor through $\Pasm(A,\phi)$.
\end{remark}

There is a regular completion construction that does work.

\newcommand\Sh{\Cat{Sh}}
\begin{definition} The exact completion of a site $(\cat C,J)$ is the up to equivalence least exact subcategory $(\cat C,J)_{\rm ex}$ of the category $\Sh (\cat C,J)$ of sheaves over the site that contains all the representable sheaves. The objects are quotients of finite limits of representables.
\end{definition}

\begin{lemma} Let $R$ be the set of morphisms of $\Pasm(A,\phi)$ that are regular epimorphisms in $\Asm(A,\phi)$. Now $(\Pasm(A,\phi),R)$ is a site and $\Asm(\ds A/\phi)_\exreg \cong (\Pasm(A,\phi),R)_{\rm ex}$. \end{lemma}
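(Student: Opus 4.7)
The plan is to reduce the claim to the universal property of the exact completion of a site, in three steps: verify the site axioms, identify the regular subcategory of $\Sh(\Pasm(A,\phi),R)$ generated by the representables with $\Asm(\ds A/\phi)$, and then apply the ex/reg completion on both sides.

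First I would show $(\Pasm(A,\phi),R)$ is a site. Identities are in $R$ and composites of regular epimorphisms in the regular category $\Asm(\ds A/\phi)$ are regular epimorphisms, so $R$ is closed under composition. For pullback stability, note that a partitioned assembly is, up to isomorphism, given by an object $X \in \cat H$ together with an arbitrary arrow $x : X \to A$, so $\Pasm(A,\phi)$ is closed under pullbacks (over arbitrary common codomains) computed in $\Asm(\ds A/\phi)$: the pullback of $(X,x) \to (Z,z)$ and $(Y,y) \to (Z,z)$ has underlying object $X \times_Z Y$ with map $x \circ \pi_X : X \times_Z Y \to A$. Regular epimorphisms are stable under pullback in the regular category $\Asm(\ds A/\phi)$, so the pullback of an $R$-cover along any $\Pasm$-morphism is again an $R$-cover.

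Next I would show that the Yoneda embedding composed with the inclusion $I : \Pasm(A,\phi) \hookrightarrow \Asm(\ds A/\phi)$ lands in sheaves and exhibits $\Asm(\ds A/\phi)$ as the regular subcategory of $\Sh(\Pasm(A,\phi),R)$ generated by the representables. The sheaf condition for each assembly follows from the stack property of $\supp_\Asm$ recorded in example \ref{asm is stack}: each single-morphism $R$-cover induces on supports an effective equivalence relation in $\cat H$, and an assembly glues uniquely along such descent data. By Shanin's principle (proposition \ref{Shanin}), every assembly $Y$ admits a supine \prodo morphism $s : P \to Y$ from a partitioned assembly $P$, and the kernel pair $P \times_Y P$ is again a partitioned assembly by the first paragraph. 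Hence every assembly is a coequalizer of a kernel pair of representables in the sheaf category, so $\Asm(\ds A/\phi)$ is precisely the regular category generated by the representables inside $\Sh(\Pasm(A,\phi),R)$.

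Finally, $(\Pasm(A,\phi),R)_{\rm ex}$ is by definition the least exact subcategory of $\Sh(\Pasm(A,\phi),R)$ containing the representables, hence also the least exact subcategory containing $\Asm(\ds A/\phi)$. On the other hand, remark \ref{ttt2} identifies the ex/reg completion as the left biadjoint to the inclusion of exact categories into regular categories, so $\Asm(\ds A/\phi)_\exreg$ is the universal exact extension of $\Asm(\ds A/\phi)$; its canonical regular functor into the exact category $\Sh(\Pasm(A,\phi),R)$ is fully faithful because it adds only quotients of equivalence relations that become effective in every exact extension. Both subcategories thus coincide up to equivalence. The hard part will be verifying that the Yoneda composite really does land in sheaves, since $\Pasm(A,\phi)$ need not have binary products or a terminal object; the argument must rely on the stack property of $\supp$ and on the characteristic tracking principles of $\Asm(\ds A/\phi)$ rather than on any intrinsic limit structure of $\Pasm(A,\phi)$.
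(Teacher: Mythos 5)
The central gap is your first step: the claim that $\Pasm(A,\phi)$ is closed under pullbacks computed in $\Asm(\ds A/\phi)$, and in particular the formula that the pullback of $(X,x)\to(Z,z)$ and $(Y,y)\to(Z,z)$ is $X\times_Z Y$ with structure map $x\circ\pi_X$. That formula is wrong even for $\Kone$ in $\Set$: pulling $g:(Y,y)\to(Z,z)$ back along the identity of $(Z,z)$ would replace the structure map $y$ by $z\circ g$, and $(Y,z\circ g)\not\cong(Y,y)$ in general, since a realizer of $\eta$ cannot be computed from a realizer of $g(\eta)$. A realizer of a point of the pullback must code realizers of \emph{both} components, so even when a pairing exists the correct structure map is $(\xi,\eta)\mapsto\langle x(\xi),y(\eta)\rangle$. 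Worse, in the generality in which the lemma is stated (arbitrary Heyting $\cat H$, external filter $\phi$ not assumed to be generated by singletons) the pairing combinator $\comb p$ is only an inhabited downset, not a global section, so the downset of realizers of a point of such a pullback need not be prone: $\Pasm(A,\phi)$ is simply not closed under pullbacks, for the same reason the remark preceding the lemma records that it need not be closed under binary products --- and that is precisely the situation this lemma is designed to handle. The error propagates: your later claim that the kernel pair $P\times_Y P$ of a supine cover is again a partitioned assembly fails for the same reason, so your description of $\Asm(\ds A/\phi)$ as generated by coequalizers of kernel pairs of representables collapses as written.

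The repair is essentially the paper's proof. The site structure does not require pullback stability: it suffices to verify the coverage condition, which follows from weak genericity (given $e:Q\to P$ in $R$ and $f:P'\to P$, cover the assembly $Q\times_P P'$ by a partitioned assembly and compose with the projection to $P'$). The paper in fact bypasses even this by noting that weak genericity makes $(\Pasm(A,\phi),R)$ a \emph{dense subsite} of $\Asm(\ds A/\phi)_\exreg$ equipped with its regular topology; the comparison lemma (C2.2.3 in \cite{MR1953060}) then identifies $\Sh(\Pasm(A,\phi),R)$ with the sheaves on the exact category, whose representables are sheaves and generate $\Asm(\ds A/\phi)_\exreg$ as the least exact subcategory. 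For the generation step one only needs that every object of $\Asm(\ds A/\phi)_\exreg$ is a quotient of an equivalence relation on a partitioned assembly, where that equivalence relation is merely the \emph{image} of a morphism between partitioned assemblies --- not itself partitioned. Finally, your closing assertion that $\Asm(\ds A/\phi)_\exreg\to\Sh(\Pasm(A,\phi),R)$ is fully faithful "because it adds only quotients that become effective in every exact extension" is not a proof: full faithfulness of the functor induced on an ex/reg completion by a fully faithful regular functor is not automatic, and it is exactly what the dense-subsite/comparison-lemma argument supplies.
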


\begin{proof} The regular topology $R'$ on $\Asm(\ds A/\phi)_\exreg$ makes the Yoneda embedding $\Cat{Sh}(\Asm(\ds A/\phi)_\exreg,R')$ a regular functor. Therefore, the least exact subcategory containing all representables is equivalent to $\Asm(\ds A/\phi)_\exreg$.

By weak genericity partitioned assemblies cover all assemblies, and this makes $(\Pasm(A,\phi),R)$ a \xemph{dense subsite} of $(\Asm(\ds A/\phi)_\exreg,R')$. The inclusion of categories $\Pasm(A,\phi) \to \Asm(\ds A/\phi)_\exreg$ therefore induces an equivalence of categories $\Sh(\Pasm(A,\phi),R)\cong \Sh(\Asm(\ds A/\phi),R')$. See theorem C2.2.3 in \cite{MR1953060}.

The least exact category that contains all partitioned assemblies contains all of $\Asm(\ds A/\phi)_\exreg$ because every object of $\Asm(\ds A/\phi)_\exreg$ is the quotient of an equivalence relation on a partitioned assembly, and those equivalence relations are the images of morphisms between partitioned assemblies. Therefore $(\Pasm(A,\phi),R)_{\rm ex} \cong \Asm(\ds A/\phi)_\exreg$. \end{proof}

\begin{remark} In \cite{ECnSS}, Shulman works out this type of exact completion in great generality. \end{remark}


\chapter{Applications}\label{apps}
This chapter contains interesting facts about realizability toposes that I have come across during my research.

The first section is about deriving well-known properties of the effective topos from the abstract characterization of realizability categories which we gave in the previous chapters. We show that some properties of the effective topos generalize to \xemph{effective categories} constructed over other Heyting categories than the topos of sets.

In the second section I revisit the topic of my master thesis and my paper \cite{MR2729220}, which was the algebraic compactness of a full internal subcategory of the effective topos.

The last section connects relative realizability and \xemph{classical realizability}, a realizability for classical second order arithmetic and set theory, developed by Krivine.

\section{Effective Categories}
This section is about realizability categories that are constructed from internal versions of \xemph{Kleene's first model}, which we have met in example \ref{Kone}. Some of the topics we want to discuss are:
\begin{itemize}
\item conditions for constructing $\Kone$ and $\Eff(\cat H)$ for a Heyting category $\cat H$;
\item characteristic properties of categories constructed in this way, especially when the base category has additional properties, e.g. having a subobject classifier or satisfying the axiom of choice.
\end{itemize}

\subsection{Internal recursion theory}
We demonstrate that a natural number object suffices to have a partial combinatory algebra of partial recursive functions in any Heyting category.

\begin{definition} In an arbitrary Heyting category $\cat H$ a \xemph{natural number object} is an object $\nno$ with morphisms $0:\termo\to \nno$ and $s:\nno\to\nno$ such that for each $f:X\to X$ there is a unique $g:\nno\times  X\to X$ which satisfies $g(0,y) = y$ and $g(s(x),y) = f(g(x,y))$.
\[ \xymatrix{
X\ar[r]^(.4){(0,\id)}\ar[dr]_{\id} & \nno\times X \ar[r]^{s\times \id}\ar@{.>}[d]^g & \nno\times X\ar@{.>}[d]^g \\
& X \ar[r]_f & X
}\]\label{nno}
\end{definition}

\begin{remark} The natural number object is usually defined as follows: for each pair $f_0:\termo \to X$ and $f_1:X\to X$ there is a unique $h:\nno\times X$ such that $h(0)=f_0$ and $h(s(n))=f_1(h(n))$.
\[ \xymatrix{ \termo\ar[r]^0\ar[dr]_{f_0} & \nno \ar[r]^s\ar@{.>}[d]^h & \nno\ar@{.>}[d]^h \\ & X \ar[r]_{f_1} & X } \]
In a Cartesian closed category, the definitions are equivalent. To get from the traditional definition to ours, let $f_0:\termo \to X^X$ be the transpose $f^t$ of $f:X\to X$, and let $f_1 = f^\id:X^X\to X^X$, then the transpose $h^t$ of $h$ is our $g$; to get from our definition to the traditional, let $f=f_1$ and let $h = g\circ(\id\times f_0):\nno \to X$.
\[ \begin{array}{cc}
\xymatrix{ \termo\ar[r]^0\ar[dr]_{*\mapsto f} & \nno \ar[r]^s\ar@{.>}[d]^h & \nno\ar@{.>}[d]^h \\ & X^X \ar[r]_{f^\id} & X^X } &
\xymatrix{ \nno \ar@/^1.5ex/[rr]^{\id\times f_0} \ar@/_2ex/@{.>}[drr]_h & X \ar@/_1.5ex/[dr]^\id \ar[r]_(.4){(0,\id)} & \nno\times X \ar[r]_{s\times \id}\ar@{.>}[d]^g & \nno\times X\ar@{.>}[d]^g \\ & & X \ar[r]_{f_1} & X }
\end{array}\]
A lack of exponentials in general Heyting categories forces us to use this stronger definition.
\end{remark}

We define the partial recursive functions as follows.

\begin{definition} In any Heyting category $\cat H$, the \xemph{partial recursive functions} is the least class of partial arrows $\nno^n \partar \nno$ that satisfies the following conditions.
\begin{itemize}
\item The zero function $0:\termo\to\nno$, the successor function $s:\nno\to\nno$ and all projections $\vec x\mapsto x_i$ are partial recursive.
\item The partial recursive functions are closed under composition, i.e. if $f:\nno^m\partar \nno$ and $g_i:\nno^n\partar \nno$ for $i<m$ are partial recursive, then so is $f\circ(g_0,\dotsm,g_m):\nno^n\partar \nno$. 
\item For every partial recursive $f:\nno^k\partar \nno$ and $g:\nno^{k+2}\to\nno$, the partial function $h:\nno^{k+1}\partar \nno$ that satisfies the following conditions is partial recursive:
\begin{itemize}
\item $(0,\vec x)\in \dom h$ and $h(0,\vec x) = f(\vec x)$ if and only if $\vec x\in \dom f$;
\item $(n+1,\vec x)\in \dom h$ and $h(n+1,\vec x) = g(n,h(n,\vec x),\vec x)$ if and only if $(n,\vec x)\in \dom h$ and $(n,h(n,\vec x),\vec x)\in\dom g$.
\end{itemize}
\item For every partial function $f:\nno^{k+1} \partar \nno$ the partial function $g:\nno^k\partar \nno$ that satisfies the following condition is partial recursive: $\vec x\in\dom g$ and $g(\vec x)=y$ if for all $y'\leq y$, $(\vec x,y')\in \dom f$ and $f(\vec x,y') = 0$ if and only if $y'=y$.
\end{itemize}
\end{definition}

We can now use Kleene's normal form theorem to define a partial application operator $\nno^2\partar \nno$ and this is exactly Kleene's first model $\Kone$.

\begin{theorem}[Kleene's normal form theorem] There are primitive recursive functions $T:\nno^3\to\nno$ and $U:\nno \to \nno$ such that for each partial recursive $f:\nno \partar \nno$ there is an $i\in N$ such that for all $j$, there is a $k$ such that $T(i,j,k)=0 \land U(k)=f(j)$. \end{theorem}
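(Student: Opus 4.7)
The plan is to mimic Kleene's classical proof, but carefully verify that each step carries through in the internal language of an arbitrary Heyting category with natural number object $\nno$. First I would develop the minimal bit of primitive recursive infrastructure needed: a primitive recursive pairing $\langle-,-\rangle:\nno^2\to\nno$ with projections, a primitive recursive coding $[x_0,\dots,x_{n-1}]$ of finite sequences with length and lookup operators, and primitive recursive bounded search. Equality on $\nno$ is decidable in any Heyting category with NNO (the characteristic map $\nno^2 \to \nno$ of the diagonal is built by two nested primitive recursions), so all bounded quantifiers over $\nno$ applied to primitive recursive predicates stay primitive recursive.

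Next I would assign to each partial recursive $f:\nno^n\partar\nno$ a G\"odel index $i\in\N$ by induction on its construction, with one tag per clause of the definition: zero, successor, projections (with the arity and index $i<n$), composition (with indices for the outer function and the $m$ inner functions), primitive recursion (with indices for the base and step functions), and minimization (with an index for the inner function). Then I would define $T(i,j,k)$ to hold when $k$ codes a finite sequence of computation steps $\langle e,\vec x,y\rangle$, the last of which has $e=i$ and $\vec x$ equal to the input $j$, and each step is licensed by the construction indicated by the tag of its $e$ — composition steps require the computations of the inner functions to appear earlier in $k$, primitive recursion steps require the recursive predecessor to appear earlier, and a minimization step $\langle e,\vec x,y\rangle$ requires $k$ to contain witnesses computing the inner function to $0$ at $(\vec x,y)$ and to something nonzero at every $y'<y$. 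All of this is a bounded check against $k$ and is primitive recursive. Finally I would let $U(k)$ extract the $y$-component of the last step coded by $k$.

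The remaining work, and the main obstacle, is to prove correctness: for each partial recursive $f$ of index $i$ the equation $f(j)\!\converges\! = y$ holds exactly when some $k$ satisfies $T(i,j,k)=0$ and $U(k)=y$. This goes by structural induction on the construction of $f$, the minimization clause being the delicate one since here the existence of a witnessing $k$ is itself obtained by a minimization, but that is permitted by the very schema we are treating. Because the whole argument only manipulates decidable predicates on $\nno$, uses only primitive recursive schemes, and never performs a case split on an unbounded existential statement, it is valid in any Heyting category with a natural number object, which is exactly what we need before invoking the result to build the internal $\Kone$ and the effective fibration $\ds\Kone/\Kone$.
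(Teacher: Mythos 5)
Your argument is correct and is essentially the proof the paper relies on: the paper gives no argument beyond citing Theorem IV of Kleene \cite{MR0007371}, and your sketch is exactly the standard arithmetization behind that citation (G\"odel numbering of the defining clauses, primitive recursive coding of computation histories with bounded checks, the predicate $T$ and the extraction function $U$, followed by correctness via structural induction), checked to make sense internally. The only loosely stated point is the minimization case, where the witnessing code is assembled by internal induction on the search bound (collecting the finitely many earlier computation codes) rather than ``by a minimization'', but this does not change the argument.
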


\begin{proof} This is theorem IV in \cite{MR0007371}. \end{proof}

\begin{definition}[Kleene's first model] Let $T:\nno^3 \to\nno$ and $U:\nno\to\nno$ be as in the theorem. For all $x,y\in \nno$ let $xy\converges$ if $\exists z\oftype\nno. T(x,y,z)=0$ and let $xy=z$ if $\exists w\oftype\nno.T(x,y,w)=0\land U(w)=z$. The applicative structure that is $\nno$ with this application operator is \xemph{Kleene's first model}. 
\end{definition}

\begin{lemma} This is a partial combinatory algebra. \end{lemma}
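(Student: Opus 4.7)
The plan is to reduce combinatory completeness of $\Kone$ to the construction of two specific global elements $\mathbf{k}, \mathbf{s}\oftype \termo\to \nno$ that behave as the standard combinators. Feferman's criterion, already cited in the remark following definition \ref{combinatory completeness}, says that a filter is combinatory complete as soon as it realizes the two partial functions $(x,y)\mapsto x$ and $(x,y,z)\mapsto xz(yz)$. Our filter is all of $\nno$, so what we need is that $\db f$ be globally inhabited for each of these two functions, which amounts to exhibiting $\mathbf{k}$ and $\mathbf{s}$ as global sections of $\nno$ satisfying, in the internal language, $\mathbf{k}xy\converges$ and $\mathbf{k}xy\leq x$, and $\mathbf{s}xy\converges$ with $\mathbf{s}xyz\leq xz(yz)$ whenever $xz(yz)\converges$.

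First I would record that the class of partial recursive functions defined in the paragraph before the lemma is internally closed under composition, primitive recursion and minimization, and contains the zero map, the successor, and all projections, essentially by definition. From this it follows by the standard bookkeeping of first-order recursion theory that \xemph{the application operator itself} $(x,y)\mapsto xy:\nno^2\partar \nno$ is partial recursive, since its graph is defined from $T$ and $U$ by a single application of minimization followed by composition. The same holds for the convergence predicate, projections of any arity, and so on; all these facts are $\Delta_0$-arithmetical consequences of the existence of the primitive recursive functions $T$ and $U$, and the proofs go through verbatim in the internal language of any Heyting category with a natural number object, since they use only existence and uniqueness of maps defined by primitive recursion.

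The substantive step is the \xemph{S-m-n theorem}: for each $m,n\in \N$ there is a primitive recursive function $s^m_n:\nno^{m+1}\to\nno$ such that whenever $e$ is an index of a partial recursive $f:\nno^{m+n}\partar\nno$, the value $s^m_n(e,\vec x)$ is an index of $\vec y\mapsto f(\vec x,\vec y)$. Classically this is proved by an explicit primitive recursive substitution on codes of recursive definitions; because the construction is entirely primitive recursive and its correctness is a universally quantified $\Delta_0$ equation, it holds internally in $\cat H$. Applying S-m-n to the projection $\pi_0\oftype\nno^2\to\nno$ produces a primitive recursive $h\oftype\nno\to\nno$ with $h(x)\cdot y\leq x$; taking for $\mathbf{k}$ the standard numeral for an index of $h$ gives the first combinator. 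For $\mathbf{s}$ we apply S-m-n twice to the partial recursive function $(x,y,z)\mapsto xz(yz)$ (partial recursive because application is).

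The main obstacle will be not the construction itself, which is an entirely standard recursion-theoretic exercise, but checking that the standard proof really is internally valid in $\cat H$. Concretely, one must verify that every equation invoked in the construction of $\mathbf{k}$ and $\mathbf{s}$ (the behavior of $T,U$ on numerical codes, the correctness of substitution on codes, the recursion equations for primitive recursive functions) is provable in the internal language using only what the natural number object provides: uniqueness of maps defined by recursion and induction for decidable predicates on $\nno$. All the relevant equations are either $\Delta_0$ equalities between primitive recursive terms, or universally quantified instances of such, and are therefore validated by the usual induction principle available in any Heyting category with a natural number object. Once this is checked, the two global elements $\mathbf{k},\mathbf{s}\oftype\termo\to\nno$ witness that $\nno$ itself is a combinatory complete filter of $\Kone$, so $\Kone$ is a partial combinatory algebra.
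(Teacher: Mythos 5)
Your proof is correct and takes essentially the same route as the paper: the paper's own proof is a one-line appeal to the S-$m$-$n$ theorem (Theorem XXIII in \S 65 of Kleene), observing that all partial combinatory functions are partial recursive by definition, which is precisely the content you spell out. The only cosmetic differences are that you detour through Feferman's $\comb k$/$\comb s$ basis and make the internalization in a Heyting category with natural number object explicit, whereas the paper applies S-$m$-$n$ directly to arbitrary partial combinatory functions and leaves the internal validity implicit.
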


\begin{proof} Theorem XXIII in \S 65 of \cite{MR0051790} is the \xemph{S-$m$-$n$ theorem}, which implies combinatory completeness, because all combinatory function are partial recursive by definition. \end{proof}

\subsection{Weakly relational natural number objects}
We have no proof that natural number objects are preserved in ex/reg completions. The problem is that we need to do recursion with functional relations, and this is not part of the definition of a natural number object. Sadly, this means that $\Asm(\cat H)_\exreg$ may not always have a natural number object, which makes characterizing effective categories harder. Therefore, we will now consider a remedy.


An obvious solution is to allow recursive relations. However, if a Heyting category has all recursively defined relations, then the ex/reg completion has all coequalizers, because we can construct transitive closures of arbitrary relations. For each parallel pair of arrows $f,g:X\to Y$, the transitive closure of $\im{(f,g)}(X)\cup \im{(g,f)}(X)\cup\set=\subseteq Y^2$ is precisely the kernel pair of the coequalizer of $f$ and $g$, which therefore exists in the ex/reg completion. Because this seems too strong, we consider a weaker condition.

\begin{definition}[weakly relational natural number objects] A natural number object $\nno$ is \xemph{weakly relational}, if for each total relation $R: X\nrightarrow X$ there is a total relation $S:\nno\times X\nrightarrow X$ such that $S(0,x,y)$ implies $x=y$ and $S(n+1,x,z)$ implies that $S(n,x,y)$ and $R(y,z)$ for some $y\in X$. We say that $S$ exists by \xemph{weak relational recursion}. \end{definition}

\begin{lemma} Let $I:\cat H \to\cat H_\exreg$ be the ex/reg completion of a regular category $\cat H$. If $\nno$ is a weakly relation natural number object in $\cat H$, then $I\nno$ is one in $\cat H_\exreg$. \end{lemma}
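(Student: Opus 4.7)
The plan is to check two things: that $I\nno=(\nno,=)$ satisfies the recursion property of a natural number object in $\cat H_\exreg$, and that it supports weak relational recursion there. Throughout, I will exploit the identification from lemma \ref{sub in exreg} of subobjects of $(X,\sim_X)$ in $\cat H_\exreg$ with $\sim_X$-saturated subobjects of $X$ in $\cat H$, together with the standard internal induction principle for $\nno$ in the Heyting category $\cat H$.

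First I would handle the natural number object property. Given $[f]\colon(X,\sim_X)\to(X,\sim_X)$ in $\cat H_\exreg$, any representative $f\colon X\to X$ in $\cat H$ automatically satisfies $x\sim_X x'\Rightarrow f(x)\sim_X f(x')$. Apply the recursion property of $\nno$ in $\cat H$ to obtain the unique $g\colon \nno\times X\to X$ with $g(0,x)=x$ and $g(s(n),x)=f(g(n,x))$. Using internal induction in $\cat H$ on the subobject $\{n\in\nno: \forall x,x'.\,(x\sim_X x'\to g(n,x)\sim_X g(n,x'))\}$, one sees that $g$ respects $\sim_X$ in its second argument, so $[g]$ is a well-defined morphism $(\nno,=)\times(X,\sim_X)\to(X,\sim_X)$ in $\cat H_\exreg$ satisfying the recursion equations up to $\sim_X$. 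Uniqueness is obtained by the analogous induction on $\{n: \forall x.\,g'(n,x)\sim_X g(n,x)\}$ for any competing $[g']$.

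Next I would verify weak relational recursion. Let $R\colon(X,\sim_X)\nrightarrow(X,\sim_X)$ be a total relation in $\cat H_\exreg$. By lemma \ref{sub in exreg}, $R$ is represented by a $(\sim_X\times\sim_X)$-saturated subobject $\tilde R\subseteq X\times X$ in $\cat H$, and totality of $R$ in $\cat H_\exreg$ translates (via saturation) to totality of $\tilde R$ in $\cat H$. Apply weak relational recursion in $\cat H$ to $\tilde R$ to obtain a total $\tilde S\subseteq \nno\times X\times X$ with $\tilde S(0,x,y)\Rightarrow x=y$ and $\tilde S(s(n),x,z)\Rightarrow \exists y.\,\tilde S(n,x,y)\wedge\tilde R(y,z)$. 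Let $S\subseteq \nno\times X\times X$ be the $(=\times\sim_X\times\sim_X)$-saturation of $\tilde S$; it represents a relation $[S]\colon I\nno\times(X,\sim_X)\nrightarrow(X,\sim_X)$ in $\cat H_\exreg$. Totality of $[S]$ is inherited from that of $\tilde S$. The base clause $S(0,x,y)\to x\sim_X y$ follows by unpacking the saturation: witnesses $x'\sim_X x$, $y'\sim_X y$ with $\tilde S(0,x',y')$ force $x'=y'$, so $x\sim_X y$. The successor clause is handled similarly: witnesses $x'\sim_X x$, $z'\sim_X z$ with $\tilde S(s(n),x',z')$ produce $y$ with $\tilde S(n,x',y)$ and $\tilde R(y,z')$; the pair $(x'\sim_X x,y\sim_X y)$ shows $S(n,x,y)$, while saturation of $\tilde R$ together with $z'\sim_X z$ upgrades $\tilde R(y,z')$ to $\tilde R(y,z)$, i.e.\ to $R(y,z)$ in $\cat H_\exreg$.

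The main obstacle is the bookkeeping around what ``equality'' and ``totality'' mean on the two sides: equality in $\cat H_\exreg$ on $(X,\sim_X)$ is $\sim_X$, and a relation must be represented by a $\sim_X\times\sim_X$-saturated relation in $\cat H$. Once the dictionary $R\leftrightarrow\tilde R$ (and likewise $S\leftrightarrow\tilde S$) is fixed, both the recursion property and the two clauses of weak relational recursion reduce to direct applications of the corresponding principles in $\cat H$ plus internal induction on $\nno$. Crucially, no coequalizers of arbitrary relations are constructed along the way, so the stronger principle warned against in the preceding discussion is not invoked.
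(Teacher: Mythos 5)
Your second half is essentially the paper's own argument: a total relation on $(X,\sim_X)$ corresponds to a saturated total relation $\tilde R\subseteq X^2$ (the paper writes it as the pullback $(q\times q)^{-1}(R)$ along the cover $q\colon X\to (X,\sim_X)$), one applies weak relational recursion in $\cat H$, and the saturation (the paper's $\exists_{q\times q}(S')$) satisfies the two clauses for the same reasons you give. That part is fine.

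The gap is in your first half. You assume that a morphism $[f]\colon(X,\sim_X)\to(X,\sim_X)$ of $\cat H_\exreg$ has a representative $f\colon X\to X$ in $\cat H$. This is true in the free quotient completion $\cat H_q$, but not after the category-of-fractions step: a morphism of $\cat H_\exreg$ is an equivalence class of spans through monic covers, i.e.\ a functional relation, and an endomorphism of a quotient object need not lift along the cover $X\to(X,\sim_X)$ unless one has projectivity or choice (in a realizability topos, for instance, maps out of quotients of assemblies are generally not tracked by maps of assemblies). This is exactly the obstruction the paper states just before introducing weakly relational natural number objects --- ``we need to do recursion with functional relations, and this is not part of the definition of a natural number object'' --- so the recursion property of $I\nno$ cannot be obtained by transporting the recursion property of $\nno$ along chosen representatives. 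The repair is to reverse the order of your two steps, and you already have the needed ingredient: having established weak relational recursion in $\cat H_\exreg$, apply it to the graph of $f$ (a total relation), then show by internal induction that the resulting total relation $S\subseteq\nno\times(X,\sim_X)^2$ is single-valued; its projection to $\nno\times(X,\sim_X)$ is then a regular epimorphism (totality) and a monomorphism, hence an isomorphism, so $S$ is the graph of the required recursion map, with uniqueness again by induction. This is how the paper concludes that $I\nno$ is a natural number object, and weakly relational.
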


\begin{proof} Let $R:X/\wave\nrightarrow X/\wave$ be a total relation on a new quotient object in the ex/reg completion and let $q:X\to X/\wave$ be the quotient map. We pull back $R$ along $q\times q$ to get a total relation $R' = \pre{(q\times q)}(R)\subseteq X^2$. By weak relational recursion there is an $S': \nno\times X\nrightarrow X$, such that $S'(0,x,y)$ implies $x=y$ and such that if $S'(n+1,x,y)$, then there is a $z$ such that $S'(n,x,z)$ and $R'(z,y)$. We let $S = \im{q\times q}(S'):\nno\times (X/\wave) \nrightarrow (X/\wave)$. This is a total relation such that $S(0,x,y)$ implies $x=y$ and $S(n+1,x,y)$ implies $S(n,x,z)\land R(y,z)$ for some $z\in X/\wave$.

A function $f:X/\wave \to X/\wave$ is a total relation, and therefore there is an $S\subseteq \nno\times (X/\wave)^2$ such that $S(0,x,y)$ implies $x=y$ and $S(n+1,x,y)$ implies $S(n,x,z)$ and $f(z)=y$. By induction, $S$ is single-valued just like $f$ is. That means that the projection $\pi_{01}:S\to \nno\times X/\wave$ is not only a regular epimorphism (due to totality) but also a monomorphism and therefore an isomorphism. So $\nno$ is in fact a natural number object and weakly relational.
\end{proof}

\begin{remark} The canonical functor $I:\cat H\to\cat H_\exreg$ is fully faithful and regular, so if $\cat H_\exreg$ has a natural number object in the image of $I$, it is automatically a natural number object of $\cat H$. We have no proof that if an exact category has a natural number object, then the natural number object is weakly relational. Therefore we cannot say whether a weaker principle than weak relationality forces $I$ to preserve natural number objects.
\end{remark}

We require that $\Asm(\ds\Kone/\Kone)$ has a weakly relational natural number object, so that $\RT(\Kone,\Kone)$ has one too. This translates to the same requirement on the base topos $\cat H$.

\begin{lemma} The category $\Asm(\ds \Kone/\Kone)$ has a weakly relational natural number object if and only if the base category $\cat H$ has one. \end{lemma}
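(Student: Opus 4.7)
The plan is to transfer the NNO structure through the adjunction $\supp \dashv \nabla$ between $\cat H$ and $\Asm(\ds \Kone/\Kone)$, where $\nabla$ is fully faithful and regular while $\supp$ is both a left adjoint and a regular functor (lemma \ref{left adjoint}).

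For the direction $(\Leftarrow)$, suppose $\cat H$ has a weakly relational NNO $\nno$. First I would define the candidate NNO in $\Asm(\ds\Kone/\Kone)$ as the partitioned assembly $N = (\nno, E_N)$ in which each $n \in \nno$ is realized by itself, viewed as an element of $\Kone$; the structure morphisms $0_N: \termo \to N$ and $s_N: N \to N$ are tracked by $0 \in \Kone$ and by the realizer of the successor function respectively. To verify the parameterized NNO property I would take a tracked map $f: X \to X$ in $\Asm$ with tracker $r_f \in \Kone$, use the NNO property in $\cat H$ to produce the unique underlying morphism $\supp g: \nno \times \supp X \to \supp X$ satisfying $\supp g(0, y) = y$ and $\supp g(s(n), y) = \supp f(\supp g(n, y))$, and then build a tracker for $g$ by internalizing primitive recursion in $\Kone$: using the S-m-n / recursion theorem inside $\cat H$, produce $r_g \in \Kone$ with $r_g \langle 0, a\rangle = a$ and $r_g \langle s(n), a\rangle = r_f(r_g \langle n, a\rangle)$. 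Weak relationality is analogous: apply weak relational recursion in $\cat H$ to $\supp R$ to obtain an underlying relation $S_0$, and equip it with realizers produced by the same kind of primitive recursion in $\Kone$.

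For the direction $(\Rightarrow)$, suppose $\Asm(\ds\Kone/\Kone)$ has a weakly relational NNO $N$. The candidate in $\cat H$ is $\supp N$, with structure morphisms $\supp 0_N$ and $\supp s_N$ (noting that $\supp$ preserves the terminal object). The parameterized NNO property transfers through the adjunction: given $X$ in $\cat H$ with $f: X \to X$, apply $\nabla$ to produce $\nabla f: \nabla X \to \nabla X$, invoke the NNO property of $N$ to obtain a unique $g: N \times \nabla X \to \nabla X$, and then apply $\supp$ (using $\supp\nabla \cong \id$ and the preservation of finite products) to obtain $\supp g: \supp N \times X \to X$ with the required recursion equations. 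Uniqueness propagates back because any competing $h: \supp N \times X \to X$ in $\cat H$ transposes via $\supp \dashv \nabla$ to a morphism $N \times \nabla X \to \nabla X$ satisfying the NNO equations in $\Asm$, which must coincide with $g$. For weak relationality, a total relation $R$ on $X$ in $\cat H$ yields the total relation $\nabla R$ on $\nabla X$ in $\Asm$ since $\nabla$ is regular; apply weak relational recursion there, then send the result back through the regular functor $\supp$. The two defining clauses of weak relationality transfer because they are expressible as subobject inclusions in regular logic, which regular functors preserve.

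The main obstacle will be the tracker-construction step in the forward direction. The subtlety is that the primitive recursion used to realize $g$, and likewise the relation $S$ witnessing weak relationality, must be definable internally in $\cat H$ rather than only in $\Cat{Set}$. This is handled by the observation that $\Kone$ itself is built from the natural number object of $\cat H$ using the universal primitive recursive functions $T$ and $U$, so Kleene's S-m-n and recursion theorems are already available inside $\cat H$ without further hypotheses. Once this internalization is made explicit, both directions reduce to routine checks combining the regularity of $\nabla$ and $\supp$ with the adjunction between them.
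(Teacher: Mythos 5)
Your (\(\Rightarrow\)) direction (from $\Asm(\ds\Kone/\Kone)$ to $\cat H$) and the plain NNO part of your (\(\Leftarrow\)) direction are essentially sound and agree with the paper: $\nabla$ and $\supp$ are regular, the two defining clauses and totality are regular-logic statements, and for \emph{functions} the tracker $r_g$ built by primitive recursion in $\Kone$ does realize the unique underlying $g$, because uniqueness of $g$ forces the realizer recursion and the element recursion to march in step. The genuine gap is in your final sentence ``weak relationality is analogous.'' It is not. Weak relational recursion in $\cat H$ applied to $\supp R$ hands you \emph{some} total relation $S_0$ on $\N\times\supp X\times\supp X$ satisfying the two implications; it is not canonical and in particular need not contain the chains your realizers trace out. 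To make $S_0$ into a total relation in $\Asm(\ds\Kone/\Kone)$ you must exhibit a realizer sending any realizer of $(n,x)$ to a realizer of some $(n,x,y)\in S_0$. Iterating a totality realizer of $R$ $n$ times starting from a realizer of $x$ does produce a realizer of \emph{some} $y$ reachable from $x$ by an $R$-chain of length $n$, but since the clauses of weak relationality are only one-way implications, nothing forces that particular $(n,x,y)$ to lie in $S_0$. So ``equip $S_0$ with realizers produced by the same kind of primitive recursion'' does not establish totality in the category of assemblies, and the argument breaks exactly where the relation case differs from the function case.

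The paper closes this gap by baking the realizer bookkeeping into the relation \emph{before} recursing: using Shanin's rule it first replaces $X$ by a cover $Y$ with a prone $p:Y\to\nno$ (so realizers of elements of $Y$ are canonical) and a tracked total relation $R'$ on $Y$; it then applies weak relational recursion in $\cat H$ not to $\supp R'$ but to an auxiliary relation $R''$ whose elements carry a realizer component, with $R''((m,n,x),(m',n',y))$ holding only when $\supp R'(x,y)$, $m=m'$, $mn\converges$ and $mn=n'$. Whatever total relation weak relational recursion returns for $R''$ then has computable realizer witnesses built into its elements, and this descends to a tracked total relation $\nno\times X\nrightarrow X$ in $\Asm(\ds\Kone/\Kone)$ with an inhabited set of totality realizers. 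You need this device (or an equivalent one) to complete your (\(\Leftarrow\)) direction; as written, the weak-relationality step does not go through.
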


\begin{proof} Consider that $\cat H$ is a regular reflective subcategory of $\Asm(\ds\Kone/\Kone)$, with $\nabla$ for an inclusion and $\supp$ for a reflection. Therefore, if $\Asm(\ds\Kone/\Kone)$ has a weakly relational natural number object, we can first find a suitable $S:\nno\times \nabla X\nrightarrow \nabla X$ for any total relation $R:X\nrightarrow X$, and $\supp S:\supp\nno \times X\to X$ will then have the required properties, namely that $\supp S(0,x,y)$ implies $x=y$ and $\supp S(n+1,x,y)$ implies that there is a $z\in X$ such that $S(n,x,z)$ and $R(z,y)$. Therefore $\supp \nno$ is a weakly relational natural number object in $\cat H$.

In the other direction, let $R:X\nrightarrow X$ be a total relation in $\Asm(\ds\Kone/\Kone)$. By theorem \ref{Shanin} there is a $Y$ with prone $p:Y\to \nno$ and a \prodo morphism $Y\to X$. Using the tracking principles, we can prove that there is a total relation $R':Y\nrightarrow Y$ which tracks $R$. There is a total relation $S:\nno\times \supp Y \nrightarrow \supp Y$ such that $S(0,x,y)$ implies $x=y$, and if $S(n+1,x,y)$ then there is a $z$ such that $S(n,x,z)$ and $\supp R'(z,y)$. We can construct $S$ in such a way, that there is an inhabited $U\subseteq \N = \supp\nno$ such that for all $m\in U$, $n\in \N$ and $x\in \supp Y$ there is a $y\in \supp Y$ such that $mn\supp p(x)\converges$ and $mn\supp p(x)=\supp p(y)$. The idea is to take an inhabited set $V\subseteq \N$ of realizers for the totality of $R'$ and consider the total relation $R'': U\times \supp Y\nrightarrow U\times \supp Y$ that satisfies \[ R''((m,n,x),(m',n',y)) \iff \supp R'(x,y), m=m', mn\converges, mn=n' \]
The resulting $S$ determines a total relation $S':\nno\times Y\nrightarrow Y$, which tracks a total relation $S'':\nno\times X\rightarrow X$. We can do this in a way that that gives us a set of realizers $U\subseteq \N = \supp\nno$ for the totality of $S$ as a relation $Y\nrightarrow Y$.
\end{proof}

\begin{remark} It would seem that by similar reasoning, $\Asm(\ds\Kone/\Kone)_\exreg$ has a natural number object if the base category $\cat H$ is exact, because then $\cat H$ is a reflective subcategory of $\Asm(\ds\Kone/\Kone)_\exreg$ and we have tracking principles here too. But the fact that the unit of $\supp\dashv\nabla$ is no longer a monomorphism blocks attempts to represent endomorphisms in $\Asm(\ds\Kone/\Kone)_\exreg$ as endomorphisms in $\cat H$, and we still lack the ability to define relations inductively. \end{remark}

We consider conditions on categories which ensure that their natural number objects are weakly relational. Basically there are two kinds: weak versions of the axiom of choice and weak versions of impredicativity. Both allow us to substitute total relations by arrows between related objects.

\begin{lemma} Let $\cat H$ be a Heyting category with natural number object $\nno$ and enough projective objects. Then $\nno$ is weakly relational. \end{lemma}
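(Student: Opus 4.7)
The plan is to replace the total relation by an endomap on a projective object and then iterate that endomap with the recursor of the natural number object.

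First I would set things up. Let $R \hookrightarrow X\times X$ be a total relation, so the first projection $\pi_0 \colon R \to X$ is a regular epimorphism. Choose a projective cover $e \colon P \to X$ using the assumption of enough projectives. Pulling back along $e$ gives a regular epimorphism $e^{*}R = P\times_X R \to P$, and since $P$ is projective this has a section. Projecting the section back to $X$ yields an arrow $f \colon P \to X$ such that $(e,f) \colon P \to X\times X$ factors through $R$, i.e.\ $R(e(p), f(p))$ internally. Using projectivity of $P$ once more, lift $f$ through the regular epi $e$ to obtain $\phi \colon P \to P$ with $e\circ \phi = f$; in particular $R(e(p), e(\phi(p)))$.

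Next I would iterate $\phi$ using the natural number object. Applied to the endomorphism $\phi \colon P \to P$, the universal property of $\nno$ (in the form given in definition \ref{nno}) yields a unique morphism $\psi \colon \nno\times P \to P$ with $\psi(0,p) = p$ and $\psi(s(n),p) = \phi(\psi(n,p))$. Now define
\[ S \;=\; \im{(n,p)\mapsto (n,e(p),e(\psi(n,p)))}(\nno\times P) \;\subseteq\; \nno\times X\times X. \]
This $S$ is the candidate total relation.

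Finally, the three defining properties are checked internally. Totality of $S \to \nno\times X$ follows from totality of the composition $\nno\times P \to \nno\times X$, which in turn uses that $e \colon P \to X$ is a regular epimorphism and that regular epimorphisms are stable under product with $\nno$. For the base case, $S(0,x,y)$ holds iff there is $p$ with $e(p)=x$ and $e(\psi(0,p)) = e(p) = y$, forcing $x=y$. For the inductive step, $S(s(n),x,z)$ means that for some $p$, $e(p)=x$ and $e(\psi(s(n),p)) = e(\phi(\psi(n,p))) = f(\psi(n,p)) = z$; taking $y = e(\psi(n,p))$ gives $S(n,x,y)$ and $R(y,z)$ by the defining property of $\phi$.

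I do not expect any serious obstacle: the one thing to be slightly careful about is that the two uses of projectivity (splitting $e^*R \to P$ and lifting $f$ through $e$) both need $P$ itself to be projective, not merely that \emph{some} projective covers $X$; but choosing $e \colon P \to X$ so that $P$ is projective is exactly the content of ``enough projectives''. Everything else is routine manipulation of regular epimorphisms, pullbacks and the recursor of $\nno$, all available in a Heyting category.
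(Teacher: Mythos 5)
Your proof is correct and follows essentially the same route as the paper: cover $X$ by a projective $P$, use projectivity to extract an endomorphism of $P$ lying over the relation, iterate it with the recursor of $\nno$, and take the image in $\nno\times X\times X$. The only cosmetic difference is that you invoke projectivity twice (a section of $e^{*}R\to P$ plus a lift of $f$ through $e$), whereas the paper pulls $R$ back along $p\times p$ and uses projectivity once to get the endomorphism $r\colon P\to P$ directly.
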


\begin{proof} Let $R:X\nrightarrow X$ be a total relation and let $p:P\to X$ be a projective cover. We pull back $R$ along $p\times p$ to get a total relation $R' = \pre{(p\times p)}(R):P\nrightarrow P$. Projectivity implies that $R'$ contains an arrow $r:P\to P$, and by recursion there is an arrow $s:\nno\times P\to P$, such that $s(0,x)=x$ and $s(n+1,x) = r(s(n,x))$ for all $n\in\nno$ and $x\in X$. Let $S$ be the following relation.
\[ S = \set{(n,x,y) \in\nno \times X^2| \exists z\in P. p(z)=x\land p(s(n,z))) = y}\]
Now $S$ is a total relation, because $p$ is a regular epimorphism. If $S(0,x,y)$ then $x=y$, because $s(0,z)=z$ for all $z\in P$; if $S(n+1,x,y)$ then for some $z\in P$, $p(z) = x$ and $p(s(n+1,z)) = p(r(s(n,x))) = y$, which implies that $S(n,x,p(s(n,z)))$ and $R(p(s(n,z)),y)$; therefore $S$ satisfies both of our requirements.
\end{proof}


For a weak version of impredicativity, we weaken the notion of power objects in toposes.

\begin{lemma} Let $\cat H$ be a Heyting category with a natural number object $\nno$, and \xemph{weak power objects}, i.e. for each object $X$ there is an $E_X\subseteq X\times P_X$ such that for each $Y \subseteq X\times Z$, there is an $m:Z\to P_X$ such that $Y = \pre{(\id_X\times m)}(E_X)$
\[ \xymatrix{
Y\ar[r]\ar[d] \ar@{}[dr]|<\lrcorner & E_X\ar[d] \\
X\times Z \ar[r]_{\id_X\times m} & X\times P_X
}\]
Then $\nno$ is weakly relational.
\end{lemma}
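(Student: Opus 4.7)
The plan is to mimic the usual set-theoretic construction where one iterates the image operator $U \mapsto R[U]$ on subsets of $X$, starting from the singleton $\{x\}$, to obtain at step $n$ the set of endpoints of length-$n$ $R$-walks from $x$. Weak power objects give us the object $P_X$ on which this iteration takes place, the natural number object $\nno$ provides the iteration itself, and the relation $S$ is then read off from $E_X$.

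More concretely, I would proceed as follows. First, classify the diagonal $\Delta \hookrightarrow X \times X$ by a map $\sigma : X \to P_X$, so that $(y, \sigma(x)) \in E_X$ iff $y = x$; this is the internal singleton map. Next, classify the subobject $\{(z, U) \in X \times P_X \mid \exists y. (y, U) \in E_X \land R(y, z)\}$, definable in the Heyting logic of $\cat H$ from $E_X$ and $R$, by a map $r : P_X \to P_X$; internally, $r(U) = \{z \mid \exists y \in U. R(y, z)\}$. Using the parametric recursion afforded by $\nno$ and applying it to $r : P_X \to P_X$, build $h : \nno \times P_X \to P_X$ satisfying $h(0, U) = U$ and $h(n+1, U) = r(h(n, U))$. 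Finally, define
\[ S = \{(n, x, y) \in \nno \times X \times X \mid (y, h(n, \sigma(x))) \in E_X\} \subseteq \nno \times X \times X. \]

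Verifying the two implications of weak relational recursion is then almost immediate from the construction. If $(n, x, y) \in S$ with $n = 0$, then $y \in h(0, \sigma(x)) = \sigma(x)$, i.e.\ $y = x$, by the defining property of $\sigma$. If $(n+1, x, z) \in S$, then $z \in h(n+1, \sigma(x)) = r(h(n, \sigma(x)))$, so by the definition of $r$ there is an intermediate $y$ with $(y, h(n, \sigma(x))) \in E_X$ and $R(y, z)$, meaning $(n, x, y) \in S$ and $R(y, z)$. Both arguments take place entirely in the internal Heyting logic of $\cat H$.

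The one place that needs care is totality of $S$, which amounts to showing that $h(n, \sigma(x))$ is inhabited for every $n$ and $x$. This is where I need induction on $\nno$ for a subobject, namely the subobject $K = \{n \in \nno \mid \forall x \oftype X. \exists y \oftype X. (y, h(n, \sigma(x))) \in E_X\} \subseteq \nno$; the base case is that $x \in \sigma(x)$, and the step case uses totality of $R$ applied to any inhabitant of $h(n, \sigma(x))$. I expect this to be the main obstacle, not because the argument is hard, but because one must extract a genuine induction principle for subobjects of $\nno$ from the parametric recursion scheme given in definition \ref{nno}. I would handle this by applying the parametric recursion with $X = P_\nno$ (available because we assumed weak power objects), using $r' : P_\nno \to P_\nno$ corresponding to the successor preimage operator on subsets of $\nno$, to deduce the usual subobject-induction statement, and then feeding $K$ through it.
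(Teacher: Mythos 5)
Your construction is not the paper's: the paper never forms the internal direct-image operator $U\mapsto\set{z\mid\exists y\in U.\,R(y,z)}$. Instead it classifies $R$ by $r:X\to P_X$, classifies the \emph{graph} of $r$ (a subobject of $X\times P_X$) by a map $f:P_X\to P_X$, iterates that $f$ by the recursion of definition \ref{nno}, and reads $S$ off from $E_X$, so that its successor clause amounts to an equality $r(y)=g(n,s(x))$ of elements of $P_X$ rather than to your existential condition. Your route is the more transparent one: the defining property of your $r:P_X\to P_X$ gives $z\in r(U)\iff\exists y.\,y\in U\land R(y,z)$ for generalized elements, and both implications required of $S$ then follow exactly as you argue, entirely in the internal logic.

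The genuine gap is where you locate it, but your proposed repair does not close it. Totality of $S$ is made to rest on an induction principle for subobjects of $\nno$, and definition \ref{nno} only supplies parametrized recursion with uniqueness; it is not at all clear that subobject induction is derivable from this in a general Heyting category with weak power objects, and your sketch is circular as stated: writing $h'$ for the iteration of the successor-preimage operator and $\kappa:\termo\to P_{\nno}$ for a name of $K$, the statements you would need (that $0\in h'(n,\kappa)$, or that the iterates stay above $\kappa$, for all $n$) are themselves instances of the induction you are trying to establish, and the uniqueness clause of the recursion is never actually used. Fortunately you do not need induction at all. Since $R$ is total, your operator $r$ preserves the subobject $P^+_X=\set{U\in P_X\mid\exists y.\,(y,U)\in E_X}$ of inhabited members (an image, hence available in a Heyting category), and $\sigma$ factors through $P^+_X$ because $x\in\sigma(x)$; so corestrict $r$ to $r^+:P^+_X\to P^+_X$, apply definition \ref{nno} to $r^+$, and define $S$ from the composite of the resulting $h^+:\nno\times P^+_X\to P^+_X$ with the inclusion into $P_X$. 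The two implication clauses are verified exactly as before, since the inclusion is monic and commutes with $r$ and $r^+$, and totality of $S$ is now automatic because $h^+(n,\sigma(x))$ lies in $P^+_X$ by construction. With that change your argument is complete, and it handles the totality requirement explicitly, which is the delicate point of this lemma.
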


\begin{proof} Let $R:X\nrightarrow X$. Since $R\subseteq X^2$ there is an $r:X\to P_X$ such that $R = \pre{(\id_X\times r)}(E_X)$. Moreover, $(\id_X,r):X\to X\times P_X$ is a monomorphism, so there is an $f:P_X\to P_X$ such that $\im{(\id_X,r)}(X) = \pre{(\id_X\times f)}(E_X)$. We apply recursion to find a $g:\nno\times P_X \to P_X$ such that $g(0,\xi)=\xi$ and $g(n+1,\xi) = f(g(n,\xi))$. Let $s:X\to P_X$ be an arrow for which $\pre{(\id_X\times s}(E_X)$ is the diagonal subobject, let $h:\nno\times X^2 \to X\times P_X$ satisfy $h(n,x,y) = (y,g(n,s(x)))$ and let $S = \pre h(E_X)$. 
Now $S(0,x,y)$ if and only if $(y,g(0,s(x))) = (y,s(x))\in E_X$ if and only if $x=y$. Also, $S(n+1,x,y)$ if and only if $(y,g(n+1,s(x))) = (y,f(g(n,s(x))))\in E_X$ if and only if $r(y) = g(n,s(x))$; that means $R(z,y)$ if and only if $S(n,x,z)$ for all $z$. Since $R$ is total, there is a $z$ such that $R(z,y)$ and $S(n,x,z)$ if and only if $S(n+1,x,y)$ holds. \end{proof}

\begin{corollary} If $F$ is a tripos, and $\Asm(F)$ has a natural number object, then so does $\Asm(F)_\exreg$. \end{corollary}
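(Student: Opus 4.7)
The plan is to reduce the corollary to the two preceding lemmas: the one that says a Heyting category with weak power objects and a natural number object has a weakly relational natural number object, and the one that says the ex/reg completion preserves weakly relational natural number objects. Thus it suffices to exhibit weak power objects in $\Asm(F)$ whenever $F$ is a tripos.

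First I would show that $\Asm(F)$ inherits a form of weak power objects from the tripos structure of $F$. Given $X \in \Asm(F)$, set $P_X := \nabla(\pi (FX))$ where $\pi(-)$ and $\epsilon_{(-)}$ are the tripos classifier and membership predicate from definition \ref{tripos}. Recall from the proof that $\Asm(F)$ is regular that subobjects of an object $W \in \Asm(F)$ correspond to vertical morphisms $V \to W$ in $\cat F$, i.e.\ to predicates below the representing predicate of $W$. In particular, a subobject $Y \subseteq X \times Z$ corresponds to a predicate in $F_{FX \times FZ}$ (bounded by the representing predicate of $X \times Z$). The tripos property yields a classifying map $m_0 : FZ \to \pi(FX)$ in the base; by the adjunction $\supp \dashv \nabla$ from definition \ref{constant object functor}, $m_0$ transposes to a morphism $m : Z \to P_X$ in $\Asm(F)$. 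Let $E_X \subseteq X \times P_X$ be the subobject corresponding to the restriction of $\epsilon_{FX}$ to the representing predicate of $X \times P_X$. A Beck--Chevalley and prone-morphism chase shows that $Y \simeq (\id_X \times m)^{-1}(E_X)$, so $(P_X, E_X)$ is a weak power object (uniqueness of $m$ is not claimed and not needed).

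Next I would invoke the preceding lemma: $\Asm(F)$ has weak power objects and, by hypothesis, a natural number object $\nno$, hence $\nno$ is weakly relational in $\Asm(F)$. Then by the lemma about ex/reg completions and weakly relational natural number objects, its image $I\nno$ under the canonical embedding $I : \Asm(F) \to \Asm(F)_\exreg$ is a (weakly relational) natural number object in $\Asm(F)_\exreg$.

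The main obstacle is the first step, namely pinning down weak power objects in $\Asm(F)$ from the tripos structure. The subtlety is that an arrow $Z \to P_X$ in $\Asm(F)$ is more than an arrow between underlying objects, so one must use the adjunction $\supp \dashv \nabla$ to transpose the base-level classifying map $m_0$ to a genuine assembly morphism, and then verify that the pullback computation in $\Asm(F)$ matches the tripos reindexing $F_{\id \times m_0}(\epsilon_{FX})$. Once this classification is in place, the remainder of the proof is a direct application of the two lemmas immediately preceding the corollary.
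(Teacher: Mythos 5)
Your route is exactly the one the paper intends (the corollary is stated without proof precisely because it follows from the two preceding lemmas): the tripos membership predicates give $\Asm(F)$ weak power objects, so the natural number object is weakly relational, and the ex/reg completion lemma finishes the argument. One small repair: the adjunction $\supp\dashv\nabla$ is only established in the paper for realizability fibrations whose external filter consists of inhabited downsets, not for arbitrary triposes, but you do not need it — since $\nabla\pi(FX)=\top_{\pi(FX)}$ and reindexing preserves the top element, any base map $m_0:FZ\to\pi(FX)$ already underlies a morphism $Z\to\nabla\pi(FX)$ in $\Asm(F)$, obtained by composing the vertical map $Z\to\top_{FZ}$ with the prone map over $m_0$.
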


Moerdijk and van den Berg have shown \cite{MR2474446} that natural number objects are preserved by ex/reg completions of locally Cartesian closed categories. The following lemma explains why.

\begin{proposition}[van den Berg] Any locally Cartesian closed Heyting category $\cat H$ with a natural number object $\nno$ has all recursively defined relations. \end{proposition}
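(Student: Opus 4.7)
The plan is to exhibit every recursive scheme for defining relations as a construction in the internal language of $\cat H$, using local Cartesian closure to form the objects of finite chains and the natural number object to index them. I shall treat the representative case of weak relational recursion needed by the preceding lemma: given a total relation $R\subseteq X\times X$, one should produce $S\subseteq\nno\times X\times X$ satisfying $S(0,x,y)\Rightarrow x=y$ and $S(n+1,x,z)\Rightarrow\exists y.\,S(n,x,y)\land R(y,z)$; the other recursive relational definitions can be obtained analogously by varying the shape of the chain.

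To build $S$, I would first construct the family of initial segments of $\nno$: the order $\leq$ on $\nno$ is primitive recursive and hence definable, so $\tau=\{(n,i)\in\nno\times\nno\mid i\leq n\}$ is a subobject of $\nno\times\nno$, and the first projection $\pi\colon\tau\to\nno$ displays the family $([n+1])_{n\in\nno}$. Local Cartesian closure then supplies the dependent product $E=\Pi_\pi(\pi^*X)\to\nno$, whose fibre over $n$ is the exponential $X^{[n+1]}$, i.e.\ the object of sequences $\sigma=(x_0,\dots,x_n)$ in $X$. Next I would carve out the subobject $C\hookrightarrow E$ of \emph{$R$-chains}, namely those $\sigma$ over $n$ such that $R(\sigma(i),\sigma(i+1))$ holds for every $i<n$; this is a definable subobject using the internal Heyting language of $\cat H$. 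Finally, $S\subseteq\nno\times X\times X$ is the image of the morphism $C\to\nno\times X\times X$ sending $\sigma$ over $n$ to $(n,\sigma(0),\sigma(n))$.

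The verification is then a routine computation in the internal language. The fibre $C_0$ is just $X$ with first and last projections equal to the identity, so $S(0,x,y)\iff x=y$. A length-$(n{+}2)$ $R$-chain from $x$ to $z$ is the same data as a length-$(n{+}1)$ $R$-chain from $x$ to some intermediate $y$ together with a single $R$-step from $y$ to $z$, whence $S(n+1,x,z)\iff\exists y.\,S(n,x,y)\land R(y,z)$. Totality of $S$ will be established by internal $\nno$-induction on the subobject $\{n\in\nno\mid\forall x.\,\exists y.\,S(n,x,y)\}\subseteq\nno$: the base case is witnessed by the singleton chain at $n=0$, and the successor step appends an $R$-step furnished by totality of $R$.

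The main obstacle is the construction of the family $E\to\nno$ and the subobject $C\hookrightarrow E$ without recourse to power objects or $W$-types, since neither need exist in a general locally Cartesian closed Heyting category. Local Cartesian closure handles $\Pi_\pi$, and the primitive-recursive character of $\leq$ on $\nno$ supplies $\tau$ as a subobject; once both are in hand, all subsequent manipulations are first order in $\cat H$ and require no further hypothesis beyond LCC and the existence of an NNO.
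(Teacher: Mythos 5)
Your proposal is correct and follows essentially the same route as the paper: the paper also defines $S$ as the relation ``there exists an $R$-chain $p$ on the initial segment $\set{0,\dots,n}$ with $p(0)=x$, $p(n)=y$'', interpreting the family of chains via the initial segments of $\nno$ and local Cartesian closure, and then reads off the two clauses of the recursion. Your extra details (the explicit dependent product $\Pi_\pi(\pi^*X)$, the image factorization, and the induction argument for totality) just flesh out what the paper leaves implicit.
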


\begin{proof} For $R: X\nrightarrow X$ let $S$ be:
\[ \set{(n,x,y)| \exists p\oftype\set{0,n} \to X. p(0)=x\land p(n)= y \land \forall i<n. R(p(i),p(i+1)) } \]
We use the family of initial segments of $n$ to interpret this. Now $S(0,x,y)$ if and only if $x=y$, and $S(n+1,x,z)$ if and only if $S(n,x,y)$ and $R(y,z)$. \end{proof}

\subsection{Effective categories}
In this section we consider properties of the effective topos that follow directly from our characterization theorems, and therefore hold in all realizability categories of the form $\Asm(\ds\Kone/\Kone)_\exreg$. We also consider what the axiom of choice and what power objects in the base category allow us to do.

We will reserve $\nno$ for natural number objects in realizability categories and use $\N$ for natural number objects in base categories.

\begin{theorem} Let $\cat H$ be an exact Heyting category with a weakly relational natural number object $\N$. Let $\Eff(\cat H) = \Asm(\ds \Kone/\Kone)_\exreg$. Then the following set of properties characterize $\Eff(\cat H)$:
\begin{itemize}
\item There is a regular full and faithful $\nabla:\cat H\to\Eff(\cat H)$ with a left adjoint $\supp$.
\item $\Eff(\cat H)$ has a natural number object $\nno$ and the unit $\eta_\nno:\nno \to \nabla\supp\nno$ is monic.
\item $\nno$ is weakly generic, i.e. partitioned assemblies cover all objects, where a partitioned assembly is an object $P$ with a morphism $p:P\to\nno$ that is prone relative to $\supp$.
\item let $p:P\to\nno$ and $q:Q\to \nno$ be prone, let $e:Q\to Y$ be a regular epimorphism and let $f:P\to Y$ be any morphism; there is a prone subobject $U\subseteq \nno$ and a map $g:U\times P \to Q$ such that $e(g(n,x))=f(x)$ and $q(g(n,x)) = np(x)$.
\[\xymatrix{
U\times \exists_p(P) \ar[d]_{(m,n)\mapsto mn} & U\times P \ar[l]_(.4){\id\times p}  \ar[r]^{\pi_1}\ar[d]^g & P \ar[d]^f \\
\nno & Q\ar[l]^{q} \ar[r]_e & Y
}\]
\end{itemize}
\end{theorem}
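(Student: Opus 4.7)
The plan is to prove both directions. The easy direction (that $\Eff(\cat H)$ satisfies the four properties) follows from the construction itself: the adjunction $\supp\dashv\nabla$ comes from lemma \ref{left adjoint}; the natural number object $\nno$ is $\nabla\N$, which is an NNO in $\Eff(\cat H)$ because $\N$ is weakly relational and $\nabla$ is regular; weak genericity of $\nno$ and hypothesis 4 are instances of Shanin's rule (proposition \ref{Shanin}) and the tracking principle (theorem \ref{tracking}) applied to $(\ds\Kone/\Kone,\mathring{\Kone})$ via the adjunction $\Asm\dashv\Sub$.

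For the hard direction, I would recognize the hypotheses as the axioms of an exact realizability category (see theorem \ref{charexact}) with $C := \nno$. The first step is to identify $\nno$ internally with Kleene's first model. The previous subsection constructs $\Kone$ inside $\cat H$ from its weakly relational NNO $\N$. Since $\nabla$ is regular and $\N$ is weakly relational, $\nabla\N$ is an NNO in the target category, so by uniqueness of NNOs (using hypothesis 2 — the monic unit $\eta_\nno:\nno\to\nabla\N$ — together with the universal property applied in both directions) we get $\nno\cong\nabla\N$. Consequently $\nno$ carries the internal $\Kone$-applicative structure, and the expression $np(x)$ in hypothesis 4 is genuinely Kleene application on $\nno$.

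Next I would verify the three clauses in the definition of an exact realizability category for $(\cat E,\nno,\cat H)$. Clause 1 (regular reflective inclusion with monic unit on $C$) is hypothesis 1 plus monic $\eta_\nno$. Clause 2 (weak genericity) is hypothesis 3. Clause 3 (tracking principle) is the main work: hypothesis 4 directly handles the case where the second leg $f$ is prone, with the splitting $g$ produced by hypothesis 4 precisely taking the form $(n,x)\mapsto np(x)$ demanded in clause 3. For the case where the second leg is a \prodo morphism, I would factor it through a prone map into $\nno$ using weak genericity, apply hypothesis 4 to each prone factor, and assemble the resulting realizers using internal closure of $\Kone$ under pairing and application — essentially replaying the derivation of theorem \ref{tracking} from Church's rule and the uniformity rule, but with hypothesis 4 standing in for both.

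Finally, theorem \ref{charexact} delivers an external filter $\chi$ with $\cat E\simeq\Asm(\ds\Kone/\chi)_\exreg$, defined by $U\in\chi$ iff the inclusion $\nabla U\cap\nno\hookrightarrow\termo$ is a regular epimorphism. Using $\supp\dashv\nabla$ together with $\supp\nno=\N$, this condition translates to $U$ being inhabited in $\cat H$, so $\chi$ is the filter of inhabited downsets of $\Kone$ — i.e., $\chi=\Kone$ as an external filter. Hence $\cat E\simeq\Asm(\ds\Kone/\Kone)_\exreg=\Eff(\cat H)$. The principal obstacle is the reduction in clause 3 from the stated prone-legged hypothesis to the general \prodo-legged abstract tracking principle, which requires carefully composing realizers across two prone factorizations; everything else amounts to reading off matches between the hypotheses and the axioms.
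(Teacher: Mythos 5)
Your overall strategy is the same as the paper's: the paper proves this theorem in one line by invoking theorem \ref{charexact}, and your plan (recognize the four hypotheses as the data of an exact realizability category with $C=\nno$, verify the clauses, then apply theorem \ref{charexact} and identify the resulting external filter) is exactly the intended argument, worked out in more detail. However, there is a genuine error in how you set it up: the identification $\nno\cong\nabla\N$. In the easy direction, $\nabla\N$ is \emph{not} the natural number object of $\Eff(\cat H)$; right adjoints such as $\nabla$ do not preserve NNOs, and the actual NNO is the canonical (partitioned) assembly of numerals, whose existence is exactly what the weak-relationality hypothesis and the preceding subsection are needed for. Its unit $\eta_\nno:\nno\to\nabla\supp\nno$ is monic but not invertible --- that is precisely why hypothesis 2 says ``monic'' rather than ``iso''. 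In the hard direction your appeal to ``uniqueness of NNOs'' therefore fails at the first step: $\nabla\N$ is not an NNO of $\cat E$, so there is nothing to be unique against, and the conclusion $\nno\cong\nabla\N$ is false in the intended model (if it held, prone maps into $\nno$ would have domains in the essential image of $\nabla$, and hypothesis 3 would force every object to be covered by constant objects, which fails in $\Eff(\cat H)$: by the uniformity principle every map $\nabla X\to N$ into the genuine NNO is constant).

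The repair is to argue with $\supp\nno$ rather than with $\nabla\N$: hypothesis 2 makes $\nno$ an NNO of $\cat E$, so the internal Kleene application of the previous subsection can be constructed on $\nno$ inside $\cat E$, which is what gives meaning to $np(x)$ in hypothesis 4 and makes $\nno$ a combinatory complete filter; the (finite-limit-preserving) reflector $\supp$ then sends $\nno$ to an NNO of $\cat H$, so $\supp\nno\cong\N$ carries $\Kone$, and theorem \ref{charexact} applies with $C=\nno$, $RC\cong\N$. Two smaller points deserve more care than your write-up gives them: the theorem's first bullet does not literally state that $\supp$ preserves finite limits, which the definition of an exact realizability category requires; and in the final step the identification of $\chi$ with the filter of \emph{all} inhabited downsets is not a one-line consequence of $\supp\dashv\nabla$ --- the direction ``$U$ inhabited in $\cat H$ implies $\nno\cap\nabla U$ globally supported in $\cat E$'' needs an argument (via hypothesis 4 and the regularity of $\nabla$), and it matters, since $\Asm(\ds\Kone/\chi)$ genuinely depends on $\chi$ when $\cat H$ is not well-pointed.
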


\begin{proof} This is essentially theorem \ref{charexact} applied to this situation. \end{proof}

A natural number object has infinitely many global sections, which has the following important implication.

\begin{lemma} The category of partitioned assemblies in $\Eff(\cat H)$ is closed under finite limits, and contains the image of $\cat H$. \end{lemma}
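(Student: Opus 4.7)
The plan has two ingredients that I will state up front. First, since $\nabla$ is full and faithful with left adjoint $\supp$, the unit $\eta_X : X \to \nabla \supp X$ is an isomorphism whenever $X$ lies in the image of $\nabla$; consequently the naturality square of the unit for any morphism out of $\nabla Y$ ($Y \in \cat H$) is trivially a pullback, so every such morphism is prone. Second, prone morphisms compose, are stable under pullback, and the Cartesian product $p \times q$ of prone morphisms is prone (an easy componentwise check: decompose any test cone into its two components and apply proneness of each factor). These suffice for both claims.

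For the image of $\cat H$, fix the zero global section $0:\termo\to\nno$. For $X \in \cat H$ the composite $0 \circ {!_{\nabla X}}: \nabla X \to \nno$ is prone by the first observation, so $\nabla X$ is a partitioned assembly. Specializing to $X = \termo$ covers the terminal object.

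For binary products of partitioned assemblies $(P, p)$ and $(Q, q)$, the morphism $p \times q : P \times Q \to \nno \times \nno$ is prone by the second observation. The pairing combinator of $\Kone$ yields a morphism $\mu : \nno \times \nno \to \nno$ in $\Eff(\cat H)$ whose underlying map in $\cat H$ is the injective total recursive pairing $\langle -,- \rangle : \N^2 \to \N$. Proneness of $\mu$ follows by direct verification that its naturality square is a pullback: the unpairing combinators track the inverse identification on realizers, while injectivity of $\langle-,-\rangle$ makes the underlying square a pullback in $\cat H$. Composing, $\mu \circ (p \times q) : P \times Q \to \nno$ is prone.

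Equalizers are the most delicate case. For parallel $a,b : (P,p) \to (Q,q)$ with equalizer inclusion $m : E \to P$, it suffices to show $m$ is prone, whence $p \circ m$ is prone and exhibits $E$ as a partitioned assembly. Realizing $E$ as the pullback of the diagonal $\Delta_Q : Q \to Q \times Q$ along $(a,b)$, stability of prone morphisms under pullback reduces the problem to showing $\Delta_Q$ is prone. This last step is the main obstacle: it corresponds to the realizability intuition that a sub-assembly of a partitioned assembly inherits the singleton-realizer property, but it must be argued abstractly via the pullback square for $\eta_{Q \times Q}$, using that $\supp$ is regular (so preserves products and diagonals) and that the unpairing combinators of $\Kone$ track the two projections from a pair of realizers back to the single realizer, forcing a would-be witness $(x,y) \in Q \times Q$ with matching image in $\nabla \supp Q \times \nabla \supp Q$ to satisfy $x = y$.
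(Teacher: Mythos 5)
Your route is essentially the paper's: terminal object and image of $\cat H$ via proneness of maps into $\nno$, binary products via the recursive pairing on $\nno$ (the paper uses that the pairing is a bijection lifting to an isomorphism $\nno^2\cong\nno$, which is marginally cleaner than verifying proneness of the pairing map directly, but both work), and equalizers via proneness of the equalizer inclusion. Two points need attention.

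First, a minor one: the claim that \emph{every} morphism out of $\nabla Y$ is prone is not ``trivial'' and is false in general. Writing out the pullback condition for the naturality square of $f:\nabla Y\to Z$, invertibility of $\eta_{\nabla Y}$ produces the candidate fill-in, but checking that it commutes with the leg into $Z$ requires $\eta_Z$ to be a monomorphism --- and in $\Eff(\cat H)=\Asm(\ds\Kone/\Kone)_\exreg$ the unit is not monic in general (this is the failure of faithfulness of the reflector noted in remark \ref{exact base}). Your two instances have codomain $\termo$ and $\nno$, whose units are monic, so the applications are fine; just restrict the statement.

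Second, the genuine gap, in the step you yourself flag as the main obstacle. Your reduction of equalizers to proneness of $\Delta_Q:Q\to Q\times Q$ is a reasonable unpacking of the paper's one-line ``equalizers are prone'', but the sketch you give (unpairing combinators, regularity of $\supp$) does not identify where the hypothesis on $Q$ enters. Since $\supp$ and $\nabla$ preserve products, $\nabla\supp\Delta_Q$ is the diagonal of $\nabla\supp Q$, so a cone over the naturality square of $\Delta_Q$ is a pair $(u,v):W\to Q\times Q$ with $\eta_Q\circ u=\eta_Q\circ v$, and the square is a pullback precisely when this forces $u=v$, i.e.\ precisely when $\eta_Q$ is a monomorphism. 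For an arbitrary object of $\Eff(\cat H)$ this fails, so the diagonal of an arbitrary object is not prone; no amount of combinator-chasing repairs that. What saves you is exactly that $Q$ is a partitioned assembly: its prone map $q:Q\to\nno$ exhibits $\eta_Q$ as a pullback of the monic $\eta_\nno$ along $\nabla\supp q$, hence $\eta_Q$ is monic. Once that sentence is added, $\Delta_Q$ is prone, its pullback $E\to P$ is prone, and $p\circ(E\to P)$ witnesses $E$ as a partitioned assembly, closing the argument.
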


\begin{proof} Categories of partitioned assemblies are always closed under equalizers, because equalizers are prone (see \ref{lifting limits}). There are recursive isomorphisms between powers of $\nno$ and subobjects of $\nno$ in any Heyting category. In fact, with the exception of $k=0$, $\nno^k\simeq \nno$ by primitive recursive bijections. The terminal object is a partitioned assembly because $0$ is a global section of $\nno$, and all maps $\termo \to \nno$ are prone. Let $(m,n)\mapsto\langle m,n\rangle: \N^2\to\N$ be a recursive paring surjection in $\cat H$. This lifts to an isomorphism $\nno^2\to\nno$ and isomorphisms are prone. Therefore, for each two prone maps $p:X\to \nno$ and $q: Y\to \nno$ the map $\langle\cdot,\cdot\rangle\circ p\times q: X\times Y \to \nno$ is another prone map, making the product of two partitioned assemblies a partitioned assembly.

For each $X\in\cat H$, $!:\nabla X\to\termo$ is prone, and therefore $0:\nabla X\to\nno$ is prone too. \end{proof}

\begin{corollary} If $\cat H$ has a projective terminal object, then $\Eff(\cat H)$ is the relative completion of $\Pasm(\Kone,\Kone)$. \end{corollary}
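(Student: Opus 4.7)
The plan is to invoke Theorem~\ref{relcomp} (Hofstra's relative completion theorem), whose hypotheses match our setting once we know that the external filter $\Kone \subseteq \ext\Kone$ is generated by singletons. So the real content of the proof is checking this singleton-generation property from the assumption that the terminal object of $\cat H$ is projective.

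First I would verify singleton generation. A member of the external filter $\Kone$ is, by definition, an inhabited downset $U\subseteq \N$, which means that the canonical arrow $U\to\termo$ is a regular epimorphism. Projectivity of $\termo$ gives a section $u:\termo\to U\subseteq \N$ of this map. The principal downset $\set{a\in\N\mid a\leq u}$ is then inhabited by $u$ itself, so it again belongs to $\Kone$. This shows that every member of $\Kone$ has a generating global section of the required kind, i.e.\ that $\Kone$ is generated by singletons.

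With singleton generation in hand, Theorem~\ref{relcomp} immediately yields $\Asm(\ds\Kone/\Kone)\cong \Pasm(\Kone,\Kone)_{\cat H/\textrm{reg}}$. Since $\Eff(\cat H)=\Asm(\ds\Kone/\Kone)_\exreg$ by definition, passing to the ex/reg completion on both sides gives $\Eff(\cat H)\cong (\Pasm(\Kone,\Kone)_{\cat H/\textrm{reg}})_\exreg$, which is exactly what ``relative completion of $\Pasm(\Kone,\Kone)$'' is meant to denote in the exact setting where $\cat H$ is assumed exact. The only nontrivial step is the singleton-generation check, and it is essential rather than cosmetic: without projectivity of $\termo$ there are inhabited downsets of $\N$ with no global section, so the corresponding morphisms of assemblies cannot all be realized by partitioned assemblies, and the relative completion of $\Pasm(\Kone,\Kone)$ would then be strictly smaller than $\Eff(\cat H)$.
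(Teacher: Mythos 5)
Your proof is correct and follows the same route as the paper: show that projectivity of $\termo$ forces the external filter of inhabited downsets of $\Kone$ to be generated by singletons (using a section of the regular epimorphism $U\to\termo$), then invoke Theorem~\ref{relcomp}. You merely spell out the singleton-generation check and the passage to the ex/reg completion, which the paper leaves implicit.
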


\begin{proof} If $\termo$ is projective in $\cat H$ then the filter of inhabited subobjects of $\Kone$ in $\cat H$ is generated by singletons. This result follows from theorem \ref{relcomp}. \end{proof}

\begin{remark} If the object $\nno$ is projective, then so is $\termo$, because projective objects are closed under retracts, and $\termo$ is a retract of $\nno$. 
\end{remark}

The following corollary tells us what the axiom of choice in $\cat H$ can do for us.

\begin{corollary} If all regular epimorphisms split in $\cat H$, then $\Eff(\cat H)$ is the ex/lex completion of $\Pasm(\Kone,\Kone)$. \end{corollary}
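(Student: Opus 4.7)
The plan is to string together two results already established: Proposition \ref{charproj}, which identifies the projective objects of $\Asm(\ds\Kone/\Kone)$, and Proposition \ref{enproj}, which expresses $\Eff(\cat H)$ as an ex/wlex completion whenever the base category has enough projectives.

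First I would verify that the hypothesis on $\cat H$ gives us everything we need to invoke these two propositions. Splitting of regular epimorphisms is an internal axiom of choice and implies that \emph{every} object of $\cat H$ is projective, so $\cat H$ certainly has enough projectives. It also implies that the external filter $\Kone$ (the filter of inhabited downsets) is generated by singletons: any inhabited $U\subseteq\Kone$ is, by definition, one for which $U\to\termo$ is a regular epimorphism, and splitting this morphism furnishes a global section $u:\termo\to U$ whose principal downset is again inhabited. Thus we are in the setting in which the previous corollary and Propositions \ref{charproj} and \ref{enproj} apply.

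Next, Proposition \ref{charproj} identifies the projective objects of $\Asm(\ds\Kone/\Kone)$ with the partitioned assemblies $P$ for which $\supp P$ is projective in $\cat H$. Under our hypothesis this second condition is automatic, so the projective objects of $\Asm(\ds\Kone/\Kone)$ are precisely the partitioned assemblies, and $\Asm(\ds\Kone/\Kone)$ has enough projectives. Applying Proposition \ref{enproj} then tells us that $\Eff(\cat H) = \Asm(\ds\Kone/\Kone)_\exreg$ is the ex/wlex completion of $\Pasm(\Kone,\Kone)$.

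The remaining step is to upgrade ``ex/wlex completion'' to ``ex/lex completion''. Here we use the lemma proved just before the corollary, which shows that $\Pasm(\Kone,\Kone)$ is closed under finite limits in $\Eff(\cat H)$; it is therefore actually a left exact category, not merely a weakly left exact one. On a left exact category $\cat D$, any finite-limit-preserving functor $F:\cat D\to\cat E$ to an exact category is automatically left covering, because a genuine limit cone is in particular a weak limit cone and $F$ sends it to a genuine limit in $\cat E$; any other weak limit cone factors through the genuine one as a retract, so $F$ sends the comparison map to a split (hence regular) epimorphism onto $\lim FD$. Thus the universal properties defining the ex/wlex and ex/lex completions of $\Pasm(\Kone,\Kone)$ coincide, and $\Eff(\cat H) \cong \Pasm(\Kone,\Kone)_\exlex$. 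The step I would expect to require the most care is verifying this last coincidence of universal properties -- it is the standard observation that on an honest lex category the two completions agree, but it is the glue that turns the previous two results into the statement we want.
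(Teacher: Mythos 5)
Your proof is correct and follows essentially the paper's route: under the splitting hypothesis every object of $\cat H$ is projective and the filter is generated by singletons, so Proposition \ref{charproj} identifies the projectives of $\Eff(\cat H)$ with the partitioned assemblies, and the conclusion is then the Carboni--Vitale exact-completion-via-projectives theorem. The only cosmetic difference is that the paper cites theorem 16 of Carboni--Vitale directly for the ex/lex statement, whereas you pass through the ex/wlex version (Proposition \ref{enproj}) and upgrade it using the closure of $\Pasm(\Kone,\Kone)$ under finite limits in $\Eff(\cat H)$ -- a harmless elaboration of the same argument.
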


\begin{proof} The assumption implies that the projective objects of $\Eff(\cat H)$ and the partitioned assemblies coincide, as we saw in lemma \ref{charproj}. The result now follows from theorem 16 in \cite{MR1600009}. \end{proof}

\begin{remark} If $\Eff(\cat H)$ is the ex/lex completion of $\Pasm(\Kone,\Kone)$ then all regular epimorphisms split in $\cat H$ because $\nabla$ is regular. If $\cat H$ has enough projectives and its terminal object is projective, then we can apply proposition \ref{enproj} to see that $\Eff(\cat H)$ is still the ex/wlex completion of its category of projective objects. \end{remark}

We will now consider what happens if $\cat H$ is a topos regardless of whether the terminal object is projective or whether there are enough projective objects.

\begin{definition} A \xemph{local operator} $j$ in a topos $\cat E$ is \xemph{regular} if $j(\exists x\oftype X.\phi(x))\to \exists x\oftype X.j(\phi(x))$ holds for all $j$-sheaves $X$. This is equivalent to: the inclusion $\Sh(\cat E,j)\to\cat E$ is a regular functor.
\end{definition}

For the theory of local operators see chapter V of \cite{MR1300636} where they are called \xemph{Lawvere-Tierney topologies}.

\newcommand\Sep{\Cat{Sep}}
\begin{lemma} Let $\cat H$ be a topos with a natural number object. The effective topos $\Eff(\cat H)$ has a regular local operator $j$ such that the category $\Sep(\Eff(\cat H),j)$ of $j$-separated objects is equivalent to $\Asm(\ds\Kone/\Kone)$ and the category $\Sh(\Eff(\cat H),j)$ of $j$-sheaves is equivalent to $\cat H$.\label{asm is sep}\end{lemma}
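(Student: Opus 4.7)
The plan is to identify $j$ as the local operator corresponding to the reflective inclusion $\nabla : \cat H \to \Eff(\cat H)$, check that it is a geometric inclusion (so $\cat H$ sits as $j$-sheaves), and then show that the $j$-separated objects are precisely the assemblies. Throughout I use that $\Eff(\cat H) = \Asm(\ds\Kone/\Kone)_\exreg$ with $\nabla : \cat H \to \Asm(\ds\Kone/\Kone)$ a regular, full and faithful functor having a left adjoint $\supp$ by lemma \ref{left adjoint}.

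First I extend $\supp$ to all of $\Eff(\cat H)$. Because $\supp : \Asm(\ds\Kone/\Kone) \to \cat H$ is a regular functor into an exact Heyting category, the universal property of the ex/reg completion produces an up to isomorphism unique regular functor $\Sigma : \Eff(\cat H) \to \cat H$ with $\Sigma \circ I \simeq \supp$, where $I : \Asm(\ds\Kone/\Kone) \to \Eff(\cat H)$ is the canonical embedding. Being regular, $\Sigma$ preserves finite limits. Next I verify that $\Sigma \dashv \nabla'$ where $\nabla' = I \circ \nabla$, by transporting the unit and counit of $\supp \dashv \nabla$ along $I$ and using that every object of $\Eff(\cat H)$ is a quotient of an equivalence relation on an assembly, so that morphisms $\Sigma X \to Y$ in $\cat H$ correspond to morphisms $X \to \nabla' Y$ as long as each side is determined by its restriction to a covering assembly. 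This makes $\nabla' : \cat H \to \Eff(\cat H)$ a fully faithful geometric inclusion.

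A standard result in topos theory (see \cite{MR1300636}, chapter V) now gives a unique local operator $j$ on $\Eff(\cat H)$ such that $\Sh(\Eff(\cat H),j) \simeq \cat H$ via $\nabla'$ and such that the sheafification functor is $\Sigma$. Regularity of $j$ is equivalent to the inclusion $\Sh(\Eff(\cat H),j) \hookrightarrow \Eff(\cat H)$ being a regular functor, and this inclusion is precisely $\nabla'$, which is regular since $\nabla$ and $I$ are both regular. Hence $j$ is a regular local operator.

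It remains to identify the $j$-separated objects with $\Asm(\ds\Kone/\Kone)$. An object $X$ of $\Eff(\cat H)$ is $j$-separated if and only if the unit $\eta_X : X \to \nabla' \Sigma X$ is a monomorphism. On one hand, any assembly $Y \in \Asm(\ds\Kone/\Kone)$ is, by remark \ref{assemblies are inhabited families}, an inhabited family of downsets and hence embeds into $\nabla \supp Y$, so $\eta_{IY}$ is monic and $IY$ is $j$-separated. Conversely, if $\eta_X$ is monic, then $X$ is a subobject of $\nabla' \Sigma X$ in $\Eff(\cat H)$; pulling back the generic assembly along a presentation of $X$ shows $X$ lies in the essential image of $I$, i.e.\ is (isomorphic to) an assembly. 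This uses that the canonical inclusion $I : \Asm \hookrightarrow \Asm_\exreg$ is closed under subobjects of objects of the form $\nabla Y$, which follows from the characterization of $\Sub(-)$ in ex/reg completions given in lemma \ref{sub in exreg}. The main obstacle I expect is step two, making the extension $\Sigma$ genuinely left adjoint to $\nabla'$ on the full ex/reg completion rather than merely on the assembly subcategory, since one must check that the unit–counit equations transported through quotients of equivalence relations still exhibit the adjunction; once that is in place, the remaining topos-theoretic identifications of sheaves, separated objects and regularity are formal consequences.
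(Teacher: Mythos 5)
Your proposal is correct and follows essentially the same route as the paper's (much terser) proof: the geometric inclusion of $\cat H$ into $\Eff(\cat H)$ given by $\nabla$ and the regular extension of $\supp$ induces the local operator $j$, regularity of $j$ follows from regularity of the direct image functor, and the assemblies are identified with the $j$-separated objects via the monic-unit criterion. Your expansion of the details — extending $\supp$ by the ex/reg universal property, verifying the adjunction on quotients of equivalence relations, and using lemma \ref{sub in exreg} for the converse direction — is a sound filling-in of what the paper leaves implicit.
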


\begin{proof} Any geometric inclusion of toposes induces a local operator, and this operator is regular if the direct image functor is regular. The assemblies are the objects for which the unit of the reflection is a monomorphism, a characteristic property of separated objects. \end{proof}

\begin{theorem} Let $\cat E$ be a topos with a natural number object $\nno$ and a regular local operator $j$ satisfying the following conditions.
\begin{itemize}
\item Internal $\nno$-indexed coproducts of $j$-sheaves cover all objects.
\item The canonical map $\nno\to j\nno$, where $j\nno$ is the $j$-sheaf associated to $\nno$, is a monomorphism whose image intersects every inhabited $j$-stable subobject of $j\nno$.
\item The following axiom schemas, where $jX$ is any sheaf for $j$, are valid.
\begin{align*}
\axiom{ETC(j)}\quad & [\forall m\oftype \nno. j(\phi(m))\to \exists n\oftype \nno. \chi(m,n))] \to [\exists e\oftype \nno. j(\phi(m))\to em\converges \land \chi(m,em)] \\
\axiom{UP(j)}\quad & [\forall x\oftype jX. \exists n\oftype\nno.\chi(x,n)] \to [\exists n\oftype \nno.\forall x\oftype jX.\chi(x,n)]
\end{align*}
\end{itemize}
Then $\cat E \cong \Eff(\Sh(\cat E,j))$. \label{definition of effective topos}
\end{theorem}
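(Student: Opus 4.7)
The plan is to apply the characterization theorem \ref{charexact} for exact realizability categories to the triple $(\cat E, \nno, \Sh(\cat E,j))$. First I would observe that the sheafification/inclusion adjunction $a \dashv i \colon \Sh(\cat E,j) \to \cat E$ provides the required reflective subcategory: since $j$ is regular by hypothesis, $i$ is a regular functor, while $a$ preserves finite limits as every sheafification does; moreover $\Sh(\cat E,j)$ is exact because it is a topos. The natural number object of $\Sh(\cat E,j)$ is $j\nno = a\nno$, and this carries the internal Kleene-first-model structure, so $\Kone_{\Sh(\cat E,j)} = j\nno$. The hypothesis that $\eta_\nno\colon \nno \to j\nno$ is monic identifies $\nno$ with a subobject of $\nabla j\nno = ij\nno$; to see that this subobject is a filter of $ij\nno$, I would use the fact that the primitive recursive pairing and the application combinators on $\nno$ are preserved by sheafification, so upward closure and closure under application internally hold.

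Next I would verify each of the three characteristic properties of exact realizability categories. Condition (1) has been checked in the previous paragraph. For weak genericity (2), the first hypothesis — that internal $\nno$-indexed coproducts of $j$-sheaves cover every object of $\cat E$ — says exactly that for each $X$ there is a sheaf $Y$ with a regular epimorphism $\coprod_{n:\nno} Y_n \twoheadrightarrow X$, and rearranging this presentation gives an object $Z$ over $\nno$ with a prone map $p\colon Z \to \nno$ (coming from the coproduct decomposition) and a regular epimorphism $e\colon Z \to X$; this is precisely a prone-regular-epi span from a partitioned assembly onto $X$. For the tracking principle (3), I would translate the categorical statement for \prodo morphisms composed with either \prodo or prone morphisms into the internal language via remark \ref{models}. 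A composite of two \prodo morphisms expresses a $j$-stable universally quantified existence statement of the shape in $\axiom{ETC}(j)$; producing the required splitting $W \to Z$ along with the multiplicative combinator is precisely what $\axiom{ETC}(j)$ affirms. Dually, a \prodo morphism followed by a prone morphism yields the uniformity statement whose conclusion $\axiom{UP}(j)$ provides.

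At this point the characterization theorem \ref{charexact} applies and gives an equivalence $\cat E \cong \Asm(\ds j\nno /\chi)_{\rm ex/reg}$ for some combinatory complete external filter $\chi$ of $j\nno$ in $\Sh(\cat E,j)$. The remaining task is to identify $\chi$ as the full filter of inhabited subobjects, so that the right-hand side is precisely $\Eff(\Sh(\cat E,j))$. By construction, $\chi$ consists of those $U \in \ext(j\nno)$ such that $!\colon (\ds j\nno/\chi)_{U\hookrightarrow j\nno}(\nno) \to \termo$ is supine, i.e.\ such that $U \cap \nno$ is inhabited in $\cat E$. The last clause of hypothesis (2) — that the image of $\nno \to j\nno$ meets every inhabited $j$-stable subobject of $j\nno$ — says exactly that $U \cap \nno$ is inhabited whenever $U$ is, so $\chi$ is the filter of all inhabited downsets, which is what defines $\Eff(-)$.

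The hard part will be the translation step in the second paragraph: carefully matching the quantifier shape of $\axiom{ETC}(j)$ and $\axiom{UP}(j)$ against the externally phrased tracking conditions in the definition of exact realizability category, and ensuring that the restriction to sheaf-valued contexts in the schemas is exactly what captures morphisms whose codomains are $j$-sheaves (equivalently, prone morphisms into $\nabla(-)$). A secondary subtlety is that the characterization in theorem \ref{charexact} presupposes $j\nno$ is weakly relational in $\Sh(\cat E,j)$; this has to be extracted either from $\axiom{UP}(j)$ applied to total relations or, more cleanly, from the fact that $\Sh(\cat E,j)$ is a topos and thus has power objects, so the preceding lemma on weak power objects applies.
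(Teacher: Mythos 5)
Your proposal is correct and follows essentially the same route as the paper's own (much terser) proof: verify the three characteristic properties of an exact realizability category for the triple $(\cat E,\nno,\Sh(\cat E,j))$ — regular inclusion of sheaves via regularity of $j$, weak genericity from the coproduct hypothesis (these coproducts being exactly the partitioned assemblies), and the tracking principle from $\axiom{ETC}(j)$ and $\axiom{UP}(j)$ — and then use the second clause of hypothesis (2) to identify the external filter as that of all inhabited subobjects of $j\nno$, so that theorem \ref{charexact} yields $\cat E\cong\Eff(\Sh(\cat E,j))$. Your extra remarks (that $j\nno$ is the Kleene model of the sheaf subtopos and that weak relationality is automatic since $\Sh(\cat E,j)$ is a topos) are details the paper leaves implicit, not a divergence in method.
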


\begin{proof} Any local operator induces a geometric embedding of toposes, and for a regular local operator, this embedding has a regular direct image map. In this context $\nno$-indexed coproducts of $j$-sheaves are partitioned assemblies, because both are characterized by the existence of a prone morphism to $\nno$. The properties of $\nno\to j\nno$ tell us that $\nno$ is a filter that represents the external filter of inhabited subobjects of $j\nno$ in $\Sh(\cat E,j)$. The two axioms are extended Church's thesis and the uniformity principle reformulated in terms of $j$. So $\cat E$ has all the characteristic properties of $\Eff(\Sh(\cat E,j))$ and must be the same \xemph{effective topos}. \end{proof}


The following theorem explains the uselessness of recursive realizability in propositional logic.

\begin{theorem} For every $p\in\Omega$, $\Eff(\cat H)\models jp$ if and only if $\Eff(\cat H)\models p$.\end{theorem}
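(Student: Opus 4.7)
The plan is to unfold both sides of the biconditional via the identification $\Eff(\cat H)=\Asm(\ds\Kone/\Kone)_\exreg$ and to reduce each to the same internal assertion in $\cat H$ about a representing downset of $\Kone$; the bridge between the two is supplied by the combinator $\comb k$.

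First I would invoke Lemma~\ref{asm is sep} to identify $j$ with the local operator coming from the geometric inclusion $\nabla\colon\cat H\hookrightarrow\Eff(\cat H)$ whose left exact reflector is $\supp$. Writing $P\hookrightarrow\termo$ for the subterminal named by $p\colon\termo\to\Omega$, the $j$-closed subterminals of $\termo$ are precisely the $\nabla X$ with $X\in\Sub_{\cat H}(\termo)$, so $jP\simeq\nabla\supp P$. Since $\nabla$ is fully faithful and preserves $\termo$, this gives $\Eff(\cat H)\models jp \iff \supp P\simeq\termo$ in $\cat H$, whereas $\Eff(\cat H)\models p$ simply means $P\simeq\termo$ in $\Eff(\cat H)$.

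Next I would represent $P$ by a downset $E\subseteq\Kone$ coming from the fibre $(\ds\Kone/\Kone)_\termo$. Because $\supp$ sends an assembly to its underlying $\cat H$-object, $\supp P$ is the image of the projection $E\to\termo$, so $\supp P\simeq\termo$ is equivalent to the internal statement $\cat H\models\exists n\in\Kone.\,n\in E$ — i.e.\ that $E$ is inhabited in $\cat H$. On the other hand, the condition $P\simeq\termo$ in $\Eff(\cat H)$ says, by the definition of the filter quotient, that the class of $E$ coincides with that of $\Kone=\top$ in the Heyting algebra $(\ds\Kone/\Kone)_\termo$; unwinding, this means that there is an inhabited downset $U\subseteq\Kone$ with $U\cdot\Kone\subseteq E$, or equivalently $\cat H\models\exists u\in\Kone.\,\forall n\in\Kone.\,un\in E$.

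It then remains to check that these two internal statements in $\cat H$ are equivalent. One direction is immediate by specialising $n$; for the reverse, given an inhabitant $e\in E$, the combinator $u=\comb k\,e$ satisfies $un=e\in E$ for every $n\in\Kone$, and $\comb k$ is available globally because $\Kone$ is a partial combinatory algebra (Example~\ref{Kone}). The implication $\Eff(\cat H)\models p\Rightarrow\Eff(\cat H)\models jp$ is in any case immediate from the unit $p\leq jp$ in $\Omega$, so the substantive content is the reverse direction. The main obstacle lies in the second step — pinning down that external validity of a global section $p$ amounts internally to the existence of a \emph{uniform} realizer for $\Kone\vdash E$ rather than merely to inhabitation of $E$ — which requires unwinding the definitions of the realizability fibration and the filter quotient; once that is in place, combinatory completeness finishes the proof.
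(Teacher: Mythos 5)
Your argument is correct, but it takes a more external route than the paper's own proof. The paper stays in the internal language and leans on the axiomatization: since $p\subseteq\termo$ is separated, $p$ is equivalent to the inhabitation of a $j$-stable subobject $U$ of $\nno$, and then $jp\models p$ follows from the regularity of the local operator together with the intersection schema (definition \ref{intersection}) — the combinator $\comb k$ that does the real work in your argument is hidden inside the proof that $\A$ represents the filter. You instead unwind the construction $\Eff(\cat H)=\Asm(\ds\Kone/\Kone)_\exreg$: you compute $jP\simeq\nabla\supp P$ from the adjunction $\supp\dashv\nabla$, identify subterminals of $\Eff(\cat H)$ with downsets $E\subseteq\Kone$ in the fibre over $\termo$, and re-prove the content of the intersection schema by hand via $u=\comb k\,e$. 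What your version buys is concreteness and self-containment: it never invokes the regularity of $j$ or the intersection schema as such, and the logical crux (external inhabitation of $E$ versus existence of a uniform realizer) is laid completely bare. What the paper's version buys is that it argues from the characteristic properties alone, in the synthetic spirit of the chapter (so it transfers to any topos satisfying the axioms of theorem \ref{definition of effective topos}), and it sidesteps the bookkeeping you assert but do not verify — that $\Sub_{\Eff(\cat H)}(\termo)$ is the fibre $(\ds\Kone/\Kone)_\termo$ and that the $j$-closure of a subterminal is $\nabla\supp P$. If you write your version up, carry the definedness condition $un\converges$ through the uniform-realizer statement, and note that the easy specialisation direction uses that $\Kone$ is internally inhabited (e.g.\ via the global section $0$); neither point is a gap, just bookkeeping.
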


\begin{proof} The direction $p\models jp$ is trivial. Every proposition $p$ is equivalent to the inhabitation of some $j$-stable subobject $U$ of $\nno$, because $p\subseteq \termo$ is a separated object. By the intersection schema, see definition \ref{intersection}, $\exists n\in\nabla\supp \nno.n\in U\models \exists n\in\nno.n\in U$. We may conclude that $j(\exists n\in\nno.n\in U) \models \exists n\in \nno. n\in U$ holds because $j$ is a regular local operator, and $U$ is $j$-stable. Hence, $jp\models p$ for all propositions.
\end{proof}

If $\cat H$ is a topos we can say things about left exact functors from $\Eff(\cat H)$ into other categories, using the left exact morphism from corollary \ref{lems and toposes}.

\begin{proposition} Let $\cat H$ be a topos with natural number object $\N$, and let $\pow^*\N$ be the object of inhabited subobjects. Note that $\pow^*\N$ is an inhabited join complete applicative lattice. Let $F:\cat H\to\cat K$ be a left exact functor, and let $C\subseteq F\pow^*\N$ be a filter such that for every $x:\termo\to\pow^*\N$, $Fx$ factors through $C$. Then there is an up to isomorphism unique functor $F_C:\Eff(\cat H)\to\cat K$ such that $F_C\nabla\simeq F$ and $F_C\mathring{\pow^*\N} \simeq C$, where $\mathring{\pow^*\N}$ is the assembly determined by $\set{(n,x)\in \N\times \pow^* \N|n\in x}$.
\end{proposition}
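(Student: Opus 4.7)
The plan is to recognize this as a direct application of Proposition \ref{lems and toposes} once we set up the correspondence between the data given in the hypotheses and the notion of a left exact model. First I would identify $\pow^*\N$ with the inhabited join completion $\ijc\Kone$ of Kleene's first algebra inside $\cat H$: since $\Kone$ is a partial applicative structure on $\N$, the object of its inhabited downsets is precisely the object of inhabited subobjects of $\N$. Correspondingly, under the isomorphism between external filters generated by singletons and filters of global sections $\termo \to \ijc A$ (see subsection \ref{downsetmonad}), the filter $\Kone^+$ on $\pow^*\N$ is generated by the global sections $\termo \to \pow^*\N$, each of which corresponds to an inhabited subobject of $\N$, i.e.\ to a member of the external filter $\Kone$.

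Next I would verify that $(F,C)$ is a left exact model for $(\ijc\Kone, \Kone^+)$ in the sense of the definition preceding Proposition \ref{pasm pseudoinitial}. This has three parts. Since $F$ is left exact it preserves the meet-semilattice and partial application structure of $\pow^*\N$, so that closure of $C$ under meets, application, and upward closure needs only that $C$ be internally a filter, which is our hypothesis. For the crucial third condition, that each $U \in \Kone^+$ has a global section $\termo \to C \cap FU$, note that every $U \in \Kone^+$ contains the principal downset $\downarrow x$ of some global section $x \colon \termo \to \pow^*\N$, and the hypothesis says $Fx$ factors through $C$; since $F$ preserves the monomorphism $\downarrow x \hookrightarrow U \hookrightarrow \pow^*\N$, the arrow $Fx$ factors simultaneously through $FU$ and through $C$, as required.

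Having set $(F,C)$ up as a left exact model, I would invoke Proposition \ref{lems and toposes} directly, which produces an up to isomorphism unique finite limit preserving functor $F_C \colon \RT(\Kone,\Kone) \to \cat K$ with $F_C \nabla \simeq F$ and sending the generic assembly to $C$. Since $\RT(\Kone,\Kone) = \Eff(\cat H)$ and the generic assembly constructed in proposition \ref{asm is pasm} is exactly $\mathring{\pow^*\N} = \{(n,x) \in \N \times \pow^*\N \mid n \in x\}$, this yields the desired $F_C$. Uniqueness up to isomorphism is inherited from that of Proposition \ref{lems and toposes}.

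The only real obstacle I foresee is that Proposition \ref{lems and toposes} requires the codomain to be exact, whereas the statement only demands $\cat K$ to admit left exact $F$; I would address this either by tacitly assuming $\cat K$ is exact (which appears to be the intended reading, since the construction of $F_C$ via triposes passes through the ex/reg completion) or by first factoring $F_C$ through the Yoneda embedding $\cat K \hookrightarrow \Sh(\cat K)$ and transporting back along the fact that every object actually produced lies in $\cat K$. Apart from this size/exactness bookkeeping, the argument is a straightforward repackaging of the pseudoinitiality results of the preceding section.
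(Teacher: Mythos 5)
Your proof is correct and follows exactly the route the paper takes: its own proof is a one-line citation of Proposition \ref{lems and toposes}, noting only that the order on $\Kone$ is discrete so that $\ijc\Kone$ is precisely $\pow^*\N$ and the generic assembly is $\mathring{\pow^*\N}$, which is what you spell out when identifying $(F,C)$ as a left exact model for $(\ijc\Kone,\Kone^+)$. Your observation about exactness of $\cat K$ is a fair point that the paper glosses over (its cited proposition does assume an exact codomain), but it does not change the argument.
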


\begin{proof} See corollary \ref{lems and toposes}. Note again that the ordering of $\Kone$ is the discrete one ($=$).\end{proof}

\section{Complete Internal Categories}
The purpose of this subsection is to revisit the work I did for my master thesis, on the algebraic compactness of the full internal subcategory of \xemph{extensional PERs} \cite{MR1167031, MR2729220}. We derive the result directly from the characteristic properties of effective categories.

\subsection{Internal categories}
In this section we go through the definition of internal categories and discuss different notions of completeness for these categories.

\begin{definition}[Ehresmann \cite{MR0213410}] An \xemph{internal category} of a category with finite limits, consists of two objects $C_0$, $C_1$, and four arrows $i:C_0\to C_1$, $s,t:C_1 \to C_0$ and $c: C_2 \to C_1$ where $C_2$ is the pullback of $s$ and $t$. These arrows satisfy $s\circ i = t\circ i = \id$, $t\circ c = t\circ p_1$ and $s\circ c=s\circ p_0$, where $p_i:C_2\to C_1$ are the projections of the pullback.
\[\begin{array}{cccc}
\xymatrix{
C_0 \ar[r]^i \ar[d]_i \ar[dr]^\id & C_1 \ar[d]^s \\
C_1\ar[r]_t & C_0
}&
\xymatrix{
C_2 \ar[r]^{p_0}\ar[d]_{p_1} \ar@{}[dr]|<\lrcorner & C_1\ar[d]^s \\
C_1 \ar[r]_t & C_0
}&
\xymatrix{ 
C_2 \ar[r]^{p_0} \ar[d]_c & C_1 \ar[d]^s \\
C_1 \ar[r]_s & C_0 \\
}&
\xymatrix{ 
C_2 \ar[r]^{p_1} \ar[d]_c & C_1 \ar[d]^t \\
C_1 \ar[r]_t & C_0 \\
}\end{array}\]
Moreover, $c$ is an associative composition operator and $i$ is a unit for composition. We use a pullback $(C_3,q_0,q_1)$ of $p_0$ and $p_1$ to help express associativity.
\[\begin{array}{ccc}
\xymatrix{
C_3 \ar[r]^{q_0}\ar[d]_{q_1} \ar@{}[dr]|<\lrcorner & C_2\ar[d]^{p_1} \\
C_2 \ar[r]_{p_0} & C_1
}&
\xymatrix{
C_3\ar[rr]^{(c\circ q_0,p_1\circ q_1)}\ar[d]_{(p_0\circ q_0,c\circ q_1)} && C_2 \ar[d]^c \\
C_2\ar[rr]_c && C_1
}&
\xymatrix{
C_1 \ar[r]^{(i\circ t,\id)} \ar[d]_{(\id,i\circ s)} \ar[dr]^\id & C_2 \ar[d]^c \\
C_2\ar[r]_c & C_1
}\end{array}\]

Let $\cat C = (C_0,C_1,i,s,t,c)$ and $\cat D = (D_0,D_1,i',s',t',c')$ be internal categories. 
An \xemph{internal functor} $\cat C \to \cat D$ is a pair of maps $f_0:C_0\to C_0$ and $f_1:C_1 \to C_1$. If $p'_i:D_2\to D_0$ is the pullback of $s',t':D_1\to D_0$, then there is a unique $f_2:C_2\to D_2$. The maps $f_0,f_1,f_2$ have to commute with all the structure maps.
\[\xymatrix{
C_2\ar[d]_{f_2} \ar[r]^c & C_1\ar[d]|{f_1} \ar@<2ex>[r]|s\ar@<-2ex>[r]_t & C_0\ar[d]^{f_0} \ar[l]|i\\
D_2 \ar[r]_{c'} & D_1 \ar@<2ex>[r]|{s'}\ar@<-2ex>[r]|{t'} & D_0 \ar[l]|{i'}
}\]

Let $F=(f_0,f_1)$ and $G=(g_0,g_1)$ be internal functors $\cat C\to\cat D$. 
An \xemph{internal natural transformation} between internal functors $F \to G$ is a map $\eta:C_0\to D_1$ such that $s'\circ \eta = f_0$ and $t'\circ\eta = g_0$. It has to satisfy the \xemph{naturality condition} that for each $x\in C_1$, $c(\eta(t(x)),f_0(x)) = c(g_0(x),\eta(s(x)))$.
\[\xymatrix{
C_1 \ar[r]^{(\eta\circ t,f_0)} \ar[d]_{(g_0,\eta\circ s)} & D_2\ar[d]^{c'} \\
D_2 \ar[r]_{c'} & D_1
}\]
\end{definition}


The properties of internal categories, especially completeness properties, are studied through the fibred category of families of objects and morphisms of the internal categories.

\newcommand\Ext{\Cat{Ext}}
\begin{definition} Let $\cat C = (C_0,C_1,i,s,t,c)$ be an internal category in $\cat E$. Its \xemph{externalization} $\Ext(\cat C): [\cat C]\to \cat E$ is the following fibred category. The domain $[\cat C]$ has arrows $f:X\to C_0$ as objects. If $g:Y\to C_0$ is another object then a morphism $f\to g$ is a pair $m=(m_0:X\to Y, m_1: X\to C_1)$ where $s\circ m_1 = f$ and $t\circ m_1 = g\circ m_0$.
\[ \xymatrix{& \ar[dl]_f X \ar[r]^{m_0}\ar[d]^{m_1} & Y\ar[d]^g \\
C_0 & \ar[l]^s C_1 \ar[r]_t & C_0}\]
Composition of morphisms is defined as follows. If $h:Z\to C_0$ is yet another object, $m:f\to g$ and $n:g\to h$ then $m_1:X\to C_1$ and $n_0\circ m_1: X\to C_1$ factors uniquely through $(C_2,p_0,p_1)$, because $t\circ m_1 = g\circ m_0 = s\circ n_1\circ m_0$. If $(n_1\circ m_0,m_1)$ is the factorization, we let $n\circ m = (n_0\circ m_0, c(n_1\circ m_0,m_1))$. 
\[ \xymatrix{
X\ar[d]_{m_0}\ar@{.>}[r]\ar@/^2ex/[rr]^{m_1} & C_2 \ar[r]_{p_1} \ar[d]_{p_0} \ar@{}[dr]|<\lrcorner & C_1 \ar[d]^t \\
Y \ar@/_2ex/[rr]_g \ar[r]^{n_1} & C_1 \ar[r]^s & C_0
}\]
The functor $\Ext(\cat C):[\cat C] \to \cat E$ sends each $f:X\to C_0$ to its domain $X$ and each morphism $(m_0,m_1)$ to $m_0$. Prone morphisms $(m_0,m_1):f\to g$ are the ones that satisfy $m_1 = i\circ g\circ m_0$.
\end{definition}

\begin{example}[internal full subcategories]
Let $f:X\to Y$ be any arrow in a topos $\cat E$. We consider $f$ to be a family of objects indexed over $Y$: $\set{X_i|i\in Y}$. There is an internal category of fibres of $f$ that is a full subcategory of $\cat E$, in a suitable sense. Let $C_0 = Y$. Take the exponential of $Y\times f:Y\times X\to Y\times Y$ and $f\times Y: X\times Y\to Y\times Y$ in $\cat E/Y \times Y$, which is the projection $\sum_{(i,j)\in Y\times Y} Y_j^{Y_i} \to Y\times Y$. The resulting arrow is $(s,t): C_1 \to C_0\times C_0$, and $(C_2,p_0,p_1)$ is the pullback of $s$ and $t$. Note the map $(s\circ p_1,s\circ p_0=t\circ p_1, t\circ p_0): C_2 \to C_0\times C_0\times C_0$. The is a fibred compositions operator $c_{ijk}: X_k^{X_j}\times X_j^{X_i} \to X_k^{X_i}$ which determines a composition operator $c:C_2 \to C_1$. \label{internalfullsubcategories}
\end{example}

\begin{example}[PERs] 
If $\cat H$ is a topos, we can define the internal category of $j$-separated subquotients of $\nno$. The \emph{category $\cat P$ of PERs} in $\Eff(\cat H)$ is the full internal subcategory generated by the following arrow. We let $P_0$ be the objects of prone partial equivalence relations on $\nno$, i.e. the symmetric and transitive binary relations $R$ for which the inclusion into $\nno\times\nno$ is a prone morphisms relative to $\supp$. We let \[B = \set{(\xi,R)\in\nno\times P_0| \exists m\oftype \nno.R(m,m) \land \forall n\oftype\nno. R(m,n) }\] Now the projection $B\to P_0$ generates $\cat P$. 

Prone relations and $j$-stable relations are the same thing by the way. The object $P_0$ is a sheaf and modest sets actually form an internal category in $\Asm(\ds\Kone/\Kone)$.\label{PERs}\index{PERs}
\end{example}

There are different notions of completeness for internal categories, because limits are usually only unique up to unique isomorphism. The completeness condition that the externalization of an internal category $\cat C$ has all finite limits and all indexed products, is equivalent to the condition that the constant functors $\Delta:\cat C\to\cat C^\cat D$ have a right adjoint for all internal categories $\cat D$. If $\cat C$ has all limits of shape $\cat D$, then such a right adjoint may not exist if the axiom of choice is not valid. This leads us to introducing a weaker notion of completeness for fibred categories.

\begin{definition} A fibred category $F:\cat F \to\cat B$ is \xemph{strongly complete} if:
\begin{itemize}
\item each fibre has finite limits, and reindexing functors preserve these limits;
\item each reindexing functor $F_f$ has a right adjoint $\prod_f$ and these right adjoints satisfy the Beck-Chevalley condition over each pullback square.
\end{itemize}
A fibred category $F:\cat F \to\cat B$ over a topos $\cat B$ is \xemph{weakly complete} if there is a full subcategory $\cat B'\subseteq \cat B$ such that
\begin{itemize}
\item the restriction of $F$ to $\cat B'$ is strongly complete,
\item objects of $\cat B'$ cover all objects of $\cat B$.
\end{itemize}

A fibred category $F$ is strongly or weakly \xemph{cocomplete} if $F^{op}:\cat F^{op}\to\cat B^{op}$ is strongly or weakly complete.
\end{definition}

\begin{remark}[stacks] Stacks $F:\cat F\to\cat B$ for the regular topology on $\cat B$ are examples of fibred categories for which strong and weak completeness coincide. Stacks are the 2-categorical counterpart of sheaves and the \xemph{`stack completion functor'} sends weakly complete fibred categories to strongly complete stacks. \end{remark}

\begin{example} Every complete fibred Heyting algebra is strongly complete and strongly cocomplete. \end{example}

\begin{example} The fibred subcategory of $\cod:\cat E/\cat E \to\cat E$ \xemph{generated by} $f:X\to Y$ consists of pullbacks of $f$ and commutative squares between these pullbacks. i.e. it is the greatest subfibration where $f$ is a \xemph{generic object}. This fibration is equivalent to the externalization of the internal full subcategory that we constructed for $f$ in example \ref{internalfullsubcategories}, at least when $\cat E$ is a small topos. \end{example}

\begin{example} For a \xemph{locally Cartesian closed category} $\cat C$, the codomain fibration $\cod:\cat C/\cat C \to\cat C$ is strongly complete. If $\cat C$ is regular and \xemph{extensive}, i.e. pullbacks preserve finite colimits, then the codomain fibration is \xemph{cocomplete}. Every topos has both properties.
\end{example}

Some completeness properties of complete preorders in the topos of sets extend to all weakly complete internal categories in other categories.

\begin{lemma} Weakly complete internal categories are weakly cocomplete. \label{comp then cocomp}\end{lemma}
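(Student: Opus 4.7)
The plan is to invoke the internal version of the Special Adjoint Functor Theorem, which in a topos base produces left adjoints to continuous functors between strongly complete internal categories. Let $\cat C$ be a weakly complete internal category in a topos $\cat B$, and let $\cat B'\subseteq\cat B$ be the covering full subcategory over which $\Ext(\cat C)$ restricts to a strongly complete fibration. For a morphism $f\oftype X\to Y$, to build a coindexing functor $\exists_f$ left adjoint to $f^*\oftype [\cat C]_Y \to [\cat C]_X$ I would internalize the colimit-as-limit-of-cocones formula: represent the object of cocones under a family $A\oftype X\to C_0$ as a subobject $K\hookrightarrow C_0^Y$ (using the subobject classifier of $\cat B$), pull back the tautological family over $K$ to obtain $\tilde A\oftype K\times Y\to C_0$, and define $\exists_f(A)$ as the limit of $\tilde A$ along the projection $K\times Y \to Y$, which exists because $\Ext(\cat C)$ is strongly complete over $\cat B'$, provided that $K$ and the auxiliary objects lie in $\cat B'$.

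First I would fix a covering subcategory $\cat B''\subseteq\cat B$ suitable for witnessing cocompleteness: take $\cat B''$ to be the full subcategory of those $Y\in\cat B$ for which all the exponentials and power objects of $C_0^Y$ needed in the formula above also lie in $\cat B'$. Using that $\cat B'$ covers $\cat B$ and that $\cat B$ is a topos, every object $Z\in\cat B$ has a cover $Y\to Z$ with $Y\in\cat B'$, and by refining $Y$ along further covers one can arrange that the required power objects of $C_0^Y$ lie in $\cat B'$ too, so $\cat B''$ covers $\cat B$. Over $\cat B''$ I would construct $\exists_f$ for each $f$ by the displayed formula, and construct finite coproducts and coequalizers in each fibre $[\cat C]_Y$ with $Y\in\cat B''$ by analogous impredicative formulas using the internal finite limit structure already present.

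The Beck-Chevalley condition for the $\exists_f$ over pullback squares in $\cat B''$ then follows formally by taking mates of the Beck-Chevalley isomorphism for the right adjoints $\prod_f$ that strong completeness already provides, since a square of left adjoints commutes up to canonical isomorphism if and only if the transposed square of right adjoints does. The main obstacle is the careful choice of $\cat B''$ together with the verification that the impredicative formula actually gives a left adjoint; this requires checking the universal property $A\to f^*B$ iff $\exists_f A\to B$ inside the fibred category, which in turn uses both the internal representability of the object of cocones by a power object and the strong completeness of $\Ext(\cat C)$ restricted to $\cat B'$. Once this is in place, the argument parallels the classical Paré--Schumacher theorem that complete internal categories in a topos are cocomplete, adapted to the weakly complete setting by working over the covering subcategories throughout.
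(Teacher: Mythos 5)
Your proposal has two genuine gaps. First, the impredicative formula you write down does not compute the colimit. A cocone under the family $A\colon X\to C_0$ is a vertex \emph{together with} a compatible family of morphisms, so the object of cocones is not a subobject of $C_0^Y$ (it needs $C_1$-data, built from exponentials in $\cat B/Y$); and, more seriously, taking the indexed product of the vertices over the mere \emph{object} $K$ of cocones yields only the product of all cocone vertices, which is a weakly universal cocone but not the colimit. To get the universal property one must take the limit over the internal \emph{category} of cocones, including its morphisms (an additional equalizer) --- which is exactly the statement that the initial object of a complete category is the limit of its identity functor. Second, the covering bookkeeping does not go through. Weak completeness only says that objects of $\cat B'$ cover $\cat B$; it gives no control over whether the auxiliary objects your formula needs --- $C_0^Y$, its power objects, the cocone object $K$ --- lie in $\cat B'$, and covering an object by something in $\cat B'$ does not put that object (let alone its power object) into $\cat B'$. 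So your $\cat B''$ need not cover $\cat B$; indeed, in the motivating example, where $\cat C$ is the category $\cat P$ of PERs in $\Eff(\cat H)$ and $\cat B'$ is the subcategory of separated objects, these impredicative auxiliaries are typically not separated, so the indexed products along maps out of $K$ that your construction requires are simply not supplied by the hypotheses.

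The paper's proof avoids all of this with a purely formal argument: a weakly complete internal category has an initial object, namely the limit of its identity functor, and the colimit of $F\colon\cat D\to\cat C$ is the initial object of the category of cocones under $F$, which inherits weak completeness from $\cat C$; no power objects, subobject classifier, or adjoint-functor-theorem machinery is used. Your mates argument for Beck--Chevalley is fine in itself, but it only applies after the left adjoints have been constructed over a covering subcategory, which is precisely the step that is missing.
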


\begin{proof} Every weakly complete internal category has an initial object: the limit of the identity functor $\id:\cat C\to \cat C$. If $\cat C$ is weakly complete then so is $\cat C^\cat D$ for all other internal categories $\cat D$. If each of those has an initial object, then $\cat C$ is cocomplete. \end{proof}

The desire to recursively define objects and functors motivates some other completeness conditions that we consider below.

\begin{definition} For any functor $F$, a \xemph{Lambek algebra} is an object $X$ together with an arbitrary morphism $f:FX\to X$. Lambek algebras form a category with the obvious morphisms. A category $\cat C$ is \xemph{weakly algebraically complete} if every functor $F:\cat C \to \cat C$ has an initial Lambek algebra. A category $\cat C$ is \xemph{strongly algebraically complete} if there is a functor $\cat C^\cat C \to \cat C$ that sends each functor to an initial Lambek algebra. 

A category $\cat C$ is \xemph{weakly algebraically compact} if both $\cat C$ and $\cat C^{op}$ are \xemph{weakly algebraically complete} and moreover the canonical morphism from the initial algebra to the terminal coalgebra is an isomorphism. The structure maps of initial algebras are isomorphisms by a result of Lambek, and their inverses are coalgebras, which is where the canonical morphism to the terminal coalgebra comes from.

A category $\cat C$ is \xemph{strongly algebraically complete} if there is a functor $\cat C^\cat C\to\cat C$ that sends each functor to an object that is both its initial algebra and its terminal coalgebra. \end{definition}

Completeness implies algebraic completeness.

\begin{lemma} Weakly complete internal categories are weakly algebraically complete. Strongly complete internal categories in locally Cartesian closed categories are strongly algebraically complete. \label{comp then algcomp} \end{lemma}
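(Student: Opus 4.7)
The plan is to build, for each endofunctor $F:\cat C\to\cat C$, the internal category $\Cat{Alg}(F)$ of $F$-algebras, show it inherits (weak, respectively strong) completeness from $\cat C$, and then invoke lemma \ref{comp then cocomp} to obtain an initial object — which by definition is an initial Lambek algebra for $F$.

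First I would construct $\Cat{Alg}(F)$ internally. If $\cat C=(C_0,C_1,i,s,t,c)$, its object of objects is the pullback expressing pairs $(x\in C_0,\alpha\in C_1)$ with $s(\alpha)=F_0(x)$ and $t(\alpha)=x$, and its object of morphisms encodes triples $(\alpha,\beta,m)$ making the homomorphism square $c(m,\alpha)=c(\beta,F_1(m))$ commute in $C_1$. This uses only finite limits in the ambient base together with the data of $F$, so $\Cat{Alg}(F)$ lives in $\cat B$ and comes equipped with a forgetful internal functor $U:\Cat{Alg}(F)\to\cat C$.

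Second, I would check that $U$ creates limits externally. Given $D:\cat D\to\Cat{Alg}(F)_I$ with $I$ in the covering subcategory $\cat B'$, take the limit $L$ of $UD$ in $\cat C_I$, which exists by weak completeness. The cone with components $F_0(L)\to F_0(UD_d)\to UD_d$ (using the algebra maps of $D$) is natural because the algebra homomorphism squares of $D$ are, so the universal property yields a unique $F_0(L)\to L$, i.e.\ an algebra structure on $L$. Reindexing along arrows of $\cat B'$ preserves this construction since it preserves the underlying limits. Hence $\Cat{Alg}(F)$ is weakly complete (over the same $\cat B'$), and lemma \ref{comp then cocomp} produces an initial object, i.e.\ an initial $F$-algebra. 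This proves the first statement.

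For the second statement, the local Cartesian closure of $\cat E$ allows one to form the internal category $\cat C^{\cat C}$ of endofunctors, and to assemble the previous construction into a single internal category $\Cat{Alg}$ fibred over $\cat C^{\cat C}$ whose fibre over $F$ is $\Cat{Alg}(F)$. Strong completeness of $\cat C$, being preserved by exponentiation and pullback in an LCC setting, transfers strictly to strong completeness of this fibred $\Cat{Alg}$. The initial object in each fibre is then the fibrewise limit of the fibrewise identity functor, which in the strongly complete setting yields an honest internal functor $\cat C^{\cat C}\to\cat C$ by composing the resulting section with $U$.

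The principal obstacle is the strong case: turning the initial-object assignment into a \emph{functor} (not merely an object-wise construction) requires strictness of the chosen limits and of the hom-construction $\cat C^{\cat C}$, which is exactly what local Cartesian closure guarantees. In the weak case the covering subcategory $\cat B'$ depends implicitly on $F$ through the limits used to build $\Cat{Alg}(F)$, and this coherence is not automatic; only the existence of an initial algebra at each $F$ survives, giving weak but not strong algebraic completeness.
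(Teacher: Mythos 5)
Your proposal is correct and follows the same basic construction as the paper: form the category of algebras $\Cat{Alg}(F)$, extract the initial algebra from a limit, and in the strong case use local Cartesian closure to build $\cat C^{\cat C}$ and a bundle of algebra categories over it so the choice of initial algebras becomes a section, hence an internal functor. The difference is in how initiality is extracted. The paper takes the limit of the forgetful functor $U:\Cat{Alg}(F)\to\cat C$ \emph{in $\cat C$}, uses the canonical transformation $\alpha:FU\to U$ to induce the structure map on $\lim U$, and reads off initiality from the limit cone; this uses only the completeness of $\cat C$ itself. You instead lift completeness to $\Cat{Alg}(F)$ and invoke lemma \ref{comp then cocomp} (initial object as limit of the identity), which computes the same object but requires the extra claim that $U$ creates all the structure in the definition of (weak) completeness. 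Your verification covers fibrewise limits but not the indexed products $\prod_f$: one must also check that $\prod_f$ of an algebra carries an algebra structure, which works because an internal endofunctor acts fibrewise and commutes with reindexing, so the counit of $F_f\dashv\prod_f$ transposes the composite $F_f F(\prod_f A)\simeq F(F_f\prod_f A)\to FA\to A$ into a structure map on $\prod_f A$; this is routine but should be said, and the paper's direct route avoids it entirely. Finally, your closing remark about $\cat B'$ depending on $F$ is off the mark: $\cat B'$ is supplied once by the weak completeness of $\cat C$ and the construction of $\Cat{Alg}(F)$ only uses finite limits of the ambient category. The genuine reason the weak hypothesis yields only weak algebraic completeness is the absence of a uniform (internal, functorial) choice of initial algebras, which is precisely what the internal functor category and the bundle over $\cat C^{\cat C}$ provide in the locally Cartesian closed case.
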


\begin{proof} For each endofunctor $F:\cat C \to\cat C$, we construct the category of algebras $\Cat{Alg}(F)$ and take the limit of the underlying object functor $U:\Cat{Alg}(F)\to \cat C$. The category $\Cat{Alg}(F)$ consists of pairs $(X,f)$ where $f$ is an arrow $f:FX\to X$, and a morphism $(X,f) \to (Y,g)$ is an arrow $m:X\to Y$ such that $m\circ f = g\circ Fm$. The underlying object functor $U$ simply sends $(X,f)$ to $X$ and is the identity on morphisms. Note that $\alpha_{(X,f)} = f$ determines the canonical natural transformation $\alpha:FU \to U$. In fact, it is a universal property of $U$ that every functor $G:\cat D\to\cat C$ with a natural transformation $\eta:FG\to G$ factors uniquely through $U$.

If $\kappa:\lim U \to U$ is a limit cone, we have a cone $\alpha \circ F\kappa: F\lim U \to U$ that factors uniquely through $\kappa$ by a structure map $a:F\lim U \to \lim U$. This is an initial algebra because of the unique morphism $\kappa_{(X,f)} (\lim U,a) \to (X,f)$.

For the strong version: in a locally Cartesian closed category, we can construct the functor category $\cat C^\cat C$ internally. All we have to do now is make a bundle of categories of algebras over $\cat C^\cat C$. Construct $\cat A$ as follows. The objects are triples $(F,X,f:FX\to X)$, a morphism $(F,X,f) \to (G,Y,g)$ is a pair $(\mu:F\to G, m:X\to Y)$ where $m\circ f = g\circ Gm\circ \eta_X = g\circ \eta_Y\circ Fm$. The bundle is the functor $A:\cat A \to \cat C^{\cat C}$ that sends $(F,X,f)$ to $F$ and $(\mu, m)$ to $\mu$. There is a fibred underlying object functor $U:\cat A \to \cat C^{\cat C}\times \cat C$ satisfying $U(F,X,f) = (F,X)$ and $U(\mu, m) = (\mu,m)$. 

We take the same limit in the fibre over $\cat C^{\cat C}$ and use this universal limit to construct an initial algebra. Now $\lim U$ is a global section of the bundle $\cat C^{\cat C}\times \cat C \to \cat C^{\cat C}$ and induces the desired functor $\cat C^\cat C \to\cat C$. \end{proof}

This almost concludes our introduction to internal categories and their completeness properties. We just want to add some remark on internal categories in the category of sets.

\begin{remark} The following is a proposition of Freyd. If $\cat C$ is a complete small category, i.e. a complete internal category of $\Set$, then $\cat C(x,y)^{C_0}\simeq \cat C(x,\prod_{f\in C_0} y)\subseteq C_0$ and for cardinality reasons $\cat C$ is a preordered set. \end{remark}

\subsection{Modest sets}
We discuss a complete internal category of $\Asm(\ds \Kone/\Kone)$ which is equivalent to the category of PERs from example \ref{PERs} if we work in a realizability topos, where the latter category exists. We follow \cite{MR1023803}, in that we will characterize this category as a category of objects that is orthogonal to a particular class of arrows. By picking a larger class of arrows and limiting our results to the category of assemblies, we are able to derive the result directly from the characteristic properties of realizability categories.

\begin{definition} An arrow $f:X\to Y$ is \xemph{left orthogonal} to $g:X'\to Y'$ (and $g$ is \xemph{right orthogonal} to $f$) if for any pair of arrow $h:X\to X'$ and $k:Y\to Y'$ such that $k\circ f=g\circ h$ there is a unique arrow $l:Y\to X'$ such that $h=g \circ l$ and $k=l\circ f$.
\[ \xymatrix{
X\ar[d]_f \ar[r]^h & X'\ar[d]^g \\
Y \ar[r]_k \ar@{.>}[ur]^l & Y'
}\]
We write $f\perp g$ in this situation.
\end{definition}

\begin{example} In regular categories, regular epimorphisms are left orthogonal to monomorphisms. \end{example}

The paper \cite{MR1023803} explains that the set of all arrows right orthogonal to some set of arrows has a lot of nice properties. In particular, such a class is automatically a complete fibred subcategory of the codomain fibration.

\newcommand\Mod{\Cat{Mod}}
\begin{definition} In $\Asm(\ds\Kone/\Kone)$, a \xemph{modest arrow} is any arrow that is right orthogonal to all prone regular epimorphisms. An object $X$ for which $!:X\to\termo$ is modest is called a \xemph{modest set}. We use $\Mod$ to denote the full subcategory whose objects are modest sets. \label{modest} \end{definition}

\begin{remark} We choose a large set of arrows instead of the arrow $!:\nabla(\termo+\termo)\to \termo$, which is used in \cite{MR1023803}. Hyland, Robinson and Rosolini prove that when the base category is $\Set$ they resulting classes of right orthogonal arrows are the same. But this may not apply to all base toposes. \end{remark}

\begin{lemma} Modest arrows form a complete fibred category over $\Asm(\ds \Kone/\Kone)$, which contains the natural number object and is closed under all subquotients. \label{mod is good} \end{lemma}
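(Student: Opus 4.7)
Write $\cat M\subseteq\Asm(\ds\Kone/\Kone)^{\to}$ for the full subcategory on modest arrows, projected by $\cod$. The strategy is to derive the fibred-categorical structure from the abstract theory of right-orthogonal classes, and to deduce the two remaining clauses from the characterisation of realizability categories in chapter~\ref{realcat}.

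Pullback-closure of modest arrows is a formal consequence of right orthogonality: for a modest $g:X'\to Y'$ and any $f:Y\to Y'$, a lifting problem for $f^{*}g$ against a prone regular epimorphism transposes through the pullback projections into a lifting problem for $g$, and the unique filler descends by the universal property of the pullback. The same orthogonality calculation gives closure under composition and under fibrewise products and equalisers, so $\cod:\cat M\to\Asm(\ds\Kone/\Kone)$ is a fibred subcategory of the codomain fibration with finite fibred limits preserved by reindexing. For the right adjoints $\prod_{f}$ to reindexing I exploit lemma~\ref{asm is sep}: the ambient realizability topos $\RT(\Kone,\Kone)$ is locally Cartesian closed, and any right adjoint to pullback preserves a right-orthogonal class by the usual adjoint argument; restricting to the covering subcategory of partitioned assemblies, where the relevant dependent products stay inside $\Asm(\ds\Kone/\Kone)$, yields weak completeness, with Beck--Chevalley inherited from the codomain fibration of the surrounding topos.

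For the natural number object, let $e:X\to Y$ be a prone regular epimorphism and $h:X\to\nno$ arbitrary. Uniqueness of a filler is automatic from $e$ being epic. For existence, note that $\nno$ is a partitioned assembly: the unit $\nno\to\nabla\supp\nno=\nabla\N$ is a prone monomorphism, because distinct natural numbers in $\Kone$ have disjoint sets of realisers. Transposing $X\to\nno\hookrightarrow\nabla\N$ across $\supp\dashv\nabla$ gives $\varphi:\supp X\to\N$ in $\cat H$. Because $e$ is prone, its kernel pair is the pullback of the kernel pair of $\supp e$; because the tracking realiser of $h$ assigns the same output to any realiser of any two $e$-equivalent elements of $X$, $\varphi$ coequalises the kernel pair of $\supp e$ and so factors through $\supp Y$, producing the required $l:Y\to\nno$.

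Closure under subquotients splits into two cases. A monomorphism $m:Y\hookrightarrow X$ into a modest $X$ inherits modesty: apply modesty of $X$ to $(e,m\circ h,k)$ and observe that the resulting diagonal $B\to X$ factors through $Y$ since $e$ is epic and $m\circ h$ does. For a regular epimorphism $q:X\twoheadrightarrow Z$ with $X$ modest, the argument is subtler: given a lifting problem $(e:A\to B,h:A\to Z)$, I use weak genericity together with Church's rule to construct a prone regular-epic \prodo cover $A'\twoheadrightarrow A$ together with a lift $\tilde h:A'\to X$ of $h$; modesty of $X$ applied to the composite prone regular epi $A'\to B$ supplies a diagonal $\tilde l:B\to X$, and the uniformity rule makes $q\circ\tilde l$ independent of the choice of $\tilde h$, so that it descends to the required $l:B\to Z$. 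I expect the main obstacle to be precisely this descent step: making $q\circ\tilde l$ descend along the kernel pair of the cover requires the combined force of the tracking and uniformity principles, rather than abstract orthogonality alone.
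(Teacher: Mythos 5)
Your plan breaks at closure under quotients, which is exactly the part that cannot be done by abstract orthogonality alone. The pivotal move --- ``use weak genericity together with Church's rule to construct a prone regular-epic \prodo{} cover $A'\twoheadrightarrow A$ together with a lift $\tilde h\colon A'\to X$ of $h$'' --- is not available. If $c\colon A'\to A$ is prone, the realizers of a point of $A'$ are forced to be exactly the realizers of its image in $A$; a lift of $h\circ c$ through $q$ would therefore have to select, uniformly in an \emph{arbitrary} realizer of $a$, one point of the fibre $q^{-1}(h(a))$ together with a realizer of it, whereas the regular-epi tracking of $q$ only produces a point depending on which realizer of $a$ you start from. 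Church's rule does not repair this: the map $A\times_Z X\to A$ that you would like to split locally is a regular epimorphism but not a \prodo{} morphism (its fibres are fibres of $q$, not families of downsets of realizers), so the rule does not apply to it, and applying it after a Shanin-type replacement reproduces the same realizer-dependence. Your final step is also misdiagnosed: once a diagonal $\tilde l\colon B\to X$ exists, $l=q\circ\tilde l$ already satisfies $l\circ e=h$ because the cover is epic, so no uniformity-rule descent is needed; the entire difficulty sits in producing $\tilde h$, and that is where the argument fails. Note that the paper's own proof of this lemma only establishes closure under subobjects; quotients are only ever taken along prone equivalence relations later (lemma \ref{per is mod}), which is a weaker statement than the one you attempt, and over a general base it is not clear that arbitrary quotients of modest objects are modest at all.

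Two further points. For the natural number object your route is genuinely different from the paper's --- the paper observes that, $\Kone$ being discretely ordered, the uniformity principle makes $\nno^p$ a regular epimorphism for every prone $p$, hence an isomorphism when $p$ is a prone regular epimorphism --- and your idea of descending $\supp h$ along $\supp e$ using proneness of $e$ and the tracker of $h$ does work; but your justification ``the unit $\nno\to\nabla\N$ is a prone monomorphism'' is false (if it were prone, every map into $\nabla\N$ would lift to $\nno$, which already fails for the identity of $\nabla\N$), and you still owe the check that the induced map $\supp Y\to\N$ is tracked as a morphism $Y\to\nno$; the tracker of $h$ does this, again via proneness of $e$, but it is not automatic. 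Finally, the lemma (and the corollary built on it) asserts a complete, i.e.\ strongly complete, fibred category over $\Asm(\ds\Kone/\Kone)$: your detour through $\RT(\Kone,\Kone)$ and the restriction to partitioned assemblies delivers only weak completeness and needs the base to be a topos, whereas the paper argues directly in the category of assemblies that right adjoints to reindexing preserve modest arrows because reindexing preserves prone regular epimorphisms.
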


\begin{proof} Because $\Kone$ has the discrete order, the uniformity principle implies that for every prone morphism $p:X\to Y$, the morphism $\nno^p:\nno^Y \to \nno^X$ is a regular epimorphism. For prone regular epimorphisms, this becomes an isomorphism, and this implies that the natural number object is modest.

Let $\set{d_i:D_i \to X|i\in\kappa}$ be a family of modest arrows for which a product $d:\prod_X D_i\to X$ exists in $\cat E/X$, let $p:Y\to Z$ be any prone regular epimorphism and let $f:Y\to \prod_X D_i$ and $g:Z\to X$ be any pair of morphisms such that $d\circ f = g\circ p$. The composition of $f$ with the pullback cone $\pi_i: \prod_X D_i \to D_i$ is a family of arrows $f_i:Y\to D_i$ such that $d_i\circ f_i = g\circ p$. Because $d_i$ is left orthogonal to $p$, there is a unique $h_i$ such that $h_i\circ p = f_i$ and $f_i\circ h_i = d_i$. The $h_i$ form a cone $Z\to D_i$ that factors uniquely through $\prod_X D_i$ in a single arrow $h$ that satisfies $h\circ p=f$ and $f\circ h=d$. This proves that $\Mod$ is closed under all products that exist in $\Asm(\ds\Kone/\Kone)$.
\[\xymatrix{
Y \ar[d]_p\ar[r]^f & \prod_X D_i \ar[d]_(.4)d \ar[r]^{\pi_i} & D_i \ar[dl]^{d_i} \\
Z \ar[r]_g\ar@{.>}[ur]|h\ar@{.>}[urr]|(.3){h_i} & X \\
}\]

Because the reindexing functor $\cat E/Y\to \cat E/X$ preserves prone regular epimorphisms, its right adjoint preserves modest arrows. We use this fact to show that modest arrows are closed under internal products.

Let $f:X\to Y$, let $d:D\to X$ be modest, let $p:P\to Z$ be a prone regular epimorphism, let $g: P \to \prod_f(D)$ and let $h:Z \to Y$ such that $\prod_f(d)\circ g = h\circ p$. Because of the adjunction, there is a transposed diagram in $\cat E/X$ with a prone regular epimorphism $f^*(p): f^*(P)\to f^*(Z)$ and morphisms $g^t: f^*(P)\to D$ and $h^t: f^*(Z) \to X$ where $f^*(p)$ is still a prone regular epimorphism. Therefore, there is a unique $k:f^*(Z)\to D$ such that $k\circ f^*(p) = g^t$ and $d\circ k = h^t$. The transposes of these $k$'s show that $\prod_f(d)$ is modest whenever $d$ is.
\[ \xymatrix{
f^*(P)\ar[d]_{f^*(p)} \ar[r]^{g^t} & D\ar[d]^d & P\ar[d]^p \ar[r]^g & \prod_f(D)\ar[d]^{\prod_f(d)} \\
f^*(Z)\ar[r]_{h^t} \ar@{.>}[ur]|k & X\ar@/_2ex/[rr]_f & Z\ar[r]^h \ar@{.>}[ur]|{k^t} & Y
} \]

Now we show that modest arrows are closed under subobjects, because prone regular epimorphisms are regular epimorphisms.

Let $e:P\to Z$ be a prone regular epimorphism. Let $d:D\to Y$ be modest and let $m:S\to D$ be a monomorphism.
Finally, let $f:P\to S$ and $g:Z\to Y$ be such that $d\circ m \circ f = g\circ e$. Because $e\perp d$ there is a unique morphism $h:Z\to D$ such that $h\circ e = m\circ f$ and $d\circ h = g$. Now $e\perp m$ because $e$ is a regular epimorphism and $m$ is a monomorphism, so there is a unique $k:Z\to S$ such that $k\circ e = f$ and $m\circ k = h$, which implies $d\circ m \circ k = g$. Therefore subobjects of modest arrows are right orthogonal to all prone regular epimorphisms.
\[ \xymatrix{
P \ar[d]_e & S\ar[d]^m \\
Q \ar[d]_r\ar[ur]^f & D\ar[d]^d \\
Z \ar[r]_g\ar@{.>}[ur]|h\ar@{.>}[uur]|k & X
}\]
\end{proof}

We now start our comparison of modest arrows and PERs.

\begin{lemma} Modest arrows $X\to Y$ are quotients of prone subarrows of the projections $\nno\times Y \to Y$.  \label{mod is per} \end{lemma}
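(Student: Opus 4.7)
The plan is to apply Shanin's principle (proposition \ref{Shanin}) inside the slice $\Asm(\ds\Kone/\Kone)/Y$. By corollary \ref{slice}, this slice is a regular realizability category in its own right, and the weakly generic vertical filter of the slice is the second projection $\nno\times Y\to Y$: it arises as the image of the generic filter $\A=\nno$ under the base-change functor $Z\mapsto Z\times Y\to Y$ along the unique map $Y\to\termo$, which is a Heyting functor and therefore preserves the filter structure. Since $\Kone$ has the discrete order, prone downsets of this filter are just prone subobjects of $\nno\times Y$.

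Regard $f:X\to Y$ as an object of the slice. Shanin's principle produces a \prodo morphism together with a supine projection onto this object. Unpacking the construction back in $\Asm(\ds\Kone/\Kone)$, and using corollary \ref{monicspans} to arrange the prone leg to be a monomorphism (which we may do because the realizability fibration is separated), we obtain a prone monomorphism $P\hookrightarrow\nno\times Y$ together with a regular epimorphism $e:P\to X$ satisfying $f\circ e=\pi$, where $\pi:P\to Y$ is the restriction of the second projection. Thus $\pi$ is a prone subarrow of $\nno\times Y\to Y$ of which $f$ is a quotient via $e$, as required.

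The only delicate point is identifying the weakly generic filter of the slice with $\nno\times Y\to Y$ and then verifying that the prone-supine span provided by Shanin's principle really does land inside $\nno\times Y$ (rather than inside some other ambient object). Both follow by chasing through the construction in the proof of proposition \ref{Shanin}, since the filter on $\nno\times Y\to Y$ transferred from $\A$ retains its weak genericity, and the prone leg of the span is automatically a subobject of (the filter times the base object) $=\nno\times Y$. Notice that modesty of $f$ is \emph{not} used in this existence statement; it will be relevant only in the subsequent identification of $X$ with the quotient of $P$ by a prone (equivalently, $j$-stable) partial equivalence relation, where orthogonality against prone regular epimorphisms forces the quotient structure on $P$ to be of PER form.
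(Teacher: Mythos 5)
There is a genuine gap, and it sits exactly at the sentence where you declare modesty unnecessary. Shanin's rule (proposition \ref{Shanin}), applied in the slice over $Y$, covers the object $f:X\to Y$ by a family of prone downsets of the slice's filter indexed over $X$ itself; together with corollary \ref{monicspans} it yields a monomorphism into $C\times X$, where $C$ is the filter of the slice. Since the filter of $\Asm(\ds\Kone/\Kone)/Y$ is $\nno\times Y\to Y$ and its product with $f$ in the slice is $\nno\times X$, what you actually obtain is a prone subobject of $\nno\times X$ with a regular epimorphism onto $X$ over $Y$ --- the relative form of Shanin's principle --- and not a prone subobject of $\nno\times Y$. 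Corollary \ref{monicspans} makes the pair $(p,s)$ \emph{jointly} monic into $C\times X$; it does not, and in general cannot, make the prone leg $p$ into $C$ alone monic, which is what your conclusion requires.

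That this cannot be repaired without modesty is shown by $Y=\termo$ and $X=\nabla 2$: a regular epimorphism $e:U\to\nabla 2$ from a prone subobject $U\subseteq\nno$ would require a single realizer $r$ such that for every $m\in\N$ and for both $b\in 2$, $r\cdot m$ realizes (hence is) an element of $e^{-1}(b)$, which is impossible because the two fibres are disjoint; so non-modest arrows are in general not quotients of prone subarrows of $\nno\times Y\to Y$, and the lemma is not a modesty-free existence statement. The paper's proof bridges exactly the gap you skipped: weak genericity in the slice (lemma \ref{slice}) gives a prone arrow $p:P\to\nno\times Y$ (not necessarily monic) and a regular epimorphism $e:P\to X$ over $Y$; one factors $p$ as a prone regular epimorphism followed by a prone monomorphism onto its image in $\nno\times Y$, and then uses that the modest arrow is right orthogonal to the prone regular epimorphism to factor $e$ through it, exhibiting $X$ as a quotient of that prone subobject. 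The orthogonality step is where modesty enters, and it is the step your argument omits.
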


\begin{proof} By lemma \ref{slice}, each slice category of $\Asm(\ds\Kone/\Kone)$ is a realizability category for its own natural number object and subcategory of prone arrows. We may therefore apply weak genericity. Let $p:P\to \nno\times Y$ be prone, let $e:P\to D$ be a regular epimorphism and let $d:D\to Y$ be a modest arrow. Split $p$ into a prone regular epimorphism $f:P\to \im p(P)$ and a prone monomorphism $m:\im p(P) \to \nno\times Y$. Due to orthogonality of $d$, $e:P\to D$ factors through $f$ in a new regular epimorphism.
\[\xymatrix{
P\ar[dr]_e \ar[r]^p & \im p(P)\ar[r]\ar@{.>}[d] & \nno\times Y \ar[d]\\
& D \ar[r]_d & Y
}\]
\end{proof}

We show that suitable subquotients of $\nno$ are modest.

\begin{lemma} A subquotient of $\nno$ by a prone equivalence relation is modest. \label{per is mod} \end{lemma}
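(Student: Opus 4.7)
The plan is to verify the orthogonality condition defining modest sets for $M = S/\sim$ directly. Given a prone regular epimorphism $p:X\to Y$ and a morphism $f:X\to M$, I want to produce a unique $g:Y\to M$ with $g\circ p=f$. Uniqueness is immediate since $p$ is epic, so the content is existence. Since $p$ is a regular epimorphism, it is the coequalizer of its kernel pair $(p_1,p_2):K\rightrightarrows X$, so the lift exists if and only if $f\circ p_1 = f\circ p_2$.

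The core argument combines two facts. First, equality in $M$ is reflected by $\sim$: writing $q:S\to M$ for the quotient map, $[n]_\sim = [n']_\sim$ in $M$ iff $n\sim n'$ in $S$, because $q$ is a regular epimorphism with kernel pair $\sim$. Second, the tracking principle (theorem \ref{tracking}) provides an inhabited prone downset $U$ of $\nno$ that realizes $f$: for each $u\in U$ and each realizer $n$ of $x\in X$, $un\converges$, $un\in S$, and $[un]_\sim = f(x)$. Since $p$ is prone, the realizability structure of $X$ is pulled back from that of $Y$, so for any $(x_1,x_2)\in K$, any realizer of the common image $y = p(x_1) = p(x_2)\in Y$ simultaneously realizes $x_1$ and $x_2$ in $X$. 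Applying $u$ to such a common realizer $n$ yields $un\in S$ that realizes both $f(x_1)$ and $f(x_2)$, forcing $[un]_\sim = f(x_1) = f(x_2)$ in $M$.

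To formalize this categorically, I would invoke Shanin's principle (proposition \ref{Shanin}) to cover $K$ by a \prodo morphism $\ell:L\to K$ with $L\hookrightarrow \nno\times K$ prone, effectively picking out common realizers for pairs in $K$. The tracking principle then produces a morphism $L\to S$ whose post-composition with $q$ computes $f\circ p_i\circ\ell$ for both $i=1,2$ simultaneously, giving $f\circ p_1\circ\ell = f\circ p_2\circ\ell$; since $\ell$ is a regular epimorphism, this descends to $f\circ p_1=f\circ p_2$. The main obstacle is making the `shared realizer' claim precise: this reduces to the observation that the kernel pair of a prone morphism is itself prone over $Y$, so realizers of $(x_1,x_2)\in K$ are realizers of $y=p(x_1)=p(x_2)$, each of which realizes both $x_1$ and $x_2$ in $X$ by the proneness of $p$.
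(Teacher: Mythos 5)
Your proof is correct, but it takes a genuinely different route from the paper's. The paper uses neither the tracking principle nor the kernel pair: writing $q:S\to M$ for the covering of the subquotient by the prone subobject $S\subseteq\nno$, it pulls $q$ back along $f$ to obtain a regular epic \prodo{} morphism $f^*(q):S\times_M X\to X$ and applies the uniformity rule once to this morphism followed by the prone $p:X\to Y$. This produces a supine \prodo{} morphism $W\to Y$ with $W\times_Y X\subseteq S\times_M X$; since $(n,x)\in S\times_M X$ already says $n\in S$ and $q(n)=f(x)$, any $(n,y)\in W$ forces $f(x_1)=q(n)=f(x_2)$ for all $x_1,x_2$ in the fibre of $p$ over $y$, so $f$ factors through $p$. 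You instead realize $f$ itself via the tracking principle (theorem \ref{tracking}) and combine this with the observation that the fibres of a prone morphism share realizers, so that the tracked code $un\in S$ computes the common value on the kernel pair. Both proofs rest on the same phenomenon -- realizers are uniform across the fibres of a prone regular epimorphism, and a code in $S$ determines a value in $M$ -- but the paper's derivation is the more economical one: it needs only the uniformity rule, whereas the tracking principle is itself proved from Church's rule together with the uniformity rule; your version, in exchange, stays closer to the classical argument that a map from a uniform object to a modest set is constant, and makes explicit where proneness of $p$ enters. Two bookkeeping points in your formalization, both of which you essentially flag yourself: the \prodo{} cover $L\to K$ should be taken to be the pullback of the canonical cover of $Y$ along the prone map $K\to Y$ (an arbitrary cover supplied by Shanin's principle need not consist of common realizers of the two components), and the descent of $f\circ p_1\circ\ell=f\circ p_2\circ\ell$ to $f\circ p_1=f\circ p_2$ goes through because $C_U\times L\to K$ is a regular epimorphism, which uses that the tracking object $U$ is inhabited. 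Neither point is a gap.
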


\begin{proof} For such a subquotient $D$ there is a regular epic \prodo morphism $c:U\to D$ where $U\subseteq \nno$. Now consider a prone epimorphism $e:Y\to Z$ and an arrow $f:Y\to X$. The pullback of $c$ along $f$ is another regular epic \prodo morphism so that we can now apply the uniformity rule to the sequence $e\circ f^*(c)$. We get a prone $W\subseteq \nno\times Z$ such that $W\times_Z Y\subseteq U\times_D Y (\subseteq \nno\times Y)$. 

For each pair $y,y'\in Y$ such that $e(y) = e(y')$ we have $n\in\nno$ such that $(n,e(y))$ and $(n,e(y'))\in W$ and therefore $(n,f(y)),(n,f(y'))\in U$. But that implies $f(y)=f(y')$ and hence $f$ factors through $e$ in a unique $g:Z\to D$. This proves $D$ is modest.

\[ \xymatrix{
& U\times_D Y \ar[r]\ar[d]\ar@{}[dr]|<\lrcorner & U \ar[d]^c \\
W\times_Z Y \ar[r]\ar[d]\ar@{}[dr]|<\lrcorner \ar@{.>}[ur] & Y \ar[d]_e\ar[r]^f & D \\
W \ar[r] & Z \ar@{.>}[ur]_g 
}\]
\end{proof}

The lemmas show that modest arrows are families of subquotients of $\nno$. If the base category $\cat H$ is a topos then we can construct the internal category $\cat P$ of $j$-separated subquotients of $\nno$ from example \ref{PERs} in $\Eff(\cat H)$. This category is actually an internal category of the category of assemblies $\Asm(\ds\Kone/\Kone)$. Therefore we can say the following about modest arrows and $\cat P$.

\begin{proposition} Let $\cat H$ be a topos. Over $\Asm(\ds \Kone/\Kone)$ the fibration of modest arrows is equivalent to the externalization of the category $\cat P$ of PERs. \end{proposition}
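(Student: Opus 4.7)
The plan is to define a fibred functor $F$ from $\Ext(\cat P):[\cat P]\to\Asm(\ds\Kone/\Kone)$ to the full subfibration of $\cod:\Asm(\ds\Kone/\Kone)^\to \to \Asm(\ds\Kone/\Kone)$ whose vertical part consists of modest arrows, and then to verify that $F$ is fibrewise essentially surjective, full and faithful. On an object $f:X\to P_0$ in $[\cat P]$ the functor $F$ is obtained by pulling the generic family $B\to P_0$ back along $f$ and quotienting fibrewise by the prone equivalence relation that $f$ classifies. The result is a prone subobject $Y_f\subseteq \nno\times X$ together with a prone equivalence relation on it over $X$; since subquotients of $\nno$ by prone equivalence relations are modest by lemma \ref{per is mod} applied slicewise (lemma \ref{slice} makes every slice of $\Asm(\ds\Kone/\Kone)$ a realizability category for $\Kone$ and its prone morphisms), the quotient $d_f:D_f\to X$ lies in the fibre of modest arrows over $X$. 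On vertical morphisms $m:f\to g$ of $[\cat P]$, which are parametrized by maps $X\to C_1$, $F$ produces the associated fibrewise morphism $D_f\to D_g$ over $X$; prone morphisms $(m_0,i\circ g\circ m_0):f\to g$ are sent to the evident pullback squares, making $F$ a morphism of fibrations.

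For essential surjectivity in the fibre over $X$, I would start from an arbitrary modest arrow $d:D\to X$. By lemma \ref{mod is per} applied in the slice over $X$, $D$ is presented as a quotient $e:Y\twoheadrightarrow D$ of a prone subobject $Y\hookrightarrow \nno\times X$; the kernel pair of $e$ is a prone equivalence relation $R\subseteq Y\times_X Y\subseteq \nno\times\nno\times X$. Because $R$ is prone, hence a sheaf, and because $P_0\in\cat H$ is the sheaf of prone partial equivalence relations on $\nno$, the datum $R$ is classified by an up to isomorphism unique map $r:X\to P_0$; moreover $Fr$ is canonically isomorphic to $d$ over $X$.

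For fullness and faithfulness I would appeal directly to the construction in example \ref{internalfullsubcategories}. By that construction the arrow $(s,t):C_1\to P_0\times P_0$ is the object of fibrewise morphisms between the fibres of the generating family $B\to P_0$, and hence morphisms $X\to C_1$ over $(f,g):X\to P_0\times P_0$ are in natural bijection with fibrewise morphisms $D_f\to D_g$ in the slice over $X$, which is precisely what a vertical morphism in the fibration of modest arrows over $X$ is. This bijection is natural in $X$ and preserved by reindexing because both sides are obtained by pullback along $X\to P_0\times P_0$.

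The main obstacle is the essential surjectivity step: it requires a uniform, stable-under-pullback classification of prone equivalence relations on prone subobjects of $\nno\times X\to X$ by maps $X\to P_0$. I expect to handle this by observing that $P_0$ lies in the reflective subcategory $\nabla\cat H$ (it is a $j$-sheaf in the sense of lemma \ref{asm is sep}), so maps $X\to P_0$ correspond to maps $\supp X\to P_0$ in the topos $\cat H$ where the classification is standard, and that prone subobjects of $\nno\times X$ transport through the adjunction $\supp\dashv\nabla$ without loss of information. Once that classification is set up, together with lemmas \ref{mod is per} and \ref{per is mod}, the three properties combine to give the required fibred equivalence $F:\Ext(\cat P)\simeq \Mod$.
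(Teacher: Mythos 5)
Your plan is correct and follows essentially the same route as the paper: the paper's proof simply cites lemmas \ref{mod is per} and \ref{per is mod}, and your fibred functor, the slicewise application of those two lemmas, and the classification of prone equivalence relations by maps into the sheaf $P_0$ are precisely the details that citation leaves implicit. The one step worth spelling out is why the kernel pair of $e$ is prone — the diagonal of an assembly $D$ is prone because the unit $D\to\nabla\supp D$ is monic and preimages of prone subobjects are prone — but this is at the same level of detail the paper itself omits.
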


\begin{proof} Lemmas \ref{mod is per} and \ref{per is mod} establish the equivalence between the modest arrows. \end{proof}

\begin{corollary} For any base topos $\cat H$, the category $\cat P$ of PERs is a strongly complete internal category of $\Asm(\ds \Kone/\Kone)$ and a weakly complete internal category of $\Eff(\cat H)$. \end{corollary}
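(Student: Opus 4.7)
By the preceding proposition, the externalization of $\cat P$ over $\Asm(\ds\Kone/\Kone)$ is equivalent to the fibration of modest arrows, so strong completeness of $\cat P$ amounts to strong completeness of the fibration of modest arrows as a subfibration of $\cod:\Asm(\ds\Kone/\Kone)/\Asm(\ds\Kone/\Kone)\to\Asm(\ds\Kone/\Kone)$. The plan is to read off the required pieces from lemma \ref{mod is good}, then use lemma \ref{asm is sep} to deduce weak completeness in $\Eff(\cat H)$.

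First I would verify the fibrewise structure. Since modest arrows are defined by right orthogonality to prone regular epimorphisms, the class is automatically closed under pullback, so reindexing functors between fibres are well defined and inherit preservation of all limits from the ambient codomain fibration. Each fibre has a terminal object, namely $\id_X$, which is trivially right orthogonal to every arrow; it has binary products and equalizers because lemma \ref{mod is good} shows the class closed under arbitrary products that exist in $\Asm(\ds\Kone/\Kone)$ and under subobjects, and equalizers in $\cat E/X$ are constructed as subobjects of binary products. Thus each fibre is finitely complete and reindexing preserves this structure.

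Next I would supply the indexed products. For any $f:X\to Y$ in $\Asm(\ds\Kone/\Kone)$, the codomain fibration has a right adjoint $\prod_f$ because $\Asm(\ds\Kone/\Kone)$ is locally Cartesian closed (being a quasitopos is enough; more directly it is a regular reflective subcategory of the topos $\Eff(\cat H)$ that admits the needed right adjoints on the slice categories inherited via $\supp\dashv\nabla$). Lemma \ref{mod is good} shows that $\prod_f$ sends modest arrows to modest arrows. The unit and counit of $\prod_f\dashv f^*$ therefore restrict to the fibred subcategory of modest arrows, giving indexed products there. Beck--Chevalley is immediate because it already holds in the ambient codomain fibration and both the reindexing and the indexed-product functors are restrictions of the ambient ones.

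For the second assertion, use $\Asm(\ds\Kone/\Kone)$ itself as the witnessing full subcategory $\cat B'\subseteq\Eff(\cat H)$. By lemma \ref{asm is sep} this is a full subcategory, and every object of $\Eff(\cat H)=\Asm(\ds\Kone/\Kone)_\exreg$ is a quotient of an equivalence relation on an assembly (in fact on a partitioned assembly, by weak genericity), so objects of $\cat B'$ cover all objects of $\Eff(\cat H)$. The restriction of the externalization of $\cat P$ to $\cat B'$ coincides with the fibration of modest arrows over $\Asm(\ds\Kone/\Kone)$ from the first part, which is strongly complete; this is exactly weak completeness. The main subtlety I expect is to verify carefully that the indexed product $\prod_f$ computed in $\Eff(\cat H)/-$ agrees with the one used on assemblies; this reduces to checking that $\Asm(\ds\Kone/\Kone)$ is closed under the relevant $\prod_f$ when $f$ lies between assemblies, which follows because $\nabla$ is regular and the right adjoints on sheaves restrict to right adjoints on separated objects when the arrow is between separated objects.
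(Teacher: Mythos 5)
Your proposal is correct and follows essentially the same route as the paper: strong completeness is read off from lemma \ref{mod is good} (via the equivalence of the externalization of $\cat P$ with the fibration of modest arrows), and weak completeness over $\Eff(\cat H)$ comes from taking $\cat B'=\Asm(\ds\Kone/\Kone)$, which is a full subcategory whose objects cover everything in the ex/reg completion. The extra details you supply (fibrewise finite limits, $\prod_f$ via local Cartesian closure of the separated objects, Beck--Chevalley by restriction) are exactly the content the paper delegates to that lemma, so nothing genuinely new or missing.
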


\begin{proof} The equivalent fibred category of modest sets is complete by lemma \ref{mod is good}. Because $\Eff(\cat H) = \Asm(\ds\Kone/\Kone)_\exreg$, $\Asm(\ds\Kone/\Kone)$ is a full subcategory whose objects cover each object of $\Eff(\cat H)$. Here weak completeness comes from.
\end{proof}

\begin{remark} We want to note some things about this completeness result. Not all subquotients of $\nno$ in $\Eff(\cat H)$ are $j$-separated, so $\cat P$ is not the category of all subquotients. Similarly, for arrows that are right orthogonal to prone regular epimorphisms the domain and codomains don't have to be separated. The proof in \cite{MR1023803} that all subquotients of $\nno$ are right orthogonal relies critically on the existence of enough internally projective objects among sheaves, a condition that could fail for many effective toposes. On the other hand, the slices of $\Eff(\cat H)$ over non-separated objects may not satisfy weak genericity, so the proof that right orthogonal objects are subquotients can fail in these slices. This means that we might have two fibred categories neither of which is a subcategory of the other. In \cite{MR1023803} the \xemph{discrete arrows} are the right orthogonal ones, while in \cite{MR2479466} the discrete arrows are families of subquotients of $\nno$. Either way, they do not form a subcategory that is as well-behaved as the fibred category of modest arrows over $j$-separated objects. Unfortunately, the category of PERs cannot be strongly complete in $\Eff(\cat H)$ by proposition 7.5 of \cite{MR1023803} (which is proposition 3.4.14 in \cite{MR2479466}).
\end{remark}

\subsection{Pointed complete extensional PERs}
We will now single out a subcategory of the category of modest sets which is algebraically compact. This category was identified in \cite{MR1167031}, but the proof for algebraic completeness given there has a gap. I patched that gap in \cite{MR2729220}. Here, we reproduce the result using the characteristic properties of effective categories. The first step is to exploit the algebraic completeness of complete internal categories.

\begin{lemma} The category $\cat P$ of PERs is strongly algebraically complete and cocomplete. \end{lemma}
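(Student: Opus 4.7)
The plan is to deduce both properties from the strong completeness of $\cat P$ in $\Asm(\ds\Kone/\Kone)$, as established in the preceding corollary, by directly applying (and dualising) the internal--categorical machinery of the previous subsection.

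For \emph{strong algebraic completeness} I would invoke Lemma \ref{comp then algcomp}. That lemma requires the ambient category to be locally Cartesian closed; here the ambient is $\Asm(\ds\Kone/\Kone)$ with $\cat H$ a topos, and local Cartesian closure follows from the equivalence $\Asm(\ds\Kone/\Kone)\simeq\Pasm(\ijc\Kone,\Kone^+)$ of Proposition \ref{asm is pasm} together with the inhabited--join--completion machinery of Subsection \ref{downsetmonad}, which provides the exponentials needed in each slice. Granting this, the construction in the proof of Lemma \ref{comp then algcomp} applies verbatim: form the bundle of $\cat P$--algebras over $\cat P^{\cat P}\times\cat P$, take the vertical limit of the underlying--object functor using the strong completeness of $\cat P$, and verify that the resulting global section of $\cat P^{\cat P}\times\cat P\to\cat P^{\cat P}$ is a fibrewise initial algebra.

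For \emph{strong cocompleteness} the plan is to realise colimits as limits of cocones, internally. Given an internal functor $F\colon\cat D\to\cat P$ in the externalisation, I would define $\mathrm{colim}\,F$ as the indexed limit over $X\in\cat P$ of the object of cocones $F\Rightarrow\Delta X$, which exists because $\cat P$ is strongly complete and because the object of cocones is available inside the locally Cartesian closed ambient. The Beck--Chevalley conditions built into strong completeness ensure that this construction is functorial in $\cat D$ and $F$, giving a left adjoint to $\Delta\colon\cat P\to\cat P^{\cat D}$ that satisfies Beck--Chevalley over pullbacks in the base. Strong algebraic cocompleteness (functorial terminal coalgebras) then follows by applying the argument of Lemma \ref{comp then algcomp} to $\cat P^{op}$, which inherits strong completeness from the strong cocompleteness just established.

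The main obstacle is keeping every construction inside $\cat P$ rather than letting it drift into the surrounding category of assemblies. For limits this is direct from Lemma \ref{mod is good}: modest arrows are closed under arbitrary products and subobjects. For colimits the same lemma is the key input, since the colimit of $F$ is cut out as a subobject of a product of modest objects (the product ranging over cocones), hence is itself modest. The secondary technicality is verifying strict functoriality rather than merely up--to--isomorphism existence; this is where the use of the strong (Beck--Chevalley) form of completeness, rather than the weak form available in $\Eff(\cat H)$, is essential.
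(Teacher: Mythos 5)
Your overall strategy matches the paper's: reduce everything to Lemma \ref{comp then algcomp} applied with ambient category $\Asm(\ds\Kone/\Kone)$, where the preceding corollary supplies strong completeness of $\cat P$, and get the cocomplete/coalgebra side by passing to $\cat P^{op}$. (Your limit-of-cocones construction of colimits is essentially a re-derivation of Lemma \ref{comp then cocomp}, which you could simply cite; as phrased, ``the indexed limit over $X\oftype\cat P$ of the object of cocones'' still needs to be turned into an actual weighted limit or the limit of the forgetful functor from the internal category of cocones, but that is repairable and the closure properties of Lemma \ref{mod is good} that you invoke are the right input.)

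The genuine gap is in how you discharge the hypothesis of Lemma \ref{comp then algcomp} that the ambient category is locally Cartesian closed. Proposition \ref{asm is pasm} and the inhabited-join-completion machinery of Subsection \ref{downsetmonad} do not give this: that equivalence only re-presents assemblies as partitioned assemblies over $\ijc\Kone$, and nothing there produces exponentials in slices (categories of partitioned assemblies are in general only left exact). The paper proves local Cartesian closure directly: for separated objects the unit $\eta_X:X\to\nabla\supp X$ is monic, so in every slice one can define $X^Y = \set{f\in\supp X^{\supp Y} | \forall x\oftype Y. f(x)\in X}$; equivalently, the assemblies are the $j$-separated objects for the regular local operator of Lemma \ref{asm is sep}, and these form a locally Cartesian closed (quasi-topos-like) subcategory of $\Eff(\cat H)$. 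This step is not a side condition but the heart of the argument: it is also what shows that $P_1$ (which is defined as an exponential) and the internal functor category $\cat P^{\cat P}$ consist of separated objects, so that the Lambek-algebra bundle of Lemma \ref{comp then algcomp} can be built inside $\Asm(\ds\Kone/\Kone)$, where the \emph{strong} (not merely weak) completeness of $\cat P$ is available. Without an actual proof of this local Cartesian closure your argument does not go through; supplying the explicit slice exponentials above closes the gap and brings your proof in line with the paper's.
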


\begin{proof} For the strong algebraic completeness of modest sets, we are in luck that both $P_0$ and $P_1$ are separated, and that the functor category $\cat P^{\cat P}$ also consists of separated objects. The object $\Omega_j^{\nno\times\nno}$ of $j$-stable subobjects of $\nno\times \nno$ is a sheaf, because $\nno$ is separated, and there is a bijection between $j$-stable subobjects of $\supp\nno$ and $\nno$. Evidently $P_0$ is separated because it is a subobject of a sheaf.

The reason that $P_1$ is separated is that the separated objects are a locally Cartesian closed category. The topos of sheaves is locally Cartesian closed, and we can simply define exponentials in every slice of the category of separated object, by 
\[ X^Y = \set{f\in \supp X^{\supp Y} | \forall x\oftype Y. f(x)\in X }\]
Here we exploit the fact that the unit $\eta_X: X\to \nabla\supp X$ is a monomorphism for separated objects.
We defined $P_1$ as an exponential, and hence it is a separated object. We note this result implies also that internal categories of $\Asm(\ds \Kone/\Kone)$ are closed under functor categories, and that in particular $\cat P^{\cat P}$ is separated.

By lemma \ref{comp then cocomp}, $\cat P^{op}$ is strongly cocomplete. We now use lemma \ref{comp then algcomp} to conclude that $\cat P$ is strongly algebraically complete and cocomplete. \end{proof}

\begin{remark} The category $\cat P$ of modest sets is not algebraically compact. The initial algebra of $\id:\cat P\to\cat P$ is the initial object whereas its terminal coalgebra is the terminal object, and the unique map between them is not an isomorphism. \end{remark}

We now introduce a new subcategory of $\Eff(\cat H)$ and show that is it is a strongly algebraically compact internal subcategory.

\begin{definition} In $\Eff(\cat S)$ a \xemph{decidable subobject} $U\subseteq X$ is any subobject for which a function $k:X\to \nno$ exists such that $kx=0$ if and only if $x\in U$. An $\nno$-indexed union of decidable subobjects is a \xemph{semidecidable subobject}. Semidecidable subobjects have a classifier:
\[ \Sigma = \set{\sigma\in\Omega| \exists f\oftype\nno\to\nno. (\sigma \leftrightarrow \exists n\oftype\nno. f(n)=0) } \]

An object $X$ is \xemph{extensional} if it is a $j$-stable subobject of $\Sigma^Y$ for some $Y$. It is \xemph{complete} if for every monotone $f:\nno\to X$, there is a join in $X$. It is \emph{pointed} if $\emptyset\in X$.
\index{extensional object!pointed}
\end{definition}

\begin{lemma} Every extensional object is modest. \end{lemma}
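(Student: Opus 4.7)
The plan is to reduce everything to the closure properties of modest arrows established in lemma \ref{mod is good}: the class of modest arrows contains $\nno$ and is closed under subobjects, subquotients, and (internal) products over arbitrary indexing objects. Since every extensional object is by definition a $j$-stable subobject of some $\Sigma^Y$, and $j$-stable subobjects are separated so they live in $\Asm(\ds\Kone/\Kone)$ where modesty was defined (see lemma \ref{asm is sep}), it suffices to show that $\Sigma$ itself is modest, then to invoke closure under $Y$-indexed products to conclude that $\Sigma^Y$ is modest, and finally to use closure under subobjects to descend to the extensional object $X\hookrightarrow\Sigma^Y$.

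The heart of the argument is therefore to exhibit $\Sigma$ as a subquotient of a product of copies of $\nno$. First, $\nno^\nno$ is the $\nno$-indexed product $\prod_\nno\nno$ of the modest object $\nno$, so $\nno^\nno$ is modest. Unwinding the definition of $\Sigma$, the morphism
\[ \epsilon:\nno^\nno \to \Omega,\qquad f\mapsto (\exists n\oftype\nno.\; f(n)=0) \]
has image precisely the object of semidecidable truth values, i.e.\ $\Sigma$. Hence $\Sigma$ is a regular quotient of the modest object $\nno^\nno$, and closure of modest arrows under subquotients gives that $\Sigma$ is modest.

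With $\Sigma$ in hand, $\Sigma^Y$ is modest for every object $Y$, being the $Y$-indexed product of the modest object $\Sigma$ (closure of modest arrows under internal products, lemma \ref{mod is good}). Every extensional $X$ sits as a $j$-stable, hence ordinary, subobject of some $\Sigma^Y$, and subobjects of modest objects are modest; therefore $X$ is modest. The one subtlety to verify carefully is the identification of $\Sigma$ with the image of $\epsilon$ above --- this is essentially the internal definition of semidecidability, but it must be checked inside $\Eff(\cat H)$ that this image indeed coincides with the $\Sigma$ defined as a subobject of $\Omega$, so that the subquotient reasoning applies in the correct ambient category.
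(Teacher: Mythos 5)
Your overall route (exhibit $\Sigma$ as a quotient of $\nno^\nno$ via $f\mapsto \exists n\oftype\nno. f(n)=0$, then use closure under internal products to get $\Sigma^Y$ and closure under subobjects to descend to the extensional object) is the same skeleton as the paper's, but you have skipped the step that carries the real content: showing that $\Sigma$ is \emph{separated}, i.e.\ that the fibres of the epimorphism $\nno^\nno\to\Sigma$ are $j$-stable, so that $\Sigma$ lives in $\Asm(\ds\Kone/\Kone)$ and is a quotient of $\nno^\nno$ by a \emph{prone} equivalence relation. Your appeal to ``closure of modest arrows under subquotients'' does not cover this. Modesty is only defined for arrows of $\Asm(\ds\Kone/\Kone)$, and the image of $\epsilon:\nno^\nno\to\Omega$ is computed in $\Eff(\cat H)$; quotients taken in $\Eff(\cat H)$ of modest objects by arbitrary equivalence relations need not be separated at all (this is exactly the difference between $\Asm$ and its ex/reg completion), so ``quotient of modest is modest'' is false in that ambient category. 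The closure statement in lemma \ref{mod is good} is proved only for subobjects and (internal) products, and the correct quotient statement is lemma \ref{per is mod}, which requires the equivalence relation to be prone --- equivalently, requires the $j$-stability of the fibres of $\nno^\nno\to\Sigma$. That is precisely what the paper's proof establishes, using that $\nno$ is separated so $j(f(n)=0)\to f(n)=0$, and it is the step your argument is missing.

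The gap also infects your opening reduction: you assert that extensional objects are separated because they are $j$-stable subobjects, but a $j$-stable subobject of a non-separated object need not be separated; you need $\Sigma^Y$ (hence $\Sigma$) to be separated first, which is again the omitted step. The subtlety you flag at the end (identifying $\Sigma$ with the image of $\epsilon$) is comparatively harmless; the point you must actually verify is that this image is separated, so that the kernel of $\nno^\nno\to\Sigma$ is prone and lemma \ref{per is mod} (or the $\Asm$-level closure properties) can legitimately be applied.
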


\begin{proof} We first note that $\Sigma$ is a subquotient of $\nno$, because $\nno^\nno$ is, and there is an evident epimorphism $\nno^\nno \to\Sigma$, because $\Sigma$ is the image of $f\mapsto (\exists n\oftype\nno.f(n)=0):\nno^\nno\to\Omega$.

The object $\Sigma$ is separated. We have $j((\exists n\oftype\nno.f(n)=0)) \to (\exists n\oftype\nno.f(n)=0)$ because $\nno$ is a separated object and hence $j(f(n)=0)\to f(n)=0$. This show that the fibres of the epimorphism $\nno^\nno \to\Sigma$ are $j$-stable sets, and therefore that $\Sigma$ is separated.

All $j$-stable subobjects of $\Sigma^X$ are modest, because modest sets are closed under subobjects. \end{proof}

\begin{definition} A \xemph{pointed complete extensional PER} is a PER $R$ such that the subquotient $\nno/R$ is a pointed complete extensional object. \end{definition}

\begin{theorem} The category of pointed complete extensional PERs is strongly algebraically compact. \label{compactness} \end{theorem}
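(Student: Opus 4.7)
The plan is to adapt Freyd's classical argument for algebraic compactness of categories of domains to this internal setting, using the characteristic properties of effective categories established earlier. Let $\cat E$ denote the full internal subcategory of $\cat P$ on pointed complete extensional PERs. First I would verify that $\cat E$ is genuinely an internal subcategory of $\cat P$, and that it inherits enough of the strong (co)completeness of $\cat P$: specifically that $\cat E$ is closed under $\omega$-indexed limits and colimits of chains of embedding-projection pairs, and that it has a zero object (the one-point PER), which exists because pointed complete extensional objects are closed under $j$-stable subobjects of $\Sigma^Y$ and the singleton is such an object.

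Next I would show that $\cat E$ is enriched over itself, or at least over pointed complete extensional objects: for $X, Y \in \cat E$, the hom-object $\cat E(X,Y)$ carries a pointwise order whose bottom is the constant map to $\bot_Y$, and this order is $\omega$-complete since $Y$ is. The crucial consequence is the standard \emph{limit/colimit coincidence} for ep-pairs: every morphism $f: X \to Y$ between pointed complete extensional objects which admits a right adjoint $f^*$ with $f^* \circ f = \id_X$ and $f \circ f^* \leq \id_Y$ (an embedding-projection pair) yields a colimit of any $\omega$-chain of embeddings that coincides, via the canonical comparison, with the limit of the corresponding $\omega^{op}$-chain of projections. This is where the pointedness, completeness and extensionality all get used: pointedness supplies the bottom, completeness supplies the suprema, and extensionality ensures that the equation $\bigvee_n e_n \circ e_n^* = \id$ (which classically is the limit/colimit coincidence) actually holds rather than merely up to $j$-closure.

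Given this enrichment, for any internal endofunctor $F: \cat E \to \cat E$ I would construct the initial algebra as the colimit of the $\omega$-chain
\[ 0 \xrightarrow{\bot} F0 \xrightarrow{F\bot} F^2 0 \xrightarrow{F^2 \bot} \cdots \]
where each $F^n \bot$ is an embedding (its projection being the unique map back, which exists by initiality up to the order). Dually, the terminal coalgebra is the limit of the corresponding chain of projections
\[ \cdots \xrightarrow{F^2 !} F^2 1 \xrightarrow{F!} F1 \xrightarrow{!} 1. \]
The strong completeness and cocompleteness of $\cat P$, restricted to $\cat E$, supplies these (co)limits internally and functorially in $F$, giving functors $\cat E^{\cat E} \to \cat E$ picking out initial algebra and terminal coalgebra. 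The limit/colimit coincidence then forces the canonical comparison map to be an isomorphism, establishing strong algebraic compactness.

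The main obstacle will be the second step: verifying that the enriched structure behaves correctly internally to $\Eff(\cat H)$, and in particular that the limit/colimit coincidence holds as an internal equality rather than merely externally. This is precisely where extensionality enters: a pointed complete extensional PER is determined by its action on semidecidable elements, so two endomorphisms which agree pointwise in a sufficiently strong sense are equal, and this is what lets us identify $\bigvee_n e_n \circ e_n^*$ with $\id$. Once that is in place, the construction of initial algebras and terminal coalgebras as bilimits of ep-pair chains, and their coincidence, proceeds exactly as in the classical Freyd--Smyth--Plotkin theory, but carried out internally using the strong (co)completeness of $\cat P$ inherited from the characteristic properties of $\Eff(\cat H)$.
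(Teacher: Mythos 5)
Your proposal is the classical Freyd--Smyth--Plotkin bilimit argument, and it fails precisely where the realizability-specific content of the theorem lies. The chain construction $0 \to F0 \to F^2 0 \to \cdots$ and the limit/colimit coincidence only work for endofunctors that act on embedding-projection pairs and preserve the resulting bilimits, i.e.\ for \emph{locally monotone and locally continuous} functors. The theorem, however, asserts \emph{strong} algebraic compactness: every internal endofunctor of the category of pointed complete extensional PERs, with no continuity hypothesis whatsoever, must have a bifree algebra, and functorially so. In the ordinary cpo-enriched world this is simply false for arbitrary functors, so you cannot import the classical machinery without first proving that arbitrary internal endofunctors of this internal category are automatically locally continuous (which in this setting comes from the fact that the hom-objects are themselves pointed complete extensional, so that the functor's action on homs is an internal map between such objects); your proposal never addresses this, and it is exactly the delicate point. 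A second, more elementary problem is your starting object: in the full subcategory of pointed complete extensional PERs the one-point PER is terminal but not initial (a map out of it is an arbitrary point of the target, and $\Sigma$ has at least two), so there is no zero object from which to launch the initial-algebra chain; repairing this by passing to strict maps or to the ep-subcategory and then transferring (co)initiality back to the full category again needs the unproven local continuity.

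The paper's proof avoids chains altogether. Initial algebras and terminal coalgebras for \emph{arbitrary} internal endofunctors are obtained from the completeness of the category of modest sets, as limits of the underlying-object functor on the internal category of (co)algebras (the lemmas relating completeness to algebraic completeness), handled uniformly over $\cat P^{\cat P}$ to get strength; the reflector $R$ onto pointed complete extensional objects is then used to show that the terminal coalgebra of $FR$ is already pointed complete extensional, so the subcategory is strongly algebraically cocomplete; and the coincidence of initial algebra and terminal coalgebra is delegated to the cited result that a category enriched in complete posets which is algebraically cocomplete is algebraically compact. If you wish to keep your route, the essential missing lemma is the automatic local continuity of all internal endofunctors together with a suitable ep-zero object; proving that is not easier than the reflection-plus-enrichment argument the paper actually gives.
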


\begin{proof} Pointed complete extensional objects form a reflective subcategory of the category of modest sets, and this induces a reflection of the category of coalgebras of any functor. The reflector preserves the terminal coalgebra, so that we get a pointed complete extensional version of it. Other properties of the category of pointed complete extensional objects then imply that it is algebraically compact.

For each modest set $X$ let $RX$ be the least pointed complete $j$-stable subobject of $\Sigma^{\Sigma^X}$ that contain all sets of the form $\set{\xi\in \Sigma^X | x\in \xi }$ for $x\in X$. For each $f:X\to Y$, $\Sigma^{\Sigma^f}$ is an inverse image map, and therefore preserves all joins. For this reason the restriction of $\Sigma^{\Sigma^f}$ to $RX$ factors through $RY$. 

The natural transformation $\eta_X:X\to \Sigma^{\Sigma^X}$ that satisfies $\eta_X(x) = \set{\xi\in \Sigma^X | x\in \xi }$, factors through $R$. That $R\eta_X$ is an isomorphism stems from the fact that $\eta_X:X\to\Sigma^{\Sigma^X}$ is a monomorphism, and the closure of the image of $X$ in $\Sigma^{\Sigma^{\Sigma^{\Sigma^X}}}$ is therefore isomorphic.

We consider all functors at once, to handle stability issues. In the category of modest sets, let a \xemph{Lambek coalgebra} be a triple $(F,X,a:X\to FX)$, where $F$ is a functor, $X$ is a modest set and $a$ is any morphism $a:X\to RX$. Morphisms $(F,X,a) \to (G,X,b)$ are pairs $(\mu,m)$ where $\mu:F\to G$, $m:X\to Y$ and $b\circ m = \mu m\circ a$, where $\mu m = \mu_Y\circ Fm = Gm\circ \mu_X$. Composition goes componentwise.
\[ \xymatrix{
X \ar[d]^m\ar[r]^a & FX\ar[r]^{\mu_X} \ar[d]_{Fm} \ar[dr]^{\mu m}  & GX \ar[d]^{Gm}  \\
Y\ar@/_2ex/[rr]_b & FY \ar[r]^{\mu_Y} & GY 
}\]
This is still equivalent to an internal category, in fact some subcategory $\cat L\subseteq \cat P^{\cat P}\times\cat P\times (\cat P/\cat P)$. The projection $\Pi:\cat L\to\cat P^{\cat P}$ has a section $Y:\cat P^{\cat P} \to \cat L$ that picks out terminal coalgebras for each functor in $\cat P^{\cat P}$ thank to the algebraic cocompleteness of modest sets.

Let $\cat C$ be that category of pointed complete extensional $j$-stable partial equivalence relations on $\nno$. The map $F\mapsto FR$ determines an inclusion $R^*:\cat C^\cat C \to\cat P^\cat P$. Let $F:\cat C\to\cat C$, then we have a terminal coalgebra $y_{FR}:Y_{FR} \to FRY_{FR}$. Now $RF=F$ because the objects in the image of $F$ are already pointed complete and extensional. This gives us the algebra $RY_{FR} \to RFRY_{FR} = FRY_{FR}$, and the morphism $(\id,\eta):(FR,Y_{FR},y_{FR})\to (FR,Y_{FR},Ry_{FR})$ which has to be an isomorphism. So the terminal coalgebra of a functor $\cat C\to\cat C$ is already a pointed complete extensional object.

For the last part we refer to \cite{MR1099200}: $\cat C$ is enriched in complete posets, and complete poset enriched categories are algebraically compact if they are algebraically cocomplete. \end{proof}

\section{Classical Realizability}
Krivine's \xemph{classical realizability} \cite{MR2825677} is a new construction of models of second order arithmetic and set theory that is based on the \la-calculus. Classical realizability explains the properties of certain Boolean subtoposes of relative realizability categories and relative realizability categories are a source of new classical realizability models. This section contains the construction of a \xemph{classical realizability tripos} from a partial applicative structure with a combinatory complete filter. We show that it is a subtripos of the realizability tripos.

Our underlying category will be a topos $\Set$ of sets, but the sets may come from arbitrary models of Zermelo's set theory, just as classical realizability models are constructed relative to an arbitrary model of set theory. We can probably generalize to other categories, but we haven't checked that no problems arise there.

\subsection{Classical realizability}
Classical realizability still assigns a set $x$ to every proposition $p$, but this time $x$ contains \xemph{challenges} to the validity of $p$. A realizer for $p$ encodes an algorithm for defeating these challenges.

The basic structure we work with extends an applicative structure with a set of stacks. Note that the stacks here are not fibred categories.

\newcommand\terms\Lambda
\newcommand\stacks\Pi
\newcommand\cc{\underline{\comb{cc}}}
\begin{definition} A \xemph{stack structure} is an applicative structure $\terms$ of \xemph{terms} with a set of \xemph{stacks} $\stacks$ which is connected to $\terms$ by the following structure:
\begin{itemize}
\item there is an operator called \xemph{push} $(t,\alpha)\mapsto t\cdot \pi: \terms\times \stacks \to\stacks$;
\item there is an operator called \xemph{continuation} $\pi\to k_\pi: \stacks\to\terms$;
\item there is a special element $\cc\in \terms$;
\item the set $\terms\star\stacks = \terms\times \stacks$ is called the set of \xemph{processes}; we write $t\star \pi$ instead of $(t,\pi)$ to denote its members; $\terms$ has an ordering $\succ$ which satisfies:
\[ \begin{array}{ccc}
tu\star\pi \succ t\star u\cdot \pi & \cc\star t\cdot\pi\succ t\star k_\pi\cdot \pi & k_\pi\star t\cdot\rho\succ t\star\pi
\end{array}\]
\end{itemize}

A filter $C\subseteq \terms$, which is still a subset closed under application, is \xemph{operationally complete} if it contains $\cc$ and if for each combinatory function $f:\terms^n\to\terms$ there is a $t\in C$ such that $t\star u_1\cdot\dotsm\cdot u_n\cdot \pi \succ f(\vec u)\star \pi$. A stack structure that has an operationally complete filter is a \xemph{realizability algebra} (Krivine) or an \xemph{abstract Krivine structure} (Streicher). 
\end{definition}

\begin{remark} Krivine's and Streicher's definitions are not equivalent, see Krivine \cite{MR2825677}, Streicher \cite{KCRfaCP}. What we defined is a well-behaved subclass of both realizability algebras and abstract Krivine structures. We also made the definition more similar to our definition of partial combinatory algebras, by replacing the existence of certain \xemph{basic combinators} with the more abstract property of operational completeness. \end{remark}

To define a tripos using an abstract Krivine structure, we need one additional bit of structure. \newcommand\pole{{\bot\mkern-10.5mu\bot}}
\begin{definition} Let $(\terms,\stacks,\cdot,k_*,\cc,\succ)$ be a stack structure. A \xemph{pole} is a $\pole\subseteq \terms\star\stacks$ that is \xemph{saturated}: if $p\in \pole$ and $q\succ p$ then $q\in\pole$.

Using the pole, we define the \xemph{orthogonal complement} $U^\pole$ of a set of stacks by $U^\pole = \set{t\in\terms| \forall \pi\in U. t\star \pi\in \pole}$. Similarly, a set of terms $V$ has a complement $V^\pole = \set{\pi \in\stacks| \forall t\in U. t\star \pi\in \pole}$. 
\end{definition}

With the help of this pole we can make the notion of challenge precise. If $S\subseteq \stacks$ is the set of stacks assigned to a proposition $p$, then $S^\pole$ is the set of realizers. The $\pi\in S$ challenge the validity of $p$; the $t\in S^\pole$ send each challenge $\pi$ to $\pole$, thereby defeating $\pi$. If $t$ is also member of the filter $C$, then $p$ is valid. We turn this informal description into a tripos.

\newcommand\CR{\Cat{CR}}
\begin{definition} Let $\stacks = (\terms,\stacks,\cdot,k_*,\cc,\succ) $ be an abstract Krivine structure. For all $f,g:X\to\pow\stacks$ we let $f\Vdash_X g$ if there is a $t\in C$ such that for all $x\in X$, $u\in f(x)^\pole$ and $\pi\in g(x)$, $t\star u\circ \pi \in \pole$. The resulting indexed preorder $(\pow\stacks,\Vdash)$ is the \xemph{classical realizability tripos} $\CR(\stacks,C,\pole)$ for $(C,\pole)$.
\end{definition}

\begin{proposition}[Krivine, Streicher] Let $\stacks = (\terms,\stacks,\cdot,k_*,\cc,\succ)$ be a stack structure, let $C$ be an operationally complete filter and let $\pole\subseteq \terms\times\stacks$ be any pole. The classical realizability tripos $\CR(\stacks, C, \pole)$ is a Boolean tripos. \end{proposition}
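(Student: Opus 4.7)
The plan is to adapt the realizability tripos construction of Section \ref{realizability fibrations} by replacing families of downsets of $A$ with maps into $\pow\stacks$ and by interpreting validity through the orthogonality $(-)^\pole$ rather than inhabitation. For each set $X$ the underlying preorder on $(\pow\stacks)^X$ is the one defined in the statement, and I would equip each fibre with Krivine's standard connectives: $\top(x) = \emptyset$, $\bot(x) = \stacks$, $(f \to g)(x) = \{u \cdot \pi : u \in f(x)^\pole,\ \pi \in g(x)\}$, with conjunction and disjunction given by the usual Krivine pairing/injection codings on stacks. Reindexing along $h : X \to Y$ is precomposition, so it commutes strictly with every connective.

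I would then carry out the verification in the following order. First, check the Heyting algebra axioms in each fibre: every axiom reduces to producing a single term in $C$ that uniformly realizes a fixed scheme of implications between sets of stacks, which follows from operational completeness together with the push/pop reduction rules for $\cdot$. Second, install the quantifier structure, taking $(\forall y.\, f)(x) = \bigcup_{y \in Y} f(x,y)$ and $\exists$ coded by Krivine pairing a witness with a challenger; Frobenius and Beck--Chevalley are automatic because reindexing is strict precomposition. Third, exhibit the generic predicate of Definition \ref{tripos}: take the parameter object to be $\pow\stacks$ itself with tautological membership predicate $E_X(x, S) = S(x)$, so that any $P \in (\pow\stacks)^{X \times Y}$ is literally the pullback of $E_X$ along its own transpose $Y \to (\pow\stacks)^X$.

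The main obstacle, and the only step genuinely distinctive to classical realizability, is Booleanness: proving $\neg\neg S \Vdash_X S$ for every $S$. The forward direction $S \Vdash_X \neg\neg S$ is witnessed by a term $t \in C$ with $t \star w \cdot u' \cdot \pi \succ u' \star w \cdot \pi$, produced by operational completeness from the combinatory function $(w, u') \mapsto u'w$. For the reverse direction I would simply take $t = \cc$, which lies in $C$ by hypothesis: given $w \in (\neg\neg S(x))^\pole$ and $\pi \in S(x)$, the reduction $\cc \star w \cdot \pi \succ w \star k_\pi \cdot \pi$ reduces the problem to showing $k_\pi \in (\neg S(x))^\pole$. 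But for any $u \in S(x)^\pole$ and $\rho \in \stacks$ one has $k_\pi \star u \cdot \rho \succ u \star \pi \in \pole$, and saturation of $\pole$ closes the argument. This is exactly where call/cc is indispensable, and it is what upgrades the tripos from Heyting to Boolean.
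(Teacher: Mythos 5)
Your proposal is correct and takes essentially the same route as the paper: implication as the set of stacks $u\cdot\pi$ with $u\in f(x)^\pole$, $\pi\in g(x)$, realizers for the logical axioms supplied by operational completeness, $\forall$ as a fibrewise union, the generic predicate given by evaluation $\pow\stacks^X\times X\to\pow\stacks$, and classicality coming from $\cc$ together with the observation that $k_\pi\in(\neg S(x))^\pole$ — your double-negation elimination is exactly the paper's realization of Peirce's law specialized at $g=\bot=\stacks$. The only cosmetic differences are that the paper proceeds via the Hilbert axioms ($\comb k$, $\comb s$, Peirce, ex falso) and then defines the remaining connectives and $\exists$ classically from $\to$ and $\neg$, and that it actually derives the Frobenius condition from the identity $(f\to g)\circ h=f\circ h\to g\circ h$ rather than declaring it automatic, a small verification you should not wave away entirely.
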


\begin{proof} We start by defining implication as follows.
\[ f\to g(x) = \set{t\cdot \pi\in\stacks|t\in f(x)^\pole, \pi\in g(x)}\]
By definition, $t\in C$ realizes $f\Vdash g$ if and only if $t\in (f\to g(x))^\pole$ for all $x$. If $u\in f^\pole(x)$, then $tu\in g^\pole(x)$ because for all $\pi \in g(x)$, $tu\star\pi \succ t\star u\cdot \pi \in \pole$, because $u\cdot \pi\in f\to g(x)$ and $t\in (f\to g)^\pole$. We have found a modus ponens rule, so now we will just show that the axioms of classical implication hold.

Let $\comb k\in C$ be a combinator that satisfies $\comb k \star t\cdot u\cdot \pi \succ t\star \pi$ and let $f,g:X\to \pow\stacks$. For all $x \in X$, $\comb k\in (f\to g\to f(x))^\pole$, because if $t\in f(x)^\pole$, $u\in g(x)^\pole$ and $\pi\in f(x)$, then $\comb k\star t\cdot u\cdot \pi \succ t\star \pi$ and because $t\in f(x)^\pole$ and $\pi\in f(x)$, $t\cdot \pi\in \pole$.

Let $\comb s\in C$ be a combinator that satisfies $\comb s\star t\cdot u\cdot v\cdot \pi\succ tv(uv)\star \pi$ let $f,g,h:X\to \pow\stacks$. For all $x\in X$, $\comb s \in ((f\to g\to h)\to (f\to g) \to f\to h(x))^\pole$ because if $t\in (f\to g\to h(x))^\pole$, $u\in (f\to h(x))^\pole$ and $v\in f(x)^\pole$, then $tv(uv)\in h(x)^\pole$ and hence if $\pi\in h(x)$, then $tv(uv)\cdot \pi\in \pole$.

The special combinator $\cc$ is a member of $((f\to g)\to f)\to f(x))^\pole$ for all $x\in X$ and $f,g:X\to\pow\stacks$. If $\pi\in f(x)$ then $k_\pi\in (f\to g)^\pole$, because if $t\in f(x)^\pole$ and $\rho\in g(x)$, then $k_\pi t\star\rho\succ t\star \pi\in \pole$. Hence if $u\in ((f\to g)\to f)^\pole$ then $u\star k_\pi \cdot \pi\in\pole$, and therefore $\cc\star u\cdot \pi \succ u\star k_\pi\cdot \pi \in\pole$. This proves that Pierce's law holds and that the tripos is Boolean.

Let $\comb i\star t\cdot\pi \succ t\star \pi$. If $f(x)\subseteq g(x)$, then $\comb i \in (g\to f(x))^\pole$, because if $t\in g(x)^\pole$ and $\pi\in f(x)$ then $\pi\in g(x)$ and $\comb i \star t\cdot \pi \succ t\star \pi \in\pole$. We see at once that $\comb i\in (\stacks\to f(x))^\pole$, which is the principle of explosion. This allows us to define negation by letting $\neg g = f\to\stacks$, and using negation, we can define all other connectives.

Now we know that $(\pow\stacks^X,\Vdash_X)$ is a Boolean algebra for each set $X$. 

Let $f:X\to Y$ be any function, and let $g,h:Y\to \pow\stacks$. If $t\in (g\to h(y))^\pole$ for all $y\in Y$, then $t\in (g\to h(f(x)))^\pole$ for all $x\in X$, and hence reindexing preserves $\Vdash$.

We focus on defining the right adjoint now, since classical negation determines a left adjoint given any right adjoint.

Define $\forall_f(g)(y) = \bigcup_{f(x)=y} g(x)$ for all $g:X\to \pow \stacks$. Let $g,h:X\to\pow \stacks$. If $t\in (g\to h(y))^\pole$ for all $x\in X$, then $t\in (\forall_f(g)\to\forall_f(h)(y))^\pole$ for all $y\in Y$. If $\pi \in \forall_f(h)(y)$, then there is an $x\in X$ such that $f(x)=y$ and $\pi \in h(x)$. But if $u\in \forall_f(g)(x)^\pole$, then for all $x'$ such that $f(x')=y$, $u\in g(x')^\pole$ and therefore in particular in $g(x)^\pole$. For these reasons $t\star u\cdot \pi\in \pole$ if $t\in (g\to h(x))^\pole$ for all $x\in X$. This means that $\forall_f$ preserves $\Vdash$. Let $k:Y\to \pow\stacks$. Now $\bigcup_{y\in Y}k \to \forall_f(h)(y) = \bigcup_{x\in X}k\circ f\to h(x)$ by the definition of $\forall_f$. Hence $\forall_f$ is right adjoint to reindexing.

The way implication is defined, it is clear that $(f\to g)\circ h = f\circ h \to f\circ h$. This implies the Frobenius condition:
\begin{align*}
f\Vdash (g\to h)\circ k &\iff f\Vdash g\circ k \to h\circ k \\
\exists_k(f) \Vdash g\to h &\iff f\land g\circ k \Vdash h\circ k \\
\exists_k(f)\land g \Vdash h &\iff \exists_k(f\land g\circ k) \Vdash h
\end{align*}

Now we know that we are dealing with a complete fibred Boolean algebra. The map $\epsilon_X: \pow\stacks^X\times X\to \pow\stacks$ determines the membership predicate. For every $f:X\times Y \to\pow\stacks$ there is a $f^t: X\to \pow\stacks^Y$ such that $f = \epsilon_X\circ (f^t\circ X)$. Therefore the classical realizability tripos is in fact a tripos.
\end{proof}

This is classical realizability. For examples see \cite{MR2825677}.

\subsection{Implementation}
We are going to construct a stack structure out of an order partial applicative structure. The relation $\succ$ between processes of a stack structure give the operational semantics of \xemph{Krivine's machine}, which we can implement in combinatory logic. Order partial combinatory algebras model combinatory logic (partially) and this is where we derive our construction from.

\newcommand\whrto{\twoheadrightarrow^*_w}
\newcommand\CT{\mathrm{CT}}
\begin{definition}[combinatory logic] A \xemph{combinatory term} $M,N,\dots$ is a variable $x,y,z,\dots$, one of the \xemph{basic combinators} $\comb b,\comb c,\comb k,\comb w$, or an application of combinators $MN$. A string of combinators $M_1M_2\dotsm M_n$ stands for the nested application $((M_1M_1)\dotsm)M_n$. The set of all such terms is $\CT$.

We order the combinatory terms by \xemph{weak head reduction} $\whrto$, which is the least preorder that satisfies: if $M\whrto M'$ then $MN\whrto M'N$ and
\begin{align*}
\comb bxyz &\whrto x(yz) & \comb cxyz &\whrto xzy &
\comb kxy &\whrto x & \comb wxy&\whrto xyy
\end{align*}

We define a \xemph{substitution operator} by the following rules.
For every term $N$ every variable $x$, every variable or constant $y$ that differs from $x$ and every pair of terms $M,M'$ we let
\begin{align*}
x[N/x] &= N & y[N/x] &= y & (MM')[N/x] &= (M[N/x])(M'[N/x])
\end{align*}

We define an \xemph{abstraction operator} by the following rules.
For every term $N$ every variable $x$, every variable or constant $y$ that differs from $x$ and every pair of terms $M,M'$ we let
\begin{align*}
\hat x(x) &= \comb{wk} & \hat x(y) &= \comb ky & \hat x(Mx) &= \comb w\hat x(M) \\
 \hat x(My) &= \comb c\hat x(M)y & \hat x(M(NP)) &= \hat x(\comb bMNP)
\end{align*}
\end{definition}

We have seen $\CT$ with a different ordering as an example (\ref{combinatorylogic}) of an order combinatory algebra.

\begin{remark} We only use weak head reduction because it is sufficient for Krivine's machine. Later we will see that we can weaken the structure of order partial combinatory algebras along the same lines, and still get a good realizability interpretation. The abstraction function $\hat x$ is total, despite the recursion in its definition; induction over the depth of application shows this.
\end{remark}

\begin{lemma} For all terms $M,N\in\CT$ and every variable $x$, $\hat x(M)N\whrto M[N/x]$.\label{abstraction} \end{lemma}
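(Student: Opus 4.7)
The plan is to induct on the recursive definition of $\hat x(M)$, handling each of the five clauses as a separate case. The two non-recursive clauses are immediate from the reduction rules: for $M=x$ we compute $\hat x(x)N = \comb{wk}N \whrto \comb kNN \whrto N = x[N/x]$, and for $M=y$ with $y\ne x$ we have $\hat x(y)N = \comb kyN \whrto y = y[N/x]$.

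For case 3, with $M=M'x$, I would use the $\comb w$-rule to reduce $\comb w\hat x(M')N \whrto \hat x(M')NN$, then apply the induction hypothesis $\hat x(M')N\whrto M'[N/x]$ together with the congruence rule $P\whrto P'\Rightarrow PR\whrto P'R$ (built into the definition of $\whrto$) to conclude $\comb w\hat x(M')N \whrto M'[N/x]\cdot N = (M'x)[N/x]$. Case 4, where $M=M'y$ with $y\ne x$, is exactly parallel but uses the $\comb c$-rule to shuffle the freshly supplied argument $N$ past the trailing $y$: $\comb c\hat x(M')yN \whrto \hat x(M')Ny \whrto M'[N/x]y = (M'y)[N/x]$.

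Case 5, with $M=M'(N'P')$, is where $\hat x$ rewrites rather than descends structurally. The strategy here is to apply the induction hypothesis to $\comb bM'N'P'$, yielding $\hat x(\comb bM'N'P')N \whrto (\comb bM'N'P')[N/x] = \comb bM'[N/x]N'[N/x]P'[N/x]$, and then use a single $\comb b$-reduction to rearrange this into $M'[N/x](N'[N/x]P'[N/x]) = (M'(N'P'))[N/x]$.

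The main subtlety lies in justifying that the induction in case 5 is legitimate, since $\comb bM'N'P'$ is not a structural subterm of $M'(N'P')$ and in fact has strictly more application nodes. This is the same issue the preceding remark flags for the totality of $\hat x$, and I would resolve it in the same way: induct on a measure that makes the definition well-founded, such as the application-depth of the rightmost argument of the outermost application, combined lexicographically with overall term size. The rewrite $M'(N'P')\mapsto \comb bM'N'P'$ replaces the rightmost argument $N'P'$ by the strictly simpler $P'$, while clauses 3 and 4 strictly reduce overall size, so the measure decreases in every recursive clause and the induction goes through.
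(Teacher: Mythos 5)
Your five reduction computations are all correct, and they flesh out exactly what the paper's one-word proof (``Induction.'') intends. The genuine problem is the well-foundedness argument you offer for the recursion, on which you explicitly rest case 5. The lexicographic measure you propose --- application-depth of the rightmost argument of the outermost application first, overall term size second --- does not decrease along clauses 3 and 4. Take $M=(a(bc))z$ with $z$ an atom different from $x$: the rightmost argument of the outermost application of $M$ is $z$, of depth $0$, but clause 4 recurses on $M'=a(bc)$, whose rightmost argument $bc$ is itself an application, of depth $1$. So the primary component jumps from $0$ to $1$ while only the secondary component (size) drops, and the lexicographic measure increases. Reversing the lexicographic priority does not help either, since clause 5 strictly increases size: the rewrite $M'(N'P')\mapsto ((\comb bM')N')P'$ inserts an extra occurrence of $\comb b$.

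This failure is not an accident of your particular choice. Any measure that decreases under clause 5 must charge a subterm more in argument position than in function position (that is what makes trading the single argument $N'P'$ for the two arguments $N'$ and $P'$ a gain), but the same rewrite demotes the arbitrarily large function part $M'$ from function position to argument position (it becomes the argument of $\comb b$), so the same surcharge blows up on $M'$; one can check that no weighting by size, depth, or number of application nodes survives both effects simultaneously. The induction therefore has to be organized differently, e.g.\ as a simultaneous statement proved by structural induction on $M$: ``for every $Q$ such that $\hat x(Q)$ is defined and $\hat x(Q)N\whrto Q[N/x]$ for all $N$, also $\hat x(QM)$ is defined and $\hat x(QM)N\whrto (QM)[N/x]$'' --- and even there the subcase $M=M_1M_2$ needs a separate argument that $\hat x(\comb bQ)$ is defined, since $Q$ is not a subterm of $M$. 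As it stands, the termination of the recursion defining $\hat x$ (and hence the totality the lemma presupposes) is not established by your argument.
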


\begin{proof} Induction. \end{proof}

After that exposition of combinatory logic and its interpretation in order partial combinatory algebras, we can now start implementing Krivine's machine.

\begin{definition} We build a syntactic stack structure out of $\CT$, with the following combinators and operators. Note that we define a new application operator $\bullet$ for the stack structure and that we also introduce an \xemph{abstraction operator} $\lambda^\bullet$, which will later help to establish operational completeness.
\begin{align*}
\comb i &= \comb{wk} & \comb g &= \comb{ci} & \comb p &= \comb{bcg} \\
M\cdot N &= \comb p MN & M\bullet N &= \comb b M(\comb p N) & \lambda^\bullet x.M &= \comb g\hat x(M) \\
\kappa &= \hat x(\lambda^\bullet y.\comb k(ux)) & k_M &= \kappa M & \cc &= \lambda^\bullet x.\hat y(x(k_y\cdot y))
\end{align*}
\end{definition}

\begin{remark} This translation of classical realizability to combinatory logic is based on the one found in \cite{MR2550048}. \end{remark}

\begin{lemma}
Let $M,N,\pi,\rho\in\CT$, then 
\begin{align*}
(M\bullet N)\pi &\whrto M(N\cdot \pi) & (\lambda^\bullet x.M)(N\cdot \pi)&\whrto M[N/x]\pi \\
k_\pi(M\cdot\rho) &\whrto M\pi & \cc(M\cdot \pi) &\whrto M(k_\pi\cdot \pi)
\end{align*}\label{KrivinesMachine}
\end{lemma}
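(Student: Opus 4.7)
The plan is to verify each of the four reductions by direct computation, using nothing beyond the four basic combinator rules for $\comb b,\comb c,\comb k,\comb w$, the definitions unfolded, and Lemma~\ref{abstraction}. First I would derive three convenient shorthand reductions from the basic rules: $\comb i x \whrto x$ (from $\comb{wk}x \whrto \comb kxx \whrto x$), $\comb g xy \whrto yx$ (from $\comb{ci}xy \whrto \comb iyx \whrto yx$), and $\comb p xyz \whrto zxy$ (from $\comb{bcg}xyz \whrto \comb c(\comb g x)yz \whrto \comb gxzy \whrto zxy$). The last says in particular that applying the ``pair'' $M\cdot N = \comb p MN$ to a further argument $\pi$ yields $\pi MN$, which is the core mechanism by which stacks are consumed in the translation.

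With these in hand, the first identity $(M\bullet N)\pi \whrto M(N\cdot \pi)$ follows from $M\bullet N = \comb bM(\comb p N)$ and one $\comb b$-reduction. The second, $(\lambda^\bullet x.M)(N\cdot \pi) \whrto M[N/x]\pi$, unfolds $\lambda^\bullet x.M = \comb g\hat x(M)$ and $N\cdot\pi = \comb p N\pi$: the $\comb g$-step swaps $\hat x(M)$ past $\comb p N\pi$, then the $\comb p$-step deposits $N$ and $\pi$ in the right order to produce $\hat x(M)N\pi$, and Lemma~\ref{abstraction} absorbs the head redex to give $M[N/x]\pi$.

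For the third identity, I would first reduce $k_\pi = \kappa\pi$ using the definition of $\kappa$ (reading the apparent typo ``$\comb k(ux)$'' as ``$\comb k(yx)$'', which is what makes the computation work and matches Krivine's abstract machine). Lemma~\ref{abstraction} then gives $k_\pi \whrto \lambda^\bullet y.\comb k(y\pi)$; applying the second identity (already proved) and then the $\comb k$-rule yields $k_\pi(M\cdot\rho) \whrto \comb k(M\pi)\rho \whrto M\pi$. The fourth identity is analogous: unfolding $\cc = \lambda^\bullet x.\hat y(x(k_y\cdot y))$ and applying the second identity to $\cc(M\cdot\pi)$ substitutes $M$ for $x$ to yield $\hat y(M(k_y\cdot y))\pi$, which by Lemma~\ref{abstraction} one more time reduces to $M(k_\pi\cdot\pi)$.

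The only real obstacle is bookkeeping: one must be scrupulous that each rewrite step is in head position so that $\whrto$ applies, and one must track carefully the order in which $\comb g$ and $\comb p$ shuffle their arguments. No creativity is required; the content of the lemma is simply that the compiled combinator terms faithfully implement the transitions of Krivine's abstract machine that were listed axiomatically in the definition of a stack structure.
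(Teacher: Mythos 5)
Your proposal is correct and follows the same route as the paper, whose entire proof is ``Writing out the definitions proves this''; you simply carry out that computation explicitly, including the correct reading of the typo in the definition of $\kappa$ (the ``$\comb k(ux)$'' should indeed be ``$\comb k(yx)$'' for the $k_\pi$ reduction to go through). No gaps.
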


\begin{proof} Writing out the definitions proves this. \end{proof}

We have implemented Krivine's machine in a sort of generic combinatory algebra. Now we show how we can turn order partial combinatory algebras into abstract Krivine structures.

\begin{definition} Let $A$ be an order partial applicative structure. A \xemph{partial interpretation} of combinatory terms, is a partial function $f:\CT \partar A$ that satisfies $f(MN)\converges$ if and only if $f(M)f(N)\converges$ and in that case $f(MN) = f(M)f(N)$; also for all $x,y,z\in A$:
\begin{align*}
&f(\comb b)xy\converges & x(yz)\converges \Longrightarrow & f(\comb b)xyz\converges, f(\comb b)xyz \leq x(yz) \\
&f(\comb c)xy\converges & xzy\converges \Longrightarrow & f(\comb c)xyz\converges, f(\comb c)xyz \leq xzy \\ 
&& & f(\comb k)xy\converges, f(\comb k)xy \leq x \\ 
&f(\comb w)x\converges & xyy\converges \Longrightarrow &  f(\comb w)xy\converges, f(\comb b)xyz \leq x(yy)
\end{align*}
\end{definition}

\begin{lemma} For all terms $M,N\in \CT$ and every partial interpretation $f$, if $M\whrto N$ and $f(N)\converges$, then $f(M)\converges$ and $f(M)\leq f(N)$.\label{monotony} \end{lemma}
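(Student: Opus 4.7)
The plan is to do induction on the derivation that $M \whrto N$, treating $\whrto$ as the least preorder generated by the four rewrite rules for the basic combinators together with the head-context closure rule. Throughout, I will silently use two basic facts: first, that $f(PQ)\converges$ iff $f(P)f(Q)\converges$ (so convergence propagates backwards from applications to their parts), and second, that application in an order partial applicative structure is monotone with downward-closed domain, i.e.\ $x\leq x'$ and $x'y\converges$ imply $xy\converges$ with $xy\leq x'y$.

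For the base case of reflexivity $M\whrto M$ the statement is trivial. For each of the four rewrite axioms, the argument is the same pattern; I illustrate with $\comb b PQR \whrto P(QR)$. Suppose $f(P(QR))\converges$, so by the application clause applied repeatedly we have $f(P)\converges$, $f(Q)\converges$, $f(R)\converges$, $f(Q)f(R)\converges$, and $f(P)(f(Q)f(R))\converges$. The definition of a partial interpretation then gives $f(\comb b)f(P)f(Q)f(R)\converges$ with $f(\comb b)f(P)f(Q)f(R)\leq f(P)(f(Q)f(R)) = f(P(QR))$. Unfolding the application clause one last time, $f(\comb b PQR)\converges$ and $f(\comb b PQR)\leq f(P(QR))$. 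The cases of $\comb c$, $\comb k$, $\comb w$ run analogously, using the corresponding bullet in the definition of partial interpretation (and for $\comb k$ the totality clause $f(\comb k)xy\converges$ is used for convergence).

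The inductive step splits in two. First, head-context closure: assume $M\whrto M'$ and $f(M'N)\converges$. Then $f(M')f(N)\converges$, hence $f(M')\converges$; by the induction hypothesis $f(M)\converges$ and $f(M)\leq f(M')$. Monotonicity of application then yields $f(M)f(N)\converges$ with $f(M)f(N)\leq f(M')f(N)$, i.e.\ $f(MN)\converges$ and $f(MN)\leq f(M'N)$. Second, transitivity: if $M\whrto M'\whrto N$ and $f(N)\converges$, apply the induction hypothesis to $M'\whrto N$ to obtain $f(M')\converges$ and $f(M')\leq f(N)$, and then to $M\whrto M'$ to obtain $f(M)\converges$ and $f(M)\leq f(M')$; transitivity of $\leq$ finishes the step.

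No step is really hard; the only thing to watch is that convergence and the inequality are propagated together in the right direction. The crucial observation is that the rewrite rules are one-sided inequalities in the order partial applicative structure (the redex is below its contractum), so a reduction $M\whrto N$ translates into $f(M)\leq f(N)$ rather than the reverse, and, dually, definedness of $f$ at $N$ suffices to guarantee definedness at $M$ via the downward closure of the application domain.
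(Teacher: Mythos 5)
Your proof is correct and is essentially the argument the paper intends: the paper's own proof of this lemma is the single word ``Induction'', and your induction over the generation of $\whrto$ (the four rewrite axioms, head-context closure, reflexivity and transitivity), using monotonicity of application in the first argument and the backward propagation of convergence through applications, is exactly how that induction goes. The only point left implicit --- equally implicit in the paper --- is that in the $\comb k$-case convergence of $f(\comb k PQ)$ also requires $f(Q)$ to be defined, a harmless edge case of the statement itself rather than a defect of your argument.
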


\begin{proof} Induction. \end{proof}

\begin{definition} Let $C$ be a combinatory complete filter on $A$. Note that a partial interpretation $f:\CT\partar A$ exists, such that $f(\comb b),f(\comb c),f(\comb k),f(\comb w)\in C$. The \emph{abstract Krivine structure induced by $A$, $C$, and $f$} satisfies
\begin{itemize}
\item terms and stack are both $\terms_A = \stacks_A = A$;
\item application on $\terms_A$ is $x\bullet y = f(\comb b) x(f(\comb p) y)$; note that it is total;
\item push is $x\cdot y = f(\comb p)xy$;
\item continuation is $k_x = f(\kappa)x$;
\item $\cc = f(\cc)$
\item we define $(x,y)\succ(x',y')$ as follows: if $x'y'\converges$, then $xy\converges$ and $xy\leq x'y'$.
\item we let $C=C$
\end{itemize}
\end{definition}

\begin{theorem} With this ordering and this filter $(\terms_A,\stacks_A,\cdot,k_*,\cc,\succ)$ is an abstract Krivine structure.\end{theorem}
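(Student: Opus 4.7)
The strategy is to lift the syntactic identities of Krivine's machine from combinatory logic to $A$ using the partial interpretation $f$. The key bridge is lemma \ref{monotony}: whenever a syntactic weak head reduction $M \whrto N$ holds and $f(N)$ is defined, then $f(M)$ is defined and $f(M) \leq f(N)$. Combined with the definition of $\succ$, this turns each identity of lemma \ref{KrivinesMachine} into a corresponding relation in $(\terms_A,\stacks_A)$.

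First I would verify the three reduction axioms of a stack structure. For $xy \star \pi \succ x \star y\cdot \pi$: unfolding the definitions, $(xy)\pi = f(\comb b)(xy)(f(\comb p)\pi) \leq x(y(f(\comb p)\pi)) = x(y\cdot\pi)$ by the interpretation of $\comb b$. For $\cc \star x\cdot \pi \succ x\star k_\pi \cdot \pi$ and $k_\pi \star x\cdot \rho \succ x\star \pi$, I would apply lemma \ref{KrivinesMachine} to obtain the corresponding weak head reductions between combinatory terms and then push them through $f$ using lemma \ref{monotony}; partiality is not an obstacle because whenever the right-hand process $x'\star \pi'$ is defined in $A$, the reductions give a term with a defined interpretation dominating it.

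Next I would check that $\cc \in C$. By definition, $\cc = \lambda^\bullet x.\hat y(x(k_y\cdot y))$ expands to a combinatory term built purely from $\comb b,\comb c,\comb k,\comb w$ and the combinators $\comb g, \comb p, \kappa$, each of which is itself such a combinator term. Since $f(\comb b),f(\comb c),f(\comb k),f(\comb w)\in C$ and $C$ is closed under application, we have $f(\cc)\in C$.

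The final and main task is operational completeness. Given an $n$-ary combinatory function $g:\terms_A^n\to\terms_A$, I want a combinator $t\in C$ realising $t\star u_1\cdot\dotsm\cdot u_n\cdot\pi \succ g(\vec u)\star\pi$. The plan is to build a combinatory term $T \in \CT$ with free variables $x_1,\dotsc,x_n$ whose interpretation under $f$ computes $g(\vec u)$ when $x_i$ is replaced by $u_i$ (using combinatory completeness of $C$ to express $g$), and then take $t = f(\lambda^\bullet x_1.\dotsm.\lambda^\bullet x_n.T)$. Iterated application of lemma \ref{KrivinesMachine} yields
\[ (\lambda^\bullet x_1.\dotsm.\lambda^\bullet x_n.T)(u_1\cdot\dotsm\cdot u_n\cdot \pi) \whrto T[u_1/x_1,\dotsc,u_n/x_n]\,\pi. \]
Interpreting under $f$ and invoking lemma \ref{monotony} gives $t\star u_1\cdot\dotsm\cdot u_n\cdot\pi \succ g(\vec u)\star\pi$, and $t\in C$ since it is built from members of $C$ by application. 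The main obstacle I anticipate is the careful juggling of three layers of application---ordinary application in $\CT$, the push operator $\cdot$, and Krivine's application $\bullet$ in $\terms_A$---together with the partiality of $f$; one must confirm that along every chain of reductions used, each intermediate interpretation is defined so that lemma \ref{monotony} actually applies.
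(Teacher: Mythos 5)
Your proposal is correct and follows essentially the same route as the paper: the machine axioms are obtained by pushing the reductions of lemma \ref{KrivinesMachine} through the partial interpretation via lemma \ref{monotony}, and operational completeness comes from interpreting a $\lambda^\bullet$/$\hat x$-abstraction of a term representing the combinatory function, which lies in $C$ because it is closed and $C$ contains the interpreted basic combinators and is closed under application. Your explicit check that $\cc=f(\cc)\in C$ is a small point the paper leaves implicit, but otherwise the two arguments coincide.
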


\begin{proof} Lemmas \ref{KrivinesMachine} and \ref{monotony} together imply that $\succ$ satisfies
\begin{align*}
t\bullet u\star \pi &\succ t(u\cdot \pi) & 
k_\pi(t\cdot\rho) &\succ t\pi & \cc(t\cdot \pi)&\succ t(k_\pi\cdot \pi)
\end{align*}
For operational completeness, note that for each partial combinatory function $g$ we have a \la-term $\lambda \vec x.g(\vec x)$, which we can interpret in $\CT$ using $\lambda^\bullet$ and the abstraction operators $\hat x$. The result is a term $M$ that has no free variables. This means the partial interpretation $f$ sends it to a member of $C$. Lemma \ref{abstraction} now helps to show that:
\[ f(M)\star t_1\cdot\dotsm\cdot t_n \cdot \pi \succ g(\vec t)\star \pi \]
Therefore the filter is operationally complete.
\end{proof}

We just need to add a pole to get a classical realizability tripos. The next subsection will show that natural choice of pole makes the classical realizability tripos equivalent to a subtripos of $\ds A/C$.

\subsection{Negative translation}
In any topos $\cat E$ subterminal objects induce local operators . e.g. if $u\subseteq\termo$ we have $x\mapsto u\to x$, $x\mapsto x\mapsto u\vee x$, $x\mapsto (x\to u)\to u$. The $x\mapsto u\to x$ operator corresponds to the slice topos $\cat E/u$, the restriction of the topos to this subterminal. Geometrically $x\mapsto u\vee x$ is the complement of $x\mapsto u\to x$: in a topos of sheaves over a topological space $x\mapsto u\vee x$ correspond to the restriction of the topos to the closed complement of $u$, while $x\mapsto u\to x$ corresponds to sheaves over $u$. 

The local operator $x\mapsto (x\to u) \to u$ it the result of combining $x\mapsto u\vee x$ with $\neg\neg$. In this way subterminals generate a family of Boolean subtoposes of $\cat E$. In this subsection we will show that such a Boolean subtopos of a relative realizability topos is a classical realizability topos. However, we work this out at the tripos level.

\newcommand\Cl{\Cat{Cl}}
\begin{definition} Let $A$ be an order partial combinatory algebra, $C$ a combinatory complete filter and let $U$ be any downset. A family of downsets $Y\subseteq A\times X$ is \xemph{$U$-stable}, if $C$ intersects $((Y\arrow U\times X)\arrow U\times X)\arrow Y$. Here $\arrow$ is as in lemma \ref{applace}:
\[ V\arrow W = \set{ (a,x)\in A\times X | \forall b\in A.(b,x)\in V \to ab\converges \land (ab,x)\in W }\]

The fibred locale $\Cl(\ds A/C,U)$ is the restriction of $\ds A/C:\cat DA/C \to\Cat{Set}$ to the $U$-stable objects.
\end{definition}

\begin{remark} Reasoning from the perspective of the relative realizability topos, a family of $U$-stable downsets correspond to $(*\to U)\to U$-stable subobjects of $\neg\neg$-sheaves. 
\end{remark}

Now we just need to pick a suitable pole.

\begin{definition} Consider the stack structure $\stacks_A = ((A,\bullet),A,\cdot,k_*,\cc,\succ)$ constructed from $A$, and the operationally complete filter $C$. For each downset $U$ of $A$, we have $\pole_U = \set{ (t,\pi)\in A\times A| t\pi\converges\land t\pi\in U }$.
\end{definition}

\begin{theorem} The triposes $\Cl(\ds A/C,U)$ and $\CR(\stacks_A,C,\pole_U)$ are equivalent. \label{cr sub rr}\end{theorem}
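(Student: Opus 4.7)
The equivalence will be implemented by orthogonal complementation. Intuitively, $U$-stability in $\Cl(\ds A/C, U)$ is exactly the $\neg_U\neg_U$-stability that orthogonal complements always enjoy in a stack structure. Concretely, I would define
\[ \Phi \colon \CR(\stacks_A,C,\pole_U) \longrightarrow \Cl(\ds A/C,U), \qquad \Phi(f)(x) = f(x)^\pole = \set{t\in A | \forall \pi\in f(x).\ t\pi\in U}. \]
Because $U$ is a downset of $A$ and application is monotone in both arguments, each $\Phi(f)(x)$ is a downset, so $\Phi(f)$ is a well-defined object of $\ds A/C$ over $X$. Writing $\neg Z := Z\arrow(U\times X)$ for negation inside $\ds A/C$, a fibrewise computation gives $\neg\Phi(f)(x) = f(x)^{\pole\pole}$ and $\neg\neg\Phi(f)(x) = f(x)^{\pole\pole\pole}$. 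The triple-complement identity $S^{\pole\pole\pole}=S^\pole$ holds for purely combinatorial reasons: the two required implications are tracked by fixed combinators built from $\comb k,\comb s,\comb c,\comb i$, which lie in $C$ by combinatory completeness, so $\Phi(f)$ is $U$-stable.

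Next I would verify that $\Phi$ preserves the fibred order. Unwinding the definitions, $f\Vdash_X g$ demands a $t\in C$ with $t(\comb p u\pi)\in U$ for all $x$, $u\in f(x)^\pole$, $\pi\in g(x)$, while $\Phi(f)\leq\Phi(g)$ in $\Cl_X$ demands an $s\in C$ with $su\pi\in U$ on the same range. The combinators $s=\lambda u\pi.\,t(\comb p u\pi)$ and $t=\lambda q.\,s(\pi_0 q)(\pi_1 q)$, both in $C$, give the translation in either direction. Reindexing is strictly preserved, since $\Phi(f\circ h)(y)=f(h(y))^\pole=(\Phi(f)\circ h)(y)$.

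For the inverse, set $\Psi(Y)(x) = \set{b\in A | \forall t\in Y(x).\ bt\in U}$, viewed as a subset of $\stacks_A$. Direct computation gives $\Phi\Psi(Y)(x) = Y(x)^{\pole\pole} = \neg\neg Y(x)$, which is equivalent to $Y$ in $\Cl_X$ by the $U$-stability hypothesis, and $\Psi\Phi(f)(x) = f(x)^{\pole\pole}$, which is $\Vdash_X$-equivalent to $f$ via the standard realizers comparing $f$ with its biorthogonal (again built from $\comb s,\pi_0,\pi_1$ and their composites). Since $\Phi$ and $\Psi$ are then mutually inverse equivalences of fibred preorders commuting with reindexing, and since the full tripos structure on each side (finite meets and joins, implications, universal and existential quantifiers, membership predicates) is determined by the underlying fibred preorder up to unique natural isomorphism, the equivalence of preorders lifts to an equivalence of triposes.

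The main obstacle will be the combinatorial bookkeeping. The stack structure uses three different ``applications'' on $A$: the ordinary $A$-application $t\pi$, which appears inside $\pole_U$; the stack-structure term application $t\bullet u=\comb b t(\comb p u)$; and the push $u\cdot\pi=\comb p u\pi$ between terms and stacks. Keeping these straight, and exhibiting explicit combinators in $C$ for each of the required realizers (triple-complement, translation between $\Vdash$ and $\leq$, the $f\cong f^{\pole\pole}$ isomorphism), is the main technical content. Once that accounting is performed, the statement becomes a formal analogue of the standard identification of a classical $\neg\neg$-tripos with a ``double-orthogonal'' tripos over an ordinary realizability tripos.
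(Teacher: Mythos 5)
Your proposal is correct and takes essentially the same route as the paper: your $\Phi$ and $\Psi$ are both the single fibrewise operation $n(V)=V^{\pole}=V\arrow U$, which is exactly the map $n\circ-$ that the paper uses in both directions, with realizers from $C$ translating between $\Vdash_X$ and the order of $\Cl(\ds A/C,U)$. Identifying the composites with double negation and resolving them via $U$-stability on one side and biorthogonal closure on the other is also the paper's concluding step, so the two arguments coincide.
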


\begin{proof} We start with an analysis of the classical realizability tripos. Let $f:X\to X'$, $g:X\to\pow A$ and $h:X'\to \pow A$, then $f$ is a morphism $(X,g) \to (Y,h)$ if $g\Vdash_X h\circ f$, which means there is some $t\in C$ such that for all $x\in X$, $u\in g(x)^\pole$ and $\pi \in h(f(x))$, $t\star u\cdot \pi\in \pole_U$. Writing out the definitions gives us $t(\comb pu\pi)\converges$ and $t(\comb pu\pi)\in U$, where $\comb p$ is the paring combinator. If fact we can say $t\rlzs \forall x\oftype X.(\overline{g(x)}\to U \land \overline{h(f(x))}) \to U$ if $\overline{g(x)}$ and $\overline{h(f(x))}$ are the downward closures of $g(x)$ and $h(f(x))$.

On the relative realizability side, every family of downsets $Y\subseteq A\times X$ has a characteristic function $\chi_Y:X\to DA$.
In terms of these characteristic functions, $f:X\to X'$ represent a morphism $(X,Y)\to(X',Y')$ if there is a $t\in C$ such that $t\rlzs \forall x\oftype X. \chi_Y(x) \to \chi_{Y'}(f(x))$. Moreover, if $Y$ is $U$-stable, there is some $t\in C$ such that $t\rlzs \forall x\oftype X. ((\chi_Y(x)\to U)\to U)\to \chi_Y(x)$.

We will go on using characteristic functions.

Let $n:\pow A\to DA$ satisfy $n(V) = \set{a\in A|\forall b\in V.ab\converges\land ab\in U}$. The set $n(V)$ is downward closed because $U$ is, and because application preserves the ordering. Note that $n(V) = V^\pole$ at the classical side and $V\arrow U$ at the relative side.

If $f:(X,g:X\to \pow A) \to (X',h:X'\to \pow A)$ then $\forall x\oftype X.(\overline{g(x)}\to U \land \overline{h(f(x))}) \to U$ is realized and therefore $\forall x\oftype X.n(g(x)) \to n(h(f(x)))$. This implies $n\circ-$ determines a vertical morphism $\CR(\stacks_A,C,\pole_U) \to \Cl(\ds A/C,U)$. If $f:(X,Y)\to (X,Y')$ then $\forall x\oftype X. \chi_Y(x) \to \chi_{Y'}(f(x))$ holds, and therefore $\forall x\oftype X. (n(\chi_Y(x))\to U) \land n(\chi_{Y'}(f(x))) \to U$ is valid too, showing that $n\circ -$ also determines a vertical morphism $\Cl(\ds A/C,U) \to \CR(\stacks_A,C,\pole_U)$. On both sides, $n\circ n$ is equivalent to $(*\to U)\to U$. Therefore the functors $n\circ -$ are weak inverses of each other, and the triposes are equivalent.
\end{proof}

So relative realizability is a source of models for classical realizability. In the other direction, classical realizability may give more insight to Boolean subtoposes of relative realizability toposes.

\subsection{Lazy partial combinatory algebras}
In the implementation of Krivine's machine in an order partial applicative structure, we never use the fact that the right hand side of the application operator preserves the order. This is why we still get an abstract Krivine structure if we work with the following weaker type of applicative structure.

\begin{definition} A \xemph{lazy partial applicative structure} is an object $A$ with a partial operator $(x,y)\mapsto xy:A^2\partar A$ and a preorder $\leq$, such that if $x\leq x'$ and $x'y\converges$ then $xy\converges$ and $xy\leq x'y$. An arbitrary-ary partial $f:A^k \to A$ is realized if there is an $r\in A$ such that $((rx_1)\dotsm )x_{k-1}\converges$ for all $\vec x\in A^{k-1}$, and $r\vec y\leq f(\vec y)$ for all $\vec y\in\dom f$. A lazy partial applicative structure that has realizers for all partial combinatory functions is a \xemph{lazy partial combinatory algebra}.
\end{definition}

\begin{example} Note that $\CT$ with simple juxtaposition is an example of a lazy partial combinatory algebra. \end{example}

We extend our results on interpreting combinatory logic in partial combinatory algebras. This doesn't require any extra work. 

\begin{definition}
A \xemph{partial interpretation} of combinatory terms, is a partial function $f:\CT \partar A$ that satisfies $f(MN)\converges$ if and only if $f(M)f(N)\converges$ and in that case $f(MN) = f(M)f(N)$ and for all $x,y,z\in A$:
\begin{align}
&f(\comb b)xy\converges & x(yz)\converges \Longrightarrow & f(\comb b)xyz\converges, f(\comb b)xyz \leq x(yz) \\
&f(\comb c)xy\converges & xzy\converges \Longrightarrow & f(\comb c)xyz\converges, f(\comb c)xyz \leq xzy \\ 
&& & f(\comb k)xy\converges, f(\comb k)xy \leq x \\ 
&f(\comb w)x\converges & xyy\converges \Longrightarrow &  f(\comb w)xy\converges, f(\comb b)xyz \leq x(yy)
\end{align}
\end{definition}

\begin{lemma}[generalization of lemma \ref{monotony}] For all terms $M,N\in \CT$ and every partial interpretation $f$, if $M\whrto N$ and $f(N)\converges$, then $f(M)\converges$ and $f(M)\leq f(N)$. \end{lemma}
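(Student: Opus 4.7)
The plan is to prove the statement by induction on the derivation of $M\whrto N$, following exactly the same structure as the original lemma \ref{monotony}. The key observation that makes the generalization work is that weak head reduction only rewrites on the left of an application (via the rule ``if $M\whrto M'$ then $MN\whrto M'N$''), so only monotonicity of application in the \emph{left} argument is ever invoked — and this is precisely what the definition of a lazy partial applicative structure supplies.

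First I would dispatch the four basic reduction axioms. For example, suppose $M = \comb bxyz$ and $N = x(yz)$, and assume $f(N)\converges$. Then $f(x(yz)) = f(x)(f(y)f(z))$ converges, so in particular $f(y)f(z)\converges$; the axiom on $f(\comb b)$ then gives $f(\comb b)f(x)f(y)f(z)\converges$ with value $\leq f(x)(f(y)f(z)) = f(N)$, i.e.\ $f(M)\converges$ and $f(M)\leq f(N)$. The cases for $\comb c$, $\comb k$, $\comb w$ are completely analogous, each using the corresponding clause in the definition of a partial interpretation; the convergence hypotheses there are exactly what is needed to invoke the clauses without appealing to right-monotonicity.

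Next comes the congruence step: suppose $M\whrto M'$ and we already know the result for this reduction, and consider $MN\whrto M'N$. Assume $f(M'N)\converges$, so $f(M')f(N)\converges$; in particular $f(M')\converges$ and $f(N)\converges$. By the inductive hypothesis, $f(M)\converges$ and $f(M)\leq f(M')$. Now apply the defining property of the lazy partial applicative structure: since $f(M)\leq f(M')$ and $f(M')f(N)\converges$, we conclude $f(M)f(N)\converges$ and $f(M)f(N)\leq f(M')f(N)$. Hence $f(MN)\converges$ and $f(MN)\leq f(M'N)$. Reflexivity is immediate, and transitivity follows by chaining the inductive hypothesis: if $M\whrto M'\whrto N$ and $f(N)\converges$, first get $f(M')\converges$ and $f(M')\leq f(N)$, then get $f(M)\converges$ and $f(M)\leq f(M')\leq f(N)$.

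I do not anticipate a genuine obstacle here; the only subtle point, which deserves explicit mention in the write-up, is the asymmetric use of monotonicity in the congruence case. The original proof in lemma \ref{monotony} implicitly used right-monotonicity as well (since an order partial applicative structure has it), but weak head reduction never forces us to go under an application on the right, so the weaker left-monotonicity axiom of a lazy partial applicative structure is sufficient. This is precisely why Krivine's machine can be implemented in the lazy setting, and records the observation promised in the preceding paragraph of the text.
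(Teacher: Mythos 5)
Your proof is correct and is exactly the intended argument: the paper proves the original lemma by the same induction on the generation of $\whrto$ and states the generalization without further proof, relying precisely on the observation you make explicit, namely that weak head reduction only rewrites to the left of an application, so only the left-monotonicity supplied by a lazy partial applicative structure is ever needed.
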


We can still build realizability triposes with combinatory complete versions of lazy partial applicative structures, and embed the classical realizability tripos into the relative realizability topos. Unfortunately, these models are even wilder than the realizability models we saw up to now, and it is hard to say the appropriate notion of combinatory complete filter actually is.



Example \ref{cpals} in subsection \ref{fcpal} shows how to construct a complete fibred partial applicative lattice out of a complete partial applicative lattice. We still get a complete partial applicative lattice out of a lazy partial applicative structure $A$ with the downset construction.

\begin{definition} The \xemph{algebra of downsets} of $A$ is the set of downsets $DA$ of $A$ together with the inclusion ordering and the following partial operator.
\[ UV\converges \iff \forall x\in U,y\in V.xy\converges\quad UV\converges \Longrightarrow UV = \set{z|\exists x\in U,y\in V. z\leq xy} \]
\end{definition}

\begin{lemma} The set $DA$ is a complete partial applicative lattice. \end{lemma}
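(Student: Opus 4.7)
The plan is to verify, one by one, the axioms listed in example \ref{cpals} (specialized to $\cat H = \Set$). Completeness of the underlying lattice is immediate: downsets of $A$ are closed under arbitrary unions and intersections, with top $A$ and bottom $\emptyset$, so $DA$ is a complete lattice. The indexed joins and meets $\sup_f$, $\inf_f$ along any $f\colon X\to Y$ are given by the fibrewise union and intersection, both of which land in $DA$ because unions and intersections of downsets are downsets.

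Next I would check that the partial application is well-defined and order-preserving. If $U,V\in DA$ and $\forall x\in U,y\in V.\,xy\converges$, then $UV = \{z\in A : \exists x\in U,y\in V.\,z\leq xy\}$ is a downset by construction. Convergence is monotone in the correct lazy sense: if $U\subseteq U'$, $V\subseteq V'$ and $U'V'\converges$ then $UV\converges$ and $UV\subseteq U'V'$. Distributivity over joins is the key computation; I would do it first for binary joins and then remark that the same argument works for arbitrary joins. If $UV\converges$ and $U'V\converges$, then $(U\cup U')V$ converges since the quantifier over $U\cup U'$ splits, and unwinding definitions gives $(U\cup U')V = UV\cup U'V$. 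The left-hand variable is symmetric. More generally, if $\{W_i\}_{i\in I}$ is a family of downsets and $W_iV\converges$ for all $i$, then $(\bigcup_i W_i)V\converges$ and equals $\bigcup_i W_iV$; the same identity on the other side verifies the two $\sup$-preservation conditions of example \ref{cpals}.

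Finally I would define the arrow operator
\[ V\arrow W = \{a\in A : \forall b\in V.\,ab\converges \wedge ab\in W\} \]
and verify that it lands in $DA$. This is where laziness of $A$ is used crucially: if $a'\leq a$ and $ab\converges$, then the defining axiom of a lazy partial applicative structure yields $a'b\converges$ and $a'b\leq ab$, and since $W$ is a downset, $ab\in W$ forces $a'b\in W$. The adjunction $U\subseteq V\arrow W \iff UV\converges\wedge UV\subseteq W$ is then a direct unfolding: the left side says that for every $a\in U$ and $b\in V$, $ab\converges$ and $ab\in W$, which is exactly the right side once one observes that $UV\subseteq W$ is equivalent to $ab\in W$ for all $a\in U,b\in V$ (using once more that $W$ is downward closed).

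There is no substantive obstacle: the whole lemma is a routine unpacking of definitions, and the only subtlety is to notice that the one-sided order axiom of a lazy partial applicative structure is exactly what is needed to see that $V\arrow W$ is again a downset. Everywhere else, preservation of joins and the convergence calculus work uniformly for arbitrary index sets, so the resulting structure satisfies all the clauses of example \ref{cpals}.
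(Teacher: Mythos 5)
Your proof is correct, and it verifies the same list of axioms from example \ref{cpals} that the paper's proof does; the difference is one of style plus one genuinely useful addition. Where the paper gets preservation of joins abstractly (application on $DA$ is a direct image followed by a downward closure, both left adjoints, hence join-preserving), you compute it elementwise by splitting the existential quantifier over a union; both are fine, and your version makes the empty-family and arbitrary-index cases visibly uniform. More importantly, you explicitly construct $V\arrow W$ and check that it is a downset, isolating exactly where the one-sided order axiom of a \emph{lazy} structure is used ($a'\leq a$ and $ab\converges$ give $a'b\converges$ and $a'b\leq ab$, and then downward closure of $W$ finishes it). The paper leaves $\arrow$ to the general derivation $y\arrow z=\sup\{x\mid xy\converges\wedge xy\leq z\}$ from example \ref{cpals}, so your direct check is a welcome sanity test of the point the whole subsection is making. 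One thing the paper's proof does that yours does not: its second paragraph observes that every partial combinatory function on $DA$ is induced by one on $A$ and therefore has a realizer whose downset lies in the filter generated by $C$. That is not needed for the statement ``$DA$ is a complete partial applicative lattice'' (combinatory completeness is automatic for such lattices by the proposition in the same subsection), but it is what the subsequent corollary about the filter quotient being a tripos actually uses, so be aware that your argument covers the lemma as stated but not that extra remark.
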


\begin{proof} The ordered set $DA$ is a completions under arbitrary joins, and the preservation of joins by application follows from the definition of to application operator as a direct image map followed by a downward closure. Both operations are left adjoints and therefore preserve joins.

By the way application is defined, each partial combinatory function $f:DA^n\partar DA$ satisfies $f(\vec U) = \set{y\in A| \exists \vec x\in\prod\vec U. y\leq g(\vec x)}$ for some partial applicative function $f:A^n\partar A$. The function $g$ has a realizer in $t\in C$ and the downset of $t$ is a member of $C$. Therefore some member of $\chi$ represents $f$.
\end{proof}

\begin{corollary} Let $\ds A$ be the complete fibred partial applicative lattice generated by $A$ and let $\cat C\subseteq \ds A/\cat C$ be a combinatory complete fibred filter closed under indexed joins. The filter quotient $\ds A/\cat C$ is a tripos. \end{corollary}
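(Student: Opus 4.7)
The plan is to combine Lemma \ref{fcHa} with essentially the same tripos construction as used for ordinary realizability fibrations. First, Lemma \ref{fcHa} already delivers that $\ds A/\cat C$ is a complete fibred Heyting algebra (assuming the fibred filter is closed under indexed meets, which I read the stated ``joins'' condition as really meaning, since this is what Lemma \ref{fcHa} uses). So the only thing left to establish is the generic predicate structure from Definition \ref{tripos}.

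Next, I would mimic the construction that appeared earlier in the excerpt for the ordinary realizability tripos. Since the base category is $\Set$ (or more generally a topos), exponentials exist, so for each object $X$ set $\pi X = DA^X$, the exponential of the object $DA$ of downsets of $A$. Define the membership family
\[ \epsilon_X = \{(a, x, f) \in A \times X \times DA^X \mid a \in f(x)\}, \]
which is manifestly downward closed in the $A$-coordinate and hence lives in $(\ds A)_{X \times \pi X}$. Given any family of downsets $P \in (\ds A)_{X \times Y}$, let $p : Y \to DA^X$ be the transpose of the characteristic map $y \mapsto (x \mapsto \{a : (a,x,y) \in P\})$; then $P = \pre{(\id_X \times p)}(\epsilon_X)$ on the nose in $\ds A$, which gives the required prone arrow $P \to \epsilon_X$ over $\id_X \times p$.

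Finally, I would check that this prone structure descends to the quotient. This is the step I would expect to need the most care, though I think it is ultimately routine. The domain of $\ds A/\cat C$ has the same objects as $\ds A$ and the same underlying fibration to the base category, so the morphism $\id_X \times p$ in the base is unchanged. A prone lift in $\ds A$ gives a morphism in $\ds A/\cat C$ (represented by $(\top, f)$ for the prone $f$ in $\ds A$, which is always available since $\top \in \cat C$ by combinatory completeness). Any morphism in $\ds A/\cat C$ over $\id_X \times p$ can be represented by $(U, g)$ with $U \in \cat C$ and $g : UP' \to \epsilon_X$ in $\ds A$ for some $P'$; since $f$ is prone in $\ds A$, $g$ factors uniquely through $f$ vertically, and this factorization descends to $\ds A/\cat C$.

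The main obstacle, therefore, is really just the bookkeeping of pronuity in the filter quotient, together with ensuring the notion of ``closed under indexed joins/meets'' lines up with what Lemma \ref{fcHa} actually needs. Once these are sorted, the tripos structure is essentially inherited from the tripos structure on $\ds A$ itself, and the corollary follows.
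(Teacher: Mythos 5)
Your proposal is correct and takes essentially the same route as the paper: lemma \ref{fcHa} supplies the complete fibred Heyting algebra structure (your reading of ``indexed joins'' as the indexed-meets closure that lemma actually uses is right), and the generic predicate is built from $\pi X = DA^X$ with the evaluation/membership family, which is exactly the paper's one-line argument. One small slip in your descent check: a prone lift should be represented by $(\comb i, -)$ where $\comb i = \db{x\mapsto x}\in\cat C$ (available by combinatory completeness), not by $(\top, f)$, since application is partial and $\top P$ need not be defined.
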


\begin{proof} See lemma \ref{fcHa} for the first part. The evaluation map $DA^X\times X\to DA$ induces a membership predicate for $X$. \end{proof}

This says it all: we can still get a sound realizability interpretation out of a lazy partial combinatory algebra. We can even go further and drop the requirements that application preserves the ordering in the first variable and that the application operator is single valued.

\begin{remark} The reason we have not written this thesis based on lazy partial applicative structures, or even more general structures is the following. Let $A$ be a lazy partial combinatory algebra and consider any function $f:A\to A$. The set of realizers for $\forall x\oftype.\A(x)\to \A(f(x))$ is $\set{r\in A| \forall x\in A, x'\leq x. rx'\converges \land rx'\leq f(x)}$. With order partial combinatory algebras $rx\leq f(x)$ implies $rx'\converges$ and $rx'\leq f(x)$ for all $x'\leq x$, but with lazy partial combinatory algebras this is no longer valid. For this reason $\A$ may not be a combinatory complete filter for all combinatory complete external filters $\phi$. In particular, $\A$ may not even be closed under application! \end{remark}

\begin{remark}[combinatory bases]
A lazy partial applicative structure $A$ represents all partial applicative functions, if it has the combinators $\comb b$, $\comb c$, $\comb k$ and $\comb w$ of the \xemph{Curry basis}. This follows from fact that we have abstraction operators. It seems that the more popular \xemph{Feferman basis} $\comb k$, $\comb s$ for order partial combinatory algebras is not a basis for lazy partial applicative structures, i.e. we have not found a combination $M$ of $\comb k$ and $\comb s$ that satisfies $Mx\whrto xx$, where $\whrto$ is the weak head reduction ordering. \end{remark}

\chapter{Conclusion}
This is a chapter of concluding remarks, containing a summary of our results and some topics for future research.

\section{Summary}
In chapter \ref{axiom}, we applied the construction of the relative realizability tripos (\cite{MR2479466} p. 106) to arbitrary order partial applicative structures in arbitrary Heyting categories, and generalized it to external filters. Then we showed a handful of properties that single out the resulting realizability fibrations. We analyzed which properties can be expressed in the internal language, and which can't, leading to a `theory of realizability' that is satisfied by any realizability model.

In chapter \ref{realcat} we constructed regular and exact categories from the realizability fibrations, following the tripos-to-topos construction. We considered how the universal property of a realizability fibration translates into universal properties of the resulting categories, and found that realizability categories are reflective inclusions of Heyting categories with order partial applicative structures. We started the analysis of functors between realizability categories, by showing that these categories are pseudoinitial objects of certain 2-categories. This is yet another universal property.

In the last section of chapter \ref{realcat} we analyzed which realizability categories are reg/lex and ex/lex completions, and which are relative completions. This condition puts a restriction on realizability models: external filters must be generated by singletons. This justifies taking an alternative route to the construction of realizability categories.

In chapter \ref{apps}, we looked at the effective topos, and how it generalizes to other base categories than the topos of sets. We started doing a kind of \xemph{synthetic recursive realizability}, i.e. proving properties of effective categories by reasoning from the universal properties rather than from the construction. First we considered general properties, then we spent some time proving the existence of a nontrivial algebraically compact internal subcategory.

The last section of chapter \ref{apps} looks at the connection between relative and classical realizability. Relative realizability models can provide many classical realizability models.

\section{Comparisons}
We compare our work with that of others in the area of realizability and category theory, in particular in the definition and characterization of realizability categories. In general, other people have focused more on generalizing to typed realizability, and less on generalizing to different base categories, so that my work is `orthogonal'. Here we focus on the intersection, and show that the characterizations coincide there.

\subsection{Jonas Frey}
Jonas Frey is a Ph.D. student from Paris, who also works on realizability and category theory. We make a survey of his work here and compare it to our own. Frey does not have any journal publications at the time of writing, so our survey is based on material available on the internet, see:\\ 
\url{www.pps.univ-paris-diderot.fr/~frey/}\\
\url{arxiv.org/abs/1104.2776}\\
\url{lama.univ-savoie.fr/\~{}hyvernat/Realisabilite2012/Files/Frey-slides.pdf}

\begin{definition}[Frey's realizability toposes] 
Frey first introduces the following concepts for general regular $F:\cat C\to\cat D$.
\begin{itemize}
\item An arrow $i:X\to FY$ is \xemph{indecomposable} if for each $j:X\to FZ$ there is a unique $f:Y\to Z$ such that $j=Ff\circ i$. This is the same thing as an initial object in $X/F$ and therefore called an \xemph{initial morphism} elsewhere.
\item As arrow $p:X\to FY$ is \xemph{projective} if for each regular epimorphism $e:W\to Z$ in $\cat C$, each $f:I\to Y$ in $\cat D$, and $g:FI\times_{FY} X \to Z$ in $\cat C$ there is an $h:J\to I$ and a $k:FJ\times_{FY} X \to W$ such that $e\circ k = e\circ (Fh\times_Y \id_X)$.
\[\xymatrix{
FJ\ar@{.>}[d]_{Fh}& \ar@{.>}[l]\ar@{.>}[d]\ar@{}[dl]|<\llcorner \bullet \ar@{.>}[r]^k & W\ar[d]^e \\
FI\ar[d]_{Ff}& \ar[l]\ar[d]\ar@{}[dl]|<\llcorner \bullet \ar[r]_g & Z \\
FY& X\ar[l]^p
}\]

If we apply this definition to the functor $FX = \termo$ in a Cartesian closed regular category, then this results in \xemph{internally projective} objects. See exercise IV.16 in \cite{MR1300636}.

\item Borrowing terminology from the fibred category $\Sub(F-)$ we let a morphism $f:X\to Y$ in $\cat D$ be \xemph{prone}, if it is the pullback of a morphism $Fg:FX'\to FY'$ along a monomorphism $Y\to FY'$. An object $M$ is \xemph{modest} if it is right orthogonal to all prone epimorphisms.

For $\nabla:\cat H\to\Asm(\ds A/\phi)$ this coincides with the objects $M$ for which $!:M\to\termo$ is modest by definition \ref{modest}. For $\nabla:\cat H\to\RT(A,\phi)$ this coincides with \xemph{discrete} in \cite{MR1023803}.
\end{itemize}

By Frey's characterization a realizability topos is:
\begin{itemize}
\item an exact locally Cartesian closed category $\cat X$,
\item whose global sections functor $\Gamma$ has a fully faithful regular right adjoint $\Delta$,
\item and there is a monic $\phi:M \to \Delta A$ such that
\begin{itemize}
\item $\phi$ is \xemph{indecomposable},
\item $\phi$ is \xemph{projective},
\item $M$ is \xemph{modest},
\item the objects with a prone morphism to $M$ are closed under finite limits, and they cover all other objects. Here, prone in the sense defined right above is equivalent to prone relative to $\Gamma$.
\end{itemize}
\end{itemize}
\end{definition}

\begin{lemma} Let $A$ be a partial combinatory algebra over the category of sets. In that case the triple $(\RT(A,A),\nabla,\A)$ satisfies Frey's definition, with $\RT(A,A)$ for $\cat X$, $\nabla$ for $\Delta$ and $\A\to\nabla A$ for $\phi$. \end{lemma}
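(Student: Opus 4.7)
The plan is to match each clause of Frey's definition against the structural properties of $\RT(A,A)$ that were established earlier in the thesis. That $\RT(A,A)$ is an exact locally Cartesian closed category is standard topos theory once we know it is a topos, which follows from Pitts' tripos-to-topos theorem applied to $\ds A/A$. The adjunction $\supp \dashv \nabla$ is lemma \ref{left adjoint} composed with the embedding into the exact completion; fullness and faithfulness of $\nabla$ plus regularity are immediate because the external filter $A$ consists of inhabited downsets and because $\ds A/A$ is separated. So Frey's global sections functor $\Gamma$ is $\supp$ and $\Delta$ is $\nabla$.

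Next I would treat the three properties of $\phi \colon \A \to \nabla A$. Monicity is clear since $\A \hookrightarrow \nabla A$ is prone (hence a vertical monomorphism in $\cat D A/A$, and monomorphisms survive the assembly/ex-reg localisations). For \emph{indecomposability} I would argue directly: a morphism $j\colon \A \to \nabla Z$ in $\RT(A,A)$ is a tracked functional relation, and because the source has a prone map to $\nabla A$ while the target is constant, the tracker forces $j$ to come from a unique set-theoretic function $f\colon A\to Z$ with $j = \nabla f \circ \phi$. The uniqueness uses the fact that $\supp \A = A$ and that $\supp$ is faithful on morphisms whose source is an assembly. For \emph{projectivity} of $\phi$, I would unfold Frey's definition: given $f\colon I \to A$ in $\Set$, the pullback $\nabla I \times_{\nabla A} \A$ is (up to isomorphism) the partitioned assembly $(I, \{f(i)\}_{i\in I})$, and a regular epi $e\colon W \to Z$ out of it with a comparison map $g$ can be split after restricting along a suitable $h\colon J \to I$ thanks to the tracking principle (theorem \ref{tracking}): the realizer of $g$ provides, pointwise in $i$, an element of $W$ above $g(i)$, and the set-theoretic axiom of choice in $\Set$ assembles these into the required $J$ and $k$. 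For \emph{modesty} of $\A$, I would invoke the results of the previous section: since the order on $A$ is discrete, prone regular epimorphisms in $\Asm(\ds A/A)$ (hence, by lemma \ref{asm is sep}, in $\RT(A,A)$) are exactly the arrows whose induced map on underlying objects is a bijection, so $\A$, being a partitioned assembly with a single realizer per element, is right orthogonal to all of them.

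Finally I would address the condition on objects with a prone map to $\A$. These are exactly the \emph{partitioned assemblies} in the sense of the thesis. That they are closed under finite limits was shown earlier: equalizers are prone because the subobject fibration of $\RT(A,A)$ lifts to prones, and binary products of partitioned assemblies are obtained by pairing realizers via the primitive recursive pairing function built from $\comb k$ and $\comb s$, which exists because $A$ is a PCA in $\Set$ (so the required pairing combinator is a genuine element). Coverage of all objects in $\RT(A,A) = \Asm(\ds A/A)_\exreg$ by partitioned assemblies follows in two steps: every object in the ex/reg completion is a quotient of an assembly, and every assembly is covered by a partitioned assembly via the Shanin rule (proposition \ref{Shanin}).

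The main obstacle I anticipate is the indecomposability and projectivity clauses, since Frey formulates them relative to the adjunction and not in the intrinsic language of the realizability fibration; the translation requires being careful that prone arrows in Frey's sense (pullbacks of $\Gamma$-images along monomorphisms) coincide with prone arrows with respect to $\supp$. I would verify this coincidence as a preliminary lemma: a monomorphism $Y \hookrightarrow \nabla Y'$ corresponds to a subset of $Y'$, and pulling an arrow $\nabla f$ back along it precisely describes the prone arrows of $\supp$. With this identification in place, the projectivity clause becomes exactly the content of the tracking principle combined with choice in $\Set$, and the argument is complete.
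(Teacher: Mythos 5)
Two points in your plan do not hold up as written. First, you simply declare that ``Frey's global sections functor $\Gamma$ is $\supp$''. Frey's $\Gamma$ is by definition the global sections functor $\RT(A,A)(\termo,-)$, and the fact that $\nabla$ is right adjoint to \emph{that} functor is precisely what the paper spends the first half of its proof establishing: it shows $\Gamma$ is regular (using that $\termo$ is projective in $\Set$, hence in $\RT(A,A)$), computes $\Gamma\A\simeq A$ (every element of $A$ realizes itself as a global section, because the filter is all of $A$) and $\Gamma\nabla X\simeq X$, and then uses the pseudoinitiality of the regular model to conclude $\Gamma\simeq\supp$, so that $\nabla$ is indeed right adjoint to the global sections functor. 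This identification is not automatic -- for a relative realizability topos $\RT(B,A)$ with $A$ a proper filter of $B$ the global sections functor and $\supp$ differ and Frey's condition fails -- so the step has to be argued, even though in the present case the argument is short.

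Second, your justification of modesty of $\A$ rests on a false claim: prone regular epimorphisms in $\Asm(\ds A/A)$ are \emph{not} the arrows that are bijective on underlying sets. For instance $\nabla 2\to\termo$ is prone (each element of $\nabla 2$, like the point of $\termo$, has all of $A$ as realizers) and a regular epimorphism, yet it is not injective on supports; if your characterization were correct, every object would be right orthogonal to all prone regular epimorphisms, i.e.\ modest, which is absurd. The correct argument is the uniformity one: if $e:X\to Y$ is a prone regular epimorphism, any two elements of $X$ over the same $y\in Y$ have the same inhabited set of realizers, a map $h:X\to\A$ is tracked, and since each element of $\A$ has a singleton set of realizers the tracker forces $h$ to be constant on the fibres of $e$, so $h$ descends uniquely along $e$. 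With these two repairs the rest of your plan is sound and matches the paper in substance; note only that for projectivity the paper takes a shorter route than your tracking-plus-choice argument, observing via the characterization of projectives (filter generated by singletons, projective supports and terminal object in $\Set$) that $\A$ is an honest projective object, which makes Frey's weaker arrow-level condition trivial.
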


\begin{proof} The category $\RT(A,A)$ is a topos, and therefore exact and locally Cartesian closed. Because $\termo$ is projective in $\Set$, it is projective in $\RT(A,A)$, so that the global sections functor is regular. It also satisfies $\Gamma\A\simeq A$ because each element of $A$ realizes itself as global section of $\A$, and $\Gamma\nabla X \simeq X$ because there are uniform realizers for all global sections of all sheaves. Since $\Gamma$ and $\supp$ induce the same regular model -- $(\id_\Set,A)$  -- the functors are isomorphic: $\Gamma\simeq \supp$. Hence $\nabla$ is indeed right adjoint to $\Gamma$.

The unit of the adjunction gives an inclusion $\A\to\nabla A$ that is an initial object in $\A/\nabla$, $A$ is projective and $\A$ is a partitioned assembly. So it is an actual projective object, and therefore trivially satisfies Frey's weakening of this condition. And of course $\A$ is a modest object. The last condition of Frey's definition is precisely our weak genericity.
\end{proof}

\begin{lemma} Frey's realizability toposes are realizability toposes. \end{lemma}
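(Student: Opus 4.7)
The plan is to apply Theorem \ref{charexact}, our characterization of exact realizability categories, to the triple $(\cat X, M, \cat H)$, where $\cat H$ is the essential image of $\Delta$ and $M$ is Frey's modest object together with the monic $\phi\colon M\to\Delta A$. The ambient framework data matches immediately: $\cat X$ is an exact Heyting category because it is a topos, $\Delta$ is a regular fully faithful right adjoint with finite-limit-preserving reflector $\Gamma$, and indecomposability of $\phi$ forces $\Gamma\phi$ to be invertible with $\Gamma M\cong A$. Under this identification the unit $\eta_M\colon M\to\Delta\Gamma M$ becomes $\phi$ itself, hence is monic, which is condition 1 of the definition of an exact realizability category.

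First I would equip $M$ with the structure of an order partial combinatory algebra. The object $A\in\cat H$ inherits an applicative structure by chasing $\phi$ through the evident polynomial diagrams on $\Delta A$; Frey's projectivity axiom applied to $\phi$ then lifts realizers of partial combinatory arrows from $A$ into $M$, turning $M$ into a combinatory complete vertical filter in the sense of subsection \ref{wgf}. Weak genericity is supplied word-for-word by Frey's final bullet: the objects admitting a prone morphism to $M$ are closed under finite limits and cover every object of $\cat X$, which is condition 2.

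For the tracking principle (condition 3), given a regular epic \prodo morphism $e\colon X\to Y$ followed by a \prodo or prone $f\colon Y\to Z$, I would pull back along a prone cover of $Z$ by $M$ to reduce to a lifting problem of the form governed by Frey's projectivity axiom for $\phi\colon M\to\Delta A$. Projectivity then supplies the required lift along $e$, and the shape of the lift---a restriction of the application map when $f$ is \prodo, an inclusion when $f$ is prone---is automatic from the $M$-based construction, because application on $M$ is transported from the internal application on $A$ along $\phi$.

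The hard part will be the uniform translation of Frey's single projectivity axiom into our two distinct tracking rules (Church's rule and the uniformity rule) and the identification of the appropriate external filter on $A$. The latter is forced by the characterization: one sets $\phi^{\star}\subseteq\ext A$ to be the set of $U$ for which the reindexed subobject of $M$ along $\Delta U\hookrightarrow\Delta A$ has supine terminal projection in $\cat X$, equivalently the $U$ such that the associated subobject of $\A$ is inhabited. Verifying that projectivity yields exactly the right splittings in the two cases of the tracking principle is the main technical obstacle; once it is done, Theorem \ref{charexact} delivers an equivalence $\cat X\simeq\Asm(\ds A/\phi^{\star})_{\exreg}=\RT(A,\phi^{\star})$, proving the lemma.
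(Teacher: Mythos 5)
There is a genuine gap, and it sits exactly where the paper says the difficulty lies: \emph{finding} the partial combinatory algebra. In Frey's definition, $A$ is just an object of the base and $\phi\colon M\to\Delta A$ is just a monomorphism satisfying indecomposability, projectivity, modesty and the covering condition; no application operator is given anywhere. Your claim that ``$A$ inherits an applicative structure by chasing $\phi$ through the evident polynomial diagrams on $\Delta A$'' assumes the very structure that has to be produced --- there are no polynomial diagrams on $\Delta A$ until an application operator exists. The paper's proof constructs it: using local Cartesian closure one forms the object $M^* = \coprod_{U\in\pow\Gamma M} M^{M\cap\Delta U}$ of partial endomorphisms of $M$ with prone domain; since modest objects are closed under these exponentials, $M^*$ is again modest; weak genericity then provides a regular epimorphism onto $M^*$ from an object with a prone map to $M$, and discreteness of $M^*$ forces $M^*$ to be a quotient of a prone subobject of $M$. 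The application operator $\alpha\colon M\times M\partar M$ is read off from that span, $M$ thereby \emph{represents all its partial endomorphisms with prone domain}, and local Cartesian closure yields combinatory completeness of $(\Gamma M,\Gamma\alpha)$.

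This omission propagates into your treatment of the tracking principle. The paper's justification that Church's rule and the uniformity rule hold is precisely the conjunction ``$M$ is discrete, projective, and represents all of its endomorphisms'' --- the last clause being the output of the construction above. You correctly identify the translation of projectivity into the two tracking rules as the main technical obstacle, but without the representation property you have no application map to restrict to in the \prodo case, so the argument cannot close. The remaining ingredients of your plan (reading weak genericity off Frey's final bullet, identifying $\eta_M$ with $\phi$ via indecomposability, and defining the external filter by supineness of the reindexed subobjects) do match the paper's route and are fine.
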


\begin{proof} The difficult part is finding a weak partial combinatory algebra. Because $\cat X$ is locally Cartesian closed, we can construct a set $M^*$ of partial morphisms $M\partar M$ with prone domain, i.e. subobjects of $M$ of the form $M\cap\Delta U$, for $U$ in the powerset $\pow \Gamma M$.
\[ M^* = \coprod_{U\in \pow \Gamma M} M^{M\cap \Delta U} \]
This is some exponential in the fibre over $\nabla\pow \Gamma M$, and since right orthogonal objects are closed under these exponentials, $M^*$ is another discrete object. Clearly, $M$ is a weakly generic object for $\Gamma$, so there is an object $Y$ with a regular epimorphism $e:Y\to M^*$ and a prone $p:Y\to M$. This $p$ factors as a prone regular epimorphism follows by a prone monomorphism $m$, and because $M^*$ is \xemph{discrete}, this means $M^*$ is a quotient of a prone subobject of $M$. We can now construct a partial application operator $\alpha:M\times M \partar M$ from the regular-epi-prone-mono span. Now $M$ is a (weak) partial applicative structure \xemph{that represents all partial endomorphisms with prone domain}. Because $\cat X$ is locally Cartesian closed, $M$ is combinatory complete. We may therefore conclude that $(\Gamma M,\Gamma\alpha)$ is an order partial combinatory algebra.

Now we just strike off characteristic properties from the definition before theorem \ref{charexact}.
\begin{itemize}
\item The right adjoint $\Delta$ is regular and fully faithful, and $\Gamma$ preserves finite limits because it is the global section functor. The unit $M\to \Delta\Gamma M$ is monic, and $M$ is a combinatory complete filter.
\item We already mentioned that $M$ is weakly generic.
\item That $M$ is discrete, projective and that it represents all of its endomorphisms implies that Church's rule and the uniformity rule hold.
\end{itemize}
\end{proof}

\begin{theorem}[Frey] Frey's definition captures all realizability toposes for all partial combinatory algebras in $\Set$. \end{theorem}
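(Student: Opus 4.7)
The plan is to assemble the two preceding lemmas into a cleanly stated equivalence between Frey's axioms and the notion of realizability topos over $\Set$. Frey's theorem amounts to two inclusions of classes of categories: every $\RT(A,A)$ with $A\in\Set$ satisfies Frey's axioms, and every category satisfying Frey's axioms is of the form $\RT(B,B)$ for some PCA $B$. The first inclusion is exactly the first lemma above, and the second inclusion is the second lemma. So the proof reduces to observing this, plus checking one thing to make sure the theorem does not under- or over-count: that the PCA $B = (\Gamma M, \Gamma\alpha)$ produced from Frey's data when starting with $\RT(A,A)$ is in fact (isomorphic to) $A$, so that the round-trip lands back at the category we started with.

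The round-trip check should be straightforward. In the first lemma we took $M = \A$ and $\Delta = \nabla$; then $\Gamma M \simeq \supp \A \simeq A$ via the canonical isomorphism that was already used to identify $\Gamma \simeq \supp$. It remains to see that the application operator produced in the proof of the second lemma from the prone/regular-epi decomposition of a weakly generic span over $M^\ast$ recovers the original partial application on $A$ up to the equivalence of PCAs needed to give the same realizability topos. Concretely, the set $M^\ast$ classifies partial endomorphisms of $\A$ with prone domain, and in $\RT(A,A)$ these are precisely (coded by) the elements of $A$ via $a \mapsto (b \mapsto ab)$; tracing through the construction of $\alpha$ as the composite of the inverse of a regular epi with a prone subobject yields (up to a computable re-coding inside $A$) the given partial application $A\times A \partar A$. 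Since realizability toposes depend only on the PCA up to an applicative isomorphism that is representable by an element of $A$, this suffices.

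The actual \emph{proof} then reads: let $\cat X$ be a category. If $\cat X \simeq \RT(A,A)$ for some PCA $A$ in $\Set$, then by the first lemma $(\cat X, \nabla, \A\hookrightarrow\nabla A)$ satisfies Frey's axioms. Conversely, if $(\cat X, \Delta, \phi:M\to\Delta A)$ satisfies Frey's axioms, then by the second lemma $(\Gamma M, \Gamma\alpha)$ is an order PCA and $\cat X \simeq \RT(\Gamma M, \Gamma\alpha)$; in particular $\cat X$ is a realizability topos on a PCA in $\Set$. Combining the two directions and the round-trip check above gives the theorem.

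The main obstacle, if any, is the round-trip verification: one must make sure that different choices in the proof of the second lemma (the choice of weakly generic cover $Y \twoheadrightarrow M^\ast$, the prone-epi/mono factorization, and the lift to a partial application operator) do not leave us with an order PCA that is merely \emph{Morita-equivalent} to $A$ rather than literally isomorphic. This is harmless at the level of the theorem as stated, because what is being characterized is the topos $\RT(A,A)$ rather than the PCA $A$ itself, and applicative morphisms that are adjoint equivalences induce equivalences of realizability toposes by the material of the previous sections; still, one should spell out the relevant computation once to confirm that the two constructions are honest inverses up to such equivalence.
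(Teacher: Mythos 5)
Your proposal is correct and matches the paper's approach: the paper gives no separate argument for this theorem, treating it as the immediate combination of the two preceding lemmas, exactly as you do. Your additional round-trip verification is harmless but not needed, since (as you yourself note) the statement characterizes the topos $\RT(A,A)$ rather than the PCA $A$, so the two inclusions supplied by the lemmas already suffice.
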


Frey's ambition is to characterize all categories that come from the tripos-to-topos construction, in the way that Giraud characterized Grothendieck toposes. He starts with a diverse category of indexed posets and works his way down to indexed posets are constructed from partial combinatory algebras.

We have been looking at characterizing realizability toposes too, but our focus was to find out what realizability can do. Since the partial combinatory algebra in a realizability model is given in advance, we have no need to derive it from other structure in the category. On the other hand, I made my construction work in arbitrary Heyting categories, where power objects and the axiom of choice are unavailable, in order to remove properties from realizability categories that aren't universal.

\subsection{Pieter Hofstra}
Pieter Hofstra, one of my predecessors as Ph.D. candidate under Ieke Moerdijk and Jaap van Oosten, worked on \xemph{indexed preorders}, \xemph{ordered partial combinatory algebras} and \xemph{relative completions}. His latest work in the characterization of realizability toposes seems to be \cite{MR2265872}, where he generalizes order partial combinatory algebras by abstracting the set of representable functions.

\begin{definition} A \xemph{(saturated) basic combinatory object} is a partially ordered set $(\Sigma,\leq)$, together with a monoid $\mathcal F_\Sigma$ of partial monotone functions $f:\Sigma\partar \Sigma$ whose domains are downsets, which is upward closed for the following ordering on partial monotone functions $\Sigma\partar \Sigma$: $f\leq g$ if for all $x\in \dom g$, $x\in \dom f$ and $f(x)\leq g(x)$. \end{definition}

In \cite{MR2265872}, Hofstra first defines a non-saturated version, then defines when these structures are equivalent, i.e. when they induce equivalent complete fibred Heyting algebras, and then shows that each basic combinatory object is equivalent to a saturated one.

\comment{
\begin{remark} As long as we are working with sets, or at least with objects of a topos, we can extend a partial ordered set $\Sigma$ with a top-element $\infty$ resulting is a partial order $\Sigma_\infty$. The set of partial monotone functions with downwards closed domain $\Sigma\partar \Sigma$ is isomorphic to the set of $\infty$-preserving monotone functions $\Sigma_\infty\to\Sigma_\infty$. The ordering on partial functions corresponds to the ordering of total functions. 
\end{remark}
}

\begin{example} If $A$ is a partial combinatory algebra and $\phi$ and external filter, then the set $\mathcal F_A$ of partial monotone functions $f:A\partar A$ such that $\db f = \set{a\in A|\forall x\in\dom f. ax\converges\land ax\leq f(x)}\in\phi$ makes the structure a basic combinatory object. \label{opca is bco}
\end{example}

We construct a fibred preorder out of a basic combinatory object by the following construction.

\begin{definition} Let $\cat P(\Sigma,\leq,\mathcal F)$ be the category whose objects are pairs $(X,f:X\to\Sigma)$ and where a morphism $(X,f) \to (Y,g)$ is a function $h:X\to Y$, such that for some $k\in \cat F$, $k(f(x)) \leq g(h(x))$ for all $x\in X$. The functor $P(\Sigma,\leq,\mathcal F): \cat P(\Sigma,\leq,\mathcal F) \to\Set$ is simply the forgetful functor, that sends $(X,f)$ to $X$ and is the identity on morphisms. \end{definition}

This faithful fibration lacks the indexed coproducts and the finite products of a fibred locale, which are problems that Hofstra addresses as follows.

\begin{definition} A \xemph{top element} in a basic combinatory object $(\Sigma,\leq,\mathcal F)$ is an element $\top\in \Sigma$ such that there is an $f\in\mathcal F$ such that for all $x\in\Sigma$, $x\in\dom f$ and $f(x)\leq \top$. A basic combinatory object \xemph{has binary products} if there is a function $p:\Sigma\times \Sigma \to \Sigma$ such that there are $d,p_0,p_1\in \mathcal F$ such that for all $x,y\in \Sigma$, $x\in \dom d$ and, $d(x)\leq p(x,x)$, and $p(x,y)\in \dom p_0\cap \dom p_1$, $p_0(p(x,y))\leq x$ and $p_1(p(x,y))\leq y$. If a basic combinatory object has a top element and binary products, then it \xemph{has finite limits} \end{definition}

\begin{lemma} $P(\Sigma,\leq,\mathcal F)$ has finite limits if and only if $(\Sigma,\leq,\mathcal F)$ does. \end{lemma}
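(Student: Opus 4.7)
The plan is to identify the finite limit structure of the fibration $P(\Sigma,\leq,\mathcal F)$ with the corresponding pointwise structure on $\Sigma$. Since $P(\Sigma,\leq,\mathcal F)$ is a faithful fibration over $\Set$, its having finite limits amounts to each fibre being a meet semilattice with top, with reindexing preserving that structure (as in lemma \ref{lifting limits}). Reindexing along $h:X\to Y$ is precomposition $(Y,g)\mapsto (X,g\circ h)$, so any fibrewise structure defined pointwise is automatically preserved.

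First I would handle the ``if'' direction. Given a top $\top\in\Sigma$ with witness $f\in\mathcal F$ as in the definition, the constant map $\top_X:X\to\Sigma,\ x\mapsto \top$ is a terminal object of the fibre over $X$: for any $(X,g)$ the element $f$ realises $f(g(x))\leq\top=\top_X(x)$. Given binary products on $\Sigma$ with witness $p:\Sigma\times\Sigma\to\Sigma$ and $d,p_0,p_1\in\mathcal F$, define $(X,g)\land(X,h)$ to be $(X,\,x\mapsto p(g(x),h(x)))$; the realisers $p_0,p_1$ show this is below $(X,g)$ and $(X,h)$, while for any $(X,k)\leq(X,g),(X,h)$ with realisers $a,b\in\mathcal F$, combining $a,b$ with $d$ via the monoid structure of $\mathcal F$ yields a realiser exhibiting $(X,k)\leq(X,g)\land(X,h)$. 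The same formulas give a terminal and binary products in every fibre, and the fact that reindexing is defined by precomposition makes preservation trivial.

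For the ``only if'' direction I would inspect the fibre over the one-point set $\mathbf 1$. Objects there are precisely elements of $\Sigma$ (as maps $\mathbf 1\to\Sigma$), and $(\mathbf 1,a)\leq(\mathbf 1,b)$ holds iff there is $k\in\mathcal F$ with $k(a)\leq b$. If this fibre has a terminal $\top$, then the realiser of $(\mathbf 1,x)\leq(\mathbf 1,\top)$ for each $x$ is an element of $\mathcal F$ witnessing that $\top$ is a top for $(\Sigma,\leq,\mathcal F)$; since reindexing preserves terminals, the constant map $X\to\{\top\}$ gives the global terminal and this is coherent. Similarly a binary meet $(\mathbf 1,a)\land(\mathbf 1,b)$ in the fibre over $\mathbf 1$ gives a candidate element $a\sqcap b\in\Sigma$ together with $\mathcal F$-realisers for the projections; to get a genuine pairing function $p:\Sigma\times\Sigma\to\Sigma$ with \emph{uniform} witnesses $p_0,p_1,d\in\mathcal F$, I would instead take binary meets in the fibre over the set $\Sigma\times\Sigma$ of the two projection maps $\pi_0,\pi_1:\Sigma\times\Sigma\to\Sigma$, reading off $p$ as the underlying function of the meet and reading off $p_0,p_1$ as realisers of the comparison with $\pi_0,\pi_1$. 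The diagonal $\Sigma\to\Sigma\times\Sigma$ then reindexes to give $d$.

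The main obstacle I expect is this second half: not just producing the data pointwise, but ensuring the witnessing monotone maps live in the single monoid $\mathcal F$ \emph{uniformly}, independently of the element of $\Sigma$ being tested. This is exactly why I plan to take meets and terminals in fibres over $\Sigma$ and $\Sigma\times\Sigma$ rather than over $\mathbf 1$: the realisers of the universal comparisons in those fibres are by definition single elements of $\mathcal F$ that work simultaneously at every argument, which is precisely the uniformity demanded by the definitions of ``top element'' and ``binary products'' in a basic combinatory object.
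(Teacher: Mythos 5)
Your overall strategy is the right one, and it is more than the paper itself supplies: the paper's ``proof'' is only a pointer to Hofstra's article. Your ``only if'' direction is the correct argument — the crux is extracting \emph{uniform} witnesses in $\mathcal F$, and you get them by testing the terminal object against the generic object $\mathrm{id}_\Sigma$ in the fibre over $\Sigma$, and by taking the meet of the two projections in the fibre over $\Sigma\times\Sigma$ and reindexing along the diagonal to obtain $d$. That is exactly how the uniformity demanded by the definition is produced.

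There is, however, a genuine gap in the ``if'' direction, at the universality of the fibrewise binary meet. You assert that from realisers $a,b\in\mathcal F$ of $(X,k)\leq(X,g)$ and $(X,k)\leq(X,h)$ one obtains, by ``combining $a,b$ with $d$ via the monoid structure of $\mathcal F$'', a realiser $c$ with $c(k(x))\leq p(g(x),h(x))$. But the monoid structure gives only closure under composition, upward closure only lets you pass to \emph{larger} partial functions, $d$ only witnesses $d(x)\leq p(x,x)$ (it is the pairing of the identity with itself), and $p$ is not assumed monotone. Starting from $z=k(x)$ you can form $d(z)\leq p(z,z)$ and then $a(p_0(d(z)))\leq g(x)$ and $b(p_1(d(z)))\leq h(x)$, but nothing in the given data re-assembles these two values into something below $p(g(x),h(x))$. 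What is needed — and what is present in Hofstra's definition but has been dropped from the paper's abbreviated transcription — is closure of $\mathcal F$ under pairing: for all $f,g\in\mathcal F$ an element $\langle f,g\rangle\in\mathcal F$ with $\langle f,g\rangle(x)\leq p(f(x),g(x))$ wherever both sides are defined. With that clause the universality is immediate (take $c=\langle a,b\rangle$), and your ``only if'' argument in fact yields this stronger datum too: in the fibre over $\dom f\cap\dom g$ the object $\mathrm{id}$ lies below both $f$ and $g$ (each realised by itself), hence below their meet, which is the reindexing of the meet of the projections along $z\mapsto(f(z),g(z))$, namely $z\mapsto p(f(z),g(z))$. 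So you should prove the lemma for the corrected definition; as literally stated in the paper, the ``if'' direction does not go through.
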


\begin{proof} Hofstra sketches a proof in \cite{MR2265872} under `Indexed finite limits'. \end{proof}

\begin{definition} The \xemph{completion} of $(\Sigma,\leq,\mathcal F)$ is $(D\Sigma, \subseteq, \mathcal F')$ where $D\Sigma$ is the set of downsets, and $f\in \mathcal F^+$ if for some $g\in \mathcal F$, $x\in\dom g$ and $g(x)\in f(\xi)$ for all $x\in \xi \in \dom f$. \end{definition}

\begin{lemma} If $(\Sigma,\leq,\mathcal F)$ has finite limits, then $P(D\Sigma, \subseteq, \mathcal F')$ is a fibred locale. \end{lemma}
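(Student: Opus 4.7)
The plan is to verify the three defining properties of a fibred locale for $P(D\Sigma, \subseteq, \mathcal F')$: faithfulness, the existence of finite products in fibres together with the Frobenius condition, and the existence of indexed coproducts satisfying the Beck--Chevalley condition. Throughout I will use the letters $f, g$ for objects (functions $X \to D\Sigma$) and $h$ for the underlying functions $X \to Y$ in the base.

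\textbf{Faithfulness.} By construction of $P$, a morphism $(X, f) \to (Y, g)$ is a function $h : X \to Y$ together with a \emph{witness} $k \in \mathcal F'$ such that $k(f(x)) \subseteq g(h(x))$, but the morphism is determined by $h$ alone. Hence $P(D\Sigma, \subseteq, \mathcal F')$ is a faithful functor, and its fibres are preorders with $f \leq_X g$ iff some $k \in \mathcal F'$ realises the pointwise inclusion.

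\textbf{Finite products in fibres and reindexing.} First I would unpack the lemma that $(\Sigma, \leq, \mathcal F)$ having finite limits lifts to $(D\Sigma, \subseteq, \mathcal F')$. A top element $\top \in \Sigma$ with realiser $t \in \mathcal F$ gives the downset $\mathord\downarrow\top \in D\Sigma$, realised uniformly in $\mathcal F'$ by $\xi \mapsto \mathord\downarrow t(x)$ for any $x$; this is the top in every fibre, and reindexing preserves it. For binary products, the product function $p : \Sigma \times \Sigma \to \Sigma$ with projection realisers $p_0, p_1$ and diagonal realiser $d$ induces a function $D\Sigma \times D\Sigma \to D\Sigma$ sending $(\xi, \eta)$ to the downset of $\{ p(x,y) \mid x \in \xi, y \in \eta\}$, and the lifted $p_0, p_1, d$ live in $\mathcal F'$ and witness that this is a product in each fibre. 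Reindexing along $h : X \to Y$ is precomposition and therefore preserves these products on the nose.

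\textbf{Indexed coproducts.} For a function $h : X \to Y$ and $(X, f)$ in the fibre over $X$, define the coindexing
\[
h_!(X, f) = \bigl(Y,\; y \mapsto \bigcup_{h(x) = y} f(x)\bigr).
\]
A union of downsets is a downset, so this lands in $D\Sigma$. The identity function $\mathrm{id}_\Sigma$ lies in $\mathcal F'$ via the identity realiser in $\mathcal F$, so the canonical injection $f(x) \subseteq \bigcup_{h(x') = h(x)} f(x')$ is realised uniformly and gives the unit $(X, f) \to h_!(X, f)$. I would then check the adjunction $h_! \dashv h^*$ at the level of $\mathcal F'$: a morphism $(X, f) \to (Y, g \circ h)$ is a realiser $k \in \mathcal F'$ with $k(f(x)) \subseteq g(h(x))$, which by definition of the coproduct is the same as a realiser for $k'((h_! f)(y)) \subseteq g(y)$, so the adjunction is tight. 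Beck--Chevalley for a pullback in $\Set$ follows because the fibres $\{x \in X : h(x) = y\}$ pull back correctly along any $k : Y' \to Y$, and the identity realiser witnesses the canonical isomorphism.

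\textbf{Frobenius.} Finally, for $h : X \to Y$, $(X, f)$ and $(Y, g)$ we must show that the canonical map
\[
h_!\bigl(f \wedge (g \circ h)\bigr) \;\simeq\; h_!(f) \wedge g
\]
is an isomorphism in the fibre over $Y$. Pointwise at $y \in Y$ both sides reduce to the downset generated by $\{p(a, b) : a \in f(x),\ b \in g(y),\ h(x) = y\}$, and the Frobenius realisers are obtained by composing the product realisers $p_0, p_1, d$ (already available in $\mathcal F'$) with the identity realiser of the coproduct unit. I expect the main obstacle to be bookkeeping rather than conceptual: one must verify, with some care, that every isomorphism or adjunction unit constructed above is actually witnessed by some element of $\mathcal F'$ rather than merely holding at the level of underlying functions on $D\Sigma$. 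Since $\mathcal F'$ is saturated upward and closed under composition, and since every realiser we need is obtained from a finite combination of the finite-limit realisers of $\mathcal F$ with the identity, this bookkeeping goes through without further hypotheses.
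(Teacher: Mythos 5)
The paper's own ``proof'' of this lemma is only a pointer to propositions 4.3 and 4.4 of Hofstra's paper, so you are attempting to supply the actual verification; most of your checklist is the right one (faithfulness, fibrewise finite meets, $h_!$ by fibrewise unions, Frobenius, Beck--Chevalley), and your Frobenius step is in fact fine, since with the $p$-generated meet the two sides are literally equal pointwise. The genuine gap is the universal property of the binary meet. Unwinding the definition of $\mathcal F'$, a vertical map $(X,e)\to(X,f)$ exists iff some single $g_1\in\mathcal F$ satisfies $g_1(c)\in f(x)$ for all $x$ and all $c\in e(x)$. So to pair two maps $e\to f$ and $e\to g$, with trackers $g_1,g_2\in\mathcal F$, into a map $e\to f\wedge g$, you need some $g_3\in\mathcal F$ with $g_3(c)\leq p(g_1(c),g_2(c))$ (or at least landing in the downset generated by $p[f(x)\times g(x)]$) uniformly in $c$. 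The data the paper's definition gives you is only $d,p_0,p_1$, where $d$ realises $x\mapsto p(x,x)$, i.e.\ the pairing of a realiser \emph{with itself}; composition and upward closure of $\mathcal F$ do not manufacture a pairing of two \emph{different} trackers, so your claim that ``every realiser we need is obtained from a finite combination of the finite-limit realisers of $\mathcal F$ with the identity'' is exactly the unjustified step. This pairing-of-realisers clause (quantified over all pairs of elements of $\mathcal F$) is part of what Hofstra's cited propositions actually use; without it, the fibres of $P(D\Sigma,\subseteq,\mathcal F')$ need not have binary meets at all.

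A second, smaller issue is your treatment of the coproduct adjunction: a witness $k\in\mathcal F'$ for a morphism $(X,f)\to(Y,g)$ over $h$ is \emph{not} ``by definition'' a witness for $h_!f\to g$, since $k$ need not even be defined on the unions $\bigcup_{h(x)=y}f(x)$, let alone send them into $g(y)$. The repair is the tracker manoeuvre: if $g_k\in\mathcal F$ tracks $k$, replace $k$ by $\xi\mapsto\mathord\downarrow\{g_k(c)\mid c\in\xi\}$, which lies in $\mathcal F'$, is defined on the unions, and lands in $g(y)$ because each $c$ lies in some $f(x)$ with $h(x)=y$. This is fixable, but it is the same uniformisation trick you need throughout, and making it explicit would also have exposed the missing pairing hypothesis above.
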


\begin{proof} This follows from propositions 4.3 and 4.4 in \cite{MR2265872}. \end{proof}

\begin{theorem}[Hofstra] Let $(\Sigma,\leq,\mathcal F)$ be a basic combinatory object with finite limits. The functor $P(D\Sigma,\leq,\mathcal F^+)$ is a tripos if and only if $(\Sigma,\leq,\mathcal F)$ is constructed from an order partial combinatory algebra and a filter generated by singletons as in \ref{opca is bco}. \end{theorem}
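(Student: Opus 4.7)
The plan is to prove the two directions separately, leveraging the characterization theorems established earlier in the thesis.

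For the easy direction, suppose $(\Sigma,\leq,\mathcal F)$ arises from an order partial combinatory algebra $A$ and a filter $\phi$ generated by singletons as in example \ref{opca is bco}, so $\Sigma=A$, the partial order is the order on $A$, and $f\in\mathcal F$ iff the downset $\db f$ meets $\phi$. I would show that $P(D\Sigma,\subseteq,\mathcal F^+)$ is equivalent, as a fibred locale over $\Set$, to the realizability fibration $\ds A/\phi$: an object $(X,f\colon X\to D\Sigma)$ corresponds to the family $\{(a,x)\in A\times X\mid a\in f(x)\}$, and the morphism condition involving $\mathcal F^+$ precisely matches the tracking condition defining morphisms in $\ds A/\phi$. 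Since $\Set$ is a topos with natural number object, theorem \ref{rf is ha} together with the tripos proof given in the construction of realizability toposes shows $\ds A/\phi$ is a tripos, proving this direction.

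For the harder direction, assume $P(D\Sigma,\subseteq,\mathcal F^+)$ is a tripos. The strategy is to reverse-engineer an opca structure on $\Sigma$ from the tripos's generic predicate, and then identify $\mathcal F^+$ with the filter of functions represented by singletons. First I would exploit the generic predicate $\epsilon_X\in F_{X\times\pi X}$: applied to $X=\Sigma$, it gives a classifying family $\pi\Sigma$ together with $\epsilon\subseteq \Sigma\times\pi\Sigma$, and this classifier provides the raw material for an application operator. Concretely, I would use the characterization in subsection \ref{wgf}: the tripos property implies weak genericity of some object $C\in F_\Sigma$, and combined with separatedness of $P(D\Sigma,\subseteq,\mathcal F^+)$ (which one checks directly since all joins of downsets exist and are preserved) and the internal logic, one can define a partial binary operation $\Sigma\times\Sigma\partar\Sigma$ by internalising composition in $\mathcal F$ through the generic predicate. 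Then combinatory completeness would follow by applying the reasoning of theorem \ref{why combinatory completeness}: the tripos gives us enough logical strength to realise that all partial combinatory polynomials are represented, forcing $\Sigma$ to be an opca.

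Next, to recover that the filter is generated by singletons, I would use that every $f\in\mathcal F$ gives a total element of $\mathcal F^+$ on principal downsets, and conversely, by the classifying property of $\epsilon$ and the fact that every object of $\Set$ is projective, any $g\in\mathcal F^+$ whose domain contains the top (terminal) predicate must arise from an actual element of $\mathcal F$, hence from an element of $\Sigma$ witnessing membership in the corresponding downset. This yields a filter $\phi\subseteq A=\Sigma$ consisting precisely of the principal downsets realising the elements of $\mathcal F$, showing $\phi$ is generated by singletons and that the basic combinatory object is induced by $(A,\phi)$.

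The hard part will be the step where we extract an opca structure on $\Sigma$ from an abstract tripos satisfying only the saturation-and-completion condition. The delicate issue is that the application operator on $\Sigma$ is not given in the data; one must construct it from the generic predicate in a way that is both well-defined up to the order on $\Sigma$ and compatible with the monoid $\mathcal F$. Pinpointing the right definition, and then verifying it represents partial combinatory functions rather than merely satisfying some weaker internalised tracking property, is where the main technical obstacle lies; the use of the projectivity of all objects in $\Set$, together with the generic monomorphism characterization of partitioned assemblies discussed after proposition \ref{asm is pasm}, should be what unlocks this step.
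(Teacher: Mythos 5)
The paper does not actually prove this statement: it is attributed to Hofstra and the proof consists of a citation to theorem 6.9 of \cite{MR2265872}, plus the one observation that in $\Set$ external filters generated by singletons coincide with filters of subsets intersecting an internal filter. So your attempt to reconstruct a proof from the thesis's own machinery is going beyond what the paper does, and your first direction is fine in outline: identifying $(X,f\colon X\to D\Sigma)$ with the family $\{(a,x)\mid a\in f(x)\}$ and matching the $\mathcal F^+$-tracking condition with the tracking condition of $\ds A/\phi$ does give the equivalence of fibred preorders (the singleton-generation of $\phi$ is exactly what makes ``tracked by some $g$ with $\db g\in\phi$'' equivalent to ``the set of uniform trackers lies in $\phi$''), and then the tripos property follows from the example after definition \ref{tripos}.

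The genuine gap is in the converse, and you have in effect flagged it yourself without closing it. The entire content of that direction is the construction of a partial application operator $\Sigma\times\Sigma\partar\Sigma$ out of data that contains no application at all, only a monoid $\mathcal F$ of partial monotone endofunctions; ``internalising composition in $\mathcal F$ through the generic predicate'' names the goal but is not a construction. Worse, the two tools you propose to lean on do not supply it. Theorem \ref{why combinatory completeness} presupposes an order partial applicative structure $A$ with its operator already given and only concludes that a filter making $\arrow$ a Heyting implication must be combinatory complete; it cannot be invoked before the operator exists, so its use here is circular. Likewise, the step ``any $g\in\mathcal F^+$ defined on the top predicate must arise from an element of $\mathcal F$, hence from an element of $\Sigma$'' smuggles in exactly the representability of trackers by elements of $\Sigma$ that one is trying to establish. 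What is actually needed (and what Hofstra's theorem 6.9 provides) is an argument that the generic predicate forces the monoid of trackers to be coded inside $\Sigma$ itself — roughly, applying genericity to a universal family over $\Sigma$ to extract, for each $f\in\mathcal F$, an element $r_f\in\Sigma$ and a uniform evaluation map, and then checking this evaluation is monotone, compatible with $\mathcal F$, and combinatory complete. As written, your proposal asserts that this can be done but does not do it, so the hard direction remains unproved.
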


\begin{proof} See theorem 6.9 in \cite{MR2265872}. In the topos of sets, subobject are determined by their set of global sections. Hence there is no distinction between external filters that are generated by singletons and external filters of subsets that intersect some internal filter. \end{proof}

\begin{definition}
The completion construction corresponds to Hofstra's \xemph{relative completion} construction, which we saw in theorem \ref{relcomp}. As we mentioned there, this construction can only produce realizability toposes for external filters that are generated by singletons. We can, however, get all realizability triposes from the inhabited joins completions of order partial combinatory algebras as we saw in subsection \ref{downsetmonad}. In fact, the category $\cat P(\Sigma,\leq,\mathcal F)$ is the category of partitioned assemblies, if $(\Sigma,\leq,\mathcal F)$ comes from an order partial combinatory algebra, and $P(\Sigma,\leq,\mathcal F)$ is the functor we called $\supp$.
\end{definition}

I have not attempted the generality of these \xemph{basic combinatory objects}. Instead, I worked out how to generalize the most useful case, namely that of order partial combinatory algebras with filters, to arbitrary Heyting categories. Breaking the need for filters that are generated by singletons was the first step in the direction of external filters.


\subsection{John Longley}
John Longley also presented a universal property of $\Asm(\ds A/A)$ in his thesis \cite{RTnLS}, where $A$ is a partial combinatory algebra in $\Set$. The category $\Asm(\ds A/ A)$ has a subcategory $\Mod(A)$ of \xemph{modest sets}, because definition \ref{modest} makes sense in any realizability category. Longley shows that $\Asm(\ds A/A)$ sort of is the result of freely adding a fully faithful regular right adjoint to $\supp:\Mod(A) \to \Set$.

A precise formulation follows. We work with a category where the objects are regular functors to $\Set$.

\begin{definition} A \xemph{$\Gamma$-category} is a regular category $\cat C$ with a regular functor $\Gamma_{\cat C}:\cat R\to\Set$. A $\Gamma$-functor $(\cat C,\Gamma_{\cat C})\to(\cat D,\Gamma_{\cat D})$ is a regular functor $F:\cat C\to\cat D$ such that $\Gamma_{\cat D}F\simeq \Gamma_{\cat C}$. The category of $\Gamma$-categories and -functors is $\Gamma\reg$.

A \xemph{$\nabla\Gamma$-category} is a $\Gamma$-category $(\cat C,\Gamma_{\cat C})$ where $\Gamma_{\cat C}$ has a right adjoint $\nabla$ that is fully faithful and regular. A $\nabla\Gamma$-functor is a morphism of $\Gamma$-categories that commutes with the $\nabla$'s. The category of $\nabla\Gamma$-categories and $\nabla\Gamma$-functors is $\nabla\Gamma\reg$.
\end{definition}

\begin{theorem}[Longley] Let $A$ be a partial combinatory algebra in $\Set$. For each $\nabla\Gamma$-category $(\cat C,\Gamma_{\cat C})$, the inclusion $J:\Mod(A)\to\Asm(\ds A/A)$ induces an equivalence of categories:
\[ \Gamma\reg(\Mod(A),(\cat C,\Gamma))\cong \nabla\Gamma\reg(\Asm(\ds A/A),(\cat C,\Gamma_{\cat C}))\]
\end{theorem}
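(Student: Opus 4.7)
The plan is to build an explicit inverse equivalence using the pseudoinitiality theorem (corollary \ref{asm pseudoinitial}). The target direction, from functors out of $\Mod(A)$ to functors out of $\Asm(\ds A/A)$, is an extension; the easy direction is restriction along $J$.

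First I would define the extension $\Phi:\Gamma\reg(\Mod(A),(\cat C,\Gamma_{\cat C}))\to\nabla\Gamma\reg(\Asm(\ds A/A),(\cat C,\Gamma_{\cat C}))$ as follows. Because $A$ is a discrete partial combinatory algebra in $\Set$, the generic filter $\A$ corresponds to the equality PER on $A$, so $\A$ itself is modest and lies in $\Mod(A)$. Given a $\Gamma$-functor $F$, the pair $(\nabla_{\cat C}, F\A)$, where $\nabla_{\cat C}\colon\Set\to\cat C$ is the right adjoint furnished by the $\nabla\Gamma$-structure, is a regular model for $(A,A)$ in $\cat C$: indeed $F\A$ is a filter on $F\A\hookrightarrow F(\nabla A)$ (using $\Gamma_{\cat C}F\simeq \Gamma_{\Mod(A)}$ and the adjunction $\Gamma_{\cat C}\dashv\nabla_{\cat C}$ to replace the ``base'' copy of $A$ by $\nabla_{\cat C}A$), and $F\A$ intersects $\nabla_{\cat C}U$ for every inhabited $U\subseteq A$, since this holds already in $\Mod(A)$. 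Pseudoinitiality then yields an up-to-isomorphism unique regular $\Phi(F):\Asm(\ds A/A)\to\cat C$ with $\Phi(F)\nabla\simeq\nabla_{\cat C}$ and $\Phi(F)\A\simeq F\A$.

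Next I would verify $\Phi(F)$ is a genuine $\nabla\Gamma$-functor: since $\Phi(F)\nabla\simeq\nabla_{\cat C}$ is a fully faithful right adjoint, its left adjoint is determined up to isomorphism, so $\Gamma_{\cat C}\Phi(F)\simeq\supp$. Define $\Psi(G)=G\circ J$; this sits in $\Gamma\reg$ because $\Gamma_{\cat C}GJ\simeq\supp J\simeq\Gamma_{\Mod(A)}$. For $\Phi\Psi\simeq\id$: given $G\in\nabla\Gamma\reg(\Asm(\ds A/A),\cat C)$, both $G$ and $\Phi(GJ)$ are regular functors sending $\nabla$ to $\nabla_{\cat C}$ and $\A$ to $G\A$ (since $J\A=\A$), so they are canonically isomorphic by the uniqueness part of pseudoinitiality.

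The main obstacle is proving $\Psi\Phi\simeq\id$, i.e.\ that $\Phi(F)\circ J\simeq F$. The strategy is to exploit the very concrete structure of $\Mod(A)$: by lemma \ref{mod is per}, each modest set $M$ is the target of a prone regular epimorphism $q\colon P\twoheadrightarrow M$ out of a prone subobject $m\colon P\hookrightarrow\A$, and by a tracking argument each morphism $f\colon M\to M'$ between modest sets is the quotient of a morphism between such prone subobjects induced by application with a realizer $r\in A$. All these data — the subobject $P$ of $\A$, its kernel pair $K\subseteq P\times P$, the regular epimorphism $q$, and the tracking morphism — live inside the regular subcategory spanned by subquotients of $\A$, and both $F$ and $\Phi(F)J$ are regular functors agreeing on the building block $\A$ (both send it to $F\A$) and on the base $\Set$ (both restrict on $\nabla X$ to $\nabla_{\cat C}X$, using that $\nabla X$ is modest). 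The delicate point I expect to confront is showing that the subquotient description of $M$ is actually carried by $F$: $F$ is only assumed regular on $\Mod(A)$, so I must check that the relevant prone monomorphism and prone regular epimorphism are genuinely morphisms \emph{in} $\Mod(A)$ (which they are, since partitioned assemblies embedding in $\A$ are modest). Once this is in place, the comparison $\Phi(F)J(M)\simeq F(M)$ is natural in $M$ because both sides are computed by the same coequalizer diagram in the image of $F$, and naturality in morphisms follows from the tracking representation together with the agreement on $\A$.
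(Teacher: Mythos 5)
You follow the paper's route: extend a $\Gamma$-functor $F$ by feeding the regular model $(\nabla_{\cat C},F\A)$ into pseudoinitiality (corollary \ref{asm pseudoinitial}), and reduce $\Phi(F)J\simeq F$ to the fact that every modest set is a subquotient of a prone subobject of $\A$ (lemma \ref{mod is per}) together with tracking; the two-sided bookkeeping with $\Phi$ and $\Psi$ is fine. But two of your supporting claims fail. The more serious is ``$\nabla X$ is modest'': this is false as soon as $X$ has two distinct elements, since $!:\nabla X\to\termo$ is not right orthogonal to the prone regular epimorphism $!:\nabla(\termo+\termo)\to\termo$ (because $\nabla$ is fully faithful there are non-constant maps $\nabla(\termo+\termo)\to\nabla X$). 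Constant objects are exactly what $\Mod(A)$ lacks and what $\Asm(\ds A/A)$ freely adjoins, so $F(\nabla X)$ is not even defined, and the step ``both restrict on $\nabla X$ to $\nabla_{\cat C}X$'' cannot be used to pin down the images of the prone subobjects $P\subseteq \A$ (e.g.\ via $P\simeq\A\times_{\nabla A}\nabla U$). What you may use is that $P$, its kernel pair and the quotient all \emph{are} modest, so the presentation lives in $\Mod(A)$ and is preserved by both regular functors; the identification of $FP$ with $\Phi(F)P$ inside $F\A\simeq\Phi(F)\A$ then has to come from the agreement on $\A$ together with tracking of the inclusion and of morphisms -- this is how the paper's one-line appeal to lemma \ref{mod is per} is to be read, not via any agreement on constant objects.

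Second, your justification of $\Gamma_{\cat C}\Phi(F)\simeq\supp$ is a non sequitur: the isomorphism $\Phi(F)\nabla\simeq\nabla_{\cat C}$ determines the left adjoint \emph{of $\nabla_{\cat C}$}, namely $\Gamma_{\cat C}$, but $\Gamma_{\cat C}\Phi(F)$ is not exhibited as an adjoint of anything, so uniqueness of adjoints gives nothing about it. Substitute the paper's argument, which is available with your tools: $\Gamma_{\cat C}\Phi(F)$ and $\supp$ are regular, both send $\nabla$ to a functor isomorphic to $\id_{\Set}$ and $\A$ to $A$ (the latter using $\Gamma_{\cat C}F\simeq\supp$ on $\Mod(A)$), so both are 1-morphisms of regular models from the pseudoinitial $(\Asm(\ds A/A),\nabla,\A)$ to $(\Set,\id_{\Set},A)$, hence isomorphic. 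With these two repairs your proposal coincides with the paper's proof.
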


\begin{proof} Note that $\Mod(A)$ is a regular subcategory that contains $\A$.
A $\Gamma$-functor $F: (\Mod(A),\supp) \to (\cat C,\Gamma_\cat C)$ gives us a regular model: $(\nabla_\cat C,F\A)$. This model induces an up to isomorphism unique regular functor $G:\Asm(\ds A/A) \to \cat C$ such that $G\A\simeq F\A$ and $G\nabla\simeq \nabla_{\cat C}$. We use regular models to show that $\Gamma_{\cat C} G\simeq \supp$ too.
\[ \Gamma_{\cat C} G\A\simeq \Gamma_{\cat C} F\A \simeq \supp A \quad \Gamma_{\cat C} G\nabla\simeq \Gamma_{\cat C} \nabla_{\cat C} \simeq \id_\cat C \simeq \supp \nabla \]
The functors $\Gamma_\cat C G$ and $\supp$ induce regular models that are isomorphic too $(\id_\Set, A)$, and hence are isomorphic functors. Last but not least, $GJ\simeq F$, because $G\A\simeq F\A$ and all objects in $\Mod(A)$ are subquotients of $\A$ (see lemma \ref{mod is per}). \end{proof}

We conclude that Longley was just a characterization of $\Mod(A)$ short of giving a universal property of $\Asm(\ds A/A)$ for some partial combinatory algebra $A\in\Set$. 

\comment{
\begin{remark} 
We have not found a definite solution yet, but here is how far we have come. Let $\supp':\Mod(A) \to\Set$ be the restriction of $\supp:\Asm(\ds A/A)\to \Set$ for some partial combinatory algebra $A$. There is a functor $F:\Asm(\ds A/A) \to \Set/\supp$: for each assembly $X$ there is a modest reflection $X_{\rm mod}$, a regular epimorphism $X\to X_{\rm mod}$ and therefore a surjection $FX:\Gamma X \to \Gamma X_{\rm mod}$. On the other hand, there is a functor $G:\Set/\supp \to\Asm(A)$: any morphism $f: X\to \supp Y$ induces a unique prone morphism $Gf\to Y$. We believe that $F$ and $G$ are inverses between subcategory of $\Set/\supp'$ on the surjections and the subcategory $\Asm(\ds A/A)$ on the assemblies for which the unit $X\to X_{\rm mod}$ is a prone morphism. There is an embedding $I:\Mod(A)\to \cat S$ to either of these subcategories, and we believe that $\Asm(\ds A/A)$ is the regular completion of $\cat S$ \xemph{relative} to this embedding. By the way, for this construction to work it is vital that epimorphisms split in $\Set$.

For every regular functor $\Gamma:\cat C\to\Set$ we can construct this subcategory $\cat S(\Gamma)\subseteq\Set/\Gamma$ on the surjections and then the relative regular completion with respect to the canonical embedding $\cat C \to \cat S(\Gamma)$. This seems to deliver the desired $\nabla\Gamma$-completion, but we have not proved this yet.
\end{remark}
}

Longley brings up the question whether each $\Gamma$-category has a `$\nabla\Gamma$-completion', or $\Mod(A)$ has a special property that makes this completion possible. We don't have a definite answer to this question, and this makes it hard to see how it should be generalized to our more general setting. Already when working with an order partial combinatory algebra $A$ whose ordering is not $=$, we run into the problem of how to define $\Mod(A)$: should we consider quotients of all subobjects of $\A$ or only quotients of downward closed subobjects?

We essentially replaced $\Mod(A)$ with the subcategory of prone subobjects of Cartesian powers $\A$ throughout this thesis, and didn't bother to hide this structure in a $\Gamma$-category. Longley's approach is more convenient for working with typed realizability, because the $\Gamma$-categories hide the many types of realizers.

\section{Directions for future research}
The most straightforward direction is subtoposes of realizability toposes. We have seen a lot of research in these areas, and feel that the tools developed in this thesis may shed some light on there too.

We have this in mind: for each topos $\cat E$, each complete partial applicative lattice $L\in\cat E$ and each combinatory complete filter of global sections of $L$, we can construct a tripos $L/\phi$, following example \ref{cpals}. 
We expect that triposes of this form are easier to characterize than realizability triposes. The resulting class of triposes is probably closed under (geometric) subtriposes and filter quotients, and all realizability triposes and all triposes derived from internal locales are of this form.

We have to note that these structures occur as special cases of Hofstra's basic combinatory objects, though, and hence are not completely new. This makes us wonder whether there are any interesting and challenging problems in this direction, and whether we shouldn't look at far more general structures instead, e.g. involving typed realizability.

Another direction for future research is completion constructions. We know why the usual ex/lex completion fails if the base category does not satisfy choice, we know conditions under which the relative completion works, and we know that there are exact completions that preserve a preselected class of regular epimorphisms (see \cite{ECnSS} for this). But maybe there are more subtle completion constructions that also work.

Consider that realizability toposes over $\Set$ are enriched in $\Set$, while realizability categories over another base category $\cat H$ are not always enriched in $\cat H$. If $\cat H$ is a topos there may be an enriched version of realizability toposes that are enriched ex/lex completions -- that is, if such things exist -- of enriched categories of partitioned assemblies. However, we may be overlooking some choice principles that are implied if the effective topos is an enriched ex/lex completion.

We have just started to generalize known results about realizability categories, and to show how to derive them directly from the universal properties; there is still a long way to go in this third direction. Here, we have to consider what role the properties of $\Set$ play in the proofs of properties of realizability toposes. In particular, it is not even clear what modest sets are for an order partial combinatory algebra whose order is not $=$.

For a fourth direction, consider the natural transformation $\eta: \id_{\Asm(\ds A/A)} \to \nabla\supp$: over partitioned assemblies all naturality squares are pullbacks. This is connected to the fact that $\A$ is an elementary substructure of $\nabla A$ for regular logic. Therefore, the realizability construction is a way of adding an elementary substructure to a model of a regular theory. Maybe this generalizes to other regular theories, and maybe we can construct free elementary substructures in first order classical and intuitionistic logic using variations of realizability.

\backmatter
\chapter{Samenvatting}

\xemph{Realiseerbaarheid} is een verzameling van technieken voor het bestuderen van \xemph{constructieve logica}. \xemph{Categorie\"entheorie} gaat over verbanden tussen uiteenlopende takken van de wiskunde. Ik zal hier aandacht besteden aan de achtergronden van mijn werk, om vervolgens een korte samenvatting te geven van de resultaten die u op de pagina's hierboven kunt bewonderen.

\section*{Achtergrond}
Wiskundige logica gaat over waarheid in de wiskunde, maar ook over kennis. Om te weten dat een propositie waar is hebben we een bewijs nodig en veel belangrijke resultaten binnen we wiskundige logica, waaronder G\"odels onvolledigheidsstellingen, gaan over \xemph{bewijsbaarheid}.

Binnen de \xemph{constructieve logica} worden er bepaalde beperkingen aan bewijsbaarheid opgelegd ten opzichte van de \xemph{klassieke logica}, die de meeste wiskundigen gebruiken. Klassieke en constructieve wiskundigen geven verschillende betekenissen aan het be\-staan van wiskundige objecten: een klassiek wiskundige accepteert dat er wiskundige objecten bestaan waarvan geen voorbeelden gegeven kunnen worden en de constructieve wiskundige accepteert dat (in veel gevallen) niet. De klassieke logica omvat principes die het bestaan van dergelijke \xemph{niet construeerbare} objecten kan aantonen. Die principes worden in de constructieve logica weggelaten. Het meest opvallende is het \xemph{principe van de uitgesloten derde}, dat zegt dat elke propositie ofwel waar ofwel onwaar is. Dit is een axioma van de klassieke logica maar niet van de constructieve.

Het uitgangspunt van \xemph{realiseerbaarheid} is dat een propositie van de vorm `voor alle $x$ is er een $y$ zodat $x\mathrel R y$' constructief alleen waar kan zijn als er een constructie is om voor iedere $x$ een $y$ te maken zodanig dat `$x\mathrel R y$'. In een \xemph{realiseerbaarheidsmodel} kiezen we een geschikte klasse van parti\"ele  functies $A\rightharpoonup A$ op een verzameling $A$ om de rol van constructie te spelen. Vervolgens defini\"eren we een \xemph{realiseerbaarheidsrelatie} tussen proposities en elementen van $A$, met behulp van deze constructies. Of een propositie \xemph{geldig} is binnen het model hangt weer af van de verzameling van realisatoren die aan iedere propositie wordt toegewezen. 

Het bekendste voorbeeld van realiseerbaarheid, \xemph{recursieve realiseerbaarheid}, staat uitgewerkt in de inleiding van dit proefschrift. Het is gebaseerd op de verzameling van niet negatieve gehele getallen en de parti\"eel recursieve functies. 

Ik heb realiseerbaarheid bestudeerd met behulp van \xemph{categorie\"entheorie}. In plaats van structuren in isolatie te bestuderen, legt categorie\"entheorie de nadruk op de afbeeldingen die structuren met elkaar verbinden. De theorie heeft daarom toepassingen in veel verschillende takken van de wiskunde.

Categorie\"entheorie speelt op drie manieren een rol in realiseerbaarheid. Ten eerste heeft ieder realiseerbaarheidsmodel een categorie van realiseerbare afbeeldingen -- dat zijn de `\xemph{realiseerbaarheidcategorie\"en}' uit de titel. Ten tweede is een realiseerbaarheidsmodel een verbinding tussen twee verschillende werkelijkheden, een klassieke en een constructieve bijvoorbeeld, en daarmee zelf een soort functie. Ten slotte verbinden we verschillende realiseerbaarheidsmodellen in een categorie met elkaar.

\section*{Samenvatting}
Er zijn proposities die in elk realiseerbaarheidsmodel worden gerealiseerd, hoewel ze niet bewijsbaar zijn met constructieve logica. Ook als we het begrip `realiseerbaarheidsmodel' drastisch oprekken en een veel grotere klasse van modellen toelaten, blijven die proposities geldig. Die proposities heb ik in kaart gebracht in het eerste hoofdstuk van mijn proefschrift. Daarna heb ik geanalyseerd in hoeverre ze gebruikt kunnen worden als axioma's van realiseerbaarheid.

De verzameling van functies in een realiseerbaarheidsmodel vormt een \xemph{realiseerbaarheidscategorie}. Deze realiseerbaarheidscategorie\"en zijn met elkaar verbonden door functoren. Het was bekend dat er een verband was tussen \xemph{reguliere functoren} en \xemph{applicatieve morfismes}, waarbij die laatste een soort functies tussen geordende parti\"eel combinatorische algebra's zijn. Het tweede hoofdstuk van mijn proefschrift verklaart waarom dit verband bestaat. We generaliseren het verband tussen applicatieve morfismes en reguliere functoren naar de nieuwe realiseerbaarheidsmodellen van hoofdstuk \'e\'en en tonen vergelijkbare verbanden voor reguliere functoren van realiseerbaarheidscategorie\"en naar willekeurige andere categorie\"en.

In het derde hoofdstuk kijken we naar toepassingen van de theorie die in de eerste twee is ontwikkeld. Het idee is dat bekende eigenschappen van realiseerbaarheidsmodellen veel makkelijker bewezen kunnen worden door ze af te leiden uit de axioma's van hoofdstuk \'e\'en. Dit stuk is noodzakelijkerwijs een samenraapsel van verschillende resultaten. In sectie 3.1 kijken we hoe recursieve realiseerbaarheid eruit ziet voor een constructivist. Sectie 3.2 is een herhaling van mijn masterscriptie. In sectie 3.3 kijken we naar realiseerbaarheidsmodellen van klassieke logica.

Het vierde hoofdstuk rond het geheel af en omvat een vergelijking van mijn werk met dat van anderen op het gebied van realiseerbaarheid en categorie\"entheorie.

\chapter{Dankwoord, Acknowledgements}
Bij mij onderzoek en bij het schrijven van dit proefschrift hebben velen mij geholpen of gesteund. Een aantal van hen wil ik hier bedanken.

Allereerst wil mijn copromotor Jaap van Oosten bedanken. Zonder hem was dit proefschrift niet mogelijk geweest. Ik ben dankbaar voor de vrijheid die ik gehad heb in het kiezen van onderwerpen. De wekelijkse vergaderingen gaven me altijd weer energie om door te gaan. Ten slotte ben ik dankbaar voor alle hulp bij het leesbaar maken van mijn cryptische en spelfoutrijke schrijfwerk.

Ik dank Ieke Moerdijk voor het vertrouwen dat hij in Jaap en mij toont door op te treden als mijn promotor, hoewel hij niet veel tijd heeft gehad om zich met mijn onderzoek te bemoeien.

I thank the members of my assessment committee, Martin Hyland, Bart Jacobs, Giuseppe Rosolini, Thomas Streicher and Benno van der Berg for careful reading my thesis and for providing useful commentary.

Ik dank mijn kamergenote Janne Kool, voor alle gezelligheid en alle praktische hulp die ik heb gekregen; ik zal je missen. Ik bedank ook mijn andere collega's voor de gezamelijke lunches en de koffies in de Gutenberg, het mountainbiken, de uitjes naar Berlijn en Brugge en natuurlijk ook voor alle hulp en gezelligheid. Ik heb veel gelachen met Albert Jan, Arjen, Arthur, Bart, Bas (F.), Bas (J.), Charlene, Dali, Dana, Esther, Ionut, Jaap, Jan Jitse, Jantien, Jan Willem, Jeroen, Job,  Kayin, Lee, Sander, Sasha, Sebastiaan, Sebastian, Tammo Jan, Timo, Vincent en Wilfred.

Ik dank mijn familie, de leden van Utrechts Studenten Koor en Orkest, de Utrecht\-se Studenten Schaats Vereniging ``Softijs'', en de tafelgenoten die ik had bij ``Spek en Bonen'', omdat jullie mij herinnerden aan het leven buiten de wiskunde.

Tot slot bedank ik Saskia en Bram omdat ze mijn paranimfen willen zijn.

\chapter{Curriculum Vitae}
Wouter Stekelenburg werd op 9 juli 1984 in Huizen geboren. In 2002 behaalde hij zijn VWO-diploma aan het Goois Lyceum te Bussum. Hij studeerde wiskunde aan de Universiteit van Utrecht. Daar haalde hij in 2006 zijn bachelor in wiskunde met muziekwetenschappen als bijvak; vervolgens haalde hij in 2008 cum laude zijn master in de wiskunde met een scriptie over \xemph{algebra\"isch compacte categorie\"en in de effectieve topos}; daarna is hij er assistent in opleiding geworden in de onderzoeksgroep van Ieke Moerdijk onder begeleiding van Jaap van Oosten, die ook de masterscriptie begeleid heeft. Een onderzoek naar realizeerbaarheidstopossen heeft geleid tot het proefschrift dat nu voor u ligt.

Buiten de minor muziekwetenschappen heeft Wouters interesse in muziek zich geuit in lidmaatschap van het \emph{Utrechts Studenten Koor en Orkest}\index{USKO} en in deelname aan diverse zangprojecten. Wouter heeft zich voor het USKO ingezet in de PR-commissie, de logistieke commissie en de archiefcommissie.

Wouter is lid geweest van de \emph{Utrechtse Studenten Schaatsvereniging `Softijs'}\index{Softijs}, waar hij 's winters schaatste en 's zomers aan inline-skating deed. Hij is daar lid geweest van de skeelercommissie, die verantwoordelijk is voor de organisatie van de inline-skatetrainingen.

\bibliography{realizability}{}

\begin{thebibliography}{10}

\bibitem{MR1948021}
Steven Awodey and Lars Birkedal.
\newblock Elementary axioms for local maps of toposes.
\newblock {\em J. Pure Appl. Algebra}, 177(3):215--230, 2003.

\bibitem{MR1909030}
Steven Awodey, Lars Birkedal, and Dana~S. Scott.
\newblock Local realizability toposes and a modal logic for computability.
\newblock {\em Math. Structures Comput. Sci.}, 12(3):319--334, 2002.
\newblock Realizability (Trento, 1999).

\bibitem{ECnCS}
Michael Barr, Pierre~A. Grillet, and Donovan~H. van Osdol.
\newblock {\em Exact categories and categories of sheaves}, volume 236 of {\em
  Lecture notes in Mathematics}.
\newblock Springer Verlag.

\bibitem{MR0393181}
Jean Benabou.
\newblock Fibrations petites et localement petites.
\newblock {\em C. R. Acad. Sci. Paris S\'er. A-B}, 281(21):Ai, A897--A900,
  1975.

\bibitem{MR0393180}
Jean Benabou.
\newblock Th\'eories relatives \`a un corpus.
\newblock {\em C. R. Acad. Sci. Paris S\'er. A-B}, 281(20):Ai, A831--A834,
  1975.

\bibitem{MR1769604}
Lars Birkedal.
\newblock {\em Developing theories of types and computability via
  realizability}, volume~34 of {\em Electronic Notes in Theoretical Computer
  Science}.
\newblock Elsevier Science B.V., Amsterdam, 2000.

\bibitem{MR1933398}
Lars Birkedal and Jaap van Oosten.
\newblock Relative and modified relative realizability.
\newblock {\em Ann. Pure Appl. Logic}, 118(1-2):115--132, 2002.

\bibitem{MR558106}
Marta Bunge.
\newblock Stack completions and {M}orita equivalence for categories in a topos.
\newblock {\em Cahiers Topologie G\'eom. Diff\'erentielle}, 20(4):401--436,
  1979.

\bibitem{MR558105}
Marta Bunge and Robert Par{\'e}.
\newblock Stacks and equivalence of indexed categories.
\newblock {\em Cahiers Topologie G\'eom. Diff\'erentielle}, 20(4):373--399,
  1979.

\bibitem{MR1358759}
Aurelio Carboni.
\newblock Some free constructions in realizability and proof theory.
\newblock {\em J. Pure Appl. Algebra}, 103(2):117--148, 1995.

\bibitem{MR948482}
Aurelio Carboni, Peter~J. Freyd, and Andre Scedrov.
\newblock A categorical approach to realizability and polymorphic types.
\newblock In {\em Mathematical foundations of programming language semantics
  ({N}ew {O}rleans, {LA}, 1987)}, volume 298 of {\em Lecture Notes in Comput.
  Sci.}, pages 23--42. Springer, Berlin, 1988.

\bibitem{MR678508}
Aurelio Carboni and R.~Celia Magno.
\newblock The free exact category on a left exact one.
\newblock {\em J. Austral. Math. Soc. Ser. A}, 33(3):295--301, 1982.

\bibitem{MR1600009}
Aurelio Carboni and E.~M. Vitale.
\newblock Regular and exact completions.
\newblock {\em J. Pure Appl. Algebra}, 125(1-3):79--116, 1998.

\bibitem{MR2814748}
Ugo Dal~Lago and Martin Hofmann.
\newblock Realizability models and implicit complexity.
\newblock {\em Theoret. Comput. Sci.}, 412(20):2029--2047, 2011.

\bibitem{MR0258596}
A.~G. Dragalin.
\newblock Transfinite complements of the constructive arithmetical calculus.
\newblock {\em Dokl. Akad. Nauk SSSR}, 189:458--460, 1969.

\bibitem{MR0213410}
Charles Ehresmann.
\newblock {\em Cat\'egories et structures}.
\newblock Dunod, Paris, 1965.

\bibitem{MR0013131}
Samuel Eilenberg and Saunders Mac~Lane.
\newblock General theory of natural equivalences.
\newblock {\em Trans. Amer. Math. Soc.}, 58:231--294, 1945.

\bibitem{MR0409137}
Solomon Feferman.
\newblock A language and axioms for explicit mathematics.
\newblock In {\em Algebra and logic ({F}ourteenth {S}ummer {R}es. {I}nst.,
  {A}ustral. {M}ath. {S}oc., {M}onash {U}niv., {C}layton, 1974)}, pages
  87--139. Lecture Notes in Math., Vol. 450. Springer, Berlin, 1975.

\bibitem{MR1099200}
Peter~J. Freyd.
\newblock Recursive types reduced to inductive types.
\newblock In {\em Fifth {A}nnual {IEEE} {S}ymposium on {L}ogic in {C}omputer
  {S}cience ({P}hiladelphia, {PA}, 1990)}, pages 498--507. IEEE Comput. Soc.
  Press, Los Alamitos, CA, 1990.

\bibitem{MR1167031}
Peter~J. Freyd, P.~Mulry, Giuseppe Rosolini, and Dana~S. Scott.
\newblock Extensional {PER}s.
\newblock {\em Inform. and Comput.}, 98(2):211--227, 1992.
\newblock Selections from the 1990 IEEE Symposium on Logic in Computer Science.

\bibitem{MR0210125}
P.~Gabriel and M.~Zisman.
\newblock {\em Calculus of fractions and homotopy theory}.
\newblock Ergebnisse der Mathematik und ihrer Grenzgebiete, Band 35.
  Springer-Verlag New York, Inc., New York, 1967.

\bibitem{MR495115}
Nicolas~D. Goodman.
\newblock Relativized realizability in intuitionistic arithmetic of all finite
  types.
\newblock {\em J. Symbolic Logic}, 43(1):23--44, 1978.

\bibitem{MR0146040}
Alexander Grothendieck.
\newblock {\em Fondements de la g\'eom\'etrie alg\'ebrique. [{E}xtraits du
  {S}\'eminaire {B}ourbaki, 1957--1962.]}.
\newblock Secr\'etariat math\'ematique, Paris, 1962.

\bibitem{MR2067191}
Pieter J.~W. Hofstra.
\newblock Relative completions.
\newblock {\em J. Pure Appl. Algebra}, 192(1-3):129--148, 2004.

\bibitem{MR2265872}
Pieter J.~W. Hofstra.
\newblock All realizability is relative.
\newblock {\em Math. Proc. Cambridge Philos. Soc.}, 141(2):239--264, 2006.

\bibitem{MR1981211}
Pieter~J.W. Hofstra and Jaap van Oosten.
\newblock Ordered partial combinatory algebras.
\newblock {\em Math. Proc. Cambridge Philos. Soc.}, 134(3):445--463, 2003.

\bibitem{MR717245}
J.~M.~E. Hyland.
\newblock The effective topos.
\newblock In {\em The {L}.{E}.{J}. {B}rouwer {C}entenary {S}ymposium
  ({N}oordwijkerhout, 1981)}, volume 110 of {\em Stud. Logic Foundations
  Math.}, pages 165--216. North-Holland, Amsterdam, 1982.

\bibitem{MR1909029}
J.~M.~E. Hyland.
\newblock Variations on realizability: realizing the propositional axiom of
  choice.
\newblock {\em Math. Structures Comput. Sci.}, 12(3):295--317, 2002.
\newblock Realizability (Trento, 1999).

\bibitem{MR578267}
J.~M.~E. Hyland, Peter~T. Johnstone, and Andrew~M. Pitts.
\newblock Tripos theory.
\newblock {\em Math. Proc. Cambridge Philos. Soc.}, 88(2):205--231, 1980.

\bibitem{MR1023803}
J.~M.~E. Hyland, E.~P. Robinson, and Guiseppe Rosolini.
\newblock The discrete objects in the effective topos.
\newblock {\em Proc. London Math. Soc. (3)}, 60(1):1--36, 1990.

\bibitem{MR1674451}
Bart Jacobs.
\newblock {\em Categorical logic and type theory}, volume 141 of {\em Studies
  in Logic and the Foundations of Mathematics}.
\newblock North-Holland Publishing Co., Amsterdam, 1999.

\bibitem{MR0470019}
Peter~T. Johnstone.
\newblock {\em Topos theory}.
\newblock Academic Press [Harcourt Brace Jovanovich Publishers], London, 1977.
\newblock London Mathematical Society Monographs, Vol. 10.

\bibitem{MR1953060}
Peter~T. Johnstone.
\newblock {\em Sketches of an elephant: a topos theory compendium. {V}ol. 1},
  volume~43 of {\em Oxford Logic Guides}.
\newblock The Clarendon Press Oxford University Press, New York, 2002.

\bibitem{MR0007371}
Stephen~Cole Kleene.
\newblock Recursive predicates and quantifiers.
\newblock {\em Trans. Amer. Math. Soc.}, 53:41--73, 1943.

\bibitem{MR0015346}
Stephen~Cole Kleene.
\newblock On the interpretation of intuitionistic number theory.
\newblock {\em J. Symbolic Logic}, 10:109--124, 1945.

\bibitem{MR0051790}
Stephen~Cole Kleene.
\newblock {\em Introduction to metamathematics}.
\newblock D. Van Nostrand Co., Inc., New York, N. Y., 1952.

\bibitem{MR0176922}
Stephen~Cole Kleene and Richard~Eugene Vesley.
\newblock {\em The foundations of intuitionistic mathematics, especially in
  relation to recursive functions}.
\newblock North--Holland Publishing Co., Amsterdam, 1965.

\bibitem{MR0106838}
Georg Kreisel.
\newblock Interpretation of analysis by means of constructive functionals of
  finite types.
\newblock In {\em Constructivity in mathematics: {P}roceedings of the
  colloquium held at {A}msterdam, 1957 (edited by {A}. {H}eyting)}, Studies in
  Logic and the Foundations of Mathematics, pages 101--128. North-Holland
  Publishing Co., Amsterdam, 1959.

\bibitem{MR2825677}
Jean-Louis Krivine.
\newblock Realizability algebras: a program to well order {$\mathbb R$}.
\newblock {\em Log. Methods Comput. Sci.}, 7(3):3:02, 47, 2011.

\bibitem{MR0158921}
F.~William Lawvere.
\newblock Functorial semantics of algebraic theories.
\newblock {\em Proc. Nat. Acad. Sci. U.S.A.}, 50:869--872, 1963.

\bibitem{MR0172807}
F.~William Lawvere.
\newblock An elementary theory of the category of sets.
\newblock {\em Proc. Nat. Acad. Sci. U.S.A.}, 52:1506--1511, 1964.

\bibitem{MR0430021}
F.~William Lawvere.
\newblock Quantifiers and sheaves.
\newblock In {\em Actes du {C}ongr\`es {I}nternational des {M}ath\'ematiciens
  ({N}ice, 1970), {T}ome 1}, pages 329--334. Gauthier-Villars, Paris, 1971.

\bibitem{MR2223032}
F.~William Lawvere.
\newblock Adjointness in foundations.
\newblock {\em Repr. Theory Appl. Categ.}, (16):1--16, 2006.
\newblock Reprinted from Dialectica {{\bf{2}}3} (1969).

\bibitem{MR2638259}
James~B. Lipton.
\newblock {\em Relating {K}ripke models and realizability}.
\newblock ProQuest LLC, Ann Arbor, MI, 1990.
\newblock Thesis (Ph.D.)--Cornell University.

\bibitem{Longley_matchingtyped}
John~R. Longley.
\newblock Matching typed and untyped realizability (extended abstract).

\bibitem{RTnLS}
John~R. Longley.
\newblock {\em Realizability Toposes and Language Semantics}.
\newblock PhD thesis, University of Edinburgh, 1994.

\bibitem{MR0025464}
Saunders Mac~Lane.
\newblock Groups, categories and duality.
\newblock {\em Proc. Nat. Acad. Sci. U. S. A.}, 34:263--267, 1948.

\bibitem{MR1712872}
Saunders Mac~Lane.
\newblock {\em Categories for the working mathematician}, volume~5 of {\em
  Graduate Texts in Mathematics}.
\newblock Springer-Verlag, New York, second edition, 1998.

\bibitem{MR1300636}
Saunders Mac~Lane and Ieke Moerdijk.
\newblock {\em Sheaves in geometry and logic}.
\newblock Universitext. Springer-Verlag, New York, 1994.
\newblock A first introduction to topos theory, Corrected reprint of the 1992
  edition.

\bibitem{EQC}
Maria~Emilia Maietti and Giuseppe Rosolini.
\newblock Elementary quotient completion.
\newblock {\em preprint}.

\bibitem{QCftFoCM}
Maria~Emilia Maietti and Giuseppe Rosolini.
\newblock Quotient completion for the foundation of constructive mathematics.
\newblock {\em preprint}.

\bibitem{Menni00exactcompletions}
Mat{\'{\i}}as Menni.
\newblock {\em Exact Completions and Toposes}.
\newblock PhD thesis, University of Edinburgh, 2000.

\bibitem{MR1900904}
Mat{\'{\i}}as Menni.
\newblock More exact completions that are toposes.
\newblock {\em Ann. Pure Appl. Logic}, 116(1-3):187--203, 2002.

\bibitem{MR2550048}
Alexandre Miquel.
\newblock Relating classical realizability and negative translation for
  existential witness extraction.
\newblock In {\em Typed lambda calculi and applications}, volume 5608 of {\em
  Lecture Notes in Comput. Sci.}, pages 188--202. Springer, Berlin, 2009.

\bibitem{MR0319757}
William Mitchell.
\newblock Boolean topoi and the theory of sets.
\newblock {\em J. Pure Appl. Algebra}, 2:261--274, 1972.

\bibitem{Pittsthesis}
Andrew~M. Pitts.
\newblock {\em The Theory of Triposes}.
\newblock PhD thesis, University of Cambridge, 1981.

\bibitem{MR1859470}
Andrew~M. Pitts.
\newblock Categorical logic.
\newblock In {\em Handbook of logic in computer science, {V}ol. 5}, volume~5 of
  {\em Handb. Log. Comput. Sci.}, pages 39--128. Oxford Univ. Press, New York,
  2000.

\bibitem{MR0360735}
Gonzalo~E. Reyes.
\newblock From sheaves to logic.
\newblock In {\em Studies in algebraic logic}, pages 143--204. Studies in
  Math., Vol. 9. Math. Assoc. Amer., Washington, D.C., 1974.

\bibitem{MR1056382}
Edmund Robinson and Giuseppe Rosolini.
\newblock Colimit completions and the effective topos.
\newblock {\em J. Symbolic Logic}, 55(2):678--699, 1990.

\bibitem{ECnSS}
Michael Shulman.
\newblock Exact completions and small sheaves.
\newblock {\em preprint}.

\bibitem{MR2729220}
Wouter~Pieter Stekelenburg.
\newblock A note on ``{E}xtensional {PER}s'' [mr1167031].
\newblock {\em J. Pure Appl. Algebra}, 215(3):253--256, 2011.

\bibitem{KCRfaCP}
Thomas Streicher.
\newblock Krivine's classical realizability from a categorical perspective.
\newblock {\em Mathematical Structures in Computer Science (preprint)}.

\bibitem{MR0373888}
Myles Tierney.
\newblock Sheaf theory and the continuum hypothesis.
\newblock In {\em Toposes, algebraic geometry and logic ({C}onf., {D}alhousie
  {U}niv., {H}alifax, {N}.{S}., 1971)}, pages 13--42. Lecture Notes in Math.,
  Vol. 274. Springer, Berlin, 1972.

\bibitem{MR0354800}
Myles Tierney.
\newblock Axiomatic sheaf theory: some constructions and applications.
\newblock In {\em Categories and commutative algebra ({C}.{I}.{M}.{E}., {III}
  {C}iclo, {V}arenna, 1971)}, pages 249--326. Edizioni Cremonese, Rome, 1973.

\bibitem{MR0335240}
A.~S. Troelstra.
\newblock Notions of realizability for intuitionistic arithmetic and
  intuitionistic arithmetic in all finite types.
\newblock In {\em Proceedings of the {S}econd {S}candinavian {L}ogic
  {S}ymposium ({O}slo, 1970)}, pages 369--405. Studies in Logic and the
  Foundations of Math., Vol. 63, Amsterdam, 1971. North-Holland.

\bibitem{MR0325352}
A.~S. Troelstra, editor.
\newblock {\em Metamathematical investigation of intuitionistic arithmetic and
  analysis}.
\newblock Lecture Notes in Mathematics, Vol. 344. Springer-Verlag, Berlin,
  1973.

\bibitem{MR0363826}
A.~S. Troelstra.
\newblock Notes on intuitionistic second order arithmetic.
\newblock In {\em Cambridge {S}ummer {S}chool in {M}athematical {L}ogic
  ({C}ambridge, 1971)}, pages 171--205. Lecture Notes in Math., Vol. 337.
  Springer, Berlin, 1973.

\bibitem{MR2474446}
Benno van~den Berg and Ieke Moerdijk.
\newblock Aspects of predicative algebraic set theory. {I}. {E}xact completion.
\newblock {\em Ann. Pure Appl. Logic}, 156(1):123--159, 2008.

\bibitem{MR1139395}
Jaap van Oosten.
\newblock A semantical proof of de {J}ongh's theorem.
\newblock {\em Arch. Math. Logic}, 31(2):105--114, 1991.

\bibitem{MR1303664}
Jaap van Oosten.
\newblock Axiomatizing higher-order {K}leene realizability.
\newblock {\em Ann. Pure Appl. Logic}, 70(1):87--111, 1994.

\bibitem{MR1443487}
Jaap van Oosten.
\newblock Extensional realizability.
\newblock {\em Ann. Pure Appl. Logic}, 84(3):317--349, 1997.

\bibitem{MR2479466}
Jaap van Oosten.
\newblock {\em Realizability: an introduction to its categorical side}, volume
  152 of {\em Studies in Logic and the Foundations of Mathematics}.
\newblock Elsevier B. V., Amsterdam, 2008.

\end{thebibliography}
\bibliographystyle{plain}

\printindex
\end{document}